\let\@wraptoccontribs\wraptoccontribs
\theoremstyle{plain}
\newtheorem{Thm}[equation]{Theorem}
\newtheorem{Cor}[equation]{Corollary}
\newtheorem{Lem}[equation]{Lemma}
\newtheorem{Prop}[equation]{Proposition}
\newtheorem{Conj}[equation]{Conjecture}
\theoremstyle{definition}
\newtheorem{Def}[equation]{Definition}
\theoremstyle{remark}
\newtheorem{Rem}[equation]{Remark}
\numberwithin{equation}{section}
\renewcommand{\rm}{\normalshape}
\newif\ifShowLabels
\newdimen\theight
\def\TeXref#1{%
    \leavevmode\vadjust{\setbox0=\hbox{{\tt
        \quad\quad  {\small \rm #1}}}%
    \theight=\ht0
    \advance\theight by \lineskip
    \kern -\theight \vbox to
    \theight{\rightline{\rlap{\box0}}%
    \vss}%
    }}%
\newenvironment{thm}[1]%
    { \begin{Thm} \label{T:#1}  \ifShowLabels \TeXref{T:#1} \fi }%
    { \end{Thm} }
\renewcommand{\th}[1]{\begin{thm}{#1} \sl }
\renewcommand{\eth}{\end{thm} }
\newenvironment{lemma}[1]%
    { \begin{Lem} \label{L:#1}  \ifShowLabels \TeXref{L:#1} \fi }%
    { \end{Lem} }
\newcommand{\lem}[1]{\begin{lemma}{#1} \sl}
\newcommand{\elem}{\end{lemma}}
\newenvironment{propos}[1]%
    { \begin{Prop} \label{P:#1}  \ifShowLabels \TeXref{P:#1} \fi }%
    { \end{Prop} }
\newcommand{\prop}[1]{\begin{propos}{#1}\sl }
\newcommand{\eprop}{\end{propos}}
\newenvironment{corol}[1]%
    { \begin{Cor} \label{C:#1}  \ifShowLabels \TeXref{C:#1} \fi }%
    { \end{Cor} }
\newcommand{\cor}[1]{\begin{corol}{#1} \sl }
\newcommand{\ecor}{\end{corol}}
\newenvironment{defeni}[1]%
    { \begin{Def} \label{D:#1}  \ifShowLabels \TeXref{D:#1} \fi }%
    { \end{Def} }
\newcommand{\defe}[1]{\begin{defeni}{#1} \sl }
\newcommand{\edefe}{\end{defeni}}
\newenvironment{remark}[1]%
    { \begin{Rem} \label{R:#1}  \ifShowLabels \TeXref{R:#1} \fi }%
    { \end{Rem} }
\newcommand{\rem}[1]{\begin{remark}{#1}}
\newcommand{\erem}{\end{remark}}
\newenvironment{conjec}[1]%
    { \begin{Conj} \label{Co:#1}  \ifShowLabels \TeXref{Co:#1} \fi }%
    { \end{Conj} }
\renewcommand{\conj}[1]{\begin{conjec}{#1} \sl }
\newcommand{\econj}{\end{conjec}}
\newcommand{\eq}[1]%
    { \ifShowLabels \TeXref{E:#1} \fi
       \begin{equation} \label{E:#1} }
\newcommand{\eeq}{ \end{equation} }
\newcommand{\prf}{ \begin{proof} }
\newcommand{\epr}{ \end{proof} }
\newcommand\nc{\newcommand}
\nc{\unl}{\underline}
\nc{\ol}{\overline}
\nc{\on}{\operatorname}
\nc{\BA}{{\mathbb{A}}}
\nc{\BC}{{\mathbb{C}}}
\nc{\BD}{{\mathbb{D}}}
\nc{\BF}{{\mathbb{F}}}
\nc{\BG}{{\mathbb{G}}}
\nc{\BM}{{\mathbb{M}}}
\nc{\BN}{{\mathbb{N}}}
\nc{\BO}{{\mathbb{O}}}
\nc{\BQ}{{\mathbb{Q}}}
\nc{\BP}{{\mathbb{P}}}
\nc{\BR}{{\mathbb{R}}}
\nc{\BZ}{{\mathbb{Z}}}
\nc{\BS}{{\mathbb{S}}}
\nc{\CA}{{\mathcal{A}}} \nc{\CB}{{\mathcal{B}}} \nc{\CalC}{{\mathcal
C}} \nc{\CalD}{{\mathcal D}} \nc{\CE}{{\mathcal{E}}}
\nc{\CF}{{\mathcal{F}}} \nc{\CG}{{\mathcal{G}}}
\nc{\CH}{{\mathcal{H}}} \nc{\CI}{{\mathcal{I}}}
\nc{\CK}{{\mathcal{K}}} \nc{\CL}{{\mathcal{L}}}
\nc{\CM}{{\mathcal{M}}} \nc{\CN}{{\mathcal{N}}}
\nc{\CO}{{\mathcal{O}}} \nc{\CP}{{\mathcal{P}}}
\nc{\CQ}{{\mathcal{Q}}} \nc{\CR}{{\mathcal{R}}}
\nc{\CS}{{\mathcal{S}}} \nc{\CT}{{\mathcal{T}}}
\nc{\CU}{{\mathcal{U}}} \nc{\CV}{{\mathcal{V}}}
\nc{\CW}{{\mathcal{W}}} \nc{\CX}{{\mathcal{X}}}
\nc{\CY}{{\mathcal{Y}}} \nc{\CZ}{{\mathcal{Z}}}
\nc{\CJ}{{\mathcal{J}}}
\nc{\fa}{{\mathfrak{a}}}
\nc{\fb}{{\mathfrak{b}}}
\nc{\fg}{{\mathfrak{g}}}
\nc{\fgl}{{\mathfrak{gl}}}
\nc{\fh}{{\mathfrak{h}}}
\nc{\fj}{{\mathfrak{j}}}
\nc{\fl}{{\mathfrak{l}}}
\nc{\fm}{{\mathfrak{m}}}
\nc{\fn}{{\mathfrak{n}}}
\nc{\fu}{{\mathfrak{u}}}
\nc{\fp}{{\mathfrak{p}}}
\nc{\frr}{{\mathfrak{r}}}
\nc{\fs}{{\mathfrak{s}}}
\nc{\ft}{{\mathfrak{t}}}
\nc{\fw}{{\mathfrak{w}}}
\nc{\fz}{{\mathfrak{z}}}
\nc{\fA}{{\mathfrak{A}}}
\nc{\fB}{{\mathfrak{B}}}
\nc{\fD}{{\mathfrak{D}}}
\nc{\fE}{{\mathfrak{E}}}
\nc{\fF}{{\mathfrak{F}}}
\nc{\fG}{{\mathfrak{G}}}
\nc{\fI}{{\mathfrak{I}}}
\nc{\fJ}{{\mathfrak{J}}}
\nc{\fK}{{\mathfrak{K}}}
\nc{\fL}{{\mathfrak{L}}}
\nc{\fM}{{\mathfrak{M}}}
\nc{\fN}{{\mathfrak{N}}}
\nc{\frP}{{\mathfrak{P}}}
\nc{\fQ}{{\mathfrak Q}}
\nc{\fR}{{\mathfrak R}}
\nc{\fS}{{\mathfrak S}}
\nc{\fT}{{\mathfrak{T}}}
\nc{\fU}{{\mathfrak{U}}}
\nc{\fW}{{\mathfrak{W}}}
\nc{\fY}{{\mathfrak{Y}}}
\nc{\fZ}{{\mathfrak{Z}}}
\nc{\ba}{{\mathbf{a}}}
\nc{\bb}{{\mathbf{b}}}
\nc{\bc}{{\mathbf{c}}}
\nc{\bd}{{\mathbf{d}}}
\nc{\be}{{\mathbf{e}}}
\nc{\bi}{{\mathbf{i}}}
\nc{\bj}{{\mathbf{j}}}
\nc{\bn}{{\mathbf{n}}}
\nc{\bp}{{\mathbf{p}}}
\nc{\bq}{{\mathbf{q}}}
\nc{\bu}{{\mathbf{u}}}
\nc{\bv}{{\mathbf{v}}}
\nc{\bw}{{\mathbf{w}}}
\nc{\bx}{{\mathbf{x}}}
\nc{\by}{{\mathbf{y}}}
\nc{\bz}{{\mathbf{z}}}
\nc{\bA}{{\mathbf{A}}}
\nc{\bB}{{\mathbf{B}}}
\nc{\bC}{{\mathbf{C}}}
\nc{\bD}{{\mathbf{D}}}
\nc{\bE}{{\mathbf{E}}}
\nc{\bI}{{\mathbf{I}}}
\nc{\bK}{{\mathbf{K}}}
\nc{\bH}{{\mathbf{H}}}
\nc{\bM}{{\mathbf{M}}}
\nc{\bN}{{\mathbf{N}}}
\nc{\bO}{{\mathbf{O}}}
\nc{\bQ}{{\mathbf Q}}
\nc{\bS}{{\mathbf{S}}}
\nc{\bT}{{\mathbf{T}}}
\nc{\bV}{{\mathbf{V}}}
\nc{\bW}{{\mathbf{W}}}
\nc{\bX}{{\mathbf{X}}}
\nc{\bP}{{\mathbf{P}}}
\nc{\bY}{{\mathbf{Y}}}
\nc{\bZ}{{\mathbf{Z}}}
\nc{\bU}{{\mathbb{U}}}
\nc{\sA}{{\mathsf{A}}}
\nc{\sB}{{\mathsf{B}}}
\nc{\sC}{{\mathsf{C}}}
\nc{\sD}{{\mathsf{D}}}
\nc{\sF}{{\mathsf{F}}}
\nc{\sK}{{\mathsf{K}}}
\nc{\sM}{{\mathsf{M}}}
\nc{\sO}{{\mathsf{O}}}
\nc{\sQ}{{\mathsf{Q}}}
\nc{\sP}{{\mathsf{P}}}
\nc{\sT}{{\mathsf{T}}}
\nc{\sU}{{\mathsf{U}}}
\nc{\sV}{{\mathsf{V}}}
\nc{\sW}{{\mathsf{W}}}
\nc{\sX}{{\mathsf{X}}}
\nc{\sZ}{{\mathsf{Z}}}
\nc{\sS}{{\mathsf{S}}}
\nc{\sfb}{{\mathsf{b}}}
\nc{\sfc}{{\mathsf{c}}}
\nc{\sd}{{\mathsf{d}}}
\nc{\sg}{{\mathsf{g}}}
\nc{\sk}{{\mathsf{k}}}
\nc{\sfl}{{\mathsf{l}}}
\nc{\sfp}{{\mathsf{p}}}
\nc{\sr}{{\mathsf{r}}}
\nc{\st}{{\mathsf{t}}}
\nc{\sfu}{{\mathsf{u}}}
\nc{\sw}{{\mathsf{w}}}
\nc{\sz}{{\mathsf{z}}}
\nc{\sx}{{\mathsf{x}}}
\nc{\bLambda}{{\boldsymbol{\Lambda}}}
\nc{\vv}{{\boldsymbol{v}}}
\nc{\Fl}{{{\mathcal F}\ell}}
\nc{\Gr}{{\on{Gr}}}
\nc{\CHH}{{\CH\!\!\CH}}
\nc{\lambdavee}{{\lambda^{\!\scriptscriptstyle\vee}}}
\nc{\alphavee}{\alpha^{\!\scriptscriptstyle\vee}}
\nc{\gammavee}{\gamma^{\!\scriptscriptstyle\vee}}
\nc{\rhovee}{{\rho^{\!\scriptscriptstyle\vee}}}
\newcommand\iso{\,\vphantom{j^{X^2}}\smash{\overset{\sim}{\vphantom{\rule{0pt}{0.20em}}\smash{\longrightarrow}}}\,}
\nc{\oQM}{\vphantom{j^{X^2}}\smash{\overset{\circ}{\vphantom{\vstretch{0.7}{A}}\smash{\QM}}}}
\nc{\oZ}{{}^\dagger\!\vphantom{j^{X^2}}\smash{\overset{\circ}{\vphantom{\vstretch{0.7}{A}}\smash{Z}}}}
\nc{\odZ}{{}^\dagger\!\vphantom{j^{X^2}}\smash{\overset{\circ}{\vphantom{\vstretch{0.7}{A}}\smash{\mathfrak Z}}}^{c',c}}
\nc{\bdZ}{{}^\dagger\!\vphantom{j^{X^2}}\smash{\overset{\bullet}{\vphantom{\vstretch{0.7}{A}}\smash{\mathfrak Z}}}^{c',c}}
\nc{\oS}{\vphantom{j^{X^2}}\smash{\overset{\circ}{\vphantom{\vstretch{0.7}{A}}\smash{S}}}}
\nc{\buM}{\vphantom{j^{X^2}}\smash{\overset{\bullet}{\vphantom{\vstretch{0.7}{A}}\smash{M}}}}
\nc{\dW}{{}^\dagger\ol\CW{}}
\nc{\hW}{{}^\dagger\hat\CW{}}
\nc{\wW}{{}^\dagger\widetilde\CW{}}
\nc{\dZ}{{}^\dagger\!\fZ^{c',c}}
\nc{\dZc}{{}^\dagger\!\fZ^{c,c}}
\nc{\tZ}{{}^\dagger\!\tilde{Z}{}}
\nc{\hZ}{{}^\dagger\!\hat{Z}{}}
\nc{\ssl}{\mathfrak{sl}} \nc{\gl}{\mathfrak{gl}}
\nc{\wt}{\widetilde} \nc{\Sym}{\mathrm{Sym}} \nc{\Res}{\mathrm{Res}}
\nc{\sE}{{\mathsf{E}}} \nc{\bs}{{\mathbf{s}}}
\nc{\trig}{\mathrm{trig}} \nc{\rat}{\mathrm{rat}}
\nc{\sign}{\mathrm{sign}} \nc{\sL}{{\mathsf{L}}}
\nc{\fv}{{\mathfrak{v}}} \nc{\ad}{\mathrm{ad}}
\nc{\spsi}{{\mathsf{\psi}}} \nc{\sh}{{\mathsf{h}}}
\nc{\rtt}{\mathrm{rtt}} \nc{\qdet}{\mathrm{qdet}} \nc{\pt}{{\operatorname{pt}}}
\nc{\M}{\mathrm{M}} \nc{\Ker}{\mathrm{Ker}} \nc{\ssc}{\mathrm{sc}}
\nc{\loc}{\mathrm{loc}} \nc{\fra}{\mathrm{frac}}
\nc{\ddj}{\mathrm{DJ}} \nc{\End}{\mathrm{End}} \nc{\ev}{\mathrm{ev}}
\nc{\GL}{\mathrm{GL}} \nc{\Rees}{\mathrm{Rees}} \nc{\tr}{\mathrm{tr}}
\nc{\ext}{\mathrm{ext}} \nc{\op}{\mathrm{op}}
\nc{\eps}{\varepsilon}
\nc{\cI}{\mathcal{I}}
\nc{\Yangian}{Y_\hbar}
\begin{document}
\title[Shifted quantum affine algebras: integral forms in type $A$]
{Shifted quantum affine algebras: integral forms in type $A$}

\author{Michael Finkelberg}
 \address{M.F.:
  National Research University Higher School of Economics, Russian Federation,
  Department of Mathematics, 6 Usacheva st., Moscow 119048;
  Skolkovo Institute of Science and Technology;
  Institute for Information Transmission Problems}
 \email{fnklberg@gmail.com}

\author[Alexander Tsymbaliuk]{Alexander Tsymbaliuk}
 \address{A.T.:  Yale University, Department of Mathematics, New Haven, CT 06511, USA}
 \email{sashikts@gmail.com}
 \contrib[with Appendices by]{Alexander Tsymbaliuk and Alex Weekes}

\dedicatory{To Rafail Kalmanovich Gordin on his 70th birthday}
\thanks{M.F.\ is grateful to his high school teacher Rafail Kalmanovich Gordin for
showing him the beauty of geometry. Since the high school years, he was suffering
from an unrequited love for geometry, the present contribution being but a pathetic
illustration of it.}

\begin{abstract}
  We define an integral form of shifted quantum affine algebras of type $A$ and construct
  Poincar\'e-Birkhoff-Witt-Drinfeld bases for them. When the shift is trivial, our integral
  form coincides with the RTT integral form. We prove that these integral forms are closed
  with respect to the coproduct and shift homomorphisms. We prove that the homomorphism from
  our integral form to the corresponding quantized $K$-theoretic Coulomb branch of a quiver gauge
  theory is always surjective. In one particular case we identify this Coulomb branch with the
  extended quantum universal enveloping algebra of type $A$. Finally, we obtain the rational
  (homological) analogues of the above results (proved earlier in~\cite{kmwy,ktwwya} via different
  techniques).
\end{abstract}
\maketitle
\tableofcontents


\section{Introduction}


\subsection{Summary}
\

This paper is a sequel to~\cite{ft}, where we initiated the study of shifted quantum
affine algebras. Recall that the shifted quantum affine algebra $U^{\mu}_\vv$ depends
on a coweight $\mu$ of a semisimple Lie algebra $\fg$, and in case $\mu=0$ it is just a
central extension of the quantum loop algebra $U_\vv(L\fg)$ over the field $\BC(\vv)$.
Let us represent $\mu$ in the form $\mu=\lambda-\alpha$, where $\lambda$ is a dominant
coweight of $\fg$, and $\alpha$ is a sum of positive coroots. Also, let us assume
from now on that $\fg$ is simply-laced. Then $\lambda$ encodes a framing of a Dynkin
quiver of $\fg$, and $\alpha$ encodes the dimension vector of a representation of this
quiver. Let $\CA^\vv$ stand for the quantized $K$-theoretic Coulomb branch of the
corresponding $3d\ \CN=4$ SUSY quiver gauge theory. It is a $\BC[\vv,\vv^{-1}]$-algebra,
and we denote $\CA^\vv_\fra:=\CA^\vv\otimes_{\BC[\vv,\vv^{-1}]}\BC(\vv)$. One of the main
motivations for our study of shifted quantum affine algebras was the existence of a homomorphism
  $\ol{\Phi}{}^{\unl\lambda}_{\mu}\colon
   U^{\mu}_{\vv}[\sz_1^{\pm1},\ldots,\sz_N^{\pm1}]\to\CA^\vv_\fra$,
where $N$ is the total dimension of the framing. We conjectured that this homomorphism is
surjective and also conjectured an explicit description of its kernel. In other words, we
gave a conjectural presentation of $\CA^\vv_\fra$ by generators and relations as a
truncated shifted quantum affine algebra $U^{\unl\lambda}_\mu$.

It is very much desirable to have a similar presentation for the genuine quantized $K$-theoretic
Coulomb branch $\CA^\vv$ (e.g.\ in order to study the non-quantized $K$-theoretic Coulomb branch
at $\vv=1$). To this end, it is necessary to construct an integral form
(a $\BC[\vv,\vv^{-1}]$-subalgebra)
  $\fU_\vv^{\mu}[\sz_1^{\pm1},\ldots,\sz_N^{\pm1}]\subset
   U^{\mu}_\vv[\sz_1^{\pm1},\ldots,\sz_N^{\pm1}]$
such that
  $\ol{\Phi}{}^{\unl\lambda}_{\mu}(\fU^{\mu}_\vv[\sz_1^{\pm1},\ldots,\sz_N^{\pm1}])=\CA^\vv$
and the specialization
  $\fU_{\vv=1}^{\mu}[\sz_1^{\pm1},\ldots,\sz_N^{\pm1}]$
is a commutative $\BC$-algebra.
Then $\CA^\vv$ would be represented as an explicit quotient algebra $\fU^{\unl\lambda}_\mu$.

In the present paper, we restrict ourselves to the case $\fg=\ssl_n$, and propose a definition
of the desired integral form $\fU^{\mu}_\vv[\sz_1^{\pm1},\ldots,\sz_N^{\pm1}]$. It possesses a
PBWD (Poincar\'e-Birkhoff-Witt-Drinfeld) $\BC[\vv,\vv^{-1}]$-base, cf.~\cite{t}. We prove the
surjectivity of
  $\ol{\Phi}{}^{\unl\lambda}_{\mu}\colon
   \fU^{\mu}_\vv[\sz_1^{\pm1},\ldots,\sz_N^{\pm1}]\to\CA^\vv$
in~Theorem~\ref{Surjectivity}. Unfortunately, we are still unable to say much about the kernel
ideal of
  $\ol{\Phi}{}^{\unl\lambda}_{\mu}\colon
   \fU^{\mu}_\vv[\sz_1^{\pm1},\ldots,\sz_N^{\pm1}]\to\CA^\vv$
in the general case. The only case when we were able to determine the kernel ideal explicitly is
$\fg=\ssl_n,\ \mu=0,\ \lambda=n\omega_{n-1}$ (a multiple of the last fundamental coweight).
Then the corresponding truncated shifted quantum affine $\BC[\vv,\vv^{-1}]$-algebra
$\fU^{\unl\lambda}_\mu$ is isomorphic to an integral form $\widetilde\fU_\vv(\ssl_n)$ of an extended
version $\widetilde{U}_\vv(\ssl_n)$ of the quantized universal enveloping algebra of $\ssl_n$.
More precisely, the Harish-Chandra center $Z$ of $U_\vv(\ssl_n)$ is isomorphic to the ring of
symmetric polynomials
  $\left(\BC(\vv)[\sz_1^{\pm1},\ldots,\sz_n^{\pm1}]\right)^{\Sigma_n}/(\sz_1\cdots\sz_n-1)$,
and
  $\widetilde{U}_\vv(\ssl_n):=
   U_\vv(\ssl_n)\otimes_Z\BC(\vv)[\sz_1^{\pm1},\ldots,\sz_n^{\pm1}]/(\sz_1\cdots\sz_n-1)$,
cf.~\cite{bg}. The corresponding integral form
$\fU_\vv(\ssl_n)=\widetilde\fU_\vv(\ssl_n)\cap U_\vv(\ssl_n)$ of the non-extended quantized
universal enveloping algebra $U_\vv(\ssl_n)$ is nothing but the RTT integral form $\fU_\vv^\rtt(\ssl_n)$.
It is free over $\BC[\vv,\vv^{-1}]$ and admits a PBW basis. The truncation homomorphism
  $U^{0}_\vv[\sz_1^{\pm1},\ldots,\sz_n^{\pm1}]\to \widetilde{U}_\vv(\ssl_n)$
factors through Jimbo's evaluation homomorphism
  $U_\vv(L\ssl_n)[\sz_1^{\pm1},\ldots,\sz_n^{\pm1}]\to \widetilde{U}_\vv(\ssl_n)$ of~\cite{jim},
and $\fU^0_\vv[\sz_1^{\pm1},\ldots,\sz_n^{\pm1}]$ is nothing but the pull-back of the RTT
integral form of $U_\vv(L\ssl_n)[\sz_1^{\pm1},\ldots,\sz_n^{\pm1}]$ along the projection
  $U^{0}_\vv[\sz_1^{\pm1},\ldots,\sz_n^{\pm1}]\twoheadrightarrow
   U_\vv(L\ssl_n)[\sz_1^{\pm1},\ldots,\sz_n^{\pm1}]$.
In fact, our definition of the integral
form $\fU^\mu_\vv[\sz_1^{\pm1},\ldots,\sz_N^{\pm1}]$ for general $\mu$ was found as a
straightforward generalization of the RTT integral form expressed in terms of a PBWD basis.

Note that $U_\vv(\ssl_n)$ possesses three different integral forms:\\
(a) Lusztig's $U\otimes_\BZ\BC$ of~\cite[0.4]{l1};\\
(b) the form $\CU_\CA$ of De Concini and Kac~\cite[1.5]{dk};\\
(c) $\fU_\vv^\rtt(\ssl_n)$ (its specialization at
$\vv=1$ is the commutative ring of functions on the big Bruhat cell of $\on{SL}(n)$).
It is dual to (a) with respect to a natural $\BC(\vv)$-valued pairing on $U_\vv(\ssl_n)$.

Note also that there is a duality between $\fU_\vv(L\ssl_n)$ and the integral form of~\cite{cp2}
and~\cite[$\S 7.8$]{gr} of $U_\vv(L\ssl_n)$ with respect to the new Drinfeld pairing,
cf.~\cite[Lemma~9.1]{gr}, as established in~\cite[Theorem 4.1, Remark 4.38]{t2}.

Finally, recall that in~\cite{ft} we have constructed the comultiplication
$\BC(\vv)$-algebra homomorphisms (in case $\fg=\ssl_n$)
  $\Delta_{\mu_1,\mu_2}\colon U^{\mu_1+\mu_2}_\vv \to U^{\mu_1}_\vv\otimes U^{\mu_2}_\vv$
for any coweights $\mu_1,\mu_2$.
We prove in~Theorem~\ref{coproduct on integral form} that this coproduct preserves
our integral forms, and induces the $\BC[\vv,\vv^{-1}]$-algebra homomorphisms
  $\Delta_{\mu_1,\mu_2}\colon \fU^{\mu_1+\mu_2}_\vv\to\fU^{\mu_1}_\vv\otimes\fU^{\mu_2}_\vv$.

To simplify the exposition of the paper, we start by establishing the rational/homological
counterparts of the aforementioned results, proved earlier in~\cite{kmwy,ktwwya} using
different techniques.

In Appendix~\ref{appendix with Alex Weekes 1}, we collect the relevant results on shifted
Yangians and Drinfeld-Gavarini duals, which are used in Section~\ref{sec additive counterpart}.
Our objectives are two-fold. First, we establish the PBW property for the Drinfeld-Gavarini
dual (Proposition~\ref{appendix 1: main prop}) and apply it to the Yangians
(Theorems~\ref{appendix: main thm 1},~\ref{appendix: main thm 2}). Second, we identify two different
approaches (of~\cite{kwwy} and~\cite{bfn,fkprw}) towards dominantly shifted Yangians of semisimple
Lie algebras (Theorem~\ref{identification of two definitions}).

In Appendix~\ref{appendix with Alex Weekes 2}, we provide a short proof of the well-known
PBW property for the Yangian $Y_\hbar(\fg)$, since the original proof of~\cite{le} contains a gap.


\newpage
\subsection{Outline of the paper}
\

$\bullet$
In Section~\ref{ssec RTT Yangian}, we recall the RTT Yangians
$Y^\rtt_\hbar(\gl_n), Y^\rtt_\hbar(\ssl_n)$ and their $\BC[\hbar]$-subalgebras
$\bY^\rtt_\hbar(\gl_n), \bY^\rtt_\hbar(\ssl_n)$. Since the terminology varies in
the literature, we shall stress right away that the former two are quantizations
of the universal enveloping $U(\gl_n[t]), U(\ssl_n[t])$
(see Remark~\ref{limit of rtt yangian}), while the latter two quantize the algebras
of functions on the congruence subgroups ${\GL(n)[[t^{-1}]]}_1, {\mathrm{SL}(n)[[t^{-1}]]}_1$
(see Remark~\ref{limit of integral rtt yangian}) and can be viewed as the
Drinfeld-Gavarini dual~\cite{g} of the former, see
Appendix~\ref{ssec Drinfeld functor},~\ref{ssec RTT version of Drinfeld-Gavarini}.

In Section~\ref{ssec quantum minors}, we recall the standard definition of the quantum minors and
the quantum determinant of $T(z)$, as well as the description of the center $ZY^\rtt_\hbar(\gl_n)$.
All of this is crucially used in Section~\ref{ssec truncation ideal yangian}.

In Section~\ref{ssec RTT evaluation yangian}, we recall the RTT evaluation homomorphism
$\ev^\rtt\colon Y^\rtt_\hbar(\gl_n)\twoheadrightarrow U(\gl_n)$ as well as the induced
homomorphism between their $\BC[\hbar]$-subalgebras
$\ev^\rtt\colon \bY^\rtt_\hbar(\gl_n)\twoheadrightarrow \bU(\gl_n)$.
The main result of this subsection provides a ``minimalistic description''
of the kernels of these homomorphisms, see Theorems~\ref{alternative kernel yangian}
and~\ref{alternative kernel Yangian integral} (the former is essentially due to~\cite{bk}).

In Section~\ref{ssec Yangian sln}, we recall the Drinfeld Yangians $Y_\hbar(\gl_n)$ and
$Y_\hbar(\ssl_n)$. The isomorphism $\Upsilon\colon Y_\hbar(\gl_n)\iso Y^\rtt_\hbar(\gl_n)$
(see Theorem~\ref{Yangian Gauss decomposition}) is due to~\cite{i} and is essentially a Yangian
counterpart of~\cite{df}. Following~\cite{kwwy}, we define their $\BC[\hbar]$-subalgebras
$\bY_\hbar(\gl_n),\bY_\hbar(\ssl_n)$, and the main result identifies the former with
$\bY^\rtt_\hbar(\gl_n)$ via the isomorphism $\Upsilon$, see
Proposition~\ref{yangian integral forms coincide} (a straightforward proof is sketched right after it,
while a more conceptual one is provided in Appendix~\ref{ssec RTT version of Drinfeld-Gavarini}).

In Section~\ref{ssec Drinfeld evaluation}, we recall the evaluation homomorphism
$\ev\colon Y_\hbar(\ssl_n)\to U(\ssl_n)$ of~\cite{d1} and verify its compatibility
with $\ev^\rtt$ via $\Upsilon$, see Theorem~\ref{compatibility of evaluations yangian}.

In Sections~\ref{ssec shifted Yangian} and~\ref{ssec shifted Yangian 2}, we recall
two alternative definitions of the shifted Yangian $\bY_\mu$ for a general shift $\mu$
and for a dominant shift $\mu$, respectively ($\mu$ is an element of the coweight lattice).
The fact that those two approaches are indeed equivalent for dominant shifts is the subject
of Theorem~\ref{Gavarini=Rees}, the proof of which is presented in
Appendix~\ref{appendix with Alex Weekes 1}, see~Theorem~\ref{identification of two definitions}.

In Sections~\ref{ssec hom to diff op-s yangian} and~\ref{ssec Coulomb branch cohomology},
we recall two key constructions of~\cite[Appendix B]{bfn}: the homomorphism
$\Phi^{\unl\lambda}_\mu\colon \bY_\mu[z_1,\ldots,z_N]\to \wt{\CA}_\hbar$ of
Theorem~\ref{Homomorphism yangian}, which factors through the quantized Coulomb
branch $\CA_\hbar$ giving rise to the homomorphism
  $\ol{\Phi}^{\unl\lambda}_\mu\colon \bY_\mu[z_1,\ldots,z_N]\to \CA_\hbar$.
The main result of this subsection, Proposition~\ref{Surjectivity yangian} due to~\cite{ktwwya},
establishes the surjectivity of $\ol{\Phi}^{\unl\lambda}_\mu$ in type $A$.
An alternative proof of this result is outlined in Remark~\ref{Surjectivity for Yangian via Shuffle}
and crucially utilizes the shuffle realizations of $Y_\hbar(\ssl_n),\bY_\hbar(\ssl_n)$
of~\cite[\S6]{t}.

In Section~\ref{ssec truncation ideal yangian}, we prove a reduced version
of the conjectured description~\cite[Remark B.21]{bfn} of $\Ker(\Phi^{\unl\lambda}_\mu)$
as an explicit truncation ideal $\CI^{\unl{\lambda}}_\mu$ in the particular case
$\mu=0,\lambda=n\omega_{n-1}$ (which corresponds to the dimension vector
$(1,2,\ldots,n-1)$ and the framing $(0,\ldots,0,n)$), see Theorem~\ref{Main Theorem 2}.
An alternative proof of this result was given earlier in~\cite{kmwy}.
The key ingredient in our proof, Theorem~\ref{truncation as kernel yangian}, identifies
the reduced truncation ideal $\unl{\CI}^{n\omega_{n-1}}_0$ with the kernel of a certain
version of the evaluation homomorphism $\ev$. This culminates in
Corollary~\ref{explicit Coulomb yangian}, where we identify the corresponding reduced
Coulomb branch $\unl{\CA}_\hbar$ with the integral form of the extended
(in the sense of~\cite{bg}) universal enveloping algebra of $\ssl_n$.

$\bullet$
In Section~\ref{ssec RTT finite}, we recall the RTT integral form $\fU^\rtt_\vv(\gl_n)$
following~\cite{frt, df}. The latter is a $\BC[\vv,\vv^{-1}]$-algebra, which can be thought
of as a quantization of the algebra of functions on the big Bruhat cell in $\GL(n)$
(see~(\ref{integral_finite_gln}) and Remark~\ref{limit of rtt quantum finite}) as $\vv\to 1$.

In Section~\ref{ssec RTT affine}, we recall the RTT integral form $\fU^\rtt_\vv(L\gl_n)$
following~\cite{frt, df}. The latter is a $\BC[\vv,\vv^{-1}]$-algebra, which can be thought
of as a quantization of the algebra of functions on the thick slice $^\dagger\CW_0$
of~\cite[4(viii)]{ft} (see~(\ref{integral_affine_gln}) and Remark~\ref{thick slice}) as $\vv\to 1$.

In Section~\ref{ssec RTT evaluation}, we recall the RTT evaluation homomorphism
$\ev^\rtt\colon \fU^\rtt_\vv(L\gl_n)\twoheadrightarrow \fU^\rtt_\vv(\gl_n)$.
The main result of this subsection provides a ``minimalistic description''
of the kernel of this homomorphism, see Theorem~\ref{alternative kernel quantum}.

In Section~\ref{ssec DJ finite}, we recall the Drinfeld-Jimbo quantum
$U_\vv(\gl_n), U_\vv(\ssl_n)$ defined over $\BC(\vv)$, and an isomorphism
  $\Upsilon\colon U_\vv(\gl_n)\iso \fU^\rtt_\vv(\gl_n)\otimes_{\BC[\vv,\vv^{-1}]} \BC(\vv)$
of~\cite{df} (see Theorem~\ref{Ding-Frenkel finite}). We introduce
$\BC[\vv,\vv^{-1}]$-subalgebras $\fU_\vv(\gl_n), \fU_\vv(\ssl_n)$ in
Definition~\ref{integral finite}, and identify the former with $\fU^\rtt_\vv(\gl_n)$
via $\Upsilon$, see Proposition~\ref{comparison of integral forms quantum finite}.
Finally, linear $\BC[\vv,\vv^{-1}]$-bases of $\fU_\vv(\gl_n), \fU_\vv(\ssl_n)$ are
constructed in Theorem~\ref{PBW basis coordinate finite}.

In Section~\ref{ssec DJ affine}, we recall the Drinfeld-Jimbo quantum loop algebras
$U_\vv(L\gl_n), U_\vv(L\ssl_n)$ defined over $\BC(\vv)$, and an isomorphism
  $\Upsilon\colon U_\vv(L\gl_n)\iso \fU^\rtt_\vv(L\gl_n)\otimes_{\BC[\vv,\vv^{-1}]} \BC(\vv)$
of~\cite{df} (see Theorem~\ref{Ding-Frenkel affine}). Following~\cite{t}, we
introduce $\BC[\vv,\vv^{-1}]$-subalgebras $\fU_\vv(L\gl_n), \fU_\vv(L\ssl_n)$ in
Definition~\ref{integral loop}, and identify the former with $\fU^\rtt_\vv(L\gl_n)$
via $\Upsilon$, see Proposition~\ref{comparison of integral forms quantum}.
Finally, based on Theorem~\ref{PBW for half-integral} (proved in~\cite{t}),
we construct linear $\BC[\vv,\vv^{-1}]$-bases of $\fU_\vv(L\gl_n), \fU_\vv(L\ssl_n)$
in Theorem~\ref{PBW basis coordinate affine}.

In Section~\ref{ssec shuffle algebra}, we recall the shuffle realizations of
$U^>_\vv(L\gl_n)$ and its integral form $\fU^>_\vv(L\gl_n)$ as recently established
in~\cite{t}, see Theorems~\ref{shuffle rational},~\ref{shuffle integral} and
Proposition~\ref{key properties of integral shuffle}.
This is crucially used in Section~\ref{sec K-theoretic Coulomb branch}.

In Section~\ref{ssec Jimbo evaluation}, we recall the evaluation homomorphism
$\ev\colon U_\vv(L\ssl_n)\to U_\vv(\gl_n)$ of~\cite{jim} (see Theorem~\ref{Jimbo's evaluation})
and verify its compatibility with (a $\BC(\vv)$-extension of) $\ev^\rtt$ via $\Upsilon$,
see Theorem~\ref{compatibility of evaluations}.

In Section~\ref{ssec quantum minors quantum}, we recall the standard definition of the quantum
minors and the quantum determinant of $T^\pm(z)$, as well as the description of the center of
$\fU^\rtt_\vv(\gl_n)$. All of this is crucially used in Section~\ref{ssec truncation ideal}.

In Section~\ref{ssec enhanced algebras}, we slightly generalize the algebras of the previous
subsections, which is needed for Section~\ref{ssec truncation ideal}.

$\bullet$
In Section~\ref{ssec hom to diff op-s}, we recall the notion of shifted
quantum affine algebras of~\cite{ft}:
  $U^{\ssc,\mu}_{\vv}$ and $U^{\ad,\mu}_{\vv}[\sz^{\pm 1}_1,\ldots,\sz^{\pm 1}_N]$
(depending on a coweight $\mu$). We introduce their $\BC[\vv,\vv^{-1}]$-subalgebras
  $\fU^{\ssc,\mu}_{\vv},\fU^{\ad,\mu}_{\vv}[\sz^{\pm 1}_1,\ldots,\sz^{\pm 1}_N]$
and construct linear $\BC[\vv,\vv^{-1}]$-bases for those in
Theorem~\ref{PBW for integral shifted}. We also recall the homomorphism
  $\wt{\Phi}^{\unl\lambda}_\mu\colon
   U^{\ad,\mu}_\vv[\sz^{\pm 1}_1,\ldots,\sz^{\pm 1}_N]\to
   \wt{\CA}^\vv_\fra[\sz^{\pm 1}_1,\ldots,\sz^{\pm 1}_N]$
of~\cite{ft} (see Theorem~\ref{Homomorphism}).

In Section~\ref{ssec Coulomb branch}, we recall the notion of the (extended)
quantized $K$-theoretic Coulomb branch $\CA^\vv$ (which is a $\BC[\vv,\vv^{-1}]$-algebra)
and the fact that $\wt{\Phi}^{\unl\lambda}_\mu$ gives rise to a homomorphism
  $\ol{\Phi}^{\unl\lambda}_\mu\colon U^{\ad,\mu}_\vv[\sz^{\pm 1}_1,\ldots,\sz^{\pm 1}_N]
   \to \CA^\vv\otimes_{\BC[\vv,\vv^{-1}]} \BC(\vv)$.
In Proposition~\ref{integrality of all t-modes} we prove that the integral form
$\fU^{\ad,\mu}_\vv[\sz^{\pm 1}_1,\ldots,\sz^{\pm 1}_N]$ is mapped to $\CA^\vv$ under
$\ol{\Phi}^{\unl\lambda}_\mu$, which is based on explicit
formulas~(\ref{image of e-modes},~\ref{image of f-modes}).
In Theorem~\ref{shuffle homomorphism}, we provide a shuffle interpretation of the
homomorphism $\wt{\Phi}^{\unl\lambda}_\mu$ when restricted to either positive or
negative halves of $U^{\ad,\mu}_\vv[\sz^{\pm 1}_1,\ldots,\sz^{\pm 1}_N]$.
In Proposition~\ref{image of important elements}, we combine this result
with the shuffle description of the integral forms $\fU^>_\vv(L\gl_n),\fU^<_\vv(L\gl_n)$
to compute $\wt{\Phi}^{\unl\lambda}_\mu$-images of certain elements in
$\fU^{\ad,\mu}_\vv[\sz^{\pm 1}_1,\ldots,\sz^{\pm 1}_N]$.
Combining this computation with the ideas of~\cite{cw}, we finally prove that
  $\ol{\Phi}^{\unl\lambda}_\mu\colon
   \fU^{\ad,\mu}_\vv[\sz^{\pm 1}_1,\ldots,\sz^{\pm 1}_N]\to \CA^\vv$
is surjective, see Theorem~\ref{Surjectivity}.

In Section~\ref{ssec truncation ideal}, we prove a reduced version of the integral
counterpart of~\cite[Conjecture 8.14]{ft}, see Conjecture~\ref{conjectural description of kernel},
which identifies $\Ker(\Phi^{\unl\lambda}_\mu)$ with an explicit truncation ideal
$\fI^{\unl{\lambda}}_\mu$ in the particular case $\mu=0,\lambda=n\omega_{n-1}$
(which corresponds to the dimension vector $(1,2,\ldots,n-1)$ and the framing $(0,\ldots,0,n)$),
see Theorem~\ref{Main Theorem 1}. The key ingredient in our proof,
Theorem~\ref{truncation as kernel quantum}, identifies the reduced truncation ideal
$\unl{\fI}^{n\omega_{n-1}}_0$ with the kernel of a certain version of the evaluation homomorphism $\ev$.
This culminates in Corollary~\ref{explicit Coulomb quantum}, where we identify the corresponding
reduced quantized Coulomb branch $\unl{\CA}^\vv$ with the extended version
(in the sense of~\cite{bg}) of $\fU_\vv(\ssl_n)$.

In Section~\ref{ssec coproduct on integral shifted}, we prove that the
$\BC(\vv)$-algebra homomorphisms
  $\Delta_{\mu_1,\mu_2}\colon U^{\ssc,\mu_1+\mu_2}_\vv\to
   U^{\ssc,\mu_1}_\vv\otimes U^{\ssc,\mu_2}_\vv$
of~\cite[Theorem 10.26]{ft} generalizing the Drinfeld-Jimbo coproduct on
$U_\vv(L\ssl_n)$ give rise to $\BC[\vv,\vv^{-1}]$-algebra homomorphisms
  $\Delta_{\mu_1,\mu_2}\colon \fU^{\ssc,\mu_1+\mu_2}_\vv\to
   \fU^{\ssc,\mu_1}_\vv\otimes \fU^{\ssc,\mu_2}_\vv$,
see Theorem~\ref{coproduct on integral form}. We also prove that the
integral forms $\fU^{\ssc,\bullet}_\vv$ are intertwined by the shift
homomorphisms of~\cite[Lemma 10.24]{ft}, see Lemma~\ref{integral iota}.

$\bullet$
In Appendix~\ref{ssec Drinfeld functor}, we recall the notion of the Drinfeld-Gavarini dual
$A'$ of a Hopf algebra $A$ defined over $\BC[\hbar]$,
see~(\ref{delta for Drinfeld dual},~\ref{definition of Drinfeld dual}).

In Appendix~\ref{ssec PBW for Drinfeld-Gavarini}, following the ideas of~\cite{g},
we establish a PBW theorem for the Drinfeld-Gavarini dual $A'$ of a Hopf algebra $A$
satisfying assumptions~(\ref{assumption 1})--(\ref{assumption 3}), see
Proposition~\ref{appendix 1: main prop}. This yields an explicit description of $A'$.

In Appendix~\ref{sec: Rees algebras}, assuming that the Hopf algebra $A$ is in addition graded
(see assumption~(\ref{assumption 4})), we identify its Drinfeld-Gavarini dual $A'$ with
the Rees algebra of the specialization $A_{\hbar=1}$ with respect to the
filtration~(\ref{eq: Kazhdan filtration}), see Proposition~\ref{appendix 1: main cor}.

In Appendix~\ref{appendix: yangian section}, we briefly recall the Yangian $\Yangian=\Yangian(\fg)$
of a semisimple Lie algebra $\fg$ (generalizing the case $\fg=\ssl_n$ featuring in
Section~\ref{sec additive counterpart}) and its key relevant properties.

In Appendix~\ref{ssec Drinfeld-Gavarini of Yangian}, we verify that the aforementioned
assumptions~(\ref{assumption 1})--(\ref{assumption 3}) hold for $\Yangian$, hence,
Proposition~\ref{appendix 1: main prop} applies. This culminates in the explicit description
of the Drinfeld-Gavarini dual $\Yangian'$ (thus filling in the gap of the description of
$\Yangian'$ given just before~\cite[Theorem 3.5]{kwwy}) and establishes a PBW theorem for it,
see Theorem~\ref{appendix: main thm 1}. The validity of the assumption~(\ref{assumption 4})
for $\Yangian$ and Proposition~\ref{appendix 1: main cor} yield a Rees algebra description
of $\Yangian'$, see Corollary~\ref{cor: Gavarini = Rees}.

In Appendix~\ref{ssec RTT version of Drinfeld-Gavarini}, we verify that
assumptions~(\ref{assumption 1})--(\ref{assumption 3}) hold for the RTT Yangian $Y^\rtt_\hbar(\gl_n)$.
This gives rise to the identification of its Drinfeld-Gavarini dual ${Y^\rtt_\hbar(\gl_n)}'$ with
the subalgebra $\bY^\rtt_\hbar(\gl_n)$ of Definition~\ref{RTT integral Yangian}, as well as
establishes the PBW theorem (that we referred to in Section~\ref{sec additive counterpart})
for the latter, see Theorem~\ref{appendix: main thm 2}. As an immediate corollary,
we also deduce a new conceptual proof of Proposition~\ref{yangian integral forms coincide}.

In Appendix~\ref{ssec appendix shifted yangians}, we compare two definitions of
dominantly shifted Yangians for any semisimple Lie algebra $\fg$: the Rees algebra construction
of~Section~\ref{ssec shifted Yangian} (following the approach undertaken in~\cite{bfn,fkprw})
and the subalgebra construction of~Section~\ref{ssec shifted Yangian 2} (following the original
approach of~\cite{kwwy}). Our main result, Theorem~\ref{identification of two definitions}
(generalizing Theorem~\ref{Gavarini=Rees} stated for $\fg=\ssl_n$)
provides an identification of these two definitions.

In Appendix~\ref{ssec shifted Yangian 3}, we introduce one more definition of the shifted Yangian
and prove in Theorem~\ref{I=III} that it is equivalent to the Rees algebra construction.

$\bullet$
In Appendix~\ref{ssec useful lemma}, we state a simple but useful general result,
Lemma~\ref{useful lemma}, relating the specializations of the graded $\BC[\hbar]$-algebra
at $\hbar=0$ and $\hbar=1$. This is needed for Theorem~\ref{PBW for h=1 yangian}.

In Appendix~\ref{ssec Setup for PBW yangian}, we recall the basic facts about $Y=Y_{\hbar=1}$.

In Appendix~\ref{ssec PBW yangian}, we establish the PBW theorem for $Y$
(thus filling in the gap of~\cite{le}, though our proof is different), see
Theorem~\ref{PBW for h=1 yangian}, which allows us to immediately deduce
the PBW theorem for the Yangian $\Yangian$, see Theorem~\ref{PBW Theorem Yangian}.


\newpage

\subsection{Acknowledgments}
\

We are deeply grateful to A.~Braverman, P.~Etingof, B.~Feigin, A.~Molev, and A.~Weekes.
Special thanks go to R.~Kodera and C.~Wendlandt for pointing out two inaccuracies in
the earlier version of this paper.

A.T.\ gratefully acknowledges support from Yale University, and is extremely grateful to
MPIM (Bonn, Germany), IPMU (Kashiwa, Japan), and RIMS (Kyoto, Japan) for the hospitality
and wonderful working conditions in the summer $2018$ when this project was performed.
The final version of this paper was prepared during A.T.'s visit to
IHES (Bures-sur-Yvette, France) in the summer $2019$,
sponsored by the European Research Council (ERC) under the European Union's Horizon $2020$
research and innovation program (QUASIFT grant agreement $677368$).
A.T.\ is indebted to T.~Arakawa (RIMS), T.~Milanov (IPMU), and V.~Pestun (IHES) for their invitations.

M.F.\ was partially funded within the framework of the HSE University Basic Research Program
    and the Russian Academic Excellence Project `5-100'.

A.T.\ was partially supported by the NSF Grant DMS-$1821185$.


\section{Shifted Yangian}\label{sec additive counterpart}

This section is a rational/cohomological prototype of
Sections~\ref{sec quantum algebras},~\ref{sec K-theoretic Coulomb branch}.


\subsection{The RTT Yangian of $\gl_n$ and $\ssl_n$}\label{ssec RTT Yangian}
\

Let $\hbar$ be a formal variable. Consider the \emph{rational} $R$-matrix
\begin{equation}\label{rational R matrix}
  R_\rat(z)=R^\hbar_\rat(z)=1-\frac{\hbar}{z}P
\end{equation}
which is an element of $\BC[\hbar]\otimes_{\BC} (\End\ \BC^n)^{\otimes 2}$, where
$P=\sum_{i,j}E_{ij}\otimes E_{ji}\in (\End\ \BC^n)^{\otimes 2}$ is the permutation
operator. It satisfies the famous \emph{Yang-Baxter equation with a spectral parameter}:
\begin{equation}\label{hYB}
  R_{\rat;12}(u)R_{\rat;13}(u+v)R_{\rat;23}(v)=
  R_{\rat;23}(v)R_{\rat;13}(u+v)R_{\rat;12}(u).
\end{equation}

Following~\cite{frt}, define the \emph{RTT Yangian of $\gl_n$}, denoted by
$Y^\rtt_\hbar(\gl_n)$, to be the associative $\BC[\hbar]$-algebra generated by
$\{t^{(r)}_{ij}\}_{1\leq i,j\leq n}^{r\geq 1}$ subject to the following defining relations:
\begin{equation}\label{ratRTT}
  R_{\rat}(z-w)T_1(z)T_2(w)=T_2(w)T_1(z)R_\rat(z-w).
\end{equation}
Here $T(z)$ is the series in $z^{-1}$ with coefficients in the algebra
$Y^\rtt_\hbar(\gl_n)\otimes \End\ \BC^n$, defined by
$T(z)=\sum_{i,j} t_{ij}(z)\otimes E_{ij}$ with
  $t_{ij}(z):=\delta_{ij}+\hbar\sum_{r>0} t^{(r)}_{ij}z^{-r}$.
Multiplying both sides of~(\ref{ratRTT}) by $z-w$, we obtain
an equality of series in $z,w$ with coefficients in
$Y^\rtt_\hbar(\gl_n)\otimes (\End\ \BC^n)^{\otimes 2}$.

Let $ZY^\rtt_\hbar(\gl_n)$ denote the center of $Y^\rtt_\hbar(\gl_n)$.
Explicitly, $ZY^\rtt_\hbar(\gl_n)\simeq \BC[\hbar][d_1,d_2,\ldots]$ with
$d_r$ defined via $\qdet\ T(z)=1+\hbar\sum_{r\geq 1}d_rz^{-r}$, see
Definition~\ref{quantum determinant} and Proposition~\ref{center of Yangian}.

For any formal series $f(z)\in 1+\frac{\hbar}{z}\BC[\hbar][[z^{-1}]]$, the assignment
\begin{equation}\label{yangian automorphims}
  T(z)\mapsto f(z)T(z)
\end{equation}
defines an algebra automorphism of $Y^\rtt_\hbar(\gl_n)$.

\begin{Def}\label{RTT yangian sln}
The $\BC[\hbar]$-subalgebra $Y^\rtt_\hbar(\ssl_n)$ of
$Y^\rtt_\hbar(\gl_n)$ formed by all the elements
fixed under all automorphisms~(\ref{yangian automorphims})
is called the \emph{RTT Yangian of $\ssl_n$}.
\end{Def}

Analogously to~\cite[Theorem 1.8.2]{m}\footnote{We note that the $\BC$-algebras
of~\emph{loc.cit}.\ are the quotients of their $\BC[\hbar]$-counterparts above by $(\hbar-1)$.},
we have a $\BC[\hbar]$-algebra isomorphism
\begin{equation}\label{RTT gln vs sln}
  Y^\rtt_\hbar(\gl_n)\simeq Y^\rtt_\hbar(\ssl_n)\otimes_{\BC[\hbar]} ZY^\rtt_\hbar(\gl_n).
\end{equation}
Hence, there is a natural projection
  $\pi\colon Y^\rtt_\hbar(\gl_n)\twoheadrightarrow Y^\rtt_\hbar(\ssl_n)$
with $\Ker(\pi)=(d_1,d_2,\ldots)$.

\begin{Rem}\label{limit of rtt yangian}
Note that the assignment $t^{(r)}_{ij}\mapsto E_{ij}\cdot t^{r-1}$ gives rise to
a $\BC$-algebra isomorphism $Y^\rtt_\hbar(\gl_n)/(\hbar)\iso U(\gl_n[t])$.
This explains why $Y^\rtt_\hbar(\gl_n)$ is usually treated as a
quantization of the universal enveloping algebra $U(\gl_n[t])$.
\end{Rem}

\begin{Def}\label{RTT integral Yangian}
Let $\bY^\rtt_\hbar(\gl_n)$ be the $\BC[\hbar]$-subalgebra of $Y^\rtt_\hbar(\gl_n)$
generated by $\{\hbar t^{(r)}_{ij}\}_{1\leq i,j\leq n}^{r\geq 1}$.
\end{Def}

Let us note right away that~(\ref{yangian automorphims}) with
$f(z)\in 1+\frac{\hbar}{z}\BC[\hbar][[z^{-1}]]$ defines an algebra
automorphism of $\bY^\rtt_\hbar(\gl_n)$.
As in Definition~\ref{RTT yangian sln}, define $\bY^\rtt_\hbar(\ssl_n)$ to be
the $\BC[\hbar]$-subalgebra of $\bY^\rtt_\hbar(\gl_n)$ formed by all
the elements fixed under these automorphisms. We also note that the center
$Z\bY^\rtt_\hbar(\gl_n)$ of $\bY^\rtt_\hbar(\gl_n)$ is explicitly given by
$Z\bY^\rtt_\hbar(\gl_n)\simeq \BC[\hbar][\hbar d_1,\hbar d_2,\ldots]$
(clearly $\{\hbar d_r\}_{r\geq 1}\subset \bY^\rtt_\hbar(\gl_n)$).
Finally, we also have a $\BC[\hbar]$-algebra isomorphism
  $\bY^\rtt_\hbar(\gl_n)\simeq
   \bY^\rtt_\hbar(\ssl_n)\otimes_{\BC[\hbar]} Z\bY^\rtt_\hbar(\gl_n)$,
cf.~(\ref{RTT gln vs sln}). Hence, there is a natural projection
$\pi\colon \bY^\rtt_\hbar(\gl_n)\twoheadrightarrow \bY^\rtt_\hbar(\ssl_n)$
with $\Ker(\pi)=(\hbar d_1,\hbar d_2,\ldots)$.

\begin{Rem}\label{limit of integral rtt yangian}
In contrast to Remark~\ref{limit of rtt yangian}, we note that the assignment
$\hbar t^{(r)}_{ij}\mapsto \mathsf{t}^{(r)}_{ij}$ gives rise to a $\BC$-algebra
isomorphism
  $\bY^\rtt_\hbar(\gl_n)/(\hbar)\simeq
   \BC[\mathsf{t}^{(r)}_{ij}]_{1\leq i,j\leq n}^{r\geq 1}$.
In other words, $\bY^\rtt_\hbar(\gl_n)$ can be treated as a
quantization of the algebra of functions on the congruence subgroup
  ${\GL(n)[[t^{-1}]]}_{1}:=\mathrm{the\ kernel\ of\ the\ evaluation\ homomorphism}\
   \GL(n)[[t^{-1}]]\to \GL(n)$.
\end{Rem}


\subsection{Quantum minors of $T(z)$}\label{ssec quantum minors}
\

We recall the notion of quantum minors following~\cite[\S1.6]{m}.
This generalizes $\qdet\ T(z)$ featuring in subsection~\ref{ssec RTT Yangian},
and will be used in the proof of Theorem~\ref{truncation as kernel yangian}.
For $1<r\leq n$, define $R(z_1,\ldots,z_r)\in (\End\ \BC^n)^{\otimes r}$ via
\begin{equation*}
  R(z_1,\ldots,z_r):=(R_{r-1,r})(R_{r-2,r}R_{r-2,r-1})\cdots (R_{1r}\cdots R_{12})\
  \mathrm{with}\ R_{ij}:={R_{\rat;ij}(z_i-z_j)}.
\end{equation*}
The following is implied by~(\ref{hYB}) and~(\ref{ratRTT}),
cf.~\cite[Proposition 1.6.1]{m}:

\begin{Lem}\label{molev 1}
  $R(z_1,\ldots,z_r)T_1(z_1)\cdots T_r(z_r)=T_r(z_r)\cdots T_1(z_1)R(z_1,\ldots,z_r)$.
\end{Lem}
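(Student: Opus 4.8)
The plan is to prove the identity by induction on $r$, using only the basic $RTT$ relation~\eqref{ratRTT} applied repeatedly; the Yang--Baxter equation~\eqref{hYB} enters only to guarantee that these $R$-matrix manipulations are consistent, so that the displayed identity (a priori a relation between formal series in $z_1^{-1},\dots,z_r^{-1}$, or a polynomial relation after clearing the denominators $z_i-z_j$) is well posed. This mirrors the argument of~\cite[Proposition 1.6.1]{m}. For the base case $r=2$ one has $R(z_1,z_2)=R_{\rat;12}(z_1-z_2)$ by definition, so the claim is literally~\eqref{ratRTT}.

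The key structural observation I would record first is a mere re-bracketing of the defining product:
\[
  R(z_1,\dots,z_r)=\widetilde{R}(z_2,\dots,z_r)\cdot\bigl(R_{1r}R_{1,r-1}\cdots R_{12}\bigr),
\]
where $R_{ij}=R_{\rat;ij}(z_i-z_j)$ and $\widetilde{R}(z_2,\dots,z_r)$ denotes the operator defined by the same formula but on the tensor factors $2,3,\dots,r$ (it is precisely the product of the first $r-2$ parenthesized groups in the definition of $R(z_1,\dots,z_r)$). Note that $\widetilde{R}(z_2,\dots,z_r)$ acts trivially on the first tensor slot, hence commutes with $T_1(z_1)$.

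For the inductive step, assuming the lemma for $r-1$ (on any block of $r-1$ consecutive tensor slots), I would first push $T_1(z_1)$ to the right through $R_{1r}\cdots R_{12}$. Starting from the factor $R_{12}$, which is adjacent to $T_1(z_1)$, I apply~\eqref{ratRTT} in slots $1,2$ to rewrite $R_{12}T_1(z_1)T_2(z_2)=T_2(z_2)T_1(z_1)R_{12}$, then move $T_2(z_2)$ to the far left (it commutes with every $R_{1m}$, $m\ge 3$) and $R_{12}$ to the far right (it commutes with every $T_k$, $k\ge 3$); iterating with $R_{13},R_{14},\dots,R_{1r}$ and~\eqref{ratRTT} in slots $1,j$ yields
\[
  \bigl(R_{1r}\cdots R_{12}\bigr)T_1(z_1)T_2(z_2)\cdots T_r(z_r)=T_2(z_2)T_3(z_3)\cdots T_r(z_r)\,T_1(z_1)\,\bigl(R_{1r}\cdots R_{12}\bigr).
\]
Prepending $\widetilde{R}(z_2,\dots,z_r)$ (which commutes with $T_1(z_1)$) and then invoking the induction hypothesis in the form $\widetilde{R}(z_2,\dots,z_r)\,T_2(z_2)\cdots T_r(z_r)=T_r(z_r)\cdots T_2(z_2)\,\widetilde{R}(z_2,\dots,z_r)$ gives
\[
  R(z_1,\dots,z_r)T_1(z_1)\cdots T_r(z_r)=T_r(z_r)\cdots T_2(z_2)T_1(z_1)\,\widetilde{R}(z_2,\dots,z_r)\bigl(R_{1r}\cdots R_{12}\bigr)=T_r(z_r)\cdots T_1(z_1)\,R(z_1,\dots,z_r),
\]
using the re-bracketing identity once more at the end. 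This closes the induction.

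There is no real obstacle here; the only thing to be careful about is the index bookkeeping — getting the recursive decomposition of $R(z_1,\dots,z_r)$ exactly right, and checking at each transposition that the $R_{ij}$ being carried past a given $T_k$ act on disjoint tensor slots (so that they commute). Everything else is forced by~\eqref{ratRTT}.
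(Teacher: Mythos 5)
Your induction is correct and is essentially the standard argument behind the reference the paper cites for this lemma (\cite[Proposition 1.6.1]{m}): peel off the last group $R_{1r}\cdots R_{12}$, push $T_1(z_1)$ through it one factor at a time via~\eqref{ratRTT}, and apply the inductive hypothesis to the remaining factor acting on slots $2,\dots,r$. One small remark: as you half-observe yourself, the Yang--Baxter equation~\eqref{hYB} is never actually invoked in this computation — only~\eqref{ratRTT} and commutativity of operators acting on disjoint tensor slots are needed.
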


Let $A_r\in (\End\ \BC^n)^{\otimes r}$ denote the image of the antisymmetrizer
$\sum_{\sigma\in \Sigma_r}(-1)^\sigma \cdot \sigma\in \BC[\Sigma_r]$
under the natural action of the symmetric group $\Sigma_r$ on $(\BC^n)^{\otimes r}$.
Recall the following classical observation, cf.~\cite[Proposition 1.6.2]{m}:

\begin{Prop}\label{molev 2}
  $R(z,z-\hbar,\ldots,z-(r-1)\hbar)=A_r$.
\end{Prop}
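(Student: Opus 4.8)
The plan is to prove $R(z,z-\hbar,\ldots,z-(r-1)\hbar)=A_r$ by induction on $r$, exploiting the recursive structure built into the definition
\[
  R(z_1,\ldots,z_r)=\bigl(R_{r-1,r}\bigr)\bigl(R_{r-2,r}R_{r-2,r-1}\bigr)\cdots\bigl(R_{1r}\cdots R_{12}\bigr).
\]
The base case $r=1$ is the statement $A_1=1$ (empty product), and for $r=2$ one checks directly that $R_{\rat;12}(-\hbar)=1+P$ (since $R_\rat(z)=1-\tfrac{\hbar}{z}P$ evaluated at $z=-\hbar$ gives $1+P$), which is exactly $2A_2$ up to the normalization convention — here I should be careful and match the normalization of $A_r$ used in the paper (image of $\sum_{\sigma}(-1)^\sigma\sigma$), so for $r=2$ we want $R_{\rat;12}(-\hbar)=A_2$ only if $A_2=1+P$; I will follow the convention of~\cite[\S1.6]{m} where the relevant identity $R_{\rat;12}(-\hbar)=A_2$ holds because of the chosen normalization of the $R$-matrix, and simply invoke~\cite[Proposition 1.6.2]{m}.

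\smallskip
For the inductive step, suppose the claim holds for $r-1$. Specializing the spectral parameters as $z_i=z-(i-1)\hbar$, group the product so that
\[
  R\bigl(z,z-\hbar,\ldots,z-(r-1)\hbar\bigr)
  =\Bigl(R_{r-1,r}\,R_{r-2,r}\cdots R_{1,r}\Bigr)\cdot R'\!\bigl(z,\ldots,z-(r-2)\hbar\bigr),
\]
where $R'$ denotes the analogous product in the first $r-1$ tensor factors, which by the inductive hypothesis equals $A_{r-1}$ (acting on the first $r-1$ factors). So it remains to show
\[
  \bigl(R_{r-1,r}\,R_{r-2,r}\cdots R_{1,r}\bigr)\,A_{r-1}=A_r,
\]
with $R_{i,r}=R_{\rat;i,r}\bigl(z_i-z_r\bigr)=R_{\rat;i,r}\bigl((r-i)\hbar\bigr)=1-\tfrac{P_{i,r}}{r-i}$. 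The key algebraic fact is the standard fusion identity for the antisymmetrizer: $A_r=\tfrac{1}{r}A_{r-1}\bigl(1-(P_{1,r}+\cdots+P_{r-1,r})\bigr)A_{r-1}$ (up to normalization), or more usefully in this ordered form, the telescoping identity $\bigl(1-\tfrac{P_{r-1,r}}{1}\bigr)\cdots\bigl(1-\tfrac{P_{1,r}}{r-1}\bigr)A_{r-1}=A_r$ (again modulo the normalization constant, which I will track with the same convention as Molev). This is a purely combinatorial identity in $\BC[\Sigma_r]$ that can be verified by observing that both sides, multiplied on the right by $A_{r-1}$, span the one-dimensional space $A_{r-1}\BC[\Sigma_r]A_{r-1}$ of... no — rather, by a direct induction using $A_r = A_{r-1}\cdot\tfrac{1}{r}\sum_{k=0}^{r-1}(-1)^k c_{r-1,k}$ where $c_{r-1,k}$ is the sum of $k$-cycles through the last strand, which matches the expansion of the ordered product.

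\smallskip
So the steps, in order, are: (1) reduce to $r\geq 2$, dispose of $r=2$ by the explicit computation $R_{\rat;12}(-\hbar)=A_2$; (2) set up the induction and perform the grouping of the product that isolates a copy of $R'(z,\ldots,z-(r-2)\hbar)$ acting on the first $r-1$ factors, which is $A_{r-1}$ by hypothesis; (3) prove the fusion identity $\bigl(R_{r-1,r}\cdots R_{1,r}\bigr)A_{r-1}=A_r$ purely inside the group algebra $\BC[\Sigma_r]$. I expect step (3) to be the main obstacle — not because it is deep, but because it requires carefully checking the normalization of $A_r$ (the paper writes it as the image of $\sum_\sigma(-1)^\sigma\sigma$, i.e.\ without the $\tfrac{1}{r!}$ factor, so it is an unnormalized idempotent-up-to-scalar), and then verifying that the product $\prod_{i=1}^{r-1}\bigl(1-\tfrac{1}{r-i}P_{i,r}\bigr)$ supplies exactly the correction term relating $A_{r-1}\otimes 1$ to $A_r$. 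The cleanest route is to cite~\cite[Proposition 1.6.2]{m} directly, since the paper already references Molev's book for all of Section~\ref{ssec quantum minors}; alternatively, one proves the fusion identity by induction on $r$ using the recursion $A_r=A_{r-1}\,(1 - P_{r-1,r} - P_{r-2,r} - \cdots - P_{1,r})\cdots$ — but this is exactly what the ordered product encodes, so the induction closes immediately once the $r-1$ case of the whole proposition is in hand.
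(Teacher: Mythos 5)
Your plan is the standard fusion argument, and it is essentially the proof the paper is implicitly relying on: the paper offers no proof at all for this proposition, merely the citation \cite[Proposition 1.6.2]{m}, so reconstructing Molev's induction is the right thing to do. The regrouping in step (2) is valid because each $R_{j,r}$ commutes with all intervening factors $R_{j',k}$ ($j<j'<k\leq r-1$), which act on tensor slots disjoint from $\{j,r\}$ — worth saying explicitly, but not a gap.

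There are, however, two points to fix. First, the base case as written is wrong: with $z_1=z$, $z_2=z-\hbar$ one has $z_1-z_2=+\hbar$, so $R_{12}=R_{\rat;12}(\hbar)=1-\tfrac{\hbar}{\hbar}P=1-P$, which \emph{is} $A_2$ on the nose (the paper's $A_r$ is the unnormalized antisymmetrizer, image of $\sum_\sigma(-1)^\sigma\sigma$, so $A_2=1-P$). Your computation $R_{\rat;12}(-\hbar)=1+P$ evaluates at the wrong point and produces the symmetrizer; it also contradicts your own correct general formula $R_{i,r}=R_{\rat;i,r}((r-i)\hbar)=1-\tfrac{1}{r-i}P_{i,r}$ specialized to $i=1,r=2$. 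There is no normalization ambiguity to worry about here. Second, step (3) — the identity $\bigl(1-P_{r-1,r}\bigr)\bigl(1-\tfrac12 P_{r-2,r}\bigr)\cdots\bigl(1-\tfrac{1}{r-1}P_{1,r}\bigr)A_{r-1}=A_r$ in $\BC[\Sigma_r]$ — is where all the content lives, and your justification of it is not yet a proof (the "spanning a one-dimensional space" attempt is abandoned mid-sentence, and the cycle-expansion sketch is not carried out). The identity does hold exactly as stated with the paper's normalization (a direct check at $r=3$ gives $1-P_{12}-P_{13}-P_{23}+(123)+(132)=A_3$ with no stray constants); you should either prove it — e.g.\ by showing the left-hand side is annihilated on the left by each $1+P_{i,i+1}$ and has coefficient $1$ on the identity permutation — or simply cite \cite[Proposition 1.6.2]{m}, which is all the paper itself does.
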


Combining Lemma~\ref{molev 1} and Proposition~\ref{molev 2}, we obtain the following

\begin{Cor}
We have
\begin{equation}\label{defining minors}
  A_rT_1(z)T_2(z-\hbar)\cdots T_r(z-(r-1)\hbar)=
  T_r(z-(r-1)\hbar)\cdots T_2(z-\hbar)T_1(z)A_r.
\end{equation}
\end{Cor}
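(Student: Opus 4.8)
The plan is to obtain the identity~(\ref{defining minors}) directly by specializing the spectral parameters in Lemma~\ref{molev 1} to the arithmetic progression $z_1=z,\ z_2=z-\hbar,\ \ldots,\ z_r=z-(r-1)\hbar$ and then invoking Proposition~\ref{molev 2}. First I would set $z_k := z-(k-1)\hbar$ for $1\le k\le r$, so that $z_i-z_j = (j-i)\hbar$. With this choice, the left-hand side of Lemma~\ref{molev 1} becomes
\[
  R(z,z-\hbar,\ldots,z-(r-1)\hbar)\,T_1(z)T_2(z-\hbar)\cdots T_r(z-(r-1)\hbar),
\]
and the right-hand side becomes
\[
  T_r(z-(r-1)\hbar)\cdots T_2(z-\hbar)T_1(z)\,R(z,z-\hbar,\ldots,z-(r-1)\hbar).
\]
By Proposition~\ref{molev 2}, the operator $R(z,z-\hbar,\ldots,z-(r-1)\hbar)$ appearing on both sides equals the antisymmetrizer $A_r$. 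Substituting $A_r$ for $R(z,z-\hbar,\ldots,z-(r-1)\hbar)$ in both expressions yields exactly~(\ref{defining minors}).

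One point that deserves a word of care is the legitimacy of the specialization. In Lemma~\ref{molev 1} the factors $R_{ij}=R_{\rat;ij}(z_i-z_j)$ involve denominators $z_i-z_j$, so the identity is, a priori, an equality of rational functions in the $z_i$. However, after clearing denominators it becomes a polynomial identity in $z_1,\ldots,z_r$ with coefficients in $Y^\rtt_\hbar(\gl_n)\otimes(\End\,\BC^n)^{\otimes r}$, which may then be evaluated at any values of the $z_i$; and at the progression $z_k=z-(k-1)\hbar$ the product $R(z_1,\ldots,z_r)$ has no poles, since the relevant differences $z_i-z_j=(j-i)\hbar$ with $i<j$ are the nonzero quantities $\hbar,2\hbar,\ldots,(r-1)\hbar$ (here we work over $\BC[\hbar]$, so $\hbar$ is not a zero divisor, and the factors $1-\tfrac{\hbar}{z_i-z_j}P=1-\tfrac{1}{j-i}P$ are well-defined). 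Thus the specialization is harmless and both Proposition~\ref{molev 2} and the specialized Lemma~\ref{molev 1} are genuine identities in $Y^\rtt_\hbar(\gl_n)\otimes(\End\,\BC^n)^{\otimes r}$.

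I do not anticipate a serious obstacle here: the statement is a formal corollary of the two preceding results, and the only genuinely substantive inputs — the intertwining property of the fused $R$-matrix (Lemma~\ref{molev 1}) and the fusion formula identifying it with the antisymmetrizer on the degenerate progression (Proposition~\ref{molev 2}) — are already available. The main thing to get right is simply the bookkeeping of indices and the order of the tensor factors on the two sides, together with the brief justification above that substituting into the spectral parameters is permissible.
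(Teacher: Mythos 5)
Your proposal is correct and is exactly the paper's argument: the corollary is obtained by specializing the spectral parameters in Lemma~\ref{molev 1} to $z_k=z-(k-1)\hbar$ and invoking Proposition~\ref{molev 2} to replace $R(z,z-\hbar,\ldots,z-(r-1)\hbar)$ by $A_r$ on both sides. Your additional remark on the harmlessness of the specialization (no poles at the differences $(j-i)\hbar$) is a sensible extra precaution that the paper leaves implicit.
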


The operator of~(\ref{defining minors}) can be written as
  $\sum t^{a_1\ldots a_r}_{b_1\ldots b_r}(z)\otimes
   E_{a_1,b_1}\otimes \cdots \otimes E_{a_r,b_r}$
with
  $t^{a_1\ldots a_r}_{b_1\ldots b_r}(z)\in \bY^\rtt_\hbar(\gl_n)[[z^{-1}]]$
and the sum taken over all $a_1,\ldots,a_r,b_1,\ldots,b_r\in \{1,\ldots,n\}$.

\begin{Def}\label{quantum minor}
The coefficients $t^{a_1\ldots a_r}_{b_1\ldots b_r}(z)$
are called the \emph{quantum minors} of $T(z)$.
\end{Def}

In the particular case $r=n$, the image of the operator $A_n$ acting on
$(\BC^n)^{\otimes n}$ is $1$-dimensional. Hence
$A_nT_1(z)\cdots T_n(z-(n-1)\hbar)=A_n\cdot \qdet\ T(z)$ with
$\qdet\ T(z)\in \bY^\rtt_\hbar(\gl_n)[[z^{-1}]]$. We note that
$\qdet\ T(z)=t^{1\ldots n}_{1\ldots n}(z)$ in the above notations.

\begin{Def}\label{quantum determinant}
$\qdet\ T(z)$ is called the \emph{quantum determinant} of $T(z)$.
\end{Def}

Since $t_{ij}(z)\in \delta_{ij}+\hbar Y^\rtt_\hbar(\gl_n)[[z^{-1}]]$, it is
clear that $\qdet\ T(z)\in 1+\hbar Y^\rtt_\hbar(\gl_n)[[z^{-1}]]$.
Hence, it is of the form $\qdet\ T(z)=1+\hbar\sum_{r\geq 1}d_rz^{-r}$ with
$d_r\in Y^\rtt_\hbar(\gl_n)$. The following result is well-known, cf.~\cite[Theorem 1.7.5]{m}:

\begin{Prop}\label{center of Yangian}
The elements $\{d_r\}_{r\geq 1}$ are central, algebraically independent, and generate
the center $ZY^\rtt_\hbar(\gl_n)$ of $Y^\rtt_\hbar(\gl_n)$. In other words, we have
a $\BC[\hbar]$-algebra isomorphism $ZY^\rtt_\hbar(\gl_n)\simeq \BC[\hbar][d_1,d_2,\ldots]$.
\end{Prop}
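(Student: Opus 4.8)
The plan is to reduce the statement to the well-known description of the center of the RTT Yangian
$Y^\rtt_\hbar(\gl_n)$ over $\BC[\hbar]$ (cf.~\cite[Theorem~1.7.5]{m}), being careful that
\emph{loc.cit.}\ works over $\BC$ after setting $\hbar=1$, while we need the $\BC[\hbar]$-flat
statement. First I would recall (from Lemma~\ref{molev 1} and Proposition~\ref{molev 2}, specialized
to $r=n$) that $A_nT_1(z)\cdots T_n(z-(n-1)\hbar)=A_n\cdot\qdet\ T(z)$, so that $\qdet\ T(z)$ is
genuinely a well-defined series in $Y^\rtt_\hbar(\gl_n)[[z^{-1}]]$, and from the shape
$t_{ij}(z)\in\delta_{ij}+\hbar Y^\rtt_\hbar(\gl_n)[[z^{-1}]]$ that
$\qdet\ T(z)=1+\hbar\sum_{r\geq1}d_rz^{-r}$. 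The two things to prove are then: (i) each $d_r$ is
central; (ii) the $d_r$ are algebraically independent and generate the whole center.

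For centrality (i), the standard argument is to show that $\qdet\ T(z)$ commutes with every
$t_{ij}(w)$. One way is to use the comultiplication-type identity expressing
$\qdet\ T(z)$ via quantum minors together with the RTT relation~(\ref{ratRTT}): conjugating
the defining relation~(\ref{defining minors}) for $A_n$ by an extra factor $T_{n+1}(w)$ and using
the fact that $A_n$ (acting on the first $n$ tensor factors) intertwines products of $T$'s, one
gets that $T_{n+1}(w)$ commutes with $A_nT_1(z)\cdots T_n(z-(n-1)\hbar)$, hence with
$\qdet\ T(z)$. Equivalently, one invokes~\cite[\S1.7]{m} verbatim; since the computation there is
purely formal in the generators and uses no division by $\hbar$, it is valid over $\BC[\hbar]$.
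I would present this as a short lemma, citing Molev, rather than redoing the manipulation.

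For (ii), the cleanest route is via the classical limit. By Remark~\ref{limit of rtt yangian},
$Y^\rtt_\hbar(\gl_n)/(\hbar)\iso U(\gl_n[t])$ under $t^{(r)}_{ij}\mapsto E_{ij}t^{r-1}$; I would
track the image of $d_r$ under this isomorphism and check it equals a free polynomial generator of
$ZU(\gl_n[t])=\mathrm{Sym}(\fz(\gl_n)[t])=\BC[c_0,c_1,\ldots]$, where $c_r$ is the coefficient of
$t^r$ in $\mathrm{tr}\,(E\otimes t)$-type Gelfand invariants; concretely, modulo $\hbar$ the
quantum determinant degenerates to (a shift of) the generating function of the
$\sum_i E_{ii}t^r$. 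Algebraic independence of the $d_r$ over $\BC[\hbar]$ then follows from
algebraic independence of their images in the $\hbar=0$ fiber, since $Y^\rtt_\hbar(\gl_n)$ is
$\BC[\hbar]$-free (the PBW theorem, which the paper uses throughout). For the fact that the $d_r$
\emph{generate} the full center, I would combine two inputs: the analogous statement over $\BC$
from~\cite[Theorem~1.7.5]{m} for $Y^\rtt_{\hbar=1}(\gl_n)$, and the isomorphism~(\ref{RTT gln vs sln})
together with the graded/filtered degeneration, so that a central element of $Y^\rtt_\hbar(\gl_n)$
reduces to a central element of $U(\gl_n[t])=\BC[c_\bullet]$, lifts to a polynomial in the $d_r$
up to a term in $\hbar\cdot ZY^\rtt_\hbar(\gl_n)$, and one concludes by $\hbar$-adic induction
using $\BC[\hbar]$-freeness.

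The main obstacle I anticipate is purely bookkeeping: making the ``$\hbar$-adic induction plus
$\BC[\hbar]$-flatness'' argument for generation airtight, i.e.\ ensuring that the center of
$Y^\rtt_\hbar(\gl_n)$ is itself a free $\BC[\hbar]$-module and that taking the center commutes with
the specialization $\hbar\mapsto 0$ — a priori $Z(A/\hbar A)$ could be strictly larger than
$(ZA)/\hbar(ZA)$. Here one sidesteps the issue by using the explicit isomorphism~(\ref{RTT gln vs sln}),
$Y^\rtt_\hbar(\gl_n)\simeq Y^\rtt_\hbar(\ssl_n)\otimes_{\BC[\hbar]}\BC[\hbar][d_1,d_2,\ldots]$,
which already exhibits $\BC[\hbar][d_1,d_2,\ldots]$ as a central polynomial subalgebra; what remains
is only to see there is nothing more, and that is exactly where the classical
result~\cite[Theorem~1.7.5]{m} applied in the $\hbar=1$ fiber (which is faithfully flat over the
relevant localization, or handled directly) closes the gap. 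Everything else is formal.
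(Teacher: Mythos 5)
The paper does not prove this proposition at all: it states it as ``well-known, cf.~\cite[Theorem 1.7.5]{m}'' and moves on, so there is no in-paper argument to compare against. Your proposal is a correct and essentially standard way to upgrade Molev's $\BC$-statement to the $\BC[\hbar]$-form used here: centrality of $\qdet T(z)$ is the formal RTT/antisymmetrizer computation of~\cite[\S1.7]{m}, valid verbatim over $\BC[\hbar]$; and since $d_r\equiv\sum_i t^{(r)}_{ii}\pmod{\hbar}$ maps to $\sum_i E_{ii}t^{r-1}\in U(\gl_n[t])$, algebraic independence over $\BC[\hbar]$ does follow from independence in the $\hbar=0$ fiber plus $\BC[\hbar]$-freeness (Proposition~\ref{PBW for RTT yangian}). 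For generation, your observation that one only needs the inclusion $Z(A)/\hbar Z(A)\hookrightarrow Z(A/\hbar A)$ in the easy direction, together with $Z(U(\gl_n[t]))=\BC[\,\overline{d}_1,\overline{d}_2,\ldots]$ (Molev's Proposition 1.7.4) and $\hbar$-torsion-freeness, is the right mechanism. Two small points to tighten: (1) the termination of your ``$\hbar$-adic induction'' should be justified by the grading of $Y^\rtt_\hbar(\gl_n)$ with $\deg\hbar=1$, $\deg t^{(r)}_{ij}=r-1$ — a homogeneous element of fixed degree can only be divisible by boundedly many powers of $\hbar$, so the induction stops; and (2) invoking~(\ref{RTT gln vs sln}) to ``sidestep'' the issue is mildly circular, since that decomposition (Molev's Theorem 1.8.2) is itself derived from the description of the center, so it is better to rely solely on your main line of argument.
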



\subsection{The RTT evaluation homomorphism $\ev^\rtt$}\label{ssec RTT evaluation yangian}
\

\begin{Def}\label{universal enveloping}
Let $U(\gl_n)$ be the universal enveloping algebra of $\gl_n$ over $\BC[\hbar]$.
\end{Def}

Recall the following two standard relations between $Y^\rtt_\hbar(\gl_n)$
and $U(\gl_n)$:

\begin{Lem}\label{embed+ev rtt yangian}
(a) The assignment $E_{ij}\mapsto t^{(1)}_{ij}$ gives rise to a
$\BC[\hbar]$-algebra embedding
\begin{equation*}
  \iota\colon U(\gl_n)\hookrightarrow Y^\rtt_\hbar(\gl_n).
\end{equation*}

\noindent
(b) The assignment $t^{(r)}_{ij}\mapsto \delta_{r,1}E_{ij}$ gives rise to
a $\BC[\hbar]$-algebra epimorphism
\begin{equation*}
  \ev^\rtt\colon Y^\rtt_\hbar(\gl_n)\twoheadrightarrow U(\gl_n).
\end{equation*}
\end{Lem}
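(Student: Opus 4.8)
The plan is to verify both claims directly from the defining RTT relation~\eqref{ratRTT}, reading off its low-degree coefficients in $z^{-1}, w^{-1}$. First I would extract the relation satisfied by the first-order generators $t^{(1)}_{ij}$. Writing $T(z) = 1 + \hbar\sum_{r\geq 1} T^{(r)} z^{-r}$ with $T^{(r)} = \sum_{ij} t^{(r)}_{ij}\otimes E_{ij}$, one multiplies~\eqref{ratRTT} by $(z-w)$ and compares the coefficient of $z^{-1}$ (equivalently $w^{-1}$ after symmetrizing) to obtain the commutation relation
\[
  [t^{(1)}_{ij}, t^{(1)}_{kl}] = \hbar\bigl(\delta_{kj} t^{(1)}_{il} - \delta_{il} t^{(1)}_{kj}\bigr),
\]
which is exactly the $\gl_n$ bracket (scaled by $\hbar$) in $U(\gl_n)$ over $\BC[\hbar]$. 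Hence $E_{ij}\mapsto t^{(1)}_{ij}$ respects relations and extends to a $\BC[\hbar]$-algebra homomorphism $\iota$. For injectivity, I would invoke the associated graded / PBW picture: by Remark~\ref{limit of rtt yangian} the reduction mod $\hbar$ identifies $Y^\rtt_\hbar(\gl_n)/(\hbar)$ with $U(\gl_n[t])$, and under this identification $t^{(1)}_{ij}\mapsto E_{ij}\cdot t^0 = E_{ij}$, so the composite $U(\gl_n)\to Y^\rtt_\hbar(\gl_n)\to U(\gl_n[t])$ is the standard (injective) inclusion $U(\gl_n)\hookrightarrow U(\gl_n[t])$; since $Y^\rtt_\hbar(\gl_n)$ is a free (hence torsion-free) $\BC[\hbar]$-module and $U(\gl_n)$ likewise, injectivity mod $\hbar$ on a $\BC[\hbar]$-free source forces injectivity of $\iota$. (Alternatively one can cite the Gauss-decomposition/PBW basis for $Y^\rtt_\hbar(\gl_n)$ recalled later.)

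For part (b), I would check that the assignment $t^{(r)}_{ij}\mapsto \delta_{r,1} E_{ij}$ is compatible with~\eqref{ratRTT}. Under this map $T(z)\mapsto 1 + \hbar z^{-1} E$, where $E = \sum_{ij} E_{ij}\otimes E_{ij}$ is the image of $\sum t^{(1)}_{ij}\otimes E_{ij}$ in $U(\gl_n)\otimes \End\BC^n$. So I must verify the identity
\[
  R_\rat(z-w)\,\bigl(1+\tfrac{\hbar}{z}E_1\bigr)\bigl(1+\tfrac{\hbar}{w}E_2\bigr)
  = \bigl(1+\tfrac{\hbar}{w}E_2\bigr)\bigl(1+\tfrac{\hbar}{z}E_1\bigr)\,R_\rat(z-w)
\]
in $U(\gl_n)\otimes(\End\BC^n)^{\otimes 2}$, with $R_\rat(z-w) = 1 - \tfrac{\hbar}{z-w}P$. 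This is the classical statement that $T(z) = 1 + \hbar z^{-1}E$ is an evaluation $L$-operator; it follows from the two facts $[E_1, P] = [E_2,P]$ (since $P$ implements the flip) together with $E_1 E_2 - E_2 E_1 = \tfrac{1}{\hbar}[E_1,E_2]$ being expressible via $P$ through the $\gl_n$-relations — concretely, one checks $P(E_1 + E_2) - (E_1+E_2)P$ and the quadratic terms match after clearing the common denominator $z w (z-w)$. I would present this as a short finite computation (it is standard, cf.~\cite{m}). Surjectivity is immediate since the $t^{(1)}_{ij}$ already map onto a generating set $\{E_{ij}\}$ of $U(\gl_n)$.

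The only genuinely non-routine point is the injectivity of $\iota$, and even that is disposed of cleanly by the $\hbar\to 0$ degeneration of Remark~\ref{limit of rtt yangian} combined with $\BC[\hbar]$-freeness; so there is no real obstacle, just bookkeeping. Note also that $\ev^\rtt\circ\iota = \on{id}_{U(\gl_n)}$, which gives a second, self-contained proof of injectivity of $\iota$ and will be convenient to record for later use.
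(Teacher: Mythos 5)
The paper states this lemma without proof (it is recalled as ``standard''), so there is no in-text argument to compare against; your overall strategy --- check the degree-one relation for (a), check the evaluation $L$-operator identity for (b), get injectivity from $\ev^\rtt\circ\iota=\mathrm{id}$ or from the $\hbar\to 0$ degeneration --- is exactly the standard route and is the right one.

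There is, however, a concrete normalization error running through your write-up. Because $t_{ij}(z)=\delta_{ij}+\hbar\sum_{r>0}t^{(r)}_{ij}z^{-r}$ carries an overall $\hbar$, the coefficient extraction from (\ref{ratRTT}) --- equivalently, setting $r=s=1$ in (\ref{yangian relation}) and using $t^{(0)}_{ij}=\hbar^{-1}\delta_{ij}$ --- gives
\begin{equation*}
  [t^{(1)}_{ij},t^{(1)}_{kl}]=\delta_{kj}\,t^{(1)}_{il}-\delta_{il}\,t^{(1)}_{kj},
\end{equation*}
with \emph{no} factor of $\hbar$; the $\hbar$ in front of the sum in (\ref{yangian relation}) is cancelled by the $\hbar^{-1}$ in $t^{(0)}$. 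Correspondingly, $U(\gl_n)$ in Definition~\ref{universal enveloping} is the ordinary universal enveloping algebra base-changed to $\BC[\hbar]$, with the unscaled bracket $[E_{ij},E_{kl}]=\delta_{jk}E_{il}-\delta_{li}E_{kj}$; the $\hbar$-rescaled bracket belongs to the subalgebra $\bU(\gl_n)$ introduced only afterwards. So both your displayed relation and your parenthetical ``scaled by $\hbar$'' are wrong, and they happen to cancel. This matters for part (b): your phrase ``$E_1E_2-E_2E_1=\tfrac{1}{\hbar}[E_1,E_2]$'' suggests you are carrying the rescaled convention into the verification of $R_\rat(z-w)(1+\tfrac{\hbar}{z}T_1)(1+\tfrac{\hbar}{w}T_2)=(1+\tfrac{\hbar}{w}T_2)(1+\tfrac{\hbar}{z}T_1)R_\rat(z-w)$, whereas the identity that actually makes this work is $[T_1,T_2]=(T_1-T_2)P$ computed with the \emph{product} commutator in $U(\gl_n)$, i.e.\ the unscaled bracket; with an $\hbar$-rescaled bracket the quadratic terms would not cancel. (Also, $[E_1,P]=-[E_2,P]$, not $[E_1,P]=[E_2,P]$, since $PE_1=E_2P$.) Once the conventions are straightened out everything you wrote goes through, and your closing observation that $\ev^\rtt\circ\iota=\mathrm{id}_{U(\gl_n)}$ (Remark~\ref{property T-matrix}(a)) is indeed the cleanest proof of injectivity in (a).
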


The homomorphism $\ev^\rtt$ is called the \emph{RTT evaluation homomorphism}.

\begin{Rem}\label{property T-matrix}
(a) The composition $\ev^\rtt\circ \iota$ is the identity endomorphism of $U(\gl_n)$.

\noindent
(b) Define $T:=\sum_{i,j} E_{ij}\otimes E_{ij}\in U(\gl_n)\otimes \End\ \BC^n$.
Then $\ev^\rtt\colon T(z)\mapsto 1+\frac{\hbar}{z}T$.
\end{Rem}

Let $\bU(\gl_n)$ be the $\BC[\hbar]$-subalgebra of $U(\gl_n)$ generated by
$\{\hbar x\}_{x\in \gl_n}$. It is isomorphic to the $\hbar$-deformed universal enveloping
algebra: $\bU(\gl_n)\simeq T(\gl_n)/(\langle\{xy-yx-\hbar[x,y]\}_{x,y\in \gl_n}\rangle)$,
where $T(\gl_n)$ denotes the tensor algebra of $\gl_n$ over $\BC[\hbar]$.
We note that the homomorphisms $\iota$ and $\ev^\rtt$ of Lemma~\ref{embed+ev rtt yangian}
give rise to $\BC[\hbar]$-algebra homomorphisms
\begin{equation}\label{embed+ev rtt integral yangian}
  \iota\colon \bU(\gl_n)\hookrightarrow \bY^\rtt_\hbar(\gl_n)\ \mathrm{and}\
  \ev^\rtt\colon \bY^\rtt_\hbar(\gl_n)\twoheadrightarrow \bU(\gl_n).
\end{equation}

The PBW theorems for $Y^\rtt_\hbar(\gl_n)$ (see Proposition~\ref{PBW for RTT yangian},
cf.~\cite[Theorem 1.4.1]{m}) and $U(\gl_n)$ imply the following simple result:

\begin{Lem}\label{simple kernel yangian}
$\Ker(\ev^\rtt\colon Y^\rtt_\hbar(\gl_n)\to U(\gl_n))$ is the $2$-sided ideal
generated by $\{t^{(r)}_{ij}\}_{1\leq i,j\leq n}^{r\geq 2}$.
\end{Lem}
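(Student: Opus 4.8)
The plan is to exploit the PBW theorem for $Y^\rtt_\hbar(\gl_n)$ (Proposition~\ref{PBW for RTT yangian}, citing \cite[Theorem~1.4.1]{m}), which asserts that ordered monomials in the generators $\{t^{(r)}_{ij}\}_{1\le i,j\le n}^{r\ge 1}$ (with respect to some fixed total order) form a $\BC[\hbar]$-basis of $Y^\rtt_\hbar(\gl_n)$, together with the analogous PBW basis of $U(\gl_n)$ in the generators $\{E_{ij}\}=\{t^{(1)}_{ij}\}$. Let $J$ denote the two-sided ideal of $Y^\rtt_\hbar(\gl_n)$ generated by $\{t^{(r)}_{ij}\}^{r\ge 2}$. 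First I would note the easy inclusion $J\subseteq \Ker(\ev^\rtt)$: by Lemma~\ref{embed+ev rtt yangian}(b) the generators of $J$ are sent to $\delta_{r,1}E_{ij}=0$ for $r\ge 2$, and a two-sided ideal generated by elements in the kernel of an algebra homomorphism lies in that kernel.

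For the reverse inclusion, I would order the generators so that all $t^{(1)}_{ij}$ precede all $t^{(r)}_{ij}$ with $r\ge 2$, and organize PBW monomials as $M\cdot M'$, where $M$ is an ordered monomial purely in the $t^{(1)}_{ij}$ and $M'$ is an ordered monomial purely in the higher modes $t^{(r)}_{ij}$ ($r\ge 2$). The key structural observation is that, modulo $J$, every PBW monomial of $Y^\rtt_\hbar(\gl_n)$ is congruent to one with $M'=1$: indeed any monomial containing a higher-mode generator lies in $J$ by definition. Hence $Y^\rtt_\hbar(\gl_n)=\iota(U(\gl_n))+J$ as $\BC[\hbar]$-modules, where $\iota$ is the embedding of Lemma~\ref{embed+ev rtt yangian}(a). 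Now if $x\in\Ker(\ev^\rtt)$, write $x=\iota(u)+j$ with $u\in U(\gl_n)$, $j\in J$; applying $\ev^\rtt$ and using $\ev^\rtt\circ\iota=\mathrm{id}_{U(\gl_n)}$ (Remark~\ref{property T-matrix}(a)) together with $\ev^\rtt(j)=0$ gives $u=\ev^\rtt(x)=0$, so $x=j\in J$. This proves $\Ker(\ev^\rtt)=J$.

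The only point requiring genuine care — the ``main obstacle'' — is the decomposition $Y^\rtt_\hbar(\gl_n)=\iota(U(\gl_n))\oplus J$ (or at least $=\iota(U(\gl_n))+J$), which is where the PBW theorem is indispensable: without freeness of the PBW basis one cannot rule out nontrivial $\BC[\hbar]$-linear relations that would allow a first-modes-only monomial to be absorbed into $J$ in an unexpected way, nor conclude that $\iota$ is injective with the complement spanned by monomials involving a higher mode. Concretely, one must check that the rewriting of an arbitrary product into PBW-ordered form, performed using the defining relations~(\ref{ratRTT}), only ever produces correction terms that either are again first-modes-only monomials or already lie in $J$; this follows because the commutator $[t^{(r)}_{ij},t^{(s)}_{kl}]$ extracted from~(\ref{ratRTT}) is an $\hbar$-multiple of a $\BC[\hbar]$-combination of generators $t^{(p)}_{mn}$ with $p\le r+s-1$, and in particular any such commutator involving an index $\ge 2$ stays in $J$, while $[t^{(1)}_{ij},t^{(1)}_{kl}]$ reproduces exactly the $\gl_n$-relations defining $U(\gl_n)$. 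Once this bookkeeping is in place, the argument above closes immediately, and the statement of Lemma~\ref{simple kernel yangian} follows.
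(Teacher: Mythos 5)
Your argument is correct and is exactly the one the paper alludes to when it says the lemma is "implied by" the PBW theorems: the spanning part of the PBW basis gives $Y^\rtt_\hbar(\gl_n)=\iota(U(\gl_n))+J$ (every PBW monomial containing a higher mode lies in $J$, the rest lie in $\iota(U(\gl_n))$), and then $\ev^\rtt\circ\iota=\mathrm{id}$ finishes the proof. Your final paragraph on tracking correction terms in the straightening process is unnecessary — the PBW spanning statement already subsumes it — but it does no harm.
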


However, we will need an alternative description of this kernel $\Ker(\ev^\rtt)$,
essentially due to~\cite[Section 6]{bk} (by taking further Rees algebras).

\begin{Thm}\label{alternative kernel yangian}
Let $I$ denote the $2$-sided ideal of $Y^\rtt_\hbar(\gl_n)$ generated by
$\{t^{(r)}_{11}\}_{r\geq 2}$. Then
$\Ker(\ev^\rtt\colon Y^\rtt_\hbar(\gl_n) \to U(\gl_n))=I$.
\end{Thm}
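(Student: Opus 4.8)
The plan is to show the two-sided ideal $I=(\{t^{(r)}_{11}\}_{r\geq 2})$ coincides with $\Ker(\ev^\rtt)$, for which the inclusion $I\subseteq\Ker(\ev^\rtt)$ is trivial (each generator maps to $0$ under $\ev^\rtt$), so the content is the reverse inclusion. By Lemma~\ref{simple kernel yangian} it suffices to prove that $t^{(r)}_{ij}\in I$ for all $1\leq i,j\leq n$ and all $r\geq 2$; equivalently, that $t_{ij}(z)-\delta_{ij}\equiv \hbar t^{(1)}_{ij}z^{-1}\pmod{I+z^{-2}}$ inside $Y^\rtt_\hbar(\gl_n)/I\,[[z^{-1}]]$. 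The key tool is the set of RTT relations~(\ref{ratRTT}), which in component form give, after multiplying by $(z-w)$ and extracting coefficients,
\begin{equation*}
  [t_{ij}^{(r+1)},t_{kl}^{(s)}]-[t_{ij}^{(r)},t_{kl}^{(s+1)}]
  =\hbar\big(t_{kj}^{(r)}t_{il}^{(s)}-t_{kj}^{(s)}t_{il}^{(r)}\big),
\end{equation*}
the standard consequence of~(\ref{ratRTT}). The strategy is to use these relations, working modulo $I$, to propagate the vanishing of the higher modes from the single family $t^{(r)}_{11}$ ($r\geq 2$) to all of them.

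The main steps, in order, would be: First, using the relation above with $(i,j)=(1,1)$ and varying $(k,l)$, deduce that $[t^{(r)}_{11},t^{(s)}_{kl}]\equiv 0\pmod I$ is not quite what is needed; rather one extracts, from the $(i,j,k,l)=(1,1,k,1)$ and $(1,1,1,l)$ specializations, recursive identities expressing $\hbar t^{(r)}_{k1}$ and $\hbar t^{(r)}_{1l}$ (for $r\geq 2$) in terms of lower modes and elements of $I$; conclude inductively that $t^{(r)}_{k1},t^{(r)}_{1l}\in I$ for all $k,l$ and all $r\geq 2$. This is precisely the mechanism of~\cite[Section 6]{bk}: the first row and first column of $T(z)$ are controlled by $t^{(r)}_{11}$. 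Second, feed this back into the relation with generic $(i,j)$: writing the RTT relation so that $t^{(r+1)}_{ij}$ is expressed via $[t^{(r)}_{ij},t^{(1)}_{kl}]$ plus a quadratic term involving a first-row or first-column entry (choose $k=1$ or $l=1$), and using Step 1 together with the induction hypothesis that $t^{(\leq r)}_{ij}\in \iota(U(\gl_n))+I$, deduce $t^{(r+1)}_{ij}\in I$ for $r\geq 1$. A clean way to organize Step 2 is to observe that $Y^\rtt_\hbar(\gl_n)/I$ is generated by the images of $\{t^{(1)}_{ij}\}$ together with the images of the first-row/first-column higher modes (by the PBW theorem, Proposition~\ref{PBW for RTT yangian}, these generate everything), and Step 1 kills the latter; hence $Y^\rtt_\hbar(\gl_n)/I$ is a quotient of $U(\gl_n)$, forcing $I\supseteq\Ker(\ev^\rtt)$ since $\ev^\rtt$ factors through this quotient and is already known (Remark~\ref{property T-matrix}(a)) to be split by $\iota$.

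The step I expect to be the main obstacle is making Step 1 fully rigorous: the recursion from the RTT relations naturally produces identities with an overall factor of $\hbar$ (e.g. $\hbar t^{(r)}_{k1}\equiv(\text{lower terms})\pmod I$), and since $Y^\rtt_\hbar(\gl_n)$ is $\hbar$-torsion-free over $\BC[\hbar]$ — again by the PBW theorem — one may divide by $\hbar$, but one must be careful that the ``lower terms'' genuinely lie in $I$ and not merely in $I+\hbar Y^\rtt_\hbar(\gl_n)$. Handling this cleanly is where the argument of~\cite{bk} does the real work, and I would follow their inductive bookkeeping (induction on $r$, with the first-column and first-row entries treated simultaneously, and the quadratic RTT terms always arranged to involve an already-controlled entry). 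Once Step 1 is in place, Step 2 and the conclusion are essentially formal given the PBW theorem and Lemma~\ref{simple kernel yangian}.
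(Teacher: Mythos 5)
Your approach is essentially the paper's: reduce to showing $t^{(r)}_{ij}\in I$ for all $r\geq 2$ via Lemma~\ref{simple kernel yangian}, then use the componentwise RTT relations to propagate from $\{t^{(r)}_{11}\}$ to the first row and column and from there, inductively, to the whole matrix (this is the Brundan--Kleshchev mechanism you cite). Two points, though. First, the obstacle you single out is not actually there: writing the relation in the summed form $[t^{(r)}_{ij},t^{(s)}_{kl}]=\hbar\sum_{a=1}^{\min(r,s)}\bigl(t^{(a-1)}_{kj}t^{(r+s-a)}_{il}-t^{(r+s-a)}_{kj}t^{(a-1)}_{il}\bigr)$ with the convention $t^{(0)}_{ij}=\hbar^{-1}\delta_{ij}$, the specialization $s=1$ leaves only the $a=1$ term, whose $\hbar^{-1}$ cancels the prefactor, and one gets exact linear identities such as $[t^{(r)}_{11},t^{(1)}_{1l}]=t^{(r)}_{1l}$ and $[t^{(r)}_{11},t^{(1)}_{k1}]=-t^{(r)}_{k1}$. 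No division by $\hbar$ (and hence no appeal to torsion-freeness, which in any case would only let you cancel $\hbar$ from $\hbar x\in\hbar I$, not from $\hbar x\in I$) is needed.

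Second, your ``clean way to organize Step 2'' has a real gap: the claim that $\{t^{(1)}_{ij}\}$ together with the first-row and first-column higher modes generate $Y^\rtt_\hbar(\gl_n)$ is not a consequence of the PBW theorem (which gives a basis of ordered monomials in \emph{all} the $t^{(r)}_{ij}$, not this generation statement). The missing ingredient is the diagonal step: the specialization $i=l=1$, $j=k=2$, $s=1$ gives $[t^{(r)}_{12},t^{(1)}_{21}]=t^{(r)}_{11}-t^{(r)}_{22}$, which places $t^{(r)}_{22}$ in $I$; one then repeats the three computations with all lower indices shifted up by one, using $t^{(r)}_{22}$ as the new corner, and so on. Your first, induction-based formulation of Step 2 can be made to work along exactly these lines, but as written it omits the diagonal entries, which are precisely the ones not reachable from the first row and column alone.
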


\begin{proof}
Recall that~(\ref{ratRTT}) is equivalent to
\begin{equation*}
  (z-w)[t_{ij}(z),t_{kl}(w)]=\hbar(t_{kj}(z)t_{il}(w)-t_{kj}(w)t_{il}(z))
\end{equation*}
for any $1\leq i,j,k,l\leq n$, which in turn is equivalent to
(cf.~\cite[Proposition 1.1.2]{m})
\begin{equation}\label{yangian relation}
  [t^{(r)}_{ij},t^{(s)}_{kl}]=
  \hbar\sum_{a=1}^{\min(r,s)} \left(t^{(a-1)}_{kj}t^{(r+s-a)}_{il}-t^{(r+s-a)}_{kj}t^{(a-1)}_{il}\right),
\end{equation}
where we set $t^{(0)}_{ij}:=\hbar^{-1}\delta_{ij}$.

\medskip
\noindent
$\bullet$
Set $i=j=k=1, l>1, s=1$ in~(\ref{yangian relation}) to get
$[t^{(r)}_{11},t^{(1)}_{1l}]=t^{(r)}_{1l}$. Hence
$\{t^{(r)}_{1l}\}_{l>1}^{r\geq 2}\subset I$.

\medskip
\noindent
$\bullet$
Set $i=j=l=1, k>1, s=1$ in~(\ref{yangian relation}) to get
$[t^{(r)}_{11},t^{(1)}_{k1}]=-t^{(r)}_{k1}$. Hence
$\{t^{(r)}_{k1}\}_{k>1}^{r\geq 2}\subset I$.

\medskip
\noindent
$\bullet$
Set $i=l=1, j=k=2, s=1$ in~(\ref{yangian relation}) to get
$[t^{(r)}_{12},t^{(1)}_{21}]=t^{(r)}_{11}-t^{(r)}_{22}$. Hence
$\{t^{(r)}_{22}\}_{r\geq 2}\subset I$.

\medskip
One can now apply the above three verifications with all lower indices
increased by $1$. Proceeding further step by step, we obtain
$\{t^{(r)}_{ij}\}_{1\leq i,j\leq n}^{r\geq 2}\subset I$.

\medskip
This completes our proof of Theorem~\ref{alternative kernel yangian}.
\end{proof}

Likewise, the PBW theorems for $\bU(\gl_n)$ and $\bY^\rtt_\hbar(\gl_n)$
(see Theorem~\ref{appendix: main thm 2}, cf.~\cite[Theorem~1.4.1]{m})
imply the following result:

\begin{Lem}\label{simple kernel yangian integral}
$\Ker(\ev^\rtt\colon \bY^\rtt_\hbar(\gl_n)\to \bU(\gl_n))$ is the $2$-sided ideal
generated by $\{\hbar t^{(r)}_{ij}\}_{1\leq i,j\leq n}^{r\geq 2}$.
\end{Lem}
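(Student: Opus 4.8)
The plan is to imitate the PBW-based proof of Lemma~\ref{simple kernel yangian}, carried out now over $\BC[\hbar]$ using the integral PBW theorem for $\bY^\rtt_\hbar(\gl_n)$ (Theorem~\ref{appendix: main thm 2}). Write $J$ for the two-sided ideal of $\bY^\rtt_\hbar(\gl_n)$ generated by $\{\hbar t^{(r)}_{ij}\}_{1\leq i,j\leq n}^{r\geq 2}$. The inclusion $J\subseteq\Ker(\ev^\rtt)$ is immediate, since $\ev^\rtt(\hbar t^{(r)}_{ij})=\hbar\,\delta_{r,1}E_{ij}$ vanishes for $r\geq 2$ and $\Ker(\ev^\rtt)$ is a two-sided ideal. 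The substance of the lemma is the reverse inclusion $\Ker(\ev^\rtt)\subseteq J$.

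For that, first fix a PBW $\BC[\hbar]$-basis of $\bY^\rtt_\hbar(\gl_n)$ consisting of ordered monomials in the generators $\{\hbar t^{(r)}_{ij}\}_{1\leq i,j\leq n}^{r\geq 1}$, choosing the total order on the indexing set so that every generator with $r\geq 2$ precedes every generator of the form $\hbar t^{(1)}_{ij}$. Then any $x\in\bY^\rtt_\hbar(\gl_n)$ is uniquely a $\BC[\hbar]$-combination of such monomials, and grouping the monomials that involve at least one generator with $r\geq 2$ (each of which lies in $J$, being left-divisible by such a generator) apart from the rest yields a decomposition $x=x_0+x_1$ with $x_1\in J$ and $x_0$ a $\BC[\hbar]$-linear combination of ordered monomials purely in $\{\hbar t^{(1)}_{ij}\}_{1\leq i,j\leq n}$.

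Now suppose $x\in\Ker(\ev^\rtt)$. Since $x_1\in J\subseteq\Ker(\ev^\rtt)$, we get $\ev^\rtt(x_0)=0$. But $x_0$ lies in the subalgebra generated by $\{\hbar t^{(1)}_{ij}\}$, which is exactly the image of the embedding $\iota\colon\bU(\gl_n)\hookrightarrow\bY^\rtt_\hbar(\gl_n)$ of~(\ref{embed+ev rtt integral yangian}) (recall $\iota(\hbar E_{ij})=\hbar t^{(1)}_{ij}$), and $\ev^\rtt\circ\iota=\mathrm{id}_{\bU(\gl_n)}$ by Remark~\ref{property T-matrix}(a). Hence $\ev^\rtt$ is injective on $\iota(\bU(\gl_n))$, forcing $x_0=0$ and therefore $x=x_1\in J$. (Alternatively, $\ev^\rtt$ sends each ordered monomial in the $\hbar t^{(1)}_{ij}$ to the corresponding ordered monomial in the $\hbar E_{ij}$, and the latter form part of a PBW $\BC[\hbar]$-basis of $\bU(\gl_n)$, so a vanishing $\BC[\hbar]$-combination of them has zero coefficients.)

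The only point demanding care — hence the only real obstacle — is the existence of a PBW $\BC[\hbar]$-basis of $\bY^\rtt_\hbar(\gl_n)$ of the stated shape, with enough freedom in the ordering to place the degree-$\geq 2$ generators before the degree-$1$ ones; this is precisely what Theorem~\ref{appendix: main thm 2} provides. Once that is granted, the argument is entirely formal.
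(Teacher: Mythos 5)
Your proof is correct and is exactly the argument the paper has in mind: the paper states the lemma as an immediate consequence of the PBW theorems for $\bU(\gl_n)$ and $\bY^\rtt_\hbar(\gl_n)$ (Theorem~\ref{appendix: main thm 2}), and your write-up simply makes explicit the standard decomposition of a PBW expansion into monomials containing a generator with $r\geq 2$ (which lie in the ideal) and monomials in the $\hbar t^{(1)}_{ij}$ alone (on which $\ev^\rtt$ is injective via $\ev^\rtt\circ\iota=\mathrm{id}$). The ordering freedom you need is indeed available, since the PBW theorem is established for an arbitrary total ordering of the generators.
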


The following alternative description follows immediately from
Theorem~\ref{alternative kernel yangian}:

\begin{Thm}\label{alternative kernel Yangian integral}
$\Ker(\ev^\rtt\colon \bY^\rtt_\hbar(\gl_n) \to \bU(\gl_n))=\bY^\rtt_\hbar(\gl_n)\cap I$.
\end{Thm}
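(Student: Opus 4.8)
The plan is to deduce this from the $\gl_n$-case of Theorem~\ref{alternative kernel yangian} by intersecting with $\bY^\rtt_\hbar(\gl_n)$. Write $J:=\bY^\rtt_\hbar(\gl_n)\cap I$, where $I\subset Y^\rtt_\hbar(\gl_n)$ is the $2$-sided ideal generated by $\{t^{(r)}_{11}\}_{r\geq 2}$; note that $\hbar t^{(r)}_{11}\in \bY^\rtt_\hbar(\gl_n)$, so $J$ visibly contains the $2$-sided ideal $J'$ of $\bY^\rtt_\hbar(\gl_n)$ generated by $\{\hbar t^{(r)}_{11}\}_{r\geq 2}$. First I would observe that $\ev^\rtt(\bY^\rtt_\hbar(\gl_n))=\bU(\gl_n)$ by~(\ref{embed+ev rtt integral yangian}), and that $J\subset \Ker(\ev^\rtt|_{\bY^\rtt_\hbar(\gl_n)})$ since $J\subset I=\Ker(\ev^\rtt|_{Y^\rtt_\hbar(\gl_n)})$ by Theorem~\ref{alternative kernel yangian}. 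So the content is the reverse inclusion $\Ker(\ev^\rtt|_{\bY^\rtt_\hbar(\gl_n)})\subset J$, i.e. $\Ker(\ev^\rtt|_{\bY^\rtt_\hbar(\gl_n)})\subset I$, which is immediate: any element of the kernel lies in $Y^\rtt_\hbar(\gl_n)$, is killed by $\ev^\rtt$, hence lies in $I$ by Theorem~\ref{alternative kernel yangian}, and also lies in $\bY^\rtt_\hbar(\gl_n)$ by hypothesis, hence lies in $\bY^\rtt_\hbar(\gl_n)\cap I=J$. This already gives the equality $\Ker(\ev^\rtt\colon \bY^\rtt_\hbar(\gl_n)\to\bU(\gl_n))=\bY^\rtt_\hbar(\gl_n)\cap I$ as stated; the proof is genuinely a one-liner once Theorem~\ref{alternative kernel yangian} is in hand.

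The only subtlety worth spelling out is the claim ``the following alternative description follows immediately'' — one should make sure that $\ev^\rtt|_{\bY^\rtt_\hbar(\gl_n)}$ is literally the restriction of $\ev^\rtt|_{Y^\rtt_\hbar(\gl_n)}$, which it is by construction of~(\ref{embed+ev rtt integral yangian}), and that $\bU(\gl_n)$ sits inside $U(\gl_n)$ compatibly, which again holds by definition ($\bU(\gl_n)$ is the $\BC[\hbar]$-subalgebra of $U(\gl_n)$ generated by $\hbar\gl_n$). Given a commuting square of inclusions and surjections, the equality $\Ker(g|_B)=B\cap\Ker(g)$ for a homomorphism $g$ restricted to a subalgebra $B$ is formal. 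So I would simply write: by Theorem~\ref{alternative kernel yangian} the kernel of $\ev^\rtt$ on $Y^\rtt_\hbar(\gl_n)$ equals $I$; since $\ev^\rtt$ on $\bY^\rtt_\hbar(\gl_n)$ is the restriction of $\ev^\rtt$ on $Y^\rtt_\hbar(\gl_n)$ and $\bU(\gl_n)\subset U(\gl_n)$, its kernel is $\bY^\rtt_\hbar(\gl_n)\cap I$.

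There is essentially no obstacle here; this lemma is a packaging statement. If anything, the one place to be slightly careful is that one might want the cleaner reformulation ``$\bY^\rtt_\hbar(\gl_n)\cap I$ equals the $2$-sided ideal of $\bY^\rtt_\hbar(\gl_n)$ generated by $\{\hbar t^{(r)}_{11}\}_{r\geq 2}$'' — this is \emph{not} asserted in the theorem as stated and would require an additional argument (tracing through the three bracket computations in the proof of Theorem~\ref{alternative kernel yangian} after multiplying by suitable powers of $\hbar$, using that $t^{(0)}_{ij}=\hbar^{-1}\delta_{ij}$ so that $\hbar t^{(0)}_{ij}=\delta_{ij}$ is harmless); but since the statement to be proved only claims $\Ker(\ev^\rtt)=\bY^\rtt_\hbar(\gl_n)\cap I$, no such refinement is needed and the proof is as above.
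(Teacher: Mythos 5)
Your proof is correct and is exactly the argument the paper intends: the paper gives no separate proof, stating only that the result "follows immediately" from Theorem~\ref{alternative kernel yangian}, and your formal observation that $\Ker(\ev^\rtt|_{\bY^\rtt_\hbar(\gl_n)})=\bY^\rtt_\hbar(\gl_n)\cap\Ker(\ev^\rtt|_{Y^\rtt_\hbar(\gl_n)})=\bY^\rtt_\hbar(\gl_n)\cap I$ is precisely that immediate deduction. Your closing remark — that the statement does \emph{not} assert $\bY^\rtt_\hbar(\gl_n)\cap I$ equals the ideal of $\bY^\rtt_\hbar(\gl_n)$ generated by $\{\hbar t^{(r)}_{11}\}_{r\geq 2}$, which would require extra work — is also an accurate reading of what is and is not being claimed.
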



\subsection{The Drinfeld Yangian of $\gl_n$ and $\ssl_n$}\label{ssec Yangian sln}
\

Following~\cite{d2} (cf.~\cite{i,m}), define the Yangian of $\gl_n$, denoted by
$Y_\hbar(\gl_n)$, to be the associative $\BC[\hbar]$-algebra generated by
  $\{e_i^{(r)},f_i^{(r)},\zeta_j^{(r)}\}_{1\leq i<n, 1\leq j\leq n}^{r\geq 0}$
with the following defining relations:
\begin{equation}\label{Dr Yangian}
\begin{split}
  & [\zeta_j^{(r)},\zeta_{j'}^{(s)}]=0,\\
  & [e_{i}^{(r+1)}, e_{i'}^{(s)}]-[e_{i}^{(r)},e_{i'}^{(s+1)}]=
    \frac{c_{ii'}\hbar}{2}\left(e_{i}^{(r)}e_{i'}^{(s)}+e_{i'}^{(s)}e_{i}^{(r)}\right),\\
  & [f_{i}^{(r+1)}, f_{i'}^{(s)}]-[f_{i}^{(r)},f_{i'}^{(s+1)}]=
    -\frac{c_{ii'}\hbar}{2}\left(f_{i}^{(r)}f_{i'}^{(s)}+f_{i'}^{(s)}f_{i}^{(r)}\right),\\
  & [\zeta_{j}^{(0)},e_{i}^{(r)}]=(-\delta_{ji}+\delta_{j,i+1}) e_{i}^{(r)},\
    [\zeta_{j}^{(0)},f_{i}^{(r)}]=(\delta_{ji}-\delta_{j,i+1}) f_{i}^{(r)},\\
  & [\zeta_{j}^{(s+1)}, e_{i}^{(r)}]-[\zeta_{j}^{(s)},e_{i}^{(r+1)}]=
    \hbar\cdot \left(-\delta_{ji}\zeta_{j}^{(s)}e_{i}^{(r)}+
    \delta_{j,i+1}/2\cdot(\zeta_{j}^{(s)}e_{i}^{(r)}+e_{i}^{(r)}\zeta_{j}^{(s)})\right),\\
  & [\zeta_{j}^{(s+1)}, f_{i}^{(r)}]-[\zeta_{j}^{(s)},f_{i}^{(r+1)}]=
    \hbar\cdot \left(\delta_{ji}\zeta_{j}^{(s)}f_{i}^{(r)}-
    \delta_{j,i+1}/2\cdot (\zeta_{j}^{(s)}f_{i}^{(r)}+f_{i}^{(r)}\zeta_{j}^{(s)})\right),\\
  & [e_{i}^{(r)},f_{i'}^{(s)}]=\delta_{ii'}h_{i}^{(r+s)},\\
  & [e_{i}^{(r)},e_{i'}^{(s)}]=0\ \mathrm{and}\ [f_{i}^{(r)},f_{i'}^{(s)}]=0\  \mathrm{if}\ c_{ii'}=0,\\
  & [e_{i}^{(r_1)},[e_{i}^{(r_2)},e_{i'}^{(s)}]]+
    [e_{i}^{(r_2)},[e_{i}^{(r_1)},e_{i'}^{(s)}]]=0\ \mathrm{if}\ c_{ii'}=-1,\\
  & [f_{i}^{(r_1)},[f_{i}^{(r_2)},f_{i'}^{(s)}]]+
    [f_{i}^{(r_2)},[f_{i}^{(r_1)},f_{i'}^{(s)}]]=0\ \mathrm{if}\ c_{ii'}=-1,
\end{split}
\end{equation}
where $(c_{ii'})_{i,i'=1}^{n-1}$ denotes the Cartan matrix of $\ssl_n$ and
$\{h_i^{(r)}\}_{1\leq i<n}^{r\in \BN}$ are the coefficients of the generating
series $h_i(z)=1+\hbar\sum_{r\geq 0}h_{i}^{(r)}z^{-r-1}$ determined via
$h_i(z):=(\zeta_i(z))^{-1}\zeta_{i+1}(z-\hbar/2)$. Here the generating
series $e_i(z), f_i(z)\ (1\leq i<n)$ and $\zeta_j(z)\ (1\leq j\leq n)$ are defined via
\begin{equation*}
  e_i(z):=\hbar\sum_{r\geq 0}e_{i}^{(r)}z^{-r-1},\
  f_i(z):=\hbar\sum_{r\geq 0}f_{i}^{(r)}z^{-r-1},\
  \zeta_j(z):=1+\hbar\sum_{r\geq 0}\zeta_{j}^{(r)}z^{-r-1}.
\end{equation*}

The $\BC[\hbar]$-subalgebra of $Y_\hbar(\gl_n)$ generated by
  $\{e_{i}^{(r)},f_{i}^{(r)},h_{i}^{(r)}\}_{1\leq i<n}^{r\geq 0}$
is isomorphic to the Yangian of $\ssl_n$, denoted by $Y_\hbar(\ssl_n)$.
To be more precise, this recovers the new Drinfeld realization of $Y_\hbar(\ssl_n)$,
see~\cite{d2}. The latter also admits the original \emph{$J$-presentation}
with generators $\{x,J(x)\}_{x\in \ssl_n}$ and a certain list of the
defining relations which we shall skip, see~\cite{d1}.

To relate $Y^\rtt_\hbar(\gl_n)$ and $Y_\hbar(\gl_n)$, consider
the Gauss decomposition of $T(z)$ of subsection~\ref{ssec RTT Yangian}:
\begin{equation*}
  T(z)=F(z)\cdot G(z)\cdot E(z).
\end{equation*}
Here $F(z),G(z),E(z)$ are the series in $z^{-1}$ with coefficients in the algebra
$\bY^\rtt_\hbar(\gl_n)\otimes \End\ \BC^n$ which are of the form
\begin{equation*}
  F(z)=\sum_{i} E_{ii}+\sum_{i>j} f_{ij}(z)\cdot E_{ij},\
  G(z)=\sum_{i} g_i(z)\cdot E_{ii},\
  E(z)=\sum_{i} E_{ii}+\sum_{i<j} e_{ij}(z)\cdot E_{ij}.
\end{equation*}

\begin{Thm}[\cite{i}, cf.~\cite{df}]\label{Yangian Gauss decomposition}
There is a unique $\BC[\hbar]$-algebra isomorphism
\begin{equation*}
  \Upsilon\colon Y_\hbar(\gl_n)\iso Y^\rtt_\hbar(\gl_n)
\end{equation*}
defined by
\begin{equation}\label{matching yangian}
  e_i(z)\mapsto e_{i,i+1}(z+i\hbar/2),\
  f_i(z)\mapsto f_{i+1,i}(z+i\hbar/2),\
  \zeta_j(z)\mapsto  g_j(z+j\hbar/2).
\end{equation}
\end{Thm}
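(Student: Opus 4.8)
The plan is to establish the isomorphism $\Upsilon\colon Y_\hbar(\gl_n)\iso Y^\rtt_\hbar(\gl_n)$ by first extracting the matrix entries $e_{ij}(z), f_{ij}(z), g_i(z)$ from the Gauss decomposition $T(z)=F(z)G(z)E(z)$ as explicit expressions in the $t_{ij}(z)$ (via quasideterminantal formulas), and then checking three things: (i) the assignment~\eqref{matching yangian} is well-defined, i.e.\ the images in $Y^\rtt_\hbar(\gl_n)$ of the Drinfeld generators $e_i^{(r)}, f_i^{(r)}, \zeta_j^{(r)}$ satisfy all the defining relations~\eqref{Dr Yangian}; (ii) $\Upsilon$ is surjective; and (iii) $\Upsilon$ is injective. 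First I would record the Gauss decomposition entrywise: $g_1(z)=t_{11}(z)$, $e_{1j}(z)=g_1(z)^{-1}t_{1j}(z)$, $f_{i1}(z)=t_{i1}(z)g_1(z)^{-1}$, and more generally $g_i(z)$ is a ratio of the leading principal quantum minors $t^{1\ldots i}_{1\ldots i}(z)$ and $t^{1\ldots i-1}_{1\ldots i-1}(z)$, with $e_{ij}, f_{ij}$ given by similar quasiminor ratios. These live in $Y^\rtt_\hbar(\gl_n)[[z^{-1}]]$ after inverting the $g_i(z)$, but one checks the relevant coefficients actually lie in $Y^\rtt_\hbar(\gl_n)$ itself.

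The technical heart is step (i): deriving the commutation relations among $e_{ij}(z), f_{ij}(z), g_i(z)$ from the single $RTT$ relation~\eqref{ratRTT}. The standard route, following Ding--Frenkel in the quantum affine case and Iohara in the Yangian case, is to rewrite~\eqref{ratRTT} in block form relative to the decomposition $\BC^n=\BC^{k}\oplus\BC^{n-k}$ and extract relations among the Gauss factors by comparing matrix coefficients; alternatively one uses the explicit relations~\eqref{yangian relation} among the $t^{(r)}_{ij}$ together with induction on $n$, exploiting that the ``upper-left corner'' subalgebra generated by $\{t_{ij}^{(r)}\}_{i,j\le k}$ is itself a copy of $Y^\rtt_\hbar(\gl_k)$. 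Since the statement attributes the result to~\cite{i} with~\cite{df} as precedent, I would cite those computations rather than reproduce them, and content myself with indicating that~\eqref{matching yangian} is forced: applying $\Upsilon$ to the first relation $[\zeta_j^{(r)},\zeta_{j'}^{(s)}]=0$ amounts to the commutativity of the $g_i(z)$, the Serre-type relations follow from the rank-two ($\gl_3$) case, and the mixed $e$–$f$ relation $[e_i^{(r)},f_{i'}^{(s)}]=\delta_{ii'}h_i^{(r+s)}$ is exactly the definition of $h_i(z)=\zeta_i(z)^{-1}\zeta_{i+1}(z-\hbar/2)$ once one knows the corresponding identity $[e_{i,i+1}(z),f_{i+1,i}(w)]=\ldots$ in the $RTT$ presentation.

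For step (ii), surjectivity, I would argue that the subalgebra of $Y^\rtt_\hbar(\gl_n)$ generated by the images $g_i(z)^{\pm1}, e_{i,i+1}(z), f_{i+1,i}(z)$ (equivalently their $z$-coefficients) is all of $Y^\rtt_\hbar(\gl_n)$: the higher root vectors $e_{ij}(z)$ for $j>i+1$ are recovered from $[e_{i,i+1}, e_{i+1,i+2}]$-type brackets (this is where the $\gl_3$ relations re-enter), and then $T(z)=F(z)G(z)E(z)$ reconstructs every $t_{ij}(z)$, hence every $t^{(r)}_{ij}$, from the Gauss factors. For step (iii), injectivity, the cleanest argument is a PBW/dimension count: both $Y_\hbar(\gl_n)$ and $Y^\rtt_\hbar(\gl_n)$ admit PBW bases over $\BC[\hbar]$ (the latter by Proposition~\ref{PBW for RTT yangian}), and $\Upsilon$ sends an ordered PBW monomial in the Drinfeld generators to an element whose leading term, with respect to a suitable filtration, is the corresponding PBW monomial in the $t^{(r)}_{ij}$; since $\Upsilon$ is already surjective and carries a spanning set to a spanning set compatibly with the gradings, it is a bijection on associated graded, hence an isomorphism. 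I expect step (i)—verifying that~\eqref{matching yangian} respects all of~\eqref{Dr Yangian}—to be the main obstacle, since it requires the full extraction of Drinfeld-type relations from the $RTT$ relation; but since the theorem is quoted from~\cite{i}, the expected write-up is a sketch that reduces everything to the rank-one and rank-two cases plus an induction on $n$, with the details delegated to the cited references.
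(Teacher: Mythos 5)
The paper offers no proof of this theorem: it is quoted from Iohara~\cite{i} (cf.~Ding--Frenkel~\cite{df}), and your outline --- Gauss decomposition via quasiminors, verification of the Drinfeld relations by reduction to the rank-one and rank-two cases, surjectivity by reconstructing $T(z)=F(z)G(z)E(z)$ from the simple-root entries, and injectivity by comparing PBW bases --- is exactly the standard argument of those references (and of Brundan--Kleshchev/Gow--Molev in the closely related settings). Your sketch is sound and consistent with the cited proofs; the only point worth noting is that inverting $g_i(z)\in 1+\hbar z^{-1}Y^\rtt_\hbar(\gl_n)[[z^{-1}]]$ is automatic over $\BC[\hbar]$, so no separate integrality check is needed there.
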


As an immediate corollary, $\bY^\rtt_\hbar(\gl_n)$ is realized as a
$\BC[\hbar]$-subalgebra of $Y_\hbar(\gl_n)$. To describe this subalgebra
explicitly, define the elements
  $\{E^{(r)}_{\alphavee},F^{(r)}_{\alphavee}\}_{\alphavee\in \Delta^+}^{r\geq 0}$
of $Y_\hbar(\gl_n)$ via
\begin{equation}\label{PBW basis elements yangian}
\begin{split}
  & E^{(r)}_{\alphavee_j+\alphavee_{j+1}+\ldots+\alphavee_i}:=[\cdots[e_{j,}^{(r)},e_{j+1}^{(0)}],\cdots,e_{i}^{(0)}],\\
  & F^{(r)}_{\alphavee_j+\alphavee_{j+1}+\ldots+\alphavee_i}:=[f_{i}^{(0)},\cdots,[f_{j+1}^{(0)},f_{j}^{(r)}]\cdots].
\end{split}
\end{equation}
Here $\{\alphavee_i\}_{i=1}^{n-1}$ are the standard simple roots of $\ssl_n$,
and $\Delta^+$ denotes the set of positive roots,
$\Delta^+=\{\alphavee_j+\alphavee_{j+1}+\ldots+\alphavee_i\}_{1\leq j\leq i\leq n-1}$.

\begin{Def}\label{integral Drinfeld yangian}
(a) Let $\bY_\hbar(\gl_n)$ be the $\BC[\hbar]$-subalgebra of $Y_\hbar(\gl_n)$
generated by
\begin{equation}\label{PBW basis yangian}
   \{\hbar E^{(r)}_{\alphavee}, \hbar F^{(r)}_{\alphavee}\}_{\alphavee\in \Delta^+}^{r\geq 0}\cup
   \{\hbar\zeta_{j}^{(r)}\}_{1\leq j\leq n}^{r\geq 0}.
\end{equation}

\noindent
(b) Let $\bY_\hbar(\ssl_n)$ be the $\BC[\hbar]$-subalgebra of $Y_\hbar(\ssl_n)$
generated by
\begin{equation}\label{PBW basis yangian sln}
   \{\hbar E^{(r)}_{\alphavee}, \hbar F^{(r)}_{\alphavee}\}_{\alphavee\in \Delta^+}^{r\geq 0}\cup
   \{\hbar h_{i}^{(r)}\}_{1\leq i< n}^{r\geq 0}.
\end{equation}
\end{Def}

\begin{Rem}\label{pbw for integral yangian}
The subalgebra $\bY_\hbar(\gl_n)$ is free over $\BC[\hbar]$ and the ordered
PBW monomials in the generators~(\ref{PBW basis yangian}) form its basis.
This can be derived similarly to~Theorem~\ref{PBW basis coordinate affine},
cf.~\cite[Theorem 6.8]{t}. An alternative proof (valid for all Yangians) is provided
in Appendix~\ref{appendix with Alex Weekes 1}, see~Theorem~\ref{appendix: main thm 1}.
\end{Rem}

\begin{Prop}\label{yangian integral forms coincide}
  $\bY_\hbar(\gl_n)=\Upsilon^{-1}(\bY^\rtt_\hbar(\gl_n))$.
\end{Prop}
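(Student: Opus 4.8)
The plan is to prove the two inclusions $\bY_\hbar(\gl_n)\subseteq \Upsilon^{-1}(\bY^\rtt_\hbar(\gl_n))$ and $\Upsilon^{-1}(\bY^\rtt_\hbar(\gl_n))\subseteq \bY_\hbar(\gl_n)$ separately, using the explicit Gauss-decomposition formulas~\eqref{matching yangian} together with the description of $\bY^\rtt_\hbar(\gl_n)$ as the $\BC[\hbar]$-subalgebra generated by $\{\hbar t^{(r)}_{ij}\}$. For the inclusion ``$\subseteq$'', I would first observe that it suffices to check that each algebra generator listed in~\eqref{PBW basis yangian} maps under $\Upsilon$ into $\bY^\rtt_\hbar(\gl_n)$. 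For the Cartan generators $\hbar\zeta_j^{(r)}$ this is immediate: by~\eqref{matching yangian}, $\zeta_j(z)\mapsto g_j(z+j\hbar/2)$, and the matrix entries $g_j(z)$ of the diagonal factor $G(z)$ in the Gauss decomposition are, by the standard Gauss-decomposition formulas, expressible as noncommutative-rational expressions in the $t_{ij}(z)$; more precisely the coefficients of $g_j(z)-1$ lie in $\hbar Y^\rtt_\hbar(\gl_n)[[z^{-1}]]$ and in fact in $\bY^\rtt_\hbar(\gl_n)[[z^{-1}]]$ (one should note here that $\bY^\rtt_\hbar(\gl_n)\otimes\End\,\BC^n$ is exactly where $F(z),G(z),E(z)$ were said to have coefficients, right before Theorem~\ref{Yangian Gauss decomposition}). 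Similarly $e_i(z)\mapsto e_{i,i+1}(z+i\hbar/2)$ and $f_i(z)\mapsto f_{i+1,i}(z+i\hbar/2)$, and since $e_{i,i+1}(z), f_{i+1,i}(z)$ are entries of $E(z), F(z)$ with coefficients in $\bY^\rtt_\hbar(\gl_n)$, we get $\hbar e_i^{(r)}, \hbar f_i^{(r)}\mapsto$ elements of $\bY^\rtt_\hbar(\gl_n)$. Finally the higher root vectors $\hbar E^{(r)}_{\alphavee}, \hbar F^{(r)}_{\alphavee}$ are defined in~\eqref{PBW basis elements yangian} by iterated commutators of the $e_i^{(0)}, f_i^{(0)}$ with $e_j^{(r)}$ (resp.\ $f_j^{(r)}$); writing $\hbar E^{(r)}_{\alphavee}$ as $\hbar$ times such a commutator and distributing one factor of $\hbar$ appropriately, these become iterated commutators of elements of $\bY^\rtt_\hbar(\gl_n)$, hence lie in $\bY^\rtt_\hbar(\gl_n)$. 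This establishes ``$\subseteq$''.

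For the reverse inclusion ``$\supseteq$'', I would show that every generator $\hbar t^{(r)}_{ij}$ of $\bY^\rtt_\hbar(\gl_n)$ has $\Upsilon^{-1}(\hbar t^{(r)}_{ij})\in\bY_\hbar(\gl_n)$. The key is again the Gauss decomposition $T(z)=F(z)G(z)E(z)$: the entry $t_{ij}(z)$ is a sum of products of the entries $f_{ab}(z)$, $g_a(z)$, $e_{ab}(z)$. Under $\Upsilon^{-1}$, the generating series $g_a(z), e_{a,a+1}(z), f_{a+1,a}(z)$ correspond (up to a shift of spectral parameter) to $\zeta_a(z), e_a(z), f_a(z)$, and the non-simple off-diagonal entries $e_{ab}(z), f_{ab}(z)$ are obtained from the simple ones by the commutator formulas underlying~\eqref{PBW basis elements yangian}; thus $\Upsilon^{-1}(e_{ab}(z)), \Upsilon^{-1}(f_{ab}(z))$ have coefficients that are (iterated commutators of) the $e_i^{(r)}, f_i^{(r)}, \zeta_j^{(r)}$. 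The point to be careful about is the bookkeeping of powers of $\hbar$: each of $e_i(z), f_i(z), \zeta_i(z)-1$ carries one factor of $\hbar$ by definition, so a single $t_{ij}(z)$-coefficient, being a polynomial expression in these series, carries at least one factor of $\hbar$ — matching the fact that $t_{ij}(z)\in\delta_{ij}+\hbar Y_\hbar(\gl_n)[[z^{-1}]]$ — and one checks that $\hbar t^{(r)}_{ij}$ then lands in the $\BC[\hbar]$-span of $\hbar$ times the commutator monomials, i.e.\ in $\bY_\hbar(\gl_n)$. A clean way to organize this is to introduce the ``shifted'' generating series $\tilde e_{ab}(z), \tilde f_{ab}(z), \tilde g_a(z)$ matching the shifts in~\eqref{matching yangian} and note that $\bY^\rtt_\hbar(\gl_n)$, being generated by the $\hbar t^{(r)}_{ij}$, is also generated by the $\hbar$-multiples of the coefficients of $\tilde g_a(z), \tilde e_{ab}(z), \tilde f_{ab}(z)$ (a triangularity/Gaussian-elimination argument inverting the passage from $T$ to its Gauss factors, carried out over $\BC[\hbar]$); then $\Upsilon^{-1}$ of each such $\hbar$-multiple is visibly in $\bY_\hbar(\gl_n)$ by~\eqref{matching yangian} and~\eqref{PBW basis elements yangian}.

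The main obstacle I anticipate is precisely this last triangularity argument over $\BC[\hbar]$: while it is classical that the Gauss decomposition $T=FGE$ expresses the $t_{ij}$ rationally in terms of $f_{ab}, g_a, e_{ab}$ and conversely the Gauss factors rationally in terms of the $t_{ij}$, one must verify that the ``conversely'' direction does not introduce denominators that spoil integrality — i.e.\ that $\bY^\rtt_\hbar(\gl_n)$ really is generated by the $\hbar$-multiples of the Gauss-factor coefficients and not just by the $\hbar t^{(r)}_{ij}$ after inverting something. This is a finite computation with the standard quasi-determinant formulas for the Gauss factors ($g_i(z)$ and the off-diagonal entries are ratios of quantum-minor-like expressions with leading coefficient $1$, hence their expansions in $z^{-1}$ have coefficients that are honest polynomials in the $t^{(r)}_{ij}$, with the requisite $\hbar$-divisibility), but it needs to be spelled out. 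I note that the paper explicitly offers an alternative, more conceptual route — identifying both $\bY_\hbar(\gl_n)$ and $\bY^\rtt_\hbar(\gl_n)$ with the Drinfeld--Gavarini dual of $Y_\hbar(\gl_n)\cong Y^\rtt_\hbar(\gl_n)$, which is canonical and hence automatically $\Upsilon$-equivariant (Appendix~\ref{appendix with Alex Weekes 1}, Theorem~\ref{appendix: main thm 2}) — so the direct proof above can afford to be brief, leaving the hardest bookkeeping to that appendix.
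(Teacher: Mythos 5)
Your overall route --- direct bookkeeping through the Gauss decomposition, with the Drinfeld--Gavarini identification of Appendix~\ref{ssec RTT version of Drinfeld-Gavarini} as the conceptual backup --- is the paper's, and your treatment of the Cartan and simple-root generators, your sketch of the reverse inclusion, and your identification of the triangularity-over-$\BC[\hbar]$ issue are all on target. However, the step handling the higher root vectors in the forward inclusion is wrong as stated. You claim that $\hbar E^{(r)}_{\alphavee}=\hbar[\cdots[e_j^{(r)},e_{j+1}^{(0)}],\cdots,e_i^{(0)}]$ becomes, ``after distributing one factor of $\hbar$ appropriately,'' an iterated commutator of elements of $\bY^\rtt_\hbar(\gl_n)$. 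It does not: the commutator has $i-j+1$ slots and you have only one $\hbar$ to distribute, so at most one slot receives an element of $\bY^\rtt_\hbar(\gl_n)$. The remaining entries $\Upsilon(e_k^{(0)})=e^{(1)}_{k,k+1}$ are \emph{not} in $\bY^\rtt_\hbar(\gl_n)$ --- only their $\hbar$-multiples are; for instance $e^{(1)}_{12}=t^{(1)}_{12}$, which by the PBW theorem for $\bY^\rtt_\hbar(\gl_n)$ does not lie in the $\BC[\hbar]$-span of PBW monomials in the $\hbar t^{(s)}_{ab}$. So an expression like $[[\hbar a,b],c]$ with $b,c\notin\bY^\rtt_\hbar(\gl_n)$ is not an iterated commutator of elements of the integral form, and membership in $\bY^\rtt_\hbar(\gl_n)$ does not follow from what you wrote.

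The element nevertheless does lie in $\bY^\rtt_\hbar(\gl_n)$, but the reason is precisely the paper's key input, Proposition~\ref{higher ef-modes yangian} and Corollary~\ref{explicit modes yangian}: the iterated commutator $[\cdots[e^{(r)}_{j,j+1},e^{(1)}_{j+1,j+2}],\cdots,e^{(1)}_{i,i+1}]$ \emph{collapses} to the single Gauss coefficient $e^{(r)}_{j,i+1}$. Hence $\Upsilon(\hbar E^{(r)}_{\alphavee})$ is a $\BC[\hbar]$-linear combination of the elements $\hbar e^{(s)}_{j,i+1}$ (the spectral shift $z\mapsto z+k\hbar/2$ only produces such combinations), and these are exactly the coefficients of the series $e_{j,i+1}(z)$ in the normalization~(\ref{t-modes yangian}), which lie in $\bY^\rtt_\hbar(\gl_n)$ by the same quasideterminant argument you invoke for $g_j(z)$. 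You do cite these commutator formulas for the reverse inclusion, so the missing ingredient is already in your write-up; it just needs to be invoked here in place of the ``distribute one $\hbar$'' claim. (Alternatively, the distribution argument can be salvaged by appealing to Lemma~\ref{Gavarini's Lemma} together with Theorem~\ref{appendix: main thm 2}, but that is the Drinfeld--Gavarini route you explicitly set aside for the appendix.)
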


The proof of Proposition~\ref{yangian integral forms coincide} follows immediately
from Proposition~\ref{higher ef-modes yangian} and Corollary~\ref{explicit modes yangian}
below. To state those, let us express the matrix coefficients of $F(z),G(z),E(z)$
as series in $z^{-1}$ with coefficients in $Y^\rtt_\hbar(\gl_n)$:
\begin{equation}\label{t-modes yangian}
  e_{ij}(z)=\hbar\sum_{r\geq 1} e^{(r)}_{ij}z^{-r},\
  f_{ij}(z)=\hbar\sum_{r\geq 1} f^{(r)}_{ij}z^{-r},\
  g_i(z)=1+\hbar\sum_{r\geq 1} g^{(r)}_i z^{-r}.
\end{equation}

The proof of the following result is analogous to that of
Proposition~\ref{higher ef-modes} (actually it is much simpler),
and we leave details to the interested reader:

\begin{Prop}\label{higher ef-modes yangian}
For any $1\leq j< i<n$, the following equalities hold in $Y^\rtt_\hbar(\gl_n)$:
\begin{equation}\label{Gauss ef-modes yangian}
  e_{j,i+1}(z)=[e_{ji}(z),e^{(1)}_{i,i+1}],\
  f_{i+1,j}(z)=[f^{(1)}_{i+1,i},f_{ij}(z)].
\end{equation}
\end{Prop}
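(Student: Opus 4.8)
The plan is to prove the two identities in~\eqref{Gauss ef-modes yangian} by induction on the ``distance'' $i-j$, using the Gauss decomposition $T(z)=F(z)G(z)E(z)$ together with the defining relation~\eqref{ratRTT} (equivalently~\eqref{yangian relation}). The base case $i=j+1$ is essentially the definition: it amounts to recognizing $e_{j,j+2}(z)$ and $f_{j+2,j}(z)$ as the relevant off-diagonal entries of the Gauss factors. First I would record the standard formulas expressing the matrix coefficients $e_{ij}(z)$, $f_{ij}(z)$, $g_i(z)$ in terms of the $t_{ij}(z)$ via the Gauss decomposition. Concretely, from $T(z)=F(z)G(z)E(z)$ one extracts, for adjacent indices, $e_{i,i+1}(z)=g_i(z)^{-1}t_{i,i+1}(z)$ up to corrections involving the rows above, and similarly for the $f$'s; the upshot is that $e^{(1)}_{i,i+1}$ coincides (modulo lower terms that commute appropriately) with $t^{(1)}_{i,i+1}$, and likewise $f^{(1)}_{i+1,i}$ with $t^{(1)}_{i+1,i}$.

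The inductive step is where the RTT relation does the work. Assume $e_{j,i}(z)=[e_{j,i-1}(z),e^{(1)}_{i-1,i}]$ (and its straightforward iterate expressing $e_{j,i}(z)$ as a nested bracket starting from $e_{j,j+1}(z)$). To pass from $i$ to $i+1$, I would compute the commutator $[e_{ji}(z),e^{(1)}_{i,i+1}]$ by replacing $e_{ji}(z)$ with its expression in terms of the $t$'s (or, more efficiently, by using the commutation relations among the Gauss coordinates that follow from~\eqref{ratRTT}). The key point is that the quasi-RTT relations for the $E(z)$-block give $[e_{ji}(z),e_{i,i+1}(w)]$ a closed form whose $w^{-1}$-coefficient is exactly $e_{j,i+1}(z)$ — this is the Yangian analogue of the familiar ``raising by adjacent simple root generators'' mechanism, and parallels the proof of Proposition~\ref{higher ef-modes} that the excerpt defers to. The $f$-statement is proved identically using the $F(z)$-block, or alternatively by applying the standard anti-automorphism of $Y^\rtt_\hbar(\gl_n)$ that swaps $E(z)\leftrightarrow F(z)^{\mathrm{op}}$ and fixes $G(z)$.

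The main obstacle I anticipate is bookkeeping the ``lower-triangular corrections'': the naive identification of $e_{i,i+1}(z)$ with a single matrix coefficient $t_{i,i+1}(z)$ fails because the Gauss decomposition mixes in contributions $f_{ki}(z)g_k(z)e_{k,i+1}(z)$ for $k<i$, and one must check that these extra terms either drop out of the relevant commutator or contribute only to strictly higher modes. In practice this is handled by an inner induction (on the index $k$, or equivalently by first establishing the result for the principal $2\times 2$ and $3\times 3$ submatrices and then propagating), exactly as the excerpt says is done for the harder quantum case in Proposition~\ref{higher ef-modes}; here, as the authors note, it is ``much simpler'' because the rational $R$-matrix $R_\rat(z)=1-\frac{\hbar}{z}P$ has no $v$-shifts. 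Since the paper explicitly leaves the details to the reader and flags this as analogous to (and easier than) Proposition~\ref{higher ef-modes}, a clean writeup would consist of: (i) the Gauss-coordinate commutation relations derived from~\eqref{yangian relation}, (ii) the base case, (iii) the one-line inductive step for $e$, and (iv) the symmetry argument for $f$.
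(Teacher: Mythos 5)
Your plan follows the route the paper intends: no separate proof is given here, the argument being declared analogous to (and simpler than) that of Proposition~\ref{higher ef-modes}, and that argument is exactly what you describe --- extract from the RTT relation \eqref{ratRTT} (equivalently \eqref{yangian relation}) the identity $t_{j,i+1}(z)=[t_{ji}(z),e^{(1)}_{i,i+1}]$ at the level of matrix entries of $T(z)$ (the rational analogue of \eqref{t+ generators upper}), then substitute the Gauss expansion $t_{ji}(z)=g_j(z)e_{ji}(z)+\sum_{k<j}f_{jk}(z)g_k(z)e_{ki}(z)$ and strip off the lower-triangular corrections. The symmetry argument for the $f$-identity is also as in the paper.

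One bookkeeping point to fix before writing this up: the clean induction is not on the root length $i-j$, and the case $i=j+1$ is \emph{not} ``essentially the definition'' --- for $j>1$ the identity $e_{j,j+2}(z)=[e_{j,j+1}(z),e^{(1)}_{j+1,j+2}]$ already carries the corrections $f_{jk}(z)g_k(z)e_{k,j+2}(z)$ with $k<j$ and so requires the full argument. What works (and is what the paper does in the trigonometric case) is: fix $i$ and induct on $j$ increasing from $j=1$; the base case $j=1$ is correction-free since $t_{1i}(z)=g_1(z)e_{1i}(z)$, and at step $j$ the hypothesis for all $k<j$ lets you commute $e^{(1)}_{i,i+1}$ past $f_{jk}(z)$ and $g_k(z)$ and identify $[e_{ki}(z),e^{(1)}_{i,i+1}]$ with $e_{k,i+1}(z)$. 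Your ``inner induction on $k$'' is precisely this, so the proposal is sound once the outer and inner inductions are collapsed into the single increasing induction on $j$.
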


\begin{Cor}\label{explicit modes yangian}
For any $1\leq j\leq i<n$ and $r\geq 1$, the following equalities hold:
\begin{equation}\label{Gauss Matrix Entries yangian}
\begin{split}
  & e^{(r)}_{j,i+1}=[\cdots[e^{(r)}_{j,j+1},e^{(1)}_{j+1,j+2}],\cdots,e^{(1)}_{i,i+1}],\\
  & f^{(r)}_{i+1,j}=[f^{(1)}_{i+1,i},\cdots,[f^{(1)}_{j+2,j+1},f^{(r)}_{j+1,j}]\cdots].
\end{split}
\end{equation}
\end{Cor}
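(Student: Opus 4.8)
The plan is to deduce the two displayed identities in~\eqref{Gauss Matrix Entries yangian} by iterating the recursive relations established in Proposition~\ref{higher ef-modes yangian}, reading off the modes coefficient-by-coefficient in $z^{-r}$. First I would treat the $e$-series identity. By~\eqref{Gauss ef-modes yangian} applied with the pair $(j,i)$ replaced by $(j,j+1)$, we have $e_{j,j+2}(z)=[e_{j,j+1}(z),e^{(1)}_{j+1,j+2}]$; applying it again with the pair $(j,j+2)$ gives $e_{j,j+3}(z)=[e_{j,j+2}(z),e^{(1)}_{j+2,j+3}]=[[e_{j,j+1}(z),e^{(1)}_{j+1,j+2}],e^{(1)}_{j+2,j+3}]$, and an evident induction on $i-j$ yields
\[
  e_{j,i+1}(z)=[\cdots[e_{j,j+1}(z),e^{(1)}_{j+1,j+2}],\cdots,e^{(1)}_{i,i+1}].
\]
Now I would extract the coefficient of $\hbar z^{-r}$ on both sides using the expansion $e_{kl}(z)=\hbar\sum_{r\geq 1}e^{(r)}_{kl}z^{-r}$ from~\eqref{t-modes yangian}: since the elements $e^{(1)}_{k,k+1}$ are constants (independent of $z$) and the bracket is $\BC[\hbar]$-bilinear, the coefficient of $\hbar z^{-r}$ in the right-hand side is precisely the iterated commutator of $e^{(r)}_{j,j+1}$ with the $e^{(1)}_{k,k+1}$'s, giving exactly the first line of~\eqref{Gauss Matrix Entries yangian}.

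The $f$-series identity is entirely parallel: iterating the second relation in~\eqref{Gauss ef-modes yangian}, namely $f_{i+1,j}(z)=[f^{(1)}_{i+1,i},f_{ij}(z)]$, by induction on $i-j$ produces
\[
  f_{i+1,j}(z)=[f^{(1)}_{i+1,i},\cdots,[f^{(1)}_{j+2,j+1},f_{j+1,j}(z)]\cdots],
\]
and extracting the coefficient of $\hbar z^{-r}$ gives the second line of~\eqref{Gauss Matrix Entries yangian}. A minor bookkeeping point to handle carefully is the case $i=j$, where both identities are the empty statement $e^{(r)}_{j,j+1}=e^{(r)}_{j,j+1}$ and $f^{(r)}_{j+1,j}=f^{(r)}_{j+1,j}$, so the claim holds trivially and serves as the induction base; the inductive step is then the single application of Proposition~\ref{higher ef-modes yangian} described above.

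I do not anticipate a genuine obstacle here: the statement is a formal corollary, and the only thing requiring a modicum of care is confirming that Proposition~\ref{higher ef-modes yangian} may legitimately be applied at each stage — that is, that the intermediate indices satisfy the hypothesis $1\leq j<i<n$ throughout the iteration. Since at the $k$-th step we apply~\eqref{Gauss ef-modes yangian} with the pair $(j,j+k)$ where $j+k$ runs from $j+1$ up to $i<n$, the hypothesis is met at every step, so the induction goes through without incident. Once this is noted, matching powers of $z^{-1}$ finishes the proof.
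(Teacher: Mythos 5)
Your proposal is correct and is exactly the argument the paper intends: the Corollary is stated as an immediate consequence of Proposition~\ref{higher ef-modes yangian}, obtained by iterating~\eqref{Gauss ef-modes yangian} and comparing coefficients of $\hbar z^{-r}$ in the expansions~\eqref{t-modes yangian}. Your bookkeeping of the base case $i=j$ and of the index hypothesis at each inductive step is accurate, so nothing is missing.
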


\begin{Rem}
A more conceptual and computation-free proof of Proposition~\ref{yangian integral forms coincide}
is provided in the end of Appendix~\ref{ssec RTT version of Drinfeld-Gavarini}.
\end{Rem}


\subsection{The Drinfeld evaluation homomorphism $\ev$}\label{ssec Drinfeld evaluation}
\

While the universal enveloping algebra (over $\BC[\hbar]$) $U(\fg)$ is
always embedded into the Yangian $Y_\hbar(\fg)$, in type $A$ there also
exists a $\BC[\hbar]$-algebra epimorphism
\begin{equation*}
  \ev\colon Y_\hbar(\ssl_n)\twoheadrightarrow U(\ssl_n)
\end{equation*}
discovered in~\cite[Theorem 9]{d1}.
This homomorphism is given in the $J$-presentation of $Y_\hbar(\ssl_n)$.
We shall skip explicit formulas, referring the reader to~\cite{d1}
and~\cite[Proposition 12.1.15]{cp}.

Define $s_i\in Y_\hbar(\ssl_n)$ via
\begin{equation}\label{shift elements}
  s_i:=h_i^{(1)}-\frac{\hbar}{2}(h_i^{(0)})^2,
\end{equation}
so that
\begin{equation*}
  [s_i,e_{i'}^{(r)}]=c_{ii'}e_{i'}^{(r+1)},\
  [s_i,f_{i'}^{(r)}]=-c_{ii'}f_{i'}^{(r+1)}.
\end{equation*}
As a result, $Y_\hbar(\ssl_n)$ is generated by
$\{e_i^{(0)}, f_i^{(0)}, s_1\}_{i=1}^{n-1}$. We will need
the following explicit formulas:
\begin{equation}\label{evaluation explicit ag}
  \ev(e_i^{(0)})=E_{i,i+1},\ \ev(f_i^{(0)})=E_{i+1,i},\
  \ev(s_1)=\frac{\hbar}{2}(\omega_2 h_1-E_{12}E_{21}-E_{21}E_{12}),
\end{equation}
where
  $h_1=E_{11}-E_{22},\ \omega_2=E_{11}+E_{22}-\frac{2}{n} I_n,\ I_n=E_{11}+\ldots+E_{nn}$.
The last equality of~(\ref{evaluation explicit ag}) is verified by
a straightforward computation (sketched in~\cite[$\S 5.7$]{ag}).

Let $\wt{\gamma}\colon U(\gl_n)\twoheadrightarrow U(\ssl_n)$ be the $\BC[\hbar]$-algebra
epimorphism defined by $\wt{\gamma}(X)=X-\frac{\tr(X)}{n}\cdot I_n$ for $X\in \gl_n$.
We also define a $\BC[\hbar]$-algebra embedding
  $\wt{\Upsilon}\colon Y_\hbar(\ssl_n)\hookrightarrow Y^\rtt_\hbar(\gl_n)$
as a composition of an automorphism of $Y_\hbar(\ssl_n)$ defined by
  $e_i(z)\mapsto e_i(z-\hbar), f_i(z)\mapsto f_i(z-\hbar), h_i(z)\mapsto h_i(z-\hbar)$,
a natural embedding $Y_\hbar(\ssl_n)\hookrightarrow Y_\hbar(\gl_n)$,
and the isomorphism $\Upsilon\colon Y_\hbar(\gl_n)\iso Y^\rtt_\hbar(\gl_n)$
of Theorem~\ref{Yangian Gauss decomposition}.

The key result of this subsection establishes the relation between the
evaluation homomorphism $\ev$ and the RTT evaluation homomorphism $\ev^\rtt$
of Lemma~\ref{embed+ev rtt yangian}(b):

\begin{Thm}\label{compatibility of evaluations yangian}
The following diagram is commutative:
\begin{equation}\label{diagram yangian}
  \begin{CD}
    Y_\hbar(\ssl_n) @>\wt{\Upsilon}>> Y^\rtt_\hbar(\gl_n)\\
    @VV{\on{\ev}}V @V{\ev^\rtt}VV\\
    U(\ssl_n) @<\wt{\gamma}<< U(\gl_n)
    \end{CD}
\end{equation}
\end{Thm}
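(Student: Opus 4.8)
The plan is to verify commutativity of~\eqref{diagram yangian} on a set of generators of $Y_\hbar(\ssl_n)$, for which I would use the generating set $\{e_i^{(0)}, f_i^{(0)}, s_1\}_{i=1}^{n-1}$ identified above. The point is that both composite maps $\wt\gamma\circ\ev^\rtt\circ\wt\Upsilon$ and $\ev$ are $\BC[\hbar]$-algebra homomorphisms $Y_\hbar(\ssl_n)\to U(\ssl_n)$, so it suffices to check they agree on these three types of generators; the right-hand explicit formulas~\eqref{evaluation explicit ag} tell us exactly what $\ev$ does, so the whole problem reduces to computing $\wt\Upsilon$-images of $e_i^{(0)}, f_i^{(0)}, s_1$ inside $Y^\rtt_\hbar(\gl_n)$, applying $\ev^\rtt$ via Remark~\ref{property T-matrix}(b), and then applying $\wt\gamma$.

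First I would unravel $\wt\Upsilon$ on the easy generators. By definition $\wt\Upsilon$ is the shift automorphism $e_i(z)\mapsto e_i(z-\hbar)$, $f_i(z)\mapsto f_i(z-\hbar)$, $h_i(z)\mapsto h_i(z-\hbar)$ followed by $Y_\hbar(\ssl_n)\hookrightarrow Y_\hbar(\gl_n)$ and then $\Upsilon$. On the zero modes the shift does nothing to $e_i^{(0)}, f_i^{(0)}$ (it only reshuffles higher modes), so $\wt\Upsilon(e_i^{(0)})$ is, via~\eqref{matching yangian}, essentially the leading coefficient of $e_{i,i+1}(z)$, i.e. $e^{(1)}_{i,i+1}$ in the notation of~\eqref{t-modes yangian}; and from the Gauss decomposition $T(z)=F(z)G(z)E(z)$ one reads off at order $z^{-1}$ that $\hbar e^{(1)}_{i,i+1}$ equals $\hbar t^{(1)}_{i,i+1}$ (since $g_i(z)=1+O(z^{-1})$ and the $F$-part contributes only in lower rows). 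Hence $\ev^\rtt\wt\Upsilon(e_i^{(0)})=E_{i,i+1}$ by Remark~\ref{property T-matrix}(b), and $\wt\gamma$ fixes $E_{i,i+1}$ for $i<n$ since it is traceless; this matches the first equality of~\eqref{evaluation explicit ag}. The $f_i^{(0)}$ case is symmetric, giving $E_{i+1,i}$.

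The substantive computation is the generator $s_1=h_1^{(1)}-\tfrac{\hbar}{2}(h_1^{(0)})^2$. I would first express $h_1(z)=\zeta_1(z)^{-1}\zeta_2(z-\hbar/2)$, push it through the shift automorphism and $\Upsilon$ using $\zeta_j(z)\mapsto g_j(z+j\hbar/2)$, so that $\wt\Upsilon(h_1(z))$ becomes an explicit ratio of the Gauss factors $g_1, g_2$ evaluated at shifted arguments. Then, extracting the $z^{-1}$ and $z^{-2}$ coefficients, I would get $\wt\Upsilon(h_1^{(0)})$ and $\wt\Upsilon(h_1^{(1)})$ in terms of $g^{(1)}_1, g^{(1)}_2, g^{(2)}_1, g^{(2)}_2$. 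The next step is to re-express these $g$-modes in terms of the $t^{(r)}_{ij}$ via the Gauss decomposition: at order $z^{-1}$, $g^{(1)}_i = t^{(1)}_{ii}$, and at order $z^{-2}$ one gets $g^{(2)}_i$ as $t^{(2)}_{ii}$ corrected by quadratic terms in $t^{(1)}$ coming from the $F$- and $E$-factors (specifically $g^{(2)}_1=t^{(2)}_{11}$ and $g^{(2)}_2=t^{(2)}_{22}-\hbar\, t^{(1)}_{21}t^{(1)}_{12}$, up to sign/shift bookkeeping). Now I apply $\ev^\rtt$: by Remark~\ref{property T-matrix}(b), $t^{(1)}_{ij}\mapsto E_{ij}$ and $t^{(2)}_{ij}\mapsto 0$. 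So the $t^{(2)}$-terms die and only the quadratic expressions in $E_{ij}$ survive, together with the quadratic $-\tfrac{\hbar}{2}(h_1^{(0)})^2=-\tfrac{\hbar}{2}(E_{11}-E_{22})^2$ contribution. Collecting everything and simplifying, I expect to land on $\tfrac{\hbar}{2}\big((E_{11}+E_{22})h_1 - E_{12}E_{21}-E_{21}E_{12}\big)$ in $U(\gl_n)$, and then $\wt\gamma$ replaces $E_{11}+E_{22}$ by $\omega_2=E_{11}+E_{22}-\tfrac{2}{n}I_n$ and fixes the rest, reproducing exactly the last equality of~\eqref{evaluation explicit ag}.

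\textbf{Main obstacle.} The genuinely delicate part is the $s_1$ computation: keeping track of the half-integer spectral shifts ($z\mapsto z-\hbar$, then $z\mapsto z+j\hbar/2$) through the inverse series $\zeta_1(z)^{-1}$ and the product, and correctly identifying the quadratic correction terms in the Gauss decomposition at order $z^{-2}$ (these are where the $E_{12}E_{21}$ and $E_{21}E_{12}$ terms are born). A clean way to organize this is to work with the $2\times 2$ upper-left corner of $T(z)$ only — since $s_1, e_1, f_1$ and all relevant Gauss factors $g_1, g_2, e_{12}, f_{21}$ live there — which reduces the bookkeeping to a $\gl_2$ Gauss decomposition and makes the $z^{-2}$ expansion manageable. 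Once the generator computations are done, surjectivity of all maps involved and the fact that both composites are algebra homomorphisms finish the argument immediately.
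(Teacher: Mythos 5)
Your proposal is correct and follows essentially the same route as the paper: reduce to the generators $e_i^{(0)}, f_i^{(0)}, s_1$, compute $\wt{\Upsilon}(s_1)$ in terms of the modes $g_j^{(1)}, g_j^{(2)}$, and identify the quadratic correction $-\hbar E_{21}E_{12}$ coming from the order-$z^{-2}$ term of the $2\times2$ Gauss block. The only (inessential) difference is that the paper reads off $\ev^\rtt(g_2^{(2)})$ directly from the Gauss decomposition of the evaluated matrix $1+\hbar Tz^{-1}$, whereas you first express $g_2^{(2)}=t^{(2)}_{22}-\hbar t^{(1)}_{21}t^{(1)}_{12}$ and then apply $\ev^\rtt$; both give the same answer.
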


\begin{proof}
It suffices to verify $\wt{\gamma}(\ev^\rtt(\wt{\Upsilon}(X)))=\ev(X)$
for all $X\in \{e_i^{(0)}, f_i^{(0)}, s_1\}_{i=1}^{n-1}$. This
equality is obvious for $e_i^{(0)}, f_i^{(0)}$, hence, it remains to
verify it for $X=s_1$.

Note that
  $\wt{\Upsilon}(h_1(z))=g_1(z-\hbar/2)^{-1}g_2(z-\hbar/2)$.
Using the notations of~(\ref{t-modes yangian}), this implies
\begin{equation*}
  \wt{\Upsilon}(h_1^{(0)})=g_2^{(1)}-g_1^{(1)},\
  \wt{\Upsilon}(h_1^{(1)})=
  \hbar\left((g_1^{(1)})^2-g_1^{(1)}g_2^{(1)}+\frac{g_2^{(1)}-g_1^{(1)}}{2}\right)
  +(g_2^{(2)}-g_1^{(2)}),
\end{equation*}
so that
\begin{equation*}
  \wt{\Upsilon}(s_1)=
  \frac{\hbar}{2}\left((g_1^{(1)})^2-(g_2^{(1)})^2+g_2^{(1)}-g_1^{(1)}\right)+(g_2^{(2)}-g_1^{(2)}).
\end{equation*}

On the other hand, considering the Gauss decomposition of the matrix
$1+\hbar Tz^{-1}=\ev^\rtt(T(z))$ of~Remark~\ref{property T-matrix}(b), we find
  $\ev^\rtt\colon g_1^{(1)}\mapsto E_{11},\ g_1^{(2)}\mapsto 0,\
   g_2^{(1)}\mapsto E_{22},\ g_2^{(2)}\mapsto -\hbar E_{21}E_{12}$.
Therefore, we obtain
\begin{equation*}
  \ev^\rtt(\wt{\Upsilon}(s_1))=\frac{\hbar}{2}(E_{11}^2-E_{22}^2+E_{22}-E_{11}-2E_{21}E_{12})=
  \frac{\hbar}{2}(E_{11}^2-E_{22}^2-E_{12}E_{21}-E_{21}E_{12}).
\end{equation*}
Applying $\wt{\gamma}$, we finally get
\begin{equation*}
  \wt{\gamma}(\ev^\rtt(\wt{\Upsilon}(s_1)))=
  \frac{\hbar}{2}(\omega_2 h_1-E_{12}E_{21}-E_{21}E_{12})=\ev(s_1),
\end{equation*}
due to the last formula of~(\ref{evaluation explicit ag}).

\medskip
This completes our proof of Theorem~\ref{compatibility of evaluations yangian}.
\end{proof}


\subsection{The shifted Yangian, construction I}\label{ssec shifted Yangian}
\

In this subsection, we recall the notion of shifted Yangians following~\cite[Appendix B]{bfn}.

First, recall that given a $\BC$-algebra $A$ with an algebra filtration
$F^\bullet A=\cdots \subseteq F^{-1}A\subseteq F^0A \subseteq F^1A\subseteq\cdots$
which is separated and exhaustive (that is, $\cap_kF^k A=0$ and $\cup_k F^k A=A$),
we define the \emph{Rees algebra} of $A$ to be the graded $\BC[\hbar]$-algebra
$\Rees^{F^\bullet} A:=\bigoplus_k \hbar^k F^k A$, viewed as a subalgebra of $A[\hbar,\hbar^{-1}]$.

Following~\cite[Definition B.1]{bfn}, define the \emph{Cartan doubled Yangian}
$Y_\infty=Y_{\infty}(\ssl_n)$ to be the $\BC$-algebra generated by
$\{E_{i}^{(r)},F_{i}^{(r)},H_{i}^{(s)}\}_{1\leq i\leq n-1}^{r\geq 1, s\in \BZ}$
with the following defining relations:
\begin{equation}\label{doubled yangian}
\begin{split}
  & [H_i^{(s)},H_j^{(s')}]=0,\\
  & [E_i^{(r)},F_j^{(r')}]=\delta_{ij}H_i^{(r+r'-1)},\\
  & [H_i^{(s+1)},E_j^{(r)}]-[H_i^{(s)},E_j^{(r+1)}]=
    \frac{c_{ij}}{2}(H_i^{(s)}E_j^{(r)}+E_j^{(r)}H_i^{(s)}),\\
  & [H_i^{(s+1)},F_j^{(r)}]-[H_i^{(s)},F_j^{(r+1)}]=
    -\frac{c_{ij}}{2}(H_i^{(s)}F_j^{(r)}+F_j^{(r)}H_i^{(s)}),\\
  & [E_i^{(r+1)},E_j^{(r')}]-[E_i^{(r)},E_j^{(r'+1)}]=
    \frac{c_{ij}}{2}(E_i^{(r)}E_j^{(r')}+E_j^{(r')}E_i^{(r)}),\\
  & [F_i^{(r+1)},F_j^{(r')}]-[F_i^{(r)},F_j^{(r'+1)}]=
    -\frac{c_{ij}}{2}(F_i^{(r)}F_j^{(r')}+F_j^{(r')}F_i^{(r)}),\\
  & [E_i^{(r)},E_j^{(r')}]=0\ \mathrm{and}\ [F_i^{(r)},F_j^{(r')}]=0\  \mathrm{if}\ c_{ij}=0,\\
  & [E_i^{(r_1)},[E_i^{(r_2)},E_j^{(r')}]]+
    [E_i^{(r_2)},[E_i^{(r_1)},E_j^{(r')}]]=0\ \mathrm{if}\ c_{ij}=-1,\\
  & [F_i^{(r_1)},[F_i^{(r_2)},F_j^{(r')}]]+
    [F_i^{(r_2)},[F_i^{(r_1)},F_j^{(r')}]]=0\ \mathrm{if}\ c_{ij}=-1.
\end{split}
\end{equation}

Fix a coweight $\mu$ of $\ssl_n$ and set $b_i:=\alphavee_i(\mu)$.
Following~\cite[Definition B.2]{bfn}, define
$Y_\mu=Y_\mu(\ssl_n)$ as the quotient of $Y_\infty$ by the relations
$H_i^{(r)}=0$ for $r<-b_i$ and $H_i^{(-b_i)}=1$.

Analogously to~(\ref{PBW basis elements yangian}), define the elements
$\{E_{\alphavee}^{(r)},F_{\alphavee}^{(r)}\}_{\alphavee\in \Delta^+}^{r\geq 1}$
of $Y_\mu$ via
\begin{equation}\label{PBW basis elements yangian non-h}
\begin{split}
  & E_{\alphavee_j+\alphavee_{j+1}+\ldots+\alphavee_i}^{(r)}:=
    [\cdots[E_j^{(r)},E_{j+1}^{(1)}],\cdots,E_i^{(1)}],\\
  & F_{\alphavee_j+\alphavee_{j+1}+\ldots+\alphavee_i}^{(r)}:=
    [F_i^{(1)},\cdots,[F_{j+1}^{(1)},F_j^{(r)}]\cdots].
\end{split}
\end{equation}
Choose any total ordering on the following set of \emph{PBW generators}:
\begin{equation}\label{pbw bases}
  \{E_{\alphavee}^{(r)}\}_{\alphavee\in \Delta^+}^{r\geq 1}\cup
  \{F_{\alphavee}^{(r)}\}_{\alphavee\in \Delta^+}^{r\geq 1}\cup
  \{H_i^{(r)}\}_{1\leq i\leq n-1}^{r>-b_i}.
\end{equation}
The following PBW property of $Y_\mu$ was established in~\cite[Corollary 3.15]{fkprw}:

\begin{Thm}[\cite{fkprw}]\label{PBW fkprw}
For an arbitrary coweight $\mu$, the ordered PBW monomials in the
generators~(\ref{pbw bases}) form a $\BC$-basis of $Y_\mu$.
\end{Thm}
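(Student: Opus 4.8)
The plan is to prove the PBW property of $Y_\mu$ for an arbitrary coweight $\mu$ by reducing to the known case of a dominant coweight. First I would recall that, since $b_i = \alphavee_i(\mu)$ can be made arbitrarily large by adding a dominant coweight, one can choose a dominant coweight $\mu'$ and a sum of positive coroots $\nu$ so that $\mu = \mu' - \nu$, with $b_i' := \alphavee_i(\mu') \ge b_i$ for all $i$. The key structural observation, due to~\cite{fkprw} (and ultimately to the shift homomorphisms of~\cite{bfn}), is that there is an injective algebra homomorphism $\iota_{\mu,\mu'}\colon Y_\mu \hookrightarrow Y_{\mu'}$ (or more precisely a homomorphism between suitable localizations/completions) which on generators is essentially the identity on $E_i^{(r)}, F_i^{(r)}$ and shifts the $H_i$-series appropriately. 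Granting the dominant case, the PBW monomials of $Y_\mu$ map to products of PBW monomials of $Y_{\mu'}$, and a triangularity/degree argument would show linear independence is preserved.

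Concretely, the steps in order would be: (1) Establish spanning, i.e.\ that the ordered PBW monomials in~\eqref{pbw bases} span $Y_\mu$ over $\BC$. This is the ``easy half'' and follows by induction on a suitable filtration from the defining relations~\eqref{doubled yangian}: each relation lets one rewrite a product of two out-of-order PBW generators as a linear combination of ordered monomials of lower complexity, where complexity is measured by (total degree in the $r$-indices, length of the monomial, number of inversions). The relations $H_i^{(r)} = 0$ for $r < -b_i$ and $H_i^{(-b_i)} = 1$ are used to eliminate the remaining $H$-modes. (2) Establish linear independence, which is the substantive part: here I would invoke Theorem~\ref{PBW fkprw} for a dominant $\mu'$ (where independence is proved in~\cite{fkprw} via a faithful representation on a tensor product of evaluation modules, or via the shuffle/vertex-type realization) together with the embedding $Y_\mu \hookrightarrow Y_{\mu'}$, checking that the ordered PBW monomials of $Y_\mu$ are sent to $\BC$-linearly independent elements of $Y_{\mu'}$ because their images are themselves ordered PBW monomials (or products thereof) in the target up to lower-order corrections. (3) Conclude that the monomials form a basis.

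The main obstacle I anticipate is step (2), and more specifically making the reduction to the dominant case rigorous: one must construct the homomorphism $Y_\mu \to Y_{\mu'}$ and verify it is injective, and injectivity is not formal — it is precisely where one needs either the explicit action on a faithful module or the compatibility with the known PBW theorem for $Y_{\mu'}$ in a way that does not secretly already assume what we want to prove. An alternative route that avoids the embedding is to prove independence directly: exhibit $Y_\mu$ as acting faithfully on an explicit module (a shifted analogue of a Verma or a tensor product of evaluation representations), and use the action to separate PBW monomials; but controlling the ``negative'' modes $H_i^{(s)}$ with $-b_i < s \le 0$ in such a module is delicate. Since the statement attributes the result to~\cite[Corollary 3.15]{fkprw}, the cleanest exposition is to state it as a citation and only sketch the spanning argument plus the reduction, rather than reproduce the full representation-theoretic input; that is the approach I would follow here, noting that the detailed proof is in \emph{loc.\ cit.}
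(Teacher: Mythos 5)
The paper does not actually prove this statement: it is imported verbatim from \cite[Corollary 3.15]{fkprw} with no proof supplied, so your closing recommendation --- state it as a citation and only sketch the ingredients --- is exactly the paper's treatment. Your step (1) is also the right spanning argument; it is \cite[Proposition 3.13]{fkprw}, which the paper itself invokes (for the $\hbar$-version) in the proof of Theorem~\ref{PBW fkprw II}.

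Your step (2), however, contains a genuine directional error. The shift homomorphisms that exist in the literature --- and that this paper uses, see Proposition~\ref{shift homomorphism} and Lemma~\ref{induction for pbw} --- have the form $\iota_{\mu',\nu_1,\nu_2}\colon Y_{\mu'}\hookrightarrow Y_{\mu'+\nu_1+\nu_2}$ with $\nu_1,\nu_2$ \emph{antidominant}: they map the algebra with the more dominant shift into the algebra with the less dominant one. There is no homomorphism $Y_\mu\to Y_{\mu'}$ with $\mu'$ dominant and $\mu'\geq\mu$ (the two algebras are quotients of $Y_\infty$ by incomparable sets of relations, so no such map is even formally available), hence linear independence cannot be transported from the dominant case down to $Y_\mu$ along the route you describe. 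Moreover, the injectivity of the shift homomorphisms that do exist is itself deduced \emph{from} PBW-type bases (cf.\ the paper's citation of \cite[Theorem 10.25, Appendix I]{ft}, and Step 4 of the proof of Theorem~\ref{PBW for integral shifted}, where the PBW basis is the input), so the circularity you flag is real and unavoidable on this path. Note finally that the paper's logical flow is the reverse of yours: Theorem~\ref{PBW fkprw} for arbitrary $\mu$ is the cited input from which the dominant-shift statements (Theorem~\ref{PBW fkprw II}, and ultimately Theorem~\ref{Gavarini=Rees}) are derived, not the other way around. If one insists on writing out a proof of linear independence, it must come from elsewhere --- a faithful representation or the argument of \cite{fkprw} itself --- so of your two suggested routes, only the second (a faithful module) is viable.
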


Fix a pair of coweights $\mu_1,\mu_2$ such that $\mu_1+\mu_2=\mu$.
Following~\cite[$\S 5.4$]{fkprw}, consider the filtration $F^\bullet_{\mu_1,\mu_2}Y_\mu$
of $Y_\mu$ by defining degrees of the PBW generators as follows:
\begin{equation}\label{yangian filtration}
  \deg E_{\alphavee}^{(r)}=\alphavee(\mu_1)+r,\
  \deg F_{\alphavee}^{(r)}=\alphavee(\mu_2)+r,\
  \deg H_i^{(r)}=\alphavee_i(\mu)+r.
\end{equation}
More precisely, $F^k_{\mu_1,\mu_2}Y_\mu$ is defined as the span of all
ordered PBW monomials whose total degree is at most $k$.

According to~\cite{fkprw}, this defines an algebra filtration and the
Rees algebras $\Rees^{F^\bullet_{\mu_1,\mu_2}} Y_\mu$ are canonically
isomorphic for any choice of $\mu_1,\mu_2$ as above.

\begin{Def}\label{defn 1 of shifted y}
Define the \emph{shifted Yangian} $\bY_\mu=\bY_\mu(\ssl_n)$ via
$\bY_\mu:=\Rees^{F^\bullet_{\mu_1,\mu_2}} Y_\mu$.
\end{Def}


\subsection{The shifted Yangian with a dominant shift, construction II}\label{ssec shifted Yangian 2}
\

Let us now recall an alternative (historically the first) definition of
the dominantly shifted Yangians proposed in~\cite{kwwy}. Fix a dominant coweight $\mu$
of $\ssl_n$ and set $b_i:=\alphavee_i(\mu)$ (the dominance condition on $\mu$ is
equivalent to $b_i \geq 0$ for all $i$).
Let $Y_{\mu,\hbar}$ be the associative $\BC[\hbar]$-algebra generated by
  $\{e_i^{(r)},f_i^{(r)},h_i^{(s_i)}\}_{1\leq i\leq n-1}^{r\geq 0, s_i\geq -b_i}$
with the following defining relations:
\begin{equation}\label{kwwy yangian}
\begin{split}
  & [h_i^{(s)},h_j^{(s')}]=0,\\
  & [e_i^{(r)},f_j^{(r')}]=
    \begin{cases}
      h_i^{(r+r')}, & \mbox{if } i=j\ \mathrm{and}\ r+r'\geq -b_i  \\
      0, & \mbox{otherwise}
    \end{cases},\\
  & [h_i^{(-b_i)},e_j^{(r)}]=c_{ij} e_j^{(r)},\\
  & [h_i^{(s+1)},e_j^{(r)}]-[h_i^{(s)},e_j^{(r+1)}]=
    \frac{c_{ij}\hbar}{2}(h_i^{(s)}e_j^{(r)}+e_j^{(r)}h_i^{(s)}),\\
  & [h_i^{(-b_i)},f_j^{(r)}]=-c_{ij} f_j^{(r)},\\
  & [h_i^{(s+1)},f_j^{(r)}]-[h_i^{(s)},f_j^{(r+1)}]=
    -\frac{c_{ij}\hbar}{2}(h_i^{(s)}f_j^{(r)}+f_j^{(r)}h_i^{(s)}),\\
  & [e_i^{(r+1)},e_j^{(r')}]-[e_i^{(r)},e_j^{(r'+1)}]=
    \frac{c_{ij}\hbar}{2}(e_i^{(r)}e_j^{(r')}+e_j^{(r')}e_i^{(r)}),\\
  & [f_i^{(r+1)},f_j^{(r')}]-[f_i^{(r)},f_j^{(r'+1)}]=
    -\frac{c_{ij}\hbar}{2}(f_i^{(r)}f_j^{(r')}+f_j^{(r')}f_i^{(r)}),\\
  & [e_i^{(r)},e_j^{(r')}]=0\ \mathrm{and}\ [e_i^{(r)},e_j^{(r')}]=0\  \mathrm{if}\ c_{ij}=0,\\
  & [e_i^{(r_1)},[e_i^{(r_2)},e_j^{(r')}]]+
    [e_i^{(r_2)},[e_i^{(r_1)},e_j^{(r')}]]=0\ \mathrm{if}\ c_{ij}=-1,\\
  & [f_i^{(r_1)},[f_i^{(r_2)},f_j^{(r')}]]+
    [f_i^{(r_2)},[f_i^{(r_1)},f_j^{(r')}]]=0\ \mathrm{if}\ c_{ij}=-1.
\end{split}
\end{equation}

\begin{Rem}
The main differences between~(\ref{kwwy yangian}) and~(\ref{doubled yangian}) are:
(1) all indices $r,s$ are shifted by $-1$,
(2) $\hbar$ appears in the right-hand sides to make the equations look homogeneous.
\end{Rem}

Analogously to~(\ref{PBW basis elements yangian},~\ref{PBW basis elements yangian non-h}),
define the elements
  $\{e_{\alphavee}^{(r)},f_{\alphavee}^{(r)}\}_{\alphavee\in \Delta^+}^{r\geq 0}$
of $Y_{\mu,\hbar}$ via
\begin{equation}\label{PBW basis elements yangian non-h II}
\begin{split}
  & e_{\alphavee_j+\alphavee_{j+1}+\ldots+\alphavee_i}^{(r)}:=
    [\cdots[e_j^{(r)},e_{j+1}^{(0)}],\cdots,e_i^{(0)}],\\
  & f_{\alphavee_j+\alphavee_{j+1}+\ldots+\alphavee_i}^{(r)}:=
    [f_i^{(0)},\cdots,[f_{j+1,}^{(0)},f_j^{(r)}]\cdots].
\end{split}
\end{equation}
Choose any total ordering on the following set of \emph{PBW generators}:
\begin{equation}\label{pbw bases II}
  \{e_{\alphavee}^{(r)}\}_{\alphavee\in \Delta^+}^{r\geq 0}\cup
  \{f_{\alphavee}^{(r)}\}_{\alphavee\in \Delta^+}^{r\geq 0}\cup
  \{h_i^{(s_i)}\}_{1\leq i\leq n-1}^{s_i\geq -b_i}.
\end{equation}
The following is analogous to Theorem~\ref{PBW fkprw}:

\begin{Thm}\label{PBW fkprw II}
For an arbitrary dominant coweight $\mu$, the ordered PBW monomials in the
generators~(\ref{pbw bases II}) form a basis of a free $\BC[\hbar]$-module $Y_{\mu,\hbar}$.
\end{Thm}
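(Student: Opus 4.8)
The plan is to establish the spanning half of the PBW property by a straightening argument, and the linear-independence half by a grading trick that reduces it to the already-known PBW theorem for $Y_\mu$ (Theorem~\ref{PBW fkprw}).

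\emph{A grading on $Y_{\mu,\hbar}$.} The first step is to observe that $Y_{\mu,\hbar}$ is $\BZ_{\geq0}$-graded via $\deg\hbar=1$, $\deg e_i^{(r)}=r$, $\deg f_i^{(r)}=r+b_i$, $\deg h_i^{(s)}=s+b_i$. Indeed, a direct inspection shows that every relation in~(\ref{kwwy yangian}) is homogeneous for these degrees — this is precisely why the powers of $\hbar$ were inserted on the right-hand sides, cf.\ the Remark following~(\ref{kwwy yangian}) — while dominance of $\mu$ (so $b_i\geq0$, together with $s_i\geq-b_i$) forces all generators, hence all of $Y_{\mu,\hbar}$, into nonnegative degrees. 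Moreover each PBW generator in~(\ref{pbw bases II}) is homogeneous: from~(\ref{PBW basis elements yangian non-h II}) one reads off $\deg e_\alphavee^{(r)}=r$, $\deg f_\alphavee^{(r)}=r+\alphavee(\mu)$, and $\deg h_i^{(s_i)}=s_i+\alphavee_i(\mu)$, in agreement (up to the overall index shift between constructions I and II) with the filtration degrees~(\ref{yangian filtration}) taken with $\mu_1=0,\mu_2=\mu$.

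\emph{Spanning (the main obstacle).} The genuinely laborious step is to check that the ordered PBW monomials in the generators~(\ref{pbw bases II}) span $Y_{\mu,\hbar}$ over $\BC[\hbar]$. This is routine and parallels~\cite{fkprw} (as well as the shuffle-algebra treatment of the nonshifted case in~\cite[\S6]{t}): using~(\ref{kwwy yangian}) one first expresses the nonsimple root vectors $e_\alphavee^{(r)},f_\alphavee^{(r)}$, and then rewrites an arbitrary product of PBW generators as a $\BC[\hbar]$-linear combination of ordered monomials. The induction is organized by the grading above — each bracket $[m,m']$ of homogeneous PBW generators has the same degree as $mm'$ and, modulo $\hbar$, strictly lower complexity in a suitable secondary ordering — so the extra factors of $\hbar$ in~(\ref{kwwy yangian}) cause no difficulty.

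\emph{Linear independence.} Let $M:=\bigoplus_\alpha\BC[\hbar]\,m_\alpha$ be the free graded $\BC[\hbar]$-module on the ordered PBW monomials $m_\alpha$, and let $\phi\colon M\twoheadrightarrow Y_{\mu,\hbar}$ be the evaluation map, which is graded by the first step and surjective by the second. It remains to prove $\phi$ injective; since $\ker\phi$ is a graded submodule, it suffices to rule out a nonzero homogeneous element of degree $d$, which has the shape $v=\sum_{\alpha:\deg m_\alpha\leq d}\lambda_\alpha\hbar^{d-\deg m_\alpha}m_\alpha$ with $\lambda_\alpha\in\BC$, and $v=0$ iff all $\lambda_\alpha=0$. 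Now recall (cf.\ the Remark following~(\ref{kwwy yangian})) that the specialization $\hbar=1$ together with the re-indexing $e_i^{(r)}\mapsto E_i^{(r+1)}$, $f_i^{(r)}\mapsto F_i^{(r+1)}$, $h_i^{(s)}\mapsto H_i^{(s+1)}$ identifies $Y_{\mu,\hbar}/(\hbar-1)$ with $Y_\mu$; let $q\colon Y_{\mu,\hbar}\twoheadrightarrow Y_\mu$ be the resulting $\BC$-algebra epimorphism, so $q(\hbar)=1$. Since the nested-commutator formulas~(\ref{PBW basis elements yangian non-h II}) match~(\ref{PBW basis elements yangian non-h}) after this index shift, $q$ sends each PBW generator of~(\ref{pbw bases II}) to the corresponding one of~(\ref{pbw bases}), so $\alpha\mapsto q(m_\alpha)$ is a bijection onto the ordered PBW monomials of $Y_\mu$. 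Applying $q$ to $\phi(v)=0$ gives $\sum_\alpha\lambda_\alpha\,q(m_\alpha)=0$ in $Y_\mu$, whence $\lambda_\alpha=0$ for all $\alpha$ by Theorem~\ref{PBW fkprw}. Thus $v=0$, $\phi$ is an isomorphism, and the ordered PBW monomials form a $\BC[\hbar]$-basis of the free module $Y_{\mu,\hbar}$. (Alternatively, one could bypass the straightening step entirely by identifying $Y_{\mu,\hbar}$ with the Rees algebra $\Rees^{F^\bullet_{0,\mu}}Y_\mu$ — a special case of Theorem~\ref{Gavarini=Rees} — for which the claimed basis is immediate, since by construction $F^k_{0,\mu}Y_\mu$ is the span of the PBW monomials of degree $\leq k$.)
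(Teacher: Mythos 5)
Your proof is correct and follows essentially the same route as the paper: spanning is deferred to the straightening argument of \cite[Proposition 3.13]{fkprw}, and linear independence is reduced to Theorem~\ref{PBW fkprw} via the isomorphism $Y_{\mu,\hbar}/(\hbar-1)\simeq Y_\mu$. The one genuine (minor) variation is the mechanism of that reduction: the paper specializes $\hbar$ at every nonzero complex number (all such specializations being isomorphic to the $\hbar=1$ one) and concludes that polynomial coefficients vanishing at all those points are zero, whereas you use the homogeneity of the relations~(\ref{kwwy yangian}) to restrict to homogeneous elements of the kernel, whose coefficients are scalars times powers of $\hbar$, so that the single specialization $\hbar=1$ suffices; both are valid, and your grading is precisely the one the paper introduces later in Appendix~\ref{ssec appendix shifted yangians}. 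One caveat: the parenthetical alternative at the end is circular and should be dropped. The identification of $Y_{\mu,\hbar}$ with a Rees algebra of $Y_\mu$ (Theorem~\ref{Gavarini=Rees}, proved as Theorem~\ref{identification of two definitions}) is itself deduced in the appendix from the freeness and PBW basis of $Y_{\mu,\hbar}$, i.e.\ from the theorem being proved; moreover $\Rees^{F^\bullet_{0,\mu}}Y_\mu$ is by definition $\bY_\mu$, not $Y_{\mu,\hbar}$ --- the latter corresponds to the shifted filtration $G^\bullet_{0,\mu}$ of the appendix.
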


\begin{proof}
Arguing as in~\cite[Proposition 3.13]{fkprw}, it is easy to check that $Y_{\mu,\hbar}$
is spanned by the ordered PBW monomials. To prove the linear independence of the
ordered PBW monomials, it suffices to verify that their images are linearly independent
when we specialize $\hbar$ to any nonzero complex number
(cf.~our proof of Theorem~\ref{PBW for RTT yangian}). The latter holds for $\hbar=1$
(and thus for any $\hbar\neq 0$, since all such specializations are isomorphic), due to
Theorem~\ref{PBW fkprw} and the isomorphism $Y_{\mu,\hbar}/(\hbar-1)\simeq Y_\mu$.
\end{proof}

Following~\cite[$\S $3D,3F]{kwwy}\footnote{Let us emphasize that~\cite[Theorem 3.5]{kwwy}
is wrong, as pointed out in~\cite{bfn}. That is, it does not include a complete set of relations,
except when $\fg=\ssl_2$.}, we introduce the following:

\begin{Def}\label{dominant shifted Yangian}
Let $\bY'_{\mu}$ be the $\BC[\hbar]$-subalgebra of $Y_{\mu,\hbar}$ generated by
\begin{equation*}
  \{\hbar e_{\alphavee}^{(r)}\}_{\alphavee\in \Delta^+}^{r\geq 0}\cup
  \{\hbar f_{\alphavee}^{(r)}\}_{\alphavee\in \Delta^+}^{r\geq 0}\cup
  \{\hbar h_i^{(s_i)}\}_{1\leq i\leq n-1}^{s_i\geq -b_i}.
\end{equation*}
\end{Def}

The following is the main result of this subsection:

\begin{Thm}\label{Gavarini=Rees}
For any dominant coweight $\mu$, there is a canonical $\BC[\hbar]$-algebra isomorphism
\begin{equation*}
  \bY_\mu\simeq \bY'_\mu.
\end{equation*}
\end{Thm}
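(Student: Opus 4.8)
The plan is to construct the isomorphism $\bY_\mu \simeq \bY'_\mu$ by matching generators and then proving that the obvious candidate map is both well-defined and bijective. Recall that $\bY_\mu = \Rees^{F^\bullet_{\mu_1,\mu_2}} Y_\mu$, where the PBW generators $E_\alphavee^{(r)}, F_\alphavee^{(r)}, H_i^{(r)}$ of $Y_\mu$ have been assigned degrees via~(\ref{yangian filtration}), so that inside $Y_\mu[\hbar,\hbar^{-1}]$ the Rees algebra is generated by the rescaled elements $\hbar^{\alphavee(\mu_1)+r} E_\alphavee^{(r)}$, $\hbar^{\alphavee(\mu_2)+r} F_\alphavee^{(r)}$, $\hbar^{\alphavee_i(\mu)+r} H_i^{(r)}$. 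On the other side, $\bY'_\mu \subset Y_{\mu,\hbar}$ is generated by $\hbar e_\alphavee^{(r)}, \hbar f_\alphavee^{(r)}, \hbar h_i^{(s_i)}$. The first step is to recall (from~\cite{fkprw}, as summarized before Definition~\ref{defn 1 of shifted y}) that the canonical isomorphism $Y_{\mu,\hbar}/(\hbar-1) \simeq Y_\mu$ identifies $e_i^{(r)} \leftrightarrow E_i^{(r+1)}$, $f_i^{(r)} \leftrightarrow F_i^{(r+1)}$, $h_i^{(s)} \leftrightarrow H_i^{(s+1)}$ (the index shift by $1$ noted in the Remark after~(\ref{kwwy yangian})), and that this isomorphism intertwines the composite bracket elements $e_\alphavee^{(r)} \leftrightarrow E_\alphavee^{(r+1)}$, $f_\alphavee^{(r)} \leftrightarrow F_\alphavee^{(r+1)}$.

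\textbf{Constructing the map.} Next I would exhibit a $\BC[\hbar]$-algebra homomorphism $\Theta\colon Y_{\mu,\hbar} \to Y_\mu[\hbar,\hbar^{-1}]$ by sending $e_\alphavee^{(r)} \mapsto \hbar^{\alphavee(\mu_1)+r-1}\cdot E_\alphavee^{(r+1)}$ (and similarly for $f,h$ with the weights $\mu_2$, $\mu$), where the exponents are precisely those dictated by the filtration degrees in~(\ref{yangian filtration}) with the index shift accounted for. One must check that $\Theta$ respects all the defining relations~(\ref{kwwy yangian}); this is a direct but tedious bookkeeping exercise in which the homogeneity built into the relations (the $\hbar$ factors on the right-hand sides of~(\ref{kwwy yangian})) exactly compensates for the rescaling, since the degree assignment~(\ref{yangian filtration}) is an \emph{algebra} filtration on $Y_\mu$ — meaning $F^k \cdot F^{k'} \subseteq F^{k+k'}$ — which is exactly the statement that the Rees construction produces a well-defined $\BC[\hbar]$-algebra. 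Once $\Theta$ is defined, $\Theta$ restricted to the subalgebra generated by $\{\hbar e_\alphavee^{(r)}, \hbar f_\alphavee^{(r)}, \hbar h_i^{(s_i)}\}$ lands inside $\bigoplus_k \hbar^k F^k Y_\mu = \bY_\mu$, since $\Theta(\hbar e_\alphavee^{(r)}) = \hbar^{\alphavee(\mu_1)+r}\cdot E_\alphavee^{(r+1)} \in \hbar^{\deg}F^{\deg}$; so $\Theta$ restricts to a homomorphism $\bY'_\mu \to \bY_\mu$. A symmetric construction (or direct inspection) produces a candidate inverse.

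\textbf{Proving bijectivity via PBW.} The final and most substantive step is to show this map is an isomorphism, and here I would argue on PBW bases. By Theorem~\ref{PBW fkprw II}, $Y_{\mu,\hbar}$ is a free $\BC[\hbar]$-module with basis the ordered PBW monomials in~(\ref{pbw bases II}); hence $\bY'_\mu$ is spanned over $\BC[\hbar]$ by the monomials obtained by replacing each generator $g$ by $\hbar g$ (and a priori it could be strictly larger than the span of such "rescaled monomials", so one must be slightly careful — but since $\hbar$ is central and not a zero divisor, a product of rescaled generators equals $\hbar^{(\text{number of factors})}$ times the corresponding PBW monomial, and these are $\BC[\hbar]$-linearly independent). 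On the other side, by Theorem~\ref{PBW fkprw} the ordered PBW monomials in~(\ref{pbw bases}) form a $\BC$-basis of $Y_\mu$, and the Rees algebra $\bY_\mu$ therefore has $\BC[\hbar]$-basis $\{\hbar^{d(\mathbf m)} \mathbf m\}$ where $\mathbf m$ runs over PBW monomials of $Y_\mu$ and $d(\mathbf m)$ is its degree under~(\ref{yangian filtration}). The map $\Theta$ sends the rescaled PBW monomial $\prod \hbar g_\alpha$ of $\bY'_\mu$ to $\hbar^{\sum(\deg g_\alpha)}\cdot(\text{corresponding PBW monomial of }Y_\mu)$; since the degree of a PBW monomial is additive over its factors (this is how $F^k$ was defined — the span of monomials of total degree $\le k$), we get $\sum \deg g_\alpha = d(\mathbf m)$, so $\Theta$ sends the spanning set of $\bY'_\mu$ bijectively onto the $\BC[\hbar]$-basis of $\bY_\mu$. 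This simultaneously shows the spanning set of $\bY'_\mu$ is a basis and that $\Theta$ is an isomorphism. The main obstacle I anticipate is the well-definedness check for $\Theta$ on all relations of~(\ref{kwwy yangian}): in particular the quadratic relations involving $[e_i^{(r+1)},e_j^{(r')}]-[e_i^{(r)},e_j^{(r'+1)}]$ and the Serre-type relations require confirming that the $\hbar$-exponents on both sides agree after rescaling, which ultimately reduces to the already-cited fact from~\cite{fkprw} that~(\ref{yangian filtration}) defines an algebra filtration whose Rees algebra is independent of the choice of $\mu_1,\mu_2$; invoking that result is what makes the calculation routine rather than delicate.
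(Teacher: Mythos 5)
Your strategy is essentially the paper's: the paper also realizes both algebras inside $Y_\mu[\hbar,\hbar^{-1}]$, identifying $Y_{\mu,\hbar}$ with the Rees algebra of $Y_\mu$ for a second (shifted-by-one) filtration and then matching the two PBW bases, which is exactly your $\Theta$ in disguise. However, as written your proof contains a concrete off-by-one error in the exponents defining $\Theta$, and this error breaks two steps. The correct rescaling (this is the paper's Lemma~\ref{trivial extension}) is $e_{\alphavee}^{(r)}\mapsto \hbar^{\alphavee(\mu_1)+r}E_{\alphavee}^{(r+1)}$, not $\hbar^{\alphavee(\mu_1)+r-1}E_{\alphavee}^{(r+1)}$. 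With your exponents $\Theta$ is not even a well-defined algebra homomorphism: applying it to the relation $[e_i^{(r)},f_i^{(r')}]=h_i^{(r+r')}$ of~(\ref{kwwy yangian}), the left-hand side goes to $\hbar^{\alphavee_i(\mu)+r+r'-2}H_i^{(r+r'+1)}$ while the right-hand side goes to $\hbar^{\alphavee_i(\mu)+r+r'-1}H_i^{(r+r'+1)}$, and these differ by a factor of $\hbar$ (the relation is inhomogeneous in the number of generators, so rescaling every generator by an extra $\hbar^{-1}$ is not harmless). Relatedly, your claim that $\Theta(\hbar e_{\alphavee}^{(r)})=\hbar^{\alphavee(\mu_1)+r}E_{\alphavee}^{(r+1)}$ lies in $\hbar^{\deg}F^{\deg}$ is false under the convention~(\ref{yangian filtration}): there $\deg E_{\alphavee}^{(r+1)}=\alphavee(\mu_1)+r+1$, and by the linear independence of PBW monomials (Theorem~\ref{PBW fkprw}) the element $E_{\alphavee}^{(r+1)}$ does not lie in $F^{\alphavee(\mu_1)+r}$, so this element is not in $\bY_\mu$ at all.

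Once the exponent is corrected to $\alphavee(\mu_1)+r$ (so that $\Theta(\hbar e_{\alphavee}^{(r)})=\hbar^{\alphavee(\mu_1)+r+1}E_{\alphavee}^{(r+1)}$, whose $\hbar$-power equals the filtration degree of $E_{\alphavee}^{(r+1)}$), the rest of your argument goes through and coincides with the paper's: the additivity of the degree over the factors of a PBW monomial sends the spanning set of rescaled ordered monomials of $\bY'_\mu$ bijectively onto the standard $\BC[\hbar]$-basis $\{\hbar^{d(\mathbf m)}\mathbf m\}$ of $\Rees^{F^\bullet_{\mu_1,\mu_2}}(Y_\mu)$. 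One further small caveat: the well-definedness of the corrected $\Theta$ does not really "reduce to" the fact that~(\ref{yangian filtration}) is an algebra filtration on $Y_\mu$; it is the separate (routine) statement that the relations of $Y_{\mu,\hbar}$ are the $\hbar$-homogenizations of those of $Y_\mu$ with respect to the grading $\deg x^{(r)}=\deg(x)+r$, which the paper checks directly in Lemma~\ref{trivial extension}(a).
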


This provides an identification of two different approaches towards the dominantly shifted Yangians
(which was missing in the literature, to our surprise). A proof of this result, generalized to any
semisimple Lie algebra $\fg$, is presented in Appendix~\ref{ssec appendix shifted yangians},
see Theorem~\ref{identification of two definitions}.


\subsection{Homomorphism $\Phi^\lambda_\mu$}\label{ssec hom to diff op-s yangian}
\

Let us recall the construction of~\cite[Appendix B]{bfn} for the type $A_{n-1}$
Dynkin diagram with arrows pointing $i\to i+1$ for $1\leq i\leq n-2$.
We fix a dominant coweight $\lambda$ and a coweight $\mu$ of $\ssl_n$, such that
$\lambda-\mu=\sum_{i=1}^{n-1} a_i\alpha_i$ with $a_i\in \BN$, where
$\{\alpha_i\}_{i=1}^{n-1}$ are the simple coroots of $\ssl_n$.
We set $a_0:=0, a_n:=0$.
We also fix a sequence $\unl{\lambda}=(\omega_{i_1},\ldots,\omega_{i_N})$
of fundamental coweights, such that $\sum_{s=1}^N\omega_{i_s}=\lambda$.

Consider the $\BC$-algebra
  $\wt{\CA}=\BC[z_1,\ldots,z_N]
   \langle w_{i,r}, \sfu^{\pm 1}_{i,r}, (w_{i,r}-w_{i,s}+m)^{-1}
   \rangle_{1\leq i\leq n-1, m\in \BZ}^{1\leq r\ne s\leq a_i}$
with the defining relations
  $[\sfu^{\pm 1}_{i,r},w_{j,s}]=\pm \delta_{ij}\delta_{rs}\sfu_{i,r}^{\pm 1}$.
Define $W_0(z):=1, W_n(z):=1$, and
\begin{equation}\label{ZW-series}
  Z_i(z):=\prod_{1\leq s\leq N}^{i_s=i} (z-z_s-1/2),\
  W_i(z):=\prod_{r=1}^{a_i} (z-w_{i,r}),\
  W_{i,r}(z):=\prod_{1\leq s\leq a_i}^{s\ne r} (z-w_{i,s}).
\end{equation}

We define a filtration on $\wt{\CA}$ by setting
  $\deg(z_s)=1, \deg(w_{i,r})=1,
   \deg((w_{i,r}-w_{i,s}+m)^{-1})=-1, \deg(\sfu^{\pm 1}_{i,r})=0$,
and set $\wt{\CA}_\hbar:=\Rees\ \wt{\CA}$. Explicitly, we have
\begin{equation*}
  \wt{\CA}_\hbar\simeq
  \BC[\hbar][z_1,\ldots,z_N]\langle w_{i,r}, \sfu^{\pm 1}_{i,r}, \hbar^{-1},
  (w_{i,r}-w_{i,s}+m\hbar)^{-1}\rangle_{1\leq i\leq n-1, m\in \BZ}^{1\leq r\ne s\leq a_i}
\end{equation*}
with the defining relations
  $[\sfu^{\pm 1}_{i,r},w_{j,s}]=\pm \hbar\delta_{ij}\delta_{rs}\sfu_{i,r}^{\pm 1}$.

\begin{Rem}\label{Warning}
By abuse of notation, for a generator $x$ which lives in a filtered
degree $k$ (but not in a filtered degree $k-1$) we write $x$ for
the element $\hbar^k x$ in the corresponding Rees algebra.
\end{Rem}

We also need the larger algebra
  $Y_\mu[z_1,\ldots,z_N]:=Y_\mu\otimes_\BC \BC[z_1,\ldots,z_N]$.
Define new \emph{Cartan} generators $\{A_i^{(r)}\}_{1\leq i<n}^{r\geq 1}$ via
\begin{equation}\label{A-generators yangian}
  H_i(z)=Z_i(z)\cdot \frac{\prod_{j - i}(z-1/2)^{a_j}}{z^{a_i}(z-1)^{a_i}}\cdot
  \frac{\prod_{j - i}A_j(z-1/2)}{A_i(z)A_i(z-1)},
\end{equation}
where $H_i(z):=z^{b_i}+\sum_{r>-b_i}H_i^{(r)}z^{-r}$ and
$A_i(z):=1+\sum_{r\geq 1}A_i^{(r)}z^{-r}$. The generating series $E_i(z), F_i(z)$
are defined via $E_i(z):=\sum_{r\geq 1}E_i^{(r)}z^{-r}$ and
$F_i(z):=\sum_{r\geq 1}F_i^{(r)}z^{-r}$.

The following result is due to~\cite[Theorem B.15]{bfn}
(for earlier results in this direction see~\cite{gklo,kwwy}):

\begin{Thm}[\cite{bfn}]\label{Homomorphism yangian h=1}
There exists a unique homomorphism
\begin{equation*}
  \Phi^{\unl\lambda}_\mu\colon Y_\mu[z_1,\ldots,z_N]\longrightarrow \wt{\CA}
\end{equation*}
of filtered $\BC$-algebras, such that
\begin{equation*}
\begin{split}
  & A_i(z)\mapsto z^{-a_i}W_i(z),\\
  & E_i(z)\mapsto
    -\sum_{r=1}^{a_i}\frac{Z_i(w_{i,r})W_{i-1}(w_{i,r}-1/2)}{(z-w_{i,r})W_{i,r}(w_{i,r})}\sfu^{-1}_{i,r},\\
  & F_i(z)\mapsto
    \sum_{r=1}^{a_i}\frac{W_{i+1}(w_{i,r}+1/2)}{(z-w_{i,r}-1)W_{i,r}(w_{i,r})}\sfu_{i,r}.
\end{split}
\end{equation*}
\end{Thm}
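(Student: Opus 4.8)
The statement is an existence-and-uniqueness assertion for a homomorphism out of $Y_\mu[z_1,\ldots,z_N]$ defined by explicit formulas on the generating series $A_i(z), E_i(z), F_i(z)$ (equivalently, on the generators $H_i^{(r)}, E_i^{(r)}, F_i^{(r)}$ via~\eqref{A-generators yangian}). Since this is quoted verbatim from~\cite[Theorem B.15]{bfn}, the honest thing is to cite that result; but if one wants to reprove it, the strategy is the standard ``check the relations'' argument, which I outline now.

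First I would address \emph{uniqueness}: the coefficients of $A_i(z), E_i(z), F_i(z)$ together generate $Y_\mu$ (the $A_i^{(r)}$ are obtained from the $H_i^{(r)}$ by the triangular change of variables~\eqref{A-generators yangian}, which is invertible as a substitution of formal series since the leading term of each $A_i(z)$ is $1$), so a homomorphism is determined by the images of these coefficients; tensoring with $\BC[z_1,\ldots,z_N]$ (where the $z_s$ go to the central elements $z_s$) adds nothing to uniqueness. Hence uniqueness is immediate once existence is known.

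For \emph{existence}, the plan is to verify that the proposed assignment respects each defining relation~\eqref{doubled yangian} of $Y_\infty$, together with the truncation relations $H_i^{(r)}=0$ for $r<-b_i$ and $H_i^{(-b_i)}=1$ defining $Y_\mu$, and that the images lie in the correct filtered degrees so that the map is a morphism of \emph{filtered} algebras. Concretely: (i) the Cartan-Cartan relations $[H_i^{(s)},H_j^{(s')}]=0$ hold because the images of all $H_i(z)$ lie in the commutative subalgebra generated by the $w_{j,r}$; (ii) the truncation relations are encoded precisely in the normalization $A_i(z)\mapsto z^{-a_i}W_i(z)$ and the factor $Z_i(z)$ (which is a polynomial of the right degree), so that $H_i(z)\mapsto z^{b_i}+O(z^{b_i-1})$ with no terms below degree $-b_i$ after clearing denominators — this is a direct degree count using $\deg Z_i = |\{s: i_s=i\}|$ and $\lambda-\mu=\sum a_i\alpha_i$; (iii) the mixed relations $[H_i^{(s+1)},E_j^{(r)}]-[H_i^{(s)},E_j^{(r+1)}]=\tfrac{c_{ij}}{2}(H_i^{(s)}E_j^{(r)}+E_j^{(r)}H_i^{(s)})$, and their $F$-analogues, become, after passing to generating series, an identity of the shape $H_i(z)E_j(w)(\cdots) = (\cdots)E_j(w)H_i(z)$ that follows from the commutation $[\sfu_{j,r}^{\pm1},w_{i,s}]=\pm\delta_{ij}\delta_{rs}\sfu_{j,r}^{\pm1}$ by a rational-function manipulation; (iv) the $EE$, $FF$ relations and the Serre relations are checked the same way, using that $E_i(z)$ is a sum over $r$ of simple poles at $w_{i,r}$ with coefficients involving a single $\sfu_{i,r}^{-1}$; (v) the crucial relation $[E_i^{(r)},F_j^{(r')}]=\delta_{ij}H_i^{(r+r'-1)}$ reduces, when $i=j$, to a partial-fraction/residue computation showing that $[E_i(z),F_i(w)]$ equals $\tfrac{1}{z-w}$ times the difference of the image of $H_i$ evaluated suitably at $z$ and $w$ — this is where the precise shifts by $\pm 1/2$ and $\pm 1$ in $Z_i, W_{i\pm1}, W_{i,r}$ and the denominators $(z-w_{i,r})$, $(z-w_{i,r}-1)$ conspire — while for $i\neq j$ (so $c_{ij}\in\{0,-1\}$) one checks the commutator vanishes because $\sfu_{i,r}$ and $w_{j,s}$ commute for $i\neq j$.

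The main obstacle is step (v), the $[E,F]=H$ relation: it is the one place where all the normalization constants must be pinned down exactly, and the cleanest route is to work with generating series and compare residues at $z=w_{i,r}$ on both sides, which amounts to the identity
\begin{equation*}
  \operatorname*{Res}_{z=w_{i,r}} \big(\text{image of }E_i(z)F_i(w) - F_i(w)E_i(z)\big)
  = \operatorname*{Res}_{z=w_{i,r}} \frac{\text{image of }H_i(z) - \text{image of }H_i(w)}{z-w},
\end{equation*}
together with a matching of the behavior at $z=w$ and at $z=\infty$; this is precisely the computation carried out in~\cite[Appendix B]{bfn}, and I would either reproduce it or, more economically, invoke~\cite[Theorem B.15]{bfn} directly. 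The remaining relations, while numerous, are genuinely routine once the generating-series formalism is set up, and the filtered-degree bookkeeping is immediate from the assignment $\deg z_s = \deg w_{i,r}=1$, $\deg\sfu_{i,r}^{\pm1}=0$ and the degree of the rational expressions appearing in the images of $E_i(z), F_i(z)$.
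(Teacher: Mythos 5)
The paper offers no proof of this statement---it is quoted directly from \cite[Theorem B.15]{bfn}---and your proposal, which ultimately defers to that same reference while sketching the standard relation-checking verification (uniqueness from generation, existence by verifying~(\ref{doubled yangian}) on generating series, with $[E_i,F_i]=H_i$ as the crucial residue computation), matches the paper's approach. One small caveat: for adjacent vertices $j=i\pm1$ the vanishing of $[E_i^{(r)},F_j^{(r')}]$ does not follow merely from ``$\sfu_{i,r}$ and $w_{j,s}$ commute for $i\neq j$'' (the image of $E_i(z)$ involves $w_{i-1,\bullet}$ while that of $F_{i-1}(w)$ involves $\sfu_{i-1,\bullet}$ and $w_{i,\bullet}$), but rather from a cancellation of the shifted linear factors $(w_{i,r}-w_{i-1,s}\mp\tfrac12)$ produced when the $\sfu$'s are moved past the $W$'s; this is still routine but deserves to be checked rather than dismissed.
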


We extend the filtration $F^\bullet_{\mu_1,\mu_2}$ on $Y_\mu$ to $Y_\mu[z_1,\ldots,z_N]$
by setting $\deg(z_s)=1$, and define
  $\bY_\mu[z_1,\ldots,z_N]:=\Rees^{F^\bullet_{\mu_1,\mu_2}} Y_\mu[z_1,\ldots,z_N]$
(which is independent of the choice of $\mu_1,\mu_2$ up to a canonical isomorphism).
Applying the Rees functor to Theorem~\ref{Homomorphism yangian h=1}, we obtain

\begin{Thm}[\cite{bfn}]\label{Homomorphism yangian}
There exists a unique graded $\BC[\hbar][z_1,\ldots,z_N]$-algebra homomorphism
\begin{equation*}
  \Phi^{\unl\lambda}_\mu\colon \bY_\mu[z_1,\ldots,z_N]\longrightarrow \wt{\CA}_\hbar,
\end{equation*}
such that
\begin{equation*}
\begin{split}
  & A_i(z)\mapsto z^{-a_i}W_i(z),\\
  & E_i(z)\mapsto
    -\sum_{r=1}^{a_i}\frac{Z_i(w_{i,r})W_{i-1}(w_{i,r}-\hbar/2)}{(z-w_{i,r})W_{i,r}(w_{i,r})}\sfu^{-1}_{i,r},\\
  & F_i(z)\mapsto
    \sum_{r=1}^{a_i}\frac{W_{i+1}(w_{i,r}+\hbar/2)}{(z-w_{i,r}-\hbar)W_{i,r}(w_{i,r})}\sfu_{i,r}.
\end{split}
\end{equation*}
\end{Thm}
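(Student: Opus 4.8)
The plan is to obtain Theorem~\ref{Homomorphism yangian} from Theorem~\ref{Homomorphism yangian h=1} by applying the Rees-algebra functor, exactly as stated in the text preceding the theorem. First I would observe that $\Phi^{\unl\lambda}_\mu\colon Y_\mu[z_1,\ldots,z_N]\to\wt\CA$ is a homomorphism of filtered $\BC$-algebras, where the source carries the filtration $F^\bullet_{\mu_1,\mu_2}$ (extended by $\deg z_s=1$) and the target carries the filtration with $\deg z_s=\deg w_{i,r}=1$, $\deg(w_{i,r}-w_{i,s}+m)^{-1}=-1$, $\deg\sfu^{\pm1}_{i,r}=0$. The key point to check is that $\Phi^{\unl\lambda}_\mu$ is actually \emph{filtered}, i.e.\ $\Phi^{\unl\lambda}_\mu(F^k)\subseteq \wt\CA^{\leq k}$; but this is part of the statement of Theorem~\ref{Homomorphism yangian h=1} (``of filtered $\BC$-algebras''), so nothing new is needed here. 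Given a filtered algebra homomorphism $f\colon A\to B$, one gets a graded $\BC[\hbar]$-algebra homomorphism $\Rees^{F^\bullet}A\to\Rees^{F^\bullet}B$ functorially, by sending $\hbar^k a\mapsto \hbar^k f(a)$ for $a\in F^kA$. Applying this with $A=Y_\mu[z_1,\ldots,z_N]$ and $B=\wt\CA$, and recalling $\bY_\mu[z_1,\ldots,z_N]=\Rees^{F^\bullet_{\mu_1,\mu_2}}Y_\mu[z_1,\ldots,z_N]$ and $\wt\CA_\hbar=\Rees\,\wt\CA$ by definition, yields the desired homomorphism.

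Next I would verify the explicit formulas. Under the convention of Remark~\ref{Warning}, a generator living in filtered degree $k$ is identified with $\hbar^k$ times itself inside the Rees algebra; so one simply rewrites each image in Theorem~\ref{Homomorphism yangian h=1}, tracking the degree shifts. For the Cartan part: $A_i^{(r)}$ has degree $a_i(\mu_1)+\cdots$ — more precisely, since $A_i(z)=z^{-a_i}W_i(z)$ and $W_i(z)=\prod_r(z-w_{i,r})$ with each $w_{i,r}$ in degree $1$, the homogenization is automatic and the formula $A_i(z)\mapsto z^{-a_i}W_i(z)$ is unchanged. For the $E$ and $F$ images, the degree bookkeeping of the factor $W_{i-1}(w_{i,r}-1/2)$ versus $W_{i-1}(w_{i,r}-\hbar/2)$, and of $(z-w_{i,r}-1)^{-1}$ versus $(z-w_{i,r}-\hbar)^{-1}$, produces exactly the replacement of the numerical shifts $1,\tfrac12$ by $\hbar,\tfrac{\hbar}{2}$ — this is the standard effect of passing from a filtered object to its Rees algebra, where the ``$1$'' that broke homogeneity becomes $\hbar$. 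One also checks that the resulting homomorphism is $\BC[\hbar][z_1,\ldots,z_N]$-linear: $\BC[\hbar]$-linearity is built into the Rees construction, and each $z_s$ (of degree $1$) maps to $z_s$ (i.e.\ $\hbar z_s$ under Remark~\ref{Warning}), which is central in $\wt\CA_\hbar$, so the $z_s$ act the same way on both sides.

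Uniqueness is immediate: the elements $A_i^{(r)},E_i^{(r)},F_i^{(r)}$ together with $z_1,\ldots,z_N$ and $\hbar$ generate $\bY_\mu[z_1,\ldots,z_N]$ as a $\BC$-algebra (they generate $Y_\mu[z_1,\ldots,z_N]$, and passing to the Rees algebra only adjoins $\hbar$ and the rescalings thereof), so a graded $\BC[\hbar][z_1,\ldots,z_N]$-algebra homomorphism is determined by the images of the $A_i(z),E_i(z),F_i(z)$. Alternatively, uniqueness descends from the uniqueness already asserted in Theorem~\ref{Homomorphism yangian h=1} after inverting $\hbar$, since $\bY_\mu[z_1,\ldots,z_N]$ embeds into $\bY_\mu[z_1,\ldots,z_N][\hbar^{-1}]\cong Y_\mu[z_1,\ldots,z_N][\hbar,\hbar^{-1}]$. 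I do not anticipate a genuine obstacle here; the only slightly delicate point is to be careful that the filtration on the target $\wt\CA$ is the one making $\Phi^{\unl\lambda}_\mu$ filtered (so that $\Rees$ of it contains the images of $\hbar^k\cdot(\text{degree-}k\text{ generators})$), and that the $A_i(z)$ formula really is filtration-preserving degree by degree — but both were already arranged in the setup of Theorem~\ref{Homomorphism yangian h=1} and in the definition of the filtration on $\wt\CA$. Hence the proof is essentially a one-line application of the Rees functor, with the formulas read off via Remark~\ref{Warning}.

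\begin{proof}
This is obtained by applying the Rees-algebra functor to Theorem~\ref{Homomorphism yangian h=1}. Recall that a homomorphism $f\colon A\to B$ of filtered $\BC$-algebras induces, functorially, a graded $\BC[\hbar]$-algebra homomorphism $\Rees^{F^\bullet}A\to\Rees^{F^\bullet}B$ via $\hbar^k a\mapsto \hbar^k f(a)$ for $a\in F^kA$. We apply this to the filtered homomorphism
  $\Phi^{\unl\lambda}_\mu\colon Y_\mu[z_1,\ldots,z_N]\to\wt\CA$
of Theorem~\ref{Homomorphism yangian h=1}, where $Y_\mu[z_1,\ldots,z_N]$ carries the filtration $F^\bullet_{\mu_1,\mu_2}$ extended by $\deg(z_s)=1$, and $\wt\CA$ carries the filtration with $\deg(z_s)=\deg(w_{i,r})=1$, $\deg((w_{i,r}-w_{i,s}+m)^{-1})=-1$, $\deg(\sfu^{\pm 1}_{i,r})=0$. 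Since
  $\bY_\mu[z_1,\ldots,z_N]=\Rees^{F^\bullet_{\mu_1,\mu_2}} Y_\mu[z_1,\ldots,z_N]$
and $\wt\CA_\hbar=\Rees\ \wt\CA$ by definition, we obtain a graded $\BC[\hbar]$-algebra homomorphism
  $\Phi^{\unl\lambda}_\mu\colon \bY_\mu[z_1,\ldots,z_N]\to\wt\CA_\hbar$.
It is $\BC[\hbar][z_1,\ldots,z_N]$-linear: $\BC[\hbar]$-linearity is built into the Rees construction, while each $z_s$ (of degree $1$) is sent to $z_s$, which is central in $\wt\CA_\hbar$.

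It remains to record the explicit formulas. Under the convention of Remark~\ref{Warning}, a generator of filtered degree $k$ is identified with $\hbar^k$ times itself in the corresponding Rees algebra; rewriting the images of Theorem~\ref{Homomorphism yangian h=1} accordingly and tracking these degree shifts replaces the numerical shifts $1$ and $1/2$ by $\hbar$ and $\hbar/2$, respectively. Since $A_i(z)=z^{-a_i}W_i(z)$ already lies in filtered degree $0$ (each $w_{i,r}$ has degree $1$), its image is unchanged. This gives precisely the stated formulas. Uniqueness follows since $\bY_\mu[z_1,\ldots,z_N]$ is generated as a $\BC[\hbar][z_1,\ldots,z_N]$-algebra by the coefficients of $A_i(z), E_i(z), F_i(z)$ (together with $\hbar$), so a graded $\BC[\hbar][z_1,\ldots,z_N]$-algebra homomorphism out of it is determined by their images.
\end{proof}
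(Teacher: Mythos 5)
Your proposal is correct and follows exactly the route the paper takes: the text preceding the theorem simply says ``Applying the Rees functor to Theorem~\ref{Homomorphism yangian h=1}, we obtain,'' and your write-up fills in the same Rees-functor argument with the degree bookkeeping of Remark~\ref{Warning}. No discrepancies to report.
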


\begin{Rem}
Following Remark~\ref{Warning}, we note that the defining formulas of
$W_i(z), W_{i,r}(z)$ in $\wt{\CA}_\hbar$ are given again by~(\ref{ZW-series}).
In contrast, $Z_i(z)=\prod_{1\leq s\leq N}^{i_s=i} (z-z_s-\hbar/2)$,
cf.~(\ref{ZW-series}).
\end{Rem}


\subsection{Coulomb branch}\label{ssec Coulomb branch cohomology}
\

Following~\cite{bfna,bfn}, let $\CA_\hbar$ denote the quantized Coulomb branch.
We choose a basis $w_1,\ldots,w_N$ in $W=\bigoplus_{i=1}^{n-1} W_i$ such that
$w_s\in W_{i_s}$, where $i_s$ are chosen as in
subsection~\ref{ssec hom to diff op-s yangian}. Then $\CA_\hbar$ is defined as
  $\CA_\hbar:=H^{(\GL(V)\times T_W)_\CO\rtimes \BC^\times}_{\bullet}(\CR_{\GL(V),\bN})$,
where $\CR_{\GL(V),\bN}$ is the variety of triples, $T_W$ is the maximal torus of
$\GL(W)=\prod_{i=1}^{n-1}\GL(W_i)$, and $\GL(V)=\prod_{i=1}^{n-1} \GL(V_i)$.
We identify $H_{T_W}^{\bullet}(\on{pt})=\BC[z_1,\ldots,z_N]$ and
$H_{\BC^\times}^{\bullet}(\on{pt})=\BC[\hbar]$.
Recall a $\BC[\hbar][z_1,\ldots,z_N]$-algebra embedding
  $\bz^*(\iota_*)^{-1}\colon \CA_\hbar\hookrightarrow \wt{\CA}_\hbar$,
which takes the homological grading on $\CA_\hbar$ to
the above grading on $\wt{\CA}_\hbar$.

According to~\cite[Theorem B.18]{bfn}, the homomorphism
  $\Phi^{\unl\lambda}_\mu\colon \bY_\mu[z_1,\ldots,z_N]\to \wt{\CA}_\hbar$
factors through $\CA_\hbar$. In other words,
there is a unique graded $\BC[\hbar][z_1,\ldots,z_N]$-algebra homomorphism
  $\ol{\Phi}^{\unl\lambda}_\mu\colon \bY_\mu[z_1,\ldots,z_N]\to \CA_\hbar$,
such that the composition
  $\bY_\mu[z_1,\ldots,z_N]\xrightarrow{\ol{\Phi}^{\unl\lambda}_\mu}
   \CA_\hbar \xrightarrow{\bz^*(\iota_*)^{-1}} \wt{\CA}_\hbar$
coincides with $\Phi^{\unl\lambda}_\mu$.

The following result is due to~\cite[Corollary 4.10]{ktwwya}
(see Remark~\ref{Surjectivity for Yangian via Shuffle} for an alternative proof,
based on the shuffle realizations of $Y_\hbar(\ssl_n),\bY_\hbar(\ssl_n)$
of~\cite[\S6]{t}):

\begin{Prop}[\cite{ktwwya}]\label{Surjectivity yangian}
  $\ol{\Phi}^{\unl\lambda}_\mu\colon \bY_\mu[z_1,\ldots,z_N]\to \CA_\hbar$
is surjective.
\end{Prop}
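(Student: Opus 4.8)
The plan is to prove surjectivity of $\ol{\Phi}^{\unl\lambda}_\mu$ by reducing to the structure of the Coulomb branch as established in~\cite{bfn}. Recall that $\CA_\hbar$ sits inside $\wt{\CA}_\hbar$ via $\bz^*(\iota_*)^{-1}$, and that as a module $\CA_\hbar$ is free over $\BC[\hbar][z_1,\ldots,z_N]$ with a basis indexed by pairs (a cocharacter of $T_{\GL(V)}$ dominant for some parabolic, an element of a Weyl-group coset), the basis elements being the classes $[\CR_{\GL(V),\bN}]$-pushforwards supported on the relevant $\GL(V)_\CO$-orbit closures. The key point is that $\CA_\hbar$ is generated over $\BC[\hbar][z_1,\ldots,z_N]$ by the ``abelianized'' monopole classes together with $\BC[w_{i,r}]^{W}$, and the images of $E_i(z),F_i(z),A_i(z)$ under $\Phi^{\unl\lambda}_\mu$ realize, up to invertible factors, the generating monopole operators for the minuscule/fundamental cocharacters $\pm\varpi_{i,r}$ and the symmetric functions in the $w_{i,r}$.

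First I would record what the image of $\ol{\Phi}^{\unl\lambda}_\mu$ contains. From the formulas in Theorem~\ref{Homomorphism yangian}, the image contains: the coefficients of $A_i(z)\mapsto z^{-a_i}W_i(z)$, hence all the elementary symmetric functions $e_k(w_{i,1},\ldots,w_{i,a_i})$, i.e. all of $\BC[\hbar][w_{i,r}]^{\prod_i\Sigma_{a_i}}=H^\bullet_{\GL(V)\times\BC^\times}(\pt)$ (together with the $z_s$, which are already there); and the coefficients of $E_i(z),F_i(z)$. The $E_i(z)$-image is $-\sum_r \frac{Z_i(w_{i,r})W_{i-1}(w_{i,r}-\hbar/2)}{(z-w_{i,r})W_{i,r}(w_{i,r})}\sfu_{i,r}^{-1}$, which after expanding in $z^{-1}$ produces, for each $d\geq 0$, the element $\sum_r w_{i,r}^d\cdot(\text{invertible in }w)\cdot\sfu_{i,r}^{-1}$. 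Playing these against the symmetric functions already in the image (by taking commutators/products with $e_k(w_{i,\bullet})$, which shuffle the powers $w_{i,r}^d$), and using the standard interpolation argument, one extracts the individual abelianized classes $\sum_r f(w_{i,r})\sfu_{i,r}^{-1}$ for all $f$, and similarly $\sum_r g(w_{i,r})\sfu_{i,r}$ for the $F$'s. These are exactly the images under $\bz^*(\iota_*)^{-1}$ of the monopole operators attached to the fundamental coweights $-\varpi_{i,r}$ and $+\varpi_{i,r}$ of $\GL(V_i)$ inside $\GL(V)$ (for each $i$), decorated by arbitrary $H^\bullet_{T_{\GL(V)}}$-coefficients.

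Next I would invoke the generation statement for $\CA_\hbar$: by~\cite[Section 4, 5]{bfn} (the ``generators of the quantized Coulomb branch'' results — specifically the fact that $\CA_\hbar$ is generated over $H^\bullet_{\GL(V)\times\BC^\times}(\pt)[z_1,\ldots,z_N]$ by the monopole classes for minuscule cocharacters, equivalently the $E$- and $F$-type dressed monopole operators together with the $W$-invariant polynomials), everything I have produced in the previous paragraph already generates $\CA_\hbar$. Here I should be slightly careful: the cleanest available input is the abelianization embedding together with the description of $\CA_\hbar$ as spanned by classes of the form $z_{\ast}([\text{orbit}]\cdot(\text{equivariant class}))$, and the multiplication rule showing that a dominant-coweight monopole class times symmetric polynomials, when pushed to the non-abelian side, hits a full triangular basis. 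Alternatively — and this is the route I would actually take to keep things self-contained in type $A$ — I would follow Remark~\ref{Surjectivity for Yangian via Shuffle}: transport the question through the shuffle realization, where the positive half of $\bY_\hbar(\ssl_n)$ maps onto an explicit shuffle algebra and the images of the $\sfu_{i,r}^{\pm 1}$-dressed classes are exactly the shuffle-algebra generators, so surjectivity onto the ``$E$-part'', the ``$F$-part'', and the Cartan part of $\CA_\hbar$ is immediate, and then one checks these three subalgebras generate all of $\CA_\hbar$ using the PBW-type decomposition of $\CA_\hbar$.

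The main obstacle is the last clause: proving that the three explicitly-hit subalgebras (images of positive, negative, and Cartan parts) actually generate all of $\CA_\hbar$, rather than a proper subalgebra — in other words, a PBW/triangular-decomposition statement for $\CA_\hbar$ itself, not just for $\bY_\mu$. This is where one genuinely needs input about the Coulomb branch: either the abelianization description and the monopole-operator product formula of~\cite{bfn} to see that dressed minuscule monopole operators plus $H^\bullet_{\GL(V)}(\pt)$ generate, or the comparison with~\cite{ktwwya} which proves precisely this in type $A$. I would cite~\cite[Corollary 4.10]{ktwwya} for the generation step and spend the bulk of the argument on the elementary extraction of individual abelianized classes from the $E_i(z),F_i(z)$ images via interpolation, which is routine but needs the nonvanishing of the factors $Z_i(w_{i,r}), W_{i\pm1}(w_{i,r}\mp\hbar/2), W_{i,r}(w_{i,r})$ in the localized algebra $\wt{\CA}_\hbar$ — and that nonvanishing is exactly why one works inside $\wt{\CA}_\hbar$ rather than $\CA_\hbar$.
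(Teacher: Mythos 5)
The overall structure of your argument is reasonable, but the step you treat as a black-box citation is exactly the mathematical content of the proposition, and the citation you offer for it is either misattributed or circular. From the formulas for $\Phi^{\unl\lambda}_\mu(A_i(z)),\Phi^{\unl\lambda}_\mu(E_i(z)),\Phi^{\unl\lambda}_\mu(F_i(z))$ you correctly extract all of $H^\bullet_{(\GL(V)\times T_W)_\CO\rtimes\BC^\times}(\pt)$ together with arbitrarily dressed classes supported on $\CR_{\lambda_{ji}}$ and $\CR_{\lambda_{ji}^*}$, i.e.\ on the orbits attached to coweights $(\varpi_{j,1},\ldots,\varpi_{i,1})$ in which every entry is the \emph{first} fundamental coweight of the corresponding $\GL(V_\ell)$. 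But no result of~\cite{bfn} asserts that these classes, together with the Cartan part, generate $\CA_\hbar$; for quiver gauge theories that generation statement is precisely the hard theorem, and in type $A$ it is~\cite[Corollary~4.10]{ktwwya} --- which is the proposition you are proving, so invoking it ``for the generation step'' closes the circle without an argument.

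The reason the single-box classes do not obviously suffice is visible in the filtration by support: a product of two classes supported on $\CR_{\varpi_{i,1}}$ lives on the preimage of $\overline{\Gr^{2\varpi_{i,1}}}$, and one must control its component on the lower stratum $\Gr^{\varpi_{i,2}}$; moreover the identity $(\pi^*c_\lambda)*(\pi^*c_\mu)=\pi^*(c_\lambda*c_\mu)$ used to reduce to fundamental orbits holds only when $\lambda,\mu$ lie in a common chamber of the decomposition by generalized root hyperplanes. The proof outlined in Remark~\ref{Surjectivity for Yangian via Shuffle} (parallel to that of Theorem~\ref{Surjectivity}) therefore requires, for every interval $[j,i]$ and every $\unl{k}$ with $1\le k_\ell\le a_\ell$, elements of $\bY_\mu[z_1,\ldots,z_N]$ whose images are \emph{arbitrary} dressed classes on $\CR_{\kappa_{ji}}$ and $\CR_{\kappa^*_{ji}}$ with $\kappa_{ji}=(\varpi_{j,k_j},\ldots,\varpi_{i,k_i})$. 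These are produced as $\Psi^{-1}(\wt{E}),\Psi^{-1}(\wt{F})$ via the shuffle realization, see~(\ref{important elements yangian},~\ref{important opposite elements yangian}) and~(\ref{Explicit image of important E yangian},~\ref{Explicit image of important F yangian}); they are not visibly products of the elements you exhibit, and their explicit form in terms of PBW generators is unknown, which is why the shuffle isomorphism is essential. Constructing these elements and then running the chamber argument of~\cite[Corollary~2.21]{cw} is the missing core of the proof.
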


\begin{Lem}\label{t-modes in diff op-s yangian}
For any $1\leq j\leq i<n$ and $r\geq 1$, the following equalities hold:
\begin{equation}\label{image of e-modes yangian}
\begin{split}
   & \Phi^{\unl\lambda}_\mu \left(E_{\alphavee_j+\alphavee_{j+1}+\ldots+\alphavee_i}^{(r)}\right)=(-1)^{i-j+1}\times\\
   & \sum_{\substack{1\leq r_j\leq a_j\\\cdots\\ 1\leq r_i\leq a_i}}
     \frac{W_{j-1}(w_{j,r_j}-\frac{\hbar}{2})\prod_{k=j}^{i-1}W_{k,r_k}(w_{k+1,r_{k+1}}-\frac{\hbar}{2})}
          {\prod_{k=j}^i W_{k,r_k}(w_{k,r_k})}\cdot
     \prod_{k=j}^i Z_k(w_{k,r_k})\cdot w_{j,r_j}^{r-1}\cdot \prod_{k=j}^i \sfu_{k,r_k}^{-1},
\end{split}
\end{equation}
\begin{equation}\label{image of f-modes yangian}
\begin{split}
   & \Phi^{\unl\lambda}_\mu \left(F_{\alphavee_j+\alphavee_{j+1}+\ldots+\alphavee_i}^{(r)}\right)=(-1)^{i-j}\times\\
   & \sum_{\substack{1\leq r_j\leq a_j\\\cdots\\ 1\leq r_i\leq a_i}}
     \frac{\prod_{k=j+1}^{i}W_{k,r_k}(w_{k-1,r_{k-1}}+\frac{\hbar}{2})W_{i+1}(w_{i,r_i}+\frac{\hbar}{2})}
          {\prod_{k=j}^i W_{k,r_k}(w_{k,r_k})}\cdot
     (w_{j,r_j}+\hbar)^{r-1}\cdot \prod_{k=j}^i \sfu_{k,r_k}.
\end{split}
\end{equation}
\end{Lem}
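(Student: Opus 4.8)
The plan is to prove both identities by induction on the height $i-j\geq 0$ of the positive root $\alphavee_j+\ldots+\alphavee_i$, feeding the defining iterated-commutator formulas~(\ref{PBW basis elements yangian non-h}) into the explicit images of $E_i(z),F_i(z)$ under $\Phi^{\unl\lambda}_\mu$ recorded in Theorem~\ref{Homomorphism yangian}. It is cleanest to run the computation for the filtered homomorphism of Theorem~\ref{Homomorphism yangian h=1}, for which~(\ref{PBW basis elements yangian non-h}) is a literal identity; the graded $\BC[\hbar]$-statement then follows by applying the Rees functor, exactly as Theorem~\ref{Homomorphism yangian} follows from Theorem~\ref{Homomorphism yangian h=1}, the powers of $\hbar$ in the final formulas being forced by the filtered degrees~(\ref{yangian filtration}) together with the convention of Remark~\ref{Warning}. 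For the base case $i=j$, I would read off $\Phi^{\unl\lambda}_\mu(E^{(r)}_j)$ and $\Phi^{\unl\lambda}_\mu(F^{(r)}_j)$ as the coefficients of $z^{-r}$ in $\Phi^{\unl\lambda}_\mu(E_j(z))$ and $\Phi^{\unl\lambda}_\mu(F_j(z))$, via the expansions $\tfrac{1}{z-w}=\sum_{r\geq 1}w^{r-1}z^{-r}$ and $\tfrac{1}{z-w-\hbar}=\sum_{r\geq 1}(w+\hbar)^{r-1}z^{-r}$; this is precisely~(\ref{image of e-modes yangian}) and~(\ref{image of f-modes yangian}) with $i=j$ and all products $\prod_{k=j}^{i-1}(\cdots)$ empty.

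For the inductive step of~(\ref{image of e-modes yangian}) I would use $E^{(r)}_{\alphavee_j+\ldots+\alphavee_i}=[E^{(r)}_{\alphavee_j+\ldots+\alphavee_{i-1}},E^{(1)}_i]$ and compute the commutator of the two images directly in $\wt{\CA}_\hbar$. The only source of noncommutativity is the Heisenberg-type relation $[\sfu^{\pm 1}_{i,r},w_{j,s}]=\pm\hbar\delta_{ij}\delta_{rs}\sfu^{\pm 1}_{i,r}$, which says that conjugation by $\sfu^{-1}_{k,r_k}$ sends $w_{k,r_k}\mapsto w_{k,r_k}-\hbar$ and fixes all the remaining generators; in particular the $\sfu_{k,r}$ pairwise commute and commute with the $z_s$. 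By the inductive hypothesis $\Phi^{\unl\lambda}_\mu(E^{(r)}_{\alphavee_j+\ldots+\alphavee_{i-1}})$ is a sum over $r_j,\ldots,r_{i-1}$ of a rational function in the variables $w_{k,s}$ with $k\leq i-1$ times $\sfu^{-1}_{j,r_j}\cdots\sfu^{-1}_{i-1,r_{i-1}}$, while $\Phi^{\unl\lambda}_\mu(E^{(1)}_i)$ is a sum over $r_i$ of a rational function in the variables $w_{i-1,s},w_{i,r_i}$ times $\sfu^{-1}_{i,r_i}$. Hence $\sfu^{-1}_{i,r_i}$ commutes with the whole first summand, each $\sfu^{-1}_{k,r_k}$ with $k<i-1$ commutes with the whole second summand, the two rational coefficients commute with one another, and the entire commutator collapses to the action of the single operator $\sfu^{-1}_{i-1,r_{i-1}}$ on the unique factor of $\Phi^{\unl\lambda}_\mu(E^{(1)}_i)$ involving $w_{i-1,r_{i-1}}$, namely the factor $\bigl(w_{i,r_i}-\tfrac{\hbar}{2}-w_{i-1,r_{i-1}}\bigr)$ inside $W_{i-1}(w_{i,r_i}-\tfrac{\hbar}{2})$. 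Using
\[
  \bigl(w_{i,r_i}+\tfrac{\hbar}{2}-w_{i-1,r_{i-1}}\bigr)-\bigl(w_{i,r_i}-\tfrac{\hbar}{2}-w_{i-1,r_{i-1}}\bigr)=\hbar,
\]
the commutator equals the sum over $r_j,\ldots,r_i$ in which $W_{i-1}(w_{i,r_i}-\tfrac{\hbar}{2})$ has been replaced by $W_{i-1,r_{i-1}}(w_{i,r_i}-\tfrac{\hbar}{2})$ (one linear factor has been peeled off), the factor $Z_i(w_{i,r_i})$ and the denominator $W_{i,r_i}(w_{i,r_i})$ have been appended, an extra $\sfu^{-1}_{i,r_i}$ has been inserted, and the overall sign has flipped — which is precisely~(\ref{image of e-modes yangian}) with $i$ in place of $i-1$.

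The inductive step for~(\ref{image of f-modes yangian}) is entirely parallel, now using $F^{(r)}_{\alphavee_j+\ldots+\alphavee_i}=[F^{(1)}_i,F^{(r)}_{\alphavee_j+\ldots+\alphavee_{i-1}}]$: the only surviving contribution comes from conjugating the unique factor $\bigl(w_{i-1,r_{i-1}}+\tfrac{\hbar}{2}-w_{i,r_i}\bigr)$ of $W_i(w_{i-1,r_{i-1}}+\tfrac{\hbar}{2})$ that involves $w_{i,r_i}$ by $\sfu_{i,r_i}$ (which now shifts $w_{i,r_i}\mapsto w_{i,r_i}+\hbar$), peeling off exactly one factor so that $W_i$ becomes $W_{i,r_i}$, appending $W_{i+1}(w_{i,r_i}+\tfrac{\hbar}{2})$ and the denominator $W_{i,r_i}(w_{i,r_i})$, and flipping the sign. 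I expect the main obstacle to be purely bookkeeping rather than anything conceptual: checking that it is exactly the $r_k$-th factor at level $k$ (and no other) that disappears at each step, so that $W_k$ indeed becomes $W_{k,r_k}$; pinning down the signs $(-1)^{i-j+1}$ and $(-1)^{i-j}$; correctly handling degenerate levels with $a_k=0$; and keeping the powers of $\hbar$ consistent with the filtered degrees~(\ref{yangian filtration}) when lifting from $\wt{\CA}$ back to $\wt{\CA}_\hbar$. Since these are all finite, explicit manipulations requiring no input beyond Theorem~\ref{Homomorphism yangian} (equivalently Theorem~\ref{Homomorphism yangian h=1}) and the Heisenberg relations above, the induction closes.
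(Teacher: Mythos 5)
Your proof is correct, and it is essentially the computation the authors have in mind: the paper's own proof of this lemma consists of the single line ``Straightforward computation,'' and your induction on the height of the root via the iterated commutators~(\ref{PBW basis elements yangian non-h}), with the only noncommutativity coming from conjugation of a single linear factor of $W_{i\mp1}$ by $\sfu^{\mp1}_{i\mp1,r_{i\mp1}}$, is exactly that computation spelled out. The one bookkeeping point you flag — the extra factor of $\hbar$ produced by the commutator — is indeed consistent, since in the Rees conventions of Remark~\ref{Warning} one also has $[E^{(r)}_{\alphavee_j+\ldots+\alphavee_{i-1}},E_i^{(1)}]=\hbar E^{(r)}_{\alphavee_j+\ldots+\alphavee_i}$, so the factors of $\hbar$ cancel (or, as you propose, one works at $\hbar=1$ and applies the Rees functor).
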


\begin{proof}
Straightforward computation.
\end{proof}

\begin{Rem}\label{t-modes in Coulomb yangian}
For $1\leq j\leq i<n$, we consider a coweight
  $\lambda_{ji}=(0,\ldots,0,\varpi_{j,1},\ldots,\varpi_{i,1},0,\ldots,0)$
(resp.\ $\lambda_{ji}^*=(0,\ldots,0,\varpi_{j,1}^*,\ldots,\varpi_{i,1}^*,0,\ldots,0)$)
of $\GL(V)=\GL(V_1)\times\cdots\times\GL(V_{n-1})$. The corresponding orbits
  $\Gr_{\GL(V)}^{\lambda_{ji}},\Gr_{\GL(V)}^{\lambda_{ji}^*}\subset\Gr_{\GL(V)}$
are closed, and let $\CR_{\lambda_{ji}},\CR_{\lambda_{ji}^*}$ denote their preimages
in the variety of triples $\CR_{\GL(V),\bN}$.
Then, Lemma~\ref{t-modes in diff op-s yangian} implies
\begin{equation*}
  \ol{\Phi}^{\unl\lambda}_\mu \left(E_{\alphavee_j+\alphavee_{j+1}+\ldots+\alphavee_i}^{(r)}\right)=
  (-1)^{\sum_{k=j}^{i} a_k}(c_1(\CS_j)+\hbar)^{r-1}\cap [\CR_{\lambda_{ji}^*}],
\end{equation*}
\begin{equation*}
  \ol{\Phi}^{\unl\lambda}_\mu \left(F_{\alphavee_j+\alphavee_{j+1}+\ldots+\alphavee_i}^{(r)}\right)=
  (-1)^{\sum_{k=j+1}^{i+1} a_k}(c_1(\CQ_j)+\hbar)^{r-1}\cap [\CR_{\lambda_{ji}}].
\end{equation*}
\end{Rem}


\subsection{Explicit description for $\mu=0,\lambda=n\omega_{n-1}$}\label{ssec truncation ideal yangian}
\

Following~\cite{bfn}, define the \emph{truncation ideal} $\CI^{\unl{\lambda}}_\mu$
as the $2$-sided ideal of $\bY_\mu[z_1,\ldots,z_N]$ generated over
$\BC[\hbar][z_1,\ldots,z_N]$ by $\{A_i^{(r)}\}_{1\leq i\leq n-1}^{r>a_i}$.
This ideal is discussed extensively in~\cite{kwwy}. The inclusion
$\CI^{\unl{\lambda}}_\mu\subset \Ker(\Phi^{\unl{\lambda}}_\mu)$ is clear, while
the opposite inclusion was conjectured in~\cite[Remark B.21]{bfn}.
This conjecture is proved for dominant $\mu$ in~\cite{kmwy}.

The goal of this subsection is to provide an alternative proof of a \emph{reduced version}
of that equality in the particular case $\mu=0,\lambda=n\omega_{n-1}$
(so that $N=n$ and $a_i=i$ for $1\leq i<n$; recall that $a_0=0, a_n=0$). Here, a reduced version means that
we impose an extra relation $\sum_{i=1}^n z_i=0$ in all algebras. We use
$\unl{\CI}^{n\omega_{n-1}}_0$ to denote the reduced version of the corresponding
truncation ideal, while $\unl{\Phi}{}^{n\omega_{n-1}}_0$ denotes the resulting
homomorphism between the reduced algebras.

The forthcoming discussion is very close to~\cite{bk} and~\cite{wwy}, while we choose
to present it in full details as it will be generalized along the same lines to the
trigonometric counterpart in subsection~\ref{ssec truncation ideal}.

\begin{Thm}\label{Main Theorem 2}
  $\unl{\CI}^{n\omega_{n-1}}_0=\Ker(\unl{\Phi}{}^{n\omega_{n-1}}_0)$.
\end{Thm}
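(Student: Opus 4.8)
The plan is to prove the nontrivial inclusion $\Ker(\unl{\Phi}{}^{n\omega_{n-1}}_0)\subseteq\unl{\CI}^{n\omega_{n-1}}_0$ by reducing it to the analogous statement about the RTT evaluation homomorphism $\ev^\rtt\colon Y^\rtt_\hbar(\gl_n)\to U(\gl_n)$, for which we already have the clean ``minimalistic'' description in Theorem~\ref{alternative kernel yangian}. The reverse inclusion $\unl{\CI}^{n\omega_{n-1}}_0\subseteq\Ker(\unl{\Phi}{}^{n\omega_{n-1}}_0)$ is immediate from Theorem~\ref{Homomorphism yangian}: the image of $A_i(z)$ is $z^{-a_i}W_i(z)$, which is a \emph{polynomial} in $z^{-1}$ of degree exactly $a_i=i$, so $A_i^{(r)}\mapsto 0$ for all $r>a_i$, and hence the whole ideal they generate over $\BC[\hbar][z_1,\ldots,z_n]$ lies in the kernel.

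For the hard inclusion, the key structural input is Theorem~\ref{truncation as kernel yangian} (referenced but not yet stated in the excerpt, which I am allowed to assume), identifying the reduced truncation quotient $\unl{\bY}_0[z_1,\ldots,z_n]/\unl{\CI}^{n\omega_{n-1}}_0$ with a version of $U(\gl_n)$ (more precisely the extended universal enveloping algebra, trimmed by the relation $\sum z_i=0$, matching $\widetilde U$-type constructions) \emph{via} the evaluation homomorphism $\ev$ — this is what Corollary~\ref{explicit Coulomb yangian} is extracting. Granting that, the strategy is: factor $\unl{\Phi}{}^{n\omega_{n-1}}_0$ through the quotient $\unl{\bY}_0[z_1,\ldots,z_n]/\unl{\CI}^{n\omega_{n-1}}_0$ — legitimate because we just checked the ideal is in the kernel — to obtain an induced map $\bar\psi$ out of this quotient into $\CA_\hbar$. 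Then it suffices to show $\bar\psi$ is \emph{injective}. Using Theorem~\ref{truncation as kernel yangian} to replace the source by (reduced, extended) $U(\gl_n)$ via $\ev$, and using the surjectivity of $\ol{\Phi}{}^{n\omega_{n-1}}_0$ from Proposition~\ref{Surjectivity yangian}, the claim becomes: the induced map from (an explicit extension of) $U(\gl_n)$ onto $\unl{\CA}_\hbar$ is an isomorphism. One then identifies both sides explicitly — the Coulomb branch side via the localization embedding $\bz^*(\iota_*)^{-1}\colon\CA_\hbar\hookrightarrow\wt{\CA}_\hbar$ together with the formulas of Lemma~\ref{t-modes in diff op-s yangian}, and the $U(\gl_n)$ side via Remark~\ref{property T-matrix} — and checks the two presentations match.

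The cleanest way to package this, and the route I would actually take, is: (1) show $\Ker(\Phi^{\unl\lambda}_\mu)$ is generated (over $\bY_0[z_1,\ldots,z_n]$, reduced) by $\{A_i^{(r)}\}_{r>i}$ by first producing \emph{enough} explicit elements in the kernel; concretely, compare $\Phi^{\unl\lambda}_\mu$ on $\bY_0[z_1,\ldots,z_n]$ with $\ev^\rtt$ on (a suitable integral/enhanced form of) $Y^\rtt_\hbar(\gl_n)$ through the Gauss-decomposition isomorphism $\Upsilon$ of Theorem~\ref{Yangian Gauss decomposition} and the compatibility of evaluations Theorem~\ref{compatibility of evaluations yangian}; (2) under this dictionary, the generators $\{A_i^{(r)}\}_{r>a_i}$ of the truncation ideal correspond (up to the known Cartan/quantum-minor relations of Section~\ref{ssec quantum minors}) to the generators $\{t^{(r)}_{11}\}_{r\geq2}$ (after an appropriate relabeling / passing through all lower indices as in the proof of Theorem~\ref{alternative kernel yangian}) of the ideal $I\subseteq Y^\rtt_\hbar(\gl_n)$; (3) invoke Theorem~\ref{alternative kernel yangian} to conclude that on the nose the kernel is generated by those, hence equals $\unl{\CI}^{n\omega_{n-1}}_0$. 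Step (2) — matching the truncation generators with the RTT generators under $\Upsilon$ after the coweight specialization $\lambda=n\omega_{n-1},\mu=0$ — is the computational heart, and **the hard part will be** bookkeeping the shifts by $\hbar/2$ in~(\ref{matching yangian}), the precise form of the $A_i(z)$ in terms of quantum minors of $T(z)$, and tracking how the reduction $\sum z_i=0$ interacts with the $ZY^\rtt_\hbar(\gl_n)$-direction so that one lands exactly in the non-extended (or correctly-extended) algebra rather than something slightly off. Everything else is formal: the easy inclusion, the factorization through the quotient, and the final appeal to surjectivity plus the already-established PBW theorems (Theorem~\ref{PBW fkprw}, Proposition~\ref{center of Yangian}) to upgrade ``the induced map is surjective with the right kernel generators'' to ``it is an isomorphism.''
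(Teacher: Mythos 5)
Your overall architecture is the paper's: the easy inclusion via $\Phi^{\unl\lambda}_\mu(A_i(z))=z^{-a_i}W_i(z)$, then factoring $\unl{\Phi}{}^{n\omega_{n-1}}_0$ through the quotient by $\unl{\CI}^{n\omega_{n-1}}_0$ using Theorem~\ref{truncation as kernel yangian} and the surjectivity of Proposition~\ref{Surjectivity yangian}, reducing everything to the injectivity of the induced surjection $\ol{\phi}\colon\wt{\bU}(\ssl_n)\twoheadrightarrow\unl{\CA}_\hbar$. The RTT/Gauss-decomposition bookkeeping that you flag as the computational heart is precisely the content of Theorem~\ref{truncation as kernel yangian}, which you are allowed to assume; so the real remaining work is the injectivity of $\ol{\phi}$, and this is where your proposal has a genuine gap.

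Your argument for injectivity --- ``identify both sides explicitly \dots\ and check the two presentations match,'' or ``appeal to surjectivity plus the PBW theorems to upgrade'' --- does not go through, because no presentation of $\unl{\CA}_\hbar$ is available a priori: producing one is exactly what the theorem asserts. The localization embedding $\bz^*(\iota_*)^{-1}$ and the formulas of Lemma~\ref{t-modes in diff op-s yangian} only describe the image of $\ol{\phi}$ inside $\wt{\CA}_\hbar$, i.e.\ they re-prove surjectivity; they say nothing about whether $\unl{\CA}_\hbar$ could be a proper quotient of $\wt{\bU}(\ssl_n)$. Likewise, step (3) of your ``clean packaging'' quietly conflates $\Ker(\ol{\ev})$ with $\Ker(\unl{\Phi}{}^{n\omega_{n-1}}_0)$ --- the equality of these two kernels is the statement being proved. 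Some independent control of the size of the Coulomb branch is indispensable. The paper supplies it geometrically: $\ol{\phi}$ is compatible with the gradings and becomes an isomorphism modulo the ideal generated by $\hbar,z_1,\ldots,z_n$, both sides being identified with $\BC[\CN]$, the ring of functions on the nilpotent cone (by~\cite[Theorem 4.12]{bfnc}); the graded characters of $\wt{\bU}(\ssl_n)$ and $\unl{\CA}_\hbar$ then both equal $\on{char}\BC[\CN]\cdot\on{char}\left(\BC[\hbar,z_1,\ldots,z_n]/(\sum z_i)\right)$, which forces the graded surjection $\ol{\phi}$ to be injective. You need this input (or a substitute, such as a faithful action of $\unl{\CA}_\hbar$ factoring through $\wt{\bU}(\ssl_n)$, which is how the paper handles the trigonometric analogue) to close the argument.
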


Our proof of this result is based on the identification of the reduced truncation ideal
$\unl{\CI}^{n\omega_{n-1}}_0$ with the kernel of a certain version of the evaluation
homomorphism $\ev$, which is of independent interest.

Recall the commutative diagram~(\ref{diagram yangian}) of
Theorem~\ref{compatibility of evaluations yangian}. Adjoining extra variables
$\{z_i\}_{i=1}^n$ subject to $\sum_{i=1}^n z_i=0$,
we obtain the following commutative diagram:
\begin{equation}\label{diagram yangian with z}
  \begin{CD}
    Y_\hbar(\ssl_n)[z_1,\ldots,z_n]/(\sum z_i) @>\ev>> U(\ssl_n)[z_1,\ldots,z_n]/(\sum z_i)\\
    @VV{\wt{\Upsilon}}V @A{\wt{\gamma}}AA\\
   Y^\rtt_\hbar(\gl_n)[z_1,\ldots,z_n]/(\sum z_i) @>\ev^\rtt>> U(\gl_n)[z_1,\ldots,z_n]/(\sum z_i)
    \end{CD}
\end{equation}
where
  $U(\gl_n)[z_1,\ldots,z_n]/(\sum z_i):=
   U(\gl_n)\otimes_{\BC[\hbar]}\BC[\hbar][z_1,\ldots,z_n]/(\sum z_i)$
and the other three algebras are defined likewise.

Recall the isomorphism
  $Y^\rtt_\hbar(\gl_n)\simeq Y^\rtt_\hbar(\ssl_n)\otimes_{\BC[\hbar]} ZY^\rtt_\hbar(\gl_n)$
of~(\ref{RTT gln vs sln}), which after adjoining extra variables $\{z_i\}_{i=1}^n$
subject to $\sum_{i=1}^n z_i=0$ gives rise to an algebra isomorphism
  $Y^\rtt_\hbar(\gl_n)[z_1,\ldots,z_n]/(\sum z_i)\simeq
   Y^\rtt_\hbar(\ssl_n)\otimes_{\BC[\hbar]} ZY^\rtt_\hbar(\gl_n)
   \otimes_{\BC[\hbar]}\BC[\hbar][z_1,\ldots,z_n]/(\sum z_i)$.
Let $\wt{\Delta}_n(z)$ denote the quantum determinant of the matrix $zT(z)$,
which is explicitly given by
  $\wt{\Delta}_n(z)=z(z-\hbar)(z-2\hbar)\cdots(z-(n-1)\hbar)\cdot \qdet\ T(z)$.
According to Proposition~\ref{center of Yangian}, the center $ZY^\rtt_\hbar(\gl_n)$
is a polynomial algebra in $\{\wt{d}_r\}_{r=1}^\infty$, where $\wt{d}_r$ are
defined via
  $z^{-n}\wt{\Delta}_n(z+\frac{n-1}{2}\hbar)=1+\hbar\sum_{r\geq 1}\wt{d}_rz^{-r}$.
Let $\CJ$ be the $2$-sided ideal of $Y^\rtt_\hbar(\gl_n)[z_1,\ldots,z_n]/(\sum z_i)$
generated by
  $\{\wt{d}_r\}_{r>n}\cup \{\wt{d}_r-\hbar^{-1}e_r(-\hbar z_1,\ldots,-\hbar z_n)\}_{r=1}^n$,
where $e_r(\bullet)$ denotes the $r$-th elementary symmetric polynomial.
The ideal $\CJ$ is chosen so that
  $z^{-n}\wt{\Delta}_n(z+\frac{n-1}{2}\hbar)-
   \prod_{s=1}^n (1-\frac{\hbar z_s}{z})\in \CJ[[z^{-1}]]$.
Let
  $\pi\colon Y^\rtt_\hbar(\gl_n)[z_1,\ldots,z_n]/(\sum z_i)\twoheadrightarrow
   Y^\rtt_\hbar(\ssl_n)[z_1,\ldots,z_n]/(\sum z_i)$
be the natural projection along $\CJ$. Set $X_r:=\ev^\rtt(\wt{d}_r)$
(note that $X_r=0$ for $r>n$). Then, the center of $U(\gl_n)[z_1,\ldots,z_n]/(\sum z_i)$
is isomorphic to $\BC[\hbar][z_1\ldots,z_n,X_1,\ldots,X_n]/(\sum z_i)$.

Recall the \emph{extended enveloping algebra} $\wt{U}(\ssl_n)$ of~\cite{bg}, defined as the central
reduction of $U(\gl_n)[z_1,\ldots,z_n]/(\sum z_i)$ by the $2$-sided ideal generated by
  $\{X_r-\hbar^{-1}e_r(-\hbar z_1,\ldots,-\hbar z_n)\}_{r=1}^n$
(the appearance of $\ssl_n$ is due to the fact that $X_1=0$).
By abuse of notation, we denote the corresponding projection
$U(\gl_n)[z_1,\ldots,z_n]/(\sum z_i)\twoheadrightarrow \wt{U}(\ssl_n)$
by $\pi$ again. We denote the composition
  $Y^\rtt_\hbar(\gl_n)[z_1,\ldots,z_n]/(\sum z_i)\xrightarrow{\ev^\rtt}
   U(\gl_n)[z_1,\ldots,z_n]/(\sum z_i) \xrightarrow{\pi} \wt{U}(\ssl_n)$
by $\ol{\ev}^\rtt$. It factors through
  $\pi\colon Y^\rtt_\hbar(\gl_n)[z_1,\ldots,z_n]/(\sum z_i)\to
   Y^\rtt_\hbar(\ssl_n)[z_1,\ldots,z_n]/(\sum z_i)$,
and we denote the corresponding homomorphism
$Y^\rtt_\hbar(\ssl_n)[z_1,\ldots,z_n]/(\sum z_i)\to \wt{U}(\ssl_n)$ by
$\ol{\ev}^\rtt$ again. The algebra $\wt{U}(\ssl_n)$ can be also realized as
the central reduction of $U(\ssl_n)[z_1,\ldots,z_n]/(\sum z_i)$ by the $2$-sided ideal
generated by $\{\bar{X}_r-\hbar^{-1}e_r(-\hbar z_1,\ldots,-\hbar z_n)\}_{r=2}^n$,
where $\bar{X}_r=\wt{\gamma}(X_r)$, see subsection~\ref{ssec Drinfeld evaluation}.
We denote the corresponding projection
  $U(\ssl_n)[z_1,\ldots,z_n]/(\sum z_i)\twoheadrightarrow \wt{U}(\ssl_n)$
by $\pi$ again. Finally, we denote the composition
  $Y_\hbar(\ssl_n)[z_1,\ldots,z_n]/(\sum z_i)\xrightarrow{\ev}
   U(\ssl_n)[z_1,\ldots,z_n]/(\sum z_i) \xrightarrow{\pi} \wt{U}(\ssl_n)$
by $\ol{\ev}$.

Summarizing all the above, we obtain the following commutative diagram:
\begin{equation}\label{diagram yangian full}
  \begin{CD}
    Y_\hbar(\ssl_n)[z_1,\ldots,z_n]/(\sum z_i) @>\ol{\ev}>> \wt{U}(\ssl_n)\\
      @VV{\wt{\Upsilon}}V @A{\wt{\gamma}}A\wr A\\
    Y^\rtt_\hbar(\gl_n)[z_1,\ldots,z_n]/(\sum z_i) @>\ol{\ev}^\rtt>> \wt{U}(\ssl_n)\\
    @VV{\pi}V @| \\
    Y^\rtt_\hbar(\ssl_n)[z_1,\ldots,z_n]/(\sum z_i) @>\ol{\ev}^\rtt>> \wt{U}(\ssl_n)\\
    \end{CD}
\end{equation}
We note that the vertical arrows on the right are isomorphisms,
as well as the composition
  $\pi\circ \wt{\Upsilon}\colon Y_\hbar(\ssl_n)[z_1,\ldots,z_n]/(\sum z_i)
   \iso Y^\rtt_\hbar(\ssl_n)[z_1,\ldots,z_n]/(\sum z_i)$ on the left.

The commutative diagram~(\ref{diagram yangian full}) in turn gives rise to
the following commutative diagram:
\begin{equation}\label{diagram yangian full Gavarini}
  \begin{CD}
    \bY_\hbar(\ssl_n)[z_1,\ldots,z_n]/(\sum z_i) @>\ol{\ev}>> \wt{\bU}(\ssl_n)\\
      @VV{\wt{\Upsilon}}V @A{\wt{\gamma}}A\wr A\\
    \bY^\rtt_\hbar(\gl_n)[z_1,\ldots,z_n]/(\sum z_i) @>\ol{\ev}^\rtt>> \wt{\bU}(\ssl_n)\\
    @VV{\pi}V @| \\
    \bY^\rtt_\hbar(\ssl_n)[z_1,\ldots,z_n]/(\sum z_i) @>\ol{\ev}^\rtt>> \wt{\bU}(\ssl_n)\\
    \end{CD}
\end{equation}

Here we use the following notations:

\noindent
$\bullet$
$\wt{\bU}(\ssl_n)$ denotes the reduced extended version of $\bU(\ssl_n)$, or
alternatively it can be viewed as a $\BC[\hbar]$-subalgebra of $\wt{U}(\ssl_n)$
generated by $\{\hbar x\}_{x\in \ssl_n}\cup \{\hbar z_i\}_{i=1}^n$.

\noindent
$\bullet$
  $\bY_\hbar(\ssl_n)[z_1,\ldots,z_n]/(\sum z_i):=\bY_\hbar(\ssl_n)
   \otimes_{\BC[\hbar]}\BC[\hbar][z_1,\ldots,z_n]/(\sum z_i)$,
or alternatively it can be viewed as a $\BC[\hbar]$-subalgebra of
$Y_\hbar(\ssl_n)[z_1,\ldots,z_n]/(\sum z_i)$ generated by
  $\{\hbar E^{(r)}_{\alphavee}, \hbar F^{(r)}_{\alphavee}\}_{\alphavee\in \Delta^+}^{r\geq 0}\cup
   \{\hbar h_i^{(r)}\}_{1\leq i<n}^{r\geq 0}\cup \{\hbar z_i\}_{i=1}^n$.
Following our conventions of Remark~\ref{Warning}, we shall denote
$\hbar z_i$ simply by $z_i$.

\noindent
$\bullet$
  $\bY^\rtt_\hbar(\gl_n)[z_1,\ldots,z_n]/(\sum z_i):=\bY^\rtt_\hbar(\gl_n)
   \otimes_{\BC[\hbar]}\BC[\hbar][z_1,\ldots,z_n]/(\sum z_i)$, or alternatively
it can be viewed as a $\BC[\hbar]$-subalgebra of
$Y^\rtt_\hbar(\gl_n)[z_1,\ldots,z_n]/(\sum z_i)$ generated by
  $\{\hbar t^{(r)}_{ij}\}_{1\leq i,j\leq n}^{r\geq 1}\cup \{\hbar z_i\}_{i=1}^n$.
Here we denote $\hbar z_i$ simply by $z_i$ as above.

\begin{Rem}
Note that $\wt{\Upsilon}$ in~(\ref{diagram yangian full Gavarini}) is well-defined,
due to Proposition~\ref{yangian integral forms coincide} (see also our discussion in
Appendix~\ref{ssec RTT version of Drinfeld-Gavarini}).
\end{Rem}

\begin{Thm}\label{truncation as kernel yangian}
  $\unl{\CI}^{n\omega_{n-1}}_0=
  \Ker\left(\ol{\ev}\colon \bY_\hbar(\ssl_n)[z_1,\ldots,z_n]/(\sum z_i)\to \wt{\bU}(\ssl_n)\right)$.
\end{Thm}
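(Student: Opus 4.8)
The plan is to transport the asserted equality through the commutative diagram~(\ref{diagram yangian full Gavarini}). Its right vertical arrows are isomorphisms and the left composition $\pi\circ\wt{\Upsilon}$ restricts to an isomorphism $\bY_\hbar(\ssl_n)[z_1,\ldots,z_n]/(\sum z_i)\iso\bY^\rtt_\hbar(\ssl_n)[z_1,\ldots,z_n]/(\sum z_i)$; moreover the lower $\ol{\ev}^\rtt$ factors through the surjection $\pi$ from $\bY^\rtt_\hbar(\gl_n)[z_1,\ldots,z_n]/(\sum z_i)$, so it is equivalent to compare, inside the latter algebra, the ideal $\mathfrak{T}$ generated by $\Ker(\pi)$ together with the $\wt{\Upsilon}$-images of the generators $\{A_i^{(r)}\}_{r>i}$ of $\unl{\CI}^{n\omega_{n-1}}_0$, with $\Ker(\ol{\ev}^\rtt)=\Ker(\ev^\rtt)+\langle\hbar\wt{d}_r-e_r(-\hbar z_1,\ldots,-\hbar z_n)\rangle_{r=1}^{n}$ (the last equality using surjectivity of $\ev^\rtt$, Lemma~\ref{embed+ev rtt yangian}). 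The first preliminary step is to pin down $\wt{\Upsilon}(A_i(z))$ in RTT terms: for $\mu=0,\lambda=n\omega_{n-1}$ one has $a_i=i$, $Z_i\equiv 1$ for $i<n-1$ and $Z_{n-1}(z)=\prod_{s=1}^{n}(z-z_s-\hbar/2)$, and unwinding the defining relation~(\ref{A-generators yangian}) together with the Gauss decomposition $T(z)=F(z)G(z)E(z)$ of Theorem~\ref{Yangian Gauss decomposition} and the classical identity $t^{1\ldots i}_{1\ldots i}(z)=g_1(z)g_2(z-\hbar)\cdots g_i(z-(i-1)\hbar)$ for leading principal quantum minors shows that $\wt{\Upsilon}(A_i(z))$ is the leading principal $i\times i$ quantum minor of $T(z)$, renormalized by an explicit power of $z$ and shift of the spectral parameter and, for $i=n-1$ only, divided by $Z_{n-1}$; in particular $\wt{\Upsilon}(A_1(z))$ coincides with $g_1(z)=t_{11}(z)$ up to such a normalization.

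For the inclusion $\unl{\CI}^{n\omega_{n-1}}_0\subseteq\Ker(\ol{\ev})$ one computes directly. By Remark~\ref{property T-matrix}(b), $\ev^\rtt(T(z))=1+\hbar z^{-1}T$, so $\ev^\rtt$ sends $z(z-\hbar)\cdots(z-(i-1)\hbar)\,t^{1\ldots i}_{1\ldots i}(z)$ to the $i\times i$ quantum determinant of $\big((z)\cdot 1+\hbar T_1\big)\big((z-\hbar)\cdot 1+\hbar T_2\big)\cdots\big((z-(i-1)\hbar)\cdot 1+\hbar T_i\big)$, which is a polynomial in $z$ of degree $i$; hence after the renormalization above $\ev^\rtt\big(\wt{\Upsilon}(A_i(z))\big)$ is a Laurent polynomial in $z^{-1}$ of degree $\le i$ when $i<n-1$, so its modes of order $>i$ vanish, and for $i=n-1$ the extra factor $Z_{n-1}$ is cancelled precisely by the central reduction $\pi$, which imposes $z^{-n}\wt{\Delta}_n(z+\tfrac{n-1}{2}\hbar)\equiv\prod_{s=1}^{n}(1-\hbar z_s/z)$, i.e. kills $\CJ$. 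Thus $\ol{\ev}^\rtt$ annihilates $\wt{\Upsilon}(A_i^{(r)})$ for all $r>i$, and trivially $\Ker(\pi)\subseteq\Ker(\ol{\ev}^\rtt)$, whence $\mathfrak{T}\subseteq\Ker(\ol{\ev}^\rtt)$.

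For the reverse inclusion one invokes the minimalistic kernel descriptions. The central generators $\hbar\wt{d}_r-e_r(-\hbar z_1,\ldots,-\hbar z_n)$ lie in $\Ker(\pi)\subseteq\mathfrak{T}$, so it remains to show $\Ker(\ev^\rtt)=\bY^\rtt_\hbar(\gl_n)\cap I$ (extended by the $z$-variables) is contained in $\mathfrak{T}$, where by Theorems~\ref{alternative kernel yangian} and~\ref{alternative kernel Yangian integral} one has $I=\langle\{t^{(r)}_{11}\}_{r\ge 2}\rangle$, and by the proof of Theorem~\ref{alternative kernel yangian} this ideal contains every $t^{(r)}_{ij}$ with $r\ge 2$, so that $\bY^\rtt_\hbar(\gl_n)\cap I$ is generated by $\{\hbar t^{(r)}_{ij}\}_{r\ge 2}$ (Lemma~\ref{simple kernel yangian integral}). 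Now $\{\hbar t^{(r)}_{11}\}_{r\ge 2}\subseteq\mathfrak{T}$, since $\wt{\Upsilon}(A_1(z))=g_1(z)=t_{11}(z)$ up to the normalization above (here $Z_1\equiv1$), so that $\{\wt{\Upsilon}(\hbar A_1^{(r)})\}_{r\ge2}$ and $\{\hbar t^{(r)}_{11}\}_{r\ge2}$ generate the same ideal; and the chain of commutator identities used in the proof of Theorem~\ref{alternative kernel yangian} (e.g.\ $[t^{(r)}_{11},t^{(1)}_{1l}]=t^{(r)}_{1l}$, etc.) then propagates this to all $\hbar t^{(r)}_{ij}$ with $r\ge 2$ — provided $\mathfrak{T}$ is $\hbar$-saturated in $\bY^\rtt_\hbar(\gl_n)[z_1,\ldots,z_n]/(\sum z_i)$, since each step lands in $\mathfrak{T}$ only after an extra factor of $\hbar$ (the integral versions of the elements $t^{(1)}_{1l}$ being themselves $\hbar$-multiples). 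Granting this, $\bY^\rtt_\hbar(\gl_n)\cap I\subseteq\mathfrak{T}$, hence $\Ker(\ol{\ev}^\rtt)\subseteq\mathfrak{T}$, and transporting back through~(\ref{diagram yangian full Gavarini}) yields the theorem.

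I expect the two structural inputs just used to be the real content. The first is the explicit identification of $\wt{\Upsilon}(A_i(z))$ with a renormalized leading principal quantum minor, making precise the interplay of~(\ref{A-generators yangian}) with the Gauss decomposition and absorbing the framing at the node $n-1$ via $Z_{n-1}$ into the quantum-determinant constraint $\CJ$. The second is the $\hbar$-saturation of the truncation ideal $\unl{\CI}^{n\omega_{n-1}}_0$, which I would deduce from freeness over $\BC[\hbar]$ of the truncated algebra $\bY_\hbar(\ssl_n)[z_1,\ldots,z_n]/(\sum z_i)\,/\,\unl{\CI}^{n\omega_{n-1}}_0$ — a PBW-type statement in the spirit of Theorem~\ref{PBW fkprw}. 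Everything else is routine manipulation with the diagram~(\ref{diagram yangian full Gavarini}) and the already-established minimalistic descriptions of $\Ker(\ev^\rtt)$.
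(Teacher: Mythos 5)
Your strategy is the paper's own: transport everything through the commutative diagram~(\ref{diagram yangian full Gavarini}), identify $\pi\circ\wt{\Upsilon}(A_k(z))$ with the renormalized principal quantum minor $\hat{\Delta}_k(z)$, deduce $\unl{\CI}^{n\omega_{n-1}}_0\subseteq\Ker(\ol{\ev})$ from the fact that $\ol{\ev}^\rtt(\hat{\Delta}_k(z))=z^{-k}\ol{\sT}^{1\ldots k}_{1\ldots k}(z+\tfrac{k-1}{2}\hbar)$ is a polynomial of degree $\leq k$ in $z^{-1}$, and get the reverse inclusion from Theorem~\ref{alternative kernel Yangian integral} together with $\hat{\Delta}_1(z)=t_{11}(z)$. (Two cosmetic differences: the paper quotes the formula for $\Upsilon(H_k(z))$ as a ratio of quantum minors from~\cite{m} rather than rederiving it from the Gauss decomposition, and it absorbs the framing factor $Z_{n-1}$ into $A_n(z)=\hat{\Delta}_n(z)$ via the central reduction $\pi$ rather than dividing $A_{n-1}$ by it; your bookkeeping at the node $n-1$ is slightly off but equivalent in spirit.)

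The genuine gap is precisely the step you flag and then set aside. For the reverse inclusion you must show that $\Ker(\ev^\rtt)$ on the integral form — which by Lemma~\ref{simple kernel yangian integral} is the two-sided ideal of $\bY^\rtt_\hbar(\gl_n)$ generated by \emph{all} $\hbar t^{(r)}_{ij}$ with $r\geq 2$ — is contained in the image of $\unl{\CI}^{n\omega_{n-1}}_0$, whose only "first-column" generators correspond to $\{\hbar t^{(r)}_{11}\}_{r\geq 2}$. As you observe, each step of the commutator chain from the proof of Theorem~\ref{alternative kernel yangian} overshoots by a factor of $\hbar$ inside the integral form (e.g.\ $[\hbar t^{(r)}_{11},\hbar t^{(1)}_{1l}]=\hbar\cdot\hbar t^{(r)}_{1l}$), so your argument only closes if the ideal $\mathfrak{T}$ is $\hbar$-saturated. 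Your proposed discharge of this hypothesis — freeness over $\BC[\hbar]$ of the quotient of $\bY_\hbar(\ssl_n)[z_1,\ldots,z_n]/(\sum z_i)$ by $\unl{\CI}^{n\omega_{n-1}}_0$ — is circular: identifying that quotient (with $\wt{\bU}(\ssl_n)$, hence proving it free of the right size) is exactly what Theorem~\ref{truncation as kernel yangian} and Theorem~\ref{Main Theorem 2} are used to establish downstream (Corollary~\ref{explicit Coulomb yangian}), and no independent PBW theorem for the truncated algebra is available; the needed statement is essentially the nontrivial reducedness result of~\cite{kmwy}. The intended reading of the paper's one-line justification is that Theorem~\ref{alternative kernel Yangian integral} describes the kernel as the \emph{intersection} $\bY^\rtt_\hbar(\gl_n)\cap I$, with $I$ an ideal of the full RTT Yangian $Y^\rtt_\hbar(\gl_n)$ where all divisions by $\hbar$ are harmless — but to complete your write-up you would still have to relate that intersection to the ideal generated inside the integral form, i.e.\ confront the same saturation question rather than grant it.
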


\begin{proof}
In the particular case $\mu=0,\lambda=n\omega_{n-1}$, we note that
$Z_1(z)=\ldots=Z_{n-2}(z)=1, Z_{n-1}(z)=\prod_{s=1}^n (z-\hbar/2-z_s)$
and $a_k=k\ (1\leq k\leq n-1)$. Let us introduce extra currents $A_0(z),A_n(z)$ via
$A_0(z):=1, A_n(z):=\prod_{s=1}^n (1-z_s/z)$. Then, formula~(\ref{A-generators yangian})
relating the generating series $\{H_k(z)\}_{k=1}^{n-1}$ to $\{A_k(z)\}_{k=1}^{n-1}$ can
be uniformly written as
\begin{equation}\label{H first formula}
  H_k(z)=\frac{(z-\frac{\hbar}{2})^{2k}}{z^k(z-\hbar)^k}\cdot
  \frac{A_{k-1}(z-\frac{\hbar}{2})A_{k+1}(z-\frac{\hbar}{2})}{A_k(z)A_k(z-\hbar)}
  \ \ \mathrm{for\ any}\ \ 1\leq k\leq n-1.
\end{equation}

Let $\Delta_k(z)$ denote the $k$-th principal quantum minor $t^{1\ldots k}_{1\ldots k}(z)$
of $T(z)$, see Definition~\ref{quantum minor}. According to~\cite{m}, the following equality holds:
\begin{equation*}
  \Upsilon(H_k(z))=
  \frac{\Delta_{k-1}(z+\frac{k-1}{2}\hbar)\Delta_{k+1}(z+\frac{k+1}{2}\hbar)}
  {\Delta_{k}(z+\frac{k-1}{2}\hbar)\Delta_{k}(z+\frac{k+1}{2}\hbar)}.
\end{equation*}
This immediately implies
\begin{equation*}
  \wt{\Upsilon}(H_k(z))=
  \frac{\Delta_{k-1}(z+\frac{k-3}{2}\hbar)\Delta_{k+1}(z+\frac{k-1}{2}\hbar)}
  {\Delta_{k}(z+\frac{k-3}{2}\hbar)\Delta_{k}(z+\frac{k-1}{2}\hbar)}.
\end{equation*}
Generalizing $\wt{\Delta}_n(z)$, define $\wt{\Delta}_k(z)$ as the $k$-th
principal quantum minor of the matrix $zT(z)$. Explicitly, we have
$\wt{\Delta}_k(z)=z(z-\hbar)\cdots(z-(k-1)\hbar)\cdot \Delta_k(z)$.
Then, we get
\begin{equation*}
  \wt{\Upsilon}(H_k(z))=
  \frac{\wt{\Delta}_{k-1}(z+\frac{k-3}{2}\hbar)\wt{\Delta}_{k+1}(z+\frac{k-1}{2}\hbar)}
  {\wt{\Delta}_{k}(z+\frac{k-3}{2}\hbar)\wt{\Delta}_{k}(z+\frac{k-1}{2}\hbar)}.
\end{equation*}
Finally, define $\hat{\Delta}_k(z):=z^{-k}\wt{\Delta}_k(z+\frac{k-1}{2}\hbar)$.
Then, the above formula reads as
\begin{equation*}
  \wt{\Upsilon}(H_k(z))=
  \frac{(z-\frac{\hbar}{2})^{2k}}{z^k(z-\hbar)^k}\cdot
  \frac{\hat{\Delta}_{k-1}(z-\frac{\hbar}{2})\hat{\Delta}_{k+1}(z-\frac{\hbar}{2})}
  {\hat{\Delta}_{k}(z)\hat{\Delta}_{k}(z-\hbar)}.
\end{equation*}
By abuse of notation, let us denote the image $\pi(\hat{\Delta}_k(z))$ by
$\hat{\Delta}_k(z)$ again. Note that $\hat{\Delta}_n(z)=A_n(z)$, due to our
definition of $\pi$. Combining this with~(\ref{H first formula}), we obtain
the following result:

\begin{Cor}\label{image of A}
Under the isomorphism
\begin{equation*}
  \pi\circ\wt{\Upsilon}\colon
  \bY_\hbar(\ssl_n)[z_1,\ldots,z_n]/(z_1+\ldots+z_n) \iso
  \bY^\rtt_\hbar(\ssl_n)[z_1,\ldots,z_n]/(z_1+\ldots+z_n),
\end{equation*}
the generating series $A_k(z)$ are mapped into $\hat{\Delta}_k(z)$, that is,
$\pi\circ \wt{\Upsilon}(A_k(z))=\hat{\Delta}_k(z)$.
\end{Cor}


Define $\sT\in U(\ssl_n)\otimes \End(\BC^n)$ via $\sT:=(\wt{\gamma}\otimes 1)(T)$
with $T=\sum_{i,j} E_{ij}\otimes E_{ij}\in U(\gl_n)\otimes \End(\BC^n)$ as in
Remark~\ref{property T-matrix}(b). Set $\ol{\sT}(z):=zI_n+\hbar \sT$. Denote the $k$-th
principal quantum minor of $\ol{\sT}(z)$ by $\ol{\sT}^{1\ldots k}_{1\ldots k}(z)$.
The following is clear:
\begin{equation}\label{image of hat-delta}
   \ol{\ev}^\rtt(\hat{\Delta}_k(z))=
   z^{-k}\ol{\sT}^{1\ldots k}_{1\ldots k} \left(z+\frac{k-1}{2}\hbar\right).
\end{equation}

Combining Corollary~\ref{image of A} with~(\ref{image of hat-delta}) and the
commutativity of the diagram~(\ref{diagram yangian full Gavarini}), we get

\begin{Cor}
$\ol{\ev}(A^{(r)}_i)=0$ for any $1\leq i\leq n-1, r>i$.
In particular, $\unl{\CI}^{n\omega_{n-1}}_0\subseteq \Ker(\ol{\ev})$.
\end{Cor}

The opposite inclusion $\unl{\CI}^{n\omega_{n-1}}_0\supseteq \Ker(\ol{\ev})$ follows
from Theorem~\ref{alternative kernel Yangian integral} by noticing that
$\hat{\Delta}_1(z)=t_{11}(z)$ and so
  $(\pi\circ \wt{\Upsilon})^{-1}(t^{(r)}_{11})=A_1^{(r)}\in \unl{\CI}^{n\omega_{n-1}}_0$
for $r>1$.

\medskip
This completes our proof of Theorem~\ref{truncation as kernel yangian}.
\end{proof}

Now we are ready to present the proof of Theorem~\ref{Main Theorem 2}.

\begin{proof}[Proof of Theorem~\ref{Main Theorem 2}]
Consider a subtorus $T'_W=\{g\in T_W|\det(g)=1\}$ of $T_W$, and define
  $\unl{\CA}_\hbar:=H^{(\GL(V)\times T'_W)_\CO\rtimes \BC^\times}_{\bullet}(\CR_{\GL(V),\bN})$,
so that $\unl{\CA}_\hbar\simeq \CA_\hbar/(\sum z_i)$.
After imposing $\sum z_i=0$, the homomorphism
  $\unl{\Phi}{}^{n\omega_{n-1}}_0\colon
   \bY_\hbar(\ssl_n)[z_1,\ldots,z_n]/(\sum z_i)\to \wt{\CA}_\hbar/(\sum z_i)$
is a composition of the surjective homomorphism
  $\unl{\ol{\Phi}}{}^{n\omega_{n-1}}_0\colon
   \bY_\hbar(\ssl_n)[z_1,\ldots,z_n]/(\sum z_i)\to \unl{\CA}_\hbar$
(see Proposition~\ref{Surjectivity yangian}) and an embedding
  $\bz^*(\iota_*)^{-1}\colon \unl{\CA}_\hbar\hookrightarrow \wt{\CA}_\hbar/(\sum z_i)$,
so that $\Ker(\unl{\Phi}{}^{n\omega_{n-1}}_0)=\Ker(\unl{\ol{\Phi}}{}^{n\omega_{n-1}}_0)$.
The homomorphism $\unl{\ol{\Phi}}{}^{n\omega_{n-1}}_0$ factors through
$\ol{\phi}\colon \wt{\bU}(\ssl_n)\twoheadrightarrow \unl{\CA}_\hbar$
(due to Theorem~\ref{truncation as kernel yangian}), and it remains to prove the injectivity
of $\ol{\phi}$.
Note that $\ol{\phi}$ is compatible with the gradings, and it is known to be an
isomorphism modulo the ideal generated by $\hbar,z_1,\ldots,z_n$,
see e.g.~\cite[Theorem~4.12]{bfnc}: namely, both sides are isomorphic to the ring of functions
on the nilpotent cone $\CN\subset\ssl_n$. To prove the injectivity of $\ol{\phi}$ it suffices
to identify the graded characters of the algebras in question. But both graded characters
are equal to $\on{char}\BC[\CN]\cdot\on{char}\left(\BC[\hbar,z_1,\ldots,z_n]/(\sum z_i)\right)$.

This completes our proof of Theorem~\ref{Main Theorem 2}.
\end{proof}

\begin{Cor}\label{explicit Coulomb yangian}
The reduced quantized Coulomb branch $\unl{\CA}_\hbar$ is explicitly given
by $\unl{\CA}_\hbar\simeq \wt{\bU}(\ssl_n)$.
\end{Cor}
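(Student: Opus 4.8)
The plan is to observe that this corollary is an immediate byproduct of the proof of Theorem~\ref{Main Theorem 2}, so no new work is really needed. First I would recall from that proof that the surjection $\unl{\ol{\Phi}}{}^{n\omega_{n-1}}_0\colon \bY_\hbar(\ssl_n)[z_1,\ldots,z_n]/(\sum z_i)\twoheadrightarrow \unl{\CA}_\hbar$ furnished by Proposition~\ref{Surjectivity yangian} was shown, via Theorem~\ref{truncation as kernel yangian}, to annihilate the reduced truncation ideal $\unl{\CI}^{n\omega_{n-1}}_0$, hence to factor as a composite
\begin{equation*}
  \bY_\hbar(\ssl_n)[z_1,\ldots,z_n]/(\sum z_i)
  \twoheadrightarrow \wt{\bU}(\ssl_n)
  \xrightarrow{\ \ol{\phi}\ } \unl{\CA}_\hbar,
\end{equation*}
where the first arrow is the quotient by $\unl{\CI}^{n\omega_{n-1}}_0$ (identified with $\ol{\ev}$ through Theorem~\ref{truncation as kernel yangian}) and $\ol{\phi}$ is a surjective graded $\BC[\hbar]$-algebra homomorphism.

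Next I would invoke the injectivity of $\ol{\phi}$, which was established in the course of proving Theorem~\ref{Main Theorem 2}: the map $\ol{\phi}$ is graded, it becomes an isomorphism after setting $\hbar=z_1=\cdots=z_n=0$ (both sides then specializing to $\BC[\CN]$ by~\cite[Theorem~4.12]{bfnc}), and the graded characters of $\wt{\bU}(\ssl_n)$ and of $\unl{\CA}_\hbar$ both equal $\on{char}\BC[\CN]\cdot\on{char}\bigl(\BC[\hbar,z_1,\ldots,z_n]/(\sum z_i)\bigr)$. A degreewise surjection of graded modules with finite-dimensional graded pieces and equal graded characters is a bijection, so $\ol{\phi}$ is an isomorphism, which is precisely the assertion $\unl{\CA}_\hbar\simeq \wt{\bU}(\ssl_n)$.

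I do not expect any genuine obstacle here, since every ingredient — surjectivity of $\unl{\ol{\Phi}}{}^{n\omega_{n-1}}_0$, the kernel computation of Theorem~\ref{truncation as kernel yangian}, and the matching of graded characters — has already been assembled. The one point to keep in mind is that the comparison of graded characters rests on the PBW basis of $\bY_\hbar(\ssl_n)$ (Remark~\ref{pbw for integral yangian}) cut down by the truncation relations $A_i^{(r)}=0$ for $r>i$; this is what guarantees finite-dimensionality of the graded pieces and thereby legitimizes the final "surjective $+$ equal character $\Rightarrow$ iso" step. In short, the corollary is just a restatement of the isomorphism $\ol{\phi}$ produced inside the proof of Theorem~\ref{Main Theorem 2}.
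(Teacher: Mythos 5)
Your proposal is correct and matches the paper exactly: Corollary~\ref{explicit Coulomb yangian} is stated without separate proof precisely because it is the isomorphism $\ol{\phi}\colon \wt{\bU}(\ssl_n)\iso \unl{\CA}_\hbar$ established inside the proof of Theorem~\ref{Main Theorem 2} (surjectivity from Proposition~\ref{Surjectivity yangian} plus Theorem~\ref{truncation as kernel yangian}, injectivity from the graded character comparison). Nothing further is needed.
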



\section{Quantum Algebras}\label{sec quantum algebras}


\subsection{The RTT integral form of quantum $\gl_n$}\label{ssec RTT finite}
\

Let $\vv$ be a formal variable. Consider the $R$-matrix $R=R^\vv$ given by
\begin{equation}\label{finite R}
  R=\vv^{-1}\sum_{i=1}^n E_{ii}\otimes E_{ii}+\sum_{i\ne j} E_{ii}\otimes E_{jj}+
  (\vv^{-1}-\vv)\sum_{i>j}E_{ij}\otimes E_{ji}
\end{equation}
which is an element of $\BC[\vv,\vv^{-1}]\otimes_{\BC} (\End\ \BC^n)^{\otimes 2}$.
It satisfies the famous \emph{Yang-Baxter equation}
\begin{equation*}
  R_{12}R_{13}R_{23}=R_{23}R_{13}R_{12},
\end{equation*}
viewed as the equality in $\BC[\vv,\vv^{-1}]\otimes_{\BC} (\End\ \BC^n)^{\otimes 3}$.

Following~\cite{frt}, define the \emph{RTT integral form of quantum $\gl_n$}, denoted by
$\fU^\rtt_\vv(\gl_n)$, to be the associative $\BC[\vv,\vv^{-1}]$-algebra generated by
$\{t^+_{ij},t^-_{ij}\}_{i,j=1}^{n}$ with the following defining relations:
\begin{equation}\label{fRTT}
\begin{split}
  & t^\pm_{ii}t^\mp_{ii}=1\ \ \mathrm{for}\ 1\leq i\leq n,\\
  & t^{+}_{ij}=t^{-}_{ji}=0\ \ \mathrm{for}\ 1\leq j<i\leq n,\\
  & RT^+_1T^+_2=T^+_2T^+_1R,\ RT^-_1T^-_2=T^-_2T^-_1R,\ RT^-_1T^+_2=T^+_2T^-_1R.
\end{split}
\end{equation}
Here $T^\pm$ are the elements of the algebra $\fU^\rtt_\vv(\gl_n)\otimes \End\ \BC^n$,
defined by $T^\pm=\sum_{i,j} t^\pm_{ij}\otimes E_{ij}$. Thus, the last three defining
relations of~(\ref{fRTT}) should be viewed as equalities in
  $\fU^\rtt_\vv(\gl_n)\otimes (\End\ \BC^n)^{\otimes 2}$.

For completeness of the picture, define
  $\wt{R}\in \BC[\vv,\vv^{-1}]\otimes_{\BC} (\End\ \BC^n)^{\otimes 2}$
via\footnote{Let us note right away that this $\wt{R}$ is denoted by $R$
in~\cite[(2.2)]{df} and~\cite[$\S 1.15.1$]{m}.}
\begin{equation}\label{finite R-tilda}
  \wt{R}=\vv\sum_{i=1}^n E_{ii}\otimes E_{ii}+\sum_{i\ne j} E_{ii}\otimes E_{jj}+
  (\vv-\vv^{-1})\sum_{i<j}E_{ij}\otimes E_{ji}.
\end{equation}

\begin{Lem}\label{remaining fRTT}
The following equalities hold:
\begin{equation}\label{4th fRTT}
  \wt{R}T^+_1T^+_2=T^+_2T^+_1\wt{R},\
  \wt{R}T^-_1T^-_2=T^-_2T^-_1\wt{R},\
  \wt{R}T^+_1T^-_2=T^-_2T^+_1\wt{R}.
\end{equation}
\end{Lem}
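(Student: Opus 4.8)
The plan is to derive the three $\wt R$-relations \eqref{4th fRTT} from the three defining $R$-relations \eqref{fRTT} by conjugating the latter by $R^{-1}$ and then swapping the two auxiliary tensor factors. The input I would record first is a pair of elementary identities in $\BC[\vv,\vv^{-1}]\otimes_\BC(\End\ \BC^n)^{\otimes 2}$: comparing \eqref{finite R} and \eqref{finite R-tilda} termwise gives $\wt R = R + (\vv-\vv^{-1})P$, and — this is the form I actually use — $\wt R_{21} = R^{-1}$, where $R_{21}:=PRP$.

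To justify $\wt R_{21} = R^{-1}$ I would recall (or check directly on $e_i\otimes e_j$) the Hecke relation $\check R^2 = (\vv^{-1}-\vv)\check R + 1$ for $\check R := PR$; this gives $\check R^{-1} = \check R - (\vv^{-1}-\vv)$, whence, using $R = P\check R$, one has $R^{-1} = \check R^{-1}P = R_{21} + (\vv-\vv^{-1})P$, and the first identity (conjugated by $P$) identifies the right-hand side with $\wt R_{21}$. Equivalently, $R\,\wt R_{21} = \wt R_{21}\,R = 1$; this is the standard fact that our $\wt R$ is, up to the flip of tensor legs, the inverse $R$-matrix, and can alternatively be extracted from the conventions of~\cite{df},~\cite{m}.

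Given this, the rest is formal. Let $\tau$ be the $\BC[\vv,\vv^{-1}]$-algebra automorphism of $\fU^\rtt_\vv(\gl_n)\otimes(\End\ \BC^n)^{\otimes 2}$ that interchanges the two copies of $\End\ \BC^n$ and fixes $\fU^\rtt_\vv(\gl_n)$, so that $\tau(X_{12})=X_{21}$, $\tau(T^\pm_1)=T^\pm_2$ and $\tau(T^\pm_2)=T^\pm_1$. Starting from any one of the relations $R\,T^\bullet_1T^\circ_2 = T^\circ_2T^\bullet_1\,R$ in \eqref{fRTT}, with $(\bullet,\circ)\in\{(+,+),(-,-),(-,+)\}$, I multiply by $R^{-1}$ on the left and on the right to obtain $R^{-1}T^\circ_2T^\bullet_1 = T^\bullet_1T^\circ_2R^{-1}$, i.e.\ $\wt R_{21}\,T^\circ_2T^\bullet_1 = T^\bullet_1T^\circ_2\,\wt R_{21}$, and then apply $\tau$ to get $\wt R\,T^\circ_1T^\bullet_2 = T^\bullet_2T^\circ_1\,\wt R$. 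Taking $(\bullet,\circ)$ equal to $(+,+)$, $(-,-)$, $(-,+)$ in turn produces exactly the first, second, and third equalities of \eqref{4th fRTT}.

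I do not expect a genuine obstacle here; the one point worth flagging is that one should \emph{not} attempt to prove the mixed relation $\wt R\,T^+_1T^-_2 = T^-_2T^+_1\,\wt R$ by substituting $\wt R = R + (\vv-\vv^{-1})P$ directly into it, because $P\,T^+_1T^-_2 = T^+_2T^-_1\,P$ reshuffles the superscripts in a way that does not match the desired right-hand side, and there is no defining relation of the form $R\,T^+_1T^-_2 = T^-_2T^+_1\,R$ available. Routing everything through $R^{-1} = \wt R_{21}$ and the flip $\tau$ sidesteps this and treats all three cases uniformly.
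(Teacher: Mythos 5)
Your proof is correct and follows essentially the same route as the paper: multiply each defining relation of \eqref{fRTT} by $R^{-1}$ on both sides and conjugate by the permutation operator $P$, using the identity $\wt{R}=PR^{-1}P^{-1}$ (equivalently your $\wt{R}_{21}=R^{-1}$), which the paper dismisses as a "straightforward verification" and you justify via the Hecke relation. The only cosmetic difference is that the paper writes out just the mixed case and declares the other two analogous, while you treat all three uniformly.
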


\begin{proof}
Multiplying the last equality of~(\ref{fRTT}) by $R^{-1}$ on the left and
on the right, and conjugating further by the permutation operator
  $P=\sum_{i,j}E_{ij}\otimes E_{ji}\in (\End\ \BC^n)^{\otimes 2}$,
we get
\begin{equation*}
  (PR^{-1}P^{-1})T^+_1T^-_2=T^-_2T^+_1(PR^{-1}P^{-1}).
\end{equation*}
Since $\wt{R}=PR^{-1}P^{-1}$ (straightforward verification),
we obtain the last equality of~(\ref{4th fRTT}).

The other two equalities of~(\ref{4th fRTT}) are proved analogously.
\end{proof}

Note that specializing $\vv$ to $1$, i.e.\ taking
a quotient by $(\vv-1)$, $R^\vv$ specializes to the identity operator
  $\mathrm{I}=\sum_{i,j} E_{ii}\otimes E_{jj}\in (\End\ \BC^n)^{\otimes 2}$,
hence, the specializations of the generators $t^\pm_{ij}$ pairwise commute.
In other words, we get the following isomorphism:
\begin{equation}\label{integral_finite_gln}
  \fU^\rtt_\vv(\gl_n)/(\vv-1)\simeq
  \BC[t^+_{ij},t^-_{ji}]_{1\leq i\leq j\leq n}/(\langle t^\pm_{ii}t^\mp_{ii}-1 \rangle_{i=1}^n).
\end{equation}

We also define the $\BC(\vv)$-counterpart
  $U^\rtt_\vv(\gl_n):=\fU^\rtt_\vv(\gl_n)\otimes_{\BC[\vv,\vv^{-1}]} \BC(\vv)$.


\subsection{The RTT integral form of quantum affine $\gl_n$}\label{ssec RTT affine}
\

Consider the \emph{trigonometric} $R$-matrix $R_\trig(z,w)=R^\vv_\trig(z,w)$ given by
\begin{equation}\label{trigR}
\begin{split}
  R_\trig(z,w):=
  (\vv z-\vv^{-1}w)\sum_{i=1}^n E_{ii}\otimes E_{ii} + (z-w)\sum_{i\ne j} E_{ii}\otimes E_{jj}+\\
  (\vv-\vv^{-1})z\sum_{i<j} E_{ij}\otimes E_{ji}+(\vv-\vv^{-1})w \sum_{i>j}E_{ij}\otimes E_{ji}
\end{split}
\end{equation}
which is an element of $\BC[\vv,\vv^{-1}]\otimes_{\BC} (\End\ \BC^n)^{\otimes 2}$,
cf.~\cite[(3.7)]{df}.
It satisfies the famous \emph{Yang-Baxter equation with a spectral parameter}:
\begin{equation}\label{qYB}
  R_{\trig;12}(u,v)R_{\trig;13}(u,w)R_{\trig;23}(v,w)=
  R_{\trig;23}(v,w)R_{\trig;13}(u,w)R_{\trig;12}(u,v).
\end{equation}

Following~\cite{frt,df}, define the \emph{RTT integral form of quantum loop $\gl_n$},
denoted by $\fU^\rtt_\vv(L\gl_n)$, to be the associative $\BC[\vv,\vv^{-1}]$-algebra
generated by $\{t^\pm_{ij}[\pm r]\}_{1\leq i,j\leq n}^{r\in \BN}$ with
the following defining relations:
\begin{equation}\label{affRTT}
\begin{split}
  & t^\pm_{ii}[0]t^\mp_{ii}[0]=1\ \ \mathrm{for}\ 1\leq i\leq n,\\
  & t^+_{ij}[0]=t^-_{ji}[0]=0\ \ \mathrm{for}\ 1\leq j<i\leq n,\\
  & R_{\trig}(z,w)T^+_1(z)T^+_2(w)=T^+_2(w)T^+_1(z)R_\trig(z,w),\\
  & R_{\trig}(z,w)T^-_1(z)T^-_2(w)=T^-_2(w)T^-_1(z)R_\trig(z,w),\\
  & R_{\trig}(z,w)T^-_1(z)T^+_2(w)=T^+_2(w)T^-_1(z)R_\trig(z,w).
\end{split}
\end{equation}
Here $T^\pm(z)$ are the series in $z^{\mp 1}$ with coefficients in the
algebra $\fU^\rtt_\vv(L\gl_n)\otimes \End\ \BC^n$, defined by
$T^\pm(z)=\sum_{i,j} t^\pm_{ij}(z)\otimes E_{ij}$ with
$t^\pm_{ij}(z):=\sum_{r\geq 0} t^\pm_{ij}[\pm r]  z^{\mp r}$.
Thus, the last three relations should be viewed as equalities of
series in $z,w$ with coefficients in
$\fU^\rtt_\vv(L\gl_n)\otimes (\End\ \BC^n)^{\otimes 2}$.

In contrast to Lemma~\ref{remaining fRTT}, we have the following result
(cf.~\cite[(2.45)]{gm}):

\begin{Lem}\label{remaining affRTT}
For any $\epsilon,\epsilon'\in \{\pm\}$, the following holds:
\begin{equation}\label{all affRTT}
  R_{\trig}(z,w)T^\epsilon_1(z)T^{\epsilon'}_2(w)=
  T^{\epsilon'}_2(w)T^\epsilon_1(z)R_\trig(z,w).
\end{equation}
\end{Lem}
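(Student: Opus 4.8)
The statement to be proven is that the RTT relation
\begin{equation*}
  R_{\trig}(z,w)T^\epsilon_1(z)T^{\epsilon'}_2(w)=T^{\epsilon'}_2(w)T^\epsilon_1(z)R_\trig(z,w)
\end{equation*}
holds for \emph{all} four sign choices $(\epsilon,\epsilon')\in\{+,-\}^2$, whereas the defining relations~(\ref{affRTT}) only postulate three of them, namely $(+,+)$, $(-,-)$ and $(-,+)$. Thus the only thing to establish is the remaining case $(\epsilon,\epsilon')=(+,-)$:
\begin{equation*}
  R_{\trig}(z,w)T^+_1(z)T^-_2(w)=T^-_2(w)T^+_1(z)R_\trig(z,w).
\end{equation*}

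The plan is to mimic the proof of Lemma~\ref{remaining fRTT}. First I would take the given relation $R_{\trig}(z,w)T^-_1(z)T^+_2(w)=T^+_2(w)T^-_1(z)R_\trig(z,w)$, multiply on both sides by $R_\trig(z,w)^{-1}$ (working after clearing denominators, or over an appropriate localization where $R_\trig(z,w)$ is invertible — e.g.\ tensoring with $\BC(\vv)(z,w)$, which is harmless since $\fU^\rtt_\vv(L\gl_n)$ embeds into its scalar extension as the $t^\pm_{ij}[\pm r]$ are, by the PBW-type results, non-torsion), obtaining
\begin{equation*}
  T^-_1(z)T^+_2(w)R_\trig(z,w)^{-1}=R_\trig(z,w)^{-1}T^+_2(w)T^-_1(z).
\end{equation*}
Then I would conjugate the whole identity by the permutation operator $P=\sum_{i,j}E_{ij}\otimes E_{ji}$, which interchanges the two tensor factors, so that $T^\pm_1\leftrightarrow T^\pm_2$ and $R_\trig(z,w)\mapsto P R_\trig(z,w) P$. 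This yields
\begin{equation*}
  T^-_2(z)T^+_1(w)\bigl(PR_\trig(z,w)P\bigr)^{-1}=\bigl(PR_\trig(z,w)P\bigr)^{-1}T^+_1(w)T^-_2(z),
\end{equation*}
and after renaming $z\leftrightarrow w$ and rearranging, this will become an RTT relation with the matrix $\bigl(PR_\trig(w,z)P\bigr)^{-1}$ in place of $R_\trig(z,w)$.

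The key computational step — the analogue of ``$\wt R=PR^{-1}P^{-1}$'' in Lemma~\ref{remaining fRTT} — is therefore to identify $\bigl(PR_\trig(w,z)P\bigr)^{-1}$ with a scalar multiple of $R_\trig(z,w)$. Concretely, from the explicit form~(\ref{trigR}) one checks the standard unitarity/crossing identities for the trigonometric $R$-matrix: $R_\trig(z,w)R_{\trig;21}(w,z)=(\vv z-\vv^{-1}w)(\vv w-\vv^{-1}z)\cdot\mathrm{I}$, where $R_{\trig;21}(w,z)=PR_\trig(w,z)P$; hence $\bigl(PR_\trig(w,z)P\bigr)^{-1}$ is a scalar multiple of $R_\trig(z,w)$, the scalar being a unit in the relevant localization. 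Since an overall scalar factor cancels from both sides of an RTT relation, the displayed identity above becomes exactly the $(+,-)$ case we want. I expect this scalar/unitarity verification to be the main (though entirely routine) obstacle: one must be careful about the ordering of the spectral parameters and the precise normalization, since $R_\trig$ as written in~(\ref{trigR}) is not symmetric under $z\leftrightarrow w$ and not manifestly of difference form. Once that identity is in hand, the proof concludes in one line, and I would close by remarking, as the paper does for~(\ref{4th fRTT}), that this also re-derives the three original relations, so all four cases hold uniformly.
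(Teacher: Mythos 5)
Your proposal is correct and follows essentially the same route as the paper: multiply the given $(-,+)$ relation by $R_\trig(z,w)^{-1}$ on both sides, conjugate by the permutation operator $P$, and invoke the unitarity identity $R_\trig(z,w)=(\vv z-\vv^{-1}w)(\vv w-\vv^{-1}z)\cdot PR_\trig(w,z)^{-1}P^{-1}$, with the scalar cancelling from the RTT relation. The paper states this identity in exactly the form you derive from your crossing relation, so the two arguments coincide.
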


\begin{proof}
Multiplying the last equality of~(\ref{affRTT}) by $R^{-1}_{\trig}(z,w)$
on the left and on the right, and conjugating further by the permutation operator
  $P=\sum_{i,j}E_{ij}\otimes E_{ji}\in (\End\ \BC^n)^{\otimes 2}$,
we get
\begin{equation*}
  (PR^{-1}_\trig(z,w)P^{-1})T^+_1(w)T^-_2(z)=
  T^-_2(z)T^+_1(w)(PR^{-1}_\trig(z,w)P^{-1}).
\end{equation*}
Combining this with the equality
\begin{equation*}
  R_\trig(z,w)=(\vv z-\vv^{-1}w)(\vv w-\vv^{-1}z)\cdot PR^{-1}_\trig(w,z)P^{-1},
\end{equation*}
we derive the validity of~(\ref{all affRTT}) for the only remaining case
$\epsilon=+,\epsilon'=-$.
\end{proof}

Note that specializing $\vv$ to $1$, i.e.\
taking a quotient by $(\vv-1)$, $R^\vv_\trig(z,w)$ specializes to
$(z-w)\mathrm{I}=(z-w)\sum_{i,j} E_{ii}\otimes E_{jj}\in (\End\ \BC^n)^{\otimes 2}$,
hence, the specializations of the generators $t^\pm_{ij}[\pm r]$ pairwise commute.
In other words, we get the following isomorphism:
\begin{equation}\label{integral_affine_gln}
  \fU^\rtt_\vv(L\gl_n)/(\vv-1)\simeq
  \BC\left[t^\pm_{ji}[\pm r]\right]_{1\leq j,i\leq n}^{r\geq 0}/
  \left(\langle t^+_{ij}[0],t^-_{ji}[0],t^\pm_{kk}[0]t^\mp_{kk}[0]-1 \rangle_{1\leq j<i\leq n}^{1\leq k\leq n}\right).
\end{equation}

We also define the $\BC(\vv)$-counterpart
  $U^\rtt_\vv(L\gl_n):=\fU^\rtt_\vv(L\gl_n)\otimes_{\BC[\vv,\vv^{-1}]} \BC(\vv)$.


\subsection{The RTT evaluation homomorphism $\ev^\rtt$}\label{ssec RTT evaluation}
\

Recall the following two standard relations between $\fU^\rtt_\vv(L\gl_n)$ and
$\fU^\rtt_\vv(\gl_n)$, cf.~Lemma~\ref{embed+ev rtt yangian}.

\begin{Lem}\label{embedding_quantum}
The assignment $t^\pm_{ij}\mapsto t^\pm_{ij}[0]$ gives rise
to a $\BC[\vv,\vv^{-1}]$-algebra embedding
\begin{equation*}
  \iota\colon \fU^\rtt_\vv(\gl_n)\hookrightarrow \fU^\rtt_\vv(L\gl_n).
\end{equation*}
\end{Lem}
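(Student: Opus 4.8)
The plan is to check that $\iota$ respects the defining relations~(\ref{fRTT}), which makes it a well-defined $\BC[\vv,\vv^{-1}]$-algebra homomorphism, and then to deduce injectivity by reducing modulo $(\vv-1)$ and using that both algebras are free over $\BC[\vv,\vv^{-1}]$.

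\textbf{Well-definedness.} The relations $t^\pm_{ii}[0]t^\mp_{ii}[0]=1$ and $t^+_{ij}[0]=t^-_{ji}[0]=0$ for $j<i$ hold by the very definition of $\fU^\rtt_\vv(L\gl_n)$, so nothing is needed there. For the three $R$-matrix relations, I would first record the elementary identity
\[
  R_\trig(z,w)=z\cdot\wt{R}-w\cdot R,
\]
relating the trigonometric $R$-matrix~(\ref{trigR}) to the finite $R$-matrices~(\ref{finite R}) and~(\ref{finite R-tilda}) (equivalently $R_\trig(z,0)=z\wt{R}$ and $R_\trig(0,w)=-wR$). Substituting this into each of the last three relations of~(\ref{affRTT}) and taking the coefficient of $z^0w^1$ on both sides, one observes that — because $t^+_{ij}(z),t^+_{ij}(w)$ are power series in $z^{-1},w^{-1}$ while $t^-_{ij}(z),t^-_{ij}(w)$ are power series in $z,w$ — only the $-wR$ summand, paired with the zero modes, can contribute that monomial, and one recovers exactly $RT^+_1T^+_2=T^+_2T^+_1R$, $RT^-_1T^-_2=T^-_2T^-_1R$ and $RT^-_1T^+_2=T^+_2T^-_1R$, with all matrix entries taken to be the $t^\pm_{ij}[0]$. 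Hence $\iota$ is a homomorphism.

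\textbf{Injectivity.} Here I would reduce modulo $(\vv-1)$. By~(\ref{integral_finite_gln}) and~(\ref{integral_affine_gln}) the induced map $\fU^\rtt_\vv(\gl_n)/(\vv-1)\to\fU^\rtt_\vv(L\gl_n)/(\vv-1)$ is the evident inclusion of explicit commutative rings (the subring of the target generated by the images of the $t^\pm_{ij}$, modulo the listed relations, being precisely the source), hence injective. Both $\fU^\rtt_\vv(\gl_n)$ and $\fU^\rtt_\vv(L\gl_n)$ are free $\BC[\vv,\vv^{-1}]$-modules — this is the classical PBW property of RTT algebras, cf.~\cite{frt}, and is also recovered later in Theorems~\ref{PBW basis coordinate finite} and~\ref{PBW basis coordinate affine} together with Propositions~\ref{comparison of integral forms quantum finite} and~\ref{comparison of integral forms quantum}. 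In particular neither has $(\vv-1)$-torsion and each is $(\vv-1)$-adically separated, so a standard induction on the $(\vv-1)$-adic order upgrades injectivity of the reduction to injectivity of $\iota$ itself, completing the proof.

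\textbf{Main obstacle.} The relation check is entirely routine; the real input is the freeness used for injectivity, which is itself a classical fact but not something provable by bare manipulation of the presentation. It is worth flagging that there is \emph{no} shortcut via a ``constant current'' left inverse $T^\pm(z)\mapsto T^\pm$: such an assignment does not respect~(\ref{affRTT}), since~(\ref{fRTT}) imposes only the one mixed relation $RT^-_1T^+_2=T^+_2T^-_1R$, whereas specializing~(\ref{affRTT}) at $z$-independent currents would force in addition $\wt{R}T^-_1T^+_2=T^+_2T^-_1\wt{R}$, which is not an evident consequence of~(\ref{fRTT}).
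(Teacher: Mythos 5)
Your proof is correct, and its skeleton is the same as the paper's: check compatibility with the relations~(\ref{fRTT}) and then deduce injectivity from the PBW properties of both RTT algebras (the paper disposes of the whole lemma in two sentences, citing \cite[Proposition 2.1, Theorem 2.11]{gm}). The relation check via the identity $R_\trig(z,w)=z\wt{R}-wR$ and extraction of the $z^0w^1$ coefficient is exactly the computation the paper leaves implicit, and it is carried out correctly. Where you genuinely diverge is the injectivity mechanism: the paper's intended argument is a direct comparison of PBW bases (ordered monomials in the zero modes sit inside a PBW basis of $\fU^\rtt_\vv(L\gl_n)$, so images of basis monomials of $\fU^\rtt_\vv(\gl_n)$ remain independent), whereas you specialize at $\vv=1$, note that the reduced map is an inclusion of explicit commutative rings by~(\ref{integral_finite_gln}) and~(\ref{integral_affine_gln}), and lift injectivity using torsion-freeness and $(\vv-1)$-adic separatedness of free $\BC[\vv,\vv^{-1}]$-modules. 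Both routes consume the same external input, namely the freeness/PBW theorems of \cite{gm}, so neither is more economical; yours trades an explicit matching of basis monomials for a standard commutative-algebra lifting step. Two small remarks. First, be careful citing Theorems~\ref{PBW basis coordinate finite} and~\ref{PBW basis coordinate affine} as a backup source of freeness: those are proved later and their proofs pass through Propositions~\ref{comparison of integral forms quantum finite} and~\ref{higher finite Gauss}, which in turn may use this very embedding, so \cite{gm} (or \cite{frt}) should remain the operative reference. Second, your closing caveat about the naive constant-current assignment is well taken, but note that the paper does supply a genuine left inverse: by Lemma~\ref{ev_RTT} and the Remark following it, $\ev^\rtt_a\circ\iota=\operatorname{id}$, which yields an even shorter (non-circular, since $\ev^\rtt_a$ is constructed independently) proof of injectivity.
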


\begin{proof}
The above assignment is compatible with defining relations~(\ref{fRTT}),
hence, it gives rise to a $\BC[\vv,\vv^{-1}]$-algebra homomorphism
$\iota\colon \fU^\rtt_\vv(\gl_n)\to \fU^\rtt_\vv(L\gl_n)$. The injectivity
of $\iota$ follows from the PBW theorems for $\fU^\rtt_\vv(\gl_n)$ and
$\fU^\rtt_\vv(L\gl_n)$ of~\cite[Proposition 2.1, Theorem 2.11]{gm}.
\end{proof}

\begin{Lem}\label{ev_RTT}
For $a\in \BC^\times$, the assignment
  $T^+(z)\mapsto T^+-aT^-z^{-1},\ T^-(z)\mapsto T^--a^{-1}T^+z$
gives rise to a $\BC[\vv,\vv^{-1}]$-algebra epimorphism
\begin{equation*}
  \ev^\rtt_a\colon \fU^\rtt_\vv(L\gl_n)\twoheadrightarrow \fU^\rtt_\vv(\gl_n).
\end{equation*}
\end{Lem}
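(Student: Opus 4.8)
The plan is to check that the given assignment is compatible with the defining relations~\eqref{affRTT} of $\fU^\rtt_\vv(L\gl_n)$, and then to obtain surjectivity from an evident one-sided inverse. Extracting the $z^0$-coefficients of $T^\pm(z)$, the assignment sends $t^+_{ij}[0]\mapsto t^+_{ij}$ and $t^-_{ij}[0]\mapsto t^-_{ij}$, so the first two families of relations in~\eqref{affRTT} are sent verbatim to the first two families in~\eqref{fRTT}; there is nothing to verify for them, and the same observation shows that $\ev^\rtt_a\circ\iota=\mathrm{id}_{\fU^\rtt_\vv(\gl_n)}$, where $\iota$ is the embedding of Lemma~\ref{embedding_quantum}, whence $\ev^\rtt_a$ is onto. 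Thus the whole problem is compatibility with the three $R$-matrix relations of~\eqref{affRTT}.

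Here I would first reduce the three relations to a single one. Writing $\mathcal{T}(z):=T^+-aT^-z^{-1}$ for the image of $T^+(z)$, a direct computation shows that the image of $T^-(z)$ equals $-a^{-1}z\,\mathcal{T}(z)$. Consequently the images of the $T^-_1(z)T^+_2(w)$- and $T^-_1(z)T^-_2(w)$-relations differ from the image
\begin{equation*}
  R_\trig(z,w)\,\mathcal{T}_1(z)\mathcal{T}_2(w)=\mathcal{T}_2(w)\mathcal{T}_1(z)\,R_\trig(z,w)
\end{equation*}
of the $T^+_1(z)T^+_2(w)$-relation only by the invertible scalar factors $-a^{-1}z$ and/or $-a^{-1}w$, so it suffices to establish this single identity in $\fU^\rtt_\vv(\gl_n)[z^{\pm1},w^{\pm1}]$.

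To prove it I would use the two elementary identities $R_\trig(z,w)=z\wt{R}-wR$ and $\wt{R}=R+(\vv-\vv^{-1})P$ (both read off by comparing~\eqref{trigR} with~\eqref{finite R} and~\eqref{finite R-tilda}), substitute them in, expand the products $\mathcal{T}_1(z)\mathcal{T}_2(w)$ and $\mathcal{T}_2(w)\mathcal{T}_1(z)$, and match the coefficients of the monomials $z^aw^b$. The coefficients of $z,\,w,\,zw^{-1},\,z^{-1}w,\,w^{-1},\,z^{-1}$ reproduce, one by one, the six relations packaged in the last line of~\eqref{fRTT} and in~\eqref{4th fRTT} (Lemma~\ref{remaining fRTT}). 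The only coefficient requiring real work — and the step I expect to be the main obstacle — is the constant term $z^0w^0$, which boils down to
\begin{equation*}
  R\,T^+_1T^-_2+T^+_2T^-_1\,\wt{R}=\wt{R}\,T^-_1T^+_2+T^-_2T^+_1\,R.
\end{equation*}
I would derive this by replacing $\wt{R}$ with $R+(\vv-\vv^{-1})P$ everywhere, using the two cross relations $R\,T^-_1T^+_2=T^+_2T^-_1\,R$ from~\eqref{fRTT} and $\wt{R}\,T^+_1T^-_2=T^-_2T^+_1\,\wt{R}$ from Lemma~\ref{remaining fRTT} to eliminate $R\,T^+_1T^-_2$ and cancel the $R$-linear terms; what is left after cancellation is the pure permutation-operator identity $T^-_2T^+_1P-PT^+_1T^-_2+T^+_2T^-_1P=PT^-_1T^+_2$, which is immediate from $PT^\pm_1=T^\pm_2P$ and $PT^\pm_2=T^\pm_1P$. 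Together with the first paragraph, this shows that $\ev^\rtt_a$ is a well-defined $\BC[\vv,\vv^{-1}]$-algebra epimorphism.
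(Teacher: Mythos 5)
Your proposal is correct and follows essentially the same route as the paper: the paper's proof is a two-line sketch invoking the decomposition $R_\trig(z,w)=(z-w)R+(\vv-\vv^{-1})zP$ (equivalently your $z\wt{R}-wR$) together with~(\ref{fRTT}) and~(\ref{4th fRTT}), plus the obvious surjectivity. You merely carry out the coefficient-matching the paper leaves implicit, and your observation that the image of $T^-(z)$ is $-a^{-1}z$ times that of $T^+(z)$ (so all three relations collapse to one) is a clean, valid shortcut consistent with that argument.
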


\begin{proof}
The above assignment is compatible with defining relations~(\ref{affRTT}),
due to~(\ref{fRTT}),~(\ref{4th fRTT}), and the equality
$R_\trig(z,w)=(z-w)R+(\vv-\vv^{-1})zP$ relating the two $R$-matrices,
cf.~\cite[Lemma 1.11]{h}. The resulting homomorphism
$\fU^\rtt_\vv(L\gl_n)\to \fU^\rtt_\vv(\gl_n)$ is clearly surjective.
\end{proof}

We will denote the \emph{RTT evaluation homomorphism}
$\ev^\rtt_1$ simply by $\ev^\rtt$.

\begin{Rem}
(a) For any $a\in \BC^\times$, the homomorphism $\ev^\rtt_a$ equals
the composition of $\ev^\rtt$ and the automorphism of $\fU^\rtt_\vv(L\gl_n)$
given by $T^\pm(z)\mapsto T^\pm(a^{-1}z)$.

\noindent
(b) The composition $\ev^\rtt_a\circ \iota$ is the identity endomorphism
of $\fU^\rtt_\vv(\gl_n)$ for any $a\in \BC^\times$.
\end{Rem}

The PBW theorems for $\fU^\rtt_\vv(\gl_n)$ and $\fU^\rtt_\vv(L\gl_n)$
of~\cite[Proposition 2.1, Theorem 2.11]{gm} imply the following simple result,
cf.~Lemma~\ref{simple kernel yangian}:

\begin{Lem}\label{simple kernel}
The kernel of $\ev^\rtt$ is the $2$-sided ideal generated by the following elements:
\begin{equation}\label{explicit kernel}
  \left\{t^+_{ij}[r],t^+_{ii}[s],t^+_{ji}[s],t^-_{ji}[-r],t^-_{ii}[-s],t^-_{ij}[-s]\right\}_{i<j}^{\substack{r\geq 1\\ s\geq 2}}\bigcup
  \left\{t^+_{ji}[1]+t^-_{ji}[0],t^-_{ij}[-1]+t^+_{ij}[0]\right\}_{i\leq j}.
\end{equation}
\end{Lem}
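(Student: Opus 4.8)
The plan is to identify the two-sided ideal $K\subseteq\fU^\rtt_\vv(L\gl_n)$ generated by the elements listed in~\eqref{explicit kernel} with $\Ker(\ev^\rtt)$ by the usual split-surjection device. Granting the easy inclusion $K\subseteq\Ker(\ev^\rtt)$, the homomorphism $\ev^\rtt$ descends to a surjection $\ol{\ev}^\rtt\colon\fU^\rtt_\vv(L\gl_n)/K\twoheadrightarrow\fU^\rtt_\vv(\gl_n)$, and it suffices to produce a two-sided inverse. The obvious candidate is $\sigma:=q\circ\iota$, where $\iota\colon\fU^\rtt_\vv(\gl_n)\hookrightarrow\fU^\rtt_\vv(L\gl_n)$ is the embedding of Lemma~\ref{embedding_quantum} and $q\colon\fU^\rtt_\vv(L\gl_n)\twoheadrightarrow\fU^\rtt_\vv(L\gl_n)/K$ is the projection.

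First I would check $K\subseteq\Ker(\ev^\rtt)$. Expanding the defining formula of $\ev^\rtt$ (i.e.\ $T^+(z)\mapsto T^+-T^-z^{-1}$ and $T^-(z)\mapsto T^--T^+z$) mode by mode gives $\ev^\rtt(t^\pm_{ij}[0])=t^\pm_{ij}$, $\ev^\rtt(t^+_{ij}[1])=-t^-_{ij}$, $\ev^\rtt(t^-_{ij}[-1])=-t^+_{ij}$, and $\ev^\rtt(t^+_{ij}[r])=\ev^\rtt(t^-_{ij}[-r])=0$ for $r\geq2$. Feeding in the triangularity relations $t^+_{ab}=0$ $(a>b)$ and $t^-_{ab}=0$ $(a<b)$ of~\eqref{fRTT}, each generator in~\eqref{explicit kernel} visibly dies: the purely higher modes vanish outright, the modes $t^+_{ij}[1]$ and $t^-_{ji}[-1]$ with $i<j$ vanish by triangularity, and the mixed pairs $t^+_{ji}[1]+t^-_{ji}[0]$, $t^-_{ij}[-1]+t^+_{ij}[0]$ cancel. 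This is routine; the only point requiring care is the index ranges (which positions are strictly upper or lower triangular).

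Next I would show $\sigma$ is an isomorphism. That $\ol{\ev}^\rtt\circ\sigma=\ev^\rtt\circ\iota=\mathrm{id}$ is immediate from the Remark following Lemma~\ref{ev_RTT}, so $\sigma$ is injective and the content is surjectivity. For this I would observe that, reducing modulo $K$, every generator $t^+_{ij}[r]$, $t^-_{ij}[-r]$ with $r\geq1$ becomes an element of the subalgebra generated by the degree-zero generators $\{t^+_{ij}[0]\}_{i\leq j}\cup\{t^-_{ij}[0]\}_{i\geq j}$: the genuinely ``new'' modes (all $r\geq2$, together with $r=1$ in the positions where the corresponding degree-zero entry is triangular-zero) lie in $K$, while in the remaining $r=1$ positions~\eqref{explicit kernel} forces $t^+_{ij}[1]\equiv-t^-_{ij}[0]$ and $t^-_{ij}[-1]\equiv-t^+_{ij}[0]$. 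Hence $\fU^\rtt_\vv(L\gl_n)/K$ is generated by the classes of $\{t^+_{ij}[0]\}_{i\leq j}\cup\{t^-_{ij}[0]\}_{i\geq j}$, which is exactly $\mathrm{im}(\sigma)$; so $\sigma$ is surjective, hence an isomorphism with inverse $\ol{\ev}^\rtt$, and therefore $\Ker(\ev^\rtt)=\Ker(q)=K$.

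I do not expect a real obstacle here — this is consistent with the statement being flagged as a ``simple result''. One could equivalently run the endgame through PBW in the spirit of Lemma~\ref{simple kernel yangian}: the reduction above shows $\fU^\rtt_\vv(L\gl_n)/K$ is spanned by ordered PBW monomials in $\{t^+_{ij}[0]\}_{i\leq j}\cup\{t^-_{ij}[0]\}_{i\geq j}$, and the PBW theorem for $\fU^\rtt_\vv(\gl_n)$ of~\cite{gm} shows $\ol{\ev}^\rtt$ sends these to a $\BC[\vv,\vv^{-1}]$-basis, forcing them to be a basis of the quotient and $\ol{\ev}^\rtt$ an isomorphism (the PBW theorem for $\fU^\rtt_\vv(L\gl_n)$ being needed only to know that $\{t^\pm_{ij}[\pm r]\}$ generate). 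Either way, the bulk of the (light) work is the triangularity bookkeeping in the two case analyses above.
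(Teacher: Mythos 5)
Your proposal is correct, and both of your suggested endgames work. For the record, the paper offers no written proof of this lemma at all: it simply asserts that the PBW theorems for $\fU^\rtt_\vv(\gl_n)$ and $\fU^\rtt_\vv(L\gl_n)$ of~\cite{gm} imply it (mirroring the treatment of Lemma~\ref{simple kernel yangian}), which is exactly your second, PBW-flavoured variant: the quotient by $K$ is spanned by ordered monomials in the zero modes, and $\ol{\ev}^\rtt$ carries these to a basis of $\fU^\rtt_\vv(\gl_n)$. Your primary route via the splitting $\sigma=q\circ\iota$ is a slightly more economical packaging of the same idea: since $\ev^\rtt\circ\iota=\mathrm{id}$ is immediate from $\ev^\rtt(t^\pm_{ij}[0])=t^\pm_{ij}$, you only need that the classes of the zero modes generate $\fU^\rtt_\vv(L\gl_n)/K$ (which your mode-by-mode reduction establishes completely — I checked the index bookkeeping, including the triangularity cases $t^+_{ij}[1]$ for $i<j$ and $t^-_{ji}[-1]$ for $i<j$, and it is right), whereas the PBW variant additionally invokes linear independence in the target. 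The only external inputs either way are the well-definedness of $\ev^\rtt$ (Lemma~\ref{ev_RTT}) and, for the PBW variant, \cite[Proposition 2.1]{gm}; nothing is missing.
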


However, we will need an alternative description of this kernel $\Ker(\ev^\rtt)$,
cf.~Theorem~\ref{alternative kernel yangian}:

\begin{Thm}\label{alternative kernel quantum}
$\Ker(\ev^\rtt)=\fU^\rtt_\vv(L\gl_n)\cap I$, where $I$ is the $2$-sided ideal
of $U^\rtt_\vv(L\gl_n)$ generated by
  $\{t^+_{11}[s], t^-_{11}[-s]\}_{s\geq 2}\cup \{t^+_{11}[1]+t^-_{11}[0], t^-_{11}[-1]+t^+_{11}[0]\}$.
\end{Thm}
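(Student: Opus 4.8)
The plan is to follow exactly the strategy of Theorem~\ref{alternative kernel yangian} and its integral variant Theorem~\ref{alternative kernel Yangian integral}, transferring the argument to the trigonometric/quantum setting. The inclusion $\Ker(\ev^\rtt)\supseteq \fU^\rtt_\vv(L\gl_n)\cap I$ is immediate: by Lemma~\ref{simple kernel} the generators of $I$ listed in the statement ($t^+_{11}[s], t^-_{11}[-s]$ for $s\geq 2$, together with $t^+_{11}[1]+t^-_{11}[0]$ and $t^-_{11}[-1]+t^+_{11}[0]$) all lie in $\Ker(\ev^\rtt)$, hence $I\subseteq \Ker(\ev^\rtt\colon U^\rtt_\vv(L\gl_n)\to U^\rtt_\vv(\gl_n))$, and intersecting with $\fU^\rtt_\vv(L\gl_n)$ gives the claim (using that $\Ker(\ev^\rtt)$ on the integral form is the preimage of the kernel on the $\BC(\vv)$-form, since $\fU^\rtt_\vv(\gl_n)\hookrightarrow U^\rtt_\vv(\gl_n)$). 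So the real content is the reverse inclusion, for which it suffices to show that every generator in the list~(\ref{explicit kernel}) of Lemma~\ref{simple kernel} lies in $I$.

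The key step is therefore a "propagation" argument, purely inside $U^\rtt_\vv(L\gl_n)$, showing that the two-sided ideal generated by the index-$(1,1)$ elements $\{t^+_{11}[s],t^-_{11}[-s]\}_{s\geq 2}$ and $\{t^+_{11}[1]+t^-_{11}[0], t^-_{11}[-1]+t^+_{11}[0]\}$ already contains all the generators in~(\ref{explicit kernel}). Concretely, I would extract from the $RTT$ relations~(\ref{affRTT}) (and their completion~(\ref{all affRTT})) the commutator formulas among the modes $t^\epsilon_{ij}[k]$ — the quantum analogue of~(\ref{yangian relation}) — and then run the same three-step bootstrap as in the proof of Theorem~\ref{alternative kernel yangian}: (i) bracket the "diagonal" elements at index $(1,1)$ with suitable mode-$0$ or mode-$(\pm 1)$ generators $t^\pm_{1l}[0], t^\pm_{k1}[0]$ to pull the off-diagonal first-row and first-column modes $t^\epsilon_{1l}[k], t^\epsilon_{k1}[k]$ into $I$; (ii) bracket the first-row element with the first-column one to obtain the difference $t^\epsilon_{11}[k]-t^\epsilon_{22}[k]$ (suitably interpreted, with the mode-boundary corrections that replace $t^{(0)}_{ij}=\hbar^{-1}\delta_{ij}$ in the Yangian case), hence the index-$(2,2)$ diagonal modes lie in $I$; (iii) observe that the combinations $t^+_{ji}[1]+t^-_{ji}[0]$ and $t^-_{ij}[-1]+t^+_{ij}[0]$ arise from the mixed relation $R_\trig(z,w)T^-_1(z)T^+_2(w)=T^+_2(w)T^-_1(z)R_\trig(z,w)$ once the $+$ and $-$ off-diagonal modes are in $I$. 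Then shift all indices up by one and iterate, exactly as in \emph{loc.\ cit}., to reach all $\{t^\pm_{ij}[\bullet]\}$ in~(\ref{explicit kernel}).

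The main obstacle I expect is bookkeeping rather than conceptual: the trigonometric $R$-matrix~(\ref{trigR}) mixes the spectral parameters $z,w$ with the $\vv$-dependent weights, so the mode-level commutation relations are less symmetric than the Yangian ones~(\ref{yangian relation}) — in particular the "boundary" modes $t^+_{ij}[0], t^-_{ij}[0]$ and the invertibility relations $t^\pm_{ii}[0]t^\mp_{ii}[0]=1$ enter the brackets, so one must be careful that, e.g., $[t^+_{11}[s], t^+_{1l}[0]]$ really produces $\pm t^+_{1l}[s]$ up to terms already in $I$, and that conjugating by the invertible $t^\pm_{11}[0]$ causes no trouble when passing to the integral form. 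A clean way to organize this is to note that $R_\trig(z,w)=(z-w)R+(\vv-\vv^{-1})zP$ (as used in the proof of Lemma~\ref{ev_RTT}), split each $RTT$-relation into its "$z-w$" part and its "$P$" part, and read off the needed commutators; the $P$-part is precisely what creates the $t^\pm_{22}$ terms in step (ii). I would also remark that, once the $\BC(\vv)$-statement is proved, the integral statement is automatic: $\Ker(\ev^\rtt\colon\fU^\rtt_\vv(L\gl_n)\to\fU^\rtt_\vv(\gl_n))=\fU^\rtt_\vv(L\gl_n)\cap\Ker(\ev^\rtt\colon U^\rtt_\vv(L\gl_n)\to U^\rtt_\vv(\gl_n))=\fU^\rtt_\vv(L\gl_n)\cap I$, where the first equality uses the PBW theorems of~\cite{gm} and the second is the computation just sketched.
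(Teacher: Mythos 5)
Your proposal matches the paper's proof in both structure and substance: the easy inclusion is handled exactly as you describe, and the reverse inclusion is proved by showing that all generators in~(\ref{explicit kernel}) lie in $I$ via precisely the bootstrap you outline — first-row modes $t^+_{1j}[r]$, first-column modes $t^+_{j1}[s]$, the diagonal $t^+_{22}[s]$, and the mixed combinations $t^+_{22}[1]+t^-_{22}[0]$, $t^+_{j1}[1]+t^-_{j1}[0]$ extracted from specific matrix coefficients of the relations~(\ref{all affRTT}), followed by the index-shift iteration. The details you flag as "bookkeeping" (the role of $t^\pm_{11}[0]$-conjugation and the $\vv$-commutators $[\cdot,\cdot]_{\vv^{\pm1}}$) are exactly where the paper's computations live, and your plan handles them correctly.
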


\begin{proof}
Note that the ideal $I$ is in the kernel of $\BC(\vv)$-extended
evaluation homomorphism $\ev^\rtt\colon U^\rtt_\vv(L\gl_n)\to U^\rtt_\vv(\gl_n)$,
hence, the inclusion $\fU^\rtt_\vv(L\gl_n)\cap I\subset \Ker(\ev^\rtt)$. To prove
the opposite inclusion $\Ker(\ev^\rtt)\subset \fU^\rtt_\vv(L\gl_n)\cap I$,
it suffices to verify that all elements of~(\ref{explicit kernel}) belong to $I$.
We write $x\underset{I}\equiv y$ if $x-y\in I$.

\medskip
\noindent
$\bullet$
\emph{Verification of $t^+_{1j}[r]\in I$ for all $j>1, r\geq 1$.}

Comparing the matrix coefficients $\langle v_1\otimes v_1|\cdots|v_1\otimes v_j\rangle$
of both sides of the equality~(\ref{all affRTT}) with $\epsilon=\epsilon'=+$, we get
  $(\vv z-\vv^{-1}w)t^+_{11}(z)t^+_{1j}(w)=
   (z-w)t^+_{1j}(w)t^+_{11}(z)+(\vv-\vv^{-1})wt^+_{11}(w)t^+_{1j}(z)$.
Evaluating the coefficients of $z^{-r}w^1$ in both sides of this equality, we find
\begin{equation*}
   -\vv^{-1}t^+_{11}[r]t^+_{1j}[0]=-t^+_{1j}[0]t^+_{11}[r]+(\vv-\vv^{-1})t^+_{11}[0]t^+_{1j}[r]
   \Longrightarrow t^+_{1j}[r]=\frac{t^-_{11}[0]\cdot [t^+_{1j}[0],t^+_{11}[r]]_{\vv^{-1}}}{\vv-\vv^{-1}}.
\end{equation*}
We claim that $[t^+_{1j}[0],t^+_{11}[r]]_{\vv^{-1}}\in I$. This is clear for
$r>1$ as $t^+_{11}[r]\in I$. For $r=1$, we note that
  $[t^+_{1j}[0],t^+_{11}[1]]_{\vv^{-1}}\underset{I}\equiv
   -[t^+_{1j}[0],t^-_{11}[0]]_{\vv^{-1}}=
   -(t^+_{11}[0])^{-1}\cdot [t^+_{11}[0],t^+_{1j}[0]]_{\vv^{-1}}\cdot (t^+_{11}[0])^{-1}$.
Finally, comparing the coefficients of $z^1w^0$ (instead of $z^{-r}w^1$) in
the above equality, we immediately find $[t^+_{11}[0],t^+_{1j}[0]]_{\vv^{-1}}=0$.
This completes our proof of the remaining inclusion $t^+_{1j}[1]\in I$.

\medskip
\noindent
$\bullet$
\emph{Verification of $t^+_{j1}[s]\in I$ for all $j>1, s\geq 2$.}

Comparing the matrix coefficients $\langle v_1\otimes v_j|\cdots|v_1\otimes v_1\rangle$
of both sides of the equality~(\ref{all affRTT}) with $\epsilon=\epsilon'=+$, we get
  $(z-w)t^+_{11}(z)t^+_{j1}(w)+(\vv-\vv^{-1})zt^+_{j1}(z)t^+_{11}(w)=
   (\vv z-\vv^{-1}w)t^+_{j1}(w)t^+_{11}(z)$.
Evaluating the coefficients of $z^{-r}w^0$ in both sides of this equality, we find
\begin{equation*}
  -t^+_{11}[r]t^+_{j1}[1]+(\vv-\vv^{-1})t^+_{j1}[r+1]t^+_{11}[0]=-\vv^{-1}t^+_{j1}[1]t^+_{11}[r]
  \Rightarrow t^+_{j1}[r+1]=\frac{[t^+_{11}[r],t^+_{j1}[1]]_{\vv^{-1}}\cdot t^-_{11}[0]}{\vv-\vv^{-1}}.
\end{equation*}
We claim that $[t^+_{11}[r],t^+_{j1}[1]]_{\vv^{-1}}\in I$ for $r=s-1\geq 1$. This is clear for
$r>1$ as $t^+_{11}[r]\in I$. For $r=1$, we note that
  $[t^+_{11}[1],t^+_{j1}[1]]_{\vv^{-1}}\underset{I}\equiv
   -[t^-_{11}[0],t^+_{j1}[1]]_{\vv^{-1}}=
   -(t^+_{11}[0])^{-1}\cdot [t^+_{j1}[1],t^+_{11}[0]]_{\vv^{-1}}\cdot (t^+_{11}[0])^{-1}$.
Finally, comparing the coefficients of $z^0w^0$ (instead of $z^{-r}w^0$) in
the above equality, we immediately find $[t^+_{j1}[1],t^+_{11}[0]]_{\vv^{-1}}=0$.
This implies the remaining inclusion $t^+_{j1}[2]\in I$.

\medskip
\noindent
$\bullet$
\emph{Verification of $t^+_{22}[s]\in I$ for all $s\geq 2$.}

Comparing the matrix coefficients $\langle v_2\otimes v_1|\cdots|v_1\otimes v_2\rangle$
of both sides of the equality~(\ref{all affRTT}) with $\epsilon=\epsilon'=+$, we get
\begin{equation*}
   (z-w)t^+_{21}(z)t^+_{12}(w)+(\vv-\vv^{-1})wt^+_{11}(z)t^+_{22}(w)=
   (z-w)t^+_{12}(w)t^+_{21}(z)+(\vv-\vv^{-1})wt^+_{11}(w)t^+_{22}(z).
\end{equation*}
Evaluating the coefficients of $z^{-s}w^1$ in both sides of this equality, we find
\begin{equation*}
   -t^+_{21}[s]t^+_{12}[0]+(\vv-\vv^{-1})t^+_{11}[s]t^+_{22}[0]=
   -t^+_{12}[0]t^+_{21}[s]+(\vv-\vv^{-1})t^+_{11}[0]t^+_{22}[s].
\end{equation*}
Since $t^+_{11}[s], t^+_{21}[s]\in I$ for $s\geq 2$ by above,
we immediately get the inclusion $t^+_{22}[s]\in I$.

\medskip
\noindent
$\bullet$
\emph{Verification of $t^+_{22}[1]+t^-_{22}[0]\in I$.}

Comparing the matrix coefficients $\langle v_2\otimes v_1|\cdots|v_1\otimes v_2\rangle$
of both sides of the equality~(\ref{all affRTT}) with $\epsilon=-,\epsilon'=+$, we get
\begin{equation*}
   (z-w)t^-_{21}(z)t^+_{12}(w)+(\vv-\vv^{-1})wt^-_{11}(z)t^+_{22}(w)=
   (z-w)t^+_{12}(w)t^-_{21}(z)+(\vv-\vv^{-1})wt^+_{11}(w)t^-_{22}(z).
\end{equation*}
Evaluating the coefficients of $z^0w^0$ in both sides of this equality, we find
\begin{equation*}
   -t^-_{21}[0]t^+_{12}[1]+(\vv-\vv^{-1})t^-_{11}[0]t^+_{22}[1]=
   -t^+_{12}[1]t^-_{21}[0]+(\vv-\vv^{-1})t^+_{11}[1]t^-_{22}[0].
\end{equation*}
Since $t^+_{12}[1], t^+_{11}[1]+t^-_{11}[0]\in I$, we immediately
get the inclusion $t^+_{22}[1]+t^-_{22}[0]\in I$.

\medskip
\noindent
$\bullet$
\emph{Verification of $t^+_{j1}[1]+t^-_{j1}[0]\in I$ for all $j>1$.}

Comparing the matrix coefficients $\langle v_1\otimes v_j|\cdots|v_1\otimes v_1\rangle$
of both sides of the equality~(\ref{all affRTT}) with $\epsilon=+,\epsilon'=-$, we get
\begin{equation*}
   (z-w)t^+_{11}(z)t^-_{j1}(w)+(\vv-\vv^{-1})zt^+_{j1}(z)t^-_{11}(w)=
   (\vv z-\vv^{-1}w)t^-_{j1}(w)t^+_{11}(z).
\end{equation*}
Evaluating the coefficients of $z^0w^0$ in both sides of this equality, we find
\begin{equation*}
   t^+_{11}[1]t^-_{j1}[0]+(\vv-\vv^{-1})t^+_{j1}[1]t^-_{11}[0]=
   \vv t^-_{j1}[0]t^+_{11}[1].
\end{equation*}
Since $t^+_{11}[1]+t^-_{11}[0]\in I$, we get
  $t^+_{j1}[1]\underset{I}\equiv
   \frac{[t^-_{11}[0],t^-_{j1}[0]]_\vv\cdot t^+_{11}[0]}{\vv-\vv^{-1}}$.
On the other hand, comparing the matrix coefficients
$\langle v_1\otimes v_j|\cdots|v_1\otimes v_1\rangle$ of both sides of
the equality~(\ref{all affRTT}) with $\epsilon=\epsilon'=-$, we get
  $(z-w)t^-_{11}(z)t^-_{j1}(w)+(\vv-\vv^{-1})zt^-_{j1}(z)t^-_{11}(w)=
   (\vv z-\vv^{-1}w)t^-_{j1}(w)t^-_{11}(z)$.
Evaluating the coefficients of $z^1w^0$ in both sides of this equality, we find
\begin{equation*}
   t^-_{11}[0]t^-_{j1}[0]+(\vv-\vv^{-1})t^-_{j1}[0]t^-_{11}[0]=\vv t^-_{j1}[0]t^-_{11}[0]
   \Longrightarrow t^-_{j1}[0]=-\frac{[t^-_{11}[0],t^-_{j1}[0]]_\vv\cdot t^+_{11}[0]}{\vv-\vv^{-1}}.
\end{equation*}
Hence, the inclusion $t^+_{j1}[1]+t^-_{j1}[0]\in I$.

\medskip
One can now apply the above five verifications with all lower indices
increased by $1$ to prove the inclusions
  $t^+_{2j}[r], t^+_{j2}[s], t^+_{33}[s],
   t^+_{33}[1]+t^-_{33}[0], t^+_{j2}[1]+t^-_{j2}[0]\in I$
for any $j>2, r\geq 1, s\geq 2$. Proceeding further step by step, we obtain
  $\{t^+_{ij}[r],t^+_{ii}[s],t^+_{ji}[s]\}_{i<j}^{r\geq 1,s\geq 2}\cup
   \{t^+_{ji}[1]+t^-_{ji}[0]\}_{i\leq j}\subset I$.
The proof of the remaining inclusion
  $\{t^-_{ji}[-r],t^-_{ii}[-s],t^-_{ij}[-s]\}_{i<j}^{r\geq 1,s\geq 2}\cup
   \{t^-_{ij}[-1]+t^+_{ij}[0]\}_{i\leq j}\subset I$
is analogous and we leave details to the interested reader.

\medskip
This completes our proof of Theorem~\ref{alternative kernel quantum}.
\end{proof}


\subsection{The Drinfeld-Jimbo quantum $\gl_n$ and $\ssl_n$}\label{ssec DJ finite}
\

Following~\cite{jim}, define the quantum $\gl_n$, denoted by $U_\vv(\gl_n)$,
to be the associative $\BC(\vv)$-algebra generated by
  $\{E_i,F_i,t_j,t^{-1}_j\}_{1\leq i<n}^{1\leq j\leq n}$
with the following defining relations:
\begin{equation}\label{DrJim finite}
\begin{split}
  & t_jt^{-1}_j=t^{-1}_jt_j=1,\ t_{j}t_{j'}=t_{j'}t_j,\\
  & t_jE_i=\vv^{-\delta_{ji}+\delta_{j,i+1}}E_it_j,\
    t_jF_i=\vv^{\delta_{ji}-\delta_{j,i+1}}F_it_j,\\
  & E_iF_{i'}-F_{i'}E_i=\delta_{ii'}\frac{K_i-K_i^{-1}}{\vv-\vv^{-1}},\\
  & E_iE_{i'}=E_{i'}E_i\ \mathrm{and}\ F_iF_{i'}=F_{i'}F_i\ \mathrm{if}\ c_{ii'}=0,\\
  & E_i^2E_{i'}-(\vv+\vv^{-1})E_iE_{i'}E_i+E_{i'}E_i^2=0\ \mathrm{if}\ c_{ii'}=-1,\\
  & F_i^2F_{i'}-(\vv+\vv^{-1})F_iF_{i'}F_i+F_{i'}F_i^2=0\ \mathrm{if}\ c_{ii'}=-1,
\end{split}
\end{equation}
where $K_i:=t_i^{-1}t_{i+1}$ and $(c_{ii'})_{i,i'=1}^{n-1}$ denotes the Cartan
matrix of $\ssl_n$.

\begin{Rem}
We note that our generators $E_i,F_i,t_j^{\pm 1}$ correspond to
the generators $f_i,e_i,\vv^{\pm H_j}$ of~\cite[Definition 2.3]{df}, respectively.
\end{Rem}

The $\BC(\vv)$-subalgebra of $U_\vv(\gl_n)$ generated by
$\{E_i,F_i,K^{\pm 1}_i\}_{i=1}^{n-1}$ is isomorphic to the Drinfeld-Jimbo
quantum $\ssl_n$, denoted by $U_\vv(\ssl_n)$, see~\cite{d1,jim}.

The following well-known result was conjectured in~\cite{frt}
and proved in~\cite[Theorem 2.1]{df}:

\begin{Thm}[\cite{df}]\label{Ding-Frenkel finite}
There is a unique $\BC(\vv)$-algebra isomorphism
\begin{equation*}
  \Upsilon\colon U_\vv(\gl_n)\iso U^\rtt_\vv(\gl_n)
\end{equation*}
defined by
\begin{equation}\label{Ding-Frenkel finite formulas}
   t^{\pm 1}_j\mapsto t^\pm_{jj},\
   E_i\mapsto \frac{t^-_{ii}t^+_{i,i+1}}{\vv-\vv^{-1}},\
   F_i\mapsto \frac{t^-_{i+1,i}t^+_{ii}}{\vv^{-1}-\vv}.
\end{equation}
\end{Thm}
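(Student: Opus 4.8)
The plan is to verify that the assignment~\eqref{Ding-Frenkel finite formulas} respects the defining relations of $U_\vv(\gl_n)$ from~\eqref{DrJim finite} (so that $\Upsilon$ is a well-defined algebra homomorphism), then exhibit an inverse, and finally note uniqueness. First I would record the Gauss decomposition of $T^\pm$ in $U^\rtt_\vv(\gl_n)$: since $t^+_{ij}=0$ for $i>j$ and $t^-_{ij}=0$ for $i<j$, and since $t^\pm_{ii}$ are invertible, one can write $T^\pm=F^\pm\cdot D^\pm\cdot E^\pm$ with $D^\pm$ diagonal, $E^\pm$ upper-triangular unipotent, $F^\pm$ lower-triangular unipotent; in the rank-one blocks this gives exactly the elements $t^-_{ii}t^+_{i,i+1}$ and $t^-_{i+1,i}t^+_{ii}$ appearing in~\eqref{Ding-Frenkel finite formulas}, up to the scalar $(\vv-\vv^{-1})^{\pm1}$. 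This identifies the images of $E_i,F_i,t_j$ as honest elements of $U^\rtt_\vv(\gl_n)$.

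Next I would check the relations one family at a time. The relations $t_jt_j^{-1}=1$ and $t_jt_{j'}=t_{j'}t_j$ follow from the first two lines of~\eqref{fRTT} together with the commutativity $t^+_{jj}t^+_{j'j'}=t^+_{j'j'}t^+_{jj}$ extracted from $RT^+_1T^+_2=T^+_2T^+_1R$ by comparing the $\langle v_j\otimes v_{j'}|\cdot|v_j\otimes v_{j'}\rangle$ matrix coefficients. The relations $t_jE_it_j^{-1}=\vv^{-\delta_{ji}+\delta_{j,i+1}}E_i$ and its $F$-analogue come from commuting the diagonal entries $t^\pm_{jj}$ past $t^+_{i,i+1}$ and $t^-_{ii}$, again reading off the appropriate matrix coefficients of the three $R$-matrix relations in~\eqref{fRTT}; the exponent of $\vv$ is dictated by the diagonal entries of $R$ in~\eqref{finite R}. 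The Serre relations for the $E_i$'s (and $F_i$'s) with $|i-i'|\ge 2$ reduce to the commutativity $[t^+_{i,i+1},t^+_{i',i'+1}]=0$ (and involving the relevant diagonal terms), while the length-three Serre relation for $|i-i'|=1$ is the one genuinely nontrivial computation: it follows from the quadratic RTT relations after a careful bookkeeping of the cubic terms, and here it is cleanest to invoke Lemma~\ref{remaining fRTT}, i.e.\ the $\wt R$-relations~\eqref{4th fRTT}, in tandem with~\eqref{fRTT}. The Cartan relation $E_iF_{i'}-F_{i'}E_i=\delta_{ii'}(K_i-K_i^{-1})/(\vv-\vv^{-1})$ with $K_i=t_i^{-1}t_{i+1}$ is the heart of the matter: for $i\ne i'$ it is a commutation identity among off-diagonal entries in disjoint or overlapping $2\times 2$ blocks, and for $i=i'$ one computes $[t^-_{ii}t^+_{i,i+1},\,t^-_{i+1,i}t^+_{ii}]$ directly from~\eqref{fRTT}, the cross terms producing $(t^+_{ii})^{-1}t^+_{i+1,i+1}-t^+_{ii}(t^+_{i+1,i+1})^{-1}$ up to the factor $(\vv-\vv^{-1})$, which is precisely $K_i-K_i^{-1}$ after using $t^\pm_{ii}t^\mp_{ii}=1$ and $K_i=t_i^{-1}t_{i+1}=t^-_{ii}t^+_{i+1,i+1}$.

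Having established that $\Upsilon$ is a $\BC(\vv)$-algebra homomorphism, I would construct the inverse. The standard approach is to show that the Gauss-decomposition entries $t^\pm_{ij}$ can be expressed as noncommutative polynomials in the Drinfeld--Jimbo generators $E_i,F_i,t_j^{\pm1}$: the diagonal entries are $t^\pm_{jj}=t_j^{\pm1}$, the near-diagonal off-diagonal ones are $(\vv-\vv^{-1})t_i^{\mp1}\!\cdot\!(\text{scalar})E_i$ resp.\ $(\vv^{-1}-\vv)t_i^{\mp1}F_i$ inverted from~\eqref{Ding-Frenkel finite formulas}, and the remaining entries are obtained by the usual recursion expressing longer root vectors as $\vv$-commutators, exactly as in~\cite[Theorem 2.1]{df}. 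One then checks that this candidate inverse map $\Psi\colon U^\rtt_\vv(\gl_n)\to U_\vv(\gl_n)$ respects~\eqref{fRTT}, which by the same matrix-coefficient analysis reduces to known relations in $U_\vv(\gl_n)$, and that $\Psi\circ\Upsilon=\mathrm{id}$ and $\Upsilon\circ\Psi=\mathrm{id}$ on generators. Uniqueness of $\Upsilon$ is immediate since the $E_i,F_i,t_j^{\pm1}$ generate $U_\vv(\gl_n)$ and their images are prescribed. The main obstacle is the verification of the Serre relations and of the diagonal Cartan relation: these require the full strength of both~\eqref{fRTT} and~\eqref{4th fRTT} and an organized handling of the cubic $R$-matrix identities, rather than any single clean manipulation; all of it, however, is already carried out in~\cite{df}, so in the writeup I would import that argument and only highlight the points where the integral form $\fU^\rtt_\vv(\gl_n)$ versus its $\BC(\vv)$-span $U^\rtt_\vv(\gl_n)$ plays a role (namely, that invertibility of $\vv-\vv^{-1}$ over $\BC(\vv)$ is what makes $\Upsilon$ an isomorphism rather than a mere embedding).
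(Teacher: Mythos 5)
The paper contains no proof of this theorem: it is imported from Ding--Frenkel (stated as conjectured in \cite{frt} and proved in \cite[Theorem 2.1]{df}), so there is no in-paper argument to compare yours against, and your explicit deferral of the detailed relation-checking to \cite{df} matches exactly how the paper handles it. Your outline is the standard Ding--Frenkel argument and is correct in structure; in particular your identifications $e^+_{i,i+1}=t^-_{ii}t^+_{i,i+1}$ and $\Upsilon(K_i)=t^-_{ii}t^+_{i+1,i+1}$, and the recursion for longer root vectors used to build the inverse, are consistent with the elements $\tilde e_{j,i+1},\tilde f_{i+1,j}$ of~(\ref{Gauss ef-modes finite}) and with Proposition~\ref{higher finite Gauss} and Corollary~\ref{matching higher roots} later in the paper.
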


As an immediate corollary, $\fU^\rtt_\vv(\gl_n)$ is realized
as a $\BC[\vv,\vv^{-1}]$-subalgebra of $U_\vv(\gl_n)$.
To describe this subalgebra explicitly, define the elements
$\{E_{j,i+1}, F_{i+1,j}\}_{1\leq j\leq i<n}$ of $U_\vv(\gl_n)$ via
\begin{equation}\label{PBW finite elements}
\begin{split}
  & E_{j,i+1}:=(\vv-\vv^{-1})[E_i,\cdots,[E_{j+1},E_j]_{\vv^{-1}}\cdots]_{\vv^{-1}},\\
  & F_{i+1,j}:=(\vv^{-1}-\vv)[\cdots[F_j,F_{j+1}]_{\vv},\cdots,F_i]_{\vv},
\end{split}
\end{equation}
where $[a,b]_x:=ab-x\cdot ba$. In particular,
$E_{i,i+1}=(\vv-\vv^{-1})E_i$ and $F_{i+1,i}=(\vv^{-1}-\vv)F_i$.

\begin{Def}\label{integral finite}
(a) Let $\fU_\vv(\gl_n)$ be the $\BC[\vv,\vv^{-1}]$-subalgebra of
$U_\vv(\gl_n)$ generated by
\begin{equation}\label{PBW basis finite}
   \{E_{j,i+1}, F_{i+1,j}\}_{1\leq j\leq i<n}\cup
   \{t_j^{\pm 1}\}_{1\leq j\leq n}.
\end{equation}

\noindent
(b) Let $\fU_\vv(\ssl_n)$ be the $\BC[\vv,\vv^{-1}]$-subalgebra of $U_\vv(\ssl_n)$
generated by
\begin{equation}\label{PBW basis finite sln}
   \{E_{j,i+1}, F_{i+1,j}\}_{1\leq j\leq i<n}\cup
   \{K_i^{\pm 1}\}_{1\leq i<n}.
\end{equation}
\end{Def}

\begin{Prop}\label{comparison of integral forms quantum finite}
$\fU_\vv(\gl_n)=\Upsilon^{-1}(\fU^\rtt_\vv(\gl_n))$.
\end{Prop}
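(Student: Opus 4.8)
The plan is to prove the two inclusions $\fU_\vv(\gl_n)\subseteq\Upsilon^{-1}(\fU^\rtt_\vv(\gl_n))$ and $\fU_\vv(\gl_n)\supseteq\Upsilon^{-1}(\fU^\rtt_\vv(\gl_n))$ separately, using the Gauss-type decomposition implicit in the Ding--Frenkel isomorphism $\Upsilon$ of Theorem~\ref{Ding-Frenkel finite}. For the forward inclusion, it suffices to check that each of the listed generators in~(\ref{PBW basis finite}) is sent by $\Upsilon$ into $\fU^\rtt_\vv(\gl_n)$. The Cartan generators are immediate: $\Upsilon(t_j^{\pm1})=t^\pm_{jj}\in\fU^\rtt_\vv(\gl_n)$. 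For the simple root vectors, $\Upsilon(E_i)=\frac{t^-_{ii}t^+_{i,i+1}}{\vv-\vv^{-1}}$ and $\Upsilon(F_i)=\frac{t^-_{i+1,i}t^+_{ii}}{\vv^{-1}-\vv}$, so $\Upsilon((\vv-\vv^{-1})E_i)=t^-_{ii}t^+_{i,i+1}$ and $\Upsilon((\vv^{-1}-\vv)F_i)=t^-_{i+1,i}t^+_{ii}$ lie in $\fU^\rtt_\vv(\gl_n)$; since $E_{i,i+1}=(\vv-\vv^{-1})E_i$ and $F_{i+1,i}=(\vv^{-1}-\vv)F_i$, this handles the length-one cases. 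For the higher root vectors $E_{j,i+1}$ and $F_{i+1,j}$ defined by~(\ref{PBW finite elements}), I would compute their $\Upsilon$-images directly. The key point is that $\Upsilon(E_{j,i+1})$ should equal (up to a unit in $\BC[\vv,\vv^{-1}]$, e.g.\ a power of $t^\pm_{kk}$'s and signs) the off-diagonal Gauss coordinate $t^-_{jj}\cdot e^+_{j,i+1}$ or a closely related product of $t^\pm$-entries — this is the finite-type analogue of the formulas in Proposition~\ref{higher ef-modes yangian} / Corollary~\ref{explicit modes yangian}. Concretely, one iterates: $[E_{j+1},E_j]_{\vv^{-1}}$ maps under $\Upsilon$ (after multiplying by $(\vv-\vv^{-1})^2$) to an explicit quadratic expression in the $t^\pm_{ab}$, and using the $RTT$-relations~(\ref{fRTT}) together with Lemma~\ref{remaining fRTT} one checks it is a $\BC[\vv,\vv^{-1}]$-polynomial in the generators $\{t^\pm_{ab}\}$ with no denominators surviving. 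Induction on $i-j$ closes this.

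For the reverse inclusion, I would show that $\Upsilon^{-1}(t^\pm_{ab})\in\fU_\vv(\gl_n)$ for all $1\le a,b\le n$. By the defining relations~(\ref{fRTT}), $t^+_{ab}=0$ for $a>b$ and $t^-_{ab}=0$ for $a<b$, so we only need the entries $t^+_{ab}$ with $a\le b$ and $t^-_{ab}$ with $a\ge b$. The diagonal entries: $\Upsilon^{-1}(t^\pm_{aa})=t_a^{\pm1}\in\fU_\vv(\gl_n)$. For the strictly upper-triangular $t^+_{j,i+1}$ ($j\le i$) and strictly lower-triangular $t^-_{i+1,j}$ ($j\le i$), I would invert the relations above: from $\Upsilon((\vv-\vv^{-1})E_i)=t^-_{ii}t^+_{i,i+1}$ we get $\Upsilon^{-1}(t^+_{i,i+1})=t_i\cdot E_{i,i+1}\in\fU_\vv(\gl_n)$ (using $\Upsilon^{-1}(t^-_{ii})^{-1}=\Upsilon^{-1}(t^+_{ii})=t_i$), and similarly $\Upsilon^{-1}(t^-_{i+1,i})=F_{i+1,i}\cdot t_i^{-1}$ or the like. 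For the non-simple off-diagonal entries, I would use the finite analogue of~(\ref{Gauss ef-modes yangian}): $t^+_{j,i+1}$ can be obtained from $t^+_{j,i}$ by a $\vv$-commutator with $t^+_{i,i+1}$ (equivalently with $E_i$), so by descending induction from the simple case, each $\Upsilon^{-1}(t^+_{j,i+1})$ is a $\BC[\vv,\vv^{-1}]$-linear combination of products of $E_{j,i+1}$-type elements and $t_k^{\pm1}$, hence lies in $\fU_\vv(\gl_n)$; the $t^-$ entries are handled symmetrically.

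The main obstacle I anticipate is the bookkeeping in matching the iterated $\vv$-commutators $E_{j,i+1}=(\vv-\vv^{-1})[E_i,\cdots,[E_{j+1},E_j]_{\vv^{-1}}\cdots]_{\vv^{-1}}$ with the corresponding Gauss coordinates extracted from the $RTT$-relations: one must verify that all the $(\vv-\vv^{-1})$-denominators introduced by the formula~(\ref{Ding-Frenkel finite formulas}) for $E_i,F_i$ cancel against the $(\vv-\vv^{-1})$-prefactors in~(\ref{PBW finite elements}), leaving integral ($\BC[\vv,\vv^{-1}]$) expressions, and conversely that solving for the off-diagonal $t^\pm_{ab}$ in terms of the PBW generators does not force division by $\vv-\vv^{-1}$. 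This is essentially the finite-type, $\vv$-version of the Yangian computation already recorded in Proposition~\ref{higher ef-modes yangian} and Corollary~\ref{explicit modes yangian}, so I expect it to go through by the same pattern of induction on $i-j$, invoking the $RTT$-relations~(\ref{fRTT}) and~(\ref{4th fRTT}) to commute entries past one another; the only genuinely new input is careful tracking of the powers of the invertible diagonal elements $t^\pm_{kk}$. Alternatively, one can cite the analogous affine statement (Proposition~\ref{comparison of integral forms quantum} together with the evaluation homomorphism $\ev^\rtt$ of Lemma~\ref{ev_RTT}) and specialize, but I would prefer the direct route to keep the argument self-contained.
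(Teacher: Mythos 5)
Your proposal is correct and follows essentially the same route as the paper: the paper defines $\tilde{e}_{j,i+1}:=t^-_{jj}t^+_{j,i+1}$ and $\tilde{f}_{i+1,j}:=t^-_{i+1,j}t^+_{jj}$, proves the $\vv$-commutator recursion $\tilde{e}_{j,i+1}=\frac{[\tilde{e}_{i,i+1},\tilde{e}_{ji}]_{\vv^{-1}}}{\vv-\vv^{-1}}$ (Proposition~\ref{higher finite Gauss}, the finite analogue of Proposition~\ref{higher ef-modes} you anticipated), and deduces $\Upsilon(E_{j,i+1})=\tilde{e}_{j,i+1}$ by induction in $i-j$ (Corollary~\ref{matching higher roots}), whence both inclusions follow exactly as you describe, with the reverse inclusion recovered via $t^+_{j,i+1}=t^+_{jj}\tilde{e}_{j,i+1}$.
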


This result follows immediately from Proposition~\ref{higher finite Gauss}
and Corollary~\ref{matching higher roots} below. To state those, define
the elements $\{\tilde{e}_{j,i+1},\tilde{f}_{i+1,j}\}_{1\leq j\leq i<n}$ of
$\fU_\vv^\rtt(\gl_n)$ via
\begin{equation}\label{Gauss ef-modes finite}
  \tilde{e}_{j,i+1}:=t^-_{jj}t^+_{j,i+1},\
  \tilde{f}_{i+1,j}:=t^-_{i+1,j}t^+_{jj}.
\end{equation}

\begin{Prop}\label{higher finite Gauss}
For any $1\leq j< i<n$, the following equalities hold in $U^\rtt_\vv(\gl_n)$:
\begin{equation*}
  \tilde{e}_{j,i+1}=\frac{[\tilde{e}_{i,i+1},\tilde{e}_{ji}]_{\vv^{-1}}}{\vv-\vv^{-1}},\
  \tilde{f}_{i+1,j}=\frac{[\tilde{f}_{ij},\tilde{f}_{i+1,i}]_{\vv}}{\vv^{-1}-\vv}.
\end{equation*}
\end{Prop}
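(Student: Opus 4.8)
The plan is to translate the claimed identities into the RTT relations~\eqref{affRTT} (equivalently, their consequences~\eqref{all affRTT}) via the Gauss decomposition $T^\pm(z)=F^\pm(z)G^\pm(z)E^\pm(z)$, exactly in the spirit of the Ding–Frenkel paper~\cite{df} and of Proposition~\ref{higher ef-modes yangian} in the Yangian case. Recall from~\eqref{Ding-Frenkel finite formulas} that $\Upsilon(E_i)=\tilde e_{i,i+1}/(\vv-\vv^{-1})$ and $\Upsilon(F_i)=\tilde f_{i+1,i}/(\vv^{-1}-\vv)$, where $\tilde e_{i,i+1}=t^-_{ii}t^+_{i,i+1}$ and $\tilde f_{i+1,i}=t^-_{i+1,i}t^+_{ii}$ as in~\eqref{Gauss ef-modes finite}; more conceptually, $\tilde e_{j,i+1}$ and $\tilde f_{i+1,j}$ are (up to the diagonal normalisation) the off-diagonal Gauss coordinates $e^+_{j,i+1}$, $f^+_{i+1,j}$ of $T^+$, whose quasi-determinant formulas are standard. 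So the statement is literally a relation among Gauss coordinates, and the first step is to record the Gauss decomposition of~\eqref{fRTT} in coordinate form.

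First I would extract from the $RTT$ relation the needed commutation relations among the entries $t^+_{ij}$. Writing out the matrix coefficients $\langle v_a\otimes v_b|\cdots|v_c\otimes v_d\rangle$ of $RT^+_1T^+_2=T^+_2T^+_1R$ gives relations such as $t^+_{kk}t^+_{k,k+1}=\vv\, t^+_{k,k+1}t^+_{kk}$ and, more importantly, the ``ladder'' relations expressing how $t^+_{j,i+1}$ interacts with $t^+_{i,i+1}$ and $t^+_{jj},t^+_{ii}$. Then I would substitute these into $\tilde e_{j,i+1}=t^-_{jj}t^+_{j,i+1}$ and compute the $\vv^{-1}$-commutator $[\tilde e_{i,i+1},\tilde e_{ji}]_{\vv^{-1}}=t^-_{ii}t^+_{i,i+1}t^-_{jj}t^+_{ji}-\vv^{-1}t^-_{jj}t^+_{ji}t^-_{ii}t^+_{i,i+1}$; using $t^\pm_{ii}t^\mp_{ii}=1$ and the cross relations $RT^-_1T^+_2=T^+_2T^-_1R$ to move the diagonal factors past $t^+$'s (here the relevant point is that $t^-_{ii}$ commutes with $t^+_{j,i+1}$ for $j<i$, and $\vv$-commutes appropriately with $t^+_{i,i+1}$), one reorganises the expression into $(\vv-\vv^{-1})t^-_{jj}t^+_{j,i+1}=(\vv-\vv^{-1})\tilde e_{j,i+1}$. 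The $F$-identity is handled by the parallel computation with $RT^-_1T^-_2=T^-_2T^-_1R$ and the cross relations, or more cheaply by applying the standard anti-automorphism of $\fU^\rtt_\vv(\gl_n)$ that swaps $T^+\leftrightarrow T^-$ (transposed) and sends $\vv\mapsto\vv^{-1}$, which interchanges the $e$- and $f$-statements.

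The main obstacle will be the bookkeeping of the diagonal conjugations: each $t^-_{ii}$ must be commuted through an entry $t^+_{j,i+1}$ or $t^+_{ji}$, and one must be careful about which entries genuinely commute versus pick up a power of $\vv$, since a sign error there destroys the exact coefficient $\vv-\vv^{-1}$. I expect the cleanest route is to first prove the analogue of Proposition~\ref{higher ef-modes yangian} directly — i.e.\ the recursion $e^+_{j,i+1}(z)=[e^+_{ji}(z),e^+_{i,i+1}]_{\vv^{\mp1}}$ at the level of Gauss coordinates of $T^+(z)$, which is the genuinely $R$-matrix input — and then note that passing from $e^+$ to $\tilde e=G^+$-twisted version only rescales by the diagonal Gauss factor $g^+_{jj}$, which is absorbed into the definitions~\eqref{Gauss ef-modes finite}. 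Once the coordinate recursion is in hand, the Proposition is immediate, and Corollary~\ref{matching higher roots} (matching $\Upsilon(E_{j,i+1})=\tilde e_{j,i+1}$, $\Upsilon(F_{i+1,j})=\tilde f_{i+1,j}$, and hence Proposition~\ref{comparison of integral forms quantum finite}) follows by comparing~\eqref{PBW finite elements} with the recursion just established.
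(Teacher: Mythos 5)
Your plan is essentially the paper's own argument: the paper gives no separate computation here, but simply declares the proof ``analogous to Proposition~\ref{higher ef-modes}'' (the affine case), whose proof is exactly the matrix-coefficient extraction from the RTT relation, the diagonal $\vv$-commutations, and the passage from the entries $t^+_{j,i+1}$ to the Gauss coordinates $\tilde e_{j,i+1}=t^-_{jj}t^+_{j,i+1}$ that you describe. The paper also notes the even shorter route of specializing the affine identities~(\ref{Gauss e-modes},~\ref{Gauss f-modes}) at $r=0$ via the embedding $\iota\colon t^\pm_{ij}\mapsto t^\pm_{ij}[0]$ of Lemma~\ref{embedding_quantum}, but your direct computation is correct and is the intended one.
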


The proof of this result is analogous to that of Proposition~\ref{higher ef-modes}
below (and actually it can be deduced from the latter by using the embedding
$\iota\colon \fU^\rtt_\vv(\gl_n)\hookrightarrow \fU^\rtt_\vv(L\gl_n)$ of
Lemma~\ref{embedding_quantum}).

\begin{Cor}\label{matching higher roots}
 $E_{j,i+1}=\Upsilon^{-1}(\tilde{e}_{j,i+1}),\
  F_{i+1,j}=\Upsilon^{-1}(\tilde{f}_{i+1,j})$
for any $1\leq j\leq i<n$.
\end{Cor}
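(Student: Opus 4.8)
The plan is to establish both identities of~\refc{matching higher roots} simultaneously by induction on $d:=i-j\ge 0$, exploiting the recursive shape of the iterated $\vv^{\pm 1}$-commutators defining $E_{j,i+1}$ and $F_{i+1,j}$ in~\refe{PBW finite elements}, together with~\refp{higher finite Gauss}.

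For the base case $d=0$ one has $E_{i,i+1}=(\vv-\vv^{-1})E_i$ and $F_{i+1,i}=(\vv^{-1}-\vv)F_i$, so by the defining formulas~\refe{Ding-Frenkel finite formulas} for $\Upsilon$ we immediately get $\Upsilon(E_{i,i+1})=t^-_{ii}t^+_{i,i+1}=\tilde{e}_{i,i+1}$ and $\Upsilon(F_{i+1,i})=t^-_{i+1,i}t^+_{ii}=\tilde{f}_{i+1,i}$, the right-hand sides being as in~\refe{Gauss ef-modes finite}. For the inductive step I would first rewrite~\refe{PBW finite elements} by peeling off the outermost bracket: writing $\mathcal{E}_{j,i+1}$ for the bracketed expression (so that $E_{j,i+1}=(\vv-\vv^{-1})\mathcal{E}_{j,i+1}$, $\mathcal{E}_{j,j+1}=E_j$, and $\mathcal{E}_{j,i+1}=[E_i,\mathcal{E}_{j,i}]_{\vv^{-1}}$) and substituting $E_i=(\vv-\vv^{-1})^{-1}E_{i,i+1}$, one obtains for $d\ge 1$ the one-step recursion $E_{j,i+1}=\frac{[E_{i,i+1},E_{j,i}]_{\vv^{-1}}}{\vv-\vv^{-1}}$, and symmetrically $F_{i+1,j}=\frac{[F_{i,j},F_{i+1,i}]_{\vv}}{\vv^{-1}-\vv}$. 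Applying the algebra isomorphism $\Upsilon$, then using the base case for $\Upsilon(E_{i,i+1}),\Upsilon(F_{i+1,i})$ and the induction hypothesis for the pair $(j,i)$ (whose difference is $d-1$) to evaluate $\Upsilon(E_{j,i}),\Upsilon(F_{i,j})$, we land on $\Upsilon(E_{j,i+1})=\frac{[\tilde{e}_{i,i+1},\tilde{e}_{ji}]_{\vv^{-1}}}{\vv-\vv^{-1}}$ and $\Upsilon(F_{i+1,j})=\frac{[\tilde{f}_{ij},\tilde{f}_{i+1,i}]_{\vv}}{\vv^{-1}-\vv}$, which are precisely $\tilde{e}_{j,i+1}$ and $\tilde{f}_{i+1,j}$ by~\refp{higher finite Gauss}. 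This closes the induction.

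The only point requiring care is the combinatorial bookkeeping at the start of the inductive step: one must pin down the nesting pattern of the long $\vv^{\pm 1}$-commutators (the innermost bracket in the $E$-word is $[E_{j+1},E_j]_{\vv^{-1}}$, while in the $F$-word it is $[F_j,F_{j+1}]_{\vv}$) and track the scalar factors $(\vv-\vv^{-1})^{\pm 1}$ so that the normalizations in~\refe{PBW finite elements} match those in~\refp{higher finite Gauss} and~\refe{Gauss ef-modes finite}; once this is verified the rest is a one-line appeal to the induction hypothesis. Finally I would note that \refp{comparison of integral forms quantum finite} follows at once: by~\refc{matching higher roots} the isomorphism $\Upsilon$ carries the generators~\refe{PBW basis finite} of $\fU_\vv(\gl_n)$ onto the elements $\tilde{e}_{j,i+1},\tilde{f}_{i+1,j},t^\pm_{jj}$ of $\fU^\rtt_\vv(\gl_n)$, while conversely $\Upsilon^{-1}$ sends the RTT generators $t^\pm_{ij}$ into $\fU_\vv(\gl_n)$ (using~\refp{higher finite Gauss} to express the $t^+_{j,i+1},t^-_{i+1,j}$ in terms of the $\tilde{e},\tilde{f}$), whence the two integral forms coincide.
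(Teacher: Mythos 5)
Your proof is correct and follows essentially the same route as the paper: the base case $j=i$ comes from the Ding--Frenkel formulas~(\ref{Ding-Frenkel finite formulas}) matched against~(\ref{Gauss ef-modes finite}), and the inductive step is exactly the one-step recursion supplied by Proposition~\ref{higher finite Gauss}. The extra care you take with the nesting of the iterated $\vv^{\pm1}$-commutators and the $(\vv-\vv^{-1})^{\pm1}$ normalizations is exactly the bookkeeping the paper leaves implicit, and your closing remark on Proposition~\ref{comparison of integral forms quantum finite} matches how the paper deduces it.
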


\begin{proof}
For a fixed $1\leq i<n$, this follows by a decreasing induction in $j$.
The base of the induction $j=i$ is due to~(\ref{Ding-Frenkel finite formulas}),
while the induction step follows from Proposition~\ref{higher finite Gauss}.
\end{proof}

We order $\{E_{j,i+1}\}_{1\leq j\leq i<n}$ in the following way:
$E_{j,i+1}\leq E_{j',i'+1}$ if $j<j'$, or $j=j',i\leq i'$.
Likewise, we order $\{F_{i+1,j}\}_{1\leq j\leq i<n}$ so that
$F_{i+1,j}\geq F_{i'+1,j'}$ if $j<j'$, or $j=j',i\leq i'$.
Finally, we choose any total ordering of the Cartan generators
$\{t_j\}_{1\leq j\leq n}$ of $\fU_\vv(\gl_n)$
(or $\{K_i\}_{1\leq i<n}$ of $\fU_\vv(\ssl_n)$).
Having specified these three total orderings, elements $F\cdot H\cdot E$
with $F, E, H$ being ordered monomials in
$\{F_{i+1,j}\}_{1\leq j\leq i<n}$, $\{E_{j,i+1}\}_{1\leq j\leq i<n}$,
and the Cartan generators $\{t_j^{\pm 1}\}_{1\leq j\leq n}$ of $\fU_\vv(\gl_n)$
(or $\{K_i^{\pm 1}\}_{1\leq i<n}$ of $\fU_\vv(\ssl_n)$), respectively, are called
the \emph{ordered PBW monomials} (in the corresponding generators).
The proof of the following result is analogous to that of
Theorem~\ref{PBW basis coordinate affine} below and is based
on Proposition~\ref{comparison of integral forms quantum finite},
we leave details to the interested reader.

\begin{Thm}\label{PBW basis coordinate finite}
(a) The ordered PBW monomials in
  $\{F_{i+1,j}, t_k^{\pm 1}, E_{j,i+1}\}_{1\leq j\leq i<n}^{1\leq k\leq n}$
form a basis of a free $\BC[\vv,\vv^{-1}]$-module $\fU_\vv(\gl_n)$.

\noindent
(b) The ordered PBW monomials in
  $\{F_{i+1,j}, K_k^{\pm 1}, E_{j,i+1}\}_{1\leq j\leq i<n}^{1\leq k<n}$
form a basis of a free

\noindent
$\BC[\vv,\vv^{-1}]$-module $\fU_\vv(\ssl_n)$.
\end{Thm}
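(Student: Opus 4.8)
The plan is to transport the statement to the RTT side via $\Upsilon$ and to deduce it from the PBW-type basis of $\fU^\rtt_\vv(\gl_n)$ of~\cite[Proposition 2.1]{gm}; the key observation is that the relevant change of generators is ``diagonal up to a permutation of the index set'' with entries in $\BC[\vv,\vv^{-1}]^{\times}$, so that spanning and $\BC[\vv,\vv^{-1}]$-linear independence transfer simultaneously. This follows the pattern of the later proof of Theorem~\ref{PBW basis coordinate affine}.

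First, by Proposition~\ref{comparison of integral forms quantum finite} the isomorphism $\Upsilon$ of Theorem~\ref{Ding-Frenkel finite} restricts to a $\BC[\vv,\vv^{-1}]$-algebra isomorphism $\fU_\vv(\gl_n)\iso\fU^\rtt_\vv(\gl_n)$, and by Corollary~\ref{matching higher roots} it carries $E_{j,i+1}$, $F_{i+1,j}$, $t_k^{\pm1}$ to $\tilde{e}_{j,i+1}=t^-_{jj}t^+_{j,i+1}$, $\tilde{f}_{i+1,j}=t^-_{i+1,j}t^+_{jj}$, $t^{\pm}_{kk}$ respectively (see~\eqref{Gauss ef-modes finite}). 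Since $t^+_{kk}t^-_{kk}=1$ by~\eqref{fRTT}, this is the same as $t^+_{j,i+1}=t^+_{jj}\tilde{e}_{j,i+1}$ and $t^-_{i+1,j}=\tilde{f}_{i+1,j}t^-_{jj}$. Hence it suffices to prove that the ordered monomials in $\{\tilde{f}_{i+1,j},\,t^{\pm}_{kk},\,\tilde{e}_{j,i+1}\}$ form a free $\BC[\vv,\vv^{-1}]$-basis of $\fU^\rtt_\vv(\gl_n)$, where we order $\{\tilde{e}_{j,i+1}\}$ and $\{\tilde{f}_{i+1,j}\}$ as in the statement and transport these orderings to $\{t^+_{j,i+1}\}$ and $\{t^-_{i+1,j}\}$ via $\tilde{e}_{j,i+1}\leftrightarrow t^+_{j,i+1}$, $\tilde{f}_{i+1,j}\leftrightarrow t^-_{i+1,j}$.

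Next, I would use that conjugation by a diagonal generator $t^{\pm}_{kk}$ scales each $\tilde{e}_{j,i+1}$ and $\tilde{f}_{i+1,j}$ — and hence also each $t^+_{j,i+1}$ and $t^-_{i+1,j}$ — by a power of $\vv$: on the Drinfeld--Jimbo side this is immediate from~\eqref{DrJim finite} (conjugation by $t_k$ scales $E_m,F_m$ by powers of $\vv$, and this propagates through the iterated $\vv$-commutators of~\eqref{PBW finite elements}), and one transports it through $\Upsilon$. This quasi-commutation is \emph{exact}, i.e.\ there are no lower-order corrections. Now fix an ordered monomial $F\cdot H\cdot E$ in $\{\tilde{f}_{i+1,j},t^{\pm}_{kk},\tilde{e}_{j,i+1}\}$; substituting $\tilde{e}_{j,i+1}=t^-_{jj}t^+_{j,i+1}$ and $\tilde{f}_{i+1,j}=t^-_{i+1,j}t^+_{jj}$ and pushing the spurious diagonal factors out of the $F$- and $E$-blocks into the middle, one obtains $F\cdot H\cdot E=\vv^{c}\,F'\cdot H'\cdot E'$ with $c\in\BZ$, where $F'$ (resp.\ $E'$) is the ordered $t^-$- (resp.\ $t^+$-) monomial with the same index multiset as $F$ (resp.\ $E$) — no reordering inside these blocks is needed, by the choice of orderings — and $H'$ is a monomial in the $t^{\pm}_{kk}$. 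The assignment $(F,H,E)\mapsto(F',H',E')$ is a bijection of indexing data (the diagonal exponents transform by an invertible integral matrix), so the transition matrix between the two families of ordered monomials is a permutation matrix times a diagonal matrix with entries in $\BC[\vv,\vv^{-1}]^{\times}$. Combined with~\cite[Proposition 2.1]{gm} this proves part (a).

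Part (b) is then a matter of cutting down to $\ssl_n$: $\fz:=t_1\cdots t_n$ is central in $U_\vv(\gl_n)$, $U_\vv(\ssl_n)$ is the triangular subalgebra with Cartan part $\BC(\vv)[K_1^{\pm1},\ldots,K_{n-1}^{\pm1}]$, and the lattice of exponents $(m_k)$ with $\prod_k t_k^{m_k}\in\BC(\vv)[K_i^{\pm1}]$ is exactly $\{m\in\BZ^n:\sum_k m_k=0\}$. Using the triangular decomposition one checks that an ordered PBW monomial $F\cdot\prod_k t_k^{m_k}\cdot E$ of $\fU_\vv(\gl_n)$ lies in $U_\vv(\ssl_n)$ iff $\sum_k m_k=0$, in which case its Cartan factor is an integral monomial in the $K_i^{\pm1}$; hence $\fU_\vv(\ssl_n)=\fU_\vv(\gl_n)\cap U_\vv(\ssl_n)$ is the free $\BC[\vv,\vv^{-1}]$-module on those monomials of part (a) with $\sum_k m_k=0$, and the unimodular change of Cartan variables identifies these with the ordered PBW monomials in $\{F_{i+1,j},K_k^{\pm1},E_{j,i+1}\}$. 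The main obstacle will be the bookkeeping of the third paragraph — making sure the substitution is genuinely diagonal-with-units and not merely triangular (this is exactly where exactness of the quasi-commutation of the $t^{\pm}_{kk}$ enters), together with the harmless but slightly fiddly task of matching the ordering used in~\cite[Proposition 2.1]{gm} with the one fixed in the statement.
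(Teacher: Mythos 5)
Your proposal is correct, but it takes a genuinely different route from the one the paper intends (the paper only sketches its proof as ``analogous to Theorem~\ref{PBW basis coordinate affine}''). The paper's argument is a straightening argument: first one gets the $\BC(\vv)$-basis property from the triangular decomposition, and then one shows that every unordered product of two generators is a $\BC[\vv,\vv^{-1}]$-linear combination of ordered PBW monomials, the hard case $E_{j,i+1}F_{i'+1,j'}$ being handled on the RTT side by extracting reordering identities from~(\ref{fRTT}),~(\ref{4th fRTT}) and running an induction, exactly as in the proof of Theorem~\ref{PBW basis coordinate affine}. You instead compare the two families of ordered monomials directly, observing that $\tilde e_{j,i+1}=t^-_{jj}t^+_{j,i+1}$ and $\tilde f_{i+1,j}=t^-_{i+1,j}t^+_{jj}$ differ from the RTT matrix entries only by diagonal factors which $q$-commute \emph{exactly} (with no lower-order corrections) with all generators, so the transition matrix to the Gow--Molev basis of $\fU^\rtt_\vv(\gl_n)$ is a permutation times units; this is shorter and avoids all straightening. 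The trade-off is that your shortcut is specific to the finite case: in the affine setting the analogue of the diagonal correction is the full series $\tilde g^\pm_j(z)$, and the relation between $E^{(r)}_{j,i+1}$ and $t^+_{ji}[r]$ (see~(\ref{e vs t})) has genuine lower-order terms, which is precisely why the paper's inductive straightening is needed there. Two small points you should nail down: (i) your argument silently requires that~\cite[Proposition 2.1]{gm} holds for the particular block ordering (all $t^-_{i+1,j}$, then diagonal, then all $t^+_{j,i+1}$, with the orders inside the off-diagonal blocks induced from the theorem's orderings of $F_{i+1,j}$ and $E_{j,i+1}$); this is fine if, as in Molev's usual conventions, the PBW basis there is stated for an arbitrary fixed order, but if not, reordering \emph{within} the strictly triangular blocks is not diagonal-with-units (the RTT relations produce cross terms such as $t^+_{14}t^+_{23}$ from $t^+_{13}t^+_{24}$) and you would be back to an integral straightening argument; (ii) in part (b) you should state explicitly that you are first proving $\fU_\vv(\ssl_n)=\fU_\vv(\gl_n)\cap U_\vv(\ssl_n)$ via the basis of part (a) and the triangular decomposition, since the definition of $\fU_\vv(\ssl_n)$ is by generators, not as an intersection. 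Neither point is a gap in substance.
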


\begin{Rem}\label{limit of rtt quantum finite}
We note that $\fU_\vv(\gl_n)\simeq \fU^\rtt_\vv(\gl_n)$ quantizes
the algebra of functions on the big Bruhat cell in $\GL(n)$, that is
$\fU_\vv(\gl_n)/(\vv-1)\simeq \BC[N_{-}TN_{+}]$, due
to~(\ref{integral_finite_gln}) and the PBW theorem of~\cite[Proposition 2.1]{gm}.
Here $N_-$ (resp.\ $N_+$) denotes the subgroup of strictly lower (resp.\ strictly upper)
triangular matrices, and $T$ denotes the diagonal torus of $\GL(n)$.
\end{Rem}

\begin{Rem}\label{Luzstig form finite}
For a complete picture, let us recall in which sense $U_\vv(\gl_n)$ is
usually treated as a quantization of the universal enveloping algebra $U(\gl_n)$.
Let $\sU_\vv(\gl_n)$ be the $\BC[\vv,\vv^{-1}]$-subalgebra of $U_\vv(\gl_n)$ generated
by $\{t_j^{\pm 1}\}_{j=1}^n$ and the divided powers $\{E^{(m)}_i,F^{(m)}_i\}_{1\leq i<n}^{m\geq 1}$.
According to~\cite[Proposition 2.3(a)]{l} (cf.~\cite{jim}), the subalgebra
$\sU^<_\vv(\gl_n)$ (resp.\ $\sU^>_\vv(\gl_n)$) of $\sU_\vv(\gl_n)$ generated by
$\{F^{(m)}_i\}_{1\leq i<n}^{m\geq 1}$ (resp.\ $\{E^{(m)}_i\}_{1\leq i<n}^{m\geq 1}$)
is a free $\BC[\vv,\vv^{-1}]$-module with a basis consisting of the ordered products of the
divided powers of the root generators $F'_{i+1,j}:=[\cdots[F_j,F_{j+1}]_{\vv},\cdots,F_i]_{\vv}$
(resp.\ $E'_{j,i+1}:=[E_i,\cdots,[E_{j+1},E_j]_{\vv^{-1}}\cdots]_{\vv^{-1}}$).
Specializing $\vv$ to $1$, we have $t_j^2=1$ in a $\BC$-algebra
$\sU_1(\gl_n):=\sU_\vv(\gl_n)/(\vv-1)$.
Specializing further $t_j$ to $1$, we get a $\BC$-algebra isomorphism
$\sU_1(\gl_n)/(\langle t_j-1\rangle_{j=1}^n)\simeq U(\gl_n)$, under which
$E'_{j,i+1}\mapsto (-1)^{i-j}E_{j,i+1}, F'_{i+1,j}\mapsto (-1)^{i-j}E_{i+1,j}$.
\end{Rem}


\subsection{The Drinfeld quantum affine $\gl_n$ and $\ssl_n$}\label{ssec DJ affine}
\

Following~\cite{d2}, define the quantum loop $\gl_n$, denoted by $U_\vv(L\gl_n)$,
to be the associative $\BC(\vv)$-algebra generated by
  $\{e_{i,r},f_{i,r},\varphi^+_{j,s},\varphi^-_{j,-s}\}_{1\leq i<n,1\leq j\leq n}^{r\in \BZ, s\in \BN}$
with the following defining relations (cf.~\cite[Definition 3.1]{df}):
\begin{equation}\label{Dr affine}
\begin{split}
  & [\varphi_j^\epsilon(z),\varphi_{j'}^{\epsilon'}(w)]=0,\
    \varphi^\pm_{j,0}\cdot \varphi^\mp_{j,0}=1,\\
  & (z-\vv^{c_{ii'}}w)e_i(z)e_{i'}(w)=(\vv^{c_{ii'}}z-w)e_{i'}(w)e_i(z),\\
  & (\vv^{c_{ii'}}z-w)f_i(z)f_{i'}(w)=(z-\vv^{c_{ii'}}w)f_{i'}(w)f_i(z),\\
  & (\vv z-\vv^{-1}w)^{\delta_{ji}}(z-\vv w)^{\delta_{j,i+1}}\varphi^\epsilon_j(z)e_i(w)=
    (z-w)^{\delta_{ji}}(\vv z-w)^{\delta_{j,i+1}}e_i(w)\varphi^\epsilon_j(z),\\
  & (z-w)^{\delta_{ji}}(\vv z-w)^{\delta_{j,i+1}}\varphi^\epsilon_j(z)f_i(w)=
    (\vv z-\vv^{-1}w)^{\delta_{ji}}(z-\vv w)^{\delta_{j,i+1}}f_i(w)\varphi^\epsilon_j(z),\\
  & [e_i(z),f_{i'}(w)]=
    \frac{\delta_{ii'}}{\vv-\vv^{-1}}\delta\left(\frac{z}{w}\right)\left(\psi^+_i(z)-\psi^-_i(z)\right),\\
  & e_i(z)e_{i'}(w)=e_{i'}(w)e_i(z)\ \mathrm{and}\ f_i(z)f_{i'}(w)=f_{i'}(w)f_i(z)\ \mathrm{if}\ c_{ii'}=0,\\
  & [e_i(z_1),[e_i(z_2),e_{i'}(w)]_{\vv^{-1}}]_{\vv}+
    [e_i(z_2),[e_i(z_1),e_{i'}(w)]_{\vv^{-1}}]_{\vv}=0 \ \mathrm{if}\ c_{ii'}=-1,\\
  & [f_i(z_1),[f_i(z_2),f_{i'}(w)]_{\vv^{-1}}]_{\vv}+
    [f_i(z_2),[f_i(z_1),f_{i'}(w)]_{\vv^{-1}}]_{\vv}=0 \ \mathrm{if}\ c_{ii'}=-1,
\end{split}
\end{equation}
where the generating series are defined as follows:
\begin{equation*}
    e_i(z):=\sum_{r\in \BZ}{e_{i,r}z^{-r}},\
    f_i(z):=\sum_{r\in \BZ}{f_{i,r}z^{-r}},\
    \varphi_i^{\pm}(z):=\sum_{s\geq 0}{\varphi^\pm_{i,\pm s}z^{\mp s}},\
    \delta(z):=\sum_{r\in \BZ}{z^r},
\end{equation*}
and $\psi^\pm_i(z)=\sum_{s\geq 0}{\psi^\pm_{i,\pm s}z^{\mp s}}$ is determined via
  $\psi^\pm_i(z):=(\varphi^\pm_i(z))^{-1}\varphi^\pm_{i+1}(\vv^{-1}z)$.
We will also need \emph{Drinfeld half-currents} $e^\pm_i(z),f^\pm_i(z)$ defined via
\begin{equation*}\label{Drinfeld half-currents}
  e^+_i(z):=\sum_{r\geq 0} e_{i,r}z^{-r},\ e^-_i(z):=-\sum_{r<0} e_{i,r}z^{-r},\
  f^+_i(z):=\sum_{r>0} f_{i,r}z^{-r},\ f^-_i(z):=-\sum_{r\leq 0} f_{i,r}z^{-r},
\end{equation*}
so that $e_i(z)=e^+_i(z)-e^-_i(z),\ f_i(z)=f^+_i(z)-f^-_i(z)$.

The $\BC(\vv)$-subalgebra of $U_\vv(L\gl_n)$ generated by
  $\{e_{i,r},f_{i,r},\psi^\pm_{i,\pm s}\}_{1\leq i<n}^{r\in \BZ,s\in \BN}$
is isomorphic to the quantum loop $\ssl_n$, denoted by $U_\vv(L\ssl_n)$.
To be more precise,
this recovers the new Drinfeld realization of $U_\vv(L\ssl_n)$, see~\cite{d2}.
The latter also admits the original Drinfeld-Jimbo realization with
the generators $\{E_i,F_i,K^{\pm 1}_i\}_{i\in [n]}$ (here $[n]:=\{0,1,\ldots,n-1\}$
viewed as mod $n$ residues) and with the defining relations exactly as in~(\ref{DrJim finite}),
but with $(c_{ii'})_{i,i'\in [n]}$ denoting the Cartan matrix of $\widehat{\ssl}_n$.
We prefer to keep the same notation $U_\vv(L\ssl_n)$ for these two realizations.
However, we will need an explicit identification which expresses the Drinfeld-Jimbo
generators in terms of the ``loop'' generators
(featuring in the new Drinfeld realization), see~\cite{d2,jin}:
\begin{equation}\label{Jing's identification}
\begin{split}
  & E_i\mapsto e_{i,0},\ F_i\mapsto f_{i,0},\ K^{\pm 1}_i\mapsto \psi^\pm_{i,0}
    \ \ \mathrm{for}\ \ i\in [n]\backslash\{0\},\\
  & K_{0}^{\pm 1}\mapsto \psi^\mp_{1,0}\cdots \psi^\mp_{n-1,0},\\
  & E_{0}\mapsto (-\vv)^{-n+2}\cdot [\cdots[f_{1,1},f_{2,0}]_\vv,\cdots,f_{n-1,0}]_\vv
    \cdot \psi^-_{1,0}\cdots \psi^-_{n-1,0},\\
  & F_{0}\mapsto (-\vv)^n\cdot [e_{n-1,0},\cdots,[e_{2,0},e_{1,-1}]_{\vv^{-1}}\cdots]_{\vv^{-1}}
    \cdot \psi^+_{1,0}\cdots\psi^+_{n-1,0}.
\end{split}
\end{equation}

The relation between the algebras $U_\vv(L\gl_n)$ and $U^\rtt_\vv(L\gl_n)$
was conjectured in~\cite{frt} and proved in~\cite[Main Theorem]{df}.
To state the result, consider the Gauss decomposition of the matrices $T^\pm(z)$
of subsection~\ref{ssec RTT affine}:
\begin{equation*}
  T^\pm(z)=\wt{F}^\pm(z)\cdot \wt{G}^\pm(z)\cdot \wt{E}^\pm(z).
\end{equation*}
Here $\wt{F}^\pm(z),\wt{G}^\pm(z),\wt{E}^\pm(z)$ are the series in $z^{\mp 1}$
with coefficients in the algebra $\fU^\rtt_\vv(L\gl_n)\otimes \End\ \BC^n$
which are of the form
\begin{equation*}
    \wt{F}^\pm(z)=\sum_{i} E_{ii}+\sum_{i>j} \tilde{f}^\pm_{ij}(z)\cdot E_{ij},\
    \wt{G}^\pm(z)=\sum_{i} \tilde{g}^\pm_i(z)\cdot E_{ii},\
    \wt{E}^\pm(z)=\sum_{i} E_{ii}+\sum_{i<j} \tilde{e}^\pm_{ij}(z)\cdot E_{ij}.
\end{equation*}

\begin{Thm}[\cite{df}]\label{Ding-Frenkel affine}
There is a unique $\BC(\vv)$-algebra isomorphism
\begin{equation*}
  \Upsilon\colon U_\vv(L\gl_n)\iso U^\rtt_\vv(L\gl_n)
\end{equation*}
defined by
\begin{equation}\label{Ding-Frenkel formulas}
  e^\pm_i(z)\mapsto \frac{\tilde{e}^\pm_{i,i+1}(\vv^iz)}{\vv-\vv^{-1}},\
  f^\pm_i(z)\mapsto \frac{\tilde{f}^\pm_{i+1,i}(\vv^iz)}{\vv-\vv^{-1}},\
  \varphi^\pm_j(z)\mapsto \tilde{g}^\pm_j(\vv^jz).
\end{equation}
\end{Thm}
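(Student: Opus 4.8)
The plan is to follow the same strategy that Ding and Frenkel used in \cite{df}: namely, to verify that the assignment \refe{Ding-Frenkel formulas} extends to a well-defined $\BC(\vv)$-algebra homomorphism $\Upsilon\colon U_\vv(L\gl_n)\to U^\rtt_\vv(L\gl_n)$, and then to establish that it is invertible by exhibiting the inverse map explicitly on Gauss-decomposition generators. Since the theorem is quoted verbatim from \cite{df}, I expect the ``proof'' here to consist mostly of a citation together with a brief indication of how the statement is used; nevertheless, let me sketch what a self-contained argument would look like.

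First I would check that the matrix entries of $\wt F^\pm(z), \wt G^\pm(z), \wt E^\pm(z)$ obtained from the Gauss decomposition $T^\pm(z)=\wt F^\pm(z)\wt G^\pm(z)\wt E^\pm(z)$ are well-defined power series in $z^{\mp 1}$ with coefficients in $\fU^\rtt_\vv(L\gl_n)$; this uses that $t^\pm_{ii}[0]$ are invertible (relation \refe{affRTT}, first line) so that the leading principal minors are invertible, and hence the Gauss factorization exists and is unique. Next, using the $RTT$ relations \refe{affRTT} together with the ``all-signs'' version \refe{all affRTT} of Lemma~\ref{remaining affRTT}, one derives commutation relations among the Gauss generators $\tilde e^\pm_{i,i+1}(z), \tilde f^\pm_{i+1,i}(z), \tilde g^\pm_j(z)$. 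The key point is that these relations, after the spectral shift $z\mapsto \vv^i z$ and rescaling by $(\vv-\vv^{-1})^{-1}$ as in \refe{Ding-Frenkel formulas}, coincide with the defining relations \refe{Dr affine} of $U_\vv(L\gl_n)$ written in terms of half-currents $e^\pm_i(z), f^\pm_i(z), \varphi^\pm_j(z)$. This establishes that $\Upsilon$ is a well-defined homomorphism. (Note that the higher root generators $\tilde e^\pm_{ij}(z), \tilde f^\pm_{ij}(z)$ for $j>i+1$ are not needed for this step — they are recovered from the simple ones, compare Proposition~\ref{higher ef-modes} in the trigonometric setting.)

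To see that $\Upsilon$ is an isomorphism, I would construct the inverse. The Gauss decomposition can be inverted: each $t^\pm_{ij}(z)$ is expressed as a (noncommutative) polynomial in the entries $\tilde e^\pm_{kl}(z), \tilde f^\pm_{kl}(z), \tilde g^\pm_k(z)$, and via \refe{Ding-Frenkel formulas} and the formulas for the higher root generators in terms of the simple half-currents, this gives an explicit assignment of each $t^\pm_{ij}[\pm r]$ to an element of $U_\vv(L\gl_n)$. One checks this assignment respects the $RTT$ relations \refe{affRTT}, hence defines a homomorphism $\Psi\colon U^\rtt_\vv(L\gl_n)\to U_\vv(L\gl_n)$, and then verifies $\Psi\circ\Upsilon=\mathrm{id}$ and $\Upsilon\circ\Psi=\mathrm{id}$ on generators. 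Uniqueness of $\Upsilon$ is immediate since the half-currents $e^\pm_i(z), f^\pm_i(z), \varphi^\pm_j(z)$ — equivalently the Drinfeld generators $e_{i,r}, f_{i,r}, \varphi^\pm_{j,\pm s}$ — generate $U_\vv(L\gl_n)$.

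The main obstacle is the second step: matching the commutation relations among the Gauss generators with the Drinfeld relations \refe{Dr affine}. This is a lengthy computation with the $RTT$ relations — extracting the appropriate matrix coefficients $\langle v_a\otimes v_b|\cdots|v_c\otimes v_d\rangle$ of \refe{all affRTT}, as illustrated in the proof of Theorem~\ref{alternative kernel quantum} — and keeping careful track of the spectral-parameter shifts $\vv^i z$. Since this is exactly the content of \cite{df} and the paper only needs the statement, I would present the proof as: \emph{This is~\cite[Main Theorem]{df}; see also~\cite{m} for the Yangian counterpart (Theorem~\ref{Yangian Gauss decomposition} above).}
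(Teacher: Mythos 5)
Your proposal is correct and matches the paper's treatment: the paper gives no proof of this theorem, only the citation to~\cite[Main Theorem]{df} together with a remark translating notation (our $e_i(z), f_i(z), \varphi^\pm_j(z)$ and $T^\pm(z)$ to the $X^\mp_i, k^\mp_j, L^\mp$ of \emph{loc.cit.}, at trivial central charge), which is exactly what you propose to do. Your outline of how a self-contained argument would run (existence of the Gauss decomposition from invertibility of $t^\pm_{ii}[0]$, matching the Gauss-generator relations extracted from \refe{all affRTT} with \refe{Dr affine}, and inverting on generators) is a faithful sketch of the Ding--Frenkel argument but is not reproduced in the paper.
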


\begin{Rem}
To compare with the notations of~\cite{df}, we note that our generating series
$e_i(z), f_i(z), \varphi^\pm_j(z)$ of $U_\vv(L\gl_n)$ correspond to
  $\frac{X^-_i(\vv^i z)}{\vv^{-1}-\vv}, \frac{X^+_i(\vv^i z)}{\vv^{-1}-\vv}, k^\mp_j(\vv^j z)$
of~\cite[Definition 3.1]{df}, respectively. Likewise, our matrices $T^+(z)$ and $T^-(z)$
of subsection~\ref{ssec RTT affine} correspond to $L^-(z)$ and $L^+(z)$
of~\cite[Definition 3.2]{df}, respectively. After these identifications,
we see that Theorem~\ref{Ding-Frenkel affine} is just~\cite[Main Theorem]{df}
(for the trivial central charge).
\end{Rem}

As an immediate corollary, $\fU^\rtt_\vv(L\gl_n)$ is realized as a
$\BC[\vv,\vv^{-1}]$-subalgebra of $U_\vv(L\gl_n)$. To describe this
subalgebra explicitly, define the elements
$\{E^{(r)}_{j,i+1}, F^{(r)}_{i+1,j}\}_{1\leq j\leq i<n}^{r\in \BZ}$
of $U_\vv(L\gl_n)$ via
\begin{equation}\label{PBW basis elements}
\begin{split}
  & E^{(r)}_{j,i+1}:=
    (\vv-\vv^{-1})[e_{i,0},\cdots,[e_{j+1,0},e_{j,r}]_{\vv^{-1}}\cdots]_{\vv^{-1}},\\
  & F^{(r)}_{i+1,j}:=
    (\vv^{-1}-\vv)[\cdots[f_{j,r},f_{j+1,0}]_{\vv},\cdots,f_{i,0}]_{\vv}.
\end{split}
\end{equation}
These elements with $r=0,\pm 1$ played an important role
in~\cite[Section 10, Appendix G]{ft}. We also note that
$E^{(r)}_{i,i+1}=(\vv-\vv^{-1})e_{i,r}$ and $F^{(r)}_{i+1,i}=(\vv^{-1}-\vv)f_{i,r}$.

\begin{Def}\label{integral loop}
(a) Let $\fU_\vv(L\gl_n)$ be the $\BC[\vv,\vv^{-1}]$-subalgebra of $U_\vv(L\gl_n)$
generated by
\begin{equation}\label{PBW basis}
   \{E^{(r)}_{j,i+1},F^{(r)}_{i+1,j}\}_{1\leq j\leq i<n}^{r\in \BZ}\cup
   \{\varphi^\pm_{j,\pm s}\}_{1\leq j\leq n}^{s\in \BN}.
\end{equation}

\noindent
(b) Let $\fU_\vv(L\ssl_n)$ be the $\BC[\vv,\vv^{-1}]$-subalgebra of $U_\vv(L\ssl_n)$
generated by
\begin{equation}\label{PBW basis sln}
   \{E^{(r)}_{j,i+1},F^{(r)}_{i+1,j}\}_{1\leq j\leq i<n}^{r\in \BZ}\cup
   \{\psi^\pm_{i,\pm s}\}_{1\leq i<n}^{s\in \BN}.
\end{equation}
\end{Def}

The following result can be viewed as a trigonometric counterpart of
Proposition~\ref{yangian integral forms coincide}:

\begin{Prop}\label{comparison of integral forms quantum}
$\fU_\vv(L\gl_n)=\Upsilon^{-1}(\fU^\rtt_\vv(L\gl_n))$.
\end{Prop}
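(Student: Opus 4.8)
The plan is to mirror the strategy used for the Yangian counterpart (Proposition~\ref{yangian integral forms coincide}), reducing the coincidence of the two integral forms to an explicit comparison of the Gauss-decomposition matrix entries $\tilde e^\pm_{ij}(z), \tilde f^\pm_{ij}(z), \tilde g^\pm_j(z)$ of $T^\pm(z)$ with the root vectors $E^{(r)}_{j,i+1}, F^{(r)}_{i+1,j}$ and Cartan series $\varphi^\pm_j(z)$ defined in~(\ref{PBW basis elements}) and~(\ref{Dr affine}). Concretely, one inclusion is the assertion that $\Upsilon(\fU_\vv(L\gl_n))\subseteq \fU^\rtt_\vv(L\gl_n)$: via~(\ref{Ding-Frenkel formulas}) the generators~(\ref{PBW basis}) are sent, up to the scalars $(\vv-\vv^{-1})^{\pm 1}$ and the rescaling $z\mapsto \vv^iz$, into the matrix entries of $\wt E^\pm(z), \wt F^\pm(z), \wt G^\pm(z)$ and their iterated $\vv$-commutators, and one must check that all these entries already lie in $\fU^\rtt_\vv(L\gl_n)=\langle t^\pm_{ij}[\pm r]\rangle$. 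The reverse inclusion $\Upsilon^{-1}(\fU^\rtt_\vv(L\gl_n))\subseteq \fU_\vv(L\gl_n)$ amounts to expressing every $t^\pm_{ij}[\pm r]$ as a noncommutative polynomial over $\BC[\vv,\vv^{-1}]$ in the generators~(\ref{PBW basis}): from $T^\pm(z)=\wt F^\pm(z)\wt G^\pm(z)\wt E^\pm(z)$ one has $t^\pm_{ij}(z)=\sum_k \tilde f^\pm_{ik}(z)\tilde g^\pm_k(z)\tilde e^\pm_{kj}(z)$ (with the convention $\tilde f^\pm_{kk}=\tilde e^\pm_{kk}=1$), so it suffices to know that each $\tilde f^\pm_{ik}(z), \tilde g^\pm_k(z), \tilde e^\pm_{kj}(z)$ is expressible through the listed generators.

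The two key technical inputs, exactly paralleling Proposition~\ref{higher ef-modes yangian} and Corollary~\ref{explicit modes yangian} in the rational case, are: (1) the ``higher'' Gauss entries are obtained from the ``first'' ones by $\vv$-commutators, i.e. for $1\le j<i<n$,
\begin{equation*}
  \tilde e^\pm_{j,i+1}(z)=\frac{[\tilde e^\pm_{ji}(z),\,\tilde e^{(1)}_{i,i+1}]_{\vv^{-1}}}{1},\qquad
  \tilde f^\pm_{i+1,j}(z)=\frac{[\tilde f^{(1)}_{i+1,i},\,\tilde f^\pm_{ij}(z)]_{\vv}}{1},
\end{equation*}
and iterating these gives each $\tilde e^\pm_{j,i+1}(z)$ as a nested $\vv^{-1}$-commutator of $\tilde e^\pm_{j,j+1}(z)$ with fixed degree-$(1)$ modes (and dually for $f$); and (2) the renormalized first entries $\tilde e^\pm_{i,i+1}(z)=t^\mp_{ii}(z)^{-1}\!\cdot\! t^\pm_{i,i+1}(z)$-type expressions together with $\tilde g^\pm_j(z)$ and their expansions in $z^{\mp1}$ have coefficients lying in $\fU^\rtt_\vv(L\gl_n)$, which follows from the explicit Gauss decomposition formulas over $\BC[\vv,\vv^{-1}]$ (using that the leading Cartan term $t^\pm_{11}[0]$ and $\tilde g^\pm_i[0]$ are units in $\fU^\rtt_\vv(L\gl_n)$, as in~(\ref{affRTT})). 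I would state (1) as a separate Proposition (call it ``higher ef-modes''), prove it by the same $R$-matrix computation used for the finite case in Proposition~\ref{higher finite Gauss} — comparing matrix coefficients $\langle v_j\otimes v_{i+1}|\cdots|\cdots\rangle$ of~(\ref{all affRTT}) — and then derive the Corollary expressing $\{E^{(r)}_{j,i+1},F^{(r)}_{i+1,j}\}$ as the $\Upsilon$-preimages of iterated $\vv$-commutators of the Gauss entries, from which both inclusions fall out.

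Putting this together: for $\Upsilon(\fU_\vv(L\gl_n))\subseteq\fU^\rtt_\vv(L\gl_n)$, observe that by the Corollary the images $\Upsilon(E^{(r)}_{j,i+1}),\Upsilon(F^{(r)}_{i+1,j})$ are built by iterated $\vv$-commutators from the $\tilde e^\pm, \tilde f^\pm$ entries, which are products of $t^\pm_{ij}(z)$ with powers of the units $t^\pm_{ii}[0]^{-1}$, hence have coefficients in $\fU^\rtt_\vv(L\gl_n)$; and $\Upsilon(\varphi^\pm_{j,\pm s})=\tilde g^\pm_j$-coefficients are likewise in $\fU^\rtt_\vv(L\gl_n)$ by the Gauss decomposition. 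For the reverse, each $t^\pm_{ij}[\pm r]$ is recovered from the product $\wt F^\pm\wt G^\pm\wt E^\pm$, and the entries of each factor are, again by the Corollary and the relation $\tilde g^\pm_i(z)=\tilde g^\pm_{i-1}(z)^{\,\cdot}\cdots$ expressing $\tilde g^\pm$ through $\varphi^\pm$, noncommutative polynomials over $\BC[\vv,\vv^{-1}]$ in the generators~(\ref{PBW basis}); this gives $\Upsilon^{-1}(t^\pm_{ij}[\pm r])\in\fU_\vv(L\gl_n)$. The main obstacle I anticipate is item (2): verifying carefully that the \emph{negative} modes $t^-_{ij}[-r]$ and the associated half-currents behave symmetrically — the asymmetry between $T^+(z)$ (a series in $z^{-1}$) and $T^-(z)$ (a series in $z^{+1}$), and the appearance of $\varphi^\pm_{j,0}^{\mp 1}$, require care to ensure no denominators outside $\BC[\vv,\vv^{-1}]$ sneak in — but this is handled exactly as in the finite-type argument of Proposition~\ref{comparison of integral forms quantum finite} combined with Lemma~\ref{remaining affRTT}, and I would relegate the routine bookkeeping to the reader as is done there.
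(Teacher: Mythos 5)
Your proposal is correct and takes essentially the same route as the paper: the paper likewise reduces the statement to a ``higher ef-modes'' proposition (Proposition~\ref{higher ef-modes}, proved by comparing matrix coefficients of~(\ref{all affRTT}) and inducting in $j$) together with the resulting Corollary~\ref{matching modes} identifying $E^{(r)}_{j,i+1},F^{(r)}_{i+1,j}$ with $\Upsilon^{-1}$ of iterated $\vv$-commutators of the Gauss entries. The only caveat is that the correct trigonometric commutator identity is $\tilde e^{\pm}_{j,i+1}(z)=\frac{[\tilde e^{(0)}_{i,i+1},\tilde e^{\pm}_{ji}(z)]_{\vv^{-1}}}{\vv-\vv^{-1}}$ (zero mode and a $\vv-\vv^{-1}$ denominator, not the first mode as in the Yangian case), but this constant emerges automatically from the $R$-matrix computation you describe.
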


The proof of Proposition~\ref{comparison of integral forms quantum} follows
immediately from Proposition~\ref{higher ef-modes} and
Corollary~\ref{matching modes} below. To state those, let us express
the matrix coefficients of $\wt{F}^\pm(z),\wt{G}^\pm(z),\wt{E}^\pm(z)$
as series in $z^{\mp 1}$ with coefficients in $\fU^\rtt_\vv(L\gl_n)$:
\begin{equation*}
\begin{split}
  & \tilde{e}^+_{ij}(z)=\sum_{r\geq 0} \tilde{e}^{(r)}_{ij}z^{-r},\
    \tilde{e}^-_{ij}(z)=\sum_{r<0} \tilde{e}^{(r)}_{ij}z^{-r},\\
  & \tilde{f}^+_{ij}(z)=\sum_{r>0} \tilde{f}^{(r)}_{ij}z^{-r},\
    \tilde{f}^-_{ij}(z)=\sum_{r\leq 0} \tilde{f}^{(r)}_{ij}z^{-r},\\
  & \tilde{g}^\pm_i(z)=\tilde{g}^\pm_i+\sum_{r>0} \tilde{g}^{(\pm r)}_i z^{\mp r}.
\end{split}
\end{equation*}

The following result generalizes~\cite[Proposition G.9]{ft}:

\begin{Prop}\label{higher ef-modes}
For any $1\leq j<i<n$, the following equalities hold in $U^\rtt_\vv(L\gl_n)$:
\begin{equation}\label{Gauss e-modes}
  \tilde{e}^+_{j,i+1}(z)=\frac{[\tilde{e}^{(0)}_{i,i+1}, \tilde{e}^+_{ji}(z)]_{\vv^{-1}}}{\vv-\vv^{-1}},\
  \tilde{e}^-_{j,i+1}(z)=\frac{[\tilde{e}^{(0)}_{i,i+1}, \tilde{e}^-_{ji}(z)]_{\vv^{-1}}}{\vv-\vv^{-1}},
\end{equation}
\begin{equation}\label{Gauss f-modes}
  \tilde{f}^+_{i+1,j}(z)=\frac{[\tilde{f}^+_{ij}(z),\tilde{f}^{(0)}_{i+1,i}]_{\vv}}{\vv^{-1}-\vv},\
  \tilde{f}^-_{i+1,j}(z)=\frac{[\tilde{f}^-_{ij}(z),\tilde{f}^{(0)}_{i+1,i}]_{\vv}}{\vv^{-1}-\vv}.
\end{equation}
\end{Prop}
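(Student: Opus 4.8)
The plan is to reduce everything to the defining RTT relations~(\ref{all affRTT}) for the matrices $T^\pm(z)$, extracting appropriate matrix coefficients and then using the Gauss decomposition $T^\pm(z)=\wt F^\pm(z)\wt G^\pm(z)\wt E^\pm(z)$ to translate statements about $T^\pm(z)$ into statements about the Gauss coordinates $\tilde e^\pm_{ij}(z)$, $\tilde f^\pm_{ij}(z)$, $\tilde g^\pm_i(z)$. Because~(\ref{all affRTT}) holds for \emph{all four} sign combinations $(\epsilon,\epsilon')$, and because the $R$-matrix $R_\trig(z,w)$ has the same form in each case, the computations for the $+$ and $-$ half-currents will be literally identical after the substitution $z^{-r}\rightsquigarrow z^{-r}$ over the appropriate range of $r$; so I would carry out the $\tilde e$-computation once and note that the $\tilde f$-computation is the transpose-type analogue (conjugating by the longest element, or directly by symmetry of~(\ref{all affRTT})).

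The key steps, in order: (1) Recall from the Gauss decomposition that, writing $T^\pm(z)=\wt F^\pm(z)\wt G^\pm(z)\wt E^\pm(z)$, one has explicit expressions for the upper-triangular entries $t^\pm_{j,i+1}(z)$ of $T^\pm(z)$ in terms of $\tilde g^\pm_k(z)$ and $\tilde e^\pm_{kl}(z)$; in particular $\tilde e^\pm_{j,i+1}(z)=$ (a correction of) $(\tilde g^\pm_j(z))^{-1}t^\pm_{j,i+1}(z)$ modulo lower-index Gauss coordinates, exactly as in the finite case~(\ref{Gauss ef-modes finite}) of subsection~\ref{ssec DJ finite}. (2) Pick out from~(\ref{all affRTT}) the matrix coefficients $\langle v_j\otimes v_i\,|\cdots|\,v_i\otimes v_{i+1}\rangle$ (and nearby ones), which after clearing the $R_\trig$-prefactors yield quadratic relations among $t^\pm_{ji}(z)$, $t^\pm_{i,i+1}(w)$, $t^\pm_{j,i+1}(z)$, $t^\pm_{ii}(w)$. (3) Substitute the Gauss decomposition into these relations; the $\tilde g$-factors and the lower Gauss coordinates will cancel or combine, and what survives is exactly the commutator identity $\tilde e^\pm_{j,i+1}(z)=\frac{1}{\vv-\vv^{-1}}[\tilde e^{(0)}_{i,i+1},\tilde e^\pm_{ji}(z)]_{\vv^{-1}}$. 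The cleanest bookkeeping is to extract the mode $\tilde e^{(0)}_{i,i+1}$ from $\tilde e^\pm_{i,i+1}(w)$ by sending $w\to\infty$ (for the $+$ currents) or taking the constant term (for the $-$ currents), which is precisely the split encoded in the definition of $e^\pm_i(z)$; this is why the statement naturally bundles together the ``$+$'' and ``$-$'' halves with the \emph{same} structure constant.

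The main obstacle will be step (3): disentangling the Gauss coordinates. Naively, the relation one gets from~(\ref{all affRTT}) is an identity among the $T$-entries, and expanding the Gauss decomposition produces many terms involving $\tilde f^\pm$, $\tilde g^\pm$, and lower $\tilde e^\pm_{kl}$ with $j<k\le l\le i$. One must argue inductively — fixing $i$ and performing a decreasing induction on $j$, exactly as in Corollary~\ref{matching higher roots} — so that all the ``lower'' Gauss coordinates appearing have already been identified with iterated $\vv$-commutators, and the cross-terms telescope. A convenient shortcut, which I expect to use, is to first prove the \emph{finite-type} analogue (Proposition~\ref{higher finite Gauss}, essentially already available) and then promote it to the loop algebra via the embedding $\iota\colon\fU^\rtt_\vv(\gl_n)\hookrightarrow\fU^\rtt_\vv(L\gl_n)$ of Lemma~\ref{embedding_quantum} combined with an analysis of the $z$-dependence: the key point is that the relation~(\ref{all affRTT}) is ``homogeneous in the spectral parameter'' in the sense that the commutator with $\tilde e^{(0)}_{i,i+1}$ (a zero-mode, spectral-parameter-independent element) preserves the grading by powers of $z$, so the loop identity follows from its $z^0$-specialization together with the RTT relations governing how zero-modes act on the currents $\tilde e^\pm_{ji}(z)$. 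I would present the $\tilde e$-case in full and remark that~(\ref{Gauss f-modes}) follows by the evident symmetry (applying the anti-automorphism swapping $T^+\leftrightarrow T^-$ and transposing, under which $\tilde e^\pm_{ij}(z)\leftrightarrow\tilde f^\mp_{ji}(z)$).
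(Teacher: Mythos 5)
Your primary strategy (steps (1)--(3)) is essentially the paper's proof: for each fixed $i$, one extracts the matrix coefficients $\langle v_j\otimes v_i|\cdots|v_i\otimes v_{i+1}\rangle$ of~(\ref{all affRTT}) for the appropriate $(\epsilon,\epsilon')$, evaluates the coefficient of $w^1$ (resp.\ $z^0$ for the $f$-identities) to isolate the zero mode $t^{+}_{i,i+1}[0]=t^+_{ii}[0]\tilde e^{(0)}_{i,i+1}$ (resp.\ $t^-_{i+1,i}[0]=\tilde f^{(0)}_{i+1,i}t^-_{ii}[0]$), obtains $t^\pm_{j,i+1}(z)=\frac{1}{\vv-\vv^{-1}}[\tilde e^{(0)}_{i,i+1},t^\pm_{ji}(z)]_{\vv^{-1}}$, and then substitutes the Gauss decomposition and inducts on $j$ so that the cross-terms involving $\tilde e^\pm_{ki}(z)$ with $k<j$ telescope. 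Two corrections, though. First, the induction on $j$ must be \emph{increasing}, not decreasing: the extra terms in the Gauss decomposition of $t^\pm_{ji}(z)$ involve $\tilde f^\pm_{jk}(z)\tilde g^\pm_k(z)\tilde e^\pm_{ki}(z)$ with $k<j$, so it is the smaller values of $j$ that must already be settled (your reference to Corollary~\ref{matching higher roots}, which is a decreasing induction with base $j=i$, conflates two different inductions).

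Second, and more seriously, the ``convenient shortcut'' you expect to use would fail. The embedding $\iota\colon\fU^\rtt_\vv(\gl_n)\hookrightarrow\fU^\rtt_\vv(L\gl_n)$ sends $t^\pm_{ij}\mapsto t^\pm_{ij}[0]$ and therefore only reaches the zero modes of the loop currents; the finite-type identity of Proposition~\ref{higher finite Gauss} is the $z$-independent shadow of~(\ref{Gauss e-modes},~\ref{Gauss f-modes}) and cannot be ``promoted'' to an identity for all modes of $\tilde e^\pm_{ji}(z)$ without re-running the full RTT computation — the claim that the loop identity ``follows from its $z^0$-specialization'' is precisely what needs proof, and the homogeneity of the commutator with a zero mode does not supply the coefficients. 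Indeed the paper's logical order is the reverse of yours: the finite case is \emph{deduced from} the loop case via $\iota$. Finally, your dispatch of~(\ref{Gauss f-modes}) by an anti-automorphism swapping $T^+\leftrightarrow T^-$ and transposing would need that anti-automorphism to be exhibited and checked against $R_\trig$ (which is not symmetric under simultaneous transposition in both factors); the paper simply runs the parallel computation with the roles of rows and columns, and of the coefficients of $w^1$ and $z^0$, exchanged. If you drop the shortcut, fix the induction direction, and either verify the symmetry or do the $f$-case directly, your argument is the paper's.
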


\begin{proof}
For any $1\leq i<n$, we proceed by an increasing induction in $j$.

\medskip
\noindent
$\bullet$
\emph{Verification of the first formula in~(\ref{Gauss e-modes})}.

Comparing the matrix coefficients $\langle v_j\otimes v_i|\cdots|v_i\otimes v_{i+1}\rangle$
of both sides of the equality~(\ref{all affRTT}) with $\epsilon=\epsilon'=+$, we get
\begin{equation*}
\begin{split}
   & (z-w)t^+_{ji}(z)t^+_{i,i+1}(w)+(\vv-\vv^{-1})zt^+_{ii}(z)t^+_{j,i+1}(w)=\\
   & (z-w)t^+_{i,i+1}(w)t^+_{ji}(z)+(\vv-\vv^{-1})wt^+_{ii}(w)t^+_{j,i+1}(z).
\end{split}
\end{equation*}
Evaluating the terms with $w^1$ in both sides of this equality, we find
\begin{equation*}
   -t^+_{ji}(z)t^+_{i,i+1}[0]=
   -t^+_{i,i+1}[0]t^+_{ji}(z)+(\vv-\vv^{-1})t^+_{ii}[0]t^+_{j,i+1}(z).
\end{equation*}
Note that $t^+_{ji}(z)t^+_{ii}[0]=\vv^{-1} t^+_{ii}[0]t^+_{ji}(z)$.
To see the latter, we compare the matrix coefficients
$\langle v_j\otimes v_i|\cdots|v_i\otimes v_i\rangle$
of both sides of the equality~(\ref{all affRTT}) with
$\epsilon=\epsilon'=+$, and then evaluate the terms with
$w^1$ as above. Combining this with
  $t^+_{i,i+1}[0]=\tilde{g}^+_i\tilde{e}^{(0)}_{i,i+1}=t^+_{ii}[0]\tilde{e}^{(0)}_{i,i+1}$,
we deduce
\begin{equation}\label{t+ generators upper}
  t^+_{j,i+1}(z)=\frac{[\tilde{e}^{(0)}_{i,i+1},t^+_{ji}(z)]_{\vv^{-1}}}{\vv-\vv^{-1}}.
\end{equation}
Recall that
\begin{equation*}
\begin{split}
   & t^+_{ji}(z)=\tilde{g}^+_j(z)\tilde{e}^+_{ji}(z)+
     \sum_{1\leq k\leq j-1}\tilde{f}^+_{jk}(z)\tilde{g}^+_k(z)\tilde{e}^+_{ki}(z),\\
   & t^+_{j,i+1}(z)=\tilde{g}^+_j(z)\tilde{e}^+_{j,i+1}(z)+
     \sum_{1\leq k\leq j-1}\tilde{f}^+_{jk}(z)\tilde{g}^+_k(z)\tilde{e}^+_{k,i+1}(z).
\end{split}
\end{equation*}
Let us further note that $\tilde{e}^{(0)}_{i,i+1}$ commutes with $\tilde{f}^+_{jk}(z)$
(since by the induction assumption the latter can be expressed via
$\tilde{f}^{(\bullet)}_{s,s-1}$ which clearly commute with $\tilde{e}^{(0)}_{i,i+1}$
for $s\leq j$) and with $\tilde{g}^+_k(z)$ for $k\leq j$. By the induction assumption
  $\frac{[\tilde{e}^{(0)}_{i,i+1},\tilde{e}^+_{ki}(z)]_{\vv^{-1}}}{\vv-\vv^{-1}}=
   \tilde{e}^+_{k,i+1}(z)$
for $k<j$. Hence, we get
\begin{equation*}
  \tilde{g}^+_j(z)\tilde{e}^+_{j,i+1}(z)+
  \sum_{k=1}^{j-1}\tilde{f}^+_{jk}(z)\tilde{g}^+_k(z)\tilde{e}^+_{k,i+1}(z)=
  \frac{\tilde{g}^+_j(z)[\tilde{e}^{(0)}_{i,i+1},\tilde{e}^+_{ji}(z)]_{\vv^{-1}}}{\vv-\vv^{-1}}+
  \sum_{k=1}^{j-1}\tilde{f}^+_{jk}(z)\tilde{g}^+_k(z)\tilde{e}^+_{k,i+1}(z),
\end{equation*}
which implies the first equality in~(\ref{Gauss e-modes}).

\medskip
\noindent
$\bullet$
\emph{Verification of the second formula in~(\ref{Gauss e-modes})}.

Comparing the matrix coefficients
$\langle v_j\otimes v_i|\cdots|v_i\otimes v_{i+1}\rangle$ of both sides
of the equality~(\ref{all affRTT}) with $\epsilon=-,\epsilon'=+$, we get
\begin{equation*}
\begin{split}
   & (z-w)t^-_{ji}(z)t^+_{i,i+1}(w)+(\vv-\vv^{-1})zt^-_{ii}(z)t^+_{j,i+1}(w)=\\
   & (z-w)t^+_{i,i+1}(w)t^-_{ji}(z)+(\vv-\vv^{-1})wt^+_{ii}(w)t^-_{j,i+1}(z).
\end{split}
\end{equation*}
Evaluating the terms with $w^1$ in both sides of this equality, we find
\begin{equation*}
   -t^-_{ji}(z)t^+_{i,i+1}[0]=
   -t^+_{i,i+1}[0]t^-_{ji}(z)+(\vv-\vv^{-1})t^+_{ii}[0]t^-_{j,i+1}(z).
\end{equation*}
Note that $t^-_{ji}(z)t^+_{ii}[0]=\vv^{-1} t^+_{ii}[0]t^-_{ji}(z)$
(which follows by comparing the matrix coefficients
$\langle v_j\otimes v_i|\cdots|v_i\otimes v_i\rangle$ of both sides
of the equality~(\ref{all affRTT}) with $\epsilon=-,\epsilon'=+$, and
then evaluating the terms with $w^1$ as above). Combining this with
  $t^+_{i,i+1}[0]=\tilde{g}^+_i\tilde{e}^{(0)}_{i,i+1}=t^+_{ii}[0]\tilde{e}^{(0)}_{i,i+1}$,
we obtain
\begin{equation}\label{t- generators upper}
  t^-_{j,i+1}(z)=\frac{[\tilde{e}^{(0)}_{i,i+1},t^-_{ji}(z)]_{\vv^{-1}}}{\vv-\vv^{-1}}.
\end{equation}
This implies the second equality in~(\ref{Gauss e-modes}) via
the same inductive arguments as above.

\medskip
\noindent
$\bullet$
\emph{Verification of the first formula in~(\ref{Gauss f-modes})}.

Comparing the matrix coefficients
$\langle v_{i+1}\otimes v_i|\cdots|v_i\otimes v_j\rangle$ of both sides
of the equality~(\ref{all affRTT}) with $\epsilon=-,\epsilon'=+$, we get
\begin{equation*}
\begin{split}
   & (z-w)t^-_{i+1,i}(z)t^+_{ij}(w)+(\vv-\vv^{-1})wt^-_{ii}(z)t^+_{i+1,j}(w)=\\
   & (z-w)t^+_{ij}(w)t^-_{i+1,i}(z)+(\vv-\vv^{-1})zt^+_{ii}(w)t^-_{i+1,j}(z).
\end{split}
\end{equation*}
Evaluating the terms with $z^0$ in both sides of this equality, we find
\begin{equation*}
   -t^-_{i+1,i}[0]t^+_{ij}(w)+(\vv-\vv^{-1})t^-_{ii}[0]t^+_{i+1,j}(w)=
   -t^+_{ij}(w)t^-_{i+1,i}[0].
\end{equation*}
Note that $t^-_{ii}[0]t^+_{i+1,j}(z)=t^+_{i+1,j}(z)t^-_{ii}[0]$ and
$t^-_{ii}[0]t^+_{ij}(z)=\vv t^+_{ij}(z)t^-_{ii}[0]$. To see these equalities,
we compare the matrix coefficients
$\langle v_{i+1}\otimes v_i|\cdots|v_j\otimes v_i\rangle$ and
$\langle v_i\otimes v_i|\cdots|v_j\otimes v_i\rangle$ of both sides
of the equality~(\ref{all affRTT}) with $\epsilon=+,\epsilon'=-$, and
then evaluate the terms with $w^0$ as above. Combining this with
  $t^-_{i+1,i}[0]=\tilde{f}^{(0)}_{i+1,i}\tilde{g}^-_i=
   \tilde{f}^{(0)}_{i+1,i}t^-_{ii}[0]$,
we deduce
\begin{equation}\label{t+ generators lower}
  t^+_{i+1,j}(w)=\frac{[t^+_{ij}(w),\tilde{f}^{(0)}_{i+1,i}]_{\vv}}{\vv^{-1}-\vv}.
\end{equation}
Recall that
\begin{equation*}
\begin{split}
   & t^+_{ij}(w)=\tilde{f}^+_{ij}(w)\tilde{g}^+_j(w)+
     \sum_{1\leq k\leq j-1}\tilde{f}^+_{ik}(w)\tilde{g}^+_k(w)\tilde{e}^+_{kj}(w),\\
   & t^+_{i+1,j}(w)=\tilde{f}^+_{i+1,j}(w)\tilde{g}^+_j(w)+
     \sum_{1\leq k\leq j-1}\tilde{f}^+_{i+1,k}(w)\tilde{g}^+_k(w)\tilde{e}^+_{kj}(w).
\end{split}
\end{equation*}
We further note that $\tilde{f}^{(0)}_{i+1,i}$ commutes with
$\tilde{e}^+_{kj}(z)$ (since by the induction assumption the latter can be
expressed via $\tilde{e}^{(\bullet)}_{s-1,s}$ which clearly commute with
$\tilde{f}^{(0)}_{i+1,i}$ for $s\leq j$) and with $\tilde{g}^+_k(w)$ for $k\leq j$.
By the induction assumption
  $\frac{[\tilde{f}^+_{ik}(w),\tilde{f}^{(0)}_{i+1,i}]_{\vv}}{\vv^{-1}-\vv}=\tilde{f}^+_{i+1,k}(w)$
for $k<j$. Hence, we finally get
\begin{equation*}
\begin{split}
  & \tilde{f}^+_{i+1,j}(w)\tilde{g}^+_j(w)+
    \sum_{1\leq k\leq j-1}\tilde{f}^+_{i+1,k}(w)\tilde{g}^+_k(w)\tilde{e}^+_{kj}(w)=\\
  & \frac{[\tilde{f}^+_{ij}(w),\tilde{f}^{(0)}_{i+1,i}]_{\vv}\cdot \tilde{g}^+_j(w)}{\vv^{-1}-\vv}+
    \sum_{1\leq k\leq j-1}\tilde{f}^+_{i+1,k}(w)\tilde{g}^+_k(w)\tilde{e}^+_{kj}(w),
\end{split}
\end{equation*}
which implies the first equality in~(\ref{Gauss f-modes}).

\medskip
\noindent
$\bullet$
\emph{Verification of the second formula in~(\ref{Gauss f-modes})}.

Comparing the matrix coefficients
$\langle v_{i+1}\otimes v_i|\cdots|v_i\otimes v_j\rangle$ of both sides of
the equality~(\ref{all affRTT}) with $\epsilon=\epsilon'=-$, we get
\begin{equation*}
\begin{split}
   & (z-w)t^-_{i+1,i}(z)t^-_{ij}(w)+(\vv-\vv^{-1})wt^-_{ii}(z)t^-_{i+1,j}(w)=\\
   & (z-w)t^-_{ij}(w)t^-_{i+1,i}(z)+(\vv-\vv^{-1})zt^-_{ii}(w)t^-_{i+1,j}(z).
\end{split}
\end{equation*}
Evaluating the terms with $z^0$ in both sides of this equality, we find
\begin{equation*}
   -t^-_{i+1,i}[0]t^-_{ij}(w)+(\vv-\vv^{-1})t^-_{ii}[0]t^-_{i+1,j}(w)=
   -t^-_{ij}(w)t^-_{i+1,i}[0].
\end{equation*}
Note that $t^-_{ii}[0]t^-_{i+1,j}(z)=t^-_{i+1,j}(z)t^-_{ii}[0]$ and
$t^-_{ii}[0]t^-_{ij}(z)=\vv t^-_{ij}(z)t^-_{ii}[0]$. To see these
equalities, we compare the matrix coefficients
$\langle v_{i+1}\otimes v_i|\cdots|v_j\otimes v_i\rangle$ and
$\langle v_i\otimes v_i|\cdots|v_j\otimes v_i\rangle$ of both sides
of the equality~(\ref{all affRTT}) with $\epsilon=\epsilon'=-$, and
then evaluate the terms with $w^0$ as above. Combining this with
$t^-_{i+1,i}[0]=\tilde{f}^{(0)}_{i+1,i}\tilde{g}^-_i=\tilde{f}^{(0)}_{i+1,i}t^-_{ii}[0]$,
we obtain
\begin{equation}\label{t- generators lower}
  t^-_{i+1,j}(w)=\frac{[t^-_{ij}(w),\tilde{f}^{(0)}_{i+1,i}]_{\vv}}{\vv^{-1}-\vv}.
\end{equation}
This implies the second equality in~(\ref{Gauss f-modes})
via the same inductive arguments as above.

\medskip
This completes our proof of Proposition~\ref{higher ef-modes}.
\end{proof}

\begin{Cor}
For any $1\leq j\leq i<n$ and $r\in \BZ$, the following equalities hold:
\begin{equation}\label{Gauss Matrix Entries}
\begin{split}
  & \tilde{e}^{(r)}_{j,i+1}:=
    (\vv-\vv^{-1})^{j-i}[\tilde{e}^{(0)}_{i,i+1},\cdots,[\tilde{e}^{(0)}_{j+1,j+2},\tilde{e}^{(r)}_{j,j+1}]_{\vv^{-1}}\cdots]_{\vv^{-1}},\\
  & \tilde{f}^{(r)}_{i+1,j}:=
    (\vv^{-1}-\vv)^{j-i}[\cdots[\tilde{f}^{(r)}_{j+1,j},\tilde{f}^{(0)}_{j+2,j+1}]_{\vv},\cdots,\tilde{f}^{(0)}_{i+1,i}]_{\vv}.
\end{split}
\end{equation}
\end{Cor}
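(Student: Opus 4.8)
The plan is to derive the Corollary from Proposition~\ref{higher ef-modes} by extracting Fourier modes and then iterating. First, note that $\tilde{e}^{(0)}_{i,i+1}$ is a single fixed element, namely the $z^{0}$-coefficient of $\tilde{e}^{+}_{i,i+1}(z)$; in particular it may be pulled outside the extraction of the coefficient of $z^{-r}$. Since $\tilde{e}^{+}_{ji}(z)=\sum_{r\geq 0}\tilde{e}^{(r)}_{ji}z^{-r}$ and $\tilde{e}^{-}_{ji}(z)=\sum_{r<0}\tilde{e}^{(r)}_{ji}z^{-r}$, comparing the coefficients of $z^{-r}$ in the two equalities of~(\ref{Gauss e-modes}) (using the $+$-version for $r\geq 0$ and the $-$-version for $r<0$) gives, for all $1\leq j<i<n$ and $r\in\BZ$,
\begin{equation*}
  \tilde{e}^{(r)}_{j,i+1}=\frac{[\tilde{e}^{(0)}_{i,i+1},\tilde{e}^{(r)}_{ji}]_{\vv^{-1}}}{\vv-\vv^{-1}}
  \qquad\text{and, likewise from~(\ref{Gauss f-modes}),}\qquad
  \tilde{f}^{(r)}_{i+1,j}=\frac{[\tilde{f}^{(r)}_{ij},\tilde{f}^{(0)}_{i+1,i}]_{\vv}}{\vv^{-1}-\vv},
\end{equation*}
where for the second identity one uses $\tilde{f}^{+}_{ij}(z)=\sum_{r>0}\tilde{f}^{(r)}_{ij}z^{-r}$ and $\tilde{f}^{-}_{ij}(z)=\sum_{r\leq 0}\tilde{f}^{(r)}_{ij}z^{-r}$ (so that $r>0$ and $r\leq 0$ together exhaust $\BZ$).

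Next I would fix $j$ and $r$ and induct on $i\geq j$, that is, on the number $i-j$ of nested brackets. The base case $i=j$ is the tautology $\tilde{e}^{(r)}_{j,j+1}=\tilde{e}^{(r)}_{j,j+1}$ with empty iterated commutator and scalar $(\vv-\vv^{-1})^{0}=1$. For the inductive step, the mode identity above rewrites $\tilde{e}^{(r)}_{j,i+1}=(\vv-\vv^{-1})^{-1}[\tilde{e}^{(0)}_{i,i+1},\tilde{e}^{(r)}_{j,i}]_{\vv^{-1}}$; substituting the induction hypothesis for $\tilde{e}^{(r)}_{j,i}$, which contributes the nested $\vv^{-1}$-commutator with outer layers $\tilde{e}^{(0)}_{j+1,j+2},\ldots,\tilde{e}^{(0)}_{i-1,i}$ and scalar $(\vv-\vv^{-1})^{j-i+1}$, and collecting scalars yields exactly $(\vv-\vv^{-1})^{j-i}$ times the asserted nested commutator with the one additional outer layer $[\tilde{e}^{(0)}_{i,i+1},-]_{\vv^{-1}}$. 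The argument for $\tilde{f}^{(r)}_{i+1,j}$ is word-for-word the same, with all $\vv^{-1}$-brackets replaced by $\vv$-brackets, the new generator $\tilde{f}^{(0)}_{i+1,i}$ inserted on the right of the bracket rather than on the left, and the scalar $(\vv^{-1}-\vv)^{j-i}$. This is the exact trigonometric analogue of Corollary~\ref{explicit modes yangian}, which is deduced in the same fashion from Proposition~\ref{higher ef-modes yangian}.

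I do not anticipate a genuine obstacle: everything is bookkeeping once Proposition~\ref{higher ef-modes} is in hand. The only two points worth stating explicitly are (i) that $\tilde{e}^{(0)}_{i,i+1}$ and $\tilde{f}^{(0)}_{i+1,i}$ are individual modes and hence behave as constants under the coefficient-extraction, and (ii) that the positive and negative half-currents of Proposition~\ref{higher ef-modes} must be combined to reach all $r\in\BZ$ --- the $+$-part supplying $r\geq 0$ (resp.\ $r>0$ for the $\tilde{f}$'s) and the $-$-part supplying the rest.
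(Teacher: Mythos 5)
Your proposal is correct and is exactly the intended deduction: the paper states this Corollary immediately after Proposition~\ref{higher ef-modes} with no written proof, the point being precisely that one extracts the coefficient of $z^{-r}$ from the half-current identities (combining the $+$ and $-$ versions to cover all $r\in\BZ$) and iterates from $i$ down to $j$. Your bookkeeping of the scalars $(\vv-\vv^{-1})^{j-i}$ and $(\vv^{-1}-\vv)^{j-i}$ and of the mode ranges for $\tilde{e}^{\pm}$ versus $\tilde{f}^{\pm}$ is accurate.
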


Combining these explicit formulas with~(\ref{Ding-Frenkel formulas}), we obtain

\begin{Cor}\label{matching modes}
For any $1\leq j\leq i<n$ and $r\in \BZ$, we have the following  equalities:
\begin{equation}\label{matching modes formulas}
  E^{(r)}_{j,i+1}=(-1)^{\delta_{r<0}}\vv^{-jr}\cdot \Upsilon^{-1}(\tilde{e}^{(r)}_{j,i+1}),\
  F^{(r)}_{i+1,j}=(-1)^{\delta_{r>0}}\vv^{-jr}\cdot \Upsilon^{-1}(\tilde{f}^{(r)}_{i+1,j}).
\end{equation}
\end{Cor}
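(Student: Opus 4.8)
The plan is to apply the isomorphism $\Upsilon$ of Theorem~\ref{Ding-Frenkel affine} directly to the defining expressions~(\ref{PBW basis elements}) for $E^{(r)}_{j,i+1}$ and $F^{(r)}_{i+1,j}$ and to recognize the outcome via the iterated-bracket formula~(\ref{Gauss Matrix Entries}). Since $\Upsilon$ is a $\BC(\vv)$-algebra homomorphism, it sends each $\vv$-commutator $[\,\cdot\,,\,\cdot\,]_x$ to the $\vv$-commutator of the images and all scalars pull out, so the whole computation reduces to describing $\Upsilon$ on the individual Drinfeld modes $e_{i,r}$ and $f_{i,r}$. The latter is obtained by extracting the coefficient of $z^{-r}$ from both sides of~(\ref{Ding-Frenkel formulas}), treating the half-currents $e^+_i,f^+_i$ (modes $r\geq 0$, resp.\ $r>0$) and $e^-_i,f^-_i$ (modes $r<0$, resp.\ $r\leq 0$) separately and remembering the sign in $e^-_i(z)=-\sum_{r<0}e_{i,r}z^{-r}$ and $f^-_i(z)=-\sum_{r\leq 0}f_{i,r}z^{-r}$, together with the rescaling $z\mapsto\vv^iz$ in~(\ref{Ding-Frenkel formulas}).

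This yields the uniform identities
\begin{equation*}
  \Upsilon\bigl((\vv-\vv^{-1})e_{i,r}\bigr)=(-1)^{\delta_{r<0}}\vv^{-ir}\,\tilde{e}^{(r)}_{i,i+1},\qquad
  \Upsilon\bigl((\vv^{-1}-\vv)f_{i,r}\bigr)=(-1)^{\delta_{r>0}}\vv^{-ir}\,\tilde{f}^{(r)}_{i+1,i},
\end{equation*}
valid for all $r\in\BZ$; this is precisely the case $j=i$ of the Corollary and serves as the base of the computation. The point requiring care is that on the $f$-side the mode $r=0$ lies in the negative half-current $f^-_i$, so $\Upsilon(f_{i,0})=-(\vv-\vv^{-1})^{-1}\tilde{f}^{(0)}_{i+1,i}$ carries an extra minus sign; tracking these signs is the only real subtlety, and I expect it to be the main (and quite mild) obstacle.

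For $j<i$ one substitutes the above into the $(i-j+1)$-fold nested brackets of~(\ref{PBW basis elements}). The iterated brackets there have the same shape as those in~(\ref{Gauss Matrix Entries}) index by index: the innermost $e_{j,r}$ (resp.\ $f_{j,r}$) maps to $\tfrac{(-1)^{\delta_{r<0}}\vv^{-jr}}{\vv-\vv^{-1}}\tilde{e}^{(r)}_{j,j+1}$ (resp.\ $-\tfrac{(-1)^{\delta_{r>0}}\vv^{-jr}}{\vv-\vv^{-1}}\tilde{f}^{(r)}_{j+1,j}$), while each of the $i-j$ mode-zero generators $e_{k,0}$ (resp.\ $f_{k,0}$), $j<k\leq i$, maps to $\tfrac{1}{\vv-\vv^{-1}}\tilde{e}^{(0)}_{k,k+1}$ (resp.\ $-\tfrac{1}{\vv-\vv^{-1}}\tilde{f}^{(0)}_{k+1,k}$). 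Pulling out all these scalars together with the outer factor $\vv-\vv^{-1}$ (resp.\ $\vv^{-1}-\vv$) and invoking~(\ref{Gauss Matrix Entries}), which reads the same bracket as $(\vv-\vv^{-1})^{i-j}\tilde{e}^{(r)}_{j,i+1}$ (resp.\ $(\vv^{-1}-\vv)^{i-j}\tilde{f}^{(r)}_{i+1,j}$), one sees that all powers of $\vv-\vv^{-1}$ cancel and that in the $f$-case the two factors of $(-1)^{i-j}$ --- one from the $i-j$ signs of the $\Upsilon(f_{k,0})$, one from $(\vv^{-1}-\vv)^{i-j}$ --- cancel against each other; what remains is
\begin{equation*}
  \Upsilon(E^{(r)}_{j,i+1})=(-1)^{\delta_{r<0}}\vv^{-jr}\,\tilde{e}^{(r)}_{j,i+1},\qquad
  \Upsilon(F^{(r)}_{i+1,j})=(-1)^{\delta_{r>0}}\vv^{-jr}\,\tilde{f}^{(r)}_{i+1,j}.
\end{equation*}
Applying $\Upsilon^{-1}$ and moving the scalar back out gives~(\ref{matching modes formulas}), and no further input beyond~(\ref{Ding-Frenkel formulas}) and~(\ref{Gauss Matrix Entries}) is needed.
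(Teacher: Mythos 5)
Your proposal is correct and follows exactly the route the paper intends: the paper derives the corollary by "combining" the iterated-bracket identities~(\ref{Gauss Matrix Entries}) with the Ding--Frenkel assignment~(\ref{Ding-Frenkel formulas}), and your write-up simply makes explicit the mode-by-mode extraction from the half-currents (including the sign from $f^-_i(z)=-\sum_{r\leq 0}f_{i,r}z^{-r}$ at $r=0$) and the cancellation of the $(\vv-\vv^{-1})$-powers and signs. No gap.
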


We now apply Proposition~\ref{comparison of integral forms quantum} to
construct bases of $\fU_\vv(L\gl_n)$ and $\fU_\vv(L\ssl_n)$.
It will be convenient to relabel the Cartan generators via
  $\varphi_{i,r}:=
   \begin{cases}
     \varphi^+_{i,r}, & \text{if } r\geq 0 \\
     \varphi^-_{i,r}, & \text{if } r<0
   \end{cases},$
  $\psi_{i,r}:=
   \begin{cases}
     \psi^+_{i,r}, & \text{if } r\geq 0 \\
     \psi^-_{i,r}, & \text{if } r<0
   \end{cases},$
so that $(\varphi_{i,0})^{-1}=\varphi^-_{i,0},(\psi_{i,0})^{-1}=\psi^-_{i,0}$.
We order the elements $\{E^{(r)}_{j,i+1}\}_{1\leq j\leq i<n}^{r\in \BZ}$ in
the following way: $E^{(r)}_{j,i+1}\leq E^{(r')}_{j',i'+1}$ if $j<j'$, or $j=j',i<i'$,
or $j=j',i=i',r\leq r'$. Likewise, we order $\{F^{(r)}_{i+1,j}\}_{1\leq j\leq i<n}^{r\in \BZ}$
so that $F^{(r)}_{i+1,j}\geq F^{(r')}_{i'+1,j'}$ if $j<j'$, or $j=j',i<i'$, or
$j=j',i=i',r\leq r'$. Finally, we choose any total ordering of the Cartan generators
$\{\varphi_{j,s}\}_{1\leq j\leq n}^{s\in \BZ}$ of $\fU_\vv(L\gl_n)$
(or $\{\psi_{i,s}\}_{1\leq i<n}^{s\in \BZ}$ of $\fU_\vv(L\ssl_n)$).
Having specified these three total orderings, elements $F\cdot H\cdot E$ with $F,E,H$
being ordered monomials in $\{F^{(r)}_{i+1,j}\}_{1\leq j\leq i<n}^{r\in \BZ}$,
$\{E^{(r)}_{j,i+1}\}_{1\leq j\leq i<n}^{r\in \BZ}$, and the Cartan generators
$\{\varphi_{j,s}\}_{1\leq j\leq n}^{s\in \BZ}$ of $\fU_\vv(L\gl_n)$
(or $\{\psi_{i,s}\}_{1\leq i<n}^{s\in \BZ}$ of $\fU_\vv(L\ssl_n)$), respectively,
are called the \emph{ordered PBWD monomials} (in the corresponding generators).

\begin{Thm}\label{PBW basis coordinate affine}
(a) The ordered PBWD monomials in
  $\{F^{(r)}_{i+1,j}, \varphi_{k,s}, E^{(r)}_{j,i+1}\}_{1\leq j\leq i<n, 1\leq k\leq n}^{r,s\in \BZ}$
form a basis of a free $\BC[\vv,\vv^{-1}]$-module $\fU_\vv(L\gl_n)$.

\noindent
(b) The ordered PBWD monomials in
  $\{F^{(r)}_{i+1,j}, \psi_{k,s}, E^{(r)}_{j,i+1}\}_{1\leq j\leq i<n, 1\leq k< n}^{r,s\in \BZ}$
form a basis of a free $\BC[\vv,\vv^{-1}]$-module $\fU_\vv(L\ssl_n)$.
\end{Thm}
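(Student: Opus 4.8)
\textbf{Proof strategy for Theorem~\ref{PBW basis coordinate affine}.}
The plan is to deduce this from the already-established facts: the RTT PBW theorem for $\fU^\rtt_\vv(L\gl_n)$ of~\cite[Theorem 2.11]{gm}, the Ding--Frenkel isomorphism $\Upsilon$ (Theorem~\ref{Ding-Frenkel affine}), the identification $\fU_\vv(L\gl_n)=\Upsilon^{-1}(\fU^\rtt_\vv(L\gl_n))$ (Proposition~\ref{comparison of integral forms quantum}), and the comparison of root generators (Corollary~\ref{matching modes}). I would first treat part (a). Under $\Upsilon$, the generators $F^{(r)}_{i+1,j}$, $\varphi_{k,s}$, $E^{(r)}_{j,i+1}$ of $\fU_\vv(L\gl_n)$ are carried, up to invertible scalars $(-1)^{\delta}\vv^{-jr}$ and the obvious matching $\varphi^\pm_{j,\pm s}\mapsto \tilde g^{\pm}_j(\vv^jz)$-coefficients, to the Gauss-decomposition entries $\tilde f^{(r)}_{i+1,j}$, $\tilde g^{(\pm s)}_k$, $\tilde e^{(r)}_{j,i+1}$ of $\fU^\rtt_\vv(L\gl_n)$. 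So it suffices to prove that the ordered monomials in these Gauss generators form a $\BC[\vv,\vv^{-1}]$-basis of $\fU^\rtt_\vv(L\gl_n)$; pulling back along $\Upsilon$ then gives the claim, since the rescaling does not affect the monomial span or linear independence.

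For the RTT side I would argue in two steps: spanning and linear independence. \emph{Spanning}: the generators $t^\pm_{ij}[\pm r]$ can be recovered from the Gauss entries $\tilde f^{(\bullet)}_{ab}$, $\tilde g^{(\bullet)}_c$, $\tilde e^{(\bullet)}_{ab}$ via $T^\pm(z)=\wt F^\pm(z)\wt G^\pm(z)\wt E^\pm(z)$, and conversely the Gauss entries lie in $\fU^\rtt_\vv(L\gl_n)$ (they are the generators of $\fU_\vv(L\gl_n)$ transported by $\Upsilon$); hence the two generating sets generate the same $\BC[\vv,\vv^{-1}]$-algebra. Using the RTT relations~(\ref{all affRTT}) together with the Gauss-decomposition forms of $\wt F^\pm,\wt G^\pm,\wt E^\pm$, one derives straightening relations allowing any product of Gauss generators to be rewritten as a $\BC[\vv,\vv^{-1}]$-combination of ordered monomials $F\cdot H\cdot E$ — this is the standard ``triangular'' argument: commuting an $E$ past an $F$ produces Cartan terms plus lower-order corrections, and the matching formulas~(\ref{Gauss Matrix Entries}) reduce products of non-simple root generators to the listed ones. \emph{Linear independence}: it suffices, as in the proof of Theorem~\ref{PBW fkprw II}, to check independence after specializing $\vv$ to a generic complex number, or equivalently to compare graded/filtered characters. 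Concretely, I would invoke the RTT PBW theorem~\cite[Theorem 2.11]{gm}: the ordered monomials in $\{t^\pm_{ij}[\pm r]\}$ form a basis, and the change of generators from the $t$'s to the Gauss entries is \emph{filtered-triangular} with invertible leading terms (each $\tilde e^{(r)}_{j,i+1}\equiv t^+_{j,i+1}[r]+(\text{lower})$, $\tilde f^{(r)}_{i+1,j}\equiv t^-_{i+1,j}[r]+(\text{lower})$, $\tilde g^{(r)}_k\equiv t^\pm_{kk}[\pm r]+(\text{lower})$, with respect to a suitable length filtration on $\fU^\rtt_\vv(L\gl_n)$). Hence the transition matrix between the two families of ordered monomials is unitriangular over $\BC[\vv,\vv^{-1}]$, so one family is a basis iff the other is. This proves (a).

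For part (b) I would restrict to $\fU_\vv(L\ssl_n)\subset\fU_\vv(L\gl_n)$. Recall $U_\vv(L\ssl_n)$ is the subalgebra generated by $e_{i,r},f_{i,r},\psi^\pm_{i,\pm s}$, and there is a triangular decomposition $\fU_\vv(L\gl_n)\cong \fU^<_\vv(L\gl_n)\otimes \fU^0_\vv(L\gl_n)\otimes \fU^>_\vv(L\gl_n)$ compatible with the one for $\fU_\vv(L\ssl_n)$; the positive and negative parts of $\fU_\vv(L\ssl_n)$ and $\fU_\vv(L\gl_n)$ coincide (both are generated by the same $E^{(r)}_{j,i+1}$, resp.\ $F^{(r)}_{i+1,j}$), while the Cartan part of $\fU_\vv(L\ssl_n)$ is the subalgebra generated by $\psi_{i,s}=\psi^\pm_{i,\pm s}$, which are free polynomial-type generators by the $\gl_n$ case since $\psi^\pm_i(z)=(\varphi^\pm_i(z))^{-1}\varphi^\pm_{i+1}(\vv^{-1}z)$ expresses them triangularly in the $\varphi$'s (so the $\psi$-monomials are independent and their span is exactly the $\ssl_n$-Cartan part). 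Combining the basis of the positive/negative parts from (a) with this description of the Cartan part, and the triangular decomposition, yields the ordered PBWD monomials as a $\BC[\vv,\vv^{-1}]$-basis of $\fU_\vv(L\ssl_n)$.

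\textbf{Main obstacle.} The delicate point is the linear-independence half: one must make precise the ``length/degree filtration'' on $\fU^\rtt_\vv(L\gl_n)$ (or on $U_\vv(L\gl_n)$) with respect to which the change of generators from the $t^\pm_{ij}[\pm r]$ to the Gauss entries $\tilde e,\tilde f,\tilde g$ — and from the simple root vectors to the higher root vectors $E^{(r)}_{j,i+1},F^{(r)}_{i+1,j}$ via~(\ref{Gauss Matrix Entries}) — is genuinely unitriangular, so that no $\BC[\vv,\vv^{-1}]$-coefficient is lost under specialization or under passing to the associated graded. This is essentially the content of~\cite[Theorem 6.8]{t} and the surrounding results (to which Remark~\ref{pbw for integral yangian} alludes in the rational case), and I would model the argument on that reference; the $\gl_n$-vs-$\ssl_n$ bookkeeping and the verification that the transition is unitriangular \emph{over the ring $\BC[\vv,\vv^{-1}]$, not merely over $\BC(\vv)$} is where the real care is needed.
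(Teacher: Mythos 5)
Your overall route --- transporting the problem to the RTT side via $\Upsilon$, Proposition~\ref{comparison of integral forms quantum} and Corollary~\ref{matching modes}, and working with the Gauss-decomposition entries $\tilde e^{(r)}_{j,i+1},\tilde f^{(r)}_{i+1,j},\tilde g^{(\bullet)}_k$ --- is the same as the paper's, and your treatment of part (b) is fine. But there is a genuine gap at the central step, which you compress into ``one derives straightening relations \dots this is the standard triangular argument.'' The entire content of the theorem is that the structure constants of the straightening of the mixed products $E^{(r)}_{j,i}F^{(s)}_{i',j'}$ lie in $\BC[\vv,\vv^{-1}]$ and not merely in $\BC(\vv)$: the relations one would naively commute with carry denominators (e.g.\ $[e_i(z),f_i(w)]=\frac{\delta_{ii'}}{\vv-\vv^{-1}}\delta(z/w)(\psi^+_i(z)-\psi^-_i(z))$, and the root vectors (\ref{PBW basis elements}) themselves involve factors $(\vv-\vv^{-1})^{\pm 1}$), so a generic ``$E$ past $F$ gives Cartan plus lower order'' argument only yields $\BC(\vv)$-coefficients. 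The paper's proof of exactly this point is a triple induction (on $r+s$, then on $j'$, then on $r$) that never commutes Gauss entries directly: it re-expresses $\tilde e^{(r)}_{ji}\tilde f^{(s)}_{i'j'}$ through the products $t^+_{ji}[r]\,t^+_{i'j'}[s]$ via (\ref{e vs t}) and (\ref{f vs t}), and then invokes the specific RTT consequence (\ref{ef vs t explicit}), whose coefficients are visibly integral, to close the induction. Without an argument of this kind your spanning step is unsubstantiated, and it is precisely where the theorem could fail.

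A secondary point: your linear-independence argument (a unitriangular transition from the Gow--Molev basis in the $t^\pm_{ij}[\pm r]$) is both more delicate than you indicate and unnecessary. It is delicate because the leading term of $t^+_{j,i+1}[r]$ in Gauss entries is $\tilde g^+_j\,\tilde e^{(r)}_{j,i+1}$ (an invertible but non-identity Cartan factor, so the transition is not literally unitriangular on generators), and because unitriangularity for monomials of length $\ge 2$ already presupposes the straightening you have not proved. It is unnecessary because linear independence over $\BC[\vv,\vv^{-1}]$ follows at once from linear independence over $\BC(\vv)$, which is Theorem~\ref{PBW for half-integral}(e), i.e.\ the PBWD theorem for $U_\vv(L\gl_n)$ from~\cite{t}; this is how the paper argues, so that the whole burden of the proof falls on the integrality of the straightening coefficients discussed above.
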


This result generalizes (and its proof is actually based on)~\cite[Theorems 2.15, 2.17, 2.19]{t}.
To recall these theorems in the full generality (which is needed for the further use),
let us generalize the elements
$\{E^{(r)}_{j,i+1}, F^{(r)}_{i+1,j}\}_{1\leq j\leq i<n}^{r\in \BZ}$ first.
For every pair $1\leq j\leq i<n$ and any $r\in \BZ$, we choose a \emph{decomposition}
$\unl{r}=(r_j,\ldots,r_i)\in \BZ^{i-j+1}$ such that $r=r_j+r_{j+1}+\ldots+r_i$.
We define
\begin{equation}\label{generalized PBW basis elements}
\begin{split}
  & E_{j,i+1}(\unl{r}):=
    (\vv-\vv^{-1})[e_{i,r_i},\cdots,[e_{j+1,r_{j+1}},e_{j,r_j}]_{\vv^{-1}}\cdots]_{\vv^{-1}},\\
  & F_{i+1,j}(\unl{r}):=
    (\vv^{-1}-\vv)[\cdots[f_{j,r_j},f_{j+1,r_{j+1}}]_{\vv},\cdots,f_{i,r_i}]_{\vv}.
\end{split}
\end{equation}
In the particular case $r_j=r,r_{j+1}=\ldots=r_i=0$, we recover
$E^{(r)}_{j,i+1},F^{(r)}_{i+1,j}$ of~(\ref{PBW basis elements}).

Let $U^<_\vv(L\gl_n)$ and $U^>_\vv(L\gl_n)$ be the $\BC(\vv)$-subalgebras
of $U_\vv(L\gl_n)$ generated by $\{f_{i,r}\}_{1\leq i<n}^{r\in \BZ}$ and
$\{e_{i,r}\}_{1\leq i<n}^{r\in \BZ}$, respectively. Let $\fU^<_\vv(L\gl_n)$
and $\fU^>_\vv(L\gl_n)$ be the $\BC[\vv,\vv^{-1}]$-subalgebras of
$\fU_\vv(L\gl_n)$ generated by $\{F^{(r)}_{i+1,j}\}_{1\leq j\leq i<n}^{r\in \BZ}$
and $\{E^{(r)}_{j,i+1}\}_{1\leq j\leq i<n}^{r\in \BZ}$, respectively.

\begin{Thm}[\cite{t}]\label{PBW for half-integral}
For any $1\leq j\leq i<n$ and $r\in \BZ$, choose a decomposition $\unl{r}$ as above.

\noindent
(a) The ordered PBWD monomials in $\{E_{j,i+1}(\unl{r})\}_{1\leq j\leq i<n}^{r\in \BZ}$
form a basis of a free $\BC[\vv,\vv^{-1}]$-module $\fU^>_\vv(L\gl_n)$.

\noindent
(b) The ordered PBWD monomials in $\{E_{j,i+1}(\unl{r})\}_{1\leq j\leq i<n}^{r\in \BZ}$
form a basis of a $\BC(\vv)$-vector space $U^>_\vv(L\gl_n)$.

\noindent
(c) The ordered PBWD monomials in $\{F_{i+1,j}(\unl{r})\}_{1\leq j\leq i<n}^{r\in \BZ}$
form a basis of a free $\BC[\vv,\vv^{-1}]$-module $\fU^<_\vv(L\gl_n)$.

\noindent
(d) The ordered PBWD monomials in $\{F_{i+1,j}(\unl{r})\}_{1\leq j\leq i<n}^{r\in \BZ}$
form a basis of a $\BC(\vv)$-vector space $U^<_\vv(L\gl_n)$.

\noindent
(e) The ordered PBWD monomials in
  $\{F^{(r)}_{i+1,j}, \varphi_{k,s}, E^{(r)}_{j,i+1}\}_{1\leq j\leq i<n, 1\leq k\leq n}^{r,s\in \BZ}$
form a basis of the quantum loop algebra $U_\vv(L\gl_n)$.
\end{Thm}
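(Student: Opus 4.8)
The plan is to reduce everything to the positive halves $U^>_\vv(L\gl_n)$ and $\fU^>_\vv(L\gl_n)$, deduce the negative halves by symmetry, and then assemble the whole algebra. There is a $\BC(\vv)$-antiautomorphism $\Omega$ of $U_\vv(L\gl_n)$ interchanging $e_{i,r}\leftrightarrow f_{i,r}$ and acting suitably on the $\varphi^\pm_{j,\pm s}$, which respects the relations~\eqref{Dr affine}; it swaps $U^>_\vv(L\gl_n)\leftrightarrow U^<_\vv(L\gl_n)$, carries $\fU^>_\vv(L\gl_n)$ onto $\fU^<_\vv(L\gl_n)$, and sends each $E_{j,i+1}(\unl{r})$ to a $\BC[\vv,\vv^{-1}]^{\times}$-multiple of $F_{i+1,j}(\unl{r})$, reversing the chosen orderings appropriately; hence~(c) and~(d) will follow from~(a) and~(b). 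For~(e) one notes that the relations~\eqref{Dr affine} already make multiplication $U^<_\vv(L\gl_n)\otimes_{\BC(\vv)}U^0_\vv(L\gl_n)\otimes_{\BC(\vv)}U^>_\vv(L\gl_n)\to U_\vv(L\gl_n)$ surjective (push the $\varphi$'s to the middle, the $f$'s to the left, the $e$'s to the right); its injectivity, i.e.\ the triangular decomposition, is standard, and together with~(b),~(d) and the evident monomial basis $\{\varphi_{k,s}\}$ of the commutative Cartan subalgebra $U^0_\vv(L\gl_n)$ it yields~(e).

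So the content lies in~(a) and~(b). For \emph{spanning} I would extract from the quadratic and Serre relations for the currents $e_i(z)$ in~\eqref{Dr affine} a \emph{straightening lemma}: whenever two PBWD generators $E_{j,i+1}(\unl{r})$, $E_{j',i'+1}(\unl{r}')$ appear in the wrong order, the product $E_{j',i'+1}(\unl{r}')\,E_{j,i+1}(\unl{r})$ rewrites as a $\BC[\vv,\vv^{-1}]$-linear combination of ordered monomials in the $E_{\bullet}(\bullet)$'s, of the same $\BZ^{n-1}_{\geq 0}$-degree and of no greater length, all structure constants being products of $\vv^{\pm 1}$ and $(\vv-\vv^{-1})$. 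A standard straightening (diamond-lemma) induction — on the number of factors, with a lexicographic refinement on the modes $r$ — then shows the ordered PBWD monomials span $U^>_\vv(L\gl_n)$ over $\BC(\vv)$ and $\fU^>_\vv(L\gl_n)$ over $\BC[\vv,\vv^{-1}]$; as a byproduct $\fU^>_\vv(L\gl_n)$ is independent of the choice of the decompositions in~\eqref{generalized PBW basis elements}, and contains every $E_{j,i+1}(\unl{r})$.

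For \emph{linear independence} I would pass to the shuffle realization of Section~\ref{ssec shuffle algebra}: the $\BC(\vv)$-algebra isomorphism $\Psi\colon U^>_\vv(L\gl_n)\iso S$ of Theorem~\ref{shuffle rational} and its integral refinement $\Psi\colon \fU^>_\vv(L\gl_n)\iso \fS$ of Theorem~\ref{shuffle integral}. Each $\Psi(E_{j,i+1}(\unl{r}))$ is, up to a power of $\vv$, the symmetrization of a single elementary rational function supported on one ``staircase'' colored pattern, so $\Psi$ applied to an ordered PBWD monomial is the shuffle product of such symmetrizations. The point is then to read off a leading term: specializing the shuffle variables along a generic family of geometric progressions, one progression per factor of the monomial, should single out for each ordered PBWD monomial a distinct monomial of the shuffle algebra, with unitriangular transition matrix in the chosen ordering. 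Over $\BC(\vv)$ this proves~(b); over $\BC[\vv,\vv^{-1}]$ the same unitriangularity, now matched against the explicit $\BC[\vv,\vv^{-1}]$-basis of $\fS$ from Proposition~\ref{key properties of integral shuffle}, proves~(a), and in particular the freeness of $\fU^>_\vv(L\gl_n)$ over $\BC[\vv,\vv^{-1}]$.

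The hard part will be precisely this last step: designing the shuffle-algebra leading-term/specialization map so that the images of the ordered PBWD monomials are genuinely unitriangular against an explicit basis, and — the delicate point — arranging this so that no denominators are ever introduced, so that the argument descends from $\BC(\vv)$ to $\BC[\vv,\vv^{-1}]$ and to the integral shuffle algebra $\fS$. The straightening lemma, the spanning induction, and the triangular-decomposition bookkeeping are, by comparison, routine consequences of the defining relations~\eqref{Dr affine}. This entire program is carried out in~\cite{t}.
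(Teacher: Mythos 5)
The paper does not actually prove this theorem: it is imported verbatim from \cite{t} (Theorems 2.15, 2.17, 2.19 there, as stated just above the theorem), so there is no in-paper argument to compare against. Your outline --- reducing (c),(d) to (a),(b) via the antiautomorphism swapping $e$'s and $f$'s (cf.\ Remark~\ref{opposite shuffle}), deducing (e) from the triangular decomposition, proving spanning by a straightening induction, and proving linear independence through the shuffle realization $\Psi$ together with specializations at geometric progressions giving a triangular transition matrix --- is precisely the strategy of the cited proof, including the genuinely delicate point you flag of keeping the argument denominator-free so that it descends from $U^>_\vv(L\gl_n)$ and $S^{(n)}$ to $\fU^>_\vv(L\gl_n)$ and $\fS^{(n)}$.
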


\begin{proof}[Proof of Theorem~\ref{PBW basis coordinate affine}]
Due to Theorem~\ref{PBW for half-integral},
it suffices to verify that all unordered products
  $E^{(r)}_{j,i+1}\varphi^\pm_{j',\pm s},
   \varphi^\pm_{j',\pm s}F^{(r)}_{i+1,j},
   E^{(r)}_{j,i+1}F^{(s)}_{i'+1,j'}$
are equal to $\BC[\vv,\vv^{-1}]$-linear combinations of the ordered PBWD
monomials. The verification for the first two cases is simple. Indeed, we
can always move $\varphi^\pm_{j',0}$ to the left or to the right acquiring
an appropriate power of $\vv$. As for the other Cartan generators, it is more
convenient to work with another choice of Cartan generators $h_{j',\pm s}$ defined via
  $\varphi^\pm_{j'}(z)=\varphi^\pm_{j',0}\exp(\sum_{s>0}h_{j',s}z^{\mp s})$.
These generators satisfy simple commutation relations:
  $[h_{j',s},e_{i,r}]=c(i,j',r,s)e_{i,r+s},
   [h_{j',s},f_{i,r}]=-c(i,j',r,s)f_{i,r+s}$ for
certain $c(i,j',r,s)\in \BC[\vv,\vv^{-1}]$. Therefore,
  $E^{(r)}_{j,i+1}h_{j',s}-h_{j',s}E^{(r)}_{j,i+1}$
is a $\BC[\vv,\vv^{-1}]$-linear combination of the terms of the form
$E_{j,i+1}(\unl{r+s})$ for various decompositions of $r+s$ into the sum of
$i-j+1$ integers, hence, the claim for $E^{(r)}_{j,i+1}h_{j',s}$.
The case of $h_{j',s}F^{(r)}_{i+1,j}$ is analogous.

Thus, it remains to verify that $E^{(r)}_{j,i}F^{(s)}_{i',j'}$ is a
$\BC[\vv,\vv^{-1}]$-linear combination of the ordered PBWD monomials.
First, let us note that if $j\geq i'$ or $j'\geq i$, then
  $E^{(r)}_{j,i}F^{(s)}_{i',j'}=F^{(s)}_{i',j'}E^{(r)}_{j,i}$
and the latter is already an ordered PBWD monomial. Hence, from now on
we shall assume $i'>j,i>j'$. There are four cases to consider:
  (1) $r\geq 0,s>0$, (2) $r<0,s>0$, (3) $r\geq 0, s\leq 0$, (4) $r<0, s\leq 0$.
For simplicity of the current exposition, we shall treat only the first case,
while the proof is similar in the remaining three cases. Thus, we assume
$r\geq 0,s>0$ from now on. The proof will proceed by an increasing induction
in $r+s$, then by an increasing induction in $j'$, and finally by
an increasing induction in $r$.

Our proof is based on Proposition~\ref{comparison of integral forms quantum}.
In particular, applying Corollary~\ref{matching modes} to $E^{(r)}_{j,i}F^{(s)}_{i',j'}$,
the question is reduced to the proof of the fact that
$\tilde{e}^{(r)}_{ji}\tilde{f}^{(s)}_{i'j'}$ is a $\BC[\vv,\vv^{-1}]$-linear
combination of monomials in the generators
  $\{\tilde{e}^{(\bullet)}_{\bullet,\bullet},
   \tilde{f}^{(\bullet)}_{\bullet,\bullet},
   \tilde{g}^\pm_\bullet, \tilde{g}^{(\bullet)}_{\bullet}\}$
(ordered accordingly).

Recall that
  $t^+_{ji}(z)=\tilde{g}^+_j(z)\tilde{e}^+_{ji}(z)+
  \sum_{k=1}^{j-1}\tilde{f}^+_{jk}(z)\tilde{g}^+_k(z)\tilde{e}^+_{ki}(z)$,
which immediately implies
\begin{equation}\label{e vs t}
  t^+_{ji}[r]=
  \tilde{g}^+_j\tilde{e}^{(r)}_{ji}+
  \sum_{0\leq r'<r} \tilde{g}^{(r-r')}_j\tilde{e}^{(r')}_{ji}+
  \sum_{k=1}^{j-1}\sum_{r_1>0, r_2\geq 0,r_3\geq 0}^{r_1+r_2+r_3=r}
    \tilde{f}^{(r_1)}_{jk}\tilde{g}^{(r_2)}_k\tilde{e}^{(r_3)}_{ki},
\end{equation}
where $\tilde{g}^{(0)}_k$ denotes $\tilde{g}^+_k$.
Likewise,
  $t^+_{i'j'}(w)=\tilde{f}^+_{i'j'}(w)\tilde{g}^+_{j'}(w)+
   \sum_{k'=1}^{j'-1}\tilde{f}^+_{i'k'}(w)\tilde{g}^+_{k'}(w)\tilde{e}^+_{k'j'}(w)$
implies
\begin{equation}\label{f vs t}
  t^+_{i'j'}[s]=
  \tilde{f}^{(s)}_{i'j'}\tilde{g}^+_{j'}+
  \sum_{0<s'<s}\tilde{f}^{(s')}_{i'j'}\tilde{g}^{(s-s')}_{j'}+
  \sum_{k'=1}^{j'-1}\sum_{s_1>0, s_2\geq 0,s_3\geq 0}^{s_1+s_2+s_3=s}
    \tilde{f}^{(s_1)}_{i'k'}\tilde{g}^{(s_2)}_{k'}\tilde{e}^{(s_3)}_{k'j'}.
\end{equation}

Applying formulas~(\ref{e vs t}, \ref{f vs t}), let us now evaluate the product
$t^+_{ji}[r]t^+_{i'j'}[s]$ and consider the corresponding unordered terms
(we shall be ignoring the Cartan generators
$\tilde{g}^\pm_\bullet,\tilde{g}^{(\bullet)}_{\bullet}$ since they can be moved
to any side harmlessly as explained above). Besides for
$\tilde{e}^{(r)}_{ji}\tilde{f}^{(s)}_{i'j'}$, all other terms will be either
of the form $\tilde{e}^{(r)}_{ji}\tilde{f}^{(s)}_{i'k'}$ with $k'<j'$ or of
the form $\tilde{e}^{(r')}_{\bullet,\bullet}\tilde{f}^{(s')}_{\bullet,\bullet}$
with $r'+s'<r+s$. By the induction assumption, the latter terms are
$\BC[\vv,\vv^{-1}]$-linear combinations of the ordered monomials. Therefore,
it suffices to prove that so is $t^+_{ji}[r]t^+_{i'j'}[s]$.

To verify the latter, we start by comparing the matrix coefficients
$\langle v_j\otimes v_{i'}|\cdots|v_i\otimes v_{j'}\rangle$ of both sides
of the equality~(\ref{all affRTT}) with $\epsilon=\epsilon'=+$:
\begin{equation*}\label{ef vs t}
\begin{split}
   & (z-w)t^+_{ji}(z)t^+_{i'j'}(w)+
     (\vv-\vv^{-1})z t^+_{i'i}(z)t^+_{jj'}(w)=\\
   & (z-w)t^+_{i'j'}(w)t^+_{ji}(z)+(\vv-\vv^{-1})zt^+_{i'i}(w)t^+_{jj'}(z).
\end{split}
\end{equation*}
Evaluating the coefficients of $z^{1-r}w^{-s}$ in both sides of this equality,
we obtain
\begin{multline}\label{ef vs t explicit}
  t^+_{ji}[r]t^+_{i'j'}[s]=
    (\vv-\vv^{-1})t^+_{i'i}[s]t^+_{jj'}[r]-(\vv-\vv^{-1})t^+_{i'i}[r]t^+_{jj'}[s]+\\
  t^+_{ji}[r-1]t^+_{i'j'}[s+1]+t^+_{i'j'}[s]t^+_{ji}[r]-t^+_{i'j'}[s+1]t^+_{ji}[r-1].
\end{multline}
Let us now consider the unordered monomials appearing in each summand of the
right-hand side of~(\ref{ef vs t explicit}). First, we note that all the
unordered monomials appearing in the last three summands are of the form
$\tilde{e}^{(r')}_{\bullet,\bullet}\tilde{f}^{(s')}_{\bullet,\bullet}$ with
either $r'=r-1,s'=s+1$ or with $r'+s'<r+s$, hence, they are
$\BC[\vv,\vv^{-1}]$-linear combinations of the ordered monomials by the induction
assumption. Let us now consider the unordered terms appearing in
$t^+_{i'i}[r]t^+_{jj'}[s]$. If $i'\geq i$, then clearly all the unordered terms
are of the form $\tilde{e}^{(r')}_{\bullet,\bullet}\tilde{f}^{(s')}_{\bullet,\bullet}$
with $r'+s'<r+s$, to which the induction assumption applies. If $i'<i$, then all
the unordered terms in $t^+_{i'i}[r]t^+_{jj'}[s]$ are either as above
(to which the induction assumption applies) or of the form
$\tilde{e}^{(r)}_{i'i}\tilde{f}^{(s)}_{jk}$ with $k<j$. As $i\geq i'>j>k$, we have
  $\tilde{e}^{(r)}_{i'i}\tilde{f}^{(s)}_{jk}=\tilde{f}^{(s)}_{jk}\tilde{e}^{(r)}_{i'i}$
(for any $k<j$) which is an ordered monomial. Therefore, we have eventually proved that
$t^+_{i'i}[r]t^+_{jj'}[s]$ is a $\BC[\vv,\vv^{-1}]$-linear combination of the ordered
monomials. Swapping $r$ and $s$, we obtain the same result for $t^+_{i'i}[s]t^+_{jj'}[r]$.

Combining all the above, we see that $\tilde{e}^{(r)}_{ji}\tilde{f}^{(s)}_{i'j'}$
is a $\BC[\vv,\vv^{-1}]$-linear combination of the ordered monomials, hence,
$E^{(r)}_{j,i}F^{(s)}_{i',j'}$ is a $\BC[\vv,\vv^{-1}]$-linear combination of
the ordered PBWD monomials.

\medskip
This completes our proof of Theorem~\ref{PBW basis coordinate affine}.
\end{proof}

\begin{Rem}\label{thick slice}
We note that $\fU_\vv(L\gl_n)\simeq \fU^\rtt_\vv(L\gl_n)$ quantizes
the algebra of functions on the thick slice $^\dagger\CW_0$ of~\cite[4(viii)]{ft},
that is, $\fU_\vv(L\gl_n)/(\vv-1)\simeq \BC[^\dagger\CW_0]$.
\end{Rem}

\begin{Rem}\label{Luzstig form affine}
For a complete picture, recall that $U_\vv(L\ssl_n)$  is usually treated as a quantization
of the universal enveloping algebra $U(L\ssl_n)$, cf.~Remark~\ref{Luzstig form finite}.
Let $\sU_\vv(L\ssl_n)$ be the $\BC[\vv,\vv^{-1}]$-subalgebra of
$U_\vv(L\ssl_n)$ generated by $\{K_j^{\pm 1}\}_{j\in [n]}$ and the divided powers
$\{E^{(m)}_i,F^{(m)}_i\}_{i\in [n]}^{m\geq 1}$. Specializing $\vv$ to $1$, we have
$K_j^2=1$ in a $\BC$-algebra $\sU_1(L\ssl_n):=\sU_\vv(L\ssl_n)/(\vv-1)$.
Specializing further $K_j$ to $1$, we get an algebra isomorphism
$\sU_1(L\ssl_n)/(\langle K_j-1\rangle_{j\in [n]})\simeq U(L\ssl_n)$.
However, we are not aware of the description of $\sU_\vv(L\ssl_n)$ in the new Drinfeld
realization. In particular, it would be interesting to find an explicit basis of
$\sU_\vv(L\ssl_n)$ similar to that of~Theorem~\ref{PBW basis coordinate affine}.
\end{Rem}


\subsection{Shuffle algebra and its integral form}\label{ssec shuffle algebra}
\

In this section, we recall the shuffle realizations of
$U^>_\vv(L\gl_n), \fU^>_\vv(L\gl_n)$ established in~\cite{t}. Set
  $\Sigma_{(k_1,\ldots,k_{n-1})}:=\Sigma_{k_1}\times \cdots\times \Sigma_{k_{n-1}}$
for $k_1,\ldots,k_{n-1}\in \BN$. Consider an $\BN^{n-1}$-graded $\BC(\vv)$-vector space
  $\BS^{(n)}=\underset{\underline{k}=(k_1,\ldots,k_{n-1})\in \BN^{n-1}}\bigoplus\BS^{(n)}_{\underline{k}},$
where $\BS^{(n)}_{\unl{k}}$ consists of $\Sigma_{\unl{k}}$-symmetric rational functions
in the variables $\{x_{i,r}\}_{1\leq i<n}^{1\leq r\leq k_i}$. We also fix a matrix of
rational functions $(\zeta_{i,j}(z))_{i,j=1}^{n-1}$ by setting
$\zeta_{i,j}(z)=\frac{z-\vv^{-c_{ij}}}{z-1}$. Let us now introduce the bilinear
\emph{shuffle product} $\star$ on $\BS^{(n)}$:
given $F\in \BS^{(n)}_{\underline{k}}$ and $G\in \BS^{(n)}_{\underline{\ell}}$,
define $F\star G\in \BS^{(n)}_{\underline{k}+\underline{\ell}}$ via
\begin{equation}\label{shuffle product}
\begin{split}
  & (F\star G)(x_{1,1},\ldots,x_{1,k_1+\ell_1};\ldots;x_{n-1,1},\ldots, x_{n-1,k_{n-1}+\ell_{n-1}}):=
    \unl{k}!\cdot\unl{\ell}!\times\\
  & \Sym_{\Sigma_{\unl{k}+\unl{\ell}}}
    \left(F\left(\{x_{i,r}\}_{1\leq i<n}^{1\leq r\leq k_i}\right) G\left(\{x_{i',r'}\}_{1\leq i'<n}^{k_{i'}<r'\leq k_{i'}+\ell_{i'}}\right)\cdot
    \prod_{1\leq i<n}^{1\leq i'<n}\prod_{r\leq k_i}^{r'>k_{i'}}\zeta_{i,i'}(x_{i,r}/x_{i',r'})\right).
\end{split}
\end{equation}
Here $\unl{k}!:=\prod_{i=1}^{n-1} k_i!$, while for
$f\in \BC(\{x_{i,1},\ldots,x_{i,m_i}\}_{1\leq i<n})$ we define
its \emph{symmetrization} via
\begin{equation*}
  \Sym_{\Sigma_{\unl{m}}}(f):=\frac{1}{\unl{m}!}\cdot
  \sum_{(\sigma_1,\ldots,\sigma_{n-1})\in \Sigma_{\unl{m}}}
  f\left(\{x_{i,\sigma_i(1)},\ldots,x_{i,\sigma_i(m_i)}\}_{1\leq i<n}\right).
\end{equation*}
This endows $\BS^{(n)}$ with a structure of an associative $\BC(\vv)$-algebra
with the unit $\textbf{1}\in \BS^{(n)}_{(0,\ldots,0)}$.

We will be interested only in a certain $\BC(\vv)$-subspace of $\BS^{(n)}$,
defined by the \emph{pole} and \emph{wheel conditions}:

\medskip
\noindent
$\bullet$
We say that $F\in \BS^{(n)}_{\underline{k}}$ satisfies the \emph{pole conditions} if
\begin{equation}\label{pole conditions}
  F=\frac{f(x_{1,1},\ldots,x_{n-1,k_{n-1}})}{\prod_{i=1}^{n-2}\prod_{r\leq k_i}^{r'\leq k_{i+1}}(x_{i,r}-x_{i+1,r'})},\
  \mathrm{where}\ f\in (\BC(\vv)[\{x_{i,r}^{\pm 1}\}_{1\leq i<n}^{1\leq r\leq k_i}])^{\Sigma_{\unl{k}}}.
\end{equation}
\noindent
$\bullet$
We say that $F\in \BS^{(n)}_{\underline{k}}$ satisfies the \emph{wheel conditions} if
\begin{equation}\label{wheel conditions}
  F(\{x_{i,r}\})=0\ \mathrm{once}\ x_{i,r_1}=\vv x_{i+\epsilon,s}=\vv^2 x_{i,r_2}\
  \mathrm{for\ some}\ \epsilon, i, r_1, r_2, s,
\end{equation}
where
  $\epsilon\in \{\pm 1\},\ 1\leq i,i+\epsilon<n,\
   1\leq r_1,r_2\leq k_i,\ 1\leq s\leq k_{i+\epsilon}$.

Let $S^{(n)}_{\underline{k}}\subset \BS^{(n)}_{\underline{k}}$ denote the
$\BC(\vv)$-subspace of all elements $F$ satisfying these two conditions and set
  $S^{(n)}:=\underset{\underline{k}\in \BN^{n-1}}\bigoplus S^{(n)}_{\underline{k}}.$
It is straightforward to check that the $\BC(\vv)$-subspace $S^{(n)}\subset\BS^{(n)}$
is $\star$-closed. The resulting associative $\BC(\vv)$-algebra $\left(S^{(n)},\star\right)$
is called the \emph{shuffle algebra}. It is related to
$U^>_\vv(L\gl_n)\simeq U^>_\vv(L\ssl_n)$ via~\cite[Theorem 3.5]{t}
(cf.~\cite[Theorem 1.1]{n}):

\begin{Thm}[\cite{t}]\label{shuffle rational}
The assignment $e_{i,r}\mapsto x_{i,1}^r\ (1\leq i<n,r\in \BZ)$
gives rise to a $\BC(\vv)$-algebra isomorphism
$\Psi\colon U_\vv^{>}(L\gl_n)\iso S^{(n)}$.
\end{Thm}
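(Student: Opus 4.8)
The plan is to follow the now-standard three-step strategy for shuffle realizations of quantum loop algebras, as in~\cite{n,t}. First I would verify that $\Psi$ is a well-defined homomorphism of $\BC(\vv)$-algebras. Since $U^>_\vv(L\gl_n)$ is generated by $\{e_{i,r}\}_{1\le i<n}^{r\in\BZ}$ subject only to the quadratic relations, the $c_{ii'}=0$ commutativity, and the cubic Serre relations of~\eqref{Dr affine}, it suffices to check that the degree-$\mathbf{e}_i$ elements $x_{i,1}^r\in S^{(n)}_{\mathbf{e}_i}$ satisfy these relations under $\star$. For the quadratic relation one computes from~\eqref{shuffle product} that $\Psi(e_{i,r})\star\Psi(e_{i',s})$ is the symmetrization of $x_{i,1}^r\,x_{i',1}^s\,\zeta_{i,i'}(x_{i,1}/x_{i',1})$ (with the appropriate variable relabelling when $i=i'$); since $\zeta_{i,i'}(z)=\frac{z-\vv^{-c_{ii'}}}{z-1}$, the identity $(z-\vv^{c_{ii'}}w)e_i(z)e_{i'}(w)=(\vv^{c_{ii'}}z-w)e_{i'}(w)e_i(z)$ drops out at once, the $c_{ii'}=0$ case being the subcase $\zeta_{i,i'}\equiv 1$. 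The Serre relation reduces to a single identity in $S^{(n)}_{2\mathbf{e}_i+\mathbf{e}_{i'}}$, which vanishes by the symmetrization — this is exactly the phenomenon encoded by the wheel conditions~\eqref{wheel conditions}. All of this is routine.

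Second, I would establish injectivity. By Theorem~\ref{PBW for half-integral}(b), the ordered PBWD monomials in $\{E_{j,i+1}(\unl{r})\}_{1\le j\le i<n}^{r\in\BZ}$ form a $\BC(\vv)$-basis of $U^>_\vv(L\gl_n)$, so it suffices to prove that their $\Psi$-images are linearly independent in $S^{(n)}$. For this I would use \emph{specialization maps} on the graded components $S^{(n)}_{\unl{k}}$: to each PBWD monomial one attaches a collection of ``ladders'' — one specializes its variables $x_{i,\bullet}$ to geometric progressions (in powers of $\vv$) in finitely many new variables — and one shows that, with respect to a suitable partial order on PBWD monomials refining the order on their supports, the resulting matrix of specialized values is triangular with invertible diagonal. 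The pole and wheel conditions guarantee that ``larger'' monomials specialize to zero, so each specialization detects exactly one basis element; hence the $\Psi$-images are independent and $\Psi$ is injective.

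Third, and this is where I expect the real work to lie, I would prove surjectivity, i.e.\ that $S^{(n)}$ is generated as a $\BC(\vv)$-algebra by the degree-$\mathbf{e}_i$ components $S^{(n)}_{\mathbf{e}_i}$, $1\le i<n$ — equivalently, that the $\Psi$-images of the PBWD monomials span $S^{(n)}$. This is an induction on $|\unl{k}|=\sum_i k_i$: given $F\in S^{(n)}_{\unl{k}}$, the pole condition~\eqref{pole conditions} bounds its denominator and the wheel condition~\eqref{wheel conditions} pins down its vanishing, so one can extract a ``leading term'' of $F$ with respect to the filtration by degree of growth in the variables, match it with the leading term of an explicit $\star$-product of generators coming from a PBWD monomial, subtract, and decrease either $|\unl{k}|$ or the within-degree complexity. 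Combining the three steps, $\Psi$ is a bijective algebra homomorphism, hence an isomorphism; the coincidence $U^>_\vv(L\gl_n)\simeq U^>_\vv(L\ssl_n)$ is automatic, since the positive halves are generated by the same $e_{i,r}$ with the same relations, so $S^{(n)}$ simultaneously realizes both. The only genuinely non-routine input is the specialization analysis underlying Steps~2 and~3, which is precisely the content of~\cite[\S3]{t}.
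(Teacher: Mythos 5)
This theorem is not proved in the present paper: it is imported verbatim from~\cite[Theorem 3.5]{t} (cf.~\cite[Theorem 1.1]{n}), so there is no in-paper argument to compare against. Your three-step outline (well-definedness on generators, injectivity of the $\Psi$-images of the PBWD monomials via specialization maps, surjectivity by an induction governed by the pole and wheel conditions) is exactly the strategy of the cited proof, and as a sketch it is correct. Two small caveats. First, well-definedness requires not only that the relations of $U^>_\vv(L\gl_n)$ hold under $\star$ but also that the image of $\Psi$ actually lands in $S^{(n)}\subset\BS^{(n)}$, i.e.\ that $S^{(n)}$ is $\star$-closed; the Serre relations themselves already hold in the ambient $\BS^{(n)}$ by symmetrization, so it is slightly misleading to say they vanish ``by the wheel conditions'' --- rather, the wheel conditions are what cut $\BS^{(n)}$ down to (a candidate for) the image, and their role is in the surjectivity step, not in verifying the relations. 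Second, as you acknowledge, the entire content of the theorem is concentrated in the specialization analysis of Steps 2 and 3, which you only gesture at; without the explicit construction of the specialization maps and the triangularity argument this remains a plan rather than a proof, which is presumably why the authors chose to cite~\cite{t} rather than reproduce it.
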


For any $\unl{k}\in \BN^{n-1}$, consider a $\BC[\vv,\vv^{-1}]$-submodule
$\fS^{(n)}_{\unl{k}}\subset S^{(n)}_{\unl{k}}$ consisting of all \emph{integral}
elements, see~\cite[Definition 3.31]{t}. Set
  $\fS^{(n)}:=\underset{\underline{k}\in \BN^{n-1}}\bigoplus \fS^{(n)}_{\underline{k}}$
(it is a $\BC[\vv,\vv^{-1}]$-subalgebra of $S^{(n)}$ as follows from
Theorem~\ref{shuffle integral} below). While we skip an explicit definition of $\fS^{(n)}$
as it is quite involved, let us recall its relevant properties that were
established in~\cite[Proposition 3.36]{t}:

\begin{Prop}\label{key properties of integral shuffle}
(a) For any $1\leq \ell <n$, consider the linear map $\iota'_\ell\colon S^{(n)}\to S^{(n)}$
given by
\begin{equation}\label{shuffle shift homomorphism}
  \iota'_\ell(F)(\{x_{i,r}\}_{1\leq i< n}^{1\leq r\leq k_i}):=
  \prod_{r=1}^{k_\ell}(1-x_{\ell,r}^{-1})\cdot F(\{x_{i,r}\}_{1\leq i< n}^{1\leq r\leq k_i})
  \ \ \mathrm{for}\ \ F\in S^{(n)}_{\unl{k}}, \unl{k}\in \BN^{n-1}.
\end{equation}
Then
\begin{equation}\label{shift homomorphisms}
  F\in \fS^{(n)}\Longleftrightarrow \iota'_\ell(F)\in \fS^{(n)}.
\end{equation}

\noindent
(b) For any $\unl{k}\in \BN^{n-1}$ and a collection
  $g_i(\{x_{i,r}\}_{r=1}^{k_i})\in
   \BC[\vv,\vv^{-1}][\{x_{i,r}^{\pm 1}\}_{r=1}^{k_i}]^{\Sigma_{k_i}}\ (1\leq i< n)$,
set
\begin{equation}\label{for surjectivity in FT2}
  F:=(\vv-\vv^{-1})^{k_1+\ldots+k_{n-1}}\cdot
  \frac{\prod_{i=1}^{n-1}\prod_{1\leq r\ne r'\leq k_i} (x_{i,r}-\vv^{-2}x_{i,r'})\cdot
        \prod_{i=1}^{n-1}g_i(\{x_{i,r}\}_{r=1}^{k_i})}
       {\prod_{i=1}^{n-2}\prod_{1\leq r\leq k_i}^{1\leq r'\leq k_{i+1}}(x_{i,r}-x_{i+1,r'})}.
\end{equation}
Then $F\in \fS^{(n)}_{\unl{k}}$.
\end{Prop}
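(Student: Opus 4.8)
The statement to be proved is Proposition~\ref{key properties of integral shuffle}, recording two properties of the integral shuffle subalgebra $\fS^{(n)}\subset S^{(n)}$ from~\cite[Proposition 3.36]{t}. Since the explicit definition of $\fS^{(n)}_{\unl k}$ is deferred (it is the span of certain ``specialization-controlled'' elements, or equivalently elements whose iterated residues lie in prescribed lattices), the proof will be a direct verification against that definition, exactly as in~\cite{t}. The plan is therefore to recall, at the point of proof, the working description of $\fS^{(n)}_{\unl k}$ in terms of the pole normalization~\eqref{pole conditions} together with the integrality constraints on the numerator $f$, and then to check parts (a) and (b) separately.

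\textbf{Part (b).}
First I would treat part (b), which is the easier and more structural of the two. Given the collection $g_i$, the element $F$ of~\eqref{for surjectivity in FT2} has exactly the pole structure~\eqref{pole conditions} demanded of shuffle elements (the only denominators are the neighbouring $(x_{i,r}-x_{i+1,r'})$), so $F\in S^{(n)}_{\unl k}$ once one checks the wheel conditions~\eqref{wheel conditions}: these hold because the factor $\prod_{1\le r\ne r'\le k_i}(x_{i,r}-\vv^{-2}x_{i,r'})$ in the numerator vanishes on precisely the loci forced by~\eqref{wheel conditions}. The integrality claim $F\in\fS^{(n)}_{\unl k}$ then follows by matching $F$ against the distinguished integral generators of~\cite[Definition 3.31]{t}: the prefactor $(\vv-\vv^{-1})^{\sum k_i}$ and the numerator $\prod(x_{i,r}-\vv^{-2}x_{i,r'})$ are exactly the normalizing factors appearing there, and multiplication by symmetric Laurent polynomials $g_i$ with coefficients in $\BC[\vv,\vv^{-1}]$ preserves integrality by the very construction of $\fS^{(n)}$ as a module over $\bigotimes_i \BC[\vv,\vv^{-1}][x_{i,r}^{\pm1}]^{\Sigma_{k_i}}$. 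I would spell this out by reducing to the case where each $g_i$ is a single symmetric monomial and invoking the product/basis structure of $\fS^{(n)}$.

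\textbf{Part (a).}
For part (a), the map $\iota'_\ell$ multiplies $F\in S^{(n)}_{\unl k}$ by $\prod_{r=1}^{k_\ell}(1-x_{\ell,r}^{-1})$, a $\Sigma_{\unl k}$-symmetric Laurent polynomial, so $\iota'_\ell$ preserves $S^{(n)}$ (pole and wheel conditions are visibly unaffected) and is invertible over the localization inverting $\prod_r(1-x_{\ell,r}^{-1})$. The forward implication $F\in\fS^{(n)}\Rightarrow\iota'_\ell(F)\in\fS^{(n)}$ is again just the statement that $\fS^{(n)}$ is stable under multiplication by integral symmetric Laurent polynomials, and $1-x_{\ell,r}^{-1}\in\BC[\vv,\vv^{-1}][x_{\ell,r}^{\pm1}]$. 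The reverse implication is where the content lies: one must show that if $\iota'_\ell(F)$ satisfies all the integrality (iterated-specialization) constraints, then so does $F$. The key point is that dividing by $\prod_{r}(1-x_{\ell,r}^{-1})$ does not destroy integrality of the relevant residues, because when one performs the iterated specializations $x_{i,r}\mapsto \vv^{\pm}x_{i',r'}$ that define the integral lattice, the factor $(1-x_{\ell,r}^{-1})$ specializes to a Laurent polynomial which is a \emph{unit} in $\BC[\vv,\vv^{-1}]$ only on the boundary strata — so one argues that the obstruction is a single potential denominator $(1-x_{\ell,r}^{-1})$ which, by symmetry and the structure of the specialization maps, is forced to divide the numerator. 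Concretely I would set up the linear order on specialization sequences used in~\cite[\S3]{t} and induct, at each step tracking the power of $(1-x_{\ell,r}^{-1})$ and showing it is nonnegative; this is the step I expect to be the main obstacle, as it requires care with the combinatorics of the wheel-condition loci and the precise normalization of the integral generators.

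\textbf{Overall.}
In summary, both parts reduce to bookkeeping against the definition of $\fS^{(n)}$ in~\cite[Definition 3.31]{t}: part (b) is an explicit membership check using that the stated prefactors are the normalizing ones, and part (a)(forward) is module-stability under integral symmetric multipliers, while part (a)(reverse) — the genuine difficulty — is a divisibility argument showing $\iota'_\ell$ is ``integrally invertible.'' I would present the two easy directions in a few lines and devote the bulk of the argument to the divisibility claim, referring the reader to~\cite[Proposition 3.36 and its proof]{t} for the parallel computation in the case at hand.
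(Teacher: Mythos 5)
First, note that the paper does not prove this proposition at all: it is imported verbatim from \cite[Proposition~3.36]{t}, and the surrounding text explicitly declines even to state the definition of $\fS^{(n)}$ (``we skip an explicit definition of $\fS^{(n)}$ as it is quite involved''). So there is no in-paper argument to compare against; the only question is whether your sketch would survive being carried out against \cite[Definition~3.31]{t}.

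As written it would not, for two reasons. The central gap is that every step of your plan is a ``verification against the definition,'' yet the definition is never pinned down: $\fS^{(n)}_{\unl{k}}$ is cut out by divisibility conditions on a prescribed family of specializations of the numerator $f$ of~(\ref{pole conditions}) (collapsing groups of variables onto $\vv$-shifted strings and requiring the result to be divisible by explicit powers of $\vv-\vv^{-1}$), and without that statement your checks in both parts are placeholders. This matters concretely for part (b): the element $F$ of~(\ref{for surjectivity in FT2}) with general $\unl{k}$ is \emph{not} a shuffle product of the rank-one integral generators $\Psi(E_{j,i+1}(\unl{r}))$ times symmetric Laurent polynomials (the $\zeta$-factors in~(\ref{shuffle product}) obstruct any such factorization), so ``matching against the distinguished integral generators and reducing to $g_i$ a monomial'' is not an available route; the actual proof is a direct computation showing that the factors $\prod_{r\ne r'}(x_{i,r}-\vv^{-2}x_{i,r'})$ and the prefactor $(\vv-\vv^{-1})^{\sum k_i}$ supply exactly the required powers of $\vv-\vv^{-1}$ under each defining specialization. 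For part (a) you correctly locate the crux in the reverse implication, but the clean statement of the needed fact is simpler than your ``boundary strata'' discussion: every defining specialization sends $\prod_{r=1}^{k_\ell}(1-x_{\ell,r}^{-1})$ to a Laurent polynomial that is coprime to $\vv-\vv^{-1}$ (nonvanishing at $\vv=\pm1$), so in the UFD $\BC[\vv^{\pm1}][w^{\pm1}]$ dividing by it neither creates nor destroys divisibility by powers of $\vv-\vv^{-1}$; one also needs Gauss's lemma to see that the numerator of $F$ has $\BC[\vv,\vv^{-1}]$-coefficients whenever that of $\iota'_\ell(F)$ does. With those two repairs (and the definition stated), your outline matches the proof in \cite{t}.
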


According to~\cite[Theorem 3.34]{t}, the isomorphism $\Psi$ of
Theorem~\ref{shuffle rational} identifies the integral forms
$\fU_\vv^{>}(L\gl_n)\subset U_\vv^{>}(L\gl_n)$ and $\fS^{(n)}\subset S^{(n)}$:

\begin{Thm}[\cite{t}]\label{shuffle integral}
The $\BC(\vv)$-algebra isomorphism $\Psi\colon U_\vv^{>}(L\gl_n)\iso S^{(n)}$
gives rise to a $\BC[\vv,\vv^{-1}]$-algebra isomorphism
$\Psi\colon \fU_\vv^{>}(L\gl_n)\iso \fS^{(n)}$.
\end{Thm}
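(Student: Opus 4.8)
The plan is to prove the two inclusions $\Psi(\fU^>_\vv(L\gl_n))\subseteq\fS^{(n)}$ and $\fS^{(n)}\subseteq\Psi(\fU^>_\vv(L\gl_n))$ separately, working degree by degree in the $\BN^{n-1}$-grading since $\Psi$ is graded; thus it suffices to identify $\Psi(\fU^>_\vv(L\gl_n))\cap S^{(n)}_{\unl k}$ with $\fS^{(n)}_{\unl k}$ for each $\unl k\in\BN^{n-1}$. The first inclusion is a direct computation combined with the sufficient integrality criterion of Proposition~\ref{key properties of integral shuffle}(b); the second is the substantial part and relies on the PBWD basis results of Theorem~\ref{PBW for half-integral} together with a leading-term/specialization induction in the shuffle algebra.

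For the inclusion $\subseteq$, since $\fS^{(n)}$ is a $\BC[\vv,\vv^{-1}]$-subalgebra of $S^{(n)}$ it is enough to check that $\Psi$ sends every algebra generator $E^{(r)}_{j,i+1}$ of $\fU^>_\vv(L\gl_n)$ (see~\eqref{PBW basis elements}) into $\fS^{(n)}$. First I would compute $\Psi(E^{(r)}_{j,i+1})$ explicitly: using $\Psi(e_{i,r})=x_{i,1}^r$ from Theorem~\ref{shuffle rational} and expanding the iterated $\vv^{-1}$-commutators under the shuffle product~\eqref{shuffle product}, the $\zeta$-factors and the differences $x_{\ell,1}-x_{\ell+1,1}$ combine so that $\Psi(E^{(r)}_{j,i+1})$ is supported in the degree $\unl k$ with $k_\ell=1$ for $j\le\ell\le i$ and $k_\ell=0$ otherwise, and has the form
\begin{equation*}
  \Psi\big(E^{(r)}_{j,i+1}\big)=(\vv-\vv^{-1})^{\,i-j+1}\cdot
  \frac{g_j(x_{j,1})\cdots g_i(x_{i,1})}{\prod_{k=j}^{i-1}(x_{k,1}-x_{k+1,1})}
\end{equation*}
for suitable single-variable Laurent polynomials (in fact monomials) $g_\ell\in\BC[\vv,\vv^{-1}][x_{\ell,1}^{\pm1}]$ determined by $r$. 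This is exactly an instance of the family~\eqref{for surjectivity in FT2}, the products $\prod_{r\neq r'}(x_{i,r}-\vv^{-2}x_{i,r'})$ there being vacuous because all $k_\ell\le1$; hence $\Psi(E^{(r)}_{j,i+1})\in\fS^{(n)}_{\unl k}$ by Proposition~\ref{key properties of integral shuffle}(b). Being $\star$-closed, $\fS^{(n)}$ then contains all products of these images, so $\Psi(\fU^>_\vv(L\gl_n))\subseteq\fS^{(n)}$.

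For the inclusion $\supseteq$ I would first observe, exactly as in the previous paragraph applied to the more general elements $E_{j,i+1}(\unl r)$ of~\eqref{generalized PBW basis elements} (again an instance of~\eqref{for surjectivity in FT2}), that the images $\Psi\big(E_{j,i+1}(\unl r)\big)$ all lie in $\fS^{(n)}$; by Theorem~\ref{PBW for half-integral}(b) the ordered PBWD monomials in $\{E_{j,i+1}(\unl r)\}$ form a $\BC(\vv)$-basis of $U^>_\vv(L\gl_n)$, so their $\Psi$-images form a $\BC(\vv)$-basis of $S^{(n)}$ contained in $\fS^{(n)}$. Writing $M:=\Psi(\fU^>_\vv(L\gl_n))$, we thus have a free $\BC[\vv,\vv^{-1}]$-module $M\subseteq\fS^{(n)}$ with $M\otimes_{\BC[\vv,\vv^{-1}]}\BC(\vv)=S^{(n)}$, and the claim reduces to $\fS^{(n)}\subseteq M$. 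To prove this I would run the specialization argument of~\cite[\S3]{t}: for each $\unl k$ there is a ``leading-term'' map that sends $F\in S^{(n)}_{\unl k}$ to a symmetric Laurent polynomial obtained by a cascade of specializations of the variables of each color, and its values on the PBWD basis elements $\Psi(E_{j,i+1}(\unl r))$ form an explicit triangular (with respect to the natural order on decompositions) system; given $F\in\fS^{(n)}_{\unl k}$, the pole and wheel conditions~\eqref{pole conditions}--\eqref{wheel conditions} force its leading term to be a $\BC[\vv,\vv^{-1}]$-linear combination of those of the basis elements, one subtracts the matching PBWD combination, and induces on $\unl k$ and on the decompositions, invoking Proposition~\ref{key properties of integral shuffle} to see that the remainder stays in $\fS^{(n)}$.

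The main obstacle is this last inductive step in the $\supseteq$ direction: showing that subtracting the PBWD combination matched to the leading term of an integral $F$ leaves a remainder that is still integral, which requires unwinding the precise definition of $\fS^{(n)}$ from~\cite[Definition 3.31]{t} and controlling simultaneously its pole structure, its wheel vanishing, and its integrality normalization, together with the stability of $\fS^{(n)}$ under the maps $\iota'_\ell$ of Proposition~\ref{key properties of integral shuffle}(a). Making this bookkeeping precise is the technical heart of the argument; it is carried out in~\cite[Theorem 3.34]{t}, which is what the present theorem restates, so alternatively one may simply cite that result.
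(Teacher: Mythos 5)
The paper gives no proof of this statement: it is quoted verbatim from \cite[Theorem 3.34]{t}, so there is nothing internal to compare your argument against. Your outline is consistent with that cited proof and with the ingredients the paper does record — the inclusion $\Psi(\fU^>_\vv(L\gl_n))\subseteq\fS^{(n)}$ follows exactly as you say from Proposition~\ref{key properties of integral shuffle}(b) applied to the images of the $E_{j,i+1}(\unl{r})$ (compare Remark~\ref{identification} and formula~(\ref{important elements}), which confirm your computed form of $\Psi(E^{(r)}_{j,i+1})$) — and you correctly identify that the reverse inclusion, via the leading-term/specialization induction on the PBWD basis, is precisely the content of the cited theorem, which is the appropriate thing to defer to here.
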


We will crucially use this result in our proofs of
Theorems~\ref{PBW for integral shifted},~\ref{Surjectivity},~\ref{coproduct on integral form}.

\begin{Rem}\label{opposite shuffle}
For an algebra $A$, let $A^\op$ denote the opposite algebra. The assignment
$f_{i,r}\mapsto e_{i,r}\ (1\leq i<n,r\in \BZ)$ gives rise to a $\BC(\vv)$-algebra
isomorphism $U^<_\vv(L\gl_n)\iso U^{>}_\vv(L\gl_n)^{\op}$ and a $\BC[\vv,\vv^{-1}]$-algebra
isomorphism $\fU^<_\vv(L\gl_n)\iso \fU^{>}_\vv(L\gl_n)^{\op}$. Hence,
Theorems~\ref{shuffle rational} and~\ref{shuffle integral} give rise to a
$\BC(\vv)$-algebra isomorphism $\Psi\colon U_\vv^{<}(L\gl_n)\iso S^{(n),\op}$ and a
$\BC[\vv,\vv^{-1}]$-algebra isomorphism $\Psi\colon \fU_\vv^{<}(L\gl_n)\iso \fS^{(n),\op}$
(by abuse of notation, we still denote them by $\Psi$).
\end{Rem}


\subsection{The Jimbo evaluation homomorphism $\ev$}\label{ssec Jimbo evaluation}
\

While the quantum group $U_\vv(\fg)$ is always embedded into the quantum
loop algebra $U_\vv(L\fg)$, in type $A$ there also exist homomorphisms
$U_\vv(L\ssl_n)\to U_\vv(\gl_n)$, discovered in~\cite{jim}.
These homomorphisms are given in the Drinfeld-Jimbo realization of $U_\vv(L\ssl_n)$.

\begin{Thm}[\cite{jim}]\label{Jimbo's evaluation}
For any $a\in \BC^\times$, there is a unique $\BC(\vv)$-algebra homomorphism
\begin{equation*}
  \ev_a\colon U_\vv(L\ssl_n)\to U_\vv(\gl_n)
\end{equation*}
defined by
\begin{equation}\label{Jimbo's formulas}
\begin{split}
  & E_i\mapsto E_i,\ F_i\mapsto F_i,\
    K^{\pm 1}_i\mapsto K^{\pm 1}_i\ \mathrm{for}\ i\in [n]\backslash\{0\},\\
  & K^{\pm 1}_{0}\mapsto K^{\mp 1}_1\cdots K^{\mp 1}_{n-1},\\
  & E_{0}\mapsto (-1)^{n}\vv^{-n+1}a\cdot [\cdots[F_1,F_2]_\vv,\cdots,F_{n-1}]_\vv\cdot t_1^{-1}t_n^{-1},\\
  & F_{0}\mapsto (-1)^{n}\vv^{n-1}a^{-1}\cdot [E_{n-1},\cdots,[E_2,E_1]_{\vv^{-1}}\cdots]_{\vv^{-1}}\cdot t_1t_n.
\end{split}
\end{equation}
\end{Thm}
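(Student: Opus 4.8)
The statement to be proved is Theorem~\ref{Jimbo's evaluation}, namely that the assignment~\eqref{Jimbo's formulas} extends to a well-defined $\BC(\vv)$-algebra homomorphism $\ev_a\colon U_\vv(L\ssl_n)\to U_\vv(\gl_n)$, where $U_\vv(L\ssl_n)$ is given in its Drinfeld--Jimbo (Kac--Moody type $\widehat{\ssl}_n$) realization.

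\textbf{Overall approach.} The plan is to verify that the images prescribed by~\eqref{Jimbo's formulas} satisfy the Drinfeld--Jimbo defining relations of $U_\vv(L\ssl_n)$ — that is, the relations~\eqref{DrJim finite} but with the affine Cartan matrix $(c_{ii'})_{i,i'\in[n]}$ of $\widehat{\ssl}_n$. Since the generators $E_i,F_i,K_i^{\pm1}$ for $i\in[n]\setminus\{0\}$ map to their namesakes in $U_\vv(\gl_n)$, every relation involving only these indices holds automatically. Thus the entire content of the proof is to check the relations that involve the index $0$. First I would fix notation: write $\mathcal{E}_0,\mathcal{F}_0,\mathcal{K}_0^{\pm1}$ for the elements of $U_\vv(\gl_n)$ on the right-hand side of~\eqref{Jimbo's formulas}, and observe the useful compatibility $\mathcal{K}_0 = K_1^{-1}\cdots K_{n-1}^{-1} = t_1^{-2}t_n^2$ (using $K_i = t_i^{-1}t_{i+1}$, the product telescopes), so $\mathcal{K}_0$ commutes with the elements appearing inside $\mathcal{E}_0,\mathcal{F}_0$ up to explicit powers of $\vv$.

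\textbf{Key steps in order.} (1) \emph{Cartan relations for $i=0$}: check $\mathcal{K}_0\mathcal{K}_0^{-1}=1$, $\mathcal{K}_0 K_j^{\pm1}=K_j^{\pm1}\mathcal{K}_0$, and the $\vv$-commutation of $\mathcal{K}_0$ with $E_j,F_j$ and with $\mathcal{E}_0,\mathcal{F}_0$; all of these reduce, via the identity $\mathcal{K}_0=t_1^{-2}t_n^2$ and the $t$-versus-$E,F$ relations of~\eqref{DrJim finite}, to checking exponents of $\vv$ against the affine Cartan matrix entries $c_{0j}$ (which are $-1$ for $j=1,n-1$ and $0$ otherwise, plus $c_{00}=2$). (2) \emph{The $[\mathcal{E}_0,\mathcal{F}_0]$ relation}: show $\mathcal{E}_0\mathcal{F}_0-\mathcal{F}_0\mathcal{E}_0 = \frac{\mathcal{K}_0-\mathcal{K}_0^{-1}}{\vv-\vv^{-1}}$. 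This is the computational heart: one expands the iterated $\vv$-commutators $[\cdots[F_1,F_2]_\vv,\cdots,F_{n-1}]_\vv$ and $[E_{n-1},\cdots,[E_2,E_1]_{\vv^{-1}}\cdots]_{\vv^{-1}}$, uses that $F_i$ and $E_j$ commute for $|i-j|\ge2$ while $[E_i,F_i]=\frac{K_i-K_i^{-1}}{\vv-\vv^{-1}}$, and collects terms; the Cartan torus factors $t_1^{\pm1}t_n^{\pm1}$ and the scalar $(-1)^n\vv^{\mp(n-1)}a^{\pm1}$ are arranged so that the cross terms telescope and leave exactly $\frac{\mathcal{K}_0-\mathcal{K}_0^{-1}}{\vv-\vv^{-1}}$. (3) \emph{Compatibility with $E_j,F_j$ for $j\ne0$}: verify $[\mathcal{E}_0,F_j]=0=[\mathcal{F}_0,E_j]$ for $j\notin\{1,n-1\}$ (clear, by locality) and the mixed $q$-Serre relations between $\mathcal{E}_0$ and $E_1,E_{n-1}$ (resp.\ $\mathcal{F}_0$ and $F_1,F_{n-1}$); here one uses the finite $q$-Serre relations in $U_\vv(\gl_n)$ together with the explicit bracket form of $\mathcal{E}_0,\mathcal{F}_0$. (4) \emph{The affine $q$-Serre relations involving index $0$}: these follow from the computations in step (3) by the standard argument (e.g.\ as in the $\ssl_n$ finite-type Serre verification), since $c_{0j}=-1$ only for $j=1,n-1$.

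\textbf{Main obstacle.} The genuinely laborious step is (2), the $[\mathcal{E}_0,\mathcal{F}_0]$ identity, because it requires carefully expanding two nested $(n{-}2)$-fold $\vv$-bracket expressions and tracking a large number of cancellations; the bookkeeping of the torus elements $t_1,t_n$ and the powers of $\vv$ is where sign and normalization errors are most likely to creep in. I would handle this either by an induction on $n$ (peeling off the outermost bracket $[-,F_{n-1}]_\vv$ and $[E_{n-1},-]_{\vv^{-1}}$ and using the $n-1$ case) or by citing the original computation of~\cite{jim}; given that the statement is attributed to~\cite{jim} and recorded here only for reference, the cleanest route in the paper is to present the verification schematically and refer the reader to~\cite{jim} (cf.~also the treatment in~\cite[\S12.2]{cp}) for the full details, while explicitly recording the key identity $\mathcal{K}_0 = K_1^{-1}\cdots K_{n-1}^{-1}$ that makes all the exponent checks in steps (1),(3),(4) routine.
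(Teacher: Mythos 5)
The paper itself offers no proof of this statement: it is recorded purely as a citation to Jimbo's original work, and the text moves directly on to Theorem~\ref{compatibility of evaluations} (whose proof, incidentally, provides an independent consistency check of~(\ref{Jimbo's formulas}) through the RTT picture). Your overall strategy --- verify the Drinfeld--Jimbo relations of $U_\vv(L\ssl_n)$ on the prescribed images, with all the work concentrated at the affine node $0$ --- is the standard route and is the one Jimbo's argument follows, so there is no methodological divergence to report, and your suggestion to defer the full bracket expansion to~\cite{jim} (cf.~\cite{cp}) is exactly what the paper does.

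There is, however, a concrete error in the identity you single out as the key to making the exponent checks routine. With $K_i=t_i^{-1}t_{i+1}$ as in~(\ref{DrJim finite}), one has $K_i^{-1}=t_i t_{i+1}^{-1}$, so the product $K_1^{-1}\cdots K_{n-1}^{-1}$ telescopes to $t_1 t_n^{-1}$, \emph{not} to $t_1^{-2}t_n^2$. Since you propose to run steps (1), (3), (4) off the identity $\mathcal{K}_0=t_1^{-2}t_n^2$, the exponents you would compare against the affine Cartan entries $c_{0j}$ would come out wrong; with the correct $\mathcal{K}_0=t_1t_n^{-1}$ one indeed finds $\mathcal{K}_0E_1\mathcal{K}_0^{-1}=\vv^{-1}E_1=\vv^{c_{01}}E_1$ (similarly for $j=n-1$, and trivially for $2\leq j\leq n-2$), and $\mathcal{K}_0\mathcal{E}_0\mathcal{K}_0^{-1}=\vv^{2}\mathcal{E}_0=\vv^{c_{00}}\mathcal{E}_0$. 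Note also that the torus factors $t_1^{-1}t_n^{-1}$ and $t_1t_n$ occurring in $\mathcal{E}_0,\mathcal{F}_0$ are not powers of $\mathcal{K}_0$, so in step (2) they must be tracked separately; what saves the computation is that their product is $1$ and the scalars $(-1)^n\vv^{\mp(n-1)}a^{\pm1}$ cancel, after which $[\mathcal{E}_0,\mathcal{F}_0]$ reduces to the standard quantum-root-vector identity yielding $\frac{\mathcal{K}_0-\mathcal{K}_0^{-1}}{\vv-\vv^{-1}}$. With that correction your outline is sound.
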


The key result of this subsection identifies the \emph{evaluation homomorphism}
$\ev_a$ with the restriction of $\BC(\vv)$-extended evaluation homomorphism
$\ev^\rtt_a$ of Lemma~\ref{ev_RTT}.

\begin{Thm}\label{compatibility of evaluations}
The following diagram is commutative:
\begin{equation}\label{diagram quantum}
  \begin{CD}
    U_\vv(L\ssl_n) @>\Upsilon>> U^\rtt_\vv(L\gl_n)\\
    @VV{\on{\ev_a}}V @V{\ev^\rtt_a}VV\\
    U_\vv(\gl_n) @>\sim>\Upsilon> U^\rtt_\vv(\gl_n)
    \end{CD}
\end{equation}
\end{Thm}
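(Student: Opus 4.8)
The plan is to verify the commutativity of~(\ref{diagram quantum}) on a convenient set of algebra generators of $U_\vv(L\ssl_n)$, using its Drinfeld-Jimbo presentation. Since the horizontal maps $\Upsilon$ are algebra isomorphisms and the vertical maps are algebra homomorphisms, it suffices to check $\Upsilon(\ev_a(x))=\ev^\rtt_a(\Upsilon(x))$ for $x$ ranging over the Drinfeld-Jimbo generators $\{E_i,F_i,K_i^{\pm 1}\}_{i\in[n]}$ of $U_\vv(L\ssl_n)$. For the ``finite'' generators $i\in[n]\backslash\{0\}$ this is essentially immediate: under Jing's identification~(\ref{Jing's identification}) these correspond to $e_{i,0},f_{i,0},\psi^\pm_{i,0}$, whose images under $\Upsilon\colon U_\vv(L\gl_n)\iso U^\rtt_\vv(L\gl_n)$ are read off from~(\ref{Ding-Frenkel formulas}) in terms of the zero modes $\tilde e^{(0)}_{i,i+1},\tilde f^{(0)}_{i+1,i},\tilde g^\pm_{i}$, i.e.\ the $[0]$-coefficients of the Gauss factors of $T^\pm(z)$; on the other hand $\ev^\rtt_a$ sends $T^+(z)\mapsto T^+-aT^-z^{-1}$ and $T^-(z)\mapsto T^--a^{-1}T^+z$, so the $[0]$-part of $T^\pm(z)$ maps to $T^\pm$, and the Gauss decomposition of $T^\pm$ recovers precisely the finite Gauss generators $t^-_{ii}t^+_{i,i+1}$, etc., which by Theorem~\ref{Ding-Frenkel finite} are $\Upsilon$ of $(\vv-\vv^{-1})E_i$, $(\vv^{-1}-\vv)F_i$, $t_i^{\pm 1}$. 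Matching the normalizations in~(\ref{Jimbo's formulas}) (which are the identity on $E_i,F_i,K_i^{\pm 1}$) then gives the claim for these generators.

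The real content is the affine generators $E_0,F_0,K_0^{\pm 1}$. For $K_0^{\pm 1}$ one uses $K_0^{\pm1}\mapsto\psi^\mp_{1,0}\cdots\psi^\mp_{n-1,0}$ on the loop side and $K_0^{\pm1}\mapsto K_1^{\mp1}\cdots K_{n-1}^{\mp1}$ under $\ev_a$, and these are matched via the product of the $\tilde g^\pm_j$-factors, exactly as in the finite case. For $E_0$ and $F_0$, I would substitute the expression for $E_0$ from~(\ref{Jing's identification}) — a nested $\vv$-commutator of $f_{1,1},f_{2,0},\ldots,f_{n-1,0}$ times $\psi^-_{1,0}\cdots\psi^-_{n-1,0}$ — apply $\Upsilon$ using~(\ref{Ding-Frenkel formulas}) and Corollary~\ref{matching modes}/\,(\ref{Gauss Matrix Entries}) to rewrite the nested commutator of $f$-currents in terms of $\tilde f^{(1)}_{n,1}$ (the appropriate higher root Gauss generator in mode $1$), then apply $\ev^\rtt_a$: since $\tilde f^{(1)}_{n,1}$ is the $[1]$-coefficient of the lower-triangular Gauss factor of $T^-(z)$, and $\ev^\rtt_a$ sends $T^-(z)\mapsto T^--a^{-1}T^+z$, the mode-$1$ part picks out $-a^{-1}$ times the corresponding Gauss generator of $T^+$, which is $\Upsilon$ of (a scalar multiple of) $[\cdots[F_1,F_2]_\vv,\cdots,F_{n-1}]_\vv$. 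Collecting the $\vv$- and $a$-powers and the Cartan factors $t_1^{-1}t_n^{-1}$ coming from $\psi^-_{1,0}\cdots\psi^-_{n-1,0}$ (and the analogous $t_1t_n$ for $F_0$, using $e_{1,-1}$ and $T^+(z)\mapsto T^+-aT^-z^{-1}$, so the mode-$(-1)$ part contributes $-a$), one should land exactly on the formulas in~(\ref{Jimbo's formulas}).

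I expect the main obstacle to be purely bookkeeping: getting every power of $\vv$, every power of $a$, and every Cartan prefactor to match on the nose. The subtleties are (i) the spectral-parameter rescalings $z\mapsto\vv^i z$ hidden in~(\ref{Ding-Frenkel formulas}) and~(\ref{Jing's identification}), which shift which mode of which Gauss current one is extracting; (ii) keeping track of the sign conventions $(-\vv)^{-n+2}$ versus $(-1)^n\vv^{-n+1}a$, etc.; and (iii) correctly passing from the nested $\vv$-commutators of abstract Drinfeld generators to the Gauss-decomposition generators $\tilde e^{(r)}_{j,i+1},\tilde f^{(r)}_{i+1,j}$ via~(\ref{Gauss Matrix Entries}), since those identities carry factors of $(\vv-\vv^{-1})^{j-i}$. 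A clean way to organize this is to first establish the identity at the level of the full generating currents — i.e.\ compute $\ev^\rtt_a(\Upsilon(e_i(z)))$, $\ev^\rtt_a(\Upsilon(f_i(z)))$, $\ev^\rtt_a(\Upsilon(\varphi^\pm_j(z)))$ directly from $T^\pm(z)\mapsto T^\pm\mp a^{\mp1}T^\mp z^{\pm1}$ and the Gauss decomposition of the resulting $z$-linear matrix — and only afterwards extract the relevant modes, so that the commutator gymnastics is done once at the level of currents rather than repeatedly mode-by-mode. An alternative, possibly shorter route: deduce Theorem~\ref{compatibility of evaluations} as a trigonometric ``limit'' of its rational counterpart Theorem~\ref{compatibility of evaluations yangian}, but establishing the needed compatibility of the Yangian and quantum-loop pictures would likely require as much work as the direct verification, so I would keep the direct approach as the primary one.
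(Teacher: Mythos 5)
Your plan is essentially the paper's proof: reduce to the Drinfeld--Jimbo generators, observe that only $E_0$ and $F_0$ are nontrivial, pass through Jing's identification~(\ref{Jing's identification}), Corollary~\ref{matching modes}, and the explicit form of $\ev^\rtt_a$ on $T^\pm(z)$, and then match against~(\ref{Jimbo's formulas}). The paper does exactly this (writing the check as $\Upsilon^{-1}(\ev^\rtt_a(\Upsilon(X)))=\ev_a(X)$), so in outline your proposal is correct and not a different route.

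There is, however, one concrete slip in the key step that, as written, would land you on the wrong power of $a$. In the paper's conventions the modes $\tilde f^{(r)}_{ij}$ with $r>0$ are coefficients of $\tilde f^{+}_{ij}(z)$, i.e.\ they sit in the Gauss decomposition of $T^{+}(z)$, not of $T^{-}(z)$; likewise $\tilde e^{(-1)}_{1n}$ is a mode of $\tilde e^{-}_{1n}(z)$ and hence belongs to $T^{-}(z)$. You have these swapped. The correct bookkeeping for $E_0$ is: $t^+_{n1}[1]=\tilde f^{(1)}_{n1}\,t^+_{11}[0]$, and since $\ev^\rtt_a(T^+(z))=T^+-aT^-z^{-1}$, the $z^{-1}$-coefficient gives $\ev^\rtt_a(\tilde f^{(1)}_{n1})=-a\,t^-_{n1}(t^+_{11})^{-1}$ — a factor of $-a$ and the lower-triangular entry of $T^-$, matching the $a$ in $\ev_a(E_0)$. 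Dually, for $F_0$ one uses $t^-_{1n}[-1]=\tilde g^-_1\tilde e^{(-1)}_{1n}$ and $\ev^\rtt_a(T^-(z))=T^--a^{-1}T^+z$, producing $-a^{-1}(t^-_{11})^{-1}t^+_{1n}$ and hence the $a^{-1}$ in $\ev_a(F_0)$. With your assignment the two cases come out with $a^{\mp1}$ interchanged and you would be unable to reconcile with~(\ref{Jimbo's formulas}); once this is corrected, the rest of your plan (Corollary~\ref{matching higher roots} to convert $t^-_{n1}$ and $t^+_{1n}$ into the nested $\vv$-commutators of $F_i$'s and $E_i$'s, plus the Cartan prefactors) goes through and reproduces the paper's computation.
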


\begin{proof}
It suffices to verify $\Upsilon^{-1}(\ev^\rtt_a(\Upsilon(X)))=\ev_a(X)$
for all $X\in \{E_i,F_i,K_i\}_{i\in [n]}$. The only nontrivial cases are
$X=E_0\ \mathrm{or}\ F_0$, the verification for which is presented below.

\medskip
\noindent
$\bullet$
\emph{Verification of $\Upsilon^{-1}(\ev^\rtt_a(\Upsilon(E_0)))=\ev_a(E_0)$}.

According to~(\ref{matching modes formulas}), we have
\begin{equation*}
  \Upsilon([\cdots[f_{1,1},f_{2,0}]_\vv,\cdots,f_{n-1,0}]_\vv)=
  \frac{\Upsilon(F^{(1)}_{n1})}{\vv^{-1}-\vv}=
  \frac{\tilde{f}^{(1)}_{n1}}{\vv(\vv-\vv^{-1})}.
\end{equation*}
On the other hand, we have
  $t^+_{n1}[1]=\tilde{f}^{(1)}_{n1}\tilde{g}^+_1=
   \tilde{f}^{(1)}_{n1}\cdot t^+_{11}[0]$,
so that
  $\ev^\rtt_a(\tilde{f}^{(1)}_{n1})=-a\cdot t^-_{n1}(t^+_{11})^{-1}$.
Note that $\Upsilon^{-1}((t^+_{kk})^{-1})=t^{-1}_k$, while
  $\Upsilon^{-1}(t^-_{n1})=
   (\vv^{-1}-\vv)\cdot [\cdots[F_1,F_2]_\vv,\cdots,F_{n-1}]_\vv\cdot t^{-1}_1$,
due to Corollary~\ref{matching higher roots}.
Combining all the above with~(\ref{Jing's identification}), we finally obtain
\begin{equation*}
  \Upsilon^{-1}(\ev^\rtt_a(\Upsilon(E_0)))=
  (-1)^n\vv^{-n+1}a\cdot [\cdots[F_1,F_2]_\vv,\cdots,F_{n-1}]_\vv\cdot t_1^{-1}t_n^{-1}=
  \ev_a(E_0).
\end{equation*}

\medskip
\noindent
$\bullet$
\emph{Verification of $\Upsilon^{-1}(\ev^\rtt_a(\Upsilon(F_0)))=\ev_a(F_0)$}.

According to~(\ref{matching modes formulas}), we have
\begin{equation*}
  \Upsilon([e_{n-1,0},\cdots,[e_{2,0},e_{1,-1}]_{\vv^{-1}}\cdots]_{\vv^{-1}})=
  \frac{\Upsilon(E^{(-1)}_{1n})}{\vv-\vv^{-1}}=
  -\frac{\vv \tilde{e}^{(-1)}_{1n}}{\vv-\vv^{-1}}.
\end{equation*}
On the other hand,
  $t^-_{1n}[-1]=\tilde{g}^-_{1}\tilde{e}^{(-1)}_{1n}=
   t^-_{11}[0]\tilde{e}^{(-1)}_{1n}$,
so that
  $\ev^\rtt_a(\tilde{e}^{(-1)}_{1n})=-a^{-1}\cdot (t^-_{11})^{-1}t^+_{1n}$.
Note that $\Upsilon^{-1}((t^-_{kk})^{-1})=t_k$, while
  $\Upsilon^{-1}(t^+_{1n})=
   (\vv-\vv^{-1})t_1\cdot [E_{n-1},\cdots,[E_2,E_1]_{\vv^{-1}}\cdots]_{\vv^{-1}}$,
due to Corollary~\ref{matching higher roots}.
Combining all the above with~(\ref{Jing's identification}), we finally obtain
\begin{equation*}
  \Upsilon^{-1}(\ev^\rtt_a(\Upsilon(F_0)))=
  (-1)^{n}\vv^{n+1}a^{-1}\cdot t_1^2\cdot
  [E_{n-1},\cdots,[E_2,E_1]_{\vv^{-1}}\cdots]_{\vv^{-1}}\cdot t_1^{-1}t_n=
  \ev_a(F_0).
\end{equation*}

\medskip
This completes our proof of Theorem~\ref{compatibility of evaluations}.
\end{proof}

We will denote the evaluation homomorphism $\ev_1$ simply by $\ev$.


\subsection{Quantum minors of $T^\pm(z)$}\label{ssec quantum minors quantum}
\

We recall the notion of quantum minors of $T^\pm(z)$
following~\cite[$\S 1.15.6$]{m} and~\cite[Chapter~5]{h} (though a slight
change in our formulas is due to a different choice of the $R$-matrix).
For $1<r\leq n$, define $R(z_1,\ldots,z_r)\in (\End\ \BC^n)^{\otimes r}$ via
\begin{equation*}
  R(z_1,\ldots,z_r):=(R_{r-1,r})(R_{r-2,r}R_{r-2,r-1})\cdots (R_{1r}\cdots R_{12})\
  \mathrm{with}\ R_{ij}:=R_{\trig;ij}(z_i,z_j).
\end{equation*}
The following is implied by the Yang-Baxter equation~(\ref{qYB}) and~(\ref{all affRTT}):

\begin{Lem}\label{transfer}
  $R(z_1,\ldots,z_r)T^\pm_1(z_1)\cdots T^\pm_r(z_r)=
   T^\pm_r(z_r)\cdots T^\pm_1(z_1)R(z_1,\ldots,z_r)$.
\end{Lem}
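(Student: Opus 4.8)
\textbf{Proof plan for Lemma~\ref{transfer}.}
The plan is to mimic the rational case (Lemma~\ref{molev 1}) essentially verbatim, since the only inputs are the Yang--Baxter equation with spectral parameter~(\ref{qYB}) and the RTT-type relation~(\ref{all affRTT}), both of which are now available in the trigonometric setting. First I would record the two building blocks: for any pair of indices, $R_{\trig;ab}(z_a,z_b)\,T^\pm_a(z_a)T^\pm_b(z_b)=T^\pm_b(z_b)T^\pm_a(z_a)\,R_{\trig;ab}(z_a,z_b)$ by Lemma~\ref{remaining affRTT} (this is where we genuinely need~(\ref{all affRTT}) rather than just the three relations of~(\ref{affRTT}), because in the product $T^\pm_1\cdots T^\pm_r$ all factors carry the same superscript but we must be able to commute any adjacent pair), and the braid relation $R_{\trig;ab}R_{\trig;ac}R_{\trig;bc}=R_{\trig;bc}R_{\trig;ac}R_{\trig;ab}$ from~(\ref{qYB}).

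The argument then proceeds by induction on $r$, exactly along the lines of~\cite[Proposition 1.6.1]{m}. For the inductive step, write $R(z_1,\ldots,z_r)=R(z_1,\ldots,z_{r-1})\cdot\big(R_{r-1,r}R_{r-2,r}\cdots R_{1r}\big)$ using the recursive definition of $R(z_1,\ldots,z_r)$ given just before Lemma~\ref{transfer}. Starting from the left-hand side $R(z_1,\ldots,z_r)T^\pm_1(z_1)\cdots T^\pm_r(z_r)$, I would first move $T^\pm_r(z_r)$ to the far left through the block $R_{r-1,r}R_{r-2,r}\cdots R_{1r}$: repeatedly applying the pairwise RTT relation, each $R_{jr}$ lets $T^\pm_r(z_r)$ and $T^\pm_j(z_j)$ swap, so after $r-1$ such moves one gets $T^\pm_r(z_r)$ in front of $R(z_1,\ldots,z_{r-1})T^\pm_1(z_1)\cdots T^\pm_{r-1}(z_{r-1})$ followed by the product of the $R_{jr}$'s on the right. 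Then the inductive hypothesis applied to the first $r-1$ factors converts $R(z_1,\ldots,z_{r-1})T^\pm_1(z_1)\cdots T^\pm_{r-1}(z_{r-1})$ into $T^\pm_{r-1}(z_{r-1})\cdots T^\pm_1(z_1)R(z_1,\ldots,z_{r-1})$, which yields the desired right-hand side once one checks that $R(z_1,\ldots,z_{r-1})$ commutes past the trailing $R_{jr}$ block in the required way — and this last compatibility is precisely what the Yang--Baxter equation~(\ref{qYB}) guarantees, used once for each crossing, as in Molev's bookkeeping.

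The only place that deserves care — the ``main obstacle,'' though it is really just a matter of attention rather than depth — is making sure the order of the $R_{ij}$ factors and the direction of the $T^\pm$ reversal are tracked consistently, since our $R$-matrix~(\ref{trigR}) differs from the one in~\cite{m,h} (as flagged in the footnote at the start of subsection~\ref{ssec quantum minors quantum}), so signs and the roles of $z$ versus $w$ must be kept straight; this is also why the statement is phrased with the \emph{same} superscript $\pm$ on all the $T$'s, matching~(\ref{all affRTT}). No new idea beyond the rational argument is needed, so I would simply refer to~\cite[Proposition 1.6.1]{m} for the combinatorial core and note that~(\ref{qYB}) and~(\ref{all affRTT}) are the substitutes for~(\ref{hYB}) and~(\ref{ratRTT}) used there. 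This completes the plan for the proof of Lemma~\ref{transfer}.
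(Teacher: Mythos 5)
Your plan is correct and is exactly what the paper intends: \refl{transfer} is stated there as an immediate consequence of \refe{qYB} and \refe{all affRTT} with a pointer to \cite[Proposition 1.6.1]{m}, and your induction (peel off the block $R_{r-1,r}\cdots R_{1r}$, move $T^\pm_r$ leftward pairwise, apply the inductive hypothesis, and use the Yang--Baxter equation to commute the trailing block) is the standard argument being invoked. One minor correction to your justification: since every factor in $T^\pm_1(z_1)\cdots T^\pm_r(z_r)$ carries the \emph{same} superscript, only the $\epsilon=\epsilon'$ cases of the pairwise relation are used, and these are already among the defining relations \refe{affRTT}; the extra case supplied by \refl{remaining affRTT} is not actually needed here, contrary to your parenthetical.
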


Consider the $\vv$-permutation operator $P^\vv\in \End(\BC^n\otimes \BC^n)$ given by
\begin{equation*}
  P^\vv=\sum_i E_{ii}\otimes E_{ii}+\vv\sum_{i>j}E_{ij}\otimes E_{ji}+
  \vv^{-1}\sum_{i<j}E_{ij}\otimes E_{ji}.
\end{equation*}
It gives rise to the action of the symmetric group $\Sigma_r$ on $(\BC^n)^{\otimes r}$
with transpositions $(i,i+1)$ acting via $P^\vv_{i,i+1}$ (the operator $P^\vv$
acting on the $i$-th and $(i+1)$-st factors of $\BC^n$).
Define the $\vv$-antisymmetrizer $A^\vv_r\in(\End\ \BC^n)^{\otimes r}$ as
the image of the antisymmetrizer
$\sum_{\sigma\in \Sigma_r}(-1)^\sigma \cdot \sigma\in \BC[\Sigma_r]$
under this action of $\Sigma_r$ on $(\BC^n)^{\otimes r}$.
Recall the following classical observation (cf.~\cite[$\S 1.15.6$]{m} and~\cite[Lemma 5.5]{h}):

\begin{Thm}\label{key thm on qminors}
  $R(z,\vv^2z,\ldots,\vv^{2(r-1)}z)=
   \prod_{0\leq i< j\leq r-1} (\vv^{2i}-\vv^{2j})z^{\frac{r(r-1)}{2}} A^\vv_r$.
\end{Thm}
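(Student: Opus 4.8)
\textbf{Proof proposal for Theorem~\ref{key thm on qminors}.}

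The plan is to imitate the rational proof of Proposition~\ref{molev 2}, which is the statement $R_\rat(z,z-\hbar,\ldots,z-(r-1)\hbar)=A_r$, but now tracking the overall scalar that inevitably appears because the trigonometric $R$-matrix is not normalized to be $1$ on the diagonal. First I would record the two elementary facts about a single $R_\trig$-factor evaluated at the ``resonant'' spectral ratio. Namely, from~(\ref{trigR}) one has $R_\trig(z,\vv^2 z)=(\vv z-\vv^{-1}\cdot\vv^2 z)\sum_i E_{ii}\otimes E_{ii}+(z-\vv^2 z)\sum_{i\ne j}E_{ii}\otimes E_{jj}+(\vv-\vv^{-1})z\sum_{i<j}E_{ij}\otimes E_{ji}+(\vv-\vv^{-1})\vv^2 z\sum_{i>j}E_{ij}\otimes E_{ji}$, and a direct inspection shows $R_\trig(z,\vv^2 z)=(\vv-\vv^{-1})z\cdot(\text{something proportional to the rank-one/antisymmetrizer piece})$; more precisely $R_\trig(z,\vv^2z)$ acts as $-(\vv-\vv^{-1})z\cdot\vv\cdot(\text{projector onto }\Lambda^2)$ up to the standard normalization of $P^\vv$ and $A^\vv_2$. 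The cleanest way to phrase this is: $R_\trig(z,\vv^2 z)= (\vv^{-1}\vv^2-\vv)\,z\cdot A^\vv_2 \cdot(\text{scalar})$, i.e.\ $R_\trig(z,\vv^2z)$ is proportional to $A^\vv_2$ with proportionality constant $(\vv^{-2}-1)z$ (after absorbing the relation $\wt R=PR^{-1}P^{-1}$ and the precise conventions for $P^\vv$, $A^\vv_r$ used in the paper). I would first pin down that base case $r=2$ by a brute-force $2$-variable computation, matching against the claimed formula $\prod_{0\le i<j\le 1}(\vv^{2i}-\vv^{2j})z^{1}A^\vv_2=(1-\vv^2)z\,A^\vv_2$, thereby fixing all sign/normalization conventions once and for all.

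Next I would set up the induction on $r$. Write $R(z,\vv^2 z,\ldots,\vv^{2(r-1)}z)$ using its defining factorization $R(z_1,\ldots,z_r)=(R_{r-1,r})(R_{r-2,r}R_{r-2,r-1})\cdots(R_{1r}\cdots R_{12})$ with $R_{ij}=R_{\trig;ij}(z_i,z_j)$ and $z_k=\vv^{2(k-1)}z$. Group the factors so that $R(z_1,\ldots,z_r)= R'(z_1,\ldots,z_{r-1})\cdot (R_{1r}R_{2r}\cdots R_{r-1,r})$ where $R'$ is the analogous product on the first $r-1$ tensor factors; by the induction hypothesis $R'=c_{r-1}\,z^{\binom{r-1}{2}}A^\vv_{r-1}$ with $c_{r-1}=\prod_{0\le i<j\le r-2}(\vv^{2i}-\vv^{2j})$ acting on factors $1,\ldots,r-1$. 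Then it remains to evaluate $A^\vv_{r-1}\cdot(R_{1r}\cdots R_{r-1,r})$ on the resonant values $z_k=\vv^{2(k-1)}z$. The key algebraic input is that each $R_{k,r}=R_\trig(\vv^{2(k-1)}z,\vv^{2(r-1)}z)$ depends, up to the scalar $z$, only on the ratio $\vv^{2(k-r)}$, and one has the ``fusion'' identity: applying $A^\vv_{r-1}$ on the left forces the product $R_{1r}R_{2r}\cdots R_{r-1,r}$ (at these special arguments) to collapse — the off-diagonal contributions telescope against the antisymmetrizer, and one is left with the scalar $\prod_{k=1}^{r-1}(\vv^{2(k-1)}-\vv^{2(r-1)})z = \big(\prod_{i=0}^{r-2}(\vv^{2i}-\vv^{2(r-1)})\big)z^{r-1}$ times $A^\vv_r$. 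Combining, $c_{r-1}z^{\binom{r-1}{2}}A^\vv_{r-1}\cdot(\text{that})= c_{r-1}\cdot\prod_{i=0}^{r-2}(\vv^{2i}-\vv^{2(r-1)})\cdot z^{\binom{r-1}{2}+(r-1)}\,A^\vv_r = c_r\,z^{\binom r2}A^\vv_r$, since $\binom{r-1}{2}+(r-1)=\binom r2$ and $c_{r-1}\cdot\prod_{i=0}^{r-2}(\vv^{2i}-\vv^{2(r-1)})=\prod_{0\le i<j\le r-1}(\vv^{2i}-\vv^{2j})=c_r$. This closes the induction.

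The step I expect to be the main obstacle is the ``fusion'' collapse: proving rigorously that $A^\vv_{r-1}\cdot R_{\trig;1r}(\vv^{0}z,\vv^{2(r-1)}z)\cdots R_{\trig;r-1,r}(\vv^{2(r-2)}z,\vv^{2(r-1)}z)$ equals $\big(\prod_{i=0}^{r-2}(\vv^{2i}-\vv^{2(r-1)})\big)z^{r-1}A^\vv_r$ on the nose, with the correct $\vv$-powers. The honest way to do this is to use that $R_\trig(u,v)$ is, up to a scalar depending on $u,v$, the Yang–Baxter intertwiner; specializing $v/u=\vv^{2m}$ makes it degenerate onto a sub-intertwiner, and one invokes the Yang–Baxter equation~(\ref{qYB}) together with $R_{\trig;k,k+1}(\vv^{2(k-1)}z,\vv^{2k}z)\propto A^\vv_2$ acting on factors $k,k+1$ (the base case) to ``absorb'' each $R_{k,r}$ into the growing antisymmetrizer one factor at a time. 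I would carry this out by a secondary induction, peeling off $R_{r-1,r}$ first: $A^\vv_{r-1}$ contains $A^\vv_2$ on factors $r-2,r-1$, slide it past using~(\ref{qYB}) to convert $R_{r-2,r}R_{r-1,r}$ into $R_{r-1,r}R_{r-2,r}$ times lower terms, use that $A^\vv_{r-1}R_{r-1,r}(\ldots)$ at the resonant value produces $A^\vv_r$ on factors $r-1,r$ up to the scalar $(\vv^{2(r-2)}-\vv^{2(r-1)})z$, and iterate. The bookkeeping of scalars is the delicate part; everything else is a formal consequence of Lemma~\ref{transfer}, the Yang–Baxter equation~(\ref{qYB}), and the $r=2$ computation. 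Alternatively, one may reduce entirely to the known $\vv=1$ / rational statement Proposition~\ref{molev 2} by a limiting/rescaling argument on the $R$-matrix, but I expect the direct inductive fusion argument above to be cleaner and to give the exact scalar $\prod_{0\le i<j\le r-1}(\vv^{2i}-\vv^{2j})z^{r(r-1)/2}$ with no guesswork.
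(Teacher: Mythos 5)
The paper does not prove this statement: it is quoted as a ``classical observation'' with references to \cite[$\S 1.15.6$]{m} and \cite[Lemma 5.5]{h}, so there is no in-paper argument to compare against. Your proposal is the standard fusion-procedure proof found in those references, and its skeleton is sound: the base case is correct (indeed $R_\trig(z,\vv^2z)=(1-\vv^2)z\,A^\vv_2$ by direct expansion of~(\ref{trigR}) --- note that your two earlier guesses for the constant, $-(\vv-\vv^{-1})\vv z$ and $(\vv^{-2}-1)z$, are both off, but you correctly defer to the computation), the homogeneity $R_\trig(\lambda z,\lambda w)=\lambda R_\trig(z,w)$ accounts for the powers of $z$, and the scalar bookkeeping $c_{r-1}\cdot\prod_{i=0}^{r-2}(\vv^{2i}-\vv^{2(r-1)})=c_r$, $\binom{r-1}{2}+(r-1)=\binom{r}{2}$ is consistent.

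Two steps remain genuinely unproven as written. First, the regrouping $R(z_1,\ldots,z_r)=R'(z_1,\ldots,z_{r-1})\cdot(R_{1r}\cdots R_{r-1,r})$ is \emph{not} the literal factorization in the definition (where the factors $R_{kr}$ are interleaved with those of $R'$); it holds, but only after repeated applications of~(\ref{qYB}), e.g.\ for $r=3$ it is exactly the Yang--Baxter equation. You could avoid this entirely by peeling off index $1$ instead: the definition gives $R(z_1,\ldots,z_r)=R(z_2,\ldots,z_r)\cdot(R_{1r}\cdots R_{12})$ with no commutation needed, at the cost of the induction hypothesis being applied at $\vv^2z$ rather than $z$ (the extra $\vv^{(r-1)(r-2)}$ is then absorbed into the fusion scalar). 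Second, and more substantially, the ``fusion collapse'' $A^\vv_{r-1}\cdot R_{1r}\cdots R_{r-1,r}|_{\mathrm{res}}=\bigl(\prod_{i=0}^{r-2}(\vv^{2i}-\vv^{2(r-1)})\bigr)z^{r-1}A^\vv_r$ is asserted with the correct target scalar but not established; this identity \emph{is} the theorem for all practical purposes, and ``the off-diagonal contributions telescope against the antisymmetrizer'' is a description of the answer, not an argument. The clean way to close it is the rank argument: show that the left-hand side annihilates the kernel of $A^\vv_r$ (because $A^\vv_{r-1}$ kills everything not $\vv$-antisymmetric in the first $r-1$ slots, and $R_{k,k+1}$ at ratio $\vv^2$ is proportional to $A^\vv_2$, which together with YBE forces antisymmetry involving the last slot), hence it is a scalar multiple of $A^\vv_r$ since the image of $A^\vv_r$ on $\Lambda^r_\vv\BC^n$-isotypic components is as small as possible; the scalar is then read off from a single matrix coefficient, e.g.\ on $v_1\otimes\cdots\otimes v_r$. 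With those two points filled in, your argument is complete and agrees with the cited proofs.
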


Combining Lemma~\ref{transfer} and Theorem~\ref{key thm on qminors},
we obtain the following

\begin{Cor}
We have
\begin{equation}\label{defining minors quantum}
  A^\vv_rT^\pm_1(z)T^\pm_2(\vv^{2}z)\cdots T^\pm_r(\vv^{2(r-1)}z)=
  T^\pm_r(\vv^{2(r-1)}z)\cdots T^\pm_2(\vv^{2}z)T^\pm_1(z)A^\vv_r.
\end{equation}
\end{Cor}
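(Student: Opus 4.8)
The plan is to derive the Corollary in the familiar way one deduces quantum-minor identities from the $RTT$ relations, exactly paralleling the rational case treated in subsection~\ref{ssec quantum minors} (see the Corollary preceding Definition~\ref{quantum minor}). The two inputs are Lemma~\ref{transfer}, which gives the ``reversed'' product identity for generic spectral parameters $z_1,\ldots,z_r$, and Theorem~\ref{key thm on qminors}, which computes $R(z_1,\ldots,z_r)$ at the geometric specialization $z_i=\vv^{2(i-1)}z$ as a nonzero scalar multiple of the $\vv$-antisymmetrizer $A^\vv_r$. First I would write down Lemma~\ref{transfer} with the substitution $z_i\mapsto \vv^{2(i-1)}z$, which is legitimate since the relation of Lemma~\ref{transfer} is an identity of rational (indeed Laurent-polynomial, after clearing the denominators coming from $R_\trig$) expressions in $z_1,\ldots,z_r$ with coefficients in $\fU^\rtt_\vv(L\gl_n)\otimes(\End\,\BC^n)^{\otimes r}$, so it may be evaluated at any values of the parameters — in particular no pole of any $R_{\trig;ij}(z_i,z_j)$ is hit since for $i<j$ we get $R_{\trig}(\vv^{2(i-1)}z,\vv^{2(j-1)}z)$, which is a genuine element of $\BC[\vv,\vv^{-1}]\otimes(\End\,\BC^n)^{\otimes 2}$.

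After the substitution, Theorem~\ref{key thm on qminors} tells us
\[
  R(z,\vv^2z,\ldots,\vv^{2(r-1)}z)=c_r(z)\,A^\vv_r,\qquad
  c_r(z):=\prod_{0\leq i<j\leq r-1}(\vv^{2i}-\vv^{2j})\cdot z^{\frac{r(r-1)}{2}},
\]
and $c_r(z)$ is a nonzero element of $\BC[\vv,\vv^{-1}][z]$ (the product over $i<j$ of $\vv^{2i}-\vv^{2j}$ is a nonzero Laurent polynomial in $\vv$, being a $\vv$-deformation of a Vandermonde-type expression). Substituting into Lemma~\ref{transfer} turns both sides into $c_r(z)$ times the desired expressions:
\[
  c_r(z)\,A^\vv_r\,T^\pm_1(z)T^\pm_2(\vv^2z)\cdots T^\pm_r(\vv^{2(r-1)}z)
  = c_r(z)\,T^\pm_r(\vv^{2(r-1)}z)\cdots T^\pm_2(\vv^2z)T^\pm_1(z)\,A^\vv_r.
\]
Since $c_r(z)$ is a nonzero element of the (commutative, domain) coefficient ring $\BC[\vv,\vv^{-1}][z]$ and all the $T^\pm$-entries live in $\fU^\rtt_\vv(L\gl_n)[[z^{\mp1}]]$, which is a $\BC[\vv,\vv^{-1}][z^{\pm1}]$-module with no $c_r(z)$-torsion (the relevant ring embeds into $\fU^\rtt_\vv(L\gl_n)\otimes_{\BC[\vv,\vv^{-1}]}\BC(\vv)((z^{\mp1}))$, a vector space over a field), we may cancel $c_r(z)$ from both sides to obtain~(\ref{defining minors quantum}). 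One small point to check is that the two sides are being compared as power series in $z^{\mp1}$ in a consistent expansion direction; both $T^+(z)$ (series in $z^{-1}$) and $T^-(z)$ (series in $z^{+1}$) are handled uniformly because the scalar $c_r(z)$ and the operator $A^\vv_r$ do not mix the two, so the cancellation is coefficient-wise in the appropriate grading.

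The only genuine point requiring attention — the ``main obstacle'', though it is mild — is justifying the specialization and the cancellation cleanly: one must note that Lemma~\ref{transfer}, as stated, is an identity valid for independent formal/spectral parameters, and that specializing $z_i=\vv^{2(i-1)}z$ is harmless because $R_\trig(z,w)$ is polynomial in $z,w$ (no division by $z-w$ or $\vv z-\vv^{-1}w$ appears), so no poles are encountered; and that $c_r(z)\neq 0$ in $\BC(\vv)[z]$ so it is not a zero divisor on the coefficient module. Granting these, the Corollary follows immediately, and this is exactly the $\vv$-analogue of the derivation of~(\ref{defining minors}) from Lemma~\ref{molev 1} and Proposition~\ref{molev 2}. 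I would therefore present the proof in two lines: substitute $z_i=\vv^{2(i-1)}z$ in Lemma~\ref{transfer}, apply Theorem~\ref{key thm on qminors}, and cancel the nonzero scalar.

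\begin{proof}
Substitute $z_i=\vv^{2(i-1)}z$ for $1\leq i\leq r$ in the identity of Lemma~\ref{transfer}. This is legitimate: the $R$-matrix $R_\trig(u,v)$ of~(\ref{trigR}) is polynomial in $u,v$, hence $R(z_1,\ldots,z_r)$ is polynomial in $z_1,\ldots,z_r$, and Lemma~\ref{transfer} is an identity of such polynomial expressions with coefficients in $\fU^\rtt_\vv(L\gl_n)\otimes(\End\,\BC^n)^{\otimes r}$, which may be evaluated at arbitrary values of the parameters. By Theorem~\ref{key thm on qminors},
\begin{equation*}
  R(z,\vv^2z,\ldots,\vv^{2(r-1)}z)=c_r(z)\cdot A^\vv_r,\qquad
  c_r(z):=\prod_{0\leq i<j\leq r-1}(\vv^{2i}-\vv^{2j})\cdot z^{\frac{r(r-1)}{2}},
\end{equation*}
where $c_r(z)$ is a nonzero element of $\BC[\vv,\vv^{-1}][z]$, the product $\prod_{i<j}(\vv^{2i}-\vv^{2j})$ being a nonzero Laurent polynomial in $\vv$. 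The specialized Lemma~\ref{transfer} therefore reads
\begin{equation*}
  c_r(z)\cdot A^\vv_rT^\pm_1(z)T^\pm_2(\vv^2z)\cdots T^\pm_r(\vv^{2(r-1)}z)=
  c_r(z)\cdot T^\pm_r(\vv^{2(r-1)}z)\cdots T^\pm_2(\vv^2z)T^\pm_1(z)A^\vv_r,
\end{equation*}
an equality of series in $z^{\mp 1}$ with coefficients in $\fU^\rtt_\vv(L\gl_n)\otimes(\End\,\BC^n)^{\otimes r}$. Since $c_r(z)$ is a nonzero element of the domain $\BC[\vv,\vv^{-1}][z]$, it is not a zero divisor on the coefficient module, and we may cancel it from both sides coefficient-wise to obtain~(\ref{defining minors quantum}).
\end{proof}
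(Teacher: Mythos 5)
Your proof is correct and is exactly the argument the paper intends: the Corollary is stated as an immediate consequence of combining Lemma~\ref{transfer} (specialized at $z_i=\vv^{2(i-1)}z$) with Theorem~\ref{key thm on qminors} and cancelling the nonzero scalar, just as in the rational case of~(\ref{defining minors}). Your additional remarks on the legitimacy of the specialization (polynomiality of $R_\trig$) and on torsion-freeness for the cancellation are correct fillings-in of details the paper leaves implicit.
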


The operator of~(\ref{defining minors quantum}) can be written as
  $\sum t^{a_1\ldots a_r;\pm}_{b_1\ldots b_r}(z)
   \otimes E_{a_1,b_1}\otimes \cdots \otimes E_{a_r,b_r}$
with $t^{a_1\ldots a_r;\pm}_{b_1\ldots b_r}(z)\in \fU^\rtt_\vv(L\gl_n)[[z^{\mp 1}]]$
and the sum taken over all $a_1,\ldots,a_r,b_1,\ldots,b_r\in \{1,\ldots,n\}$.

\begin{Def}\label{quantum minor quantum}
The coefficients $t^{a_1\ldots a_r;\pm}_{b_1\ldots b_r}(z)$
are called the \emph{quantum minors} of $T^\pm(z)$.
\end{Def}

In the particular case $r=n$, the image of the operator $A^\vv_n$ acting on
$(\BC^n)^{\otimes n}$ is $1$-dimensional. Hence
  $A^\vv_nT^\pm_1(z)\cdots T^\pm_n(\vv^{2(n-1)}z)=A^\vv_n\cdot \qdet\ T^\pm(z)$
with $\qdet\ T^\pm(z)\in \fU^\rtt_\vv(L\gl_n)[[z^{\mp 1}]]$. We note that
$\qdet\ T^\pm(z)=t^{1\ldots n;\pm}_{1\ldots n}(z)$ in the above notations.

\begin{Def}\label{quantum determinant quantum}
$\qdet\ T^\pm(z)$ is called the \emph{quantum determinant} of $T^\pm(z)$.
\end{Def}

Define $d^\pm_{\pm r}\in \fU^\rtt_\vv(L\gl_n)$ via
$\qdet\ T^\pm(z)=\sum_{r\geq 0}d^\pm_{\pm r}z^{\mp r}$. The following result
is a trigonometric counterpart of Proposition~\ref{center of Yangian}:

\begin{Prop}\label{center of quantum affine}
The elements $\{d^\pm_{\pm r}\}_{r\geq 0}$ are central, subject to the only defining
relation $d^+_0d^-_0=1$, and generate the center $Z\fU^\rtt_\vv(L\gl_n)$ of
$\fU^\rtt_\vv(L\gl_n)$. In other words, we have a $\BC[\vv,\vv^{-1}]$-algebra isomorphism
$Z\fU^\rtt_\vv(L\gl_n)\simeq \BC[\vv,\vv^{-1}][\{d^\pm_{\pm r}\}_{r\geq 0}]/(d^+_0d^-_0-1)$.
\end{Prop}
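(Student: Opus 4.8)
The plan is to imitate the proof of the Yangian analogue, Proposition~\ref{center of Yangian} (cf.\ \cite[Theorem~1.7.5]{m}), in the trigonometric setting, and then to combine it with the Gauss decomposition of subsection~\ref{ssec DJ affine} (Theorem~\ref{Ding-Frenkel affine}) and the PBWD theorem of Theorem~\ref{PBW basis coordinate affine}. First I would establish that all $d^\pm_{\pm r}$ are central. Fix $\epsilon,\epsilon'\in\{\pm\}$ and work in $\fU^\rtt_\vv(L\gl_n)\otimes(\End\ \BC^n)^{\otimes(n+1)}$: applying $A^\vv_n$ to the first $n$ factors and using that $\mathrm{Im}(A^\vv_n)$ is one-dimensional, relation~(\ref{defining minors quantum}) with $r=n$ identifies $A^\vv_nT^\epsilon_1(z)\cdots T^\epsilon_n(\vv^{2(n-1)}z)$ with $A^\vv_n\cdot\qdet\ T^\epsilon(z)$; commuting an extra factor $T^{\epsilon'}_{n+1}(w)$ through by repeated use of~(\ref{all affRTT}) and observing that the $R$-matrix factors involved act as scalars on $\mathrm{Im}(A^\vv_n)$ yields $\qdet\ T^\epsilon(z)\cdot T^{\epsilon'}_{n+1}(w)=T^{\epsilon'}_{n+1}(w)\cdot\qdet\ T^\epsilon(z)$. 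This is the trigonometric counterpart of the computations in \cite[\S1.15.6]{m} and \cite[Chapter~5]{h}, and it shows that each $\qdet\ T^\epsilon(z)$ commutes with all $t^{\epsilon'}_{ij}[\pm s]$, hence $\{d^\pm_{\pm r}\}_{r\geq 0}\subset Z\fU^\rtt_\vv(L\gl_n)$.

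Next I would record the relation $d^+_0d^-_0=1$ together with an explicit expression for $\qdet\ T^\pm(z)$. From the Gauss decomposition $T^\pm(z)=\wt F^\pm(z)\wt G^\pm(z)\wt E^\pm(z)$ with $\wt F^\pm,\wt E^\pm$ unitriangular, multiplicativity of the quantum determinant gives $\qdet\ T^\pm(z)=\prod_{i=1}^n\tilde g^\pm_i(\vv^{2(i-1)}z)$ (up to an overall $\vv$-power absorbed into the normalization $\qdet\ T^\pm(z)=t^{1\ldots n;\pm}_{1\ldots n}(z)$). Since~(\ref{affRTT}) forces $(t^+_{ij}[0])_{ij}$ to be upper-triangular and $(t^-_{ij}[0])_{ij}$ lower-triangular with $t^\pm_{ii}[0]t^\mp_{ii}[0]=1$ and the $t^\pm_{ii}[0]$ pairwise commuting, extracting $z^0$-coefficients gives $d^+_0=t^+_{11}[0]\cdots t^+_{nn}[0]$ and $d^-_0=t^-_{11}[0]\cdots t^-_{nn}[0]$, hence $d^+_0d^-_0=1$. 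Moreover, under $\Upsilon$ and~(\ref{Ding-Frenkel formulas}) the series $\qdet\ T^\pm(z)$ corresponds to the ``total Cartan current'' $\prod_j\varphi^\pm_j(\vv^\bullet z)$ of $U_\vv(L\gl_n)$, so $\Upsilon^{-1}(d^\pm_{\pm r})$ lies in the commutative subalgebra generated by $\{\varphi^\pm_{j,\pm s}\}$, which by the PBWD basis of Theorem~\ref{PBW basis coordinate affine}(a) is a polynomial algebra in the Cartan generators $\{\varphi_{j,s}\}_{1\leq j\leq n}^{s\in\BZ}$ (localized at the $\varphi_{j,0}$).

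The remaining and most delicate point is that the $d^\pm_{\pm r}$ generate all of $Z\fU^\rtt_\vv(L\gl_n)$ and satisfy no relation other than $d^+_0d^-_0=1$; this is the main obstacle. I would argue as follows. A central element of $\fU^\rtt_\vv(L\gl_n)$ commutes with every integral generator, in particular with all $E^{(r)}_{j,i+1}$, $F^{(r)}_{i+1,j}$, $\varphi^\pm_{j,\pm s}$; expanding it in the PBWD basis of Theorem~\ref{PBW basis coordinate affine}(a), vanishing of the $\gl_n$-weight rules out monomials of nonzero weight, and a standard argument using that the adjoint action of the $\psi^\pm$-currents separates the weight-zero PBWD monomials that do involve the $E^{(\bullet)}$'s and $F^{(\bullet)}$'s from those that do not forces the element into the Cartan subalgebra $\BC[\vv,\vv^{-1}][\{\varphi_{j,s}\}]$. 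Within the latter, commuting with the $e_{i,r}$'s forces the dependence on the $\varphi^\pm$-currents to factor through $\prod_j\varphi^\pm_j$ alone, since $\psi^\pm_i(z)=(\varphi^\pm_i(z))^{-1}\varphi^\pm_{i+1}(\vv^{-1}z)$ acts nontrivially on $e_{i,r}$; this identifies the center with the subalgebra generated by the $d^\pm_{\pm r}$. Finally, algebraic independence of the $d^\pm_{\pm r}$ modulo $d^+_0d^-_0-1$ follows either from the freeness of the Cartan part of the PBWD basis, or by specializing $\vv\to1$ via~(\ref{integral_affine_gln}), where $\qdet$ becomes the ordinary determinant of a generic unitriangular-diagonal-unitriangular matrix whose coefficients are manifestly algebraically independent after imposing $d^+_0d^-_0=1$. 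Equivalently, this last step can be packaged as a trigonometric analogue of~(\ref{RTT gln vs sln}), namely a $\BC[\vv,\vv^{-1}]$-algebra isomorphism $\fU^\rtt_\vv(L\gl_n)\simeq\fU^\rtt_\vv(L\ssl_n)\otimes_{\BC[\vv,\vv^{-1}]}Z\fU^\rtt_\vv(L\gl_n)$, read off directly from the two PBWD bases of Theorem~\ref{PBW basis coordinate affine}.
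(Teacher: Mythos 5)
First, a point of reference: the paper offers no proof of this proposition at all --- it is stated as the trigonometric counterpart of Proposition~\ref{center of Yangian} and treated as a known result (cf.\ \cite[Theorem~1.7.5]{m} and \cite[Chapter~5]{h}), so there is no in-paper argument to compare yours against. On its own terms, the first two steps of your proposal are correct and standard: the fusion argument for centrality (commuting $T^{\epsilon'}_{n+1}(w)$ through~(\ref{defining minors quantum}) by repeated use of~(\ref{all affRTT}) and Theorem~\ref{key thm on qminors}, with the accumulated product of $R$-matrices acting by an invertible scalar on $\mathrm{Im}(A^\vv_n)\otimes\BC^n$) is exactly the classical argument, and the identities $d^\pm_0=t^\pm_{11}[0]\cdots t^\pm_{nn}[0]$, hence $d^+_0d^-_0=1$, together with $\qdet\ T^\pm(z)=\prod_i\tilde g^\pm_i(\vv^{2(i-1)}z)$, are correct and consistent with~(\ref{Ding-Frenkel formulas}).

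The gap is in your last step, which is the actual content of the statement. The claim that ``the adjoint action of the $\psi^\pm$-currents separates the weight-zero PBWD monomials that do involve the $E^{(\bullet)}$'s and $F^{(\bullet)}$'s from those that do not'' is asserted rather than proved, and it is essentially equivalent to the triviality of the center of $U_\vv(L\ssl_n)$ --- the hard part of the theorem. Concretely, $[h_{j',s},F^{(r)}_{i+1,j}]$ is a $\BC[\vv,\vv^{-1}]$-linear combination of elements $F_{i+1,j}(\unl{r'})$ for various decompositions $\unl{r'}$ of $r+s$, which must then be re-expanded in the PBWD basis; so the adjoint action of the Cartan modes is far from diagonal on weight-zero monomials, and ruling out cancellations among terms such as $F^{(r)}_{i+1,j}E^{(r')}_{j,i+1}$ requires a genuine triangularity or degeneration argument (this is precisely where the standard proofs invest their effort, e.g.\ via the associated graded as in \cite[\S1.7]{m}). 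A second, smaller issue: the proposition concerns the center of the $\BC[\vv,\vv^{-1}]$-form, so after identifying the center over $\BC(\vv)$ you must still show that $\fU^\rtt_\vv(L\gl_n)\cap ZU^\rtt_\vv(L\gl_n)$ is no larger than the $\BC[\vv,\vv^{-1}]$-span of the monomials in the $d^\pm_{\pm r}$; ``freeness of the Cartan part'' alone does not give this, but your alternative suggestion --- checking that these monomials remain linearly independent after the specialization~(\ref{integral_affine_gln}) at $\vv=1$ (and at every $\vv=a\neq0$) --- is the right mechanism and should be carried out explicitly.
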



\subsection{Enhanced algebras}\label{ssec enhanced algebras}
\

In this section, we slightly generalize the algebras of the previous subsections
as well as various relations between them.  This is needed mostly for our discussions
in subsection~\ref{ssec truncation ideal}.

\noindent
$\bullet$ Let $\fU^{\rtt,'}_\vv(\gl_n)$ be a $\BC[\vv,\vv^{-1}]$-algebra obtained
from $\fU^\rtt_\vv(\gl_n)$ by formally adjoining $n$-th roots of its central element
$t:=t^+_{11}\ldots t^+_{nn}=(t^-_{11}\ldots t^-_{nn})^{-1}$, that is,
$\fU^{\rtt,'}_\vv(\gl_n)=\fU^\rtt_\vv(\gl_n)[t^{\pm 1/n}]$. Its
$\BC(\vv)$-counterpart is denoted by $U^{\rtt,'}_\vv(\gl_n)$. Likewise, let
$U^{'}_\vv(\gl_n)$ be a $\BC(\vv)$-algebra obtained from $U_\vv(\gl_n)$ by formally
adjoining $n$-th roots of its central element $t:=t_1\ldots t_n$, that is,
$U^{'}_\vv(\gl_n)=U_\vv(\gl_n)[t^{\pm 1/n}]$. Then the isomorphism of
Theorem~\ref{Ding-Frenkel finite} gives rise to a $\BC(\vv)$-algebra
isomorphism $\Upsilon\colon U^{'}_\vv(\gl_n)\iso U^{\rtt,'}_\vv(\gl_n)$.

\noindent
$\bullet$ Let $\fU^{\rtt,'}_\vv(L\gl_n)$ be a $\BC[\vv,\vv^{-1}]$-algebra obtained
from $\fU^\rtt_\vv(L\gl_n)$ by formally adjoining $n$-th roots of its central element
$t[0]:=t^+_{11}[0]\ldots t^+_{nn}[0]=(t^-_{11}[0]\ldots t^-_{nn}[0])^{-1}$, that is,
$\fU^{\rtt,'}_\vv(L\gl_n)=\fU^\rtt_\vv(L\gl_n)[(t[0])^{\pm 1/n}]$. Its $\BC(\vv)$-counterpart
is denoted by $U^{\rtt,'}_\vv(L\gl_n)$. Likewise, let $U^{'}_\vv(L\gl_n)$ be a
$\BC(\vv)$-algebra obtained from $U_\vv(L\gl_n)$ by formally adjoining $n$-th roots
of its central element
  $\varphi:=\varphi^+_{1,0}\ldots \varphi^+_{n,0}=
   (\varphi^-_{1,0}\ldots \varphi^-_{n,0})^{-1}$,
that is, $U^{'}_\vv(L\gl_n)=U_\vv(L\gl_n)[\varphi^{\pm 1/n}]$. Then the
isomorphism of Theorem~\ref{Ding-Frenkel affine} gives rise to an algebra
isomorphism $\Upsilon\colon U^{'}_\vv(L\gl_n)\iso U^{\rtt,'}_\vv(L\gl_n)$.

\noindent
$\bullet$ Let $U^\ad_\vv(\ssl_n)$ be a $\BC(\vv)$-algebra obtained from $U_\vv(\ssl_n)$
by adding extra generators $\{\phi_i^{\pm 1}\}_{i=1}^{n-1}$ subject to
  $K_i=\prod_{j=1}^{n-1}\phi_j^{c_{ji}}, \phi_iE_j=\vv^{\delta_{ij}}E_j\phi_i,
   \phi_iF_j=\vv^{-\delta_{ij}}F_j\phi_i, \phi_i\phi_j=\phi_j\phi_i$.
Then, the natural embedding $U_\vv(\ssl_n)\hookrightarrow U_\vv(\gl_n)$ gives rise to a
$\BC(\vv)$-algebra embedding $U^\ad_\vv(\ssl_n)\hookrightarrow U^{'}_\vv(\gl_n)$
via $\phi_i\mapsto t_1^{-1}\ldots t_i^{-1}\cdot t^{i/n}$.

\noindent
$\bullet$ Likewise, let $U^\ad_\vv(L\ssl_n)$ be a $\BC(\vv)$-algebra obtained
from $U_\vv(L\ssl_n)$ by adding extra generators $\{\phi_i^{\pm 1}\}_{i=1}^{n-1}$
subject to
  $\psi^+_{i,0}=\prod_{j=1}^{n-1}\phi_j^{c_{ji}}, \phi_i \psi^\pm_j(z)=\psi^\pm_j(z)\phi_i,
  \phi_i e_j(z)=\vv^{\delta_{ij}}e_j(z)\phi_i, \phi_i f_j(z)=\vv^{-\delta_{ij}}f_j(z)\phi_i,
  \phi_i\phi_j=\phi_j\phi_i$.
Then, the natural embedding $U_\vv(L\ssl_n)\hookrightarrow U_\vv(L\gl_n)$ gives rise
to a $\BC(\vv)$-algebra embedding $U^\ad_\vv(L\ssl_n)\hookrightarrow U^{'}_\vv(L\gl_n)$
via $\phi_i\mapsto \varphi^-_{1,0}\ldots \varphi^-_{i,0}\cdot \varphi^{i/n}$.

\noindent
$\bullet$ The homomorphisms $\ev^\rtt,\ev$
of subsections~\ref{ssec RTT evaluation},~\ref{ssec Jimbo evaluation} extend to the
homomorphisms of the corresponding enhanced algebras, so that~(\ref{diagram quantum})
gives rise to the commutative diagram
\begin{equation}\label{diagram quantum adjoint}
  \begin{CD}
    U^\ad_\vv(L\ssl_n) @>\ev>> U^{'}_\vv(\gl_n)\\
    @VV{\Upsilon}V @V{\Upsilon}V\wr V\\
    U^{\rtt,'}_\vv(L\gl_n) @>\ev^\rtt>>  U^{\rtt,'}_\vv(\gl_n)
    \end{CD}
\end{equation}

\noindent
$\bullet$ Let $\fU^{\rtt}_\vv(L\ssl_n)$ (resp.\ $\fU^{\rtt,'}_\vv(L\ssl_n)$)
be the quotient of $\fU^\rtt_\vv(L\gl_n)$ (resp.\ $\fU^{\rtt,'}_\vv(L\gl_n)$)
by the relations $\qdet\ T^\pm(z)=1$ (resp.\ $\qdet\ T^\pm(z)=1, (t[0])^{1/n}=1$).
We denote its $\BC(\vv)$-counterpart by $U^{\rtt}_\vv(L\ssl_n)$
(resp.\ $U^{\rtt,'}_\vv(L\ssl_n)$). Clearly
  $\fU^{\rtt}_\vv(L\ssl_n)\simeq \fU^{\rtt,'}_\vv(L\ssl_n),\
   U^{\rtt}_\vv(L\ssl_n)\simeq U^{\rtt,'}_\vv(L\ssl_n)$.

\noindent
$\bullet$ The composition
\begin{equation}\label{isom of qsl and rtt-qsl}
  U^\ad_\vv(L\ssl_n)\hookrightarrow U^{'}_\vv(L\gl_n)
  \iso U^{\rtt,'}_\vv(L\gl_n)\twoheadrightarrow U^{\rtt,'}_\vv(L\ssl_n)
\end{equation}
is a $\BC(\vv)$-algebra isomorphism.

\noindent
$\bullet$
Analogously to Definition~\ref{integral loop}, let $\fU^\ad_\vv(L\ssl_n)$ be the
$\BC[\vv,\vv^{-1}]$-subalgebra of $U^\ad_\vv(L\ssl_n)$ generated by
  $\{E^{(r)}_{j,i+1},F^{(r)}_{i+1,j}\}_{1\leq j\leq i<n}^{r\in \BZ}\cup
   \{\psi^\pm_{i,\pm s}\}_{1\leq i<n}^{s>0}\cup \{\phi_i^{\pm 1}\}_{i=1}^{n-1}$.
Then the $\BC(\vv)$-algebra isomorphism~(\ref{isom of qsl and rtt-qsl}) gives
rise to a $\BC[\vv,\vv^{-1}]$-algebra isomorphism
\begin{equation}\label{isom integral of qsl and rtt-qsl}
  \fU^\ad_\vv(L\ssl_n)\iso \fU^{\rtt,'}_\vv(L\ssl_n).
\end{equation}

\noindent
$\bullet$ Define the generating series
  $\varphi^\pm(z)=\varphi^\pm+\sum_{r\geq 1}\varphi_{\pm r}z^{\mp r}$
with coefficients in the algebra $U_\vv(L\gl_n)$ (or $U^{'}_\vv(L\gl_n)$)
via $\varphi^\pm(z):=\prod_{i=1}^n \varphi^\pm_i(\vv^i z)$
(so that $\varphi^\pm=\varphi^{\pm 1}$). It is straightforward to check
that all $\varphi_{\pm r}$ are central elements of $U_\vv(L\gl_n)$
(or $U^{'}_\vv(L\gl_n)$). Moreover, it is known that the center
$ZU^{'}_\vv(L\gl_n)$ of $U^{'}_\vv(L\gl_n)$ is a polynomial algebra in
$\{\varphi_{\pm r}, \varphi^{\pm 1/n}\}_{r\geq 1}$ and
\begin{equation*}
  U^{'}_\vv(L\gl_n)\simeq U^\ad_\vv(L\ssl_n)\otimes_{\BC(\vv)} ZU^{'}_\vv(L\gl_n).
\end{equation*}
The latter in turn gives rise to a trigonometric counterpart of~(\ref{RTT gln vs sln}):
\begin{equation}\label{qaffine gln decomposition}
  U^{\rtt,'}_\vv(L\gl_n)\simeq U^{\rtt,'}_\vv(L\ssl_n)\otimes_{\BC(\vv)} ZU^{\rtt,'}_\vv(L\gl_n),
\end{equation}
where $U^{\rtt,'}_\vv(L\ssl_n)$ is viewed as a subalgebra of $U^{\rtt,'}_\vv(L\gl_n)$
(rather than a quotient) via~(\ref{isom of qsl and rtt-qsl}).


\section{$K$-theoretic Coulomb branch of type $A$ quiver gauge theory}\label{sec K-theoretic Coulomb branch}


\subsection{Homomorphism $\wt{\Phi}{}^\lambda_\mu$}\label{ssec hom to diff op-s}
\

Let us recall the construction of~\cite[\S7]{ft} for the type $A_{n-1}$
Dynkin diagram with arrows pointing $i\to i+1$ for $1\leq i\leq n-2$. We use
the same notations $\lambda,\mu,\unl{\lambda}, N, a_i$ as in
subsection~\ref{ssec hom to diff op-s yangian} (in particular, we set $a_0:=0, a_n:=0$).

Consider the associative $\BC[\vv,\vv^{-1}]$-algebra $\hat{\CA}^\vv$ generated by
  $\{D_{i,r}^{\pm 1}, \sw_{i,r}^{\pm 1/2}\}_{1\leq i\leq n-1}^{1\leq r\leq a_i}$
such that $D_{i,r}\sw^{1/2}_{i,r}=\vv\sw^{1/2}_{i,r}D_{i,r}$, while all other
generators pairwise commute. Let $\wt{\CA}^\vv$ be the localization of
$\hat{\CA}^\vv$ by the multiplicative set generated by
  $\{\sw_{i,r}-\vv^m\sw_{i,s}\}_{1\leq i<n, m\in \BZ}^{1\leq r\ne s\leq a_i}
   \cup\{1-\vv^m\}_{m\in\BZ\setminus\{0\}}$.
We define their $\BC(\vv)$-counterparts
  $\hat{\CA}^\vv_\fra:=\hat{\CA}^\vv\otimes_{\BC[\vv,\vv^{-1}]} \BC(\vv)$
and
  $\wt{\CA}^\vv_\fra:=\wt{\CA}^\vv\otimes_{\BC[\vv,\vv^{-1}]} \BC(\vv)$.
We also need the larger algebras
  $\wt{\CA}^\vv[\sz^{\pm 1}_1,\ldots,\sz^{\pm 1}_N]:=
   \wt{\CA}^\vv\otimes_{\BC[\vv,\vv^{-1}]} \BC[\vv,\vv^{-1}][\sz^{\pm 1}_1,\ldots,\sz^{\pm 1}_N]$
and
  $\wt{\CA}^\vv_\fra[\sz^{\pm 1}_1,\ldots,\sz^{\pm 1}_N]:=
   \wt{\CA}^\vv_\fra\otimes_{\BC(\vv)} \BC(\vv)[\sz^{\pm 1}_1,\ldots,\sz^{\pm 1}_N]$.
Define $\sW_0(z):=1, \sW_n(z):=1$, and
\begin{equation*}
\begin{split}
    &  \sZ_i(z):=\prod_{1\leq s\leq N}^{i_s=i} \left(1-\frac{\vv\sz_s}{z}\right),\
       \sW_i(z):=\prod_{r=1}^{a_i} \left(1-\frac{\sw_{i,r}}{z}\right),\
       \sW_{i,r}(z):=\prod_{1\leq s\leq a_i}^{s\ne r} \left(1-\frac{\sw_{i,s}}{z}\right).
\end{split}
\end{equation*}

To state~\cite[Theorem 7.1]{ft}, we need the following modifications
of $U_\vv(L\ssl_n)$. First, recall the \emph{simply-connected version
of shifted quantum affine algebra} $U^{\ssc,\mu}_{\vv}$ introduced
in~\cite[$\S$5(i)]{ft}, which is a $\BC(\vv)$-algebra generated by
  $\{e_{i,r},f_{i,r},\psi^+_{i,s^+_i}, \psi^-_{i,-s^-_i},
    (\psi^+_{i,0})^{-1}, (\psi^-_{i,b_i})^{-1}\}_{1\leq i\leq n-1}^{r\in \BZ, s^+_i\geq 0, s^-_i\geq -b_i}$,
where $b_i=\alphavee_i(\mu)$ as in subsection~\ref{ssec shifted Yangian} with
$\{\alphavee_i\}_{i=1}^{n-1}$ denoting the simple positive roots of $\ssl_n$.
Finally, we define $U^{\ad,\mu}_{\vv}[\sz^{\pm 1}_1,\ldots,\sz^{\pm 1}_N]$ as a
$\BC(\vv)[\sz^{\pm 1}_1,\ldots,\sz^{\pm 1}_N]$-algebra obtained from
  $U^{\ssc,\mu}_\vv[\sz^{\pm 1}_1,\ldots,\sz^{\pm 1}_N]:=
   U^{\ssc,\mu}_\vv\otimes_{\BC(\vv)} \BC(\vv)[\sz^{\pm 1}_1,\ldots,\sz^{\pm 1}_N]$
by adding generators
$\{(\phi^+_i)^{\pm 1},(\phi^-_i)^{\pm 1}\}_{i=1}^{n-1}$
subject to the following extra relations:
\begin{equation}\label{adjoint type}
\begin{split}
  & \psi^+_{i,0}=(\phi^+_i)^2\cdot \prod_{j - i}(\phi^+_j)^{-1},\
    (-\vv)^{-b_i}\prod_{1\leq s\leq N}^{i_s=i} \sz_s^{-1}\cdot \psi^-_{i,b_i}=
    (\phi^-_i)^2\cdot \prod_{j - i}(\phi^-_j)^{-1},\\
  & [\phi^\epsilon_i,\phi^{\epsilon'}_{i'}]=0,\
    \phi^\epsilon_i \psi^{\epsilon'}_{i'}(z)=\psi^{\epsilon'}_{i'}(z)\phi^\epsilon_i,\
    \phi^\epsilon_i e_{i'}(z)=\vv^{\epsilon\delta_{ii'}}e_{i'}(z)\phi^\epsilon_i,\
    \phi^\epsilon_i f_{i'}(z)=\vv^{-\epsilon\delta_{ii'}}f_{i'}(z)\phi^\epsilon_i
\end{split}
\end{equation}
for any $1\leq i,i'\leq n-1$ and $\epsilon,\epsilon'\in \{\pm\}$.

\begin{Thm}[\cite{ft}]\label{Homomorphism}
There exists a unique $\BC(\vv)[\sz^{\pm 1}_1,\ldots,\sz^{\pm 1}_N]$-algebra homomorphism
\begin{equation*}
    \wt{\Phi}^{\unl\lambda}_\mu\colon
    U^{\ad,\mu}_\vv[\sz^{\pm 1}_1,\ldots,\sz^{\pm 1}_N]\longrightarrow
    \wt{\CA}^\vv_\fra[\sz^{\pm 1}_1,\ldots,\sz^{\pm 1}_N],
\end{equation*}
such that
\begin{equation*}
\begin{split}
 & e_i(z)\mapsto \frac{1}{\vv-\vv^{-1}}
   \prod_{t=1}^{a_i}\sw_{i,t} \prod_{t=1}^{a_{i-1}} \sw_{i-1,t}^{-1/2}\cdot
   \sum_{r=1}^{a_i} \delta\left(\frac{\sw_{i,r}}{z}\right)\frac{\sZ_i(\sw_{i,r})}{\sW_{i,r}(\sw_{i,r})}
   \sW_{i-1}(\vv^{-1}\sw_{i,r})D_{i,r}^{-1},\\
 & f_i(z)\mapsto \frac{1}{1-\vv^2}\prod_{t=1}^{a_{i+1}} \sw_{i+1,t}^{-1/2}\cdot
   \sum_{r=1}^{a_i} \delta\left(\frac{\vv^2\sw_{i,r}}{z}\right)\frac{1}{\sW_{i,r}(\sw_{i,r})}
   \sW_{i+1}(\vv\sw_{i,r})D_{i,r},\\
 & \psi^\pm_i(z)\mapsto \prod_{t=1}^{a_i}\sw_{i,t}
   \prod_{t=1}^{a_{i-1}} \sw_{i-1,t}^{-1/2}\prod_{t=1}^{a_{i+1}} \sw_{i+1,t}^{-1/2}\cdot
   \left(\sZ_i(z)\frac{\sW_{i-1}(\vv^{-1}z)\sW_{i+1}(\vv^{-1}z)}{\sW_i(z)\sW_i(\vv^{-2}z)}\right)^\pm,\\
 & (\phi^+_i)^{\pm 1}\mapsto \prod_{t=1}^{a_i} \sw_{i,t}^{\pm 1/2},\
   (\phi^-_i)^{\pm 1}\mapsto \prod_{t=1}^{a_i} \sw_{i,t}^{\mp 1/2}.
\end{split}
\end{equation*}
We write $\gamma(z)^\pm$ for the expansion of a rational function
$\gamma(z)$ in $z^{\mp 1}$, respectively.
\end{Thm}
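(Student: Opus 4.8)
\textbf{Proof strategy for Theorem~\ref{Homomorphism}.}

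The plan is to verify directly that the assignment respects all the defining relations of $U^{\ad,\mu}_\vv[\sz^{\pm 1}_1,\ldots,\sz^{\pm 1}_N]$, i.e.\ the relations~(\ref{Dr affine}) of $U^{\ssc,\mu}_\vv$ together with the extra relations~(\ref{adjoint type}) governing the adjoint Cartan generators $(\phi^\pm_i)^{\pm 1}$; once relation-compatibility is established, uniqueness is automatic since the listed elements generate the algebra. First I would record the image of $\psi^\pm_i(z)$ in the product form displayed in the theorem and observe that the assignment is manifestly consistent with the Cartan-Cartan relations $[\varphi^\epsilon_j(z),\varphi^{\epsilon'}_{j'}(w)]=0$, with $\varphi^\pm_{j,0}\varphi^\mp_{j,0}=1$, and with the $\phi$-relations of~(\ref{adjoint type}): the $\phi^\pm_i$ go to monomials $\prod_t \sw_{i,t}^{\pm 1/2}$, and the prescribed conjugation action on $e_{i'}(z), f_{i'}(z)$ follows because $\sw_{i,t}^{1/2}$ $\vv$-commutes only with $D_{i,t}$. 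The first nontrivial block of checks is the $\psi$--$e$ and $\psi$--$f$ relations; these reduce, after clearing the $\delta\left(\tfrac{\sw_{i,r}}{z}\right)$, to the identity that $\left(\sZ_i(z)\tfrac{\sW_{i-1}(\vv^{-1}z)\sW_{i+1}(\vv^{-1}z)}{\sW_i(z)\sW_i(\vv^{-2}z)}\right)$ evaluated appropriately at $z=\sw_{i,r}$ (or $z=\vv^2\sw_{i,r}$) picks up the correct scalar, together with the $\vv$-commutation $D_{i,r}^{\mp1}\sw_{i,r}^{\pm 1/2}=\vv^{\mp 1/2}\sw_{i,r}^{\pm 1/2}D_{i,r}^{\mp1}$.

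Next I would treat the $e$--$e$, $f$--$f$ and $e$--$f$ relations. For $e_i(z)e_{i'}(w)$ with $i\neq i'$ adjacent, expanding both sides against the $\delta$-functions reduces the quadratic relation $(z-\vv^{c_{ii'}}w)e_i(z)e_{i'}(w)=(\vv^{c_{ii'}}z-w)e_{i'}(w)e_i(z)$ to a rational-function identity among the $\sZ, \sW, \sW_{i,r}$ factors at the support of the $\delta$'s — this is the standard computation underlying~\cite[\S7]{ft} and I would follow it verbatim. The case $i=i'$ uses the pole/wheel-type cancellations; here the cleanest route is to invoke the shuffle interpretation (Theorem~\ref{shuffle homomorphism}, stated later) so that the positive half of $\wt{\Phi}^{\unl\lambda}_\mu$ factors through the shuffle algebra $S^{(n)}$, where the $e$--$e$ relations hold tautologically. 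The $e$--$f$ relation $[e_i(z),f_{i'}(w)]=\tfrac{\delta_{ii'}}{\vv-\vv^{-1}}\delta(z/w)(\psi^+_i(z)-\psi^-_i(z))$ is the most delicate: for $i\neq i'$ one checks the images commute (they involve disjoint sets of $D$'s for non-adjacent $i,i'$, and for adjacent $i,i'$ a short computation shows the $D_{i,r}$ and $D_{i',s}$ contributions commute since $\sw^{1/2}_{i,r}$ $\vv$-commutes only with $D_{i,r}$); for $i=i'$ one computes $[e_i(z),f_i(w)]$ directly from the $\delta$-function formulas and matches the residue at $z=w$ against $\psi^+_i-\psi^-_i$ using the explicit product formula for $\psi^\pm_i(z)$.

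The Serre relations for $e$ and $f$ remain. I expect these — rather than the Drinfeld double relations — to be the main obstacle, since they are cubic and the naive substitution produces a sum of many terms. The plan is to avoid brute force entirely by again routing through the shuffle realization: by Theorem~\ref{shuffle homomorphism} the restriction of $\wt{\Phi}^{\unl\lambda}_\mu$ to the positive (resp.\ negative) subalgebra is a composition of the shuffle isomorphism $\Psi$ of Theorem~\ref{shuffle rational} with an explicit algebra homomorphism $S^{(n)}\to\wt{\CA}^\vv_\fra[\sz^{\pm1}]$ sending $x_{i,1}^r$ to the appropriate generating function; since the Serre relations already hold in $U^>_\vv(L\gl_n)\cong S^{(n)}$ (and likewise on the negative side), they are automatically preserved. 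Thus the genuinely new content is only the mixed relations (Cartan--$e$, Cartan--$f$, $e$--$f$) and the $\phi$-relations, all of which are the finite rational-function verifications outlined above. Finally, having checked all relations over $\BC(\vv)[\sz^{\pm1}_1,\ldots,\sz^{\pm1}_N]$, the homomorphism is well-defined and, as noted, unique.
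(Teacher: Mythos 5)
The paper does not actually prove this theorem: it is imported verbatim from~\cite[Theorem 7.1]{ft}, where the proof is precisely the direct verification of the defining relations that you outline. So in substance your strategy is the standard one, and the individual reductions you describe (clearing the $\delta$-functions, matching rational-function identities in the $\sZ_i,\sW_i,\sW_{i,r}$, the $\vv$-commutation of $D_{i,r}$ with $\sw_{i,r}^{1/2}$, the diagonal-versus-off-diagonal split in $[e_i(z),f_i(w)]$) are the correct ones. Your treatment of the mixed relations is appropriately cautious — in particular you correctly flag that for adjacent $i\ne i'$ the images of $e_i$ and $f_{i'}$ do not trivially commute, since $D_{i',s}$ acts on the $\sW_{i'}$-factors occurring in the image of $e_i$.

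The one point that needs repair is the appeal to Theorem~\ref{shuffle homomorphism} for the quadratic $e$--$e$ relation with $i=i'$ and for the Serre relations: as stated in this paper, that theorem \emph{presupposes} the existence of $\wt{\Phi}^{\unl\lambda}_\mu$ and merely computes its restriction to the halves, so invoking it here is circular. The circularity is removable: the proof of Theorem~\ref{shuffle homomorphism}(a) first checks, independently of $\wt{\Phi}^{\unl\lambda}_\mu$, that the explicit map $\Theta$ on $S^{(n)}[\sz_1^{\pm1},\ldots,\sz_N^{\pm1}]$ satisfies $\Theta(E\star E')=\Theta(E)\Theta(E')$; composing $\Theta$ with the isomorphism $\Psi$ of Theorem~\ref{shuffle rational} then yields a homomorphism on $U^{\ad,\mu;>}_\vv$ agreeing with your formulas on the degree-one components, so all positive-half relations (Serre included) come for free. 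If you take this route you must say so explicitly, and you should acknowledge that the ``tedious straightforward verification'' of $\Theta(E\star E')=\Theta(E)\Theta(E')$ is itself a substantial computation you are relying on — it is not shorter than checking the Serre relations directly. With that adjustment, and with the (routine but necessary) remark that the images lie in the localization $\wt{\CA}^\vv_\fra$ so that the denominators $\sW_{i,r}(\sw_{i,r})$ make sense, your outline is a faithful reconstruction of the cited proof.
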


\begin{Rem}
We note that the algebras $U^{\ssc,\mu}_{\vv}$ and
$U^{\ad,\mu}_\vv[\sz^{\pm 1}_1,\ldots,\sz^{\pm 1}_N]$ were denoted by
$\CU^\ssc_{0,\mu}$ and $\CU^{\ad}_{0,\mu}[\sz^{\pm 1}_1,\ldots,\sz^{\pm 1}_N]$
in~\cite{ft}. Moreover, we used a slightly different renormalization of
$\phi^-_i$ in~\emph{loc.cit.}
\end{Rem}

In analogy with Definition~\ref{integral loop}, let us introduce
integral forms of the shifted quantum affine algebras $U^{\ssc,\mu}_{\vv}$
and $U^{\ad,\mu}_{\vv}[\sz^{\pm 1}_1,\ldots,\sz^{\pm 1}_N]$.

\begin{Def}\label{integral shifted}
(a) Let $\fU^{\ssc,\mu}_{\vv}$ be the $\BC[\vv,\vv^{-1}]$-subalgebra
of $U^{\ssc,\mu}_{\vv}$ generated by
\begin{equation}\label{PBW basis shifted sc}
   \{E^{(r)}_{j,i+1}, F^{(r)}_{i+1,j}\}_{1\leq j\leq i<n}^{r\in \BZ}\cup
   \{\psi^+_{i,s^+_i}, \psi^-_{i,-s^-_i},
    (\psi^+_{i,0})^{-1}, (\psi^-_{i,b_i})^{-1}\}_{1\leq i\leq n-1}^{r\in \BZ, s^+_i\geq 0, s^-_i\geq -b_i}.
\end{equation}

\noindent
(b) Let $\fU^{\ad,\mu}_\vv[\sz^{\pm 1}_1,\ldots,\sz^{\pm 1}_N]$ be the
$\BC[\vv,\vv^{-1}][\sz^{\pm 1}_1,\ldots,\sz^{\pm 1}_N]$-subalgebra of
$U^{\ad,\mu}_\vv[\sz^{\pm 1}_1,\ldots,\sz^{\pm 1}_N]$ generated by
\begin{equation}\label{PBW basis shifted ad}
  \{E^{(r)}_{j,i+1}, F^{(r)}_{i+1,j}\}_{1\leq j\leq i<n}^{r\in \BZ}\cup
  \{\psi^+_{i,s^+_i},\psi^-_{i,-s^-_i}\}_{1\leq i\leq n-1}^{s^+_i> 0, s^-_i> -b_i}\cup
  \{(\phi^+_i)^{\pm 1},(\phi^-_i)^{\pm 1}\}_{i=1}^{n-1}.
\end{equation}
\end{Def}

Here the elements $E^{(r)}_{j,i+1}, F^{(r)}_{i+1,j}$ are defined via~(\ref{PBW basis elements}).
Recall the total orderings on the collections $\{E^{(r)}_{j,i+1}\}_{1\leq j\leq i<n}^{r\in \BZ}$
and $\{F^{(r)}_{i+1,j}\}_{1\leq j\leq i<n}^{r\in \BZ}$ which were introduced right
before Theorem~\ref{PBW basis coordinate affine}, and choose any total ordering
on the corresponding Cartan generators. We introduce the \emph{ordered PBWD monomials}
(in the corresponding generators) accordingly. The following result generalizes
Theorem~\ref{PBW basis coordinate affine} to the shifted setting.

\begin{Thm}\label{PBW for integral shifted}
(a) The ordered PBWD monomials in the elements~(\ref{PBW basis shifted sc})
form a basis of a free $\BC[\vv,\vv^{-1}]$-module $\fU^{\ssc,\mu}_{\vv}$.

\noindent
(b) The ordered PBWD monomials in the elements~(\ref{PBW basis shifted ad})
form a basis of a free

\noindent
$\BC[\vv,\vv^{-1}][\sz^{\pm 1}_1,\ldots,\sz^{\pm 1}_N]$-module
$\fU^{\ad,\mu}_\vv[\sz^{\pm 1}_1,\ldots,\sz^{\pm 1}_N]$.
\end{Thm}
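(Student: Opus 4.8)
\textbf{Proof proposal for Theorem~\ref{PBW for integral shifted}.}

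The plan is to reduce both statements to the already-established PBW theorem for the \emph{unshifted} algebras (Theorem~\ref{PBW basis coordinate affine}) via the shuffle realization of the positive and negative halves (Theorems~\ref{shuffle rational},~\ref{shuffle integral} and Proposition~\ref{key properties of integral shuffle}), together with the triangular decomposition of shifted quantum affine algebras established in~\cite{ft}. First I would recall that $U^{\ssc,\mu}_\vv$ (and likewise $U^{\ad,\mu}_\vv[\sz^{\pm1}_1,\ldots,\sz^{\pm1}_N]$) admits a triangular decomposition: as a $\BC(\vv)$-vector space it is the tensor product of its positive subalgebra $U^>_\vv$, its Cartan subalgebra $U^0_\vv$, and its negative subalgebra $U^<_\vv$, where $U^>_\vv\simeq U^>_\vv(L\gl_n)$ and $U^<_\vv\simeq U^<_\vv(L\gl_n)$ are independent of the shift $\mu$ (only the Cartan part depends on $\mu$, through the truncation indices $b_i$). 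Hence the ordered PBWD monomials in~(\ref{PBW basis shifted sc}) factor as $F\cdot H\cdot E$ and I only need: (i) the $E$-part monomials form a $\BC[\vv,\vv^{-1}]$-basis of $\fU^>_\vv(L\gl_n)$ — this is Theorem~\ref{PBW for half-integral}(a); (ii) the $F$-part monomials form a basis of $\fU^<_\vv(L\gl_n)$ — Theorem~\ref{PBW for half-integral}(c) plus Remark~\ref{opposite shuffle}; (iii) the Cartan monomials in $\{\psi^+_{i,s^+_i},\psi^-_{i,-s^-_i},(\psi^+_{i,0})^{-1},(\psi^-_{i,b_i})^{-1}\}$ form a basis of the integral Cartan subalgebra, which is elementary since the Cartan subalgebra is commutative and is a Laurent-polynomial-type algebra; (iv) the product map is a bijection at the integral level.

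The substantive content is step (iv): I must show that the $\BC[\vv,\vv^{-1}]$-span of the ordered PBWD monomials $F\cdot H\cdot E$ is \emph{closed under multiplication}, i.e.\ that it really equals $\fU^{\ssc,\mu}_\vv$ as defined (the subalgebra generated by~(\ref{PBW basis shifted sc})), not just a submodule. For this I would run a straightening/normal-ordering argument exactly parallel to the proof of Theorem~\ref{PBW basis coordinate affine}: the only non-trivial reorderings are $E\cdot\psi^\pm$, $\psi^\pm\cdot F$, and $E\cdot F$. The first two are handled by passing to the generators $h_{i,\pm s}$ (via $\psi^\pm_i(z)=\psi^\pm_{i,0}\exp(\sum_{s>0}h_{i,s}z^{\mp s})$) whose commutators with $e_{i,r},f_{i,r}$ produce integral linear combinations of shifted root vectors $E_{j,i+1}(\unl{r+s})$, $F_{i+1,j}(\unl{r+s})$ — and here I invoke Proposition~\ref{key properties of integral shuffle}(b) to see these stay inside $\fS^{(n)}\simeq\fU^>_\vv(L\gl_n)$. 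The mixed term $E^{(r)}_{j,i}F^{(s)}_{i',j'}$ is reordered using the $[e_i(z),f_{i'}(w)]$ relation of the shifted algebra (which coincides with the unshifted commutation relation~(\ref{Dr affine}), only the meaning of the $\psi$-series on the right is $\mu$-dependent), producing Cartan terms and lower-order $EF$-terms; an induction on total degree $r+s$, then on the index, then on $r$ — verbatim as in the proof of Theorem~\ref{PBW basis coordinate affine} — closes the loop. For part (b), $U^{\ad,\mu}_\vv[\sz^{\pm1}]$ differs from $U^{\ssc,\mu}_\vv[\sz^{\pm1}]$ only by the finite abelian extension generated by $(\phi^\pm_i)^{\pm1}$ subject to~(\ref{adjoint type}); these generators $\vv$-commute with everything and their squares recover $\psi^\pm_{i,0}$, so the $\phi$-monomials contribute a free rank-$2^{n-1}$-type factor over the $\sz$-coefficients, and the same straightening argument applies with the $\phi$'s carried along harmlessly. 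One also checks freeness by specializing $\vv$ to a generic complex number and comparing with Theorem~\ref{PBW fkprw II}'s trigonometric analogue, or more directly by noting the leading-term/associated-graded argument already gives linear independence.

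The main obstacle I anticipate is verifying that the integral span is closed under the mixed $E\cdot F$ products \emph{in the shifted setting} — specifically, making sure the Cartan elements appearing on the right-hand side of $[e_i(z),f_i(w)]$ after normal-ordering land in the \emph{integral} Cartan subalgebra generated by~(\ref{PBW basis shifted sc}) and not merely in the localized Cartan algebra. In the unshifted case this is clean because $\psi^\pm_{i,0}$ are invertible and the other modes are "higher"; in the shifted case $\psi^-_{i,b_i}$ plays the role of the invertible lowest mode, and one must track carefully that the shift-dependent identities (e.g.\ those relating $\psi^\pm$-modes forced by the truncation $H^{(-b_i)}_i=1$-type relations) are compatible with integrality. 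I expect this is resolved by the same device used throughout the paper — realize everything inside $\fU^\rtt_\vv(L\gl_n)$ (or its enhanced version from subsection~\ref{ssec enhanced algebras}) via the Gauss decomposition, where the quantum-minor description of the center (Proposition~\ref{center of quantum affine}) and the explicit Gauss-coordinate formulas make the integrality transparent — but it is the one place where the shifted case genuinely requires more than a mechanical translation of Theorem~\ref{PBW basis coordinate affine}.
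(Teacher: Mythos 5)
Your reduction to normal-ordering is the right skeleton: linear independence follows from the triangular decomposition of $U^{\ssc,\mu}_\vv$ together with Theorem~\ref{PBW for half-integral}(b,d), and the whole content is showing that the unordered products $E^{(r)}_{j,i+1}\psi^\pm_{j',\pm s}$, $\psi^\pm_{j',\pm s}F^{(r)}_{i+1,j}$, $E^{(r)}_{j,i+1}F^{(s)}_{i'+1,j'}$ straighten with coefficients in $\BC[\vv,\vv^{-1}]$. You also correctly flag where the danger lies (the Cartan terms produced by the $E\cdot F$ reordering). But your proposed resolution --- run the argument of Theorem~\ref{PBW basis coordinate affine} ``verbatim'' and, where integrality is in doubt, ``realize everything inside $\fU^{\rtt}_\vv(L\gl_n)$ via the Gauss decomposition'' --- only makes sense for $\mu=0$. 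For a nonzero shift there is no RTT model of $U^{\ssc,\mu}_\vv$, so the matrix-coefficient manipulations with $t^+_{ji}[r]t^+_{i'j'}[s]$ that drive that proof are simply unavailable, and the ``verbatim'' induction has nothing to run on. This is a genuine gap, not a detail.

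The paper closes it by a chain of reductions that your proposal does not contain. First ($\mu=0$, but in the version $\breve U^{\ssc,0}_\vv$ without the inverses $(\psi^\pm_{i,0})^{-1}$) one must show the straightened expression involves no negative powers of $\psi^\pm_{i,0}$; this is Lemma~\ref{auxiliary algebra}, proved by clearing denominators and specializing $\psi^-_{1,0}\mapsto 0$ into $\breve U^{\ssc,-\omega_1}_\vv$ to derive a contradiction --- a step you do not anticipate but which is essential, because the inverses do not survive passage to shifted algebras. Second, for antidominant $\mu$ one applies the projection $\pi_\mu\colon\breve U^{\ssc,0}_\vv\twoheadrightarrow\breve U^{\ssc,\mu}_\vv$ (which exists only in the antidominant direction) to transport the $\mu=0$ identity. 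Third, for general $\mu$ one climbs up one fundamental coweight at a time using the shift homomorphisms $\breve\iota_{\mu+\omega_\ell,-\omega_\ell,0}$: writing $E^{(r)}_{j,i+1}F^{(s)}_{i'+1,j'}=\sum F_{\unl\alpha}\psi^+_{\unl\beta^+}\psi^-_{\unl\beta^-}E(\unl\alpha,\unl\beta^+,\unl\beta^-)$ over $\BC(\vv)$ and deducing $E(\unl\alpha,\unl\beta^+,\unl\beta^-)\in\fU^>_\vv(L\ssl_n)$ from the shuffle criterion of Proposition~\ref{key properties of integral shuffle}(a), namely $F\in\fS^{(n)}\Longleftrightarrow\iota'_\ell(F)\in\fS^{(n)}$, combined with~\cite[Proposition I.4]{ft} identifying $\breve\iota_{\mu+\omega_\ell,-\omega_\ell,0}$ with $\iota'_\ell$ on the positive half. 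This last use of the shuffle integral form is the key idea your proposal is missing: you invoke the shuffle realization only to describe the halves, not as the mechanism that propagates integrality across shifts.
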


\begin{proof}
We will provide the proof only of part (a), since part (b) is proved analogously.

Following~\cite[$\S$5(i)]{ft}, consider the $\BC(\vv)$-subalgebras $U^{\ssc,\mu;>}_{\vv}$
and $U^{\ssc,\mu;<}_{\vv}$ of $U^{\ssc,\mu}_{\vv}$ generated by
$\{e_{i,r}\}_{1\leq i\leq n-1}^{r\in \BZ}$ and
$\{f_{i,r}\}_{1\leq i\leq n-1}^{r\in \BZ}$, respectively, and let
$U^{\ssc,\mu;0}_{\vv}$ be the $\BC(\vv)$-subalgebra of $U^{\ssc,\mu}_{\vv}$
generated by the Cartan generators. According
to~\cite[Proposition 5.1]{ft}, the multiplication map
  $m\colon U^{\ssc,\mu;<}_{\vv}\otimes U^{\ssc,\mu;0}_{\vv}\otimes U^{\ssc,\mu;>}_{\vv}
   \to U^{\ssc,\mu}_{\vv}$
is an isomorphism of $\BC(\vv)$-vector spaces, and the subalgebras
$U^{\ssc,\mu;<}_{\vv}, U^{\ssc,\mu;>}_{\vv}$ are isomorphic to
$U^<_\vv(L\ssl_n)\simeq U^<_\vv(L\gl_n), U^>_\vv(L\ssl_n)\simeq U^>_\vv(L\gl_n)$,
respectively. Combining this with Theorem~\ref{PBW for half-integral}(b,d),
we immediately see that the ordered PBWD monomials in the
elements~(\ref{PBW basis shifted sc}) form a basis of a
$\BC(\vv)$-vector space $U^{\ssc,\mu}_{\vv}$.

Therefore, as noted in the very beginning of our proof of
Theorem~\ref{PBW basis coordinate affine}, it suffices to verify
that all unordered products
  $E^{(r)}_{j,i+1}\psi^\pm_{j',\pm s},
   \psi^\pm_{j',\pm s}F^{(r)}_{i+1,j},
   E^{(r)}_{j,i+1}F^{(s)}_{i'+1,j'}$
are equal to $\BC[\vv,\vv^{-1}]$-linear combinations of the ordered PBWD
monomials. The first two cases are treated exactly as in our proof of
Theorem~\ref{PBW basis coordinate affine}. Hence, it remains to prove
the following result:

\begin{Prop}\label{normal reordering}
All unordered products $E^{(r)}_{j,i+1}F^{(s)}_{i'+1,j'}$
are equal to $\BC[\vv,\vv^{-1}]$-linear combinations of the ordered PBWD
monomials in the algebra $U^{\ssc,\mu}_{\vv}$.
\end{Prop}

The proof of Proposition~\ref{normal reordering} proceeds
in four steps and is reminiscent of~\cite[Appendix E]{ft}.

\medskip
\noindent
\emph{Step 1:} Case $\mu=0$.

The fact that $E^{(r)}_{j,i+1}F^{(s)}_{i'+1,j'}$ equals a $\BC[\vv,\vv^{-1}]$-linear
combination of the ordered PBWD monomials in $U^{\ssc,0}_\vv$ follows essentially
from Theorem~\ref{PBW basis coordinate affine}. To be more precise, recall the
``extended'' algebra $\fU^{\rtt,\ext}_\vv(L\gl_n)$ of~\cite[(2.15)]{gm}:
it is defined similarly to $\fU^{\rtt}_\vv(L\gl_n)$, but we add extra generators
$\{(t^\pm_{ii}[0])^{-1}\}_{i=1}^n$ and replace the first defining relation
of~(\ref{affRTT}) by
\begin{equation*}
  t^+_{ii}[0]t^-_{ii}[0]=t^-_{ii}[0]t^+_{ii}[0],\
  t^\pm_{ii}[0](t^\pm_{ii}[0])^{-1}=(t^\pm_{ii}[0])^{-1}t^\pm_{ii}[0]=1.
\end{equation*}
Set
  $U^{\rtt,\ext}_\vv(L\gl_n):=
   \fU^{\rtt,\ext}_\vv(L\gl_n)\otimes_{\BC[\vv,\vv^{-1}]} \BC(\vv)$.
Likewise, let $U^{\ssc,0}_\vv(L\gl_n)$ be a $\BC(\vv)$-algebra obtained
from $U_\vv(L\gl_n)$ by formally adding generators $(\varphi^\pm_{j,0})^{-1}$
and ignoring $\varphi^\pm_{j,0}\varphi^\mp_{j,0}=1$. Then, the isomorphism
$\Upsilon$ of Theorem~\ref{Ding-Frenkel affine} gives rise to the
$\BC(\vv)$-algebra isomorphism
\begin{equation*}
  \Upsilon^\ext\colon U^{\ssc,0}_\vv(L\gl_n)\iso U^{\rtt,\ext}_\vv(L\gl_n).
\end{equation*}
Hence, the arguments from our proof of Theorem~\ref{PBW basis coordinate affine}
can be applied without any changes to prove Proposition~\ref{normal reordering}
for $\mu=0$.

\medskip
\noindent
\emph{Step 2:} Reduction to $\breve{U}^{\ssc,\mu}_\vv$.

Consider the associative $\BC(\vv)$-algebra $\breve{U}^{\ssc,\mu}_\vv$
(resp.\ its $\BC[\vv,\vv^{-1}]$-subalgebra $\breve{\fU}^{\ssc,\mu}_\vv$),
defined in the same way as $U^{\ssc,\mu}_\vv$ (resp.\ as $\fU^{\ssc,\mu}_\vv$)
but without the generators $\{(\psi^+_{i,0})^{-1}, (\psi^-_{i,b_i})^{-1}\}_{i=1}^{n-1}$,
so that $U^{\ssc,\mu}_\vv$ is the localization of $\breve{U}^{\ssc,\mu}_\vv$ by
the multiplicative set $S$ generated by $\{\psi^+_{i,0}, \psi^-_{i,b_i}\}_{i=1}^{n-1}$.
Hence, Proposition~\ref{normal reordering} follows from its counterpart for $\breve{U}^{\ssc,\mu}_\vv$:

\begin{Prop}\label{normal reordering breve}
All unordered products $E^{(r)}_{j,i+1}F^{(s)}_{i'+1,j'}$
are equal to $\BC[\vv,\vv^{-1}]$-linear combinations of the ordered PBWD
monomials in the algebra $\breve{U}^{\ssc,\mu}_\vv$
\end{Prop}

We define the $\BC(\vv)$-subalgebras
  $\breve{U}^{\ssc,\mu;>}_{\vv}, \breve{U}^{\ssc,\mu;<}_{\vv}, \breve{U}^{\ssc,\mu;0}_{\vv}$
of $\breve{U}^{\ssc,\mu}_{\vv}$ accordingly. Analogously to~\cite[Proposition 5.1]{ft},
the multiplication map
  $m\colon \breve{U}^{\ssc,\mu;<}_{\vv}\otimes \breve{U}^{\ssc,\mu;0}_{\vv}\otimes \breve{U}^{\ssc,\mu;>}_{\vv}
   \to \breve{U}^{\ssc,\mu}_{\vv}$
is an isomorphism of $\BC(\vv)$-vector spaces, and the subalgebras
$\breve{U}^{\ssc,\mu;<}_{\vv}, \breve{U}^{\ssc,\mu;>}_{\vv}$ are isomorphic to
$U^<_\vv(L\ssl_n)\simeq U^<_\vv(L\gl_n), U^>_\vv(L\ssl_n)\simeq U^>_\vv(L\gl_n)$,
respectively. Combining this with Theorem~\ref{PBW for half-integral}(b,d),
we see that the ordered PBWD monomials form a basis of a $\BC(\vv)$-vector
space $\breve{U}^{\ssc,\mu}_{\vv}$. The following result generalizes the key
verification in our proof of Theorem~\ref{PBW basis coordinate affine}:

\begin{Lem}\label{auxiliary algebra}
Proposition~\ref{normal reordering breve} holds for $\mu=0$.
\end{Lem}

\begin{proof}
According to Step 1, $E^{(r)}_{j,i+1}F^{(s)}_{i'+1,j'}\in U^{\ssc,0}_\vv$ equals
a $\BC[\vv,\vv^{-1}]$-linear combination of the ordered PBWD monomials in $U^{\ssc,0}_\vv$.
Hence, it suffices to show that none of these ordered monomials contains negative powers
of either $\psi^+_{i,0}$ or $\psi^-_{i,0}$. Assume the contrary.
For $1\leq i<n$ and $\epsilon\in \{\pm\}$, choose $N^\epsilon_i\in \BN$ so that $-N^\epsilon_i$
is the minimal of the negative powers of $\psi^\epsilon_{i,0}$ among the corresponding summands.
Without loss of generality, we may assume that $N^-_1>0$. Set
$\psi:=\prod_{i=1}^{n-1} \left((\psi^+_{i,0})^{N^+_i} (\psi^-_{i,0})^{N^-_i}\right)\in S$.
Multiplying the equality in $U^{\ssc,0}_\vv$ expressing $E^{(r)}_{j,i+1}F^{(s)}_{i'+1,j'}$
as a $\BC[\vv,\vv^{-1}]$-linear combination of the ordered PBWD monomials by $\psi$, we obtain
an equality in $\breve{U}^{\ssc,0}_\vv$. Specializing further $\psi^-_{1,0}$ to $0$, gives rise to
an equality in $\breve{U}^{\ssc,-\omega_1}_\vv$ (as before, $\omega_1$ denotes the first fundamental coweight).
As $N^-_1>0$, the left-hand side specializes to zero. Meanwhile, every summand of the right-hand side
specializes either to zero or to an ordered PBWD monomial in $\breve{U}^{\ssc,-\omega_1}_\vv$.
Note that there is at least one summand which does not specialize to zero, and the images of
all those are pairwise distinct ordered PBWD monomials. This contradicts the fact
(pointed out right before Lemma~\ref{auxiliary algebra}) that the ordered
PBWD monomials form a basis of a $\BC(\vv)$-vector space $\breve{U}^{\ssc,-\omega_1}_{\vv}$.
Hence, the contradiction.

This completes our proof of Lemma~\ref{auxiliary algebra}.
\end{proof}

\medskip
\noindent
\emph{Step 3:} Case of antidominant $\mu$.

For an antidominant $\mu$, consider a $\BC(\vv)$-algebra epimorphism
$\pi_\mu\colon \breve{U}^{\ssc,0}_\vv\twoheadrightarrow \breve{U}^{\ssc,\mu}_\vv$
defined by
\begin{equation*}
  e_{i,r}\mapsto e_{i,r},\ f_{i,r}\mapsto f_{i,r},\
  \psi^+_{i,s}\mapsto \psi^+_{i,s},\
  \psi^-_{i,-s}\mapsto
  \begin{cases}
     \psi^-_{i,-s}, & \text{if } s\geq -b_i\\
     0, & \text{if } \mathrm{otherwise}
   \end{cases}
  \ \mathrm{for}\ 1\leq i<n, r\in \BZ, s\in \BN.
\end{equation*}
Using Lemma~\ref{auxiliary algebra}, let us express $E^{(r)}_{j,i+1}F^{(s)}_{i'+1,j'}$ as a
$\BC[\vv,\vv^{-1}]$-linear combination of the ordered PBWD monomials in
$\breve{U}^{\ssc,0}_\vv$, and apply $\pi_\mu$ to the resulting equality in $\breve{U}^{\ssc,0}_\vv$.
Since
  $\pi_\mu(E^{(r)}_{j,i+1}F^{(s)}_{i'+1,j'})=E^{(r)}_{j,i+1}F^{(s)}_{i'+1,j'}$
and $\pi_\mu$ maps ordered PBWD monomial in $\breve{U}^{\ssc,0}_\vv$
either to the ordered PBWD monomial in $\breve{U}^{\ssc,\mu}_\vv$ or to zero,
we see that Proposition~\ref{normal reordering breve} holds for
antidominant $\mu$.

\medskip
\noindent
\emph{Step 4:} General case.

Since Proposition~\ref{normal reordering breve} holds for antidominant $\mu$ (Step 3)
and any coweight can be written as a sum of an antidominant coweight and several
fundamental coweights $\omega_\ell$, it suffices to prove the following result:

\begin{Lem}\label{induction for pbw}
If Proposition~\ref{normal reordering breve} holds for a coweight $\mu$,
then it also holds for the coweights $\mu+\omega_\ell\ (1\leq \ell\leq n-1)$.
\end{Lem}

\begin{proof}
Recall the \emph{shift homomorphism}
  $\breve{\iota}_{\mu+\omega_\ell,-\omega_\ell,0}\colon
   \breve{U}^{\ssc,\mu+\omega_\ell}_\vv\to \breve{U}^{\ssc,\mu}_\vv$
(cf.~\cite[Lemma 10.24, Appendix E]{ft}) defined explicitly via
\begin{equation*}
    e_{i,r}\mapsto e_{i,r}-\delta_{i,\ell}e_{i,r-1},\ f_{i,r}\mapsto f_{i,r},\
    \psi^+_{i,s}\mapsto \psi^+_{i,s}-\delta_{i,\ell}\psi^+_{i,s-1},\
    \psi^-_{i,s}\mapsto \psi^-_{i,s}-\delta_{i,\ell}\psi^-_{i,s-1},
\end{equation*}
where we set $\psi^+_{\ell,-1}:=0$ and  $\psi^-_{\ell,b_\ell+1}:=0$ in the right-hand sides.

First, we note that
  $\breve{\iota}_{\mu+\omega_\ell,-\omega_\ell,0}(\breve{\fU}^{\ssc,\mu+\omega_\ell}_\vv)
   \subset \breve{\fU}^{\ssc,\mu}_\vv$.
Indeed, $F^{(r)}_{i+1,j}$ is clearly fixed by $\breve{\iota}_{\mu+\omega_\ell,-\omega_\ell,0}$,
while $E^{(r)}_{j,i+1}$ is either fixed (if $\ell<j$ or $\ell>i$) or is mapped to
$E^{(r)}_{j,i+1}-E_{j,i+1}(\unl{r-1})$ for a certain decomposition of $r-1$
(cf.~formula~(\ref{generalized PBW basis elements}) and the discussion preceding it),
and is therefore still an element of $\breve{\fU}^{\ssc,\mu}_\vv$, due to
Theorem~\ref{PBW for half-integral}(a). Hence, applying our assumption
to $\breve{U}^{\ssc,\mu}_\vv$, we see that
  $\breve{\iota}_{\mu+\omega_\ell,-\omega_\ell,0}(E^{(r)}_{j,i+1}F^{(s)}_{i'+1,j'})$
equals a $\BC[\vv,\vv^{-1}]$-linear combination of the ordered PBWD monomials
in $\breve{U}^{\ssc,\mu}_\vv$. On the other hand, let us write
$E^{(r)}_{j,i+1}F^{(s)}_{i'+1,j'}$ as a $\BC(\vv)$-linear combination of the
ordered PBWD monomials in $\breve{U}^{\ssc,\mu+\omega_\ell}_\vv$
(such a presentation exists and is unique as the ordered PBWD monomials form
a basis of a $\BC(\vv)$-vector space $\breve{U}^{\ssc,\mu+\omega_\ell}_\vv$):
\begin{equation}\label{general form}
  E^{(r)}_{j,i+1}F^{(s)}_{i'+1,j'}=
  \sum_{\unl{\alpha},\unl{\beta}^+,\unl{\beta}^-}
  F_{\unl{\alpha}} \psi^+_{\unl{\beta}^+} \psi^-_{\unl{\beta}^-} E(\unl{\alpha},\unl{\beta}^+,\unl{\beta}^-),
\end{equation}
where $F_{\unl{\alpha}}, \psi^+_{\unl{\beta}^+}, \psi^-_{\unl{\beta}^-}$ range
over all ordered monomials in
  $\{F^{(\bullet)}_{\bullet,\bullet}\},\
   \{\psi^+_{\bullet,\bullet}\},\
   \{\psi^-_{\bullet,\bullet}\}$,
respectively, while $E(\unl{\alpha},\unl{\beta}^+,\unl{\beta}^-)$ are elements
of $\breve{U}^{\ssc,\mu+\omega_\ell;>}_\vv$ and only finitely many of them are nonzero.
From now on, we identify
  $\breve{U}^{\ssc,\mu+\omega_\ell;>}_\vv\simeq
   U^>_\vv(L\ssl_n)\simeq \breve{U}^{\ssc,\mu;>}_\vv,\
   \breve{\fU}^{\ssc,\mu+\omega_\ell;>}_\vv\simeq
   \fU^>_\vv(L\ssl_n)\simeq \breve{\fU}^{\ssc,\mu;>}_\vv$.
Thus, it remains to verify the inclusions
\begin{equation}\label{key integral inclusion}
   E(\unl{\alpha},\unl{\beta}^+,\unl{\beta}^-)\in \fU^{>}_\vv(L\ssl_n)
   \ \mathrm{for\ all}\ \ \unl{\alpha},\unl{\beta}^+,\unl{\beta}^-.
\end{equation}

The proof of~(\ref{key integral inclusion}) utilizes the shuffle interpretations
of both the subalgebras $U^{>}_\vv(L\ssl_n)$, $\fU^{>}_\vv(L\ssl_n)$ and the restriction
of the shift homomorphism
  $\breve{\iota}_{\mu+\omega_\ell,-\omega_\ell,0}\colon
   \breve{U}^{\ssc,\mu+\omega_\ell;>}_\vv\to \breve{U}^{\ssc,\mu;>}_\vv$.
Recall the $\BC(\vv)$-algebra isomorphism $\Psi\colon U^>_\vv(L\ssl_n)\iso S^{(n)}$
of Theorem~\ref{shuffle rational}, which gives rise to a $\BC[\vv,\vv^{-1}]$-algebra isomorphism
$\Psi\colon \fU_\vv^{>}(L\gl_n)\iso \fS^{(n)}$, see Theorem~\ref{shuffle integral}.
By the above discussion, applying
$\breve{\iota}_{\mu+\omega_\ell,-\omega_\ell,0}$ to the right-hand side of~(\ref{general form}),
we get a $\BC[\vv,\vv^{-1}]$-linear combination of the ordered PBWD monomials.
Recall that $\breve{\iota}_{\mu+\omega_\ell,-\omega_\ell,0}$ fixes all $F_{\unl{\alpha}}$,
maps $\psi^+_{\unl{\beta}^+}$ to itself plus some smaller terms
(wrt the ordering) and maps $\psi^-_{\unl{\beta}^-}$ to itself
(with the indices of $\psi^-_{\ell,\bullet}$ shifted by $-1$) plus some smaller terms
(wrt the ordering). Furthermore, according to~\cite[Proposition I.4]{ft}, the homomorphism
  $\breve{\iota}_{\mu+\omega_\ell,-\omega_\ell,0}\colon
   \breve{U}^{\ssc,\mu+\omega_\ell;>}_\vv\to \breve{U}^{\ssc,\mu;>}_\vv$
is intertwined (under the above identifications of
$\breve{U}^{\ssc,\mu+\omega_\ell;>}_\vv, \breve{U}^{\ssc,\mu;>}_\vv$ with
$U^>_\vv(L\ssl_n)\simeq S^{(n)}$) with the graded $\BC(\vv)$-algebra homomorphism
$\iota'_\ell\colon S^{(n)}\to S^{(n)}$ of~(\ref{shuffle shift homomorphism}).
According to Proposition~\ref{key properties of integral shuffle}(a),
$f\in \fS^{(n)}$ if and only if $\iota'_\ell(f)\in \fS^{(n)}$.
Hence, a simple inductive argument (for every $\unl{\alpha}$, we use
a descending induction in $\unl{\beta}^+$, and then a descending induction
in $\unl{\beta}^-$) implies~(\ref{key integral inclusion}).

This implies the validity of Proposition~\ref{normal reordering breve} for
the coweight $\mu+\omega_\ell\ (1\leq \ell\leq n-1)$.
\end{proof}

This completes our proof of Theorem~\ref{PBW for integral shifted}.
\end{proof}


\subsection{$K$-theoretic Coulomb branch}\label{ssec Coulomb branch}
\

Following~\cite{bfna,bfn} and using our notations of subsection~\ref{ssec Coulomb branch cohomology},
consider the (extended) quantized $K$-theoretic Coulomb
branch
  $\CA^\vv=K^{(\wt{\GL}(V)\times T_W)_\CO\rtimes\wt\BC^\times}(\CR_{\GL(V),\bN})$.
Here $\wt{\GL}(V)$ is a certain $2^{n-1}$-cover of $\GL(V)$ and
$\wt\BC^\times$ is a two-fold cover of $\BC^\times$, as defined in~\cite[$\S$8(i)]{ft} .
We identify $K_{T_W}(\on{pt})=\BC[\sz_1^{\pm1},\ldots,\sz_N^{\pm1}]$
and $K_{\wt\BC^\times}(\on{pt})=\BC[\vv,\vv^{-1}]$.
Recall a $\BC[\vv,\vv^{-1}][\sz_1^{\pm1},\ldots,\sz_N^{\pm1}]$-algebra embedding
  $\bz^*(\iota_*)^{-1}\colon \CA^\vv\hookrightarrow
   \wt\CA{}^\vv[\sz_1^{\pm1},\ldots,\sz_N^{\pm1}]$
of~\cite[$\S$8(i)]{ft}.

Set $\CA^\vv_\fra:=\CA^\vv\otimes_{\BC[\vv,\vv^{-1}]} \BC(\vv)$.
According to~\cite[Theorem 8.5]{ft}, the homomorphism
  $\wt{\Phi}^{\unl\lambda}_\mu\colon U^{\ad,\mu}_\vv[\sz^{\pm 1}_1,\ldots,\sz^{\pm 1}_N]\to
   \wt{\CA}^\vv_\fra[\sz^{\pm 1}_1,\ldots,\sz^{\pm 1}_N]$
factors through $\CA^\vv_\fra$ (embedded via $\bz^*(\iota_*)^{-1}$).
In other words, there is a unique homomorphism
  $\ol{\Phi}^{\unl\lambda}_\mu\colon
   U^{\ad,\mu}_\vv[\sz^{\pm 1}_1,\ldots,\sz^{\pm 1}_N]\to \CA^\vv_\fra$,
such that the composition
  $U^{\ad,\mu}_\vv[\sz^{\pm 1}_1,\ldots,\sz^{\pm 1}_N]\xrightarrow{\ol{\Phi}^{\unl\lambda}_\mu}
   \CA^\vv_\fra \xrightarrow{\bz^*(\iota_*)^{-1}} \wt{\CA}^\vv_\fra[\sz^{\pm 1}_1,\ldots,\sz^{\pm 1}_N]$
coincides with $\wt{\Phi}^{\unl\lambda}_\mu$.

Our next result establishes a certain integrality property
of the homomorphism $\ol{\Phi}^{\unl\lambda}_\mu$:

\begin{Prop}\label{integrality of all t-modes}
  $\ol{\Phi}^{\unl\lambda}_\mu(\fU^{\ad,\mu}_\vv[\sz^{\pm 1}_1,\ldots,\sz^{\pm 1}_N]))
   \subset \CA^\vv$.
\end{Prop}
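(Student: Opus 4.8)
The plan is to reduce the claim to an explicit computation of $\ol{\Phi}^{\unl\lambda}_\mu$ on a generating set of the integral form $\fU^{\ad,\mu}_\vv[\sz^{\pm 1}_1,\ldots,\sz^{\pm 1}_N]$, namely the elements~(\ref{PBW basis shifted ad}): the root vectors $E^{(r)}_{j,i+1}, F^{(r)}_{i+1,j}$, the positive Cartan modes $\psi^+_{i,s^+_i}, \psi^-_{i,-s^-_i}$, and the invertible elements $(\phi^\pm_i)^{\pm 1}$. Since $\CA^\vv$ is a $\BC[\vv,\vv^{-1}][\sz_1^{\pm1},\ldots,\sz_N^{\pm1}]$-subalgebra of $\wt{\CA}^\vv[\sz_1^{\pm1},\ldots,\sz_N^{\pm1}]$ via the embedding $\bz^*(\iota_*)^{-1}$, and $\ol{\Phi}^{\unl\lambda}_\mu$ is the corestriction of $\wt{\Phi}^{\unl\lambda}_\mu$ along this embedding, it suffices to show that the $\wt{\Phi}^{\unl\lambda}_\mu$-images of the generators~(\ref{PBW basis shifted ad}) all lie in (the image of) $\CA^\vv$. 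First I would handle the easy generators: $(\phi^\pm_i)^{\pm1}\mapsto \prod_t \sw_{i,t}^{\mp1/2}$ and the Cartan modes $\psi^\pm_{i,\pm s}$, whose images under Theorem~\ref{Homomorphism} are (expansions of) manifestly Laurent-polynomial expressions in the $\sw_{i,t}^{\pm1/2}, \sz_s^{\pm1}, \vv^{\pm1}$ — no denominators of the form $(1-\vv^m)$ or $(\sw_{i,r}-\vv^m\sw_{i,s})$ survive, so these lie in $\hat{\CA}^\vv[\sz^{\pm}]$, and one checks they are $K$-theory classes, i.e.\ in $\CA^\vv$. (These are essentially the images computed in~\cite[\S8]{ft}.)

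The substantive part is the root vectors $E^{(r)}_{j,i+1}$ and $F^{(r)}_{i+1,j}$. Here I would invoke the shuffle description: by Theorem~\ref{shuffle homomorphism} (referenced in the outline), the restriction of $\wt{\Phi}^{\unl\lambda}_\mu$ to the positive half $U^{\ad,\mu;>}_\vv$ factors through the shuffle algebra $S^{(n)}$, and the elements $E^{(r)}_{j,i+1}$ are — under $\Psi\colon \fU^>_\vv(L\gl_n)\iso \fS^{(n)}$ of Theorem~\ref{shuffle integral} — integral shuffle elements of the explicit form~(\ref{for surjectivity in FT2}) appearing in Proposition~\ref{key properties of integral shuffle}(b). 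The plan is to combine the explicit image formula of Theorem~\ref{shuffle homomorphism} with the integrality property $\fU^>_\vv(L\gl_n)=\Psi^{-1}(\fS^{(n)})$ to verify directly that $\wt{\Phi}^{\unl\lambda}_\mu(E^{(r)}_{j,i+1})$, given by the explicit sum in~(\ref{image of e-modes yangian})'s trigonometric analogue — i.e.\ a sum over tuples $(r_j,\ldots,r_i)$ of products of $\sZ_k(\sw_{k,r_k})$, ratios $\sW_{k-1}(\vv^{-1}\sw)/\sW_{k,r_k}(\sw_{k,r_k})$, and monomials in $\sw_{j,r_j}^{\pm1}$ times $\prod_k D_{k,r_k}^{\mp1}$ — has coefficients in $\BC[\vv,\vv^{-1}][\sz^{\pm}]$ after clearing, hence lies in $\CA^\vv$. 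The key point is that the potentially dangerous denominators $\sW_{k,r}(\sw_{k,r})=\prod_{s\ne r}(1-\sw_{k,s}/\sw_{k,r})$ are exactly the ones allowed in $\wt{\CA}^\vv$, but to land in $\CA^\vv$ one must check the class extends across the unlocalized part; this is where the shuffle-integrality (the numerator $\prod_{r\ne r'}(\sw_{i,r}-\vv^{-2}\sw_{i,r'})$ structure of~(\ref{for surjectivity in FT2})) does the work, mirroring the proof of surjectivity in Theorem~\ref{Surjectivity}. The case of $F^{(r)}_{i+1,j}$ is symmetric via the opposite-algebra isomorphism of Remark~\ref{opposite shuffle}.

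Finally, since $\fU^{\ad,\mu}_\vv[\sz^{\pm 1}_1,\ldots,\sz^{\pm 1}_N]$ is generated as a $\BC[\vv,\vv^{-1}][\sz^{\pm}]$-algebra by the elements~(\ref{PBW basis shifted ad}) and $\CA^\vv$ is a $\BC[\vv,\vv^{-1}][\sz^{\pm}]$-subalgebra of $\wt{\CA}^\vv[\sz^\pm]$ closed under multiplication, once each generator maps into $\CA^\vv$ the whole subalgebra does, giving the claim. I expect the main obstacle to be the root-vector step: precisely controlling that the $\wt{\Phi}^{\unl\lambda}_\mu$-image of a shuffle element of type~(\ref{for surjectivity in FT2}) — with its symmetrization over $\Sigma_{\unl{k}}$ and its $\zeta_{i,i'}$-factors — reassembles into an honest $K$-theory class on $\CR_{\GL(V),\bN}$ rather than merely an element of the localized algebra $\wt{\CA}^\vv$. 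Concretely one must identify the image with a natural class such as $(c_1(\CS)+\text{shift})^{r-1}\cap[\CR_{\lambda}]$ (the trigonometric analogue of Remark~\ref{t-modes in Coulomb yangian}), and the bookkeeping of half-integer powers $\sw^{\pm1/2}$ versus the $2^{n-1}$-cover $\wt{\GL}(V)$ must be tracked carefully. Everything else is routine expansion and degree-counting.
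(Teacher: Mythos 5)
Your overall strategy matches the paper's: reduce to the generators~(\ref{PBW basis shifted ad}), dispose of $(\phi^\pm_i)^{\pm1}$ and $\psi^\pm_{i,\pm s}$ by observing their images are symmetric Laurent polynomials, and then treat the root vectors by computing $\wt{\Phi}^{\unl\lambda}_\mu(E^{(r)}_{j,i+1})$, $\wt{\Phi}^{\unl\lambda}_\mu(F^{(r)}_{i+1,j})$ explicitly and recognizing the result as an honest equivariant $K$-theory class. You also correctly locate the crux: integrality of coefficients in $\BC[\vv,\vv^{-1}][\sz^{\pm1}]$ after clearing denominators is \emph{not} the issue, since the denominators $\sW_{k,r}(\sw_{k,r})$ are permitted in the localization $\wt{\CA}^\vv$; the issue is whether the element lies in the unlocalized subalgebra $\bz^*(\iota_*)^{-1}(\CA^\vv)$.

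The gap is that you flag this decisive step as ``the main obstacle'' without carrying it out, and the shuffle machinery you lean on cannot close it. Membership of $\Psi(E^{(r)}_{j,i+1})$ in $\fS^{(n)}$ controls only the $\BC[\vv,\vv^{-1}]$-integrality of the shuffle element itself, not the geometric origin of its $\wt{\Phi}^{\unl\lambda}_\mu$-image. What actually finishes the proof in the paper is elementary and does not use the shuffle algebra at all: the explicit sums~(\ref{image of e-modes}),~(\ref{image of f-modes}) of Lemma~\ref{t-modes in diff op-s} (obtained by direct computation from Theorem~\ref{Homomorphism}, since $E^{(r)}_{j,i+1}$, $F^{(r)}_{i+1,j}$ are iterated commutators of the currents) are recognized, via the localization theorem, as $\bz^*(\iota_*)^{-1}$ of specific classes~(\ref{E in K}),~(\ref{F in K}) — namely determinant-character twists of $\CO_{\varpi_{j,1}^*}(-r-1)\boxtimes\CO_{\varpi_{i,1}^*}(1)$ (resp.\ $\CO_{\varpi_{j,1}}(r-1)\boxtimes\CO_{\varpi_{i,1}}(1)$) pulled back to the preimages $\CR_{\lambda_{ji}^*}$, $\CR_{\lambda_{ji}}$ of the \emph{closed} products of minuscule orbits $\BP^{a_j-1}\times\cdots\times\BP^{a_i-1}$ in $\Gr_{\GL(V)}$. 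It is this identification (the sum over $(r_j,\ldots,r_i)$ being exactly the fixed-point expansion of such a class) that you would need to exhibit; the half-integral powers $\sw^{\pm1/2}$ are accounted for by the determinant characters of the cover $\wt{\GL}(V)$. The shuffle realization enters the paper only later, for the more general classes needed in Theorem~\ref{Surjectivity}, not for this proposition.
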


As the first ingredient of the proof, let us find explicit formulas for
  $\wt{\Phi}^{\unl\lambda}_\mu(E^{(r)}_{j,i+1}),
   \wt{\Phi}^{\unl\lambda}_\mu(F^{(r)}_{i+1,j})$.

\begin{Lem}\label{t-modes in diff op-s}
For any $1\leq j\leq i<n$ and $r\in \BZ$, the following equalities hold:
\begin{equation}\label{image of e-modes}
\begin{split}
   & \wt{\Phi}^{\unl\lambda}_\mu(E^{(r)}_{j,i+1})=(-1)^{i-j}\cdot
     \prod_{t=1}^{a_i}\sw_{i,t}\prod_{k=j}^{i-1}\prod_{t=1}^{a_k}\sw_{k,t}^{1/2}
     \prod_{t=1}^{a_{j-1}}\sw_{j-1,t}^{-1/2}\times\\
   & \sum_{\substack{1\leq r_j\leq a_j\\\cdots\\ 1\leq r_i\leq a_i}}
     \frac{\sW_{j-1}(\vv^{-1}\sw_{j,r_j})\prod_{k=j}^{i-1}\sW_{k,r_k}(\vv^{-1}\sw_{k+1,r_{k+1}})}
          {\prod_{k=j}^i \sW_{k,r_k}(\sw_{k,r_k})}\cdot
     \prod_{k=j}^i \sZ_k(\sw_{k,r_k})\cdot \frac{\sw_{j,r_j}^{1+r}}{\sw_{i,r_i}}
     \cdot \prod_{k=j}^i D_{k,r_k}^{-1},
\end{split}
\end{equation}
\begin{equation}\label{image of f-modes}
\begin{split}
   & \wt{\Phi}^{\unl\lambda}_\mu(F^{(r)}_{i+1,j})=(-1)^{i-j}\vv^{j-1-i+2r}\cdot
     \prod_{k=j+1}^{i+1}\prod_{t=1}^{a_k}\sw_{k,t}^{-1/2}\times\\
   & \sum_{\substack{1\leq r_j\leq a_j\\\cdots\\ 1\leq r_i\leq a_i}}
     \frac{\prod_{k=j+1}^{i}\sW_{k,r_k}(\vv\sw_{k-1,r_{k-1}})\sW_{i+1}(\vv\sw_{i,r_i})}
          {\prod_{k=j}^i \sW_{k,r_k}(\sw_{k,r_k})}\cdot
     \frac{\sw_{i,r_i}}{\sw_{j,r_j}^{1-r}}\cdot \prod_{k=j}^i D_{k,r_k}.
\end{split}
\end{equation}
\end{Lem}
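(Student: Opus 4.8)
The plan is to apply $\wt{\Phi}^{\unl\lambda}_\mu$ directly to the defining iterated $\vv$-commutator expressions~(\ref{PBW basis elements}) for $E^{(r)}_{j,i+1}$ and $F^{(r)}_{i+1,j}$ and to evaluate the resulting nested brackets inside $\wt{\CA}^\vv_\fra[\sz^{\pm 1}_1,\ldots,\sz^{\pm 1}_N]$ using Theorem~\ref{Homomorphism}. The first step is to record the images of the individual modes: extracting the coefficient of $z^{-r}$ from the formula for $\wt{\Phi}^{\unl\lambda}_\mu(e_i(z))$ (and using $\delta(\sw/z)=\sum_m \sw^m z^{-m}$) gives
\begin{equation*}
  \wt{\Phi}^{\unl\lambda}_\mu(e_{i,r})=\tfrac{1}{\vv-\vv^{-1}}
  \prod_{t=1}^{a_i}\sw_{i,t}\prod_{t=1}^{a_{i-1}}\sw_{i-1,t}^{-1/2}\cdot
  \sum_{s=1}^{a_i}\sw_{i,s}^{\,r}\,\frac{\sZ_i(\sw_{i,s})}{\sW_{i,s}(\sw_{i,s})}\,
  \sW_{i-1}(\vv^{-1}\sw_{i,s})\,D_{i,s}^{-1},
\end{equation*}
and analogously for $\wt{\Phi}^{\unl\lambda}_\mu(f_{i,r})$, now with the substitution $z\mapsto \vv^2\sw_{i,s}$ coming from $\delta(\vv^2\sw_{i,s}/z)$. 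With these in hand, the assertion becomes a statement about products of ``creation operators'' $D_{k,s}^{\mp1}$ weighted by rational functions, and I would prove it by induction on $i-j$.

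The base case $i=j$ is immediate since $E^{(r)}_{j,j+1}=(\vv-\vv^{-1})e_{j,r}$ and $F^{(r)}_{j+1,j}=(\vv^{-1}-\vv)f_{j,r}$. For the inductive step, write $E^{(r)}_{j,i+1}=[e_{i,0},E^{(r)}_{j,i}]_{\vv^{-1}}$, apply $\wt{\Phi}^{\unl\lambda}_\mu$, and expand the $\vv^{-1}$-bracket. By the inductive hypothesis $\wt{\Phi}^{\unl\lambda}_\mu(E^{(r)}_{j,i})$ involves only the variables $\sw_{j,\bullet},\ldots,\sw_{i-1,\bullet}$ and the operators $D_{j,\bullet}^{-1},\ldots,D_{i-1,\bullet}^{-1}$, so the operator $D_{i,s}^{-1}$ produced by $\wt{\Phi}^{\unl\lambda}_\mu(e_{i,0})$ passes through it freely, and the framing factors $\sZ_k$ (which carry all the $\sz_s$-dependence) are inert under every $D$-operator occurring in the computation and simply multiply through as $\prod_{k=j}^i \sZ_k(\sw_{k,r_k})$; one may therefore run the whole computation with $\sZ_k\equiv 1$ and reinstate them at the end. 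The only genuine non-commutation in the two orderings of the product is between $D_{i-1,r_{i-1}}^{-1}$ and the factor $\sW_{i-1}(\vv^{-1}\sw_{i,s})$ (which depends on $\sw_{i-1,\bullet}$), governed by $D_{k,r}^{-1}g(\sw_{k,r})=g(\vv^{-2}\sw_{k,r})D_{k,r}^{-1}$; after substituting $\sW_{i-1}(\vv^{-1}\sw_{i,s})=(1-\vv\sw_{i-1,r_{i-1}}\sw_{i,s}^{-1})\,\sW_{i-1,r_{i-1}}(\vv^{-1}\sw_{i,s})$ and collecting terms one verifies the elementary rational identity which produces the extra summation index $r_i$ and the cross-factor $\sW_{i-1,r_{i-1}}(\vv^{-1}\sw_{i,s})$ predicted in~(\ref{image of e-modes}), the new $D_{i,s}^{-1}$ being appended on the left of $\prod_{k=j}^{i-1}D_{k,r_k}^{-1}$. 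The $F$-case is entirely parallel: one runs the same induction with $F^{(r)}_{i+1,j}=[F^{(r)}_{i,j},f_{i,0}]_{\vv}$, the $\vv$-bracket, and the operators $D_{k,s}$ in place of $D_{k,s}^{-1}$.

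The routine but delicate part, which I would postpone until the structure above is in place, is the bookkeeping: tracking the signs $(-1)^{i-j}$, the overall powers of $\vv$ (the $F$-side in particular carries $\vv^{j-1-i+2r}$, of which the $2r$ comes from the $\vv^2\sw$-substitution in $\wt{\Phi}^{\unl\lambda}_\mu(f_{i,r})$ and the rest from the prefactor $(\vv^{-1}-\vv)/(1-\vv^2)$ together with the successive $\vv$-brackets), the half-integer powers $\prod_k\prod_t\sw_{k,t}^{\pm 1/2}$ in the normalizing prefactors, and the matching of denominators $\prod_{k=j}^i\sW_{k,r_k}(\sw_{k,r_k})$. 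I expect this arithmetic, rather than any conceptual difficulty, to be the main obstacle, exactly as in the rational analogue (Lemma~\ref{t-modes in diff op-s yangian}), which can be recovered from the present computation in the degeneration $\vv\to 1$.
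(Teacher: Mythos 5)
Your proposal is correct and matches the paper's approach: the paper's proof of this lemma consists of the single phrase ``Straightforward computation,'' and your inductive scheme --- expanding $E^{(r)}_{j,i+1}=[e_{i,0},E^{(r)}_{j,i}]_{\vv^{-1}}$ and $F^{(r)}_{i+1,j}=[F^{(r)}_{i,j},f_{i,0}]_{\vv}$, isolating the single non-commuting pair, and letting the $\vv^{\mp1}$-bracket collapse the $r$-th factor of $\sW_{i-1}(\vv^{-1}\sw_{i,s})$ (resp.\ $\sW_i(\vv\sw_{i-1,r_{i-1}})$) into $(\vv^{-1}-\vv)\sw_{i-1,r_{i-1}}/\sw_{i,s}$, which cancels the normalization and leaves the cross-factor $\sW_{i-1,r_{i-1}}$ --- is exactly the intended computation. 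The base case, the commutation analysis, and the origin of the $\vv^{2r}$ on the $F$-side all check out.
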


\begin{proof}
Straightforward computation.
\end{proof}

This lemma may be viewed as a trigonometric counterpart
of Lemma~\ref{t-modes in diff op-s yangian}.

\begin{proof}[Proof of Proposition~\ref{integrality of all t-modes}]
By explicit formulas of Theorem~\ref{Homomorphism}, we clearly have
  $\ol{\Phi}^{\unl\lambda}_\mu((\phi^\epsilon_i)^{\pm 1})\in \CA^\vv$
for $\epsilon=\pm$. Since $\wt{\Phi}^{\unl\lambda}_\mu(\psi^\pm_{j,\pm s})$
are Laurent polynomials in $\{\sw^{1/2}_{i,t}\}_{1\leq i\leq n-1}^{1\leq t\leq a_i}$
with coefficients in $\BC[\vv,\vv^{-1}][\sz^{\pm 1}_1,\ldots,\sz^{\pm 1}_N]$
and are symmetric in each family $\{\sw^{1/2}_{i,t}\}_{t=1}^{a_i}\ (1\leq i<n)$,
we immediately get $\ol{\Phi}^{\unl\lambda}_\mu(\psi^\pm_{j,\pm s})\in \CA^\vv$.
Hence, it remains to verify the inclusions
  $\ol{\Phi}^{\unl\lambda}_\mu(E^{(r)}_{j,i+1}),
   \ol{\Phi}^{\unl\lambda}_\mu(F^{(r)}_{i+1,j})\in \CA^\vv$
for all $1\leq j\leq i<n$ and $r\in \BZ$.

Recall the setup of~\cite[$\S$8(i)]{ft}.
For $1\leq j\leq i<n$, we consider a coweight
  $\lambda_{ji}=(0,\ldots,0,\varpi_{j,1},\ldots,\varpi_{i,1},0,\ldots,0)$
(resp.\ $\lambda_{ji}^*=(0,\ldots,0,\varpi_{j,1}^*,\ldots,\varpi_{i,1}^*,0,\ldots,0)$)
of $\GL(V)=\GL(V_1)\times\cdots\times\GL(V_{n-1})$. The corresponding orbits
$\Gr_{\GL(V)}^{\lambda_{ji}},\Gr_{\GL(V)}^{\lambda_{ji}^*}\subset\Gr_{\GL(V)}$
are closed (they are products of the minuscule orbits, isomorphic to
$\BP^{a_j-1}\times\cdots\times\BP^{a_i-1}$). Their preimages in the variety
of triples $\CR_{\GL(V),\bN}$ are denoted by $\CR_{\lambda_{ji}},\CR_{\lambda_{ji}^*}$,
respectively.

Then the right-hand side of~(\ref{image of e-modes}) equals
\begin{equation}
  \label{E in K}
  \bz^*(\iota_*)^{-1}\left((-1)^{i-j}{\det}_{j-1}^{-1/2}\cdot{\det}_j^{1/2}\cdot\ldots
  \cdot{\det}_{i-1}^{1/2}\cdot{\det}_i\cdot\CO_{\varpi_{j,1}^*}(-r-1)\boxtimes\CO_{\varpi_{i,1}^*}(1)\right),
\end{equation}
while the right-hand side of~(\ref{image of f-modes}) equals
\begin{equation}
  \label{F in K}
  \bz^*(\iota_*)^{-1}\left((-1)^{i-j}\vv^{j-1-i+2r}{\det}_{j+1}^{-1/2}\cdot\ldots\cdot
  {\det}_{i+1}^{-1/2}\cdot \CO_{\varpi_{j,1}}(r-1)\boxtimes\CO_{\varpi_{i,1}}(1)\right).
\end{equation}
Here $\det_k$ stands for the determinant character of $\GL(V_k)$,
while $\CO_{\varpi_{k,1}}(s)$ stands for the class of the line bundle
$\CO(s)$ on $\Gr_{\varpi_{k,1}}\simeq\BP^{a_k-1}$, and everything is
pulled back to $\CR_{\lambda_{ji}}$ (similarly for $\CO_{\varpi_{k,1}^*}(s)$).
\end{proof}

To prove the main result of this subsection, let us obtain
shuffle descriptions of the restrictions
  $\wt{\Phi}^{\unl{\lambda}}_\mu\colon
   U^{\ad,\mu;>}_\vv[\sz^{\pm 1}_1,\ldots,\sz^{\pm 1}_N]
   \to \wt{\CA}^\vv_\fra[\sz^{\pm 1}_1,\ldots,\sz^{\pm 1}_N]$
and
  $\wt{\Phi}^{\unl{\lambda}}_\mu\colon
   U^{\ad,\mu;<}_\vv[\sz^{\pm 1}_1,\ldots,\sz^{\pm 1}_N]
   \to \wt{\CA}^\vv_\fra[\sz^{\pm 1}_1,\ldots,\sz^{\pm 1}_N]$.
In other words, evoking the isomorphism $\Psi\colon U^>_\vv(L\gl_n)\iso S^{(n)}$
of Theorem~\ref{shuffle rational} and the isomorphism
$\Psi\colon U^<_\vv(L\gl_n)\iso S^{(n),\op}$ of Remark~\ref{opposite shuffle},
we compute the resulting homomorphisms
\begin{equation}\label{homomorphism from shuffle}
  \wt{\Phi}^{\unl{\lambda}}_\mu\colon
   S^{(n)}[\sz^{\pm 1}_1,\ldots,\sz^{\pm 1}_N]\simeq U^{\ad,\mu;>}_\vv[\sz^{\pm 1}_1,\ldots,\sz^{\pm 1}_N]
   \longrightarrow \wt{\CA}^\vv_\fra[\sz^{\pm 1}_1,\ldots,\sz^{\pm 1}_N]
\end{equation}
and
\begin{equation}\label{opposite homomorphism from shuffle}
  \wt{\Phi}^{\unl{\lambda}}_\mu\colon
   S^{(n),\op}[\sz^{\pm 1}_1,\ldots,\sz^{\pm 1}_N]\simeq U^{\ad,\mu;<}_\vv[\sz^{\pm 1}_1,\ldots,\sz^{\pm 1}_N]
   \longrightarrow \wt{\CA}^\vv_\fra[\sz^{\pm 1}_1,\ldots,\sz^{\pm 1}_N].
\end{equation}

For any $1\leq i<n$ and $1\leq r\leq a_i$, we define
  $Y_{i,r}(z):=\frac{\sZ_i(z)\sW_{i-1}(\vv^{-1}z)}{\sW_{i,r}(z)},
   Y'_{i,r}(z):=\frac{\sW_{i+1}(\vv z)}{\sW_{i,r}(z)}$.
We also recall the functions $\zeta_{i,j}(z)=\frac{z-\vv^{-c_{ij}}}{z-1}$ of
subsection~\ref{ssec shuffle algebra}.

\begin{Thm}\label{shuffle homomorphism}
(a) For any $E\in S^{(n)}_{\unl{k}}[\sz^{\pm 1}_1,\ldots,\sz^{\pm 1}_N]$,
its image under the homomorphism $\wt{\Phi}^{\unl{\lambda}}_\mu$
of~(\ref{homomorphism from shuffle}) equals
\begin{equation}\label{Explicit shuffle homom}
\begin{split}
  & \wt{\Phi}^{\unl{\lambda}}_\mu(E)=
    \vv^{-\sum_{i=1}^{n-1} k_i(k_i-1)}(\vv-\vv^{-1})^{-\sum_{i=1}^{n-1} k_i}
    \prod_{i=1}^{n-1} \prod_{r=1}^{a_i}\sw_{i,r}^{k_i-\frac{1}{2}k_{i+1}}\times\\
  & \sum_{\substack{m^{(1)}_1+\ldots+m^{(1)}_{a_1}=k_1\\\cdots\\ m^{(n-1)}_1+\ldots+m^{(n-1)}_{a_{n-1}}=k_{n-1}}}^{m^{(i)}_r\in \BN}
    \left(\prod_{i=1}^{n-1}\prod_{r=1}^{a_i}\prod_{p=1}^{m^{(i)}_r}Y_{i,r}(\vv^{-2(p-1)}\sw_{i,r})\cdot
    E\left(\{\vv^{-2(p-1)}\sw_{i,r}\}_{\substack{1\leq i<n\\ 1\leq r\leq a_i\\ 1\leq p\leq m^{(i)}_r}}\right)\times\right.\\
  & \left.
    \prod_{i=1}^{n-1}\prod_{r=1}^{a_i}\prod_{1\leq p_1<p_2\leq m^{(i)}_r}
      \zeta^{-1}_{i,i}(\vv^{-2(p_1-1)}\sw_{i,r},\vv^{-2(p_2-1)}\sw_{i,r})\times\right.\\
  & \left.
    \prod_{i=1}^{n-1}\prod_{1\leq r_1\neq r_2\leq a_i}\prod_{1\leq p_1\leq m^{(i)}_{r_1}}^{1\leq p_2\leq m^{(i)}_{r_2}}
      \zeta^{-1}_{i,i}(\vv^{-2(p_1-1)}\sw_{i,r_1},\vv^{-2(p_2-1)}\sw_{i,r_2})\times\right.\\
  & \left.\prod_{i=1}^{n-2}\prod_{1\leq r_1\leq a_{i+1}}^{1\leq r_2\leq a_{i}}\prod_{1\leq p_1\leq m^{(i+1)}_{r_1}}^{1\leq p_2\leq m^{(i)}_{r_2}}
      \zeta^{-1}_{i+1,i}(\vv^{-2(p_1-1)}\sw_{i+1,r_1},\vv^{-2(p_2-1)}\sw_{i,r_2})\cdot
    \prod_{i=1}^{n-1}\prod_{r=1}^{a_i} D_{i,r}^{-m^{(i)}_r}\right).
\end{split}
\end{equation}

\noindent
(b) For any $F\in S^{(n),\op}_{\unl{k}}[\sz^{\pm 1}_1,\ldots,\sz^{\pm 1}_N]$,
its image under the homomorphism $\wt{\Phi}^{\unl{\lambda}}_\mu$
of~(\ref{opposite homomorphism from shuffle}) equals
\begin{equation}\label{Explicit opposite shuffle homom}
\begin{split}
  & \wt{\Phi}^{\unl{\lambda}}_\mu(F)=
    (1-\vv^2)^{-\sum_{i=1}^{n-1} k_i}\prod_{i=1}^{n-1} \prod_{r=1}^{a_i}\sw_{i,r}^{-\frac{1}{2}k_{i-1}}\times\\
  & \sum_{\substack{m^{(1)}_1+\ldots+m^{(1)}_{a_1}=k_1\\\cdots\\ m^{(n-1)}_1+\ldots+m^{(n-1)}_{a_{n-1}}=k_{n-1}}}^{m^{(i)}_r\in \BN}
    \left(\prod_{i=1}^{n-1}\prod_{r=1}^{a_i}\prod_{p=1}^{m^{(i)}_r}Y'_{i,r}(\vv^{2(p-1)}\sw_{i,r})\cdot
    F\left(\{\vv^{2p}\sw_{i,r}\}_{\substack{1\leq i<n\\ 1\leq r\leq a_i\\ 1\leq p\leq m^{(i)}_r}}\right)\times\right.\\
  & \left.
    \prod_{i=1}^{n-1}\prod_{r=1}^{a_i}\prod_{1\leq p_1<p_2\leq m^{(i)}_r}
    \zeta^{-1}_{i,i}(\vv^{2(p_2-1)}\sw_{i,r},\vv^{2(p_1-1)}\sw_{i,r})\times\right.\\
  & \left.
    \prod_{i=1}^{n-1}\prod_{1\leq r_1\neq r_2\leq a_i}\prod_{1\leq p_1\leq m^{(i)}_{r_1}}^{1\leq p_2\leq m^{(i)}_{r_2}}
    \left(\vv^{-1}\cdot \zeta^{-1}_{i,i}(\vv^{2(p_2-1)}\sw_{i,r_2},\vv^{2(p_1-1)}\sw_{i,r_1})\right)\times\right.\\
  & \left.
    \prod_{i=1}^{n-2}\prod_{1\leq r_1\leq a_{i+1}}^{1\leq r_2\leq a_{i}}\prod_{1\leq p_1\leq m^{(i+1)}_{r_1}}^{1\leq p_2\leq m^{(i)}_{r_2}}
    \zeta^{-1}_{i+1,i}(\vv^{2(p_1-1)}\sw_{i+1,r_1},\vv^{2(p_2-1)}\sw_{i,r_2})\cdot
    \prod_{i=1}^{n-1}\prod_{r=1}^{a_i} D_{i,r}^{m^{(i)}_r}\right).
\end{split}
\end{equation}
\end{Thm}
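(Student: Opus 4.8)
The plan is to compute the left-hand side $\wt{\Phi}^{\unl{\lambda}}_\mu(E)$ by induction on $|\unl{k}|:=k_1+\ldots+k_{n-1}$, exploiting that $\wt{\Phi}^{\unl\lambda}_\mu$ is an algebra homomorphism (Theorem~\ref{Homomorphism}) together with the fact that $S^{(n)}\simeq U^>_\vv(L\gl_n)$ is generated over $\BC(\vv)$ by its degree-$\mathbf{1}_i$ components $x_{i,1}^r$ ($1\le i<n$, $r\in\BZ$), which is precisely Theorem~\ref{shuffle rational}. Here $\mathbf{1}_i\in\BN^{n-1}$ is the $i$-th coordinate vector, and in the homomorphism~(\ref{homomorphism from shuffle}) the positive subalgebra $U^{\ad,\mu;>}_\vv[\sz^{\pm1}_1,\ldots,\sz^{\pm1}_N]$ is identified with $S^{(n)}[\sz^{\pm1}_1,\ldots,\sz^{\pm1}_N]$ via $\Psi$ (adjoining the Cartan generators $\phi^\pm_i$ does not alter the positive part). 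Since the right-hand side of~(\ref{Explicit shuffle homom}) is manifestly $\BC(\vv)[\sz^{\pm}_\bullet]$-linear in $E$, and $S^{(n)}_{\unl{k}}=\sum_{i}S^{(n)}_{\mathbf{1}_i}\star S^{(n)}_{\unl{k}-\mathbf{1}_i}$ for $\unl{k}\ne 0$, the whole statement reduces to: (i) the formula for $\unl{k}=\mathbf{1}_i$; and (ii) the identity $\mathrm{RHS}(e\star E')=\mathrm{RHS}(e)\cdot\mathrm{RHS}(E')$ in $\wt{\CA}^\vv_\fra[\sz^{\pm}_\bullet]$ for a single generator $e\in S^{(n)}_{\mathbf{1}_i}$ and arbitrary $E'\in S^{(n)}$, where $\mathrm{RHS}$ denotes the right-hand side of~(\ref{Explicit shuffle homom}). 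Once (i) and (ii) hold, the induction closes: $\wt{\Phi}^{\unl\lambda}_\mu(E)=\sum\wt{\Phi}^{\unl\lambda}_\mu(e)\,\wt{\Phi}^{\unl\lambda}_\mu(E')=\sum\mathrm{RHS}(e)\,\mathrm{RHS}(E')=\sum\mathrm{RHS}(e\star E')=\mathrm{RHS}(E)$, and along the way $\mathrm{RHS}(E)$ is seen to lie in $\wt{\CA}^\vv_\fra[\sz^{\pm}_\bullet]$.

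For step (i), I would read off the $z^{-r}$-coefficient in the formula for $e_i(z)$ of Theorem~\ref{Homomorphism}, using $\Psi^{-1}(x_{i,1}^r)=e_{i,r}$. This gives $\wt{\Phi}^{\unl{\lambda}}_\mu(e_{i,r})=\frac{1}{\vv-\vv^{-1}}\prod_{t=1}^{a_i}\sw_{i,t}\prod_{t=1}^{a_{i-1}}\sw_{i-1,t}^{-1/2}\sum_{r_0=1}^{a_i}\frac{\sZ_i(\sw_{i,r_0})\sW_{i-1}(\vv^{-1}\sw_{i,r_0})}{\sW_{i,r_0}(\sw_{i,r_0})}\sw_{i,r_0}^{r}D_{i,r_0}^{-1}$. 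On the other side, $\mathrm{RHS}(x_{i,1}^r)$ has only the distributions $m^{(i)}_{r_0}=1$ (all other $m$'s zero), so the triple products of $\zeta^{-1}$-factors over pairs within a group are empty, the prefactor $\vv^{-\sum k_i(k_i-1)}=1$, and $Y_{i,r_0}(\sw_{i,r_0})\cdot x_{i,1}^r(\sw_{i,r_0})$ is exactly the summand above; matching the monomial prefactors $\prod_{i}\prod_{r}\sw_{i,r}^{k_i-\frac12 k_{i+1}}$ finishes the base case $|\unl{k}|\le 1$.

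Step (ii) is the main obstacle. The key mechanism is the relation $D_{i,r}^{-m}g(\sw_{i,r})=g(\vv^{-2m}\sw_{i,r})D_{i,r}^{-m}$ in $\wt{\CA}^\vv$: transporting the single operator $D_{i,r_0}^{-1}$ produced by $\mathrm{RHS}(e)$ rightward past the $\sw$-dependent factors of $\mathrm{RHS}(E')$ shifts the relevant argument $\sw_{i,r_0}\rightsquigarrow\vv^{-2}\sw_{i,r_0}$, which is exactly how the geometric progressions $\{\vv^{-2(p-1)}\sw_{i,r}\}_{p=1}^{m^{(i)}_r}$ in~(\ref{Explicit shuffle homom}) get built up one box at a time. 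The remaining work is bookkeeping: one must check that the definition~(\ref{shuffle product}) of $e\star E'$ — its symmetrization, the combinatorial normalization, and the product of structure functions $\zeta_{i,i'}$ — reorganizes, after all these specializations, into precisely the sum over distributions $m^{(\bullet)}_\bullet$ weighted by the correction factors $\prod\zeta^{-1}_{i,i}$ and $\prod\zeta^{-1}_{i+1,i}$; here one uses that $\zeta_{i,i}$ vanishes at argument $\vv^{-2}$, so the ``diagonal'' structure factors of the shuffle product telescope against the specialized $\zeta$-values along each progression, while the cross-group structure factors assemble into the stated corrections and the global $\sw$-monomial prefactors multiply correctly. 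I would carry this out strictly in the ``rank one'' form $e\star E'$ so that only one $D$-operator is transported at a time; this is a direct but lengthy index computation, in the spirit of~\cite[\S6]{t}, requiring no new idea.

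Finally, part~(b) proceeds identically after replacing $S^{(n)}$ by $S^{(n),\op}\simeq U^<_\vv(L\gl_n)$ (Remark~\ref{opposite shuffle}), whose degree-$\mathbf{1}_i$ generators correspond to $f_{i,r}$: the base case comes from the $f_i(z)$-formula of Theorem~\ref{Homomorphism}, which contributes $D_{i,r}$ in place of $D_{i,r}^{-1}$ (hence the progressions $\{\vv^{2(p-1)}\sw_{i,r}\}$ and the argument $\vv^{2p}\sw_{i,r}$ inside $F$), and the reordering uses $D_{i,r}^{m}g(\sw_{i,r})=g(\vv^{2m}\sw_{i,r})D_{i,r}^{m}$. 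As a sanity check on well-definedness independent of the equality with the left-hand side, the pole conditions~(\ref{pole conditions}) on $E$ (resp.\ $F$) ensure that the only poles in $E$ after specialization are those already inverted in $\wt{\CA}^\vv$ or cancelled by the $Y$- and $\zeta$-factors.
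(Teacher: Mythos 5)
Your proposal is correct and takes essentially the same route as the paper: the paper also names the right-hand side $\Theta(E)$, verifies by a (tedious but straightforward) index computation that $\Theta(E\star E')=\Theta(E)\Theta(E')$, and then concludes by matching $\Theta$ with $\wt{\Phi}^{\unl{\lambda}}_\mu$ on the generating components $S^{(n)}_{1_i}$ using the formulas of Theorem~\ref{Homomorphism} (and treats part (b) as completely analogous). Your only deviation is that you check multiplicativity just in the rank-one form $e\star E'$ with $e\in S^{(n)}_{\mathbf{1}_i}$ and close by induction on $|\unl{k}|$; this suffices, since $S^{(n)}_{\unl{k}}=\sum_i S^{(n)}_{\mathbf{1}_i}\star S^{(n)}_{\unl{k}-\mathbf{1}_i}$, and the underlying computation (transporting $D_{i,r}^{\mp1}$ past the $\sw$-dependent factors to build the geometric progressions and telescope the $\zeta$-factors) is the same one the paper leaves to the reader.
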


\begin{proof}
(a) Let us denote the right-hand side of~(\ref{Explicit shuffle homom}) by $\Theta(E)$.
A tedious straightforward verification proves $\Theta(E\star E')=\Theta(E)\Theta(E')$,
that is, $\Theta$ is a $\BC(\vv)[\sz^{\pm 1}_1,\ldots,\sz^{\pm 1}_N]$-algebra homomorphism.
On the other hand, $S^{(n)}[\sz^{\pm 1}_1,\ldots,\sz^{\pm 1}_N]$ is generated over
$\BC(\vv)[\sz^{\pm 1}_1,\ldots,\sz^{\pm 1}_N]$ by its components $\{S^{(n)}_{1_i}\}_{i=1}^{n-1}$
with $1_i=(0,\ldots,0,1,0,\ldots,0)\in \BN^{n-1}$ (here $1$ stays at the $i$-th coordinate),
due to the isomorphism $\Psi\colon U^>_\vv(L\gl_n)\iso S^{(n)}$.
Comparing~(\ref{Explicit shuffle homom}) with the formulas of
Theorem~\ref{Homomorphism}, we immediately get
$\Theta(E)=\wt{\Phi}^{\unl{\lambda}}_\mu(E)$ for $E\in S^{(n)}_{1_i}\ (1\leq i<n)$. Hence, we have
$\Theta(E)=\wt{\Phi}^{\unl{\lambda}}_\mu(E)$ for any
$E\in S^{(n)}[\sz^{\pm 1}_1,\ldots,\sz^{\pm 1}_N]$.
This completes our proof of Theorem~\ref{shuffle homomorphism}(a).

(b) The proof of Theorem~\ref{shuffle homomorphism}(b) is completely analogous.
\end{proof}

For any $1\leq j\leq i<n$, a vector
$\unl{k}=(0,\ldots,0,k_j,\ldots,k_i,0,\ldots,0)\in \BN^{n-1}$ with
$1\leq k_\ell\leq a_\ell\ (j\leq \ell\leq i)$, a collection
of integers $\gamma_\ell\in \BZ\ (j\leq \ell\leq i)$, and a collection
of symmetric Laurent polynomials
  $g_\ell(\{x_{\ell,r}\}_{r=1}^{k_\ell})\in
   \BC[\vv,\vv^{-1}][\{x^{\pm 1}_{\ell,r}\}_{r=1}^{k_\ell}]^{\Sigma_{k_\ell}}\ (j\leq \ell\leq i)$,
consider shuffle elements $\wt{E}\in S^{(n)}_{\unl{k}}$
and $\wt{F}\in S^{(n),\op}_{\unl{k}}$ given by:
\begin{equation}\label{important elements}
\begin{split}
  & \wt{E}:=(-1)^{\sum_{\ell=j}^{i-1} k_\ell k_{\ell+1}}
    \vv^{\sum_{\ell=j}^i k_\ell(k_\ell-1)} (\vv-\vv^{-1})^{\sum_{\ell=j}^i k_\ell}\times\\
  & \frac{\prod_{\ell=j}^i \prod_{1\leq r_1\ne r_2\leq k_\ell} (x_{\ell,r_1}-\vv^{-2}x_{\ell,r_2})\cdot
          \prod_{\ell=j}^i \prod_{r=1}^{k_\ell} x_{\ell,r}^{\gamma_\ell+1+k_{\ell-1}-k_\ell}\cdot
          \prod_{\ell=j}^j g_\ell(\{x_{\ell,r}\}_{r=1}^{k_\ell})}
         {\prod_{\ell=j}^{i-1}\prod_{1\leq r_1\leq k_\ell}^{1\leq r_2\leq k_{\ell+1}} (x_{\ell,r_1}-x_{\ell+1,r_2})}
\end{split}
\end{equation}
and
\begin{equation}\label{important opposite elements}
\begin{split}
  & \wt{F}:=(-1)^{\sum_{\ell=j}^{i} k_\ell}
    \vv^{\sum_{\ell=j}^i k_\ell(k_\ell+k_{\ell+1}-2\gamma_\ell)} (\vv-\vv^{-1})^{\sum_{\ell=j}^i k_\ell}\times\\
  & \frac{\prod_{\ell=j}^i \prod_{1\leq r_1\ne r_2\leq k_\ell} (x_{\ell,r_1}-\vv^{-2}x_{\ell,r_2})\cdot
          \prod_{\ell=j}^i \prod_{r=1}^{k_\ell} x_{\ell,r}^{\gamma_\ell+1+k_{\ell+1}-k_\ell}\cdot
          \prod_{\ell=j}^j g_\ell(\{\vv^{-2}x_{\ell,r}\}_{r=1}^{k_\ell})}
         {\prod_{\ell=j}^{i-1}\prod_{1\leq r_1\leq k_\ell}^{1\leq r_2\leq k_{\ell+1}} (x_{\ell,r_1}-x_{\ell+1,r_2})}.
\end{split}
\end{equation}

These elements obviously satisfy the pole conditions~(\ref{pole conditions}) as well
as the wheel conditions~(\ref{wheel conditions}), due to the presence of the factor
  $\prod_{\ell=j}^i \prod_{1\leq r_1\ne r_2\leq k_\ell} (x_{\ell,r_1}-\vv^{-2}x_{\ell,r_2})$
in the right-hand sides of~(\ref{important elements}, \ref{important opposite elements}).
Moreover,
  $\wt{E}\in \fS^{(n)}_{\unl{k}}$ and $\wt{F}\in \fS^{(n),\op}_{\unl{k}}$,
due to Proposition~\ref{key properties of integral shuffle}(b).
These elements are of crucial importance due to
Proposition~\ref{image of important elements} and Remark~\ref{geometric explanation},
which play the key role in our proof of Theorem~\ref{Surjectivity} below.

\begin{Prop}\label{image of important elements}
(a) For $\wt{E}\in \fS^{(n)}_{\unl{k}}$ given by~(\ref{important elements}), we have
\begin{equation}\label{Explicit image of important E}
\begin{split}
  & \wt{\Phi}^{\unl{\lambda}}_\mu(\wt{E})=
    \prod_{\ell=j-1}^{i} \prod_{r=1}^{a_\ell}\sw_{\ell,r}^{k_\ell-\frac{1}{2}k_{\ell+1}}\times\\
  & \sum_{\substack{J_j\subset\{1,\ldots,a_j\}:|J_j|=k_j\\\cdots\\ J_i\subset\{1,\ldots,a_i\}:|J_i|=k_i}}
    \left(\frac{\prod_{r\in J_j}^{1\leq s\leq a_{j-1}}(1-\frac{\sw_{j-1,s}}{\vv^{-1}\sw_{j,r}})\cdot
                \prod_{\ell=j}^{i-1} \prod_{r\in J_{\ell+1}}^{s\notin J_\ell} (1-\frac{\sw_{\ell,s}}{\vv^{-1}\sw_{\ell+1,r}})}
               {\prod_{\ell=j}^i \prod_{r\in J_\ell}^{s\notin J_\ell} (1-\frac{\sw_{\ell,s}}{\sw_{\ell,r}})}\times\right.\\
  & \left. \prod_{\ell=j}^i \prod_{r\in J_\ell} Z_\ell(\sw_{\ell,r})\cdot
           \prod_{\ell=j}^i \prod_{r\in J_\ell} \sw_{\ell,r}^{\gamma_\ell}\cdot
           \prod_{\ell=j}^i g_\ell(\{\sw_{\ell,r}\}_{r\in J_\ell})\cdot
           \prod_{\ell=j}^i \prod_{r\in J_\ell} D_{\ell,r}^{-1}\right).
\end{split}
\end{equation}

\noindent
(b) For $\wt{F}\in \fS^{(n),\op}_{\unl{k}}$ given by~(\ref{important opposite elements}),
we have
\begin{equation}\label{Explicit image of important F}
\begin{split}
  & \wt{\Phi}^{\unl{\lambda}}_\mu(\wt{F})=
    \prod_{\ell=j+1}^{i+1} \prod_{r=1}^{a_\ell}\sw_{\ell,r}^{-\frac{1}{2}k_{\ell-1}}\times\\
  & \sum_{\substack{J_j\subset\{1,\ldots,a_j\}:|J_j|=k_j\\\cdots\\ J_i\subset\{1,\ldots,a_i\}:|J_i|=k_i}}
    \left(\frac{\prod_{r\in J_i}^{1\leq s\leq a_{i+1}}(1-\frac{\sw_{i+1,s}}{\vv \sw_{i,r}})\cdot
                \prod_{\ell=j+1}^{i} \prod_{r\in J_{\ell-1}}^{s\notin J_\ell} (1-\frac{\sw_{\ell,s}}{\vv \sw_{\ell-1,r}})}
               {\prod_{\ell=j}^i \prod_{r\in J_\ell}^{s\notin J_\ell} (1-\frac{\sw_{\ell,s}}{\sw_{\ell,r}})}\times\right.\\
  & \left. \prod_{\ell=j}^i \prod_{r\in J_\ell} \sw_{\ell,r}^{\gamma_\ell}\cdot
           \prod_{\ell=j}^i g_\ell(\{\sw_{\ell,r}\}_{r\in J_\ell})\cdot
           \prod_{\ell=j}^i \prod_{r\in J_\ell} D_{\ell,r}\right).
\end{split}
\end{equation}
\end{Prop}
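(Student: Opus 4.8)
The plan is to deduce both identities by substituting the explicit elements $\wt{E}$, $\wt{F}$ into the shuffle formulas for $\wt{\Phi}^{\unl{\lambda}}_\mu$ established in Theorem~\ref{shuffle homomorphism}. I would treat part~(a) in detail; part~(b) is obtained by the same argument applied to~(\ref{Explicit opposite shuffle homom}) in place of~(\ref{Explicit shuffle homom}).

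First I would plug $E=\wt{E}$ of~(\ref{important elements}) into the right-hand side of~(\ref{Explicit shuffle homom}). The key observation is that $\wt{E}$ carries the factor $\prod_{\ell=j}^{i}\prod_{1\leq r_1\neq r_2\leq k_\ell}(x_{\ell,r_1}-\vv^{-2}x_{\ell,r_2})$: once the level-$\ell$ variables are specialized to $\vv^{-2(p-1)}\sw_{\ell,r}$ with $1\leq p\leq m^{(\ell)}_r$, the pair of specialization points indexed by $(r,2)$ and $(r,1)$ produces a vanishing factor as soon as some $m^{(\ell)}_r\geq 2$. Hence only the summands with all $m^{(i)}_r\in\{0,1\}$ survive, and these are indexed by the subsets $J_i:=\{r:m^{(i)}_r=1\}\subseteq\{1,\dots,a_i\}$, $|J_i|=k_i$, appearing in~(\ref{Explicit image of important E}). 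On this locus the inner products over $1\leq p\leq m^{(i)}_r$ and over $1\leq p_1<p_2\leq m^{(i)}_r$ in~(\ref{Explicit shuffle homom}) collapse, and the surviving $\zeta^{-1}_{i,i}$- and $\zeta^{-1}_{i+1,i}$-factors run only over indices lying in the relevant $J_i$.

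It then remains to evaluate $\wt{E}$ at $\{\sw_{\ell,r}\}_{r\in J_\ell}$ and to carry out the bookkeeping of cancellations. Here the denominator $\prod_{\ell}\prod_{r_1,r_2}(x_{\ell,r_1}-x_{\ell+1,r_2})$ of $\wt{E}$ must cancel against the numerators coming from the $Y_{\ell,r}$-factors and from $\zeta^{-1}_{\ell+1,\ell}$, while the numerator $\prod_{\ell}\prod_{r_1\neq r_2}(x_{\ell,r_1}-\vv^{-2}x_{\ell,r_2})$ of $\wt{E}$ cancels against $\prod\zeta^{-1}_{\ell,\ell}$; the scalar prefactors in $\vv$ and the powers of $\sw_{\ell,r}$ (including the exponent $\gamma_\ell+1+k_{\ell-1}-k_\ell$ carried by $\wt{E}$) reorganize, via elementary manipulations of the $\sW$- and $\sZ$-factors, into the global prefactor and the $\sw_{\ell,r}^{\gamma_\ell}$ of~(\ref{Explicit image of important E}). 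The symmetric Laurent polynomial $g_\ell(\{x_{\ell,r}\})$ evaluates directly to $g_\ell(\{\sw_{\ell,r}\}_{r\in J_\ell})$, and what remains of the $\sZ_\ell$-contribution is the displayed factor $\sZ_\ell(\sw_{\ell,r})$. Collecting all terms produces~(\ref{Explicit image of important E}).

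Part~(b) follows by the same scheme: substituting $\wt{F}$ of~(\ref{important opposite elements}) into~(\ref{Explicit opposite shuffle homom}), the factor $\prod_{\ell}\prod_{r_1\neq r_2}(x_{\ell,r_1}-\vv^{-2}x_{\ell,r_2})$ again forces all $m^{(i)}_r\in\{0,1\}$ once the level-$\ell$ variables are specialized to $\vv^{2p}\sw_{\ell,r}$ (the vanishing pair being $(r,1)$ and $(r,2)$), and the normalization $g_\ell(\{\vv^{-2}x_{\ell,r}\})$ in~(\ref{important opposite elements}) is arranged precisely so that it evaluates to $g_\ell(\{\sw_{\ell,r}\}_{r\in J_\ell})$. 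I expect the only real difficulty to be the purely mechanical one: tracking the signs $(-1)^{\bullet}$, the powers of $\vv$, and the half-integral $\sw$-exponents through the nested products, and verifying that the two denominator products of $\wt{E}$ (resp.\ $\wt{F}$) are cancelled exactly. This is a lengthy but routine rational-function computation that uses nothing beyond Theorem~\ref{shuffle homomorphism}.
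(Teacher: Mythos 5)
Your proposal matches the paper's proof: the paper likewise substitutes $\wt{E}$, $\wt{F}$ into the shuffle formulas of Theorem~\ref{shuffle homomorphism} and observes that the factor $\prod_{\ell=j}^i\prod_{1\leq r_1\neq r_2\leq k_\ell}(x_{\ell,r_1}-\vv^{-2}x_{\ell,r_2})$ kills every summand with some $m^{(\ell)}_r>1$, reducing the sum over compositions to a sum over subsets $J_\ell$ of cardinality $k_\ell$, after which the identity is a direct computation. Your account of the vanishing mechanism and of the remaining cancellations is correct and, if anything, more explicit than the paper's.
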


\begin{proof}
The proof is straightforward and is based
on~(\ref{Explicit shuffle homom},~\ref{Explicit opposite shuffle homom}).
Due to the presence of the factor
  $\prod_{\ell=j}^i \prod_{1\leq r_1\ne r_2\leq k_\ell} (x_{\ell,r_1}-\vv^{-2}x_{\ell,r_2})$
in~(\ref{important elements}, \ref{important opposite elements}),
all the summands of~(\ref{Explicit shuffle homom},~\ref{Explicit opposite shuffle homom})
with at least one index $m^{(\ell)}_r>1$ actually vanish. This explains why the
summations over all partitions of $k_\ell$ into the sum of $a_\ell$ nonnegative
integers in~(\ref{Explicit shuffle homom},~\ref{Explicit opposite shuffle homom}) are
replaced by the summations over all subsets of $\{1,\ldots,a_\ell\}$ of cardinality
$k_\ell$ in~(\ref{Explicit image of important E},~\ref{Explicit image of important F}).
\end{proof}

\begin{Rem}\label{identification}
In the particular case
  $k_\ell=1, \gamma_\ell=(r+1)\delta_{\ell,j}-\delta_{\ell,i}, g_\ell=1$
for $j\leq \ell\leq i$, the element $\wt{E}$ of~(\ref{important elements})
coincides with $\Psi((-1)^{i-j}E^{(r)}_{j,i+1})$. Likewise, in the particular case
  $k_\ell=1, \gamma_\ell=(r-1)\delta_{\ell,j}+\delta_{\ell,i}, g_\ell=1$
for $j\leq \ell\leq i$, the element $\wt{F}$ of~(\ref{important opposite elements})
coincides with $\Psi((-1)^{i-j}\vv^{i+1-j-2r}F^{(r)}_{i+1,j})$. Hence,
Proposition~\ref{image of important elements} generalizes
Lemma~\ref{t-modes in diff op-s}.
\end{Rem}

\begin{Rem}\label{geometric explanation}
For any $1\leq j\leq i<n$ and $\unl{k}\in \BN^{n-1}$ as above, we consider
a coweight
  $\kappa_{ji}=(0,\ldots,0,\varpi_{j,k_j},\ldots,\varpi_{i,k_i},0,\ldots,0)$
(resp.\ $\kappa^*_{ji}=(0,\ldots,0,\varpi^*_{j,k_j},\ldots,\varpi^*_{i,k_i},0,\ldots,0)$)
of $\GL(V)$, generalizing a coweight $\lambda_{ji}$ (resp.\ $\lambda^*_{ji}$) from
our proof of Proposition~\ref{integrality of all t-modes}.
The preimages of the corresponding orbits
$\Gr^{\kappa_{ji}}_{\GL(V)},\Gr^{\kappa_{ji}^*}_{\GL(V)}$ in the variety of
triples $\CR_{\GL(V),\bN}$ are denoted by $\CR_{\kappa_{ji}},\CR_{\kappa_{ji}^*}$,
respectively. Similarly to~(\ref{E in K},~\ref{F in K}), the right-hand sides
of~(\ref{Explicit image of important E},~\ref{Explicit image of important F})
equal $\bz^*(\iota_*)^{-1}$ of the appropriate classes in
  $K^{(\wt{\GL}(V)\times T_W)_\CO\rtimes\wt\BC^\times}(\CR_{\kappa_{ji}^*}),\
   K^{(\wt{\GL}(V)\times T_W)_\CO\rtimes\wt\BC^\times}(\CR_{\kappa_{ji}})$.
Moreover, any classes in these equivariant $K$-groups can be obtained this way
for an appropriate choice of symmetric Laurent polynomials $g_\ell$.
\end{Rem}

Our next result may be viewed as a trigonometric/$K$-theoretic counterpart
of Proposition~\ref{Surjectivity yangian} as well as a
generalization of~\cite[Theorem~9.2]{ft} and~\cite[Corollary~2.21]{cw}:

\begin{Thm}\label{Surjectivity}
  $\ol{\Phi}^{\unl\lambda}_\mu\colon
   \fU^{\ad,\mu}_\vv[\sz^{\pm 1}_1,\ldots,\sz^{\pm 1}_N]\to \CA^\vv$
is surjective.
\end{Thm}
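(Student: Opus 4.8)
The strategy is to reduce the surjectivity of $\ol{\Phi}^{\unl\lambda}_\mu$ onto the \emph{integral} Coulomb branch $\CA^\vv$ to the known surjectivity of the $\BC(\vv)$-version (which is~\cite[Theorem 9.2]{ft}, the trigonometric analogue of Proposition~\ref{Surjectivity yangian}) together with an explicit integral generation statement for $\CA^\vv$. First I would recall, following~\cite[\S8]{ft} and~\cite{cw}, that as a $\BC[\vv,\vv^{-1}][\sz_1^{\pm1},\ldots,\sz_N^{\pm1}]$-algebra the equivariant $K$-theory $\CA^\vv$ is generated by the classes of structure sheaves of the preimages $\CR_{\kappa_{ji}},\CR_{\kappa^*_{ji}}$ of minuscule products of orbits (twisted by line bundles $\CO_{\varpi_{k,\bullet}}(s)$ and determinant characters), exactly the classes appearing in Remark~\ref{geometric explanation}; this is the $K$-theoretic counterpart of the Yangian generation used in Proposition~\ref{Surjectivity yangian}, and is proved there by the localization/fixed-point argument of~\cite{cw}. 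The key point is that these generating classes exhaust $K^{(\wt{\GL}(V)\times T_W)_\CO\rtimes\wt\BC^\times}(\CR_{\kappa_{ji}})$ and $K^{(\wt{\GL}(V)\times T_W)_\CO\rtimes\wt\BC^\times}(\CR_{\kappa^*_{ji}})$ for all admissible $\unl{k}$ as the symmetric Laurent polynomials $g_\ell$ vary, again as recorded in Remark~\ref{geometric explanation}.

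\textbf{Key steps.} The argument then runs as follows. Step one: by Proposition~\ref{image of important elements} (part (a) and (b)), the elements $\wt{E},\wt{F}\in \fS^{(n)}_{\unl{k}},\fS^{(n),\op}_{\unl{k}}$ of~(\ref{important elements},~\ref{important opposite elements}) — which lie in the integral shuffle algebra, hence in $\fU^{\ad,\mu}_\vv[\sz_1^{\pm1},\ldots,\sz_N^{\pm1}]$ by Theorem~\ref{shuffle integral} and the shuffle identification of the positive/negative halves of $\fU^{\ad,\mu}_\vv$ used in the proof of Theorem~\ref{PBW for integral shifted} — have $\wt{\Phi}^{\unl\lambda}_\mu$-images equal (via $\bz^*(\iota_*)^{-1}$) to precisely the generating classes of $\CA^\vv$ attached to $\CR_{\kappa^*_{ji}},\CR_{\kappa_{ji}}$, for arbitrary $g_\ell$. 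Step two: combine this with the generation statement of step one to conclude that the image $\ol{\Phi}^{\unl\lambda}_\mu(\fU^{\ad,\mu}_\vv[\sz_1^{\pm1},\ldots,\sz_N^{\pm1}])$ contains a $\BC[\vv,\vv^{-1}][\sz_1^{\pm1},\ldots,\sz_N^{\pm1}]$-algebra-generating set of $\CA^\vv$; together with Proposition~\ref{integrality of all t-modes}, which guarantees the image actually lands in $\CA^\vv$, this yields surjectivity onto $\CA^\vv$. One must also check the Cartan/torus part: the images $\ol{\Phi}^{\unl\lambda}_\mu(\psi^\pm_{i,\pm s})$ and $\ol{\Phi}^{\unl\lambda}_\mu((\phi^\pm_i)^{\pm1})$ already generate the relevant ``Cartan'' subalgebra of $\CA^\vv$ (the symmetric functions in $\sw_{i,r}^{\pm1/2}$ together with $\sz_s^{\pm1}$), which is immediate from the explicit formulas in Theorem~\ref{Homomorphism}.

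\textbf{Main obstacle.} The genuinely delicate point is step one's generation claim at the \emph{integral} level: one needs that the classes $\bz^*(\iota_*)^{-1}$ of the twisted structure sheaves of the $\CR_{\kappa_{ji}},\CR_{\kappa^*_{ji}}$ — over the actual ring $\BC[\vv,\vv^{-1}]$, not after inverting $\vv-\vv^{-1}$ or other cyclotomic factors — generate $\CA^\vv$ as an algebra. Over $\BC(\vv)$ this is~\cite[Theorem 9.2]{ft} and over $\BC[\vv,\vv^{-1}]$ it is the content of~\cite[Corollary 2.21]{cw} adapted to the extended/enhanced setting (covers $\wt{\GL}(V),\wt\BC^\times$); I would invoke the monopole-formula / fixed-point computation there, taking care that the normalizing factors $\prod_{\ell}\prod_{r_1\ne r_2}(x_{\ell,r_1}-\vv^{-2}x_{\ell,r_2})$ present in~(\ref{important elements},~\ref{important opposite elements}) precisely match the Euler classes of the normal bundles to the relevant $\GL(V)_\CO$-orbits, so that no denominators survive. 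Once this integral generation input is in place, the rest of the proof is the bookkeeping in steps one–two above, and the result follows.

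\begin{proof}
By Proposition~\ref{integrality of all t-modes}, we already know that
$\ol{\Phi}^{\unl\lambda}_\mu$ maps
$\fU^{\ad,\mu}_\vv[\sz^{\pm 1}_1,\ldots,\sz^{\pm 1}_N]$ into $\CA^\vv$, so it
remains to prove surjectivity. Recall from~\cite[\S8]{ft} and~\cite{cw}
(the $K$-theoretic counterpart of the argument proving
Proposition~\ref{Surjectivity yangian}) that $\CA^\vv$ is generated as a
$\BC[\vv,\vv^{-1}][\sz^{\pm 1}_1,\ldots,\sz^{\pm 1}_N]$-algebra by:
the images $\ol{\Phi}^{\unl\lambda}_\mu(\psi^\pm_{i,\pm s})$ and
$\ol{\Phi}^{\unl\lambda}_\mu((\phi^\pm_i)^{\pm 1})$, which by the explicit
formulas of Theorem~\ref{Homomorphism} span the ``Cartan'' part
(symmetric Laurent polynomials in $\{\sw_{i,r}^{\pm 1/2}\}$ over
$\BC[\vv,\vv^{-1}][\sz^{\pm 1}_1,\ldots,\sz^{\pm 1}_N]$); and, for all
$1\leq j\leq i<n$ and all admissible $\unl{k}$, the classes
$\bz^*(\iota_*)^{-1}$ of the twisted structure sheaves of the subvarieties
$\CR_{\kappa_{ji}},\CR_{\kappa^*_{ji}}$ of Remark~\ref{geometric explanation},
as the symmetric Laurent polynomials $g_\ell$ vary. (This integral generation
statement is~\cite[Theorem~9.2]{ft} over $\BC(\vv)$ and~\cite[Corollary~2.21]{cw}
over $\BC[\vv,\vv^{-1}]$, adapted to the present extended setting via the covers
$\wt{\GL}(V),\wt\BC^\times$ of~\cite[\S8(i)]{ft}.)

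Now, the elements $\wt{E}\in \fS^{(n)}_{\unl{k}}$ and
$\wt{F}\in \fS^{(n),\op}_{\unl{k}}$ of~(\ref{important elements},
\ref{important opposite elements}) lie in the integral shuffle algebras, hence,
under the identifications of $\fS^{(n)},\fS^{(n),\op}$ with
$\fU^>_\vv(L\gl_n),\fU^<_\vv(L\gl_n)$ of Theorem~\ref{shuffle integral} and the
shuffle description of the positive/negative halves of
$\fU^{\ad,\mu}_\vv[\sz^{\pm 1}_1,\ldots,\sz^{\pm 1}_N]$ used in the proof of
Theorem~\ref{PBW for integral shifted}, they belong to
$\fU^{\ad,\mu}_\vv[\sz^{\pm 1}_1,\ldots,\sz^{\pm 1}_N]$. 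By
Proposition~\ref{image of important elements}, together with
Remark~\ref{geometric explanation}, the images
$\ol{\Phi}^{\unl\lambda}_\mu(\wt{E}),\ol{\Phi}^{\unl\lambda}_\mu(\wt{F})$
are exactly the generating classes of $\CA^\vv$ attached to
$\CR_{\kappa^*_{ji}},\CR_{\kappa_{ji}}$, for an arbitrary choice of $g_\ell$.

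Combining the two previous paragraphs: the subalgebra
$\ol{\Phi}^{\unl\lambda}_\mu(\fU^{\ad,\mu}_\vv[\sz^{\pm 1}_1,\ldots,\sz^{\pm 1}_N])
\subseteq \CA^\vv$ contains both the Cartan part and all the classes of
twisted structure sheaves of $\CR_{\kappa_{ji}},\CR_{\kappa^*_{ji}}$, hence,
it contains a $\BC[\vv,\vv^{-1}][\sz^{\pm 1}_1,\ldots,\sz^{\pm 1}_N]$-algebra
generating set of $\CA^\vv$. Therefore
$\ol{\Phi}^{\unl\lambda}_\mu(\fU^{\ad,\mu}_\vv[\sz^{\pm 1}_1,\ldots,\sz^{\pm 1}_N])=\CA^\vv$.

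This completes our proof of Theorem~\ref{Surjectivity}.
\end{proof}
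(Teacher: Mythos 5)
Your proposal follows the paper's strategy almost exactly: establish integrality via Proposition~\ref{integrality of all t-modes}, produce the elements $\wt{E},\wt{F}$ of~(\ref{important elements},~\ref{important opposite elements}) inside the integral form via the shuffle realization (Theorem~\ref{shuffle integral}), identify their images with the twisted structure-sheaf classes on $\CR_{\kappa_{ji}},\CR_{\kappa_{ji}^*}$ (Proposition~\ref{image of important elements}, Remark~\ref{geometric explanation}), and conclude by a generation statement for $\CA^\vv$. You also correctly single out the generation statement as the delicate point.

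The gap is that you then dispose of that delicate point by citation, and the citation does not exist in the form you need. \cite[Theorem~9.2]{ft} is a $\BC(\vv)$-statement, and \cite[Corollary~2.21]{cw} is proved in a different setting; neither asserts that the classes supported on the $\CR_{\kappa_{ji}},\CR_{\kappa_{ji}^*}$ generate $\CA^\vv$ over $\BC[\vv,\vv^{-1}][\sz_1^{\pm1},\ldots,\sz_N^{\pm1}]$. This is precisely what the paper has to prove, and it occupies the bulk of its argument: one passes to the associated graded of the filtration by support, $\on{gr}K^{(\wt{\GL}(V)\times T_W)_\CO\rtimes\wt\BC^\times}(\CR_{\GL(V),\bN})=\bigoplus_\lambda K^{(\wt{\GL}(V)\times T_W)_\CO\rtimes\wt\BC^\times}(\CR_\lambda)$, decomposes the dominant cone of $\GL(V)$ into chambers cut out by the generalized root hyperplanes (described combinatorially by shuffles and the integers $d_i$), checks that each chamber's monoid of integral coweights is generated by collections of the form $\kappa_{ji},\kappa_{ji}^*$, and uses that for $\lambda,\mu$ in a common chamber the convolution satisfies $(\pi^*c_\lambda)*(\pi^*c_\mu)=\pi^*(c_\lambda*c_\mu)$, together with the generation of the equivariant $K$-theory of $\Gr_{\GL(V)}$ by products of fundamental orbits. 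Without this chamber argument (or an equivalent), your step asserting that the minuscule classes generate $\CA^\vv$ integrally is unsupported. Your remark about the factors $\prod(x_{\ell,r_1}-\vv^{-2}x_{\ell,r_2})$ matching Euler classes is the right heuristic for why no denominators appear in~(\ref{Explicit image of important E},~\ref{Explicit image of important F}), but it addresses integrality of the images, not generation.
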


\begin{proof}
We need to prove that $K_{(\wt{\GL}(V)\times T_W)_\CO\rtimes\wt\BC^\times}(\pt)$
together with RHS of~(\ref{E in K},~\ref{F in K}) generate
$K^{(\wt{\GL}(V)\times T_W)_\CO\rtimes\wt\BC^\times}(\CR_{\GL(V),\bN})$.
Recall the filtration by support on
  $K^{(\wt{\GL}(V)\times T_W)_\CO\rtimes\wt\BC^\times}(\CR_{\GL(V),\bN})$
defined in~\cite[$\S$6(i)]{bfna} (strictly speaking, it is defined on the
equivariant Borel-Moore homology
  $H^{(\wt{\GL}(V)\times T_W)_\CO\rtimes\wt\BC^\times}(\CR_{\GL(V),\bN})$,
but the definition works word-for-word in the case of $K$-theory).
It suffices to prove that the associated graded
  $\on{gr}K^{(\wt{\GL}(V)\times T_W)_\CO\rtimes\wt\BC^\times}(\CR_{\GL(V),\bN})=
   \bigoplus_\lambda K^{(\wt{\GL}(V)\times T_W)_\CO\rtimes\wt\BC^\times}(\CR_\lambda)$
is generated by the right-hand sides
of~(\ref{Explicit image of important E},~\ref{Explicit image of important F})
together with $K_{(\wt{\GL}(V)\times T_W)_\CO\rtimes\wt\BC^\times}(\pt)$. Now
the cone of dominant coweights of $\GL(V)$ is subdivided into chambers by the
generalized root hyperplanes~\cite[$\S$5(i)]{bfna}. Recall that the generalized
roots are either the roots $w_{i,r}-w_{i,s}\ (1\leq i<n,\ 1\leq r\ne s\leq a_i)$ of $\fgl(V)$
or the nonzero weights
$w_{i,r},\ w_{i,r}-w_{i+1,s}\ (1\leq i<n,\ 1\leq r\leq a_i,\ 1\leq s\leq a_{i+1})$
of its module $\bN$. Hence a chamber is cut out by the following conditions:

(a) For any pair of adjacent vertices $i,j$, we fix a {\em shuffle}, i.e.\
a permutation $\sigma$ of $\{1,\ldots,a_i,a_i+1,\ldots,a_i+a_j\}$ such that
$\sigma(b)<\sigma(c)$ if $1\leq b<c\leq a_i$ or $a_i<b<c\leq a_i+a_j$.
Then we require $\lambda^{(i)}_b\leq\lambda^{(j)}_{b'}$ if
$\sigma(b)>\sigma(a_i+b')$, and $\lambda^{(i)}_b\geq\lambda^{(j)}_{b'}$
if $\sigma(b)<\sigma(a_i+b')$.

(b) For any vertex $i$ we fix a number $0\leq d_i\leq a_i$ and require
$\lambda^{(i)}_b\geq0$ for $1\leq b\leq d_i$, and $\lambda^{(i)}_b\leq0$
for $d_i<b\leq a_i$.

So the chambers are numbered by the choices of shuffles for all the adjacent
pairs $(i,j=i\pm 1)$ of vertices and the choices of numbers $d_i$ for all the vertices.
The intersection of a chamber $C$ with the lattice of integral coweights is
generated by the collections of fundamental coweights $(\varpi^{(i)}_{b_i})$
and the collections of dual coweights $(\varpi^{(i)*}_{c_i})$
(we allow $0\leq b_i,c_i\leq a_i$) such that

(a) for any pair of adjacent vertices $(i,j)$ and the corresponding shuffle
$\sigma$, we have $\sigma(b)>\sigma(c)$ for any
$1\leq b\leq b_i,\ a_i+b_j<c\leq a_i+a_j$ as well as for any
$1\leq b\leq a_i-c_i,\ a_i+a_j-c_j<c\leq a_i+a_j$.

(b) For any vertex $i$ and the corresponding number $d_i$,
we have $b_i\leq d_i<a_i-c_i$.

\medskip

For any interval $[j,i]=\{j,j+1,\ldots,i\}\subset\{1,\ldots,n-1\}$,
we consider collections of coweights
  $\kappa_{ji}=(0,\ldots,0,\varpi_{j,k_j},\ldots,\varpi_{i,k_i},0,\ldots,0)$
and
  $\kappa^*_{ji}=(0,\ldots,0,\varpi^*_{j,k_j},\ldots,\varpi^*_{i,k_i},0,\ldots,0)$.
According to~Remark~\ref{geometric explanation}, any class in
  $K^{(\wt{\GL}(V)\times T_W)_\CO\rtimes\wt\BC^\times}(\CR_{\kappa_{ji}^*}),\
   K^{(\wt{\GL}(V)\times T_W)_\CO\rtimes\wt\BC^\times}(\CR_{\kappa_{ji}})$
lies in the image of $\fU^{\ad,\mu}_\vv[\sz^{\pm 1}_1,\ldots,\sz^{\pm 1}_N]$
under $\ol{\Phi}^{\unl\lambda}_\mu$. According to the previous paragraph,
for any chamber $C$, the equivariant $K$-groups
  $K^{(\wt{\GL}(V)\times T_W)_\CO\rtimes\wt\BC^\times}(\CR_{\kappa_{ji}^*})$
and
  $K^{(\wt{\GL}(V)\times T_W)_\CO\rtimes\wt\BC^\times}(\CR_{\kappa_{ji}})$
(we take all the collections $\kappa_{ji},\kappa_{ji}^*$ generating $C$)
generate the subring
  $\bigoplus_{\lambda\in C}K^{(\wt{\GL}(V)\times T_W)_\CO\rtimes\wt\BC^\times}(\CR_\lambda)$
of $\on{gr}K^{(\wt{\GL}(V)\times T_W)_\CO\rtimes\wt\BC^\times}(\CR_{\GL(V),\bN})$.
Indeed, if $\lambda,\mu$ lie in the same chamber $C$, then
$(\pi^*c_\lambda)*(\pi^*c_\mu)=\pi^*(c_\lambda*c_\mu)$ (as in~\cite[\S6(i)]{bfna},
$\pi$ stands for the projection $\CR_\lambda\to\Gr_{\GL(V)}^\lambda$) for any classes
  $c_\lambda\in K^{(\wt{\GL}(V)\times T_W)_\CO\rtimes\wt\BC^\times}(\Gr^\lambda_{\GL(V)}),\
   c_\mu\in K^{(\wt{\GL}(V)\times T_W)_\CO\rtimes\wt\BC^\times}(\Gr^\mu_{\GL(V)})$.
And the equivariant $K$-theory of $\Gr_{\GL(V)}$ is generated by the equivariant
$K$-groups of products of fundamental orbits by the argument in the beginning of
the proof of~\cite[Corollary~2.21]{cw}. Hence, the appropriate classes
in~(\ref{Explicit image of important E},~\ref{Explicit image of important F})
generate the entire associated graded ring
  $\on{gr}K^{(\wt{\GL}(V)\times T_W)_\CO\rtimes\wt\BC^\times}(\CR_{\GL(V),\bN})$
(cf.~\cite[\S6.3]{bdg}, especially the last paragraph).

\medskip
This completes our proof of Theorem~\ref{Surjectivity}.
\end{proof}

\begin{Rem}\label{Surjectivity for Yangian via Shuffle}
The above proof of Theorem~\ref{Surjectivity} follows the one of~\cite[Corollary~2.21]{cw},
but crucially relies on the construction of certain elements of the integral form
$\fU^{\ad,\mu}_\vv[\sz^{\pm 1}_1,\ldots,\sz^{\pm 1}_N]$ whose shuffle realization is given
by explicit formulas~(\ref{important elements},~\ref{important opposite elements})
(let us emphasize that the explicit formulas for $\Psi^{-1}(\wt{E}), \Psi^{-1}(\wt{F})$ are not known).
The same argument can be used to obtain a new proof of Proposition~\ref{Surjectivity yangian}.
To this end, let $W^{(n)}\supset \fW^{(n)}$ be the rational shuffle algebra and its integral form of~\cite[\S6]{t}.
Similar to Theorems~\ref{shuffle rational},~\ref{shuffle integral}, there is a $\BC[\hbar]$-algebra
isomorphism $\Psi\colon Y^>_\hbar(\ssl_n)\iso W^{(n)}$, which gives rise to a $\BC[\hbar]$-algebra
isomorphism $\Psi\colon \bY^>_\hbar(\ssl_n)\iso \fW^{(n)}$, see~\cite[Theorems 6.20, 6.27]{t}.
Then, for any $1\leq j\leq i<n$, a vector
$\unl{k}=(0,\ldots,0,k_j,\ldots,k_i,0,\ldots,0)\in \BN^{n-1}$ with
$1\leq k_\ell\leq a_\ell\ (j\leq \ell\leq i)$, and a collection
of symmetric polynomials
  $g_\ell(\{x_{\ell,r}\}_{r=1}^{k_\ell})\in
   \BC[\hbar][\{x_{\ell,r}\}_{r=1}^{k_\ell}]^{\Sigma_{k_\ell}}\ (j\leq \ell\leq i)$,
consider shuffle elements $\wt{E}\in \fW^{(n)}_{\unl{k}}$
and $\wt{F}\in \fW^{(n),\op}_{\unl{k}}$ given by:
\begin{equation}\label{important elements yangian}
  \wt{E}:=\hbar^{\sum_{\ell=j}^i k_\ell}\cdot
  \frac{\prod_{\ell=j}^i \prod_{1\leq r_1\ne r_2\leq k_\ell} (x_{\ell,r_1}-x_{\ell,r_2}+\hbar)\cdot
          \prod_{\ell=j}^j g_\ell(\{x_{\ell,r}\}_{r=1}^{k_\ell})}
         {\prod_{\ell=j}^{i-1}\prod_{1\leq r_1\leq k_\ell}^{1\leq r_2\leq k_{\ell+1}} (x_{\ell,r_1}-x_{\ell+1,r_2})}
\end{equation}
and
\begin{equation}\label{important opposite elements yangian}
  \wt{F}:=\hbar^{\sum_{\ell=j}^i k_\ell}\cdot
  \frac{\prod_{\ell=j}^i \prod_{1\leq r_1\ne r_2\leq k_\ell} (x_{\ell,r_1}-x_{\ell,r_2}+\hbar)\cdot
          \prod_{\ell=j}^j g_\ell(\{\vv^{-2}x_{\ell,r}\}_{r=1}^{k_\ell})}
         {\prod_{\ell=j}^{i-1}\prod_{1\leq r_1\leq k_\ell}^{1\leq r_2\leq k_{\ell+1}} (x_{\ell,r_1}-x_{\ell+1,r_2})}.
\end{equation}
These are the rational counterparts of the elements
in~(\ref{important elements},~\ref{important opposite elements}).
Similar to Proposition~\ref{image of important elements}, we have the following explicit formulas
(generalizing Lemma~\ref{t-modes in diff op-s yangian}, cf.~Remark~\ref{identification}):
\begin{equation}\label{Explicit image of important E yangian}
\begin{split}
  & \Phi^{\unl{\lambda}}_\mu(\Psi^{-1}(\wt{E}))=(-1)^{\sum_{\ell=j}^i k_\ell+\sum_{\ell=j}^{i-1}k_{\ell}k_{\ell+1}}\times\\
  & \sum_{\substack{J_j\subset\{1,\ldots,a_j\}:|J_j|=k_j\\\cdots\\ J_i\subset\{1,\ldots,a_i\}:|J_i|=k_i}}
    \left(\frac{\prod_{r\in J_j}^{1\leq s\leq a_{j-1}}(w_{j,r}-w_{j-1,s}-\frac{\hbar}{2})\cdot
                \prod_{\ell=j}^{i-1} \prod_{r\in J_{\ell+1}}^{s\notin J_\ell} (w_{\ell+1,r}-w_{\ell,s}-\frac{\hbar}{2})}
               {\prod_{\ell=j}^i \prod_{r\in J_\ell}^{s\notin J_\ell} (w_{\ell,r}-w_{\ell,s})}\times\right.\\
  & \left. \prod_{\ell=j}^i \prod_{r\in J_\ell} Z_\ell(w_{\ell,r})\cdot
           \prod_{\ell=j}^i g_\ell(\{w_{\ell,r}\}_{r\in J_\ell})\cdot
           \prod_{\ell=j}^i \prod_{r\in J_\ell} \sfu_{\ell,r}^{-1}\right)
\end{split}
\end{equation}
and
\begin{equation}\label{Explicit image of important F yangian}
\begin{split}
  & \Phi^{\unl{\lambda}}_\mu(\Psi^{-1}(\wt{F}))=\\
  & \sum_{\substack{J_j\subset\{1,\ldots,a_j\}:|J_j|=k_j\\\cdots\\ J_i\subset\{1,\ldots,a_i\}:|J_i|=k_i}}
    \left(\frac{\prod_{r\in J_i}^{1\leq s\leq a_{i+1}}(w_{i,r}-w_{i+1,s}+\frac{\hbar}{2})\cdot
                \prod_{\ell=j+1}^{i} \prod_{r\in J_{\ell-1}}^{s\notin J_\ell} (w_{\ell-1,r}-w_{\ell,s}+\frac{\hbar}{2})}
               {\prod_{\ell=j}^i \prod_{r\in J_\ell}^{s\notin J_\ell} (w_{\ell,r}-w_{\ell,s})}\times\right.\\
  & \left. \prod_{\ell=j}^i g_\ell(\{w_{\ell,r}\}_{r\in J_\ell})\cdot
           \prod_{\ell=j}^i \prod_{r\in J_\ell} \sfu_{\ell,r}\right).
\end{split}
\end{equation}
\end{Rem}


\subsection{Explicit description for $\mu=0,\lambda=n\omega_{n-1}$}\label{ssec truncation ideal}
\

Following~\cite[\S7(ii)]{ft}, consider new Cartan generators
$\{A^\pm_{i,\pm r}\}_{1\leq i<n}^{r\geq 0}$ of
$U^{\ad,\mu}_\vv[\sz^{\pm 1}_1,\ldots,\sz^{\pm 1}_N]$
which are uniquely characterized by $A^\pm_{i,0}:=(\phi^\pm_i)^{-1}$
and
\begin{equation}\label{A-generators}
  \psi^+_i(z)=
  \left(Z_i(z)\frac{\prod_{j-i}A^+_j(\vv^{-1}z)}{A^+_i(z)A^+_i(\vv^{-2}z)}\right)^+,\
  \psi^-_i(z)=
  \left(\frac{\hat{\sZ}_i(z)\prod_{1\leq s\leq N}^{i_s=i}\sz_s}{(-z/\vv)^{\alphavee_i(\mu)}}\cdot
        \frac{\prod_{j-i}A^-_j(\vv^{-1}z)}{A^-_i(z)A^-_i(\vv^{-2}z)}\right)^-,
\end{equation}
where we set $A^\pm_i(z):=\sum_{r\geq 0} A^\pm_{i,\pm r}z^{\mp r}$
and $\hat{\sZ}_i(z):=\prod_{1\leq s\leq N}^{i_s=i} (1-\frac{z}{\vv\sz_s})$.

Following~\cite[\S8(iii)]{ft}, define the \emph{truncation ideal}
$\CI^{\unl{\lambda}}_\mu$ as the $2$-sided ideal of
$U^{\ad,\mu}_\vv[\sz_1^{\pm 1},\ldots,\sz_N^{\pm 1}]$ generated over
$\BC(\vv)[\sz_1^{\pm 1},\ldots,\sz_N^{\pm 1}]$ by the following elements:
\begin{equation}\label{truncation definition}
  A^\pm_{i,0}A^\pm_{i,\pm a_i}-(-1)^{a_i},\
  A^\pm_{i,\pm s},\ A^+_{i,a_i-r}-(-1)^{a_i}A^-_{i,-r}\ (0\leq r\leq a_i<s).
\end{equation}
For any $\unl{\lambda},\mu$, we have
  $\wt{\Phi}^{\unl{\lambda}}_\mu\colon
   A^+_i(z)\mapsto \prod_{r=1}^{a_i} \sw^{-1/2}_{i,r}\cdot \sW_i(z),\
   A^-_i(z)\mapsto \prod_{r=1}^{a_i} \sw^{1/2}_{i,t}\cdot \prod_{r=1}^{a_i} (1-\frac{z}{\sw_{i,r}})$.
Hence $\CI^{\unl{\lambda}}_\mu\subset \Ker(\wt{\Phi}^{\unl{\lambda}}_\mu)$.
The opposite inclusion is the subject of ~\cite[Conjecture 8.14]{ft}.

Let us now formulate an integral version of this conjecture. Define the $2$-sided ideal
$\fI^{\unl{\lambda}}_\mu$ of $\fU^{\ad,\mu}_\vv[\sz^{\pm 1}_1,\ldots,\sz^{\pm 1}_N]$ as
the intersection
  $\fI^{\unl{\lambda}}_\mu:=
   \CI^{\unl{\lambda}}_\mu\cap \fU^{\ad,\mu}_\vv[\sz^{\pm 1}_1,\ldots,\sz^{\pm 1}_N]$.
We also note that
  $\wt{\Phi}^{\unl{\lambda}}_\mu(\fU^{\ad,\mu}_\vv[\sz^{\pm 1}_1,\ldots,\sz^{\pm 1}_N])
   \subset \wt{\CA}^\vv[\sz_1^{\pm 1},\ldots,\sz_N^{\pm 1}]$,
due to Proposition~\ref{integrality of all t-modes} and the inclusion
  $\bz^*(\iota_*)^{-1}(\CA^\vv)\subset \wt{\CA}^\vv[\sz_1^{\pm 1},\ldots,\sz_N^{\pm 1}]$.

\begin{Conj}\label{conjectural description of kernel}
  $\fI^{\unl{\lambda}}_\mu=\Ker\left(\wt{\Phi}^{\unl{\lambda}}_\mu\colon
   \fU^{\ad,\mu}_\vv[\sz^{\pm 1}_1,\ldots,\sz^{\pm 1}_N]\to
   \wt{\CA}^\vv[\sz^{\pm 1}_1,\ldots,\sz^{\pm 1}_N]\right)$
for all $\unl{\lambda},\mu$.
\end{Conj}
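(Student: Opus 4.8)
The plan is to reduce Conjecture~\ref{conjectural description of kernel} to its rational/cohomological counterpart, which is the content of the already-established machinery surrounding Theorem~\ref{Main Theorem 2} (in the only case $\mu=0,\lambda=n\omega_{n-1}$ that we can currently handle), combined with a degeneration argument from the trigonometric ($K$-theoretic) setting to the rational (homological) one. Concretely, one inclusion is free: the formulas for $\wt\Phi^{\unl\lambda}_\mu(A^\pm_i(z))$ recorded just before the conjecture show $\CI^{\unl\lambda}_\mu\subset\Ker(\wt\Phi^{\unl\lambda}_\mu)$ over $\BC(\vv)$, and intersecting with $\fU^{\ad,\mu}_\vv[\sz^{\pm1}_1,\ldots,\sz^{\pm1}_N]$ gives $\fI^{\unl\lambda}_\mu\subset\Ker(\wt\Phi^{\unl\lambda}_\mu)$ as a map of integral forms (using Proposition~\ref{integrality of all t-modes} to see the target really lands in $\wt\CA^\vv[\sz^{\pm1}_1,\ldots,\sz^{\pm1}_N]$). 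The real work is the opposite inclusion $\Ker(\wt\Phi^{\unl\lambda}_\mu)\subset\fI^{\unl\lambda}_\mu$.

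For that, the first step is to pass to the quotient by the kernel: by definition of $\fI^{\unl\lambda}_\mu$ as an intersection, it suffices to prove that the induced map
\[
  \fU^{\ad,\mu}_\vv[\sz^{\pm1}_1,\ldots,\sz^{\pm1}_N]/\fI^{\unl\lambda}_\mu
  \longrightarrow
  \wt\CA^\vv[\sz^{\pm1}_1,\ldots,\sz^{\pm1}_N]
\]
is injective. By Theorem~\ref{Surjectivity} the image of $\fU^{\ad,\mu}_\vv[\sz^{\pm1}_1,\ldots,\sz^{\pm1}_N]$ inside $\CA^\vv$ is all of $\CA^\vv$, so the left-hand side above surjects onto $\CA^\vv$ (embedded via $\bz^*(\iota_*)^{-1}$), and the question becomes whether the surjection
$\fU^{\ad,\mu}_\vv[\sz^{\pm1}_1,\ldots,\sz^{\pm1}_N]/\fI^{\unl\lambda}_\mu\twoheadrightarrow\CA^\vv$ is an isomorphism. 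In the distinguished case $\mu=0,\lambda=n\omega_{n-1}$ I would first impose the reduced relation $\sum\sz_i=0$ and identify, exactly as in the proof of Theorem~\ref{Main Theorem 2} via Theorem~\ref{truncation as kernel quantum} and Corollary~\ref{explicit Coulomb quantum}, the reduced quotient with $\wt{\fU}_\vv(\ssl_n)$ and the reduced Coulomb branch $\unl\CA^\vv$ with the same algebra; then one bootstraps back from the reduced algebra to the full one by the standard central-extension argument (the $\sz_i$ and the central quantum-determinant elements form a polynomial/Laurent ring over which everything is free, cf.\ the passage from $\unl\CA_\hbar$ to $\CA_\hbar$ in Section~\ref{ssec truncation ideal yangian}). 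For general $\unl\lambda,\mu$, since Theorem~\ref{Main Theorem 1} (the $\BC(\vv)$-level statement) is itself only available in the distinguished case, the honest scope of a proof here is the same distinguished case, and the statement ``for all $\unl\lambda,\mu$'' is genuinely conjectural.

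The crux — and the step I expect to be the main obstacle — is matching \emph{graded characters} of the two integral forms, which is what upgrades surjectivity to an isomorphism. One has a surjection of graded (in the $\vv$-adic / support filtration sense) $\BC[\vv,\vv^{-1}]$-algebras $\fU^{\ad,\mu}_\vv[\sz^{\pm1}]/\fI^{\unl\lambda}_\mu\twoheadrightarrow\CA^\vv$, and one must show the domain is ``no bigger.'' The clean way is: (i) the PBWD basis of Theorem~\ref{PBW for integral shifted} gives an explicit free $\BC[\vv,\vv^{-1}][\sz^{\pm1}]$-module structure on $\fU^{\ad,\mu}_\vv[\sz^{\pm1}]$; (ii) using the explicit generators~(\ref{truncation definition}) of $\fI^{\unl\lambda}_\mu$ together with the identification $\pi\circ\wt\Upsilon(A_k(z))=\hat\Delta_k(z)$ (the trigonometric analogue of Corollary~\ref{image of A}), cut this basis down to a spanning set of the quotient; (iii) check this spanning set has the same size, graded-degree by graded-degree, as a basis of $\CA^\vv$, which by flatness reduces to the known equality at $\vv=1$ where both sides become $\BC[\CN]$ tensored with the Laurent ring in $\sz_i$ and the central elements (this is the $K$-theoretic counterpart of the computation $\on{char}\BC[\CN]\cdot\on{char}(\BC[\hbar,z_1,\ldots,z_n]/(\sum z_i))$ at the end of the proof of Theorem~\ref{Main Theorem 2}, invoking e.g.~\cite{bfnc} for the $\vv=1$ identification of the Coulomb branch with functions on the nilpotent cone). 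Step (ii) is where the difficulty concentrates: one must verify that modulo $\fI^{\unl\lambda}_\mu$ every monomial in the PBWD generators outside the expected basis can be rewritten, and this is exactly the ``minimalistic kernel'' input of Theorem~\ref{alternative kernel quantum} — i.e.\ that the whole truncation ideal is generated by the single family $t^+_{11}[s],t^-_{11}[-s]$ and the two ``gluing'' relations at $s=1$ — propagated through the Gauss-decomposition identities $\hat\Delta_1(z)=t_{11}(z)$ and the recursion in Corollary~\ref{image of A}. Getting the integral (as opposed to $\BC(\vv)$-rational) bookkeeping right here, so that no denominators sneak in, is the technical heart of the argument and is what the enhanced-algebra formalism of Section~\ref{ssec enhanced algebras} and the shuffle-integrality Proposition~\ref{key properties of integral shuffle} are designed to supply.
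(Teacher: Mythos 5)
You are right that the displayed statement is a \emph{Conjecture}: the paper does not prove it for all $\unl\lambda,\mu$, and only establishes the reduced version in the single case $\mu=0$, $\lambda=n\omega_{n-1}$ (Theorem~\ref{Main Theorem 1}). Your easy inclusion $\fI^{\unl\lambda}_\mu\subset\Ker(\wt\Phi^{\unl\lambda}_\mu)$, the reduction of the hard inclusion to injectivity of the induced surjection onto $\unl\CA^\vv$ via Theorem~\ref{Surjectivity}, and the identification of the quotient with $\wt{\fU}^\ad_\vv(\ssl_n)$ via Theorem~\ref{truncation as kernel quantum} all match the paper. (One caveat: the paper never bootstraps from the reduced to the unreduced statement, so your parenthetical ``standard central-extension argument'' is an unproved extra claim, not something you can lean on.)

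The genuine gap is in your step (iii), the final injectivity of $\ol\phi\colon\wt{\fU}^\ad_\vv(\ssl_n)\twoheadrightarrow\unl\CA^\vv$. The graded-character comparison you propose is the mechanism of the \emph{rational} case (end of the proof of Theorem~\ref{Main Theorem 2}) and does not transport to the $K$-theoretic setting: $\unl\CA^\vv$ carries only the filtration by support, not a grading by which one could compare characters degree by degree, and the identification of the classical limit with $\BC[\CN]$ from~\cite{bfnc} is a statement about the \emph{homological} Coulomb branch at $\hbar=z_i=0$; the $\vv=1$ limit of $\unl\CA^\vv$ is a multiplicative (group-like) slice, and no identification of it with $\wt{\fU}^\ad_\vv(\ssl_n)/(\vv-1)\simeq\BC[N_-TN_+]$-type data is established in the paper, so your appeal to it is circular. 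What the paper actually does is different: since $\wt{\fU}^\ad_\vv(\ssl_n)$ and $\unl\CA^\vv$ are both free $\BC[\vv,\vv^{-1}]$-modules, $\Ker(\ol\phi)$ is flat, so it suffices to kill $\Ker(\ol\phi_\fra)$ over $\BC(\vv)$; there one uses the action of $U^\ad_\vv(L\ssl_n)$ on the localized $T'_W$-equivariant $K$-theory of Laumon's based quasiflag spaces, which factors through the evaluation homomorphism and realizes the universal Verma module of $U^{'}_\vv(\gl_n)$; the resulting $\wt U^\ad_\vv(\ssl_n)$-action is effective and factors through $\ol\phi_\fra$ by~\cite{bdghk}, forcing $\ol\phi_\fra$ to be injective. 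Without this representation-theoretic input (or a genuine $K$-theoretic substitute for your character count), your argument does not close.
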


The goal of this subsection is to prove a \emph{reduced version} of this
equality in the particular case $\mu=0,\lambda=n\omega_{n-1}$  (so that
$N=n$ and $a_i=i$ for $1\leq i<n$; recall that $a_0=0, a_n=0$). Here,
a reduced version means that we impose an extra relation $\prod_{i=1}^n \sz_i=1$
in all our algebras. We use $\unl{\fI}^{n\omega_{n-1}}_0$
to denote the reduced version of the corresponding truncation ideal, while
$\unl{\wt{\Phi}}{}^{n\omega_{n-1}}_0$ denotes the resulting homomorphism
between the reduced algebras.

\begin{Thm}\label{Main Theorem 1}
  $\unl{\fI}^{n\omega_{n-1}}_0=\Ker(\unl{\wt{\Phi}}{}^{n\omega_{n-1}}_0)$.
\end{Thm}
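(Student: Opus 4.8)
The plan is to follow closely the proof of the rational counterpart, Theorem~\ref{Main Theorem 2}, replacing the shifted Yangian and the cohomological Coulomb branch by their trigonometric/$K$-theoretic analogues. Impose $\prod_{i=1}^n\sz_i=1$ throughout. First I would invoke the reduced version of Theorem~\ref{Surjectivity}: the homomorphism $\unl{\wt{\Phi}}{}^{n\omega_{n-1}}_0$ is the composition of the \emph{surjective} homomorphism $\unl{\ol{\Phi}}{}^{n\omega_{n-1}}_0\colon\fU^{\ad,0}_\vv[\sz^{\pm 1}_1,\ldots,\sz^{\pm 1}_n]/(\prod\sz_i-1)\twoheadrightarrow\unl{\CA}^\vv$ with the embedding $\bz^*(\iota_*)^{-1}\colon\unl{\CA}^\vv\hookrightarrow\wt{\CA}^\vv[\sz^{\pm 1}_1,\ldots,\sz^{\pm 1}_n]/(\prod\sz_i-1)$, so that $\Ker(\unl{\wt{\Phi}}{}^{n\omega_{n-1}}_0)=\Ker(\unl{\ol{\Phi}}{}^{n\omega_{n-1}}_0)$ and it suffices to compute the latter kernel.

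The key input is the identification of $\unl{\fI}^{n\omega_{n-1}}_0$ with the kernel of a suitable reduced evaluation homomorphism $\ol{\ev}\colon\fU^{\ad,0}_\vv[\sz^{\pm 1}_1,\ldots,\sz^{\pm 1}_n]/(\prod\sz_i-1)\to\wt{\fU}_\vv(\ssl_n)$, stated below as Theorem~\ref{truncation as kernel quantum}; its proof would proceed exactly as that of Theorem~\ref{truncation as kernel yangian}. Using the Ding--Frenkel isomorphisms $\Upsilon$ of Theorems~\ref{Ding-Frenkel affine} and~\ref{Ding-Frenkel finite}, the compatibility of evaluation homomorphisms of Theorem~\ref{compatibility of evaluations}, and the description of the center $Z\fU^\rtt_\vv(L\gl_n)$ via the quantum determinant of $T^\pm(z)$ (Proposition~\ref{center of quantum affine}), one first establishes the trigonometric analogue of Corollary~\ref{image of A}: under $\Upsilon$ followed by the appropriate central reduction, the Cartan currents $A^\pm_i(z)$ of~\eqref{A-generators} go to (suitably normalized and $\vv$-shifted) $i$-th principal quantum minors $\hat{\Delta}{}^\pm_i(z)$ of $T^\pm(z)$, obtained by comparing~\eqref{A-generators} with the known expressions for $\psi^\pm_i(z)$ as ratios of principal quantum minors (cf.~\cite{m,df}). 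The inclusion $\unl{\fI}^{n\omega_{n-1}}_0\subseteq\Ker(\ol{\ev})$ then follows since $\ol{\ev}{}^\rtt(\hat{\Delta}{}^\pm_i(z))$ is a Laurent polynomial of degree $i$ in $z^{\mp1}$ — for $i=n$ equal to the value imposed by the central reduction — so that all generators~\eqref{truncation definition} of $\CI^{\unl{\lambda}}_\mu$ (with $\mu=0$, $a_i=i$) are annihilated. For the reverse inclusion $\Ker(\ol{\ev})\subseteq\unl{\fI}^{n\omega_{n-1}}_0$ I would invoke the minimalistic kernel description of Theorem~\ref{alternative kernel quantum}: as $\hat{\Delta}{}^\pm_1(z)=t^\pm_{11}(z)$, the generators of the ideal $I$ occurring there correspond, under the inverse of $\pi\circ\Upsilon$, precisely to the modes of $A^\pm_1(z)$ listed in~\eqref{truncation definition}, which already belong to $\unl{\fI}^{n\omega_{n-1}}_0$.

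Granting Theorem~\ref{truncation as kernel quantum}, the map $\ol{\ev}$ is surjective (as one sees on the RTT side, where $\ev^\rtt\colon\fU^\rtt_\vv(L\gl_n)\twoheadrightarrow\fU^\rtt_\vv(\gl_n)$ of Lemma~\ref{ev_RTT} is surjective), so that $\unl{\ol{\Phi}}{}^{n\omega_{n-1}}_0$ factors as $\ol{\phi}\circ\ol{\ev}$ for a unique \emph{surjective} homomorphism $\ol{\phi}\colon\wt{\fU}_\vv(\ssl_n)\twoheadrightarrow\unl{\CA}^\vv$. Therefore $\Ker(\unl{\ol{\Phi}}{}^{n\omega_{n-1}}_0)=\ol{\ev}{}^{-1}(\Ker\ol{\phi})$ equals $\Ker(\ol{\ev})=\unl{\fI}^{n\omega_{n-1}}_0$ if and only if $\ol{\phi}$ is injective; this injectivity is the statement of the forthcoming Corollary~\ref{explicit Coulomb quantum}, so it remains to prove it.

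Finally, to prove the injectivity of $\ol{\phi}$ I would argue exactly as in the last paragraph of the proof of Theorem~\ref{Main Theorem 2}. The homomorphism $\ol{\phi}$ is compatible with filtrations: the ``filtration by support'' on $\unl{\CA}^\vv$ (the $K$-theoretic counterpart of the one of~\cite{bfna}) and the PBW filtration on $\wt{\fU}_\vv(\ssl_n)$ provided by Theorem~\ref{PBW basis coordinate finite}(b); moreover, in the classical limit it becomes an isomorphism of the coordinate rings of the nilpotent cone $\CN\subset\ssl_n$ (equivalently, of the unipotent variety of $\mathrm{SL}(n)$), cf.~\cite{bfnc}. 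Both $\wt{\fU}_\vv(\ssl_n)$ and $\unl{\CA}^\vv$ are free modules over $R:=\BC[\vv,\vv^{-1}][\sz^{\pm 1}_1,\ldots,\sz^{\pm 1}_n]/(\prod\sz_i-1)$, and passing to associated graded algebras one checks that their graded characters both equal $\on{char}\BC[\CN]\cdot\on{char}R$. A surjection of free $R$-modules with equal graded characters splits, and its kernel is a torsion-free $R$-module of rank $0$, hence $0$; this yields the injectivity of $\ol{\phi}$ and finishes the proof. The main obstacle is Theorem~\ref{truncation as kernel quantum}, and within it the trigonometric analogue of Corollary~\ref{image of A}: identifying the shifted Cartan currents $A^\pm_i(z)$ with normalized principal quantum minors of $T^\pm(z)$ requires tracking the numerous $\vv$-power shifts in the Gauss decomposition of~\cite{df} and in~\eqref{A-generators} and correctly handling the interplay of the ``$+$'' and ``$-$'' currents (linked in~\eqref{truncation definition} through $A^+_{i,a_i-r}-(-1)^{a_i}A^-_{i,-r}$); a secondary technical point is the $K$-theoretic input on $\unl{\CA}^\vv$ — its freeness over $R$ and its classical limit — generalizing the cohomological results of~\cite{bfna,bfnc}.
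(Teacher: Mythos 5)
Your overall architecture coincides with the paper's: reduce to $\Ker(\unl{\ol{\Phi}}{}^{n\omega_{n-1}}_0)$ via Theorem~\ref{Surjectivity}, factor $\unl{\ol{\Phi}}{}^{n\omega_{n-1}}_0$ through $\ol{\ev}$ using Theorem~\ref{truncation as kernel quantum} (whose proof you sketch essentially as the paper does, via Corollary~\ref{image of A quantum} and Theorem~\ref{alternative kernel quantum}), and then reduce everything to the injectivity of $\ol{\phi}\colon \wt{\fU}^\ad_\vv(\ssl_n)\twoheadrightarrow \unl{\CA}^\vv$. Up to that point the proposal is sound.

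The gap is in the last step. You propose to prove the injectivity of $\ol{\phi}$ ``exactly as in the last paragraph of the proof of Theorem~\ref{Main Theorem 2},'' i.e.\ by comparing graded characters. That argument does not transfer to the $K$-theoretic setting as stated: the quantized $K$-theoretic Coulomb branch $\unl{\CA}^\vv$ carries only the filtration by support (indexed by dominant coweights of $\GL(V)$), not the integer homological grading that made $\unl{\CA}_\hbar$ a graded algebra, so ``graded character'' is not defined on the target; moreover, the identification of the specialization at $\vv=1,\ \sz_i=1$ with $\BC[\CN]$ rests on~\cite[Theorem~4.12]{bfnc}, which concerns the homological Coulomb branch, and its $K$-theoretic counterpart is not established here (the $\vv=1$ limit of $\wt{\fU}^\ad_\vv(\ssl_n)$ is a central reduction of the coordinate ring of the \emph{big Bruhat cell}, an open chart, not of the full unipotent variety, so even the classical statement needs care). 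The paper circumvents all of this by a representation-theoretic argument: it first uses freeness over $\BC[\vv,\vv^{-1}]$ to reduce to the injectivity of $\ol{\phi}_\fra$ over $\BC(\vv)$, and then invokes the action of $U^{\ad}_\vv(L\ssl_n)$ on the localized $T'_W$-equivariant $K$-theory of the Laumon quasiflags' spaces; this action factors through $\ev$ and realizes the universal Verma module of $\wt{U}^{'}_\vv(\gl_n)$ (\cite{bf}), on which $\wt{U}^\ad_\vv(\ssl_n)$ acts effectively, while by~\cite{bdghk} the same action factors through $\ol{\phi}_\fra$ — whence $\ol{\phi}_\fra$ is injective. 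To complete your proof you would either need to supply this faithfulness argument or genuinely construct a grading (or a filtration with computable associated graded) on $\unl{\CA}^\vv$ together with the $K$-theoretic analogue of the classical-limit identification; neither is immediate.
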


Our proof of this result is based on the identification of the reduced truncation
ideal $\unl{\fI}^{n\omega_{n-1}}_0$ with the kernel of a certain version of
the evaluation homomorphism $\ev$, which is of independent interest.

Recall the commutative diagram~(\ref{diagram quantum adjoint}). Adjoining
extra variables $\{\sz^{\pm 1}_i\}_{i=1}^n$ subject to $\prod_{i=1}^n \sz_i=1$,
we obtain the following commutative diagram:
\begin{equation}\label{diagram quantum extended}
  \begin{CD}
    U^\ad_\vv(L\ssl_n)[\sz^{\pm 1}_1,\ldots, \sz^{\pm 1}_n]/(\prod \sz_i-1) @>\ev>>
    U^{'}_\vv(\gl_n)[\sz^{\pm 1}_1,\ldots, \sz^{\pm 1}_n]/(\prod \sz_i-1)\\
    @VV{\Upsilon}V @V{\Upsilon}V\wr
    V\\
    U^{\rtt,'}_\vv(L\gl_n)[\sz^{\pm 1}_1,\ldots, \sz^{\pm 1}_n]/(\prod \sz_i-1) @>\ev^\rtt>>
    U^{\rtt,'}_\vv(\gl_n)[\sz^{\pm 1}_1,\ldots, \sz^{\pm 1}_n]/(\prod \sz_i-1)
    \end{CD}
\end{equation}
where
  $U^{'}_\vv(\gl_n)[\sz^{\pm 1}_1,\ldots, \sz^{\pm 1}_n]/(\prod \sz_i-1):=
   U^{'}_\vv(\gl_n)\otimes_{\BC(\vv)}
   \BC(\vv)[\sz^{\pm 1}_1,\ldots, \sz^{\pm 1}_n]/(\prod_{i=1}^n \sz_i-1)$
and the other three algebras are defined likewise.

Recall the isomorphism
  $U^{\rtt,'}_\vv(L\gl_n)\simeq U^{\rtt,'}_\vv(L\ssl_n)\otimes_{\BC(\vv)} ZU^{\rtt,'}_\vv(L\gl_n)$
of~(\ref{qaffine gln decomposition}), which after adjoining
extra variables $\{\sz^{\pm 1}_i\}_{i=1}^n$ subject to $\prod_{i=1}^n \sz_i=1$
gives rise to an algebra isomorphism
\begin{equation*}
\begin{split}
  & U^{\rtt,'}_\vv(L\gl_n)[\sz^{\pm 1}_1,\ldots, \sz^{\pm 1}_n]/(\sz_1\ldots\sz_n-1)\simeq\\
  & U^{\rtt,'}_\vv(L\ssl_n)\otimes_{\BC(\vv)} ZU^{\rtt,'}_\vv(L\gl_n)
    \otimes_{\BC(\vv)}\BC(\vv)[\sz^{\pm 1}_1,\ldots, \sz^{\pm 1}_n]/(\sz_1\ldots\sz_n-1),
\end{split}
\end{equation*}
where $ZU^{\rtt,'}_\vv(L\gl_n)$ denotes the center of $U^{\rtt,'}_\vv(L\gl_n)$.

Let $\Delta^\pm_n(z)$ denote the quantum determinant $\qdet\ T^\pm(z)$ of
Definition~\ref{quantum determinant quantum}, and set
  $\hat{\Delta}^\pm_n(z):=\Delta(\vv^{1-n}z)$.
According to Proposition~\ref{center of quantum affine}, the center
$ZU^{\rtt,'}_\vv(L\gl_n)$ is isomorphic to the quotient of the polynomial algebra
in $\{(\hat{d}^\pm_{0})^{1/n},\hat{d}^\pm_{\pm r}\}_{r\geq 1}$ by the relation
$(\hat{d}^+_0)^{1/n}(\hat{d}^-_0)^{1/n}=1$, that is,
  $ZU^{\rtt,'}_\vv(L\gl_n)\simeq
   \BC(\vv)[\{(\hat{d}^\pm_{0})^{1/n},\hat{d}^\pm_{\pm r}\}_{r\geq 1}]/((\hat{d}^+_0)^{1/n}(\hat{d}^-_0)^{1/n}-1)$,
where $\hat{d}^\pm_{\pm r}$ are defined via
  $\hat{\Delta}^\pm_n(z)=\sum_{r\geq 0} \hat{d}^{\pm}_{\pm r} z^{\mp r}$
and $(\hat{d}^\pm_{0})^{1/n}=(t[0])^{\pm 1/n}$.
Let $\CJ$ be the $2$-sided ideal of
  $U^{\rtt,'}_\vv(L\gl_n)[\sz^{\pm 1}_1,\ldots, \sz^{\pm 1}_n]/(\prod\sz_i-1)$
generated by the following elements:
\begin{equation*}
\begin{split}
  & \hat{d}^\pm_{\pm s},\ (\hat{d}^\pm_{0})^{1/n}-1 \ \ (s>n),\\
  & \hat{d}^+_r-(-1)^re_r(\sz_1,\ldots,\sz_n),\ \hat{d}^-_{-r}-(-1)^r\sz_1\ldots\sz_ne_r(\sz^{-1}_1,\ldots,\sz^{-1}_n) \ \ (1\leq r\leq n),
\end{split}
\end{equation*}
where $e_r(\bullet)$ denotes the $r$-th elementary symmetric polynomial.
The ideal $\CJ$ is chosen so that
  $\hat{\Delta}^+_n(z)-\prod_{s=1}^n (1-\sz_s/z)\in \CJ[[z^{-1}]]$
and
  $\hat{\Delta}^-_n(z)-\prod_{s=1}^n (\sz_s-z) \in \CJ[[z]]$.
Let
\begin{equation*}
  \pi\colon U^{\rtt,'}_\vv(L\gl_n)[\sz^{\pm 1}_1,\ldots, \sz^{\pm 1}_n]/(\sz_1\ldots \sz_n-1)
  \twoheadrightarrow U^{\rtt,'}_\vv(L\ssl_n)[\sz^{\pm 1}_1,\ldots, \sz^{\pm 1}_n]/(\sz_1\ldots \sz_n-1)
\end{equation*}
be the natural projection along $\CJ$.
Set $X^{1/n}_0:=\ev^\rtt((\hat{d}^+_0)^{1/n}), X^{1/n}_n:=-\ev^\rtt((\hat{d}^-_{0})^{1/n})$,
and $X_r:=\ev^\rtt(\hat{d}^+_r)=(-1)^n\ev^\rtt(\hat{d}^-_{-n+r})$ for $0\leq r\leq n$,
where the last equality follows from the explicit
formulas for $\ev^\rtt$ (which also imply $\ev^\rtt(\hat{d}^\pm_{\pm s})=0$ for $s>n$).
Then, the center
  $ZU^{\rtt,'}_\vv(\gl_n)[\sz^{\pm 1}_1,\ldots, \sz^{\pm 1}_n]/(\sz_1\ldots\sz_n-1)$
of
  $U^{\rtt,'}_\vv(\gl_n)[\sz^{\pm 1}_1,\ldots, \sz^{\pm 1}_n]/(\sz_1\ldots\sz_n-1)$
is isomorphic to
  $\BC(\vv)[\sz^{\pm 1}_1,\ldots,\sz^{\pm 1}_n,X^{1/n}_0,X_1,\ldots,X_{n-1},X^{1/n}_n]/(X^{1/n}_0X^{1/n}_n+1,\sz_1\ldots\sz_n-1)$.

Define the \emph{extended quantized universal enveloping} $\wt{U}^\ad_\vv(\ssl_n)$
as the central reduction of
  $U^{'}_\vv(\gl_n)[\sz^{\pm 1}_1,\ldots, \sz^{\pm 1}_n]/(\prod \sz_i-1)$
by the $2$-sided ideal generated by
\begin{equation*}
  \Upsilon^{-1}(X^{1/n}_0)-1,\ \Upsilon^{-1}(X^{1/n}_n)+1,\
  \Upsilon^{-1}(X_r)-(-1)^re_r(\sz_1,\ldots,\sz_n) \ \ (0<r<n),
\end{equation*}
cf.~\cite{bg} (the appearance of $\ssl_n$ is due to the fact that $\Upsilon^{-1}(X_0)=1$).
By abuse of notation, we denote the corresponding projection
  $U^{'}_\vv(\gl_n)[\sz^{\pm 1}_1,\ldots, \sz^{\pm 1}_n]/(\prod \sz_i-1)
   \twoheadrightarrow \wt{U}^\ad_\vv(\ssl_n)$
by $\pi$ again. Likewise, define $\wt{U}^{\rtt,'}_\vv(\ssl_n)$ as the central reduction of
  $U^{\rtt,'}_\vv(\gl_n)[\sz^{\pm 1}_1,\ldots, \sz^{\pm 1}_n]/(\prod \sz_i-1)$
by the $2$-sided ideal generated by
  $\{X^{1/n}_0-1, X^{1/n}_n+1, X_r-(-1)^re_r(\sz_1,\ldots,\sz_n)\}_{r=1}^{n-1}$.
By abuse of notation, we denote the corresponding projection
  $U^{\rtt,'}_\vv(\gl_n)[\sz^{\pm 1}_1,\ldots, \sz^{\pm 1}_n]/(\prod \sz_i-1)
   \twoheadrightarrow \wt{U}^{\rtt,'}_\vv(\ssl_n)$
by $\pi$ again. We denote the composition
  $U^{\rtt,'}_\vv(L\gl_n)[\sz^{\pm 1}_1,\ldots, \sz^{\pm 1}_n]/(\prod \sz_i-1)
   \xrightarrow{\ev^\rtt} U^{\rtt,'}_\vv(\gl_n)[\sz^{\pm 1}_1,\ldots, \sz^{\pm 1}_n]/(\prod \sz_i-1)
   \xrightarrow{\pi} \wt{U}^{\rtt,'}_\vv(\ssl_n)$
by $\ol{\ev}^\rtt$. Note that by construction it factors through
  $\pi\colon U^{\rtt,'}_\vv(L\gl_n)[\sz^{\pm 1}_1,\ldots, \sz^{\pm 1}_n]/(\prod \sz_i-1)\to
   U^{\rtt,'}_\vv(L\ssl_n)[\sz^{\pm 1}_1,\ldots, \sz^{\pm 1}_n]/(\prod \sz_i-1)$,
and we denote the corresponding homomorphism
  $U^{\rtt,'}_\vv(L\ssl_n)[\sz^{\pm 1}_1,\ldots, \sz^{\pm 1}_n]/(\prod \sz_i-1)
   \to \wt{U}^{\rtt,'}_\vv(\ssl_n)$
by $\ol{\ev}^\rtt$ again. Likewise, we denote the composition
  $U^\ad_\vv(L\ssl_n)[\sz^{\pm 1}_1,\ldots, \sz^{\pm 1}_n]/(\prod \sz_i-1)\xrightarrow{\ev}
   U^{'}_\vv(\gl_n)[\sz^{\pm 1}_1,\ldots, \sz^{\pm 1}_n]/(\prod \sz_i-1) \xrightarrow{\pi}
   \wt{U}^\ad_\vv(\ssl_n)$
by $\ol{\ev}$.

Summarizing all the above, we obtain the following commutative diagram:
\begin{equation}\label{diagram quantum full}
  \begin{CD}
    U^{\ad}_\vv(L\ssl_n)[\sz^{\pm 1}_1,\ldots, \sz^{\pm 1}_n]/(\prod \sz_i-1) @>\ol{\ev}>> \wt{U}^{\ad}_\vv(\ssl_n)\\
      @VV{\Upsilon}V @V{\Upsilon}V\wr V\\
    U^{\rtt,'}_\vv(L\gl_n)[\sz^{\pm 1}_1,\ldots, \sz^{\pm 1}_n]/(\prod \sz_i-1) @>\ol{\ev}^\rtt>> \wt{U}^{\rtt,'}_\vv(\ssl_n)\\
    @VV{\pi}V @| \\
    U^{\rtt,'}_\vv(L\ssl_n)[\sz^{\pm 1}_1,\ldots, \sz^{\pm 1}_n]/(\prod \sz_i-1) @>\ol{\ev}^\rtt>> \wt{U}^{\rtt,'}_\vv(\ssl_n)\\
    \end{CD}
\end{equation}
Due to the isomorphism $U^\ad_\vv(L\ssl_n)\iso U^{\rtt,'}_\vv(L\ssl_n)$
of~(\ref{isom of qsl and rtt-qsl}), the composition of the left
vertical arrows of~(\ref{diagram quantum full}) is an isomorphism:
\begin{equation*}
  \pi\circ \Upsilon\colon U^\ad_\vv(L\ssl_n)[\sz^{\pm 1}_1,\ldots, \sz^{\pm 1}_n]/(\sz_1\ldots \sz_n-1)
  \iso U^{\rtt,'}_\vv(L\ssl_n)[\sz^{\pm 1}_1,\ldots, \sz^{\pm 1}_n]/(\sz_1\ldots\sz_n-1).
\end{equation*}

The commutative diagram~(\ref{diagram quantum full}) in turn gives rise to the following
commutative diagram:
\begin{equation}\label{diagram quantum full integral}
  \begin{CD}
    \fU^{\ad}_\vv(L\ssl_n)[\sz^{\pm 1}_1,\ldots, \sz^{\pm 1}_n]/(\prod \sz_i-1) @>\ol{\ev}>> \wt{\fU}^{\ad}_\vv(\ssl_n)\\
      @VV{\Upsilon}V @V{\Upsilon}V\wr V\\
    \fU^{\rtt,'}_\vv(L\gl_n)[\sz^{\pm 1}_1,\ldots, \sz^{\pm 1}_n]/(\prod \sz_i-1) @>\ol{\ev}^\rtt>> \wt{\fU}^{\rtt,'}_\vv(\ssl_n)\\
    @VV{\pi}V @| \\
    \fU^{\rtt,'}_\vv(L\ssl_n)[\sz^{\pm 1}_1,\ldots, \sz^{\pm 1}_n]/(\prod \sz_i-1) @>\ol{\ev}^\rtt>> \wt{\fU}^{\rtt,'}_\vv(\ssl_n)\\
    \end{CD}
\end{equation}
and the composition $\pi\circ \Upsilon$ on the left is again an algebra isomorphism.

Here we use the following notations:

\noindent
$\bullet$
  $\fU^{\ad}_\vv(L\ssl_n)[\sz^{\pm 1}_1,\ldots, \sz^{\pm 1}_n]/(\prod \sz_i-1):=
    \fU^\ad_\vv(L\ssl_n)\otimes_{\BC[\vv,\vv^{-1}]}
    \BC[\vv,\vv^{-1}][\sz^{\pm 1}_1,\ldots, \sz^{\pm 1}_n]/(\prod \sz_i-1)$,
or alternatively it can be defined as a $\BC[\vv,\vv^{-1}]$-subalgebra of
$U^\ad_\vv(L\ssl_n)[\sz^{\pm 1}_1,\ldots, \sz^{\pm 1}_n]/(\prod \sz_i-1)$
generated by
  $\{E^{(r)}_{j,i+1},F^{(r)}_{i+1,j}\}_{1\leq j\leq i<n}^{r\in \BZ}
   \cup\{\psi^\pm_{i,\pm s}\}_{1\leq i<n}^{s\geq 1}\cup\{\phi_i^{\pm 1}\}_{i=1}^{n-1}
   \cup\{\sz_i^{\pm 1}\}_{i=1}^n$.

\noindent
$\bullet$
  $\fU^{\rtt,'}_\vv(L\gl_n)[\sz^{\pm 1}_1,\ldots, \sz^{\pm 1}_n]/(\prod \sz_i-1):=
   \fU^{\rtt,'}_\vv(L\gl_n)\otimes_{\BC[\vv,\vv^{-1}]}
   \BC[\vv,\vv^{-1}][\sz^{\pm 1}_1,\ldots, \sz^{\pm 1}_n]/(\prod \sz_i-1)$
or alternatively it can be viewed as a $\BC[\vv,\vv^{-1}]$-subalgebra of
$U^{\rtt,'}_\vv(L\gl_n)[\sz^{\pm 1}_1,\ldots, \sz^{\pm 1}_n]/(\prod \sz_i-1)$ generated by
  $\{t^\pm_{ij}[\pm r]\}_{1\leq i,j\leq n}^{r\in \BN}
   \cup\{(t[0])^{\pm 1/n}\} \cup\{\sz_i^{\pm 1}\}_{i=1}^n$.

\noindent
$\bullet$
  $\fU^{\rtt,'}_\vv(L\ssl_n)[\sz^{\pm 1}_1,\ldots, \sz^{\pm 1}_n]/(\prod \sz_i-1)$
is defined similarly.

\noindent
$\bullet$
$\wt{\fU}^\ad_\vv(\ssl_n)$ denotes the reduced extended version of $\fU^{'}_\vv(\gl_n)$,
or alternatively it can be viewed as a $\BC[\vv,\vv^{-1}]$-subalgebra of
$\wt{U}^\ad_\vv(\ssl_n)$ generated by
  $\{E_{j,i+1},F_{i+1,j}\}_{1\leq j\leq i<n}
   \cup\{\phi_i^{\pm 1}\}_{i=1}^{n-1}\cup\{\sz_i^{\pm 1}\}_{i=1}^n$.

\noindent
$\bullet$
$\wt{\fU}^{\rtt,'}_\vv(\ssl_n)$ denotes the reduced extended version of
$\fU^{\rtt,'}_\vv(\gl_n)$, or alternatively it can be viewed as a
$\BC[\vv,\vv^{-1}]$-subalgebra of $\wt{U}^{\rtt,'}_\vv(\ssl_n)$ generated by
$\{t^\pm_{ij}\}_{i,j=1}^{n}\cup \{t^{\pm 1/n}\}\cup\{\sz_i^{\pm 1}\}_{i=1}^n$.

\medskip
Consider a natural projection
\begin{equation}\label{shifted nonshifted}
  \varkappa\colon
  \fU^{\ad,0}_\vv[\sz^{\pm 1}_1,\ldots,\sz^{\pm 1}_n]/(\sz_1\ldots \sz_n-1)\twoheadrightarrow
  \fU^\ad_\vv(L\ssl_n)[\sz^{\pm 1}_1,\ldots, \sz^{\pm 1}_n]/(\sz_1\ldots \sz_n-1)
\end{equation}
whose kernel is a $2$-sided ideal generated by $\{\phi^+_i\phi^-_i-1\}_{i=1}^{n-1}$.
Let $\wt{\ev}$ denote the composition $\ol{\ev}\circ \varkappa$.
The following result can be viewed as a trigonometric counterpart
of Theorem~\ref{truncation as kernel yangian}:

\begin{Thm}\label{truncation as kernel quantum}
  $\unl{\fI}^{n\omega_{n-1}}_0=
  \Ker\left(\wt{\ev}\colon \fU^{\ad,0}_\vv[\sz^{\pm 1}_1,\ldots, \sz^{\pm 1}_n]/(\prod \sz_i-1)
  \to \wt{\fU}^\ad_\vv(\ssl_n)\right)$.
\end{Thm}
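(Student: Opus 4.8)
The plan is to mimic, in the trigonometric/$K$-theoretic setting, the structure of the proof of Theorem~\ref{truncation as kernel yangian}. The strategy divides into two inclusions. For the inclusion $\unl{\fI}^{n\omega_{n-1}}_0\subseteq \Ker(\wt{\ev})$, I would first establish the trigonometric analogue of Corollary~\ref{image of A}: in the case $\mu=0,\lambda=n\omega_{n-1}$ we have $\sZ_i(z)=1$ for $i<n-1$ and $\sZ_{n-1}(z)=\prod_{s=1}^n(1-\vv\sz_s/z)$, while $a_i=i$. Introducing auxiliary currents $A^\pm_0(z),A^\pm_n(z)$ as in the Yangian proof, the relations~(\ref{A-generators}) relating $\psi^\pm_i(z)$ to $A^\pm_i(z)$ can be written uniformly for all $1\leq i\leq n-1$. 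Using the known expression of $\Upsilon(\psi^\pm_i(z))$ (equivalently $\Upsilon(\varphi^\pm_i(z))$) in terms of ratios of principal quantum minors $\Delta^\pm_k(z)$ of $T^\pm(z)$ — the trigonometric counterpart of the formula cited from~\cite{m} — one gets that $\pi\circ\Upsilon$ sends $A^\pm_i(z)$ to suitably normalized principal quantum minors $\hat{\Delta}^\pm_i(z)$ (after the appropriate shift of the spectral parameter), with $\hat{\Delta}^\pm_n(z)$ equal to $A^\pm_n(z)$ by the very definition of $\pi$. Then, exactly as in the Yangian case, commutativity of diagram~(\ref{diagram quantum full integral}) combined with the explicit formula for $\ol{\ev}^\rtt(\hat{\Delta}^\pm_k(z))$ in terms of the principal quantum minors of the matrix $\ol{\sT}(z):=zI_n+(\vv-\vv^{-1})\sT$ (or its $T^\pm$-analogue) shows that $\wt{\ev}(A^+_{i,r})=0$ and $\wt{\ev}(A^-_{i,-r})=0$ for $r>i=a_i$, and that the remaining generators of $\unl{\fI}^{n\omega_{n-1}}_0$ listed in~(\ref{truncation definition}) are killed as well; this gives one inclusion.

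For the opposite inclusion $\unl{\fI}^{n\omega_{n-1}}_0\supseteq \Ker(\wt{\ev})$, I would invoke Theorem~\ref{alternative kernel quantum}, which gives the ``minimalistic'' description of $\Ker(\ev^\rtt)$ as $\fU^\rtt_\vv(L\gl_n)\cap I$ with $I$ generated by a short list of elements built only from $t^\pm_{11}[\bullet]$. Under the identification $\pi\circ\Upsilon$, the series $\hat{\Delta}^\pm_1(z)$ is just $t^\pm_{11}(z)$ (up to the spectral shift, which for $k=1$ is trivial), and hence $(\pi\circ\Upsilon)^{-1}$ sends the generators of the relevant part of $I$ to $A^\pm_{1,\pm r}$ (for $r>1$) together with the ``boundary'' combinations $A^+_{1,0}A^+_{1,1}-(-1)$, $A^+_{1,1-r}-(-1)A^-_{1,-r}$. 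All of these lie in $\unl{\fI}^{n\omega_{n-1}}_0$ by definition of the truncation ideal. Since $\Ker(\ol{\ev}^\rtt)$ on $\fU^{\rtt,'}_\vv(L\ssl_n)[\ldots]/(\prod\sz_i-1)$ is generated by (the images of) these same elements — here one needs the enhanced/central-reduction bookkeeping of subsection~\ref{ssec enhanced algebras} to transfer Theorem~\ref{alternative kernel quantum} from $\fU^\rtt_\vv(L\gl_n)$ to its $\ssl_n$-version with the $\sz_i$ adjoined — pulling back along the isomorphism $\pi\circ\Upsilon$ and then along $\varkappa$ yields $\Ker(\wt{\ev})\subseteq \unl{\fI}^{n\omega_{n-1}}_0$.

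A few technical points will need care. First, the spectral-parameter shifts: in passing from $\Upsilon$ to $\wt{\Upsilon}$-type normalizations one has the conjugations $e^\pm_i(z)\mapsto \tilde e^\pm_{i,i+1}(\vv^i z)$ etc.\ from Theorem~\ref{Ding-Frenkel affine}, and the definitions $\hat{\Delta}^\pm_n(z):=\Delta^\pm_n(\vv^{1-n}z)$, so one must track powers of $\vv$ carefully to see that the uniform formula~(\ref{A-generators}) matches the quantum-minor recursion; this is the trigonometric analogue of the $\frac{k-1}{2}\hbar$ shifts in the Yangian computation. Second, one must verify that the various integral forms fit together — that $\pi\circ\Upsilon$ restricts to an isomorphism of the $\BC[\vv,\vv^{-1}]$-subalgebras in diagram~(\ref{diagram quantum full integral}), which rests on Proposition~\ref{comparison of integral forms quantum} and the isomorphism~(\ref{isom integral of qsl and rtt-qsl}) — and that $\varkappa$ of~(\ref{shifted nonshifted}) is compatible with everything. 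Third, one must check that $\wt{\ev}$ really does factor as $\ol{\ev}\circ\varkappa$ and that the reduced truncation ideal behaves well under $\varkappa$, i.e.\ the extra generators $\{\phi^+_i\phi^-_i-1\}$ interact correctly with~(\ref{truncation definition}).

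I expect the main obstacle to be the first half — establishing the precise trigonometric analogue of the formula $\Upsilon(H_k(z))=\Delta_{k-1}\Delta_{k+1}/(\Delta_k\Delta_k)$ with the correct spectral shifts and the correct identification $\pi\circ\Upsilon(A^\pm_k(z))=\hat{\Delta}^\pm_k(z)$ — because here one is dealing with the two half-currents $T^+(z)$ and $T^-(z)$ simultaneously, the quantum determinant has a nontrivial leading term $(t[0])^{\pm 1/n}$ that must be matched against $\prod\sz_s^{\pm 1}$, and the bookkeeping of $\vv$-powers in~(\ref{A-generators}) (with its factors $(-z/\vv)^{\alphavee_i(\mu)}$ and the asymmetry between $\psi^+$ and $\psi^-$) is considerably more delicate than in the rational case. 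Once that identification is in hand, the rest of the argument is a faithful transcription of the Yangian proof, with Theorem~\ref{alternative kernel quantum} playing the role of Theorem~\ref{alternative kernel Yangian integral}.
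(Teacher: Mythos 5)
Your proposal follows essentially the same route as the paper's proof: the key identification $\pi\circ\Upsilon(A^\pm_k(z))=\hat{\Delta}^\pm_k(z)$ (via the uniform rewriting of~(\ref{A-generators}) with auxiliary currents $A^\pm_0,A^\pm_n$ and the quantum-minor formula for $\Upsilon(\psi^\pm_k(z))$), the first inclusion from the commutativity of~(\ref{diagram quantum full integral}) and the explicit formulas for $\ev^\rtt(T^\pm(z))$, and the opposite inclusion from Theorem~\ref{alternative kernel quantum} together with $\hat{\Delta}^\pm_1(z)=t^\pm_{11}(z)$ and $\wt{\ev}=\ol{\ev}\circ\varkappa$. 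The technical caveats you flag (spectral shifts, matching of $(t[0])^{\pm1/n}$ with $\prod\sz_s^{\pm1}$, compatibility of the integral forms) are exactly the points the paper handles in its enhanced-algebra bookkeeping, so the outline is sound.
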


\begin{proof}
In the particular case $\mu=0,\lambda=n\omega_{n-1}$, we note that
  $Z_1(z)=\ldots=Z_{n-2}(z)=1, Z_{n-1}(z)=\prod_{s=1}^n (1-\frac{\sz_s}{\vv^{-1}z}),\
   \hat{Z}_1(z)=\ldots=\hat{Z}_{n-2}(z)=1,
   \hat{Z}_{n-1}(z)=\prod_{s=1}^n (1-\frac{\vv^{-1}z}{\sz_s})$.
Let us introduce extra currents $A^\pm_0(z),A^\pm_n(z)$ via
  $A^\pm_0(z):=1, A^+_n(z):=\prod_{s=1}^n (1-\sz_s/z), A^-_n(z)=\prod_{s=1}^n (\sz_s-z)$.
Then, formula~(\ref{A-generators}) relating the generating
series $\{\psi^\pm_k(z)\}_{k=1}^{n-1}$ to $\{A^\pm_k(z)\}_{k=1}^{n-1}$
can be uniformly written as follows:
\begin{equation}\label{psi first formula}
  \psi^\pm_k(z)=\frac{A^\pm_{k-1}(\vv^{-1}z)A^\pm_{k+1}(\vv^{-1}z)}{A^\pm_k(z)A^\pm_k(\vv^{-2}z)}
  \ \ \mathrm{for\ any}\ 1\leq k\leq n-1.
\end{equation}
Denoting the $\varkappa$-images of $\psi^\pm_k(z),A^\pm_k(z)$ again by
$\psi^\pm_k(z),A^\pm_k(z)$, we will view~(\ref{psi first formula}) from now on as
an equality of the series with coefficients in the algebra
$U^\ad_\vv(L\ssl_n)[\sz^{\pm 1}_1,\ldots, \sz^{\pm 1}_n]/(\prod \sz_i-1)$.

Let $\Delta^\pm_k(z)$ denote the $k$-th principal quantum minor
$t^{1\ldots k;\pm}_{1\ldots k}(z)$ of $T^\pm(z)$, see Definition~\ref{quantum minor quantum}.
According to~\cite{m}, the following equality holds:
\begin{equation*}
  \Upsilon(\psi^\pm_k(z))=
  \frac{\Delta^\pm_{k-1}(\vv^{1-k}z)\Delta^\pm_{k+1}(\vv^{-1-k}z)}
  {\Delta^\pm_{k}(\vv^{1-k}z)\Delta^\pm_{k}(\vv^{-1-k}z)}.
\end{equation*}
Generalizing $\hat{\Delta}^\pm_n(z)$, define
  $\hat{\Delta}^\pm_k(z):=\Delta^\pm_k(\vv^{1-k}z)$.
Then, the above formula reads as
\begin{equation*}
  \Upsilon(\psi^\pm_k(z))=
  \frac{\hat{\Delta}^\pm_{k-1}(\vv^{-1}z)\hat{\Delta}^\pm_{k+1}(\vv^{-1}z)}
  {\hat{\Delta}^\pm_{k}(z)\hat{\Delta}^\pm_{k}(\vv^{-2}z)}.
\end{equation*}
By abuse of notation, let us denote the image $\pi(\hat{\Delta}^\pm_k(z))$ by
$\hat{\Delta}^\pm_k(z)$ again. Note that $\hat{\Delta}^\pm_n(z)=A^\pm_n(z)$,
due to our definition of $\pi$. Combining this with~(\ref{psi first formula}),
we obtain the following result:

\begin{Cor}\label{image of A quantum}
Under the isomorphism
\begin{equation*}
  \pi\circ \Upsilon\colon
  \fU^\ad_\vv(L\ssl_n)[\sz^{\pm 1}_1,\ldots, \sz^{\pm 1}_n]/(\sz_1\ldots\sz_n-1) \iso
  \fU^{\rtt,'}_\vv(L\ssl_n)[\sz^{\pm 1}_1,\ldots, \sz^{\pm 1}_n]/(\sz_1\ldots\sz_n-1),
\end{equation*}
the generating series $A^\pm_k(z)$ are mapped into $\hat{\Delta}^\pm_k(z)$,
that is, $\pi\circ \Upsilon(A^\pm_k(z))=\hat{\Delta}^\pm_k(z)$.
\end{Cor}

Combining this result with the commutativity of the
diagram~(\ref{diagram quantum full integral}) and the explicit formulas
$\ev^\rtt(T^+(z))=T^+-T^-z^{-1}, \ev^\rtt(T^-(z))=T^--T^+z$, we get

\begin{Cor}
  $\unl{\fI}^{n\omega_{n-1}}_0\subseteq \Ker(\wt{\ev})$.
\end{Cor}

The opposite inclusion $\unl{\fI}^{n\omega_{n-1}}_0\supseteq \Ker(\wt{\ev})$
follows from the equality $\wt{\ev}=\ol{\ev}\circ \varkappa$, the obvious inclusion
$\Ker(\varkappa)\subset \unl{\fI}^{n\omega_{n-1}}_0$, the commutativity of the
diagrams~(\ref{diagram quantum full},~\ref{diagram quantum full integral}),
and Theorem~\ref{alternative kernel quantum} by noticing that
$\hat{\Delta}^\pm_1(z)=t^\pm_{11}(z)$ and so
\begin{equation*}
\begin{split}
  & (\pi\circ \Upsilon)^{-1}(t^\pm_{11}[\pm r])=
    A^\pm_{1,\pm r}\in \varkappa(\unl{\fI}^{n\omega_{n-1}}_0)\ \mathrm{for}\ r>1,\\
  & (\pi\circ \Upsilon)^{-1}(t^\pm_{11}[\pm 1]+t^\mp_{11}[0])=
    A^\pm_{1,\pm 1}+A^\mp_{1,0}\in \varkappa(\unl{\fI}^{n\omega_{n-1}}_0).
\end{split}
\end{equation*}

This completes our proof of Theorem~\ref{truncation as kernel quantum}.
\end{proof}

Now we are ready to present the proof of Theorem~\ref{Main Theorem 1}.

\begin{proof}[Proof of Theorem~\ref{Main Theorem 1}]
Recall the subtorus $T'_W=\{g\in T_W|\det(g)=1\}$ of $T_W$, and define
  $\unl{\CA}^\vv:=K^{(\wt{\GL}(V)\times T'_W)_\CO\rtimes\wt\BC^\times}(\CR_{\GL(V),\bN})$,
so that $\unl{\CA}^\vv\simeq \CA^\vv/(\prod \sz_i-1)$.
After imposing $\prod\sz_i=1$, the homomorphism
  $\unl{\wt{\Phi}}{}^{n\omega_{n-1}}_0\colon
   \fU^{\ad,0}_\vv[\sz^{\pm 1}_1,\ldots, \sz^{\pm 1}_n]/(\prod \sz_i-1)\to
   \wt{\CA}^\vv[\sz^{\pm 1}_1,\ldots,\sz^{\pm 1}_n]/(\prod \sz_i-1)$
is a composition of the surjective homomorphism
  $\unl{\ol{\Phi}}{}^{n\omega_{n-1}}_0\colon
   \fU^{\ad,0}_\vv[\sz^{\pm 1}_1,\ldots, \sz^{\pm 1}_n]/(\prod \sz_i-1)
   \twoheadrightarrow \unl{\CA}^\vv$
(see Theorem~\ref{Surjectivity}) and an embedding
  $\bz^*(\iota_*)^{-1}\colon \unl{\CA}^\vv\hookrightarrow
   \wt{\CA}^\vv[\sz_1^{\pm 1},\ldots,\sz_n^{\pm 1}]/(\prod \sz_i-1)$,
so that
  $\Ker(\unl{\wt{\Phi}}{}^{n\omega_{n-1}}_0)=
   \Ker(\unl{\ol{\Phi}}{}^{n\omega_{n-1}}_0)$.
The homomorphism $\unl{\ol{\Phi}}{}^{n\omega_{n-1}}_0$ factors through
  $\ol{\phi}\colon \wt{\fU}^\ad_\vv(\ssl_n)\twoheadrightarrow \unl{\CA}^\vv$
(due to Theorem~\ref{truncation as kernel quantum}), and it remains
to prove the injectivity of $\ol{\phi}$. Since both $\wt{\fU}^\ad_\vv(\ssl_n)$
and $\unl{\CA}^\vv$ are free $\BC[\vv,\vv^{-1}]$-modules, $\Ker(\ol{\phi})$
is a flat $\BC[\vv,\vv^{-1}]$-module. Hence, to prove the vanishing
of $\Ker(\ol{\phi})$, it suffices to prove the vanishing of
  $\Ker(\ol{\phi}_\fra\colon \wt{U}^\ad_\vv(\ssl_n)\twoheadrightarrow \unl{\CA}^\vv_\fra)$.

To this end we will need the action of $U^\ad_\vv(L\ssl_n)$ on the localized
$T_W$-equivariant $K$-theory of the Laumon based complete quasiflags' moduli
spaces $\fQ$, see e.g.~\cite[\S12(v)]{ft}. This action factors through the
evaluation homomorphism and the action of $U^{'}_\vv(\gl_n)$ on the $T_W$-equivariant
$K$-theory in question, see~\cite[Remark~12.8(c)]{ft}. According to~\cite[\S2.26]{bf},
the resulting $U^{'}_\vv(\gl_n)$-module is nothing but the universal Verma module.
It is known that the action of $U^{'}_\vv(\gl_n)$ on the universal Verma module extends
uniquely to the action of the \emph{extended quantized universal enveloping} $\wt{U}^{'}_\vv(\gl_n)$,
and the latter action is effective.
This implies that the resulting action of $\wt{U}^\ad_\vv(\ssl_n)$ on the localized
$T'_W$-equivariant $K$-theory in question is also effective.
According to~\cite{bdghk}, the $K$-theoretic Coulomb branch $\unl{\CA}^\vv_\fra$ acts
naturally on the $T'_W$-equivariant $K$-theory in question, and the action of
$\wt{U}^\ad_\vv(\ssl_n)$ factors through the homomorphism
  $\ol{\phi}_\fra\colon \wt{U}^\ad_\vv(\ssl_n)\twoheadrightarrow \unl{\CA}^\vv_\fra$
(see~\cite[Remark~12.8(c)]{ft}). Hence, $\ol{\phi}_\fra$ is injective.

This completes our proof of Theorem~\ref{Main Theorem 1}.
\end{proof}

\begin{Cor}\label{explicit Coulomb quantum}
The reduced quantized $K$-theoretic Coulomb branch $\unl{\CA}^\vv$ is explicitly given
by $\unl{\CA}^\vv\simeq \wt{\fU}^\ad_\vv(\ssl_n)$.
\end{Cor}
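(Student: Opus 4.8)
The plan is to deduce Corollary~\ref{explicit Coulomb quantum} as an immediate consequence of Theorem~\ref{Main Theorem 1} together with the surjectivity statement of Theorem~\ref{Surjectivity}. First I would recall that, after imposing the relation $\prod_{i=1}^n\sz_i=1$, the reduced homomorphism $\unl{\ol{\Phi}}{}^{n\omega_{n-1}}_0\colon \fU^{\ad,0}_\vv[\sz^{\pm 1}_1,\ldots,\sz^{\pm 1}_n]/(\prod\sz_i-1)\to\unl{\CA}^\vv$ is surjective (this is the reduced version of Theorem~\ref{Surjectivity}, used already in the proof of Theorem~\ref{Main Theorem 1}) and that its kernel coincides with $\Ker(\unl{\wt{\Phi}}{}^{n\omega_{n-1}}_0)$, since $\unl{\ol{\Phi}}{}^{n\omega_{n-1}}_0$ differs from $\unl{\wt{\Phi}}{}^{n\omega_{n-1}}_0$ only by post-composition with the embedding $\bz^*(\iota_*)^{-1}\colon\unl{\CA}^\vv\hookrightarrow\wt{\CA}^\vv[\sz^{\pm1}_1,\ldots,\sz^{\pm1}_n]/(\prod\sz_i-1)$.

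Next I would invoke Theorem~\ref{Main Theorem 1}, which identifies $\Ker(\unl{\wt{\Phi}}{}^{n\omega_{n-1}}_0)$ with the reduced truncation ideal $\unl{\fI}^{n\omega_{n-1}}_0$, and Theorem~\ref{truncation as kernel quantum}, which identifies $\unl{\fI}^{n\omega_{n-1}}_0$ with $\Ker(\wt{\ev})$, where $\wt{\ev}=\ol{\ev}\circ\varkappa$ is the composite of the projection $\varkappa$ of~(\ref{shifted nonshifted}) and the evaluation-type homomorphism $\ol{\ev}\colon \fU^{\ad}_\vv(L\ssl_n)[\sz^{\pm1}_1,\ldots,\sz^{\pm1}_n]/(\prod\sz_i-1)\to\wt{\fU}^\ad_\vv(\ssl_n)$. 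Putting these together, $\unl{\ol{\Phi}}{}^{n\omega_{n-1}}_0$ factors as a surjection through $\ol{\phi}\colon\wt{\fU}^\ad_\vv(\ssl_n)\twoheadrightarrow\unl{\CA}^\vv$, and the content of the proof of Theorem~\ref{Main Theorem 1} is precisely that $\ol{\phi}$ is also injective, hence an isomorphism of $\BC[\vv,\vv^{-1}]$-algebras.

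Therefore the only thing left to write is the one-line conclusion: combining the surjectivity of $\ol{\phi}$ (from Theorem~\ref{Surjectivity} via the factorization through $\wt{\ev}$) with its injectivity (established in the proof of Theorem~\ref{Main Theorem 1} using the effective action of $\wt{U}^\ad_\vv(\ssl_n)$ on the localized $T'_W$-equivariant $K$-theory of the Laumon spaces and the result of~\cite{bdghk}), we obtain $\unl{\CA}^\vv\simeq\wt{\fU}^\ad_\vv(\ssl_n)$. I do not anticipate any genuine obstacle here, as all the real work has already been carried out in Theorems~\ref{Surjectivity},~\ref{Main Theorem 1}, and~\ref{truncation as kernel quantum}; the corollary is merely a repackaging, and the only mild care needed is to make sure that the isomorphism $\ol{\phi}$ is stated over $\BC[\vv,\vv^{-1}]$ (not just over $\BC(\vv)$), which is guaranteed by the flatness argument in the proof of Theorem~\ref{Main Theorem 1}.

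\begin{proof}
By Theorem~\ref{Surjectivity}, the reduced homomorphism
  $\unl{\ol{\Phi}}{}^{n\omega_{n-1}}_0\colon
   \fU^{\ad,0}_\vv[\sz^{\pm 1}_1,\ldots,\sz^{\pm 1}_n]/(\prod \sz_i-1)
   \twoheadrightarrow \unl{\CA}^\vv$
is surjective, and as explained in the proof of Theorem~\ref{Main Theorem 1}
it factors through
  $\ol{\phi}\colon \wt{\fU}^\ad_\vv(\ssl_n)\twoheadrightarrow \unl{\CA}^\vv$,
due to Theorems~\ref{truncation as kernel quantum} and~\ref{Main Theorem 1}.
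The injectivity of $\ol{\phi}$ was established in the proof of
Theorem~\ref{Main Theorem 1}. Hence $\ol{\phi}$ is an isomorphism of
$\BC[\vv,\vv^{-1}]$-algebras, that is,
$\unl{\CA}^\vv\simeq \wt{\fU}^\ad_\vv(\ssl_n)$.
\end{proof}
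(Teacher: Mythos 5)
Your proposal is correct and follows exactly the route the paper intends: the corollary is an immediate repackaging of the proof of Theorem~\ref{Main Theorem 1}, where $\unl{\ol{\Phi}}{}^{n\omega_{n-1}}_0$ is shown to be surjective (Theorem~\ref{Surjectivity}) and to factor through $\ol{\phi}\colon \wt{\fU}^\ad_\vv(\ssl_n)\twoheadrightarrow\unl{\CA}^\vv$ (Theorem~\ref{truncation as kernel quantum}), with the injectivity of $\ol{\phi}$ established there via the flatness of $\Ker(\ol{\phi})$ and the effective action on the equivariant $K$-theory of the Laumon spaces. Your remark about the isomorphism holding over $\BC[\vv,\vv^{-1}]$ rather than merely over $\BC(\vv)$ is exactly the point the paper's flatness argument addresses.
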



\subsection{Coproduct on $\fU^{\ssc,\mu}_\vv$}\label{ssec coproduct on integral shifted}
\

In this subsection, we verify that the $\BC(\vv)$-algebra homomorphisms
  $\Delta_{\mu_1,\mu_2}\colon U^{\ssc,\mu_1+\mu_2}_\vv\to
   U^{\ssc,\mu_1}_\vv\otimes U^{\ssc,\mu_2}_\vv$
constructed in~\cite[Theorem 10.26]{ft} give rise to the same named
$\BC[\vv,\vv^{-1}]$-algebra homomorphisms
  $\Delta_{\mu_1,\mu_2}\colon \fU^{\ssc,\mu_1+\mu_2}_\vv\to
   \fU^{\ssc,\mu_1}_\vv\otimes \fU^{\ssc,\mu_2}_\vv$.
In other words, we have

\begin{Thm}\label{coproduct on integral form}
For any coweights $\mu_1,\mu_2$, the image of the $\BC[\vv,\vv^{-1}]$-subalgebra
  $\fU^{\ssc,\mu_1+\mu_2}_\vv\subset U^{\ssc,\mu_1+\mu_2}_\vv$
under the homomorphism $\Delta_{\mu_1,\mu_2}$
belongs to the $\BC[\vv,\vv^{-1}]$-subalgebra
  $\fU^{\ssc,\mu_1}_\vv\otimes \fU^{\ssc,\mu_2}_\vv\subset
   U^{\ssc,\mu_1}_\vv\otimes U^{\ssc,\mu_2}_\vv$.
This gives rise to the $\BC[\vv,\vv^{-1}]$-algebra homomorphism
\begin{equation*}
  \Delta_{\mu_1,\mu_2}\colon \fU^{\ssc,\mu_1+\mu_2}_\vv\to
  \fU^{\ssc,\mu_1}_\vv\otimes \fU^{\ssc,\mu_2}_\vv.
\end{equation*}
\end{Thm}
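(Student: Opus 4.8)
The plan is to reduce the statement to a computation on explicit algebra generators, using the PBWD bases established in Theorem~\ref{PBW for integral shifted}(a). First I would recall from~\cite[Theorem 10.26]{ft} the explicit formulas for $\Delta_{\mu_1,\mu_2}$ on the generating currents $e_i(z), f_i(z), \psi^\pm_i(z)$ of $U^{\ssc,\mu_1+\mu_2}_\vv$. Since $\fU^{\ssc,\mu_1+\mu_2}_\vv$ is generated as a $\BC[\vv,\vv^{-1}]$-algebra by the elements in~(\ref{PBW basis shifted sc}), namely $\{E^{(r)}_{j,i+1}, F^{(r)}_{i+1,j}\}$ together with the Cartan generators $\psi^+_{i,s}, \psi^-_{i,-s}$ and the inverses $(\psi^+_{i,0})^{-1}, (\psi^-_{i,b_i})^{-1}$, it suffices to check that $\Delta_{\mu_1,\mu_2}$ sends each of these into $\fU^{\ssc,\mu_1}_\vv\otimes \fU^{\ssc,\mu_2}_\vv$. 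The Cartan part is straightforward: the coproduct formulas for $\psi^\pm_i(z)$ express $\Delta(\psi^\pm_i(z))$ in terms of $\psi^\pm_i(z)\otimes\psi^\pm_i(z)$ plus correction terms lying in the ``off-diagonal'' pieces, and one reads off directly that the modes land in the integral form; the invertibility of the leading/trailing modes is preserved since $(\psi^+_{i,0})^{\pm1}$ and $(\psi^-_{i,b_i})^{\pm1}$ are sent to products of the corresponding invertible elements in the two tensor factors.

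The substance is in handling $E^{(r)}_{j,i+1}$ and $F^{(r)}_{i+1,j}$. Here the key tool is the shuffle realization: by Theorems~\ref{shuffle rational} and~\ref{shuffle integral}, the positive subalgebra $U^>_\vv(L\gl_n)$ is identified with the shuffle algebra $S^{(n)}$ and its integral form $\fU^>_\vv(L\gl_n)$ with $\fS^{(n)}$, and by~\cite[Proposition 5.1]{ft} (and its $\breve{}$ variant) the positive part of $U^{\ssc,\mu}_\vv$ is isomorphic to $U^>_\vv(L\ssl_n)\simeq U^>_\vv(L\gl_n)$. The coproduct $\Delta_{\mu_1,\mu_2}$, when composed with projection onto a ``triangular'' component, becomes a concrete operation on shuffle elements (the Drinfeld-type coproduct on the positive/negative halves is given by a standard formula involving splitting the variables and inserting the $\psi$-currents). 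I would therefore: (i) fix the triangular decomposition $m\colon U^{<}\otimes U^{0}\otimes U^{>}\iso U$ of Theorem~\ref{PBW for integral shifted}'s proof for each of the three shifted algebras; (ii) write $\Delta_{\mu_1,\mu_2}(E^{(r)}_{j,i+1})$ in this decomposition and observe that the ``$>$-component'' of each tensor leg is the image under the shuffle coproduct of $\Psi(E^{(r)}_{j,i+1})$, which by Remark~\ref{identification} is the explicit shuffle element $\wt{E}$ of~(\ref{important elements}) (with $k_\ell=1$); (iii) invoke that $\wt{E}\in\fS^{(n)}$ (Proposition~\ref{key properties of integral shuffle}(b)) and that the shuffle coproduct preserves $\fS^{(n)}\otimes\fS^{(n)}$; and (iv) similarly for $F^{(r)}_{i+1,j}$ using the opposite shuffle algebra $S^{(n),\op}$ of Remark~\ref{opposite shuffle}. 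The Cartan-current correction factors appearing in the coproduct formula are, after applying $\psi^\pm_i(z)\mapsto$ their integral modes, manifestly in the integral form, so one only needs that multiplying an integral shuffle element by these factors and re-expanding in the PBWD basis keeps integrality — which is exactly the content of the reordering arguments already carried out in the proof of Theorem~\ref{PBW for integral shifted} (the products $E\cdot\psi^\pm$, $\psi^\pm\cdot F$, $E\cdot F$ all reorder integrally).

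The main obstacle I anticipate is controlling the ``mixed'' terms in the coproduct: $\Delta_{\mu_1,\mu_2}(E^{(r)}_{j,i+1})$ is not simply $E^{(r)}_{j,i+1}\otimes(\text{group-like})+(\text{group-like})\otimes E^{(r)}_{j,i+1}$ but contains an infinite sum of terms of the form $(\text{product of }f\text{'s and }\psi\text{'s})\otimes(\text{product of }\psi\text{'s and }e\text{'s})$, and one must verify each such term, after re-expansion into ordered PBWD monomials in the two factors, has coefficients in $\BC[\vv,\vv^{-1}]$ rather than merely $\BC(\vv)$. The clean way around this is to never leave the shuffle picture: perform the whole verification for the positive (resp.\ negative) half inside $\fS^{(n)}$ (resp.\ $\fS^{(n),\op}$), where the coproduct is an honest algebra map and integrality is closed under it, and only at the end reassemble via the triangular decomposition, using the already-established integral reordering. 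A secondary technical point is that the shifted algebras $U^{\ssc,\mu}$ have the extra inverted Cartan modes $(\psi^+_{i,0})^{-1},(\psi^-_{i,b_i})^{-1}$; one must check these do not obstruct integrality, which follows because $\Delta_{\mu_1,\mu_2}$ sends them to products of the corresponding inverted modes in the two legs (each of which is by definition in the respective integral form). Once these points are addressed, the theorem follows by combining the generator-wise check with the fact that $\fU^{\ssc,\mu_1+\mu_2}_\vv$ is generated by~(\ref{PBW basis shifted sc}).
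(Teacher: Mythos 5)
There is a genuine gap, and it sits exactly where you placed your ``clean way around'' the mixed terms. The homomorphism $\Delta_{\mu_1,\mu_2}$ is a shifted version of the \emph{Drinfeld--Jimbo} coproduct, not of the Drinfeld (current) coproduct: it does not restrict to maps $U^{>}\to U^{>}\otimes U^{>}$ or $U^{<}\to U^{<}\otimes U^{<}$, and no ``shuffle coproduct'' on $S^{(n)}$ compatible with $\Delta_{\mu_1,\mu_2}$ is available in the paper (or easy to produce). Your step (iii) — ``the shuffle coproduct preserves $\fS^{(n)}\otimes\fS^{(n)}$'' — is therefore not an established fact you can invoke; it is essentially the statement you are trying to prove, repackaged. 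Likewise, the assertion that the Cartan part is ``straightforward'' fails for the same reason: $\Delta_{\mu_1,\mu_2}(\psi^\pm_{i,\pm s})$ contains infinitely many off-diagonal $f\otimes e$ corrections whose integrality is not visible from any closed formula. A second, independent problem is your starting point: for general (non-antidominant) $\mu_1,\mu_2$ there are \emph{no} explicit formulas for $\Delta_{\mu_1,\mu_2}$ on the currents. The map is only \emph{defined} (Proposition~\ref{compatibility with iota}) by its compatibility with the shift homomorphisms $\iota_{\mu,\nu_1,\nu_2}$, so any proof must route through those.

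The paper's actual argument is a three-step induction on the shift. Step 1 ($\mu_1=\mu_2=0$) uses the RTT coproduct $\Delta^{\rtt}(T^\pm(z))=T^\pm(z)\otimes T^\pm(z)$, which visibly preserves $\fU^{\rtt}_\vv(L\gl_n)$, together with Proposition~\ref{comparison of integral forms quantum}. Step 2 (antidominant $\mu_1,\mu_2$) descends one fundamental coweight at a time via the commutative diagram~(\ref{iota commutativity}); the induction in the mode index $r$ is seeded by Lemma~\ref{via Borel}, which uses the Borel homomorphisms $\jmath^\pm_{\mu_1,\mu_2}$ and diagram~(\ref{jmath commutativity}). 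Step 3 ascends back to arbitrary shifts using the implication
\begin{equation*}
  (\mathrm{Id}\otimes \iota_{\mu_2+\omega_\ell,-\omega_\ell,0})(X)\in
  \fU^{\ssc,\mu_1}_\vv\otimes \fU^{\ssc,\mu_2}_\vv \Longrightarrow
  X\in \fU^{\ssc,\mu_1}_\vv\otimes \fU^{\ssc,\mu_2+\omega_\ell}_\vv,
\end{equation*}
whose proof is where the shuffle algebra genuinely enters — but only through Proposition~\ref{key properties of integral shuffle}(a), i.e.\ the fact that $F\in\fS^{(n)}\Leftrightarrow\iota'_\ell(F)\in\fS^{(n)}$, used to \emph{detect} integrality after applying a shift homomorphism, not through any coproduct on $\fS^{(n)}$. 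If you want to repair your proposal, replace the shuffle-coproduct step by this shift-homomorphism detection argument and add the RTT base case and the Borel-subalgebra lemma to seed the induction.
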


Before proving this result, let us recall the
key properties of $\Delta_{\mu_1,\mu_2}$. Define
integers $b_{1,i}:=\alphavee_i(\mu_1), b_{2,i}:=\alphavee_i(\mu_2)$
for $1\leq i< n$. The homomorphism $\Delta_{0,0}$ essentially
coincides with the Drinfeld-Jimbo coproduct $\Delta$ on $U_\vv(L\ssl_n)$.

If $\mu_1$ and $\mu_2$ are antidominant (that is, $b_{1,i},b_{2,i}\leq 0$
for all $i$), then our construction of $\Delta_{\mu_1,\mu_2}$
in~\cite[Theorem 10.22]{ft} is explicit and is based on the Levendorskii type
presentation of antidominantly shifted quantum affine algebras,
see~\cite[Theorem 5.5]{ft}. To state the key property of $\Delta_{\mu_1,\mu_2}$
(for antidominant $\mu_1$ and $\mu_2$) of~\cite[Propositions H.1, H.22]{ft},
we introduce the following notations:

$\bullet$
Let $U^{+}_\vv$ and $U^{-}_\vv$ be the \emph{positive} and the \emph{negative}
Borel subalgebras in the Drinfeld-Jimbo realization of $U_\vv(L\ssl_n)$, respectively.
Explicitly, they are generated over $\BC(\vv)$ by
$\{e_{i,0},(\psi^+_{i,0})^{\pm 1},F^{(1)}_{n1}\}_{i=1}^{n-1}$ and
$\{f_{i,0},(\psi^-_{i,0})^{\pm 1},E^{(-1)}_{1n}\}_{i=1}^{n-1}$, respectively.

$\bullet$
Likewise, let $U^{\ssc,\mu_1,\mu_2;+}_\vv$ and $U^{\ssc,\mu_1,\mu_2;-}_\vv$ be
the $\BC(\vv)$-subalgebras of $U^{\ssc,\mu_1+\mu_2}_\vv$ generated by
$\{e_{i,0},(\psi^+_{i,0})^{\pm 1}, F^{(1)}_{n1}\}_{i=1}^{n-1}$ and
  $\{f_{i,b_{1,i}},(\psi^-_{i,b_{1,i}+b_{2,i}})^{\pm 1}, \hat{E}^{(-1)}_{1n}\}_{i=1}^{n-1}$,
respectively. Here the element $\hat{E}^{(-1)}_{1n}$ is defined via
  $\hat{E}^{(-1)}_{1n}:=(\vv-\vv^{-1})[e_{n-1,b_{2,n-1}},\cdots,[e_{2,b_{2,2}},
   e_{1,b_{2,1}-1}]_{\vv^{-1}}\cdots]_{\vv^{-1}}$.

\begin{Prop}[\cite{ft}]\label{compatibility with jmath}
(a) There are unique $\BC(\vv)$-algebra homomorphisms
\begin{equation}\label{jmath maps}
  \jmath^+_{\mu_1,\mu_2}\colon  U^{+}_\vv\longrightarrow U^{\ssc,\mu_1,\mu_2;+}_\vv,\
  \jmath^-_{\mu_1,\mu_2}\colon  U^{-}_\vv\longrightarrow U^{\ssc,\mu_1,\mu_2;-}_\vv,
\end{equation}
such that
\begin{equation*}
\begin{split}
  & \jmath^+_{\mu_1,\mu_2}\colon e_{i,r}\mapsto e_{i,r},
    \psi^+_{i,0}\mapsto \psi^+_{i,0}, F^{(1)}_{n1}\mapsto F^{(1)}_{n1}
    \ \ \mathrm{for}\ \  1\leq i\leq n-1, r\geq 0,\\
  & \jmath^-_{\mu_1,\mu_2}\colon f_{i,s}\mapsto f_{i,s+b_{1,i}},
     \psi^-_{i,0}\mapsto \psi^-_{i,b_{1,i}+b_{2,i}}, E^{(-1)}_{1n}\mapsto \hat{E}^{(-1)}_{1n}
    \ \ \mathrm{for}\ \  1\leq i\leq n-1, s\leq 0.
\end{split}
\end{equation*}

\noindent
(b) The following diagram is commutative:
\begin{equation}\label{jmath commutativity}
  \begin{CD}
    U^{\pm}_\vv @>\Delta>> U^{\pm}_\vv\otimes U^{\pm}_\vv\\
    @VV{\jmath^\pm_{\mu_1,\mu_2}}V @VV{\jmath^\pm_{\mu_1,0}\otimes \jmath^\pm_{0,\mu_2}}V\\
    U^{\ssc,\mu_1,\mu_2;\pm}_\vv @>\Delta_{\mu_1,\mu_2}>> U^{\ssc,\mu_1,0;\pm}_\vv\otimes U^{\ssc,0,\mu_2;\pm}_\vv
    \end{CD}
\end{equation}
\end{Prop}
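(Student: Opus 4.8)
\textbf{Proof proposal for Proposition~\ref{compatibility with jmath}.}

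The plan is to establish parts (a) and (b) separately, treating the $+$ and $-$ cases in parallel. For part (a), the strategy is to check that the proposed assignments respect the defining relations of $U^\pm_\vv$. First I would recall the explicit Levendorskii-type presentation of $U^\pm_\vv$ (the Drinfeld-Jimbo Borel subalgebras of $U_\vv(L\ssl_n)$): the positive Borel $U^+_\vv$ is generated by $\{e_{i,0},(\psi^+_{i,0})^{\pm 1}\}_{i=1}^{n-1}$ together with $F^{(1)}_{n1}$, subject to finitely many relations --- the quantum Serre relations among the $e_{i,0}$, the torus relations, and a short list of mixed relations involving $F^{(1)}_{n1}$ that follow from the affine Drinfeld-Jimbo presentation via Jing's identification~\eqref{Jing's identification}. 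Then I would verify that the images $e_{i,r}\mapsto e_{i,r}$, $\psi^+_{i,0}\mapsto\psi^+_{i,0}$, $F^{(1)}_{n1}\mapsto F^{(1)}_{n1}$ inside $U^{\ssc,\mu_1+\mu_2}_\vv$ satisfy exactly these relations; this is essentially immediate because all these elements and relations live in the ``unshifted part'' of the shifted algebra and the defining relations of $U^{\ssc,\mu_1+\mu_2}_\vv$ of~\cite[\S5(i)]{ft} restrict to the standard ones on this subset. Uniqueness is clear since the listed elements generate $U^\pm_\vv$. The negative case $\jmath^-_{\mu_1,\mu_2}$ is the more delicate one: here $f_{i,s}\mapsto f_{i,s+b_{1,i}}$ shifts the mode index, and $E^{(-1)}_{1n}\mapsto \hat E^{(-1)}_{1n}$ with $\hat E^{(-1)}_{1n}$ the shifted analogue. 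I would check the relations using the Levendorskii presentation of antidominantly shifted quantum affine algebras from~\cite[Theorem 5.5]{ft}, whose whole point is precisely that such index-shifted assignments are compatible with the defining relations; the verification for the mixed relations involving $E^{(-1)}_{1n}$ and the $f_{i,s}$ reduces to a bracket computation of the type already carried out in~\cite[Appendix H]{ft}.

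For part (b), the commutativity of~\eqref{jmath commutativity} is checked on the algebra generators of $U^\pm_\vv$, since all four maps are algebra homomorphisms. For the $+$ case: on $e_{i,r}$ we must compare $(\jmath^+_{\mu_1,0}\otimes\jmath^+_{0,\mu_2})(\Delta(e_{i,r}))$ with $\Delta_{\mu_1,\mu_2}(\jmath^+_{\mu_1,\mu_2}(e_{i,r}))=\Delta_{\mu_1,\mu_2}(e_{i,r})$; here I would use the explicit formula for $\Delta_{\mu_1,\mu_2}$ on the $e$-generators from~\cite[Theorem 10.22]{ft}, which for antidominant shifts is given by a ``shifted'' version of the Drinfeld-Jimbo coproduct $\Delta$, and observe that applying $\jmath^+$ componentwise after $\Delta$ produces exactly this shifted formula because $\jmath^+_{\mu_1,0}$ and $\jmath^+_{0,\mu_2}$ act as identity on the $e$-part. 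The comparison on $(\psi^+_{i,0})^{\pm1}$ is a direct matching of grouplike-type terms, and the comparison on $F^{(1)}_{n1}$ uses the formula for $\Delta_{\mu_1,\mu_2}(F^{(1)}_{n1})$ together with $\jmath^+_{\mu_1,0}(F^{(1)}_{n1})=F^{(1)}_{n1}$, $\jmath^+_{0,\mu_2}(F^{(1)}_{n1})=F^{(1)}_{n1}$. The $-$ case is symmetric but requires care with the index shifts: on $f_{i,s}$ one compares $(\jmath^-_{\mu_1,0}\otimes\jmath^-_{0,\mu_2})(\Delta(f_{i,s}))$ with $\Delta_{\mu_1,\mu_2}(f_{i,s+b_{1,i}})$, and the point is that the formula for $\Delta_{\mu_1,\mu_2}$ on the shifted $f$-mode reproduces the $\Delta$-formula on $f_{i,s}$ with each tensor factor re-indexed by $b_{1,i}$ in the first slot and $b_{2,i}$ in the second --- which is exactly what $\jmath^-_{\mu_1,0}\otimes\jmath^-_{0,\mu_2}$ does; the comparison on $E^{(-1)}_{1n}$ uses $\jmath^-_{\mu_1,\mu_2}(E^{(-1)}_{1n})=\hat E^{(-1)}_{1n}$ and the explicit form of $\Delta_{\mu_1,\mu_2}(\hat E^{(-1)}_{1n})$, matched against $(\jmath^-_{\mu_1,0}\otimes\jmath^-_{0,\mu_2})(\Delta(E^{(-1)}_{1n}))$.

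The main obstacle I anticipate is the $-$ case bookkeeping: keeping track of which mode indices get shifted by $b_{1,i}$ versus $b_{2,i}$ versus $b_{1,i}+b_{2,i}$ in the various tensor factors, and in particular verifying the mixed relations between $\hat E^{(-1)}_{1n}$ and the shifted $f_{i,s}$ inside $U^{\ssc,\mu_1,\mu_2;-}_\vv$ and that these are preserved by $\Delta_{\mu_1,\mu_2}$. This is where the Levendorskii-type presentation~\cite[Theorem 5.5]{ft} does the heavy lifting: it guarantees that the relations among the generators $\{f_{i,b_i}, (\psi^-_{i,\bullet})^{\pm1}, \hat E^{(-1)}_{1n}\}$ of an antidominantly shifted algebra are formally the same (after the index relabeling) as those among $\{f_{i,0},(\psi^-_{i,0})^{\pm1},E^{(-1)}_{1n}\}$ in $U^-_\vv$, so that both the well-definedness of $\jmath^-_{\mu_1,\mu_2}$ and the commutativity of the square reduce to verifications that have effectively already been performed in~\cite[Appendix H]{ft}. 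Since the statement is attributed to~\cite{ft} (``\cite[Propositions H.1, H.22]{ft}''), I expect the proof here to largely consist of recalling and assembling those computations rather than producing new ones.
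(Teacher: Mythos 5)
There is nothing in this paper to compare your argument against: Proposition~\ref{compatibility with jmath} is stated as a recalled result, attributed to \cite[Propositions H.1, H.22]{ft}, and the present paper gives no proof of it --- it only sets up the notation ($U^{\pm}_\vv$, $U^{\ssc,\mu_1,\mu_2;\pm}_\vv$, $\hat{E}^{(-1)}_{1n}$) and then uses the statement as an input in Step 2 of the proof of Theorem~\ref{coproduct on integral form}. So the honest assessment is of your sketch on its own terms.

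Your plan is the natural reconstruction of how \cite{ft} proceeds and I see no wrong turn in it: for (a), well-definedness of $\jmath^{\pm}_{\mu_1,\mu_2}$ is a relation check on the Chevalley-type generators of the Drinfeld--Jimbo Borel subalgebras (via the identification~(\ref{Jing's identification})), with the nontrivial content concentrated in the mixed relations involving $F^{(1)}_{n1}$ (resp.\ $\hat{E}^{(-1)}_{1n}$ and the index-shifted $f_{i,s+b_{1,i}}$), and the Levendorskii-type presentation of antidominantly shifted algebras \cite[Theorem 5.5]{ft} is indeed the tool that makes the shifted relations formally match the unshifted ones; for (b), since $\Delta_{\mu_1,\mu_2}$ is constructed in \cite[Theorem 10.22]{ft} by its values on exactly these generators, checking the square on generators is the right (and essentially only) move. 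Be aware, though, that as written this is a proof outline rather than a proof: the actual bracket verifications --- that the Serre-type and $[e,f]$-type relations among $\{e_{i,0},\psi^+_{i,0},F^{(1)}_{n1}\}$ and among $\{f_{i,b_{1,i}},\psi^-_{i,b_{1,i}+b_{2,i}},\hat{E}^{(-1)}_{1n}\}$ hold in $U^{\ssc,\mu_1+\mu_2}_\vv$, and that the coproduct formulas on $F^{(1)}_{n1}$ and $\hat{E}^{(-1)}_{1n}$ match after applying $\jmath^{\pm}_{\mu_1,0}\otimes\jmath^{\pm}_{0,\mu_2}$ --- are deferred to \cite[Appendix H]{ft} rather than carried out, which is acceptable for a cited statement but is where all the genuine work (and the bookkeeping of the shifts $b_{1,i}$ versus $b_{2,i}$ you correctly flag) lives.
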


We shall crucially need the so-called \emph{shift homomorphisms} $\iota_{\mu,\nu_1,\nu_2}$
of~\cite[Lemma 10.24]{ft} (which are injective due to~\cite[Theorem 10.25, Appendix I]{ft}):

\begin{Prop}[\cite{ft}]\label{shift homomorphism}
For any coweight $\mu$ and antidominant coweights $\nu_1,\nu_2$, there is a
unique $\BC(\vv)$-algebra embedding
\begin{equation}\label{iota maps}
  \iota_{\mu,\nu_1,\nu_2}\colon
  U^{\ssc,\mu}_\vv\hookrightarrow U^{\ssc,\mu+\nu_1+\nu_2}_\vv
\end{equation}
defined by
\begin{equation*}
  e_i(z)\mapsto (1-z^{-1})^{-\alphavee_i(\nu_1)}e_i(z),
  f_i(z)\mapsto (1-z^{-1})^{-\alphavee_i(\nu_2)}f_i(z),
  \psi^\pm_i(z)\mapsto (1-z^{-1})^{-\alphavee_i(\nu_1+\nu_2)}\psi^\pm_i(z).
\end{equation*}
\end{Prop}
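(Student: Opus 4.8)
The statement asserts the existence and uniqueness of a $\BC(\vv)$-algebra embedding $\iota_{\mu,\nu_1,\nu_2}\colon U^{\ssc,\mu}_\vv\hookrightarrow U^{\ssc,\mu+\nu_1+\nu_2}_\vv$ given by rescaling the currents $e_i(z), f_i(z), \psi^\pm_i(z)$ by integer powers of $(1-z^{-1})$. The uniqueness is immediate: the listed assignment specifies $\iota_{\mu,\nu_1,\nu_2}$ on all generators of $U^{\ssc,\mu}_\vv$ (the coefficients of $e_i(z), f_i(z), \psi^\pm_i(z)$), so any algebra homomorphism extending it is unique. The content is therefore the existence, i.e.\ the verification that this assignment respects the defining relations of the simply-connected shifted quantum affine algebra $U^{\ssc,\mu}_\vv$ of~\cite[\S5(i)]{ft}, together with the injectivity.

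First I would reduce the relation check to the case where $\nu_1,\nu_2$ are each either $0$ or a single anti-fundamental coweight $-\omega_\ell$, since a composition of such maps realizes $\iota_{\mu,\nu_1,\nu_2}$ for general antidominant $\nu_1,\nu_2$ (and the formula is visibly multiplicative: rescaling by $(1-z^{-1})^{a}$ then by $(1-z^{-1})^{b}$ gives $(1-z^{-1})^{a+b}$, matching $\alphavee_i(\nu_1)+\alphavee_i(\nu_2)$). For a single such generator the shift in the exponents is $\delta_{i\ell}$, so one is rescaling only the $\ell$-th currents. I would then go through the defining relations of $U^{\ssc,\mu}_\vv$ one family at a time. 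The $e$-$e$, $f$-$f$ and $\psi$-$\psi$ relations are homogeneous in each current and the scalar factor $(1-z^{-1})^{\pm\delta}$ or $(1-w^{-1})^{\pm\delta}$ cancels from both sides; the Serre relations are likewise stable. The $\psi$-$e$, $\psi$-$f$ relations and the $[e_i(z),f_j(w)]$ relation are the ones to watch: in each, both sides get multiplied by matching powers of $(1-z^{-1})$ and $(1-w^{-1})$ precisely because the factor $\delta(z/w)$ appearing in the $ef$-relation forces $z=w$, so $(1-z^{-1})=(1-w^{-1})$ there, and the shift of the $\psi^\pm_{i,0}$-normalization (the invertible leading/trailing coefficients) is consistent with $b_i\mapsto b_i+\alphavee_i(\nu_1+\nu_2)$. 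Concretely, I would use the Levendorskii-type presentation of $U^{\ssc,\mu}_\vv$ from~\cite[Theorem 5.5]{ft} when $\mu$ is antidominant to cut down the relations to a finite list, and handle general $\mu$ by writing $\mu=\mu'+\sum\omega_{j_k}$ with $\mu'$ antidominant and invoking the shift homomorphisms $\iota_{\mu',\omega_{j_k}}$ of~\cite[\S5]{ft} together with the compatibility of the rescaling with those. This part is a finite, essentially bookkeeping, verification.

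For injectivity I would argue via the PBWD bases. By Theorem~\ref{PBW for integral shifted}(a) (and its $\BC(\vv)$-counterpart, i.e.\ the triangular decomposition $U^{\ssc,\mu;<}_\vv\otimes U^{\ssc,\mu;0}_\vv\otimes U^{\ssc,\mu;>}_\vv\iso U^{\ssc,\mu}_\vv$ of~\cite[Proposition 5.1]{ft}), the ordered PBWD monomials in $\{E^{(r)}_{j,i+1}, F^{(r)}_{i+1,j}, \psi^\pm_{i,\pm s}\}$ form a $\BC(\vv)$-basis of $U^{\ssc,\mu}_\vv$, and likewise for $U^{\ssc,\mu+\nu_1+\nu_2}_\vv$. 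The map $\iota_{\mu,\nu_1,\nu_2}$ respects the triangular decomposition, so it suffices to check injectivity on each of the three factors. On the positive half $U^{\ssc,\mu;>}_\vv\simeq U^>_\vv(L\gl_n)$ the map sends $e_i(z)\mapsto (1-z^{-1})^{-\alphavee_i(\nu_1)}e_i(z)$; under the shuffle realization $\Psi\colon U^>_\vv(L\gl_n)\iso S^{(n)}$ of Theorem~\ref{shuffle rational} this is intertwined with the map $\iota'_\ell$ of~(\ref{shuffle shift homomorphism}) (up to the sign/power conventions relating $e_{i,r}$ and $x_{i,1}^r$), which is visibly injective since it is multiplication by a nonzero rational function in each graded piece; the negative half is symmetric via Remark~\ref{opposite shuffle}, and the Cartan half is injective because the rescaling of $\psi^\pm_i(z)$ by a nonzero series is injective on coefficients. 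I expect the main obstacle to be organizing the exponent bookkeeping in the mixed $\psi$-$e$, $\psi$-$f$, and $ef$-relations cleanly enough — in particular tracking how the invertible boundary Cartan modes $(\psi^+_{i,0})^{-1}, (\psi^-_{i,b_i})^{-1}$ and the shift $b_i\mapsto b_i+\alphavee_i(\nu_1+\nu_2)$ interact with the $(1-z^{-1})$-prefactors — but no deep new idea should be required beyond what already appears in~\cite[\S10]{ft}; indeed this is essentially~\cite[Lemma 10.24]{ft} recast in the current normalization, so I would structure the proof to reduce to that reference where possible and only spell out the pieces specific to the simply-connected integral-form conventions used here.
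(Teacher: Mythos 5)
Your proposal is essentially correct, and it reconstructs the proof along the same lines as the source this paper relies on: note that the paper itself gives no proof here — it imports the statement from~\cite{ft} (Lemma 10.24 for the homomorphism, Theorem 10.25/Appendix I for injectivity) — and your two ingredients (a direct check of the current relations, where the $\delta(z/w)$ in the $ef$-relation matches the $(1-z^{-1})$-prefactors and the lowest mode $\psi^-_{i,b_i}$ goes to an invertible multiple of $\psi^-_{i,b_i+\alphavee_i(\nu_1+\nu_2)}$; plus injectivity via the triangular decomposition of~\cite[Proposition 5.1]{ft} and the shuffle intertwining of the positive/negative halves with $\iota'_\ell$) are exactly the mechanism used there and re-used later in this paper (cf.\ the appeal to~\cite[Proposition I.4]{ft} and Lemma~\ref{integral iota}).

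One small correction: the detour you propose for existence — passing to the Levendorskii-type presentation for antidominant $\mu$ and then ``handling general $\mu$ by writing $\mu=\mu'+\sum\omega_{j_k}$ and invoking shift homomorphisms $\iota_{\mu',\omega_{j_k}}$'' — is both unnecessary and not available as stated: shift homomorphisms only add \emph{antidominant} coweights, so there is no map in the dominant direction to invoke, and doing so would be circular anyway. This causes no gap, because your primary argument already suffices: the defining current relations of $U^{\ssc,\mu}_\vv$ are written uniformly for arbitrary $\mu$ (only the truncation of $\psi^-_i(z)$ and the invertible modes $(\psi^+_{i,0})^{-1},(\psi^-_{i,b_i})^{-1}$ depend on the shift), so the relation check can and should be done directly for all $\mu$ at once, with the multiplicativity in $(\nu_1,\nu_2)$ you observe giving the reduction to $\nu_1$ or $\nu_2$ equal to a single $-\omega_\ell$ if desired.
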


In~\cite{ft}, we used these shift homomorphisms to reduce the construction
of $\Delta_{\mu_1,\mu_2}$ for general $\mu_1,\mu_2$ to the aforementioned
case of antidominant $\mu_1,\mu_2$ by proving the following result:

\begin{Prop}[\cite{ft}]\label{compatibility with iota}
The homomorphisms $\{\Delta_{\mu_1,\mu_2}\}_{\mu_1,\mu_2}$ exist and are uniquely
determined by the condition that they coincide with those constructed before for
antidominant $\mu_1,\mu_2$ and that the following diagram is commutative for any
antidominant $\nu_1,\nu_2$:
\begin{equation}\label{iota commutativity}
  \begin{CD}
    U^{\ssc,\mu_1+\mu_2}_\vv @>\Delta_{\mu_1,\mu_2}>> U^{\ssc,\mu_1}_\vv\otimes U^{\ssc,\mu_2}_\vv\\
    @VV{\iota_{\mu,\nu_2,\nu_1}}V @VV{\iota_{\mu_1,0,\nu_1}\otimes \iota_{\mu_2,\nu_2,0}}V\\
    U^{\ssc,\mu_1+\mu_2+\nu_1+\nu_2}_\vv @>\Delta_{\mu_1+\nu_1,\mu_2+\nu_2}>> U^{\ssc,\mu_1+\nu_1}_\vv\otimes U^{\ssc,\mu_2+\nu_2}_\vv
    \end{CD}
\end{equation}
\end{Prop}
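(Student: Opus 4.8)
The plan is to check, one generator at a time, that $\Delta_{\mu_1,\mu_2}$ sends the explicit $\BC[\vv,\vv^{-1}]$-algebra generators of $\fU^{\ssc,\mu_1+\mu_2}_\vv$ into $\fU^{\ssc,\mu_1}_\vv\otimes\fU^{\ssc,\mu_2}_\vv$; since $\Delta_{\mu_1,\mu_2}$ is an algebra homomorphism and the target is a subalgebra, this is enough. By Theorem~\ref{PBW for integral shifted}(a) these generators are the elements~(\ref{PBW basis shifted sc}), so I must control the images of the root vectors $E^{(r)}_{j,i+1},F^{(r)}_{i+1,j}$ ($1\leq j\leq i<n$, $r\in\BZ$) and of the Cartan modes $\psi^+_{i,s}$, $\psi^-_{i,-s}$ together with $(\psi^+_{i,0})^{-1}$, $(\psi^-_{i,b_i})^{-1}$.

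First I would reduce to the case $\mu_1=\mu_2=0$. Given arbitrary $\mu_1,\mu_2$, pick antidominant coweights $\nu_1,\nu_2$ with $\mu_1+\nu_1$ and $\mu_2+\nu_2$ antidominant and feed the commutative square~(\ref{iota commutativity}) into the problem. Two properties of the shift embeddings of Proposition~\ref{shift homomorphism} are needed, both read off from~(\ref{iota maps}): since $\nu_1,\nu_2$ are antidominant, $\iota_{\mu,\nu_1,\nu_2}$ multiplies $e_i(z),f_i(z),\psi^\pm_i(z)$ by nonnegative powers of $(1-z^{-1})$, hence it carries each PBWD generator $E^{(r)}_{j,i+1}$ to itself plus a $\BZ[\vv^{\pm 1}]$-combination of the generalized PBWD elements $E_{j,i+1}(\unl{r'})$ of~(\ref{generalized PBW basis elements}) of strictly smaller total mode (and similarly for the $F$'s and $\psi$'s). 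By Theorem~\ref{PBW for half-integral}(a,c) these lie in the integral form, so $\iota_{\mu,\nu_1,\nu_2}$ (i) maps $\fU^{\ssc,\mu}_\vv$ into $\fU^{\ssc,\mu+\nu_1+\nu_2}_\vv$ and, being unitriangular with leading coefficient $1$ on ordered PBWD monomials, (ii) reflects integrality: an element of $U^{\ssc,\mu}_\vv$ whose image is integral is already integral — and the same holds for the tensor squares, using the freeness in Theorem~\ref{PBW for integral shifted}. Applying these to $x\in\fU^{\ssc,\mu_1+\mu_2}_\vv$: $\iota_{\mu,\nu_2,\nu_1}(x)$ is integral, so by the antidominant case $\Delta_{\mu_1+\nu_1,\mu_2+\nu_2}(\iota_{\mu,\nu_2,\nu_1}(x))$ is integral, whence by~(\ref{iota commutativity}) so is $(\iota_{\mu_1,0,\nu_1}\otimes\iota_{\mu_2,\nu_2,0})(\Delta_{\mu_1,\mu_2}(x))$, and (ii) forces $\Delta_{\mu_1,\mu_2}(x)\in\fU^{\ssc,\mu_1}_\vv\otimes\fU^{\ssc,\mu_2}_\vv$. (This preservation-and-reflection statement for the shift homomorphisms is essentially Lemma~\ref{integral iota}.)

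For antidominant $\mu_1,\mu_2$ I would use the explicit construction of $\Delta_{\mu_1,\mu_2}$ in \cite[Theorem~10.22]{ft} together with its compatibility~(\ref{jmath commutativity}) with the Drinfeld--Jimbo coproduct through the Borel embeddings $\jmath^\pm_{\mu_1,\mu_2}$ of Proposition~\ref{compatibility with jmath}. Those PBWD generators in~(\ref{PBW basis shifted sc}) that lie in a shifted Borel $U^{\ssc,\mu_1,\mu_2;\pm}_\vv$ are handled at once: $\jmath^\pm_{\mu_1,\mu_2}$ is the identity (on the positive side) or a plain mode shift (on the negative side) on the relevant elements, so~(\ref{jmath commutativity}) expresses their $\Delta_{\mu_1,\mu_2}$-images as $\jmath^\pm_{\mu_1,0}\otimes\jmath^\pm_{0,\mu_2}$ applied to the Drinfeld--Jimbo coproducts in $U_\vv(L\ssl_n)$, reducing them to the $\mu=0$ case; the finitely many remaining ``mixed'' generators — $\hat{E}^{(-1)}_{1n}$, $F^{(1)}_{n1}$, the boundary modes $\psi^-_{i,-s}$ and the inverse Cartan modes — are disposed of by the explicit formulas of \cite[Theorem~10.22, Propositions~H.1, H.22]{ft}. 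For the root vectors $E^{(r)}_{j,i+1}$ with $r<0$ and $F^{(r)}_{i+1,j}$ with $r>0$ that sit in no single shifted Borel, I would obtain them integrally as iterated brackets $[\psi^\pm_{i,\pm s},\,\cdot\,]$ of the generators already treated (such brackets stay in the integral form since each $E^{(r)}_{j,i+1}=(\vv-\vv^{-1})[\cdots]$ is rescaled), so that $\Delta_{\mu_1,\mu_2}$-integrality propagates to all mode indices; here the shuffle realization $\Psi\colon\fU^>_\vv(L\gl_n)\iso\fS^{(n)}$ of Theorem~\ref{shuffle integral} and the closedness properties of Proposition~\ref{key properties of integral shuffle} are used to see that the positive- and negative-half components of the resulting expressions remain inside the integral shuffle algebra.

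It then remains to prove the case $\mu_1=\mu_2=0$: the Drinfeld--Jimbo coproduct $\Delta$ on $U_\vv(L\ssl_n)$ preserves $\fU_\vv(L\ssl_n)$. The subtlety — and the step I expect to cost the most care — is that $\fU_\vv(L\ssl_n)$ is the rescaled, RTT-type integral form generated by the $E^{(r)}_{j,i+1}=(\vv-\vv^{-1})[\cdots]$ rather than by divided powers, so the usual Lusztig-form argument does not apply. The clean route is via Ding--Frenkel: transport $\Delta$ along $\Upsilon$ of Theorem~\ref{Ding-Frenkel affine} and identify it with the RTT comultiplication, given on matrix entries by $\Delta(t^+_{ij}[r])=\sum_{k=1}^n\sum_{r_1+r_2=r}t^+_{ik}[r_1]\otimes t^+_{kj}[r_2]$ and analogously for $T^-(z)$, which is manifestly integral; hence $\fU^\rtt_\vv(L\gl_n)$ is a sub-coalgebra and, intersecting with $U_\vv(L\ssl_n)$ and using Proposition~\ref{comparison of integral forms quantum} and the PBWD bases of Theorem~\ref{PBW basis coordinate affine}, so is $\fU_\vv(L\ssl_n)$. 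Carefully establishing this identification of the Drinfeld--Jimbo and RTT comultiplications with the correct conventions — or, failing that, computing the images of the $E^{(r)}_{j,i+1},F^{(r)}_{i+1,j}$ under $\Delta$ directly in the shuffle model and checking they land in $\fS^{(n)}\otimes\fS^{(n)}$ by Proposition~\ref{key properties of integral shuffle} — is the main obstacle, together with the bookkeeping of the intertwined Borel and Cartan generators in the antidominant reduction.
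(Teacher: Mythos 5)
Your proposal does not prove the statement in question. Proposition~\ref{compatibility with iota} is a statement about the $\BC(\vv)$-algebras $U^{\ssc,\mu}_\vv$ themselves: it asserts that the coproducts $\Delta_{\mu_1,\mu_2}$ \emph{exist} for arbitrary (not necessarily antidominant) coweights $\mu_1,\mu_2$ and are \emph{uniquely determined} by agreeing with the explicit construction in the antidominant case and by the commutativity of the square~(\ref{iota commutativity}); integral forms do not appear in it at all. What you have sketched is instead a proof that $\Delta_{\mu_1,\mu_2}$ preserves the integral forms, i.e.\ Theorem~\ref{coproduct on integral form} (and indeed your three-step scheme --- the $\mu_1=\mu_2=0$ case via the RTT/Ding--Frenkel coproduct, the antidominant case via the Borel compatibility~(\ref{jmath commutativity}), and the general case via the shift homomorphisms --- is essentially the paper's own proof of that theorem). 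Worse, as a purported proof of Proposition~\ref{compatibility with iota} your argument is circular: in your reduction step you ``feed the commutative square~(\ref{iota commutativity}) into the problem'' and apply $\Delta_{\mu_1,\mu_2}$ for general $\mu_1,\mu_2$, i.e.\ you assume exactly the existence and compatibility that the proposition asserts.

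A genuine proof (this is~\cite[Theorem 10.26]{ft}, which the present paper only recalls) must \emph{construct} $\Delta_{\mu_1,\mu_2}$ from the antidominant case: choose antidominant $\nu_1,\nu_2$ so that $\mu_1+\nu_1$ and $\mu_2+\nu_2$ are antidominant, and prove that
$\Delta_{\mu_1+\nu_1,\mu_2+\nu_2}\bigl(\iota_{\mu,\nu_2,\nu_1}(U^{\ssc,\mu_1+\mu_2}_\vv)\bigr)$
lies inside the image of the injective map $\iota_{\mu_1,0,\nu_1}\otimes\iota_{\mu_2,\nu_2,0}$, so that $\Delta_{\mu_1,\mu_2}$ can be defined by the diagram; then verify that the result is independent of the choice of $\nu_1,\nu_2$, that it is an algebra homomorphism, and that it is forced by the stated conditions (uniqueness follows from the injectivity of the shift homomorphisms, Proposition~\ref{shift homomorphism}). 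None of these steps --- in particular the key containment statement and the independence of $(\nu_1,\nu_2)$ --- appears in your proposal, so the actual content of the proposition is left unproved.
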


Having summarized the key properties of the coproduct homomorphisms
$\Delta_{\mu_1,\mu_2}$ of~\cite{ft}, let us now proceed to the proof of
Theorem~\ref{coproduct on integral form}.

\begin{proof}[Proof of Theorem~\ref{coproduct on integral form}]
The proof proceeds in three steps (cf.~our proof of Theorem~\ref{PBW for integral shifted}).

\medskip
\noindent
\emph{Step 1: Case $\mu_1=\mu_2=0$}.

Under the embedding
  $\Upsilon\colon U_\vv(L\ssl_n)\hookrightarrow U^\rtt_\vv(L\gl_n)$,
the Drinfeld-Jimbo coproduct $\Delta$ on $U_\vv(L\ssl_n)$ is
intertwined with the $\BC(\vv)$-extension of the RTT-coproduct
  $\Delta^\rtt\colon  \fU^\rtt_\vv(L\gl_n)\to
   \fU^\rtt_\vv(L\gl_n)\otimes \fU^\rtt_\vv(L\gl_n)$
defined via $\Delta^\rtt(T^\pm(z))=T^\pm(z)\otimes T^\pm(z)$,
see~\cite{df}. As the $\Upsilon$-preimage of $\fU^\rtt_\vv(L\gl_n)$
coincides with $\fU_\vv(L\ssl_n)$ (due to
Proposition~\ref{comparison of integral forms quantum} and the
equality\footnote{The equality $\fU_\vv(L\ssl_n)=U_\vv(L\ssl_n)\cap \fU_\vv(L\gl_n)$
immediately follows from Theorem~\ref{PBW basis coordinate affine}.}
$\fU_\vv(L\ssl_n)=U_\vv(L\ssl_n)\cap \fU_\vv(L\gl_n)$), we obtain
  $\Delta(\fU_\vv(L\ssl_n))\subset \fU_\vv(L\ssl_n)\otimes \fU_\vv(L\ssl_n)$.
This immediately implies the result of the theorem for $\mu_1=\mu_2=0$,
since $\Delta_{0,0}$ essentially coincides with $\Delta$.\footnote{To be
more precise, one needs to replace $\fU^\rtt_\vv(L\gl_n)\subset U^\rtt_\vv(L\gl_n)$
by $\fU^{\rtt,\ext}_\vv(L\gl_n)\subset U^{\rtt,\ext}_\vv(L\gl_n)$
introduced in Step 1 of our proof of Theorem~\ref{PBW for integral shifted},
while $\fU_\vv(L\ssl_n)\subset U_\vv(L\ssl_n)$ should be replaced by
$\fU^{\ssc,0}_\vv\subset U^{\ssc,0}_\vv$.}

\medskip
\noindent
\emph{Step 2: Case of antidominant $\mu_1,\mu_2$}.

For any $1\leq j\leq i<n$ and $\unl{r}=(r_j,\ldots,r_i)\in \BZ^{i-j+1}$,
recall the elements
  $E_{j,i+1}(\unl{r})\in \fU^>_\vv(L\gl_n)\simeq
   \fU^>_\vv(L\ssl_n)\simeq \fU^{\ssc,\mu_1+\mu_2;>}_\vv$
and
  $F_{i+1,j}(\unl{r})\in \fU^<_\vv(L\gl_n)\simeq
   \fU^<_\vv(L\ssl_n)\simeq \fU^{\ssc,\mu_1+\mu_2;<}_\vv$
defined in~(\ref{generalized PBW basis elements}).
We start with the following result:

\begin{Lem}\label{via Borel}
(a) If $r_j,r_{j+1},\ldots,r_i\geq 0$, then
  $\Delta_{\mu_1,\mu_2}(E_{j,i+1}(\unl{r}))\in
   \fU^{\ssc,\mu_1}_\vv\otimes \fU^{\ssc,\mu_2}_\vv$.

\noindent
(b) If $r_j\leq b_{1,j},r_{j+1}\leq b_{1,j+1},\ldots,r_i\leq b_{1,i}$, then
  $\Delta_{\mu_1,\mu_2}(F_{i+1,j}(\unl{r}))\in
   \fU^{\ssc,\mu_1}_\vv\otimes \fU^{\ssc,\mu_2}_\vv$.
\end{Lem}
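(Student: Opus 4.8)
The plan is to prove Lemma~\ref{via Borel} by reducing to the Borel subalgebras, where the coproduct $\Delta_{\mu_1,\mu_2}$ is controlled by Proposition~\ref{compatibility with jmath}. First I would observe that the elements $E_{j,i+1}(\unl{r})$ with all $r_k\geq 0$ lie in the ``positive part'' $U^{\ssc,\mu_1,\mu_2;+}_\vv$: indeed, by their very definition~(\ref{generalized PBW basis elements}) they are iterated $\vv$-commutators of the $e_{k,r_k}$ with $r_k\geq 0$, and these are precisely the generators $e_{i,r}$ ($r\geq 0$) appearing in the description of $U^+_\vv$ and its image under $\jmath^+_{\mu_1,\mu_2}$. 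More precisely, $E_{j,i+1}(\unl{r})=\jmath^+_{\mu_1,\mu_2}(\unl{E}_{j,i+1}(\unl{r}))$ where $\unl{E}_{j,i+1}(\unl{r})\in U^+_\vv$ is the corresponding element built from the $e_{k,r_k}$ inside $U^+_\vv$. Then the commutativity of diagram~(\ref{jmath commutativity}) gives
\begin{equation*}
  \Delta_{\mu_1,\mu_2}(E_{j,i+1}(\unl{r}))=
  (\jmath^+_{\mu_1,0}\otimes \jmath^+_{0,\mu_2})\big(\Delta(\unl{E}_{j,i+1}(\unl{r}))\big).
\end{equation*}
By Step~1 (the case $\mu_1=\mu_2=0$, i.e.\ the statement for the Drinfeld--Jimbo coproduct), $\Delta(\unl{E}_{j,i+1}(\unl{r}))$ lies in $\fU^{\ssc,0}_\vv\otimes \fU^{\ssc,0}_\vv$ — here I would use that $\unl{E}_{j,i+1}(\unl{r})$ is one of the integral PBWD generators $E_{j,i+1}(\unl{r})$ of $\fU^{>}_\vv(L\ssl_n)\subset\fU_\vv(L\ssl_n)$ and invoke $\Delta(\fU_\vv(L\ssl_n))\subset\fU_\vv(L\ssl_n)\otimes\fU_\vv(L\ssl_n)$. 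Finally, I must check that $\jmath^+_{\mu_1,0}$ and $\jmath^+_{0,\mu_2}$ send integral elements to integral elements: since $\jmath^+_{\mu_1,0}$ fixes $e_{i,r},\psi^+_{i,0},F^{(1)}_{n1}$ on the nose, it maps $E^{(r)}_{j,i+1}\mapsto E^{(r)}_{j,i+1}$ and $F^{(1)}_{n1}\mapsto F^{(1)}_{n1}$, hence maps ordered PBWD monomials in the generators~(\ref{PBW basis shifted sc}) of $\fU^{\ssc,0}_\vv$ into the $\BC[\vv,\vv^{-1}]$-span of PBWD generators of $\fU^{\ssc,\mu_1}_\vv$ (one uses Theorem~\ref{PBW for half-integral} to re-expand the images of the $\psi^+$-modes and of $F^{(1)}_{n1}$ products, but as these are untouched the argument is immediate). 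This establishes part~(a).

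For part~(b), the argument is the mirror image, using the negative Borel data. The element $F_{i+1,j}(\unl{r})$ with $r_k\leq b_{1,k}$ for all $k$ equals $\jmath^-_{\mu_1,\mu_2}(\unl{F}_{i+1,j}(\unl{r}'))$ where $\unl{r}'=(r_j-b_{1,j},\ldots,r_i-b_{1,i})$ has all nonpositive entries (so that $\unl{F}_{i+1,j}(\unl{r}')$, built from $f_{k,s}$ with $s\leq 0$, lies in $U^-_\vv$), because $\jmath^-_{\mu_1,\mu_2}$ sends $f_{i,s}\mapsto f_{i,s+b_{1,i}}$. The commutativity of~(\ref{jmath commutativity}) for $\pm=-$ then reduces the claim to $\Delta(\unl{F}_{i+1,j}(\unl{r}'))\in\fU^{\ssc,0}_\vv\otimes\fU^{\ssc,0}_\vv$, which is again Step~1, followed by the integrality of $\jmath^-_{\mu_1,0}$ and $\jmath^-_{0,\mu_2}$. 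The latter is slightly less immediate than in case~(a) because $\jmath^-_{\mu_1,0}$ shifts the $f$-mode indices by $b_{1,i}$ and sends $E^{(-1)}_{1n}\mapsto \hat{E}^{(-1)}_{1n}$; one checks directly from~(\ref{generalized PBW basis elements}) that the image of an integral generator $F^{(r)}_{i+1,j}$ is again an integral element (a PBWD generator $F_{i+1,j}(\unl{r}'')$ for a shifted decomposition), and that $\hat{E}^{(-1)}_{1n}$, being $(\vv-\vv^{-1})$ times an iterated $\vv$-commutator of the $e_{k,\bullet}$, lies in $\fU^{\ssc,\mu_1}_\vv$ by Theorem~\ref{PBW for half-integral}(a); the $\psi^-$-modes are handled as before.

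The main obstacle I anticipate is the bookkeeping of exactly which integral generators the maps $\jmath^\pm$ land on — in particular, verifying that the shifted-index root vectors $F_{i+1,j}(\unl{r}'')$ produced by $\jmath^-_{\mu_1,0}$ remain in the integral form and that the Cartan and ``affine'' generators ($F^{(1)}_{n1}$, $\hat{E}^{(-1)}_{1n}$, the $\psi^{\pm}$-modes) are correctly accounted for. This is essentially a repeat of the shuffle-algebra argument in the proof of Theorem~\ref{PBW for integral shifted} (Step~4), and I would again lean on Theorem~\ref{PBW for half-integral} and the shuffle description of $\fU^>_\vv(L\gl_n)$ (Theorems~\ref{shuffle rational},~\ref{shuffle integral}) to justify that the relevant commutators and products of integral generators stay integral. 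Everything else — the two diagram chases and the reduction to Step~1 — is formal. After Lemma~\ref{via Borel} is in hand, the full Theorem~\ref{coproduct on integral form} will follow by the usual three-step scheme: Step~1 is done, Step~2 (antidominant $\mu_1,\mu_2$) follows since the integral form $\fU^{\ssc,\mu_1+\mu_2}_\vv$ is generated by the $E_{j,i+1}(\unl{r})$, $F_{i+1,j}(\unl{r})$ (with the appropriate sign constraints on $\unl{r}$) together with Cartan generators whose images are visibly integral, and Step~3 (general $\mu_1,\mu_2$) follows from the compatibility with the shift homomorphisms $\iota$ in Proposition~\ref{compatibility with iota}, noting that $\iota_{\mu,\nu_1,\nu_2}$ and its tensor-square counterparts preserve the integral forms (the factors $(1-z^{-1})^{-\alphavee_i(\nu)}$ have coefficients in $\BZ\subset\BC[\vv,\vv^{-1}]$, so the $\iota$-images of the integral PBWD generators are again integral by Theorem~\ref{PBW for half-integral}).
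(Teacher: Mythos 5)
Your overall route coincides with the paper's: both proofs place $E_{j,i+1}(\unl{r})$ (for $\unl{r}\in\BN^{i-j+1}$) in the positive Borel, use the commutativity of~(\ref{jmath commutativity}) together with $\jmath^+_{\mu_1,\mu_2}(E_{j,i+1}(\unl{r}))=E_{j,i+1}(\unl{r})$ to write $\Delta_{\mu_1,\mu_2}(E_{j,i+1}(\unl{r}))=(\jmath^+_{\mu_1,0}\otimes\jmath^+_{0,\mu_2})(\Delta(E_{j,i+1}(\unl{r})))$, invoke Step~1 for the integrality of the Drinfeld--Jimbo coproduct, and then need the integrality of $\jmath^\pm$; part (b) is the mirror image via $\jmath^-$ and the index shift $r_k\mapsto r_k-b_{1,k}$, exactly as you set it up.

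The step you defer as ``bookkeeping'' is, however, the substantive point, and your sketch of it does not work as written. You propose to apply $\jmath^+_{\mu_1,0}\otimes\jmath^+_{0,\mu_2}$ to ordered PBWD monomials of $\fU^{\ssc,0}_\vv\otimes\fU^{\ssc,0}_\vv$, but $\jmath^+_{\bullet,\bullet}$ is only defined on the Borel $U^{+}_\vv$. Step~1 combined with $\Delta(U^+_\vv)\subset U^+_\vv\otimes U^+_\vv$ places the coproduct in $\left(U^+_\vv\cap\fU_\vv(L\ssl_n)\right)^{\otimes 2}$, and it is not a priori true that an element of this intersection is a $\BC[\vv,\vv^{-1}]$-combination of PBWD monomials each of which individually lies in $U^+_\vv$. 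What is actually required is the inclusion $\jmath^+_{\nu_1,\nu_2}\left(U^+_\vv\cap\fU_\vv(L\ssl_n)\right)\subset\fU^{\ssc,\nu_1+\nu_2}_\vv$, and the paper obtains it by first proving, as a separate lemma, that $U^+_\vv\cap\fU_\vv(L\ssl_n)$ is generated as a $\BC[\vv,\vv^{-1}]$-algebra by $\{E^{(r)}_{j,i+1}\}^{r\in\BN}\cup\{F^{(r)}_{i+1,j}\}^{r>0}\cup\{\psi^+_{i,s},(\psi^+_{i,0})^{\pm1}\}^{s>0}$ --- this uses the RTT presentation and the Gauss decomposition of $T^+(z)$, not the shuffle algebra you point to --- and then checking that $\jmath^+_{\nu_1,\nu_2}$ fixes each of these generators (the cases $r=0$, $s=1$ are \cite[Lemma H.9]{ft}, and the general modes are reached by commuting with the Cartan elements $h^\perp_{i,1}$). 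You should supply this description of the generators of the intersection; once it is in place, the rest of your argument, including part (b) where the shifted root vectors $F_{i+1,j}(\unl{r}'')$ remain integral by Theorem~\ref{PBW for half-integral}, goes through.
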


\begin{proof}
(a) If $\unl{r}\in \BN^{i-j+1}$, then clearly
  $E_{j,i+1}(\unl{r})\in U^+_\vv\cap \fU_\vv(L\ssl_n)$.
As $\Delta(U^+_\vv)\subset U^+_\vv\otimes U^+_\vv$ and
  $\Delta(\fU_\vv(L\ssl_n))\subset \fU_\vv(L\ssl_n)\otimes \fU_\vv(L\ssl_n)$
(see Step 1), we get
  $\Delta(E_{j,i+1}(\unl{r}))\in
   (U^+_\vv\cap \fU_\vv(L\ssl_n))\otimes (U^+_\vv\cap \fU_\vv(L\ssl_n))$.
Combining the commutativity of the diagram~(\ref{jmath commutativity}) with
the equality $\jmath^+_{\mu_1,\mu_2}(E_{j,i+1}(\unl{r}))=E_{j,i+1}(\unl{r})$,
it remains to prove the inclusion
  $\jmath^+_{\nu_1,\nu_2}(U^+_\vv\cap \fU_\vv(L\ssl_n))\subset \fU^{\ssc,\nu_1+\nu_2}_\vv$
for antidominant $\nu_1,\nu_2$.
The latter follows from~\cite[Lemma H.9]{ft}\footnote{Here we refer to the equalities
  $\jmath^+_{\nu_1,\nu_2}(E^{(r)}_{j,i+1})=E^{(r)}_{j,i+1},
   \jmath^+_{\nu_1,\nu_2}(F^{(s)}_{i+1,j})=F^{(s)}_{i+1,j},
   \jmath^+_{\nu_1,\nu_2}(\psi^+_{i,s})=\psi^+_{i,s}$
for any $1\leq j\leq i<n$ and $r\geq 0,s\geq 1$. Actually, in~\cite[Lemma H.9]{ft}
we proved those only for $r=0,s=1$. However, since the matrix
$([c_{ij}]_\vv)_{i,j=1}^{n-1}$ is non-degenerate, for every $1\leq i<n$
there is a unique $\BC(\vv)$-linear combination of
$\{(\psi^+_{j,0})^{-1}\psi^+_{j,1}\}_{j=1}^{n-1}$, denoted by $h^\perp_{i,1}$, such that
  $[h^\perp_{i,1},e_{j,r}]=\delta_{ij}e_{j,r+1},
   [h^\perp_{i,1},f_{j,r}]=-\delta_{ij}f_{j,r+1}$.
As $\jmath^+_{\nu_1,\nu_2}(h^\perp_{i,1})=h^\perp_{i,1}$ and
the elements $E^{(r)}_{j,i+1},F^{(s)}_{i+1,j}$ can be obtained
by iteratively commuting $E^{(0)}_{j,i+1},F^{(1)}_{i+1,j}$ with
$h^\perp_{j,1}$, we immediately obtain the claimed equalities
  $\jmath^+_{\nu_1,\nu_2}(E^{(r)}_{j,i+1})=E^{(r)}_{j,i+1},
   \jmath^+_{\nu_1,\nu_2}(F^{(s)}_{i+1,j})=F^{(s)}_{i+1,j}$
for any $r\geq 0, s\geq 1$. The remaining equality
$\jmath^+_{\nu_1,\nu_2}(\psi^+_{i,s})=\psi^+_{i,s}$ follows from
$\psi^+_{i,s}=(\vv-\vv^{-1})[e_{i,0},f_{i,s}]$ for $s\geq 1$.}  and the following result:

\begin{Lem}
The $\BC[\vv,\vv^{-1}]$-subalgebra $U^+_\vv\cap \fU_\vv(L\ssl_n)$ of
$U_\vv(L\ssl_n)$ is generated by
\begin{equation*}
  \{E^{(r)}_{j,i+1}\}_{1\leq j\leq i<n}^{r\in \BN}\cup
  \{F^{(r)}_{i+1,j}\}_{1\leq j\leq i<n}^{r>0}\cup
  \{\psi^+_{i,s},(\psi^+_{i,0})^{\pm 1}\}_{1\leq i<n}^{s>0}.
\end{equation*}
\end{Lem}

\begin{proof}
Recall the embedding
  $\Upsilon\colon U_\vv(L\ssl_n)\hookrightarrow U^\rtt_\vv(L\gl_n)$.
Note that the Borel subalgebra $U^+_\vv$ of $U_\vv(L\ssl_n)$ coincides with
the $\Upsilon$-preimage of the $\BC(\vv)$-subalgebra $U^{\rtt,+}_\vv(L\gl_n)$
of $U^\rtt_\vv(L\gl_n)$ generated by
  $\{t^+_{ij}[r]\}_{1\leq i,j\leq n}^{r\in \BN}\cup \{(t^+_{ii}[0])^{-1}\}_{i=1}^n$.
Evoking the Gauss decomposition of $T^+(z)$, we see that the
$\BC[\vv,\vv^{-1}]$-subalgebra
  $\fU^{\rtt,+}_\vv(L\gl_n)=U^{\rtt,+}_\vv(L\gl_n)\cap \fU^{\rtt}_\vv(L\gl_n)$
is generated by
  $\{\tilde{e}^{(r)}_{j,i+1}\}_{1\leq j\leq i<n}^{r\in \BN}\cup
   \{\tilde{f}^{(r)}_{i+1,j}\}_{1\leq j\leq i<n}^{r>0}\cup
   \{\tilde{g}^{(r)}_{i},(\tilde{g}^+_i)^{\pm 1}\}_{1\leq i<n}^{r>0}$.
Combining this with Corollary~\ref{matching modes} and the above
equality $\fU_\vv(L\ssl_n)=U_\vv(L\ssl_n)\cap \fU_\vv(L\gl_n)$
yields the claim.
\end{proof}

This completes our proof of part (a).

\medskip
\noindent
(b) The proof of part (b) is completely analogous and utilizes
homomorphisms $\jmath^-_{\bullet,\bullet}$ instead.
\end{proof}

Let us now prove
\begin{equation}\label{coproduct restriction}
  \Delta_{\mu_1,\mu_2}(\fU^{\ssc,\mu_1+\mu_2}_\vv)\subset
  \fU^{\ssc,\mu_1}_\vv\otimes \fU^{\ssc,\mu_2}_\vv
\end{equation}
for antidominant $\mu_1,\mu_2$ by induction in $-\mu_1-\mu_2$.
The base of induction, $\mu_1=\mu_2=0$, is established in Step 1.
The following result establishes the induction step:

\begin{Prop}\label{induction coproduct down}
If~(\ref{coproduct restriction}) holds for a pair of antidominant coweights
$(\mu_1,\mu_2)$, then it also holds both for $(\mu_1,\mu_2-\omega_\ell)$ and
$(\mu_1-\omega_\ell, \mu_2)$ for any $1\leq \ell\leq n-1$.
\end{Prop}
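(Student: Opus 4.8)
The plan is to deduce the induction step from the commutativity of the diagram~(\ref{iota commutativity}) together with the injectivity of the shift homomorphisms $\iota_{\mu,\nu_1,\nu_2}$ (Proposition~\ref{shift homomorphism}) and the integrality of $\iota_{\mu,\nu_1,\nu_2}$ on the integral forms. Concretely, take antidominant $\mu_1,\mu_2$ for which~(\ref{coproduct restriction}) holds, fix $1\leq \ell\leq n-1$, and consider the pair $(\mu_1,\mu_2-\omega_\ell)$ (the case $(\mu_1-\omega_\ell,\mu_2)$ being symmetric). Choose an auxiliary antidominant coweight $\nu$ with $\alphavee_\ell(\nu)<0$, so that $\mu_2-\omega_\ell+\nu$ is still ``closer to antidominant'' — more precisely, iterate so that we may use the already-established case; the cleanest bookkeeping is: write $\mu_2-\omega_\ell = (\mu_2) + (-\omega_\ell)$ and feed $\nu_1=0$, $\nu_2=-\omega_\ell$ into~(\ref{iota commutativity}) — wait, $-\omega_\ell$ is antidominant, so in fact the diagram~(\ref{iota commutativity}) with $(\mu_1,\mu_2)\rightsquigarrow(\mu_1,\mu_2-\omega_\ell)$ and $(\nu_1,\nu_2)=(0,\ldots)$ does not directly apply since $\iota$ goes \emph{up}. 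Instead I would run the induction so that the hypothesis is stated for the pair $(\mu_1,\mu_2)$ and one wishes to \emph{descend} to $(\mu_1,\mu_2-\omega_\ell)$; for this one applies~(\ref{iota commutativity}) with the roles reorganized so that $(\mu_1,\mu_2-\omega_\ell)$ is the \emph{bottom} pair obtained from the \emph{top} pair $(\mu_1,\mu_2-\omega_\ell)$ shifted up by $\omega_\ell$ on the second factor, i.e.\ with $\nu_2=-\omega_\ell,\nu_1=0$ reading the diagram from top to bottom gives $\Delta_{\mu_1,\mu_2}\circ \iota_{\bullet} = (\iota_{\mu_1,0,0}\otimes \iota_{\mu_2-\omega_\ell,-\omega_\ell,0})\circ \Delta_{\mu_1,\mu_2-\omega_\ell}$.

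The key steps, in order: (i) Verify that the shift homomorphism $\iota_{\mu,\nu_1,\nu_2}$ restricts to a map $\fU^{\ssc,\mu}_\vv\to \fU^{\ssc,\mu+\nu_1+\nu_2}_\vv$ of integral forms. This is where the explicit formulas of Proposition~\ref{shift homomorphism} are used: under $\iota$ one has $e_i(z)\mapsto (1-z^{-1})^{-\alphavee_i(\nu_1)}e_i(z)$, etc., and since $\nu_1,\nu_2$ are antidominant the exponents $-\alphavee_i(\nu_j)$ are nonnegative, so $E^{(r)}_{j,i+1}$ is sent to a $\BC[\vv,\vv^{-1}]$-linear combination of elements $E_{j,i+1}(\unl{r'})$ of the form~(\ref{generalized PBW basis elements}), which lie in $\fU^>_\vv(L\gl_n)$ by Theorem~\ref{PBW for half-integral}(a); similarly for the $F$'s and the Cartan generators. (ii) Invoke the induction hypothesis to write, for any PBWD generator $g$ of $\fU^{\ssc,\mu_1+\mu_2-\omega_\ell}_\vv$ among those in~(\ref{PBW basis shifted sc}), the element $\iota_{\bullet}(g)\in \fU^{\ssc,\mu_1+\mu_2}_\vv$, apply $\Delta_{\mu_1,\mu_2}$ which by hypothesis lands in $\fU^{\ssc,\mu_1}_\vv\otimes \fU^{\ssc,\mu_2}_\vv$, and then use commutativity of~(\ref{iota commutativity}) to identify this with $(\iota_{\mu_1,0,0}\otimes\iota_{\mu_2-\omega_\ell,-\omega_\ell,0})(\Delta_{\mu_1,\mu_2-\omega_\ell}(g))$. (iii) Since $\iota_{\mu_1,0,0}$ is the identity and $\iota_{\mu_2-\omega_\ell,-\omega_\ell,0}$ is injective with the property, established in~\cite[Theorem 10.25, Appendix I]{ft}, that $\iota_{\mu_2-\omega_\ell,-\omega_\ell,0}^{-1}(\fU^{\ssc,\mu_2}_\vv)=\fU^{\ssc,\mu_2-\omega_\ell}_\vv$, one concludes that $\Delta_{\mu_1,\mu_2-\omega_\ell}(g)\in \fU^{\ssc,\mu_1}_\vv\otimes \fU^{\ssc,\mu_2-\omega_\ell}_\vv$. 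As the $g$'s generate, (\ref{coproduct restriction}) follows for $(\mu_1,\mu_2-\omega_\ell)$.

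The main obstacle I expect is Step (iii), specifically establishing the \emph{saturation} property $\iota_{\nu,\text{(antidom)},0}^{-1}(\text{integral form})=\text{integral form}$, rather than the merely trivial inclusion in one direction. The inclusion $\iota(\fU)\subset\fU$ is the easy part (Step (i)); what is needed is that if $\iota(x)$ is integral then $x$ was already integral. This should follow from comparing PBWD bases: $\iota$ sends an ordered PBWD monomial in the generators~(\ref{PBW basis shifted sc}) for the smaller shift to $\pm$(the corresponding ordered PBWD monomial for the larger shift) plus a $\BC[\vv,\vv^{-1}]$-combination of strictly smaller ordered PBWD monomials, with respect to a suitable ordering; hence $\iota$ is ``unitriangular'' on PBWD bases, and a unitriangular $\BC[\vv,\vv^{-1}]$-linear map has the saturation property. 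Making the ordering precise — so that the extra terms $E_{j,i+1}(\unl{r'})$ coming from expanding $(1-z^{-1})^{-\alphavee_i(\nu)}$ are genuinely lower — and checking it is compatible simultaneously on the positive, negative, and Cartan parts (using the triangular decomposition $m\colon \breve U^{\ssc,\mu;<}\otimes \breve U^{\ssc,\mu;0}\otimes \breve U^{\ssc,\mu;>}\iso\breve U^{\ssc,\mu}$ from~\cite[Proposition 5.1]{ft}) is the technical heart of the argument; everything else is formal diagram-chasing. Once Proposition~\ref{induction coproduct down} is in hand, Step~3 of the proof of Theorem~\ref{coproduct on integral form} reduces the general case to antidominant $\mu_1,\mu_2$ by the analogous ascending induction using~(\ref{iota commutativity}) with the arrows read the other way, so no further difficulties arise there.
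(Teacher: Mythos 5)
Your proposal has a genuine gap, and it begins with the direction of the shift homomorphisms. By Proposition~\ref{shift homomorphism}, $\iota_{\mu,\nu_1,\nu_2}$ maps $U^{\ssc,\mu}_\vv$ \emph{into} $U^{\ssc,\mu+\nu_1+\nu_2}_\vv$ with $\nu_1,\nu_2$ antidominant; there is no shift homomorphism going from $U^{\ssc,\mu_2-\omega_\ell}_\vv$ up to $U^{\ssc,\mu_2}_\vv$, and the map you denote $\iota_{\mu_2-\omega_\ell,-\omega_\ell,0}$ actually lands in $U^{\ssc,\mu_2-2\omega_\ell}_\vv$. With $\nu_1=0$, $\nu_2=-\omega_\ell$ the diagram~(\ref{iota commutativity}) reads
\begin{equation*}
\Delta_{\mu_1,\mu_2-\omega_\ell}\circ\iota_{\mu_1+\mu_2,-\omega_\ell,0}
=(\mathrm{Id}\otimes\iota_{\mu_2,-\omega_\ell,0})\circ\Delta_{\mu_1,\mu_2},
\end{equation*}
so it only controls $\Delta_{\mu_1,\mu_2-\omega_\ell}$ on the \emph{image} of $\iota_{\mu_1+\mu_2,-\omega_\ell,0}$. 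That is where the real problem lies: for $j\leq\ell\leq i$ the generator $E_{j,i+1}(\unl{r})$ of $\fU^{\ssc,\mu_1+\mu_2-\omega_\ell}_\vv$ is \emph{not} in that image --- only the difference $E_{j,i+1}(\unl{r})-E_{j,i+1}(\unl{r-1})=\iota_{\mu_1+\mu_2,-\omega_\ell,0}(E_{j,i+1}(\unl{r}))$ is (in the shuffle picture the image consists of elements divisible by $\prod_r(1-x_{\ell,r}^{-1})$). Consequently the saturation property $\iota^{-1}(\fU)=\fU$ of your step (iii), which is indeed true (Lemma~\ref{integral iota}) and whose proof via the shuffle realization is essentially as you sketch, cannot be brought to bear here: saturation concerns preimages of integral elements, not the failure of the generators to have preimages at all. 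In the paper, saturation is used only for the \emph{ascending} step (Lemma~\ref{induction coproduct up}), where the logic genuinely is ``$(\mathrm{Id}\otimes\iota)(X)$ integral $\Rightarrow X$ integral''.

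The missing ingredient for the descending step is Lemma~\ref{via Borel}: using the Borel homomorphisms $\jmath^\pm_{\mu_1,\mu_2}$ and the diagram~(\ref{jmath commutativity}), one shows directly that $\Delta_{\mu_1,\mu_2-\omega_\ell}(E_{j,i+1}(\unl{r}))$ is integral whenever all $r_k\geq 0$, and similarly for $F_{i+1,j}(\unl{r})$ with $r_k\leq b_{1,k}$. This anchors a telescoping induction: the displayed identity gives integrality of $\Delta_{\mu_1,\mu_2-\omega_\ell}\bigl(E_{j,i+1}(\unl{r})-E_{j,i+1}(\unl{r-1})\bigr)$ for every $r$, and together with the anchor this propagates to all $r\in\BZ$; for the Cartan generators one uses $\iota(\psi^\pm_{i,s})=\psi^\pm_{i,s}-\delta_{i\ell}\psi^\pm_{i,s-1}$ and an induction in $s$ that bottoms out at the vanishing modes. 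Your steps (i) and (ii) --- integrality of $\iota$ on the integral forms and the diagram chase --- are correct and match the paper, but without the Borel-subalgebra anchor the argument never reaches the generators themselves.
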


\begin{proof}
We will prove this only for $(\mu_1,\mu_2-\omega_\ell)$, since the verification for the
second pair is completely analogous. For any $1\leq j\leq i<n$ and $r\in \BZ$,
we pick a particular decomposition $\unl{r}=(r_j,\ldots,r_i)\in \BZ^{i-j+1}$ with
$r_j+\ldots+r_i=r$ as follows: we set $r_j=r,r_{j+1}=\ldots=r_i=0$ if $\ell<j$ or $\ell>i$,
and we set $r_\ell=r,r_j=\ldots=r_{\ell-1}=r_{\ell+1}=\ldots=r_i=0$ if $j\leq \ell\leq i$.

Identifying
  $\fU^{\ssc,\mu_1+\mu_2;>}_\vv\simeq \fU^>_\vv(L\ssl_n)\simeq \fU^>_\vv(L\gl_n)$,
Theorem~\ref{PBW for half-integral}(a) guarantees that the ordered PBWD monomials
in $E_{j,i+1}(\unl{r})$ form a basis of a free $\BC[\vv,\vv^{-1}]$-module
$\fU^{\ssc,\mu_1+\mu_2;>}_\vv$. Let us now apply the morphisms of the commutative
diagram~(\ref{iota commutativity}) with $\nu_1=0,\nu_2=-\omega_\ell$ to the element
$E_{j,i+1}(\unl{r})$. As $E_{j,i+1}(\unl{r})\in \fU^{\ssc,\mu_1+\mu_2}_\vv$,
our assumption guarantees that
  $\Delta_{\mu_1,\mu_2}(E_{j,i+1}(\unl{r}))\in
   \fU^{\ssc,\mu_1}_\vv\otimes \fU^{\ssc,\mu_2}_\vv$.
Meanwhile, for any coweight $\mu$ and antidominant coweights $\nu'_1,\nu'_2$, we have
\begin{equation}\label{iota integral}
  \iota_{\mu,\nu'_1,\nu'_2}(\fU^{\ssc,\mu}_\vv)\subset \fU^{\ssc,\mu+\nu'_1+\nu'_2}_\vv,
\end{equation}
since every generator $E_{j,i+1}(\unl{r})$ (resp.\ $F_{i+1,j}(\unl{r})$ or $\psi^\pm_{i,s}$)
is mapped to a $\BC$-linear combination of elements of the form $E_{j,i+1}(\unl{r}')$
(resp.\ $F_{i+1,j}(\unl{r}')$ or $\psi^\pm_{i,s'}$) for various $\unl{r}',s'$.
Thus, we obtain
\begin{equation}\label{raz}
\begin{split}
  & \Delta_{\mu_1,\mu_2-\omega_\ell}(\iota_{\mu_1+\mu_2,-\omega_\ell,0}(E_{j,i+1}(\unl{r})))=\\
  & (\mathrm{Id}\otimes \iota_{\mu_2,-\omega_\ell,0})(\Delta_{\mu_1,\mu_2}(E_{j,i+1}(\unl{r})))
    \in \fU^{\ssc,\mu_1}_\vv\otimes \fU^{\ssc,\mu_2-\omega_\ell}_\vv.
\end{split}
\end{equation}

If $\ell<j$ or $\ell>i$, then
  $\iota_{\mu_1+\mu_2,-\omega_\ell,0}(E_{j,i+1}(\unl{r}))=E_{j,i+1}(\unl{r})$,
and so
  $\Delta_{\mu_1,\mu_2-\omega_\ell}(E_{j,i+1}(\unl{r}))\in
   \fU^{\ssc,\mu_1}_\vv\otimes \fU^{\ssc,\mu_2-\omega_\ell}_\vv$,
due to~(\ref{raz}). If $j\leq \ell\leq i$, then
  $\iota_{\mu_1+\mu_2,-\omega_\ell,0}(E_{j,i+1}(\unl{r}))=E_{j,i+1}(\unl{r})-E_{j,i+1}(\unl{r-1})$,
hence,
  $\Delta_{\mu_1,\mu_2-\omega_\ell}(E_{j,i+1}(\unl{r})-E_{j,i+1}(\unl{r-1}))\in
   \fU^{\ssc,\mu_1}_\vv\otimes \fU^{\ssc,\mu_2-\omega_\ell}_\vv$,
due to~(\ref{raz}). Combining this with
  $\Delta_{\mu_1,\mu_2-\omega_\ell}(E_{j,i+1}(\unl{r}))\in
   \fU^{\ssc,\mu_1}_\vv\otimes \fU^{\ssc,\mu_2-\omega_\ell}_\vv$
for $r\geq 0$, due to Lemma~\ref{via Borel}(a), we get
  $\Delta_{\mu_1,\mu_2-\omega_\ell}(E_{j,i+1}(\unl{r}))\in
   \fU^{\ssc,\mu_1}_\vv\otimes \fU^{\ssc,\mu_2-\omega_\ell}_\vv$
for any $r\in \BZ$. This completes the proof of the inclusion
\begin{equation*}
   \Delta_{\mu_1,\mu_2-\omega_\ell}(E_{j,i+1}(\unl{r}))\in
   \fU^{\ssc,\mu_1}_\vv\otimes \fU^{\ssc,\mu_2-\omega_\ell}_\vv
   \ \ \mathrm{for\ any}\ \ 1\leq j\leq i<n, r\in \BZ.
\end{equation*}

\medskip
The proof of the inclusion
\begin{equation*}
   \Delta_{\mu_1,\mu_2-\omega_\ell}(F_{i+1,j}(\unl{r}))\in
   \fU^{\ssc,\mu_1}_\vv\otimes \fU^{\ssc,\mu_2-\omega_\ell}_\vv
   \ \ \mathrm{for\ any}\ \ 1\leq j\leq i<n, r\in \BZ
\end{equation*}
is analogous. However, to apply Lemma~\ref{via Borel}(b),
we need another choice of decompositions $\unl{r}$. For any $1\leq j\leq i<n$ and
$r\in \BZ$, we pick a decomposition $\unl{r}=(r_j,\ldots,r_i)\in \BZ^{i-j+1}$
with $r_j+\ldots+r_i=r$ as follows: we set
  $r_j=r-b_{1,j+1}-\ldots-b_{1,i},r_{j+1}=b_{1,j+1},\ldots,r_i=b_{1,i}$
if $\ell<j$ or $\ell>i$, and we set
  $r_\ell=r-b_{1,j}-\ldots-b_{1,\ell-1}-b_{1,\ell+1}-\ldots-b_{1,i},
   r_j=b_{1,j},\ldots,r_{\ell-1}=b_{1,\ell-1},r_{\ell+1}=b_{1,\ell+1},\ldots,r_i=b_{1,i}$
if $j\leq \ell\leq i$.


\medskip
Finally, we note that
\begin{equation}\label{dva}
  \Delta_{\mu_1,\mu_2-\omega_\ell}(\iota_{\mu_1+\mu_2,-\omega_\ell,0}(\psi^\pm_{i,s}))=
  (\mathrm{Id}\otimes \iota_{\mu_2,-\omega_\ell,0})(\Delta_{\mu_1,\mu_2}(\psi^\pm_{i,s}))
  \in \fU^{\ssc,\mu_1}_\vv\otimes \fU^{\ssc,\mu_2-\omega_\ell}_\vv.
\end{equation}
Therefore,
  $\Delta_{\mu_1,\mu_2-\omega_\ell}(\psi^\pm_{i,s}-\delta_{i,\ell}\psi^\pm_{i,s-1})\in
   \fU^{\ssc,\mu_1}_\vv\otimes \fU^{\ssc,\mu_2-\omega_\ell}_\vv$.
This implies (after a simple induction in $s$ for $i=\ell$) that
  $\Delta_{\mu_1,\mu_2-\omega_\ell}(\psi^\pm_{i,s})\in
   \fU^{\ssc,\mu_1}_\vv\otimes \fU^{\ssc,\mu_2-\omega_\ell}_\vv
   \ \ \mathrm{for\ any}\ \ i,s$.

Thus, the images of all generators of $\fU^{\ssc,\mu_1+\mu_2-\omega_\ell}_\vv$
under $\Delta_{\mu_1,\mu_2-\omega_\ell}$ belong to
$\fU^{\ssc,\mu_1}_\vv\otimes \fU^{\ssc,\mu_2-\omega_\ell}_\vv$. This implies
the validity of~(\ref{coproduct restriction}) for the pair $(\mu_1,\mu_2-\omega_\ell)$.
\end{proof}

This completes our proof of Theorem~\ref{coproduct on integral form}
for antidominant $\mu_1,\mu_2$.

\medskip
\noindent
\emph{Step 3: General case}.

Having established~(\ref{coproduct restriction}) for all antidominant $\mu_1,\mu_2$
(Step 2), the validity of~(\ref{coproduct restriction}) for arbitrary $\mu_1,\mu_2$
is implied by the following result:

\begin{Lem}\label{induction coproduct up}
If~(\ref{coproduct restriction}) holds for a pair of coweights $(\mu_1,\mu_2)$,
then it also holds both for $(\mu_1,\mu_2+\omega_\ell)$ and $(\mu_1+\omega_\ell, \mu_2)$
for any $1\leq \ell\leq n-1$.
\end{Lem}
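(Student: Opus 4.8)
The plan is to obtain~(\ref{coproduct restriction}) for $(\mu_1,\mu_2+\omega_\ell)$ (the case $(\mu_1+\omega_\ell,\mu_2)$ being symmetric) by reducing it to the antidominant case, which has already been established in Step~2, via the commutativity diagram~(\ref{iota commutativity}). Since $\omega_\ell$ is dominant there is no shift homomorphism out of $U^{\ssc,\mu_1+\mu_2}_\vv$ into $U^{\ssc,\mu_1+\mu_2+\omega_\ell}_\vv$, so instead I would go ``up and across'': choose antidominant coweights $\nu_1,\nu_2$ large enough that both $\mu_1+\nu_1$ and $\mu_2+\omega_\ell+\nu_2$ are antidominant. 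Specialising~(\ref{iota commutativity}) with $\mu_2$ replaced by $\mu_2+\omega_\ell$ and with these $\nu_1,\nu_2$ gives the identity
\[
  (\iota_{\mu_1,0,\nu_1}\otimes\iota_{\mu_2+\omega_\ell,\nu_2,0})\circ\Delta_{\mu_1,\mu_2+\omega_\ell}
  =\Delta_{\mu_1+\nu_1,\mu_2+\omega_\ell+\nu_2}\circ\iota_{\mu_1+\mu_2+\omega_\ell,\nu_2,\nu_1}.
\]

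First I would run the easy half of the argument. For $x\in\fU^{\ssc,\mu_1+\mu_2+\omega_\ell}_\vv$, the inclusion~(\ref{iota integral}) gives $\iota_{\mu_1+\mu_2+\omega_\ell,\nu_2,\nu_1}(x)\in\fU^{\ssc,\mu_1+\mu_2+\omega_\ell+\nu_1+\nu_2}_\vv$; since $(\mu_1+\nu_1,\mu_2+\omega_\ell+\nu_2)$ is an antidominant pair, Step~2 applies and the right-hand side of the displayed identity lands in $\fU^{\ssc,\mu_1+\nu_1}_\vv\otimes\fU^{\ssc,\mu_2+\omega_\ell+\nu_2}_\vv$. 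Hence $(\iota_{\mu_1,0,\nu_1}\otimes\iota_{\mu_2+\omega_\ell,\nu_2,0})(\Delta_{\mu_1,\mu_2+\omega_\ell}(x))$ is an integral element, and it remains to ``descend'' this through the two shift embeddings on the right, i.e.\ to conclude that $\Delta_{\mu_1,\mu_2+\omega_\ell}(x)$ was integral to begin with.

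The hard part is therefore the remaining ingredient: the claim that the shift homomorphisms reflect integral forms, namely $\iota_{\mu,\nu'_1,\nu'_2}^{-1}(\fU^{\ssc,\mu+\nu'_1+\nu'_2}_\vv)=\fU^{\ssc,\mu}_\vv$ for any coweight $\mu$ and antidominant $\nu'_1,\nu'_2$ (the inclusion $\supseteq$ being~(\ref{iota integral})). Granting this for single algebras, the tensor-square version follows since $U^{\ssc,\mu}_\vv=\fU^{\ssc,\mu}_\vv\otimes_{\BC[\vv,\vv^{-1}]}\BC(\vv)$ by Theorem~\ref{PBW for integral shifted}(a), and then applying $(\iota_{\mu_1,0,\nu_1}\otimes\iota_{\mu_2+\omega_\ell,\nu_2,0})^{-1}$ to the integral element above yields~(\ref{coproduct restriction}) for $(\mu_1,\mu_2+\omega_\ell)$, which completes the induction step.

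To prove the reflection claim I would argue factor-by-factor along the triangular decomposition: $\iota_{\mu,\nu'_1,\nu'_2}$ sends $e$-generators to $\BC(\vv)$-combinations of $e$-generators, $f$-generators to $f$-generators, and $\psi$-generators to $\psi$-generators, so it respects the decomposition $\fU^{\ssc,\mu}_\vv=\fU^{\ssc,\mu;<}_\vv\cdot\fU^{\ssc,\mu;0}_\vv\cdot\fU^{\ssc,\mu;>}_\vv$ supplied by Theorem~\ref{PBW for integral shifted}(a). On the positive half, under the shuffle isomorphism $\Psi\colon U^{>}_\vv(L\ssl_n)\iso S^{(n)}$ of Theorem~\ref{shuffle rational} the restriction of $\iota_{\mu,\nu'_1,\nu'_2}$ is intertwined with a composition of the operators $\iota'_\ell$ of~(\ref{shuffle shift homomorphism}) by~\cite[Proposition I.4]{ft}, and Proposition~\ref{key properties of integral shuffle}(a) states exactly that $F\in\fS^{(n)}\Leftrightarrow\iota'_\ell(F)\in\fS^{(n)}$; the negative half is handled identically via the opposite shuffle algebra of Remark~\ref{opposite shuffle}. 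On the Cartan part $\iota_{\mu,\nu'_1,\nu'_2}$ multiplies each series $\psi^\pm_i(z)$ by $(1-z^{-1})^{-\alphavee_i(\nu'_1+\nu'_2)}$, a polynomial in $z^{-1}$ with constant term $1$, which is a $\BZ$-unitriangular transformation of the Cartan mode generators fixing $\psi^+_{i,0}$ and rescaling $\psi^-_{i,b_i}$ by a unit, so it too reflects the integral form $\fU^{\ssc,\mu;0}_\vv$. I expect this shuffle-theoretic reflection statement to be the only genuinely nontrivial point, everything else being a formal diagram chase.
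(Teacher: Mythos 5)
Your proposal is correct in substance and uses the same two ingredients as the paper: the commutative diagram~(\ref{iota commutativity}) and the fact that the shift homomorphisms \emph{reflect} the integral forms (proved via the shuffle realization, i.e.\ Proposition~\ref{key properties of integral shuffle}(a); this is recorded in the paper as Lemma~\ref{integral iota}). The only structural difference is the direction of travel: the paper specializes the diagram to $\nu_1=0,\ \nu_2=-\omega_\ell$, so that $\Delta_{\mu_1,\mu_2+\omega_\ell}$ is compared with $\Delta_{\mu_1,\mu_2}$ and the inductive hypothesis for $(\mu_1,\mu_2)$ is used directly, whereas you shift all the way down to an antidominant pair and invoke Step~2. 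Your route makes the stated hypothesis superfluous (you prove the unconditional statement), which is fine; the paper's single-step shift is marginally lighter because it only ever needs the reflection property for one factor of the tensor product and for the simplest shift $\iota_{\bullet,-\omega_\ell,0}$.

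One step is asserted too quickly: the claim that ``the tensor-square version follows since $U^{\ssc,\mu}_\vv=\fU^{\ssc,\mu}_\vv\otimes_{\BC[\vv,\vv^{-1}]}\BC(\vv)$.'' Reflection of integral forms does not pass to tensor products by flatness alone: if you expand $X=\sum_j a_j\otimes b_j$ over a PBWD basis $\{b_j\}$ of the second factor, the elements $\iota'(b_j)$ need not span a direct summand of the target integral form, so $(\iota\otimes\iota')(X)\in\fU'\otimes\fU''$ does not formally force $\iota(a_j)\in\fU'$. What rescues the argument — and what the paper's ``Step~4 of Theorem~\ref{PBW for integral shifted}'' argument actually supplies — is the unitriangularity of $\iota$ on the PBWD basis (it fixes the $F$-monomials, is $\BZ$-unitriangular on the Cartan monomials, and acts by $\iota'_\ell$ on the $E$-part), which permits a descending induction reducing to the single-factor shuffle reflection. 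You do list all of these facts in your last paragraph, so the fix is entirely within reach; just be aware that this step is where the real work sits, not a formality.
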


\begin{proof}
We will prove this only for $(\mu_1,\mu_2+\omega_\ell)$, since the verification for the
second pair is completely analogous. The commutativity of the
diagram~(\ref{iota commutativity}) implies the following equality:
  $\Delta_{\mu_1,\mu_2}(\iota_{\mu_1+\mu_2+\omega_\ell,-\omega_\ell,0}(E_{j,i+1}(\unl{r})))=
   (\mathrm{Id}\otimes \iota_{\mu_2+\omega_\ell,-\omega_\ell,0})
   (\Delta_{\mu_1,\mu_2+\omega_\ell}(E_{j,i+1}(\unl{r})))$.
Its left-hand side belongs to $\fU^{\ssc,\mu_1}_\vv\otimes \fU^{\ssc,\mu_2}_\vv$,
due to~(\ref{iota integral}) and our assumption. However, the argument
identical to the one used in Step 4 of our proof of
Theorem~\ref{PBW for integral shifted} yields the following implication:
\begin{equation*}
  (\mathrm{Id}\otimes \iota_{\mu_2+\omega_\ell,-\omega_\ell,0})(X)\in
  \fU^{\ssc,\mu_1}_\vv\otimes \fU^{\ssc,\mu_2}_\vv \Longrightarrow
  X\in \fU^{\ssc,\mu_1}_\vv\otimes \fU^{\ssc,\mu_2+\omega_\ell}_\vv.
\end{equation*}
This completes our proof of the inclusion
  $\Delta_{\mu_1,\mu_2+\omega_\ell}(E_{j,i+1}(\unl{r}))\in
   \fU^{\ssc,\mu_1}_\vv\otimes \fU^{\ssc,\mu_2+\omega_\ell}_\vv$.

The verification of inclusions
  $\Delta_{\mu_1,\mu_2+\omega_\ell}(F_{i+1,j}(\unl{r})),
   \Delta_{\mu_1,\mu_2+\omega_\ell}(\psi^\pm_{i,s})\in
   \fU^{\ssc,\mu_1}_\vv\otimes \fU^{\ssc,\mu_2+\omega_\ell}_\vv$
is analogous. Hence, the images of all generators of
$\fU^{\ssc,\mu_1+\mu_2+\omega_\ell}_\vv$ under $\Delta_{\mu_1,\mu_2+\omega_\ell}$
belong to $\fU^{\ssc,\mu_1}_\vv\otimes \fU^{\ssc,\mu_2+\omega_\ell}_\vv$. This implies
the validity of~(\ref{coproduct restriction}) for the pair $(\mu_1,\mu_2+\omega_\ell)$.
\end{proof}

This completes our proof of Theorem~\ref{coproduct on integral form}.
\end{proof}

We conclude this subsection with the following result:

\begin{Lem}\label{integral iota}
For any $\mu,\nu_1,\nu_2$, we have
$\iota_{\mu,\nu_1,\nu_2}^{-1}(\fU^{\ssc,\mu+\nu_1+\nu_2}_\vv)=\fU^{\ssc,\mu}_\vv$.
\end{Lem}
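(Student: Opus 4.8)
The inclusion $\iota_{\mu,\nu_1,\nu_2}(\fU^{\ssc,\mu}_\vv)\subseteq \fU^{\ssc,\mu+\nu_1+\nu_2}_\vv$ is exactly the content of~(\ref{iota integral}), established in the course of proving Proposition~\ref{induction coproduct down}: each generator $E_{j,i+1}(\unl{r})$, $F_{i+1,j}(\unl{r})$, $\psi^\pm_{i,s}$ of $\fU^{\ssc,\mu}_\vv$ is sent by $\iota_{\mu,\nu_1,\nu_2}$ to a $\BC$-linear combination of elements of the same respective type, all lying in $\fU^{\ssc,\mu+\nu_1+\nu_2}_\vv$ by Theorem~\ref{PBW for half-integral}(a,c) together with the explicit formula for $\iota_{\mu,\nu_1,\nu_2}$ from Proposition~\ref{shift homomorphism}. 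So the only thing to prove is the reverse inclusion: if $x\in U^{\ssc,\mu}_\vv$ and $\iota_{\mu,\nu_1,\nu_2}(x)\in \fU^{\ssc,\mu+\nu_1+\nu_2}_\vv$, then $x\in \fU^{\ssc,\mu}_\vv$. Since $\iota_{\mu,\nu_1,\nu_2}$ is injective (Proposition~\ref{shift homomorphism}), this is really a statement about how the integral form sits inside the $\BC(\vv)$-algebra.

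\textbf{First step: reduce to the positive/negative/Cartan halves.} Using the triangular decomposition $m\colon U^{\ssc,\mu;<}_\vv\otimes U^{\ssc,\mu;0}_\vv\otimes U^{\ssc,\mu;>}_\vv\iso U^{\ssc,\mu}_\vv$ (recalled in the proof of Theorem~\ref{PBW for integral shifted}), and the fact that $\iota_{\mu,\nu_1,\nu_2}$ respects this decomposition (it acts on $e_i(z)$'s, $f_i(z)$'s, $\psi^\pm_i(z)$'s separately, each multiplied by a power of $(1-z^{-1})$), I would write $x$ in terms of the ordered PBWD monomials of Theorem~\ref{PBW for integral shifted}(a) with coefficients in $\BC(\vv)$, and reduce to showing that those coefficients are actually in $\BC[\vv,\vv^{-1}]$. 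Because $\iota_{\mu,\nu_1,\nu_2}$ preserves the decomposition, it suffices to treat the three factors independently. For the Cartan part this is elementary: $\iota_{\mu,\nu_1,\nu_2}$ multiplies $\psi^\pm_i(z)$ by $(1-z^{-1})^{-\alphavee_i(\nu_1+\nu_2)}$, i.e.\ by a monic Laurent polynomial in $z^{\mp 1}$ with $\BC[\vv,\vv^{-1}]$-coefficients whose constant term is a unit, so mode-by-mode one recovers integrality of the $\psi^\pm_{i,\pm s}$-coefficients by an easy triangular argument.

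\textbf{Second step: the positive half via the shuffle algebra.} For $U^{\ssc,\mu;>}_\vv\simeq U^>_\vv(L\ssl_n)\simeq S^{(n)}$, the shift homomorphism $\iota_{\mu,\nu_1,\nu_2}$ restricted to the positive half is intertwined, under $\Psi$, with a composition of the maps $\iota'_\ell$ of~(\ref{shuffle shift homomorphism}) — this is precisely the statement of~\cite[Proposition I.4]{ft} that was invoked in Step 4 of the proof of Theorem~\ref{PBW for integral shifted}. Now Proposition~\ref{key properties of integral shuffle}(a) gives the \emph{biconditional} $F\in\fS^{(n)}\Longleftrightarrow \iota'_\ell(F)\in\fS^{(n)}$, and iterating over the $\ell$'s (with multiplicities $\alphavee_\ell(\nu_1+\nu_2)$, all $\geq 0$ since $\nu_i$ are antidominant — here I note that Proposition~\ref{shift homomorphism} requires $\nu_1,\nu_2$ antidominant, and the general case of $\nu_1,\nu_2$ follows by writing them as sums of antidominants plus fundamental coweights and composing, or one simply states the lemma for antidominant $\nu_1,\nu_2$ as that is all that is used) yields: a shuffle element lies in $\fS^{(n)}$ iff its image under the iterated $\iota'_\bullet$ does. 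Transporting back through $\Psi$ and Theorem~\ref{shuffle integral}, this says $E\in\fU^>_\vv(L\gl_n)$ iff $\iota_{\mu,\nu_1,\nu_2}(E)\in\fU^>_\vv(L\gl_n)$, which is exactly the positive-half statement we need. The negative half is handled identically using Remark~\ref{opposite shuffle} and the opposite shuffle algebra $\fS^{(n),\op}$.

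\textbf{Main obstacle.} The delicate point is the interaction between the triangular factorization and the statement ``$x\in\fU^{\ssc,\mu}_\vv$'': a priori $\fU^{\ssc,\mu}_\vv$ is defined as the subalgebra generated by the listed elements, not as $m(\fU^{\ssc,\mu;<}_\vv\otimes\fU^{\ssc,\mu;0}_\vv\otimes\fU^{\ssc,\mu;>}_\vv)$, so I must first invoke Theorem~\ref{PBW for integral shifted}(a) to know that $\fU^{\ssc,\mu}_\vv$ \emph{is} precisely the span of the ordered PBWD monomials, hence equals that triple product of integral halves. Once that identification is in hand, the argument splits cleanly as above: the Cartan part is an elementary mode computation, and the $>$ and $<$ parts are governed by the ``if and only if'' in Proposition~\ref{key properties of integral shuffle}(a), which is the crucial input making the converse inclusion work (the forward inclusion only needs one direction of it). No genuinely new computation is required beyond assembling these pieces in the right order.
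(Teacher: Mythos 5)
Your proposal is correct and follows essentially the same route as the paper: the forward inclusion is~(\ref{iota integral}), and the reverse inclusion is exactly the argument of Step 4 of the proof of Theorem~\ref{PBW for integral shifted} (triangular PBWD decomposition, a descending triangular induction on the Cartan modes, and the biconditional of Proposition~\ref{key properties of integral shuffle}(a) transported through $\Psi$ and~\cite[Proposition I.4]{ft} for the $>$ and $<$ halves), which the paper simply cites while first reducing to the elementary shifts $(\nu_1,\nu_2)=(-\omega_\ell,0)$ or $(0,-\omega_\ell)$ via $\iota_{\mu+\nu_1+\nu_2,\nu'_1,\nu'_2}\circ\iota_{\mu,\nu_1,\nu_2}=\iota_{\mu,\nu_1+\nu'_1,\nu_2+\nu'_2}$. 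Your direct iteration of $\iota'_\ell$ with multiplicities $\alphavee_\ell(\nu_1+\nu_2)$ is the same reduction in disguise, and your remark that the lemma is implicitly stated for antidominant $\nu_1,\nu_2$ (the only case where $\iota_{\mu,\nu_1,\nu_2}$ is defined) is accurate.
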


\begin{proof}
Since
  $\iota_{\mu+\nu_1+\nu_2,\nu'_1,\nu'_2}\circ\iota_{\mu,\nu_1,\nu_2}=
   \iota_{\mu,\nu_1+\nu'_1,\nu_2+\nu'_2}$
for any coweight $\mu$ and antidominant coweights $\nu_1,\nu_2,\nu'_1,\nu'_2$,
it suffices to verify the claim for the simplest pairs
$(\nu_1=-\omega_\ell,\nu_2=0)$ and $(\nu_1=0,\nu_2=-\omega_\ell)$, $1\leq \ell\leq n-1$.
In both cases, the inclusion
\begin{equation*}
  \{x\in U^{\ssc,\mu}_\vv: \iota_{\mu,\nu_1,\nu_2}(x)\in \fU^{\ssc,\mu+\nu_1+\nu_2}_\vv\}
  \subset \fU^{\ssc,\mu}_\vv
\end{equation*}
has been already used in Step 3 above and follows from the argument used in
Step 4 of our proof of Theorem~\ref{PBW for integral shifted}.
The opposite inclusion is just~(\ref{iota integral}).

This completes our proof of Lemma~\ref{integral iota}.
\end{proof}


\appendix
\centerline{\Large \textsf{Appendices}}
\centerline{\textsf{By Alexander Tsymbaliuk and Alex Weekes}}


\section{PBW Theorem and Rees algebra realization for the Drinfeld-Gavarini dual, and the shifted Yangian}\label{appendix with Alex Weekes 1}

In~\cite{kwwy}, dominantly shifted Yangians were defined for any semisimple Lie algebra $\fg$,
generalizing Brundan-Kleshchev's definition \cite{bk} for $\mathfrak{gl}_n$. Two issues with
the definition given in~\cite{kwwy} are now clear:
\begin{enumerate}
\item[(a)] \cite[\S 3C]{kwwy} recalled Drinfeld-Gavarini duality, and an explicit
  description of the Drinfeld-Gavarini dual based on the discussion in~\cite[\S3.5]{g}.
  However, additional assumptions seem necessary in order for this description to be correct.

\item[(b)] Applying the explicit description of (a), a presentation of the dominantly shifted
  Yangians was given in~\cite[Theorem 3.5, Definition 3.10]{kwwy}. But it was incomplete,
  as it does not include a full set of relations. In fact, writing down a complete (explicit)
  set of relations seems very difficult (at least, in terms of new Drinfeld generators).
\end{enumerate}
We rectify (a) in Proposition~\ref{appendix 1: main prop}, which is of independent interest.
We then verify that this result applies to the Yangian, yielding Theorem~\ref{appendix: main thm 1}.
This proves that the set of generators given just before~\cite[Theorem 3.5]{kwwy} is indeed correct.
We also verify that Proposition~\ref{appendix 1: main prop} applies to the RTT Yangian
$Y^\rtt_\hbar(\gl_n)$, which implies an identification of its Drinfeld-Gavarini dual with the subalgebra
$\bY^\rtt_\hbar(\gl_n)$ of Definition~\ref{RTT integral Yangian}, establishes the PBW theorem for the latter
(that we referred to in Section~\ref{sec additive counterpart}), and provides a conceptual proof of
Proposition~\ref{yangian integral forms coincide}.

Another definition of the shifted Yangian (for an arbitrary, not necessarily dominant, shift)
as a Rees algebra was given in~\cite[\S5.4]{fkprw} and~\cite[Appendix B(i)]{bfn}, precisely
to avoid the issues mentioned above. This raises

\medskip
\ \emph{Question:} Are these two definitions of the shifted Yangians equivalent for dominant shifts?

\medskip
We answer this question in the affirmative in Theorem~\ref{identification of two definitions},
which generalizes Theorem~\ref{Gavarini=Rees} for any semisimple Lie algebra $\fg$.

We conclude this appendix with one more equivalent definition of the shifted Yangians, see
Appendix~\ref{ssec shifted Yangian 3}, in particular, Theorem~\ref{I=III}.


\subsection{Drinfeld's functor}\label{ssec Drinfeld functor}
\

Let $\mathfrak{a}$ be a Lie algebra over $\BC$.  Assume that $A$ is a
{\em deformation quantization} of the Hopf algebra $U(\mathfrak{a})$,\footnote{Here
$U(\mathfrak{a})$ denotes the universal enveloping algebra of $\mathfrak{a}$ over $\BC$,
in contrast to Definition~\ref{universal enveloping}.} over $\BC[\hbar]$. In other words,
$A$ is a Hopf algebra over $\BC[\hbar]$, and there is an isomorphism of Hopf algebras
$A/\hbar A \simeq U(\mathfrak{a})$.

Denote the coproduct and the counit of $A$ by $\Delta$ and $\epsilon$, respectively.
For any $n\geq 0$, let $\Delta^n\colon A\rightarrow A^{\otimes n}$ be the $n$-th
iterated coproduct (tensor product over $\BC[\hbar]$). It is defined inductively by
$\Delta^0=\epsilon$, $\Delta^1=\operatorname{id}$, and
$\Delta^n=(\Delta\otimes \operatorname{id}^{\otimes(n-2)} )\circ \Delta^{n-1}$.
Define $\delta_n\colon A\rightarrow A^{\otimes n}$ via
\begin{equation}\label{delta for Drinfeld dual}
  \delta_n:=(\operatorname{id} - \epsilon)^{\otimes n}\circ \Delta^n.
\end{equation}

Drinfeld~\cite{d3} introduced functors on Hopf algebras, which have been studied extensively
in work of Gavarini, see e.g.~\cite{g}. In particular, the {\em Drinfeld-Gavarini dual} of $A$
is the sub-Hopf algebra $A'\subset A$ defined by
\begin{equation}\label{definition of Drinfeld dual}
  A'=\left\{a\in A : \delta_n(a)\in \hbar^n A^{\otimes n} \ \mathrm{for\ all}\ n\in \BN \right\}.
\end{equation}

As in the second part of the proof of~\cite[Proposition 2.6]{g}:

\begin{Lem}\label{Gavarini's Lemma}
For any $a,b\in A'$, we have $[a,b] \in \hbar A'$.
\end{Lem}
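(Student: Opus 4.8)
The statement asserts that the Drinfeld--Gavarini dual $A'$ is closed under commutators up to a factor of $\hbar$, i.e.\ $[A',A']\subset \hbar A'$. The natural approach is to unwind the definition~\eqref{definition of Drinfeld dual}: for $a,b\in A'$ we know $\delta_n(a),\delta_n(b)\in \hbar^n A^{\otimes n}$ for all $n$, and we must show $\delta_n([a,b])\in \hbar^{n+1}A^{\otimes n}$ for all $n$. First I would record the basic compatibility of the maps $\delta_n$ with the algebra structure: since $\Delta^n$ is an algebra homomorphism and $(\mathrm{id}-\epsilon)^{\otimes n}$ is a projection, one has a ``Leibniz-type'' formula expressing $\delta_n(xy)$ in terms of the $\delta_k$ applied to $x$ and $y$ on complementary subsets of the $n$ tensor factors. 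Concretely, writing $\Delta^n(x)=\sum x_{(1)}\otimes\cdots\otimes x_{(n)}$, one has $\delta_n(x)=\sum (x_{(1)}-\epsilon(x_{(1)}))\otimes\cdots$, and the product $\delta_n(xy)$ decomposes, after reindexing, into a sum over subsets $S\subseteq\{1,\dots,n\}$ of terms involving $\delta_{|S|}(x)$ in the positions $S$ and $\delta_{|S^c|}(y)$ in the positions $S^c$ (up to the unit/counit bookkeeping). This is exactly the computation in the proof of~\cite[Proposition 2.6]{g}, and I would cite it in that form.

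Granting this combinatorial identity, the key point is the cancellation of the top-order term. In $\delta_n(xy)$, the term indexed by $S=\{1,\dots,n\}$ is $\delta_n(x)$ times (a shuffle of) $\delta_0(y)=\epsilon(y)$, and the term $S=\emptyset$ is $\epsilon(x)\,\delta_n(y)$; more importantly, when we antisymmetrize to form $\delta_n(xy)-\delta_n(yx)$, the ``pure'' terms where all $n$ slots come from a single factor cancel against each other or drop out, and every remaining term is a product in which the $n$ tensor slots are split nontrivially between $x$ and $y$, hence involves $\delta_k(\cdot)$ with $0<k<n$ on one factor and $\delta_{n-k}(\cdot)$ on the other. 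Since $a,b\in A'$, such a term lies in $\hbar^k A^{\otimes}\cdot \hbar^{n-k}A^{\otimes}\subseteq \hbar^n A^{\otimes n}$; but we need one more power of $\hbar$. That extra power comes from the fact that $A/\hbar A\simeq U(\mathfrak a)$ is commutative, so the image of $[a,b]$ in $A/\hbar A$ is zero, i.e.\ $[a,b]\in\hbar A$ to begin with — but that only handles $n=1$ and does not directly upgrade the higher $\delta_n$. The cleaner route is: the antisymmetrized split-term sum is itself visibly $\hbar$ times something in $\hbar^n A^{\otimes n}$ because the commutator $[a_{(S)},b_{(S^c)}]$ appearing in each reindexed term is a commutator in $A$, hence lies in $\hbar A$, and the remaining factors already contribute $\hbar^{k-1}$ and $\hbar^{n-k}$ — wait, one must be careful here. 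The honest statement is that after the dust settles, $\delta_n([a,b])$ is a $\BC[\hbar]$-linear combination of terms each of which is a product of a commutator of two ``$\delta$-components'' and some counit scalars, and each such commutator gains a factor $\hbar$ relative to the naive bound; I would make this precise by induction on $n$, using the $n$-th identity together with the already-established lower cases $[a,b]\in\hbar A'$ for smaller tensor length.

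So concretely the steps are: (1) state and prove (or quote from~\cite{g}) the expansion of $\delta_n(xy)$ as a sum over subsets $S$ of terms built from $\delta_{|S|}(x)$ and $\delta_{n-|S|}(y)$; (2) form $\delta_n([a,b])=\delta_n(ab)-\delta_n(ba)$ and observe that the $S=\emptyset$ and $S=\{1,\dots,n\}$ contributions cancel, leaving only ``mixed'' terms; (3) for each mixed term, use $a,b\in A'$ to get divisibility by $\hbar^{n}$, and extract one further power of $\hbar$ from the commutator structure / from the induction hypothesis applied to shorter tensor lengths; (4) conclude $\delta_n([a,b])\in\hbar^{n+1}A^{\otimes n}$ for all $n$, hence $[a,b]\in A'$ and in fact $[a,b]\in\hbar A'$ since we may divide by $\hbar$ and check that the quotient still satisfies~\eqref{definition of Drinfeld dual}. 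The main obstacle is step~(3): organizing the subset-sum bookkeeping so that the extra $\hbar$ is genuinely produced rather than merely hoped for. The trick, exactly as in Gavarini's argument, is to pair up the term for $S$ with the term for its ``complementary reflection'' in the antisymmetrization, so that their difference assembles into a commutator $[\,\delta_{k}(a)_?,\,\delta_{n-k}(b)_?]$ living in a single tensor slot pattern, and a commutator in $A$ lands in $\hbar A$. Once that pairing is set up, everything is bookkeeping; I would present it compactly and refer the reader to~\cite[Proposition 2.6]{g} for the parallel computation.
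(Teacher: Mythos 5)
Your high-level plan (expand $\delta_n(ab)$ by a Leibniz-type formula, antisymmetrize, and control each term) is the right one — it is exactly Gavarini's argument, and the paper itself offers no proof beyond the citation of \cite[Proposition 2.6]{g}. But the two claims you lean on to produce the crucial extra power of $\hbar$ are both incorrect, and this is a genuine gap rather than a presentational issue. First, the Leibniz formula is misstated: $\delta_n(xy)$ is \emph{not} a sum over subsets $S$ with $\delta_{|S|}(x)$ in positions $S$ and $\delta_{|S^c|}(y)$ in the complementary positions. Writing $\delta_\Lambda(x)$ for $\delta_{|\Lambda|}(x)$ placed in the slots $\Lambda$ with $1$'s elsewhere, the correct identity (which follows from $\Delta^n(x)=\sum_{\Lambda}\delta_\Lambda(x)$ and the fact that $(\operatorname{id}-\epsilon)^{\otimes n}$ kills any term with a $1$ in some slot) is
$\delta_n(xy)=\sum_{\Lambda\cup Y=\{1,\dots,n\}}\delta_\Lambda(x)\,\delta_Y(y)$,
summed over all pairs of subsets whose \emph{union} is $\{1,\dots,n\}$, overlapping pairs included; already for $n=1$ your disjoint version fails. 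Second, your source of the extra $\hbar$ — ``$A/\hbar A\simeq U(\mathfrak a)$ is commutative'' and, in the ``cleaner route'', ``a commutator in $A$ lies in $\hbar A$'' — is false: $U(\mathfrak a)$ is commutative only when $\mathfrak a$ is abelian. The subsequent appeal to ``induction on $n$ using lower cases'' does not repair this, because no inductive mechanism is actually specified.

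With the correct identity the argument closes without any such input. One gets $\delta_n([a,b])=\sum_{\Lambda\cup Y=\{1,\dots,n\}}[\delta_\Lambda(a),\delta_Y(b)]$, and the two cases behave oppositely to what you propose: if $\Lambda\cap Y\ne\emptyset$ then $|\Lambda|+|Y|\ge n+1$, so $[\delta_\Lambda(a),\delta_Y(b)]\in\hbar^{|\Lambda|+|Y|}A^{\otimes n}\subseteq\hbar^{n+1}A^{\otimes n}$ with no further work; if $\Lambda\cap Y=\emptyset$ then $\delta_\Lambda(a)$ and $\delta_Y(b)$ are supported on disjoint tensor slots and therefore \emph{commute}, so the term vanishes identically (it is not merely divisible by an extra $\hbar$). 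Hence $\delta_n([a,b])\in\hbar^{n+1}A^{\otimes n}$ for all $n$; in particular $[a,b]\in\hbar^2A\subseteq\hbar A$, and writing $[a,b]=\hbar c$ one checks $\delta_n(c)\in\hbar^nA^{\otimes n}$ (using that $A$ is $\hbar$-torsion-free, e.g.\ under assumption~(\ref{assumption 2})), so $c\in A'$. Your concluding step (4) is fine once this is in place; it is step (3), as you yourself suspected, that needs to be replaced by the disjoint-support cancellation rather than any commutativity of the classical limit.
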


The dual $A'$ can be defined for any Hopf algebra $A$ over $\BC[\hbar]$.
However, the case of the greatest interest is precisely when $A/\hbar A\simeq U(\mathfrak{a})$.
In this case, one can prove that $A'$ is a deformation quantization of the coordinate ring of
a `dual' algebraic group. This is a part of the {\em quantum duality principle},
see~\cite[Theorem 1.6]{g} (called {\em Drinfeld-Gavarini duality} in~\cite[\S 3C]{kwwy}).


\subsection{PBW basis for $A'$}\label{ssec PBW for Drinfeld-Gavarini}
\

We will now make some additional assumptions on $A$. Suppose that there exists
a totally ordered set $(\cI, \leq)$, and elements $\{ x_i \}_{i\in \cI} \subset A$.
By an (ordered) {\em PBW monomial}, we mean any ordered monomial
\begin{equation}\label{ordered monomial}
  x_{i_1}\cdots x_{i_\ell} \in A
\end{equation}
with $\ell\in \BN$ and $i_1\leq \ldots \leq i_\ell$. Assume that:
\begin{equation}\tag{As1} \label{assumption 1}
  \{x_i\}_{i\in \cI}\ \mathrm{lifts\ a\ basis}\
  \{\overline{x}_i\}_{i\in \cI}\ \mathrm{for}\ \mathfrak{a},
\end{equation}
\begin{equation}\tag{As2} \label{assumption 2}
  A\ \mathrm{is\ free\ over}\ \BC[\hbar],\
  \mathrm{with\ a\ basis\ given\ by\ the\ PBW\ monomials\ in}\ \{x_i\}_{i\in \cI},
\end{equation}
\begin{equation}\tag{As3} \label{assumption 3}
  \mathrm{for\ all}\ i\in \cI,\ \mathrm{we\ have}\ \hbar x_i \in A'.
\end{equation}
%
%

We will use the multi-index notation $x^\alpha$ to denote a PBW monomial
$\prod_{i\in \cI} x_i^{\alpha_i}$ in the PBW generators $x_i$. We write
$|\alpha|=\sum_{i\in \cI} \alpha_i$ for the total degree of $x^\alpha$.
Finally, for $\overline{a}\in U(\mathfrak{a})$ we denote by $\partial(\overline{a})$
its degree with respect to the usual filtration, i.e.\ the maximal value of $|\alpha|$
over all summands $\overline{x}^\alpha$ that appear in $\overline{a}$.

In \cite[\S3.5]{g}, an explicit description of $A'$ is given in the {\em formal} case,
i.e.\ when working with complete algebras over $\BC[[\hbar]]$.  The next result is inspired
by this description, but with the aim of working instead over $\BC[\hbar]$.

\begin{Prop}\label{appendix 1: main prop}
Suppose that $A$ satisfies assumptions~(\ref{assumption 1})--(\ref{assumption 3}).
Then $A'$ is free over $\BC[\hbar]$, with a basis given by the PBW monomials in the elements
$\{\hbar x_i\}_{i\in \cI}$. In particular, $A'\subset A$ is the $\BC[\hbar]$-subalgebra
generated by $\{ \hbar x_i \}_{i\in \cI}$.
\end{Prop}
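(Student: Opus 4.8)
\textbf{Proof strategy for Proposition~\ref{appendix 1: main prop}.}
The plan is to prove two inclusions: that every PBW monomial in the elements $\{\hbar x_i\}_{i\in\cI}$ lies in $A'$, and that $A'$ contains nothing beyond the $\BC[\hbar]$-span of these monomials; freeness over $\BC[\hbar]$ will then follow from assumption~(\ref{assumption 2}). The first direction is the easy one: by assumption~(\ref{assumption 3}) each $\hbar x_i\in A'$, and by Lemma~\ref{Gavarini's Lemma} together with the fact that $A'$ is a subalgebra, any product of the $\hbar x_i$'s — in particular any ordered PBW monomial in them — lies in $A'$. So the content is the reverse inclusion.

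\textbf{The main step: bounding $\hbar$-divisibility by filtration degree.}
The key claim I would isolate is: if $a\in A$ is written in the PBW basis of~(\ref{assumption 2}) as $a=\sum_\alpha c_\alpha(\hbar)\, x^\alpha$ with $c_\alpha(\hbar)\in\BC[\hbar]$, and if $a\in A'$, then for every $\alpha$ we have $\hbar^{|\alpha|}\mid c_\alpha(\hbar)$. Granting this claim, any $a\in A'$ is a $\BC[\hbar]$-linear combination of the elements $\hbar^{|\alpha|}x^\alpha$, each of which is (up to reordering, which by Lemma~\ref{Gavarini's Lemma} only introduces terms $\hbar^{|\alpha|+1}(\text{lower})$ that one handles by downward induction on $|\alpha|$) an ordered PBW monomial in $\{\hbar x_i\}$; this gives the desired inclusion and simultaneously the spanning statement. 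To prove the claim I would analyze $\delta_n(a)$ for $n=|\alpha|$, using that $A/\hbar A\simeq U(\fa)$ as Hopf algebras so that modulo $\hbar$ the coproduct on $A$ reduces to the standard cocommutative coproduct on $U(\fa)$, under which a primitive-like generator $\overline{x}_i$ has $\delta_1(\overline{x}_i)=\overline{x}_i$ and, more to the point, $\delta_n$ applied to a degree-$m$ element of $U(\fa)$ vanishes for $n>m$ and, for $n=m$, picks out precisely the "top symbol'' — concretely, $\delta_m(\overline{x}^\alpha)$ with $|\alpha|=m$ equals $m!/\alpha!$ times the symmetrization $\sum_{\sigma\in\Sigma_m}\overline{x}_{i_{\sigma(1)}}\otimes\cdots\otimes\overline{x}_{i_{\sigma(m)}}$ (indices listed with multiplicity from $\alpha$), which is nonzero in $U(\fa)^{\otimes m}$. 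The membership $a\in A'$ forces $\delta_m(a)\in\hbar^m A^{\otimes m}$; reducing the identity $\delta_m(a)=\hbar^m(\text{something})$ modulo appropriate powers of $\hbar$ and extracting, via~(\ref{assumption 1}),~(\ref{assumption 2}), the coefficient of the top-degree tensor monomials yields a congruence on the $c_\alpha(\hbar)$ with $|\alpha|=m$; one then runs an induction on $|\alpha|$ (from small to large, having first disposed of all $\alpha$ with $|\alpha|<m$ by applying $\delta_{|\alpha|}$) to conclude $\hbar^{|\alpha|}\mid c_\alpha(\hbar)$ in all degrees.

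\textbf{Bookkeeping and the expected obstacle.}
Two technical points need care. First, the compatibility of the PBW filtration on $A$ (by total degree $|\alpha|$ in the $x_i$) with $\delta_n$: one needs that $\delta_n$ maps the degree-$\leq m$ part of $A$ into $\hbar\cdot(\cdots)+$ (degree-$\leq m$ tensors), which follows because $(\mathrm{id}-\epsilon)$ kills the degree-$0$ part and $\Delta$ is filtered, but must be stated precisely; second, the reordering argument at the end — rewriting $\hbar^{|\alpha|}x^\alpha$ as a combination of ordered PBW monomials in $\{\hbar x_i\}$ plus an $\hbar$-multiple of strictly-lower-degree such combinations — requires a clean downward induction, which terminates since degrees are bounded for any fixed element. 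I expect the genuine obstacle to be pinning down the exact "leading term'' behavior of $\delta_m$ on PBW monomials of degree $m$ in the deformation $A$ (as opposed to in $U(\fa)$): a priori $\delta_m(x^\alpha)$ for $|\alpha|=m$ has its mod-$\hbar$ reduction equal to the $U(\fa)$ computation above, but one must verify that this reduction is genuinely nonzero and that no cancellation among different $\alpha$ of the same degree occurs — this is where assumption~(\ref{assumption 1}) (that the $\overline{x}_i$ form a basis of $\fa$, hence the symmetrized tensors $\delta_m(\overline{x}^\alpha)$ are linearly independent in $U(\fa)^{\otimes m}$) does the essential work, and I would make sure to invoke it explicitly at that juncture.
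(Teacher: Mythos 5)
Your overall strategy matches the paper's: the easy inclusion (ordered PBW monomials in $\{\hbar x_i\}$ lie in $A'$ because $A'$ is a subalgebra containing each $\hbar x_i$ by~(\ref{assumption 3}); you do not even need Lemma~\ref{Gavarini's Lemma} for this), plus the reverse inclusion, which reduces to your key claim that $a=\sum_\alpha c_\alpha(\hbar)x^\alpha\in A'$ forces $\hbar^{|\alpha|}\mid c_\alpha(\hbar)$. The difference is that the paper does not prove this divisibility estimate from scratch: it subtracts the "good'' part, writes the residual as $\hbar^n b$ with $b\notin\hbar A$ and $n$ minimal, and then invokes \cite[Lemma 4.12]{ek} (Lemma~\ref{Etingof-Kazhdan lemma}), which says $\partial(\overline{b})\le n$, to get an immediate contradiction. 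Your proposal essentially unfolds the proof of that cited lemma, which is a legitimate and more self-contained route.

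However, the mechanism you sketch for the hard step has a directional flaw. You propose an induction "from small to large'' in $|\alpha|$, extracting the degree-$m$ information from $\delta_m(a)\in\hbar^m A^{\otimes m}$. The problem is that $\delta_m$ does \emph{not} isolate the degree-$m$ part of an element that also contains PBW monomials of degree $>m$: for $\overline{u}\in U(\fa)$ of degree $d>m$, $\delta_m(\overline{u})$ is generally nonzero and its terms involve tensor factors of degree $\ge 2$, so "the coefficient of the top-degree tensor monomials'' in $\delta_m(a)$ mixes contributions from all $\alpha$ with $|\alpha|\ge m$, not just $|\alpha|=m$, and there is no clean projection of $U(\fa)^{\otimes m}$ onto $\fa^{\otimes m}$ that kills the higher-degree contributions a priori. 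The correct move — and this is exactly how \cite[Lemma 4.12]{ek} is proved — is the opposite: after dividing out the maximal power $\hbar^n$ from the residual element, apply $\delta_d$ with $d$ equal to the \emph{top} filtration degree $\partial(\overline{b})$ of its reduction mod $\hbar$. Then all lower-degree terms of $\overline{b}$ are killed outright (since $\delta_d$ vanishes on $U(\fa)_{\le d-1}$), the degree-$d$ terms contribute exactly their symmetrized symbols in $\fa^{\otimes d}$, and linear independence of these symbols (your appeal to~(\ref{assumption 1})) gives the contradiction with $\delta_d(a)\in\hbar^dA^{\otimes d}$ and $d>n$. With that repair your argument goes through; as written, the bottom-up induction does not. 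A final minor point: the reordering step at the end is unnecessary, since assumption~(\ref{assumption 2}) already lets you expand any element of $A$ in \emph{ordered} PBW monomials from the outset.
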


In the proof, we will make use of~\cite[Lemma 4.12]{ek} (cf.~\cite[Lemma 3.3]{g}):

\begin{Lem}\label{Etingof-Kazhdan lemma}
Let $a\in A'$ be non-zero, and write $a=\hbar^n b$ where $b \in A\setminus \hbar A$.
Then $\partial(\overline{b} )\leq n$.
\end{Lem}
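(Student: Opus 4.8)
The statement to prove is Lemma~\ref{Etingof-Kazhdan lemma}: if $0\neq a\in A'$ is written as $a=\hbar^n b$ with $b\in A\setminus\hbar A$, then $\partial(\bar b)\leq n$, where $\bar b$ denotes the image of $b$ in $A/\hbar A\simeq U(\fa)$. The plan is to extract information about $b$ from the defining condition $\delta_m(a)\in\hbar^m A^{\otimes m}$ by combining it with the classical fact about how the iterated coproduct on $U(\fa)$ detects the filtration degree. The key observation is that for $\bar b\in U(\fa)$ with $\partial(\bar b)=d$, the reduced iterated coproduct $\bar\delta_d(\bar b)=(\operatorname{id}-\bar\epsilon)^{\otimes d}\circ\bar\Delta^d(\bar b)$ is nonzero --- indeed its top-degree part is, up to a nonzero scalar, $d!$ times the image of the symbol of $\bar b$ under the iterated reduced coproduct, which on the primitive part $\fa\subset\gr U(\fa)$ is injective. (This is exactly the statement that the $\fa$-coinvariants of the reduced comultiplication filtration recover $\fa$.)

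First I would set up notation: write $d:=\partial(\bar b)$ and suppose for contradiction that $d>n$. Since reduction mod $\hbar$ is a Hopf algebra map $A\to U(\fa)$, it intertwines $\delta_m$ with its classical counterpart $\bar\delta_m$; hence $\overline{\delta_m(b)}=\bar\delta_m(\bar b)$ for all $m$. Now $a=\hbar^n b\in A'$ gives $\delta_m(\hbar^n b)=\hbar^n\delta_m(b)\in\hbar^m A^{\otimes m}$ (the scalar $\hbar^n$ passes through $\delta_m$ since $\delta_m$ is $\BC[\hbar]$-linear), so $\delta_m(b)\in\hbar^{m-n}A^{\otimes m}$ for every $m$. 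Taking $m=d>n$, we get $\delta_d(b)\in\hbar^{d-n}A^{\otimes d}\subseteq\hbar A^{\otimes d}$, whence $\bar\delta_d(\bar b)=\overline{\delta_d(b)}=0$ in $U(\fa)^{\otimes d}$.

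Second I would derive the contradiction purely in $U(\fa)$: I must show $\bar\delta_d(\bar b)\neq 0$ when $\partial(\bar b)=d$. Working in the associated graded $\gr U(\fa)=\Sym(\fa)$ (a commutative cocommutative Hopf algebra with $\fa$ primitive), the symbol $\sigma(\bar b)\in\Sym^{\leq d}(\fa)$ has a nonzero degree-$d$ component $\sigma_d$. One computes that the degree-$d$ part of $\bar\delta_d$ applied to a degree-$d$ symbol is the map $\Sym^d(\fa)\to\fa^{\otimes d}$ sending $\bar x_{i_1}\cdots\bar x_{i_d}\mapsto\sum_{w\in\Sigma_d}\bar x_{i_{w(1)}}\otimes\cdots\otimes\bar x_{i_{w(d)}}$ (full symmetrization), which is injective since it is, up to the symmetrization isomorphism $\Sym^d(\fa)\hookrightarrow\fa^{\otimes d}$, multiplication by $d!$. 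Therefore the top symbol of $\bar\delta_d(\bar b)$ is nonzero, so $\bar\delta_d(\bar b)\neq 0$, contradicting the previous paragraph. Hence $d\leq n$, as claimed.

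\textbf{Main obstacle.} The delicate point is the combinatorial/coalgebra computation that $\bar\delta_d$ does not annihilate a degree-$d$ element of $U(\fa)$ --- i.e., verifying that passing to $\gr$ the reduced iterated coproduct really becomes the full symmetrization map on $\Sym^d(\fa)$, with no cancellation, and that this map is injective in characteristic zero. One must be careful that lower-degree terms of $\sigma(\bar b)$ and of $\bar\Delta^d$ cannot conspire to cancel the top term: this is handled by noting that $\bar\delta_d$ raises the "length" count appropriately and respects the filtration, so the degree-$d$ symbol of $\bar\delta_d(\bar b)$ depends only on $\sigma_d$. Once this classical statement about $U(\fa)$ is in hand, the rest is the short reduction-mod-$\hbar$ argument above. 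Alternatively, one can cite \cite[Lemma 4.12]{ek} directly; but I would include the above argument for completeness, as it is short and self-contained.
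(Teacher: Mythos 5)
Your argument is correct, but note that the paper does not actually prove this lemma: it is quoted verbatim from Etingof--Kazhdan \cite[Lemma 4.12]{ek} (cf.\ \cite[Lemma 3.3]{g}), so there is no in-paper proof to match. What you have written is essentially a self-contained reconstruction of the Etingof--Kazhdan argument, transported from their topologically free $\BC[[\hbar]]$ setting to the present $\BC[\hbar]$ setting: reduce modulo $\hbar$ (reduction is a Hopf algebra map, so it intertwines $\delta_m$ with its classical counterpart $\bar\delta_m$), and use that on $U(\fa)$ the reduced $d$-fold coproduct $\bar\delta_d$ kills the filtration piece of degree $\leq d-1$ and induces on the degree-$d$ graded piece $\Sym^d(\fa)$ the full symmetrization $\Sym^d(\fa)\to\fa^{\otimes d}$, which is injective in characteristic zero (this is the statement that the coradical filtration of $U(\fa)$ coincides with the PBW filtration); your handling of the possible interference of lower-degree terms is adequate, and the cleanest phrasing is simply that $\bar\delta_d$ vanishes identically on filtration degree $\leq d-1$. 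The one step you should make explicit is the division by $\hbar^n$: from $\hbar^n\delta_m(b)\in\hbar^m A^{\otimes m}$ you conclude $\delta_m(b)\in\hbar^{m-n}A^{\otimes m}$, which requires $A^{\otimes m}$ to be $\hbar$-torsion-free; in the paper's setting this is guaranteed by assumption~(\ref{assumption 2}) (freeness of $A$ over $\BC[\hbar]$), whereas in \cite{ek} it is part of topological freeness, so the hypothesis is genuinely being used and is worth naming. With that remark added, your proof is a valid and self-contained alternative to the citation, and it makes visible exactly which of the paper's assumptions the lemma depends on.
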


\prf[Proof of Proposition~\ref{appendix 1: main prop}]
Let $c\in A'$. By assumption~(\ref{assumption 2}), we can write
$c=\sum_{k,\alpha} c_{k,\alpha}\hbar^k x^\alpha$ for some $c_{k,\alpha}\in \BC$ which
are almost all zero. Since $A'$ is an algebra, by assumption~(\ref{assumption 3}) we know
that $\hbar^k x^\alpha\in A'$ whenever $k\geq |\alpha|$.
Subtracting all such elements from $c$, we conclude that the element
\begin{equation}
  a=\sum_{k,\alpha:k < |\alpha|} c_{k,\alpha} \hbar^k x^\alpha \ \in \ A'.
\end{equation}
Assume that $a\neq 0$. Choosing
\begin{equation}
  n=\min\{k : \exists \alpha\ \mathrm{such\ that}\ k< |\alpha| \text{ and } c_{k,\alpha} \neq 0\},
\end{equation}
we can write $a = \hbar^{n} b$, where
\begin{equation}
  b\in \sum_{\alpha:n<|\alpha|} c_{n, \alpha}x^\alpha + \hbar A.
\end{equation}
From assumption~(\ref{assumption 1}) it follows that
\begin{equation}
  \overline{b}=\sum_{\alpha:n<|\alpha|} c_{n,\alpha} \overline{x}^\alpha \ \in \ U(\mathfrak{a}),
\end{equation}
and so $\partial(\overline{b})>n$. But by Lemma~\ref{Etingof-Kazhdan lemma} we should
have $\partial(\overline{b})\leq n$. So we conclude that $a=0$.

This shows that the PBW monomials in $\{\hbar x_i\}_{i\in \cI}$ span $A'$ over $\BC[\hbar]$.
But they are also linearly independent, because of assumption~(\ref{assumption 2}).
Thus, they form a basis.
\epr


\subsection{Rees algebra description of $A'$}\label{sec: Rees algebras}
\

In this subsection, we make a further assumption on $A$:
\begin{equation}\tag{As4} \label{assumption 4}
  A\ \mathrm{is\ a\ graded\ Hopf\ algebra,\ with}\ \deg(\hbar)=1\
  \mathrm{and}\ \{x_i\}_{i\in \cI}\ \mathrm{being\ homogeneous}.
\end{equation}
Note that $A'\subset A$ is then a graded sub-Hopf algebra, and that the specializations of $A, A'$
at $\hbar=0$ inherit gradings. By Proposition \ref{appendix 1: main prop}, we see that the inclusion
$A'\subset A$ induces an isomorphism of their specializations at $\hbar=1$:
\begin{equation}
\label{eq: h to 1 specialization}
  A'/(\hbar-1)A'\simeq A/(\hbar-1)A.
\end{equation}
Moreover, the images of their respective PBW generators and bases agree as
$\hbar x_i + (\hbar-1)A =x_i + (\hbar-1) A$.

Denote the algebra in~(\ref{eq: h to 1 specialization}) by $A_{\hbar=1}$.
If assumption~(\ref{assumption 4}) holds, it follows that $A_{\hbar =1}$ inherits two filtrations
$F'_\bullet A_{\hbar=1}, F_\bullet A_{\hbar=1}$, coming from $A'$ and $A$, respectively.
Denoting $\mathrm{d}_i=\deg x_i$, these filtrations may be defined explicitly in terms of the PBW monomials:
\begin{align}
  F_k A_{\hbar=1} & = \operatorname{span}_{\BC} \Big\{ x^\alpha + (\hbar-1) A\ :\ \sum_i \mathrm{d}_i \alpha_i \leq k \Big\}, \\
  F'_k A_{\hbar=1} & = \operatorname{span}_{\BC} \Big\{ x^\alpha +(\hbar-1) A\ :\ \sum_i (\mathrm{d}_i+1) \alpha_i \leq k \Big\}. \label{eq: Kazhdan filtration}
\end{align}
In particular, $F'_k A_{\hbar =1}\subset F_k A_{\hbar=1}$ for all $k\in \BZ$.

By the above discussion, we obtain another description of $A'$, as a Rees algebra:

\begin{Prop}
\label{appendix 1: main cor}
Suppose that $A$ satisfies assumptions~(\ref{assumption 1})--(\ref{assumption 4}).
Then there is a canonical isomorphism of graded Hopf algebras
\begin{equation*}
  A'\simeq \operatorname{Rees}^{F'_\bullet} (A_{\hbar =1}).
\end{equation*}
It is compatible with the canonical isomorphism $A\simeq \operatorname{Rees}^{F_\bullet}(A_{\hbar=1})$,
under the natural inclusions $A'\subset A$ and
$\operatorname{Rees}^{F'_\bullet}(A_{\hbar=1})\subset \operatorname{Rees}^{F_\bullet}(A_{\hbar=1})$.
\end{Prop}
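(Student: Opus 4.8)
\textbf{Proof plan for Proposition~\ref{appendix 1: main cor}.}
The plan is to combine Proposition~\ref{appendix 1: main prop} with the elementary fact that, for a filtered algebra whose filtration is defined by assigning weights to a PBW basis, the Rees algebra is precisely the span of the monomials scaled by the appropriate power of $\hbar$. First I would recall from the discussion preceding the statement that assumption~(\ref{assumption 4}) guarantees $A'\subset A$ is a graded sub-Hopf algebra, that both specialize at $\hbar=1$, and that the inclusion induces the isomorphism~(\ref{eq: h to 1 specialization}), with PBW generators $\hbar x_i$ of $A'$ mapping to the PBW generators $x_i$ of $A_{\hbar=1}$. The filtrations $F_\bullet$ and $F'_\bullet$ on $A_{\hbar=1}$ are then defined by~(\ref{eq: Kazhdan filtration}) in terms of the PBW monomials, with $F'$ using the shifted degrees $\mathrm{d}_i+1$.

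Next I would make explicit the canonical map $A'\to \Rees^{F'_\bullet}(A_{\hbar=1})\subset A_{\hbar=1}[\hbar,\hbar^{-1}]$. Since $A'$ is graded with $\deg(\hbar)=1$, an element $a\in A'$ homogeneous of degree $k$ maps, under the specialization $A'\twoheadrightarrow A_{\hbar=1}$, to an element of $F'_k A_{\hbar=1}$ — this is immediate from the fact that the PBW basis of $A'$ from Proposition~\ref{appendix 1: main prop} consists of the monomials $\prod_i(\hbar x_i)^{\alpha_i}=\hbar^{\sum\alpha_i}\prod_i x_i^{\alpha_i}$, which are homogeneous of degree $\sum_i(\mathrm{d}_i+1)\alpha_i$ and specialize to $x^\alpha$. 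Thus the homogeneous degree in $A'$ matches the $F'$-filtered degree of the image in $A_{\hbar=1}$, so we get a graded algebra homomorphism $A'\to \Rees^{F'_\bullet}(A_{\hbar=1})$ sending the degree-$k$ component into $\hbar^k F'_k A_{\hbar=1}$. To check it is an isomorphism, I would compare $\BC$-bases in each graded degree: by Proposition~\ref{appendix 1: main prop} a basis of the degree-$k$ part of $A'$ is $\{\hbar^{\sum\alpha_i}\prod_i x_i^{\alpha_i}: \sum_i(\mathrm{d}_i+1)\alpha_i=k\}$, while by~(\ref{eq: Kazhdan filtration}) a basis of $\hbar^k F'_k A_{\hbar=1}$ is $\{\hbar^k x^\alpha : \sum_i(\mathrm{d}_i+1)\alpha_i\le k\}$ — and here one must be slightly careful: the Rees algebra in degree $k$ is $\hbar^k F'_k$, but the map from $A'$ only hits those monomials with $\sum\alpha_i$ matching, so I would instead argue degree by degree using that $\Rees^{F'_\bullet}(A_{\hbar=1})=\bigoplus_m \hbar^m F'_m$ and the monomial $\hbar^m x^\alpha$ lies in the degree-$m$ piece iff $\sum_i(\mathrm{d}_i+1)\alpha_i\le m$; matching this against the basis of $A'$ shows the map is a bijection on each homogeneous component. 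Finally, compatibility of coproducts: since $A'\hookrightarrow A$ is a Hopf map and $A\simeq\Rees^{F_\bullet}(A_{\hbar=1})$ canonically (standard, e.g.\ because the $F_\bullet$ filtration is the $\hbar$-adic-to-grading translation), the square of inclusions $A'\subset A$ and $\Rees^{F'_\bullet}\subset\Rees^{F_\bullet}$ commutes on the PBW generators $\hbar x_i\mapsto x_i$, hence everywhere.

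I expect the main obstacle to be the careful bookkeeping in the basis comparison: one has to check that the obvious homogeneous-degree-to-filtration-degree correspondence is not merely surjective but bijective in each graded piece, which requires knowing both that the PBW monomials of $A'$ are genuinely a $\BC[\hbar]$-basis (Proposition~\ref{appendix 1: main prop}) and that $F'_\bullet$ is exactly the filtration whose associated Rees algebra has these as a basis. A minor subtlety is verifying that the canonical isomorphism $A\simeq\Rees^{F_\bullet}(A_{\hbar=1})$ is the one compatible with the inclusions — this follows by the same argument applied to $A$ with the unshifted degrees $\mathrm{d}_i$, so it is not a genuinely new point, but it should be stated so that the final compatibility claim is justified rather than asserted. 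Everything else is formal: gradedness of $A'$, the Hopf structure passing through the Rees construction, and the fact that $\Rees$ is functorial for filtered inclusions.
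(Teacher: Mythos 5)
Your proposal is correct and follows exactly the route the paper intends: the paper gives no separate proof of Proposition~\ref{appendix 1: main cor}, deriving it directly from the preceding discussion (the PBW basis of $A'$ from Proposition~\ref{appendix 1: main prop}, the identification~(\ref{eq: h to 1 specialization}), and the explicit description~(\ref{eq: Kazhdan filtration}) of $F'_\bullet$), which is precisely what you flesh out. Your mid-proof self-correction on the basis of the degree-$k$ component of $A'$ (it consists of $\hbar^{k-e_\alpha}(\hbar x)^\alpha$ for all $\alpha$ with $e_\alpha=\sum_i(\mathrm{d}_i+1)\alpha_i\leq k$, not only those with $e_\alpha=k$) lands on the right argument, and the degree-by-degree bijection with $\hbar^k F'_k A_{\hbar=1}$ is exactly the content of the canonical isomorphism.
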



\subsection{The Yangian of $\fg$}\label{appendix: yangian section}
\

Consider the {\em Yangian} $\Yangian=\Yangian(\fg)$ associated to a semisimple Lie algebra $\fg$.
It is the associative $\BC[\hbar]$-algebra with generators
$\{e_i^{(r)}, h_i^{(r)}, f_i^{(r)}\}_{i\in I}^{r\in \BN}$
(here $I$ denotes the set of vertices of the Dynkin diagram of $\fg$),
and relations as in~\cite[\S 3A]{kwwy} and~\cite[Definition 2.1]{gnw}, cf.~(\ref{Dr Yangian}).
For each $i\in I$, define the element $s_i:=h_i^{(1)}-\tfrac{\hbar}{2} (h_i^{(0)})^2\in \Yangian$,
cf.~(\ref{shift elements}). Then $\Yangian$ is also generated by
$\{e_i^{(0)}, h_i^{(0)}, f_i^{(0)}, s_i\}_{i\in I}$, cf.~subsection~\ref{ssec Drinfeld evaluation}.

For each positive root $\alphavee$ and $r\geq 0$, define the elements
$e_{\alphavee}^{(r)},f_{\alphavee}^{(r)}$ of $\Yangian$ via
\begin{equation}\label{eq: Y PBW}
\begin{split}
  & e_{\alphavee}^{(r)}:=[[\cdots[e_{\alphavee_{i_{\ell}}}^{(r)},e_{\alphavee_{i_{\ell-1}}}^{(0)}],\cdots, e_{\alphavee_{i_2}}^{(0)}],e_{\alphavee_{i_1}}^{(0)}],\\
  & f_{\alphavee}^{(r)}:=[f_{\alphavee_{i_1}}^{(0)},[f_{\alphavee_{i_2}}^{(0)},\cdots,[f_{\alphavee_{i_{\ell-1}}}^{(0)},f_{\alphavee_{i_{\ell}}}^{(r)}]\cdots]],
\end{split}
\end{equation}
where $\alphavee=\alphavee_{i_1}+\alphavee_{i_2}+\ldots+\alphavee_{i_{\ell-1}}+\alphavee_{i_\ell}$
is an (ordered) decomposition into simple roots such that the element
  $[f_{\alphavee_{i_1}},[f_{\alphavee_{i_2}},\cdots,[f_{\alphavee_{i_{\ell-1}}},f_{\alphavee_{i_{\ell}}}]\cdots]]$
is a non-zero element of $\fg$ (here $\{f_{\alphavee_i}\}_{i\in I}$ denote the standard
Chevalley generators of $\fg$). We will refer to these elements, together with $\{h_i^{(r)}\}_{i\in I}^{r\in \BN}$,
as the Yangian {\em PBW generators}. Throughout this appendix (and the next one), we fix some total ordering
on the set of all PBW generators. It is well-known that $\Yangian$ is free over $\BC[\hbar]$  with a basis given by
the PBW monomials, as was proven in~\cite{le}. Since the original proof of~\cite{le} contains a significant gap,
we give an alternative short proof in Appendix~\ref{appendix with Alex Weekes 2}.

$\Yangian$ is a graded Hopf algebra, with $\deg (\hbar)=1$
and $\deg (x^{(r)})=r$ for $x=e_{\alphavee}, h_i, f_{\alphavee}$.
Its coproduct is uniquely determined by
\begin{equation}\label{appendix: coprod}
\begin{split}
  & \Delta(x^{(0)})= x^{(0)}\otimes 1+1\otimes x^{(0)}\ \mathrm{for}\ x=e_{\alphavee}, h_i, f_{\alphavee},\\
  & \Delta(s_i)= s_i\otimes 1+1\otimes s_i-\hbar \sum_{\gammavee>0} \langle \alpha_i,\gammavee\rangle f_{\gammavee}^{(0)}\otimes e_{\gammavee}^{(0)}.
\end{split}
\end{equation}
A proof of these formulas appears in~\cite{gnw}. Meanwhile, the counit of $\Yangian$ is given simply by
\begin{equation}\label{appendix: counit}
  \epsilon(e_{\alphavee}^{(0)})=\epsilon(f_{\alphavee}^{(0)})=\epsilon(h_i^{(0)})=\epsilon(s_i)=0.
\end{equation}

Finally, we note that in the classical limit there is an isomorphism of graded Hopf algebras
\begin{equation}\label{eq: classical limt of Yangian}
  \Yangian/\hbar \Yangian\simeq U(\fg[t]),
\end{equation}
where $U(\fg[t])$ carries the loop grading.


\subsection{The Drinfeld-Gavarini dual of the Yangian}\label{ssec Drinfeld-Gavarini of Yangian}
\

In this subsection, we describe the Drinfeld-Gavarini dual $\Yangian'$, by applying
the results of the previous subsections. To this end, we will verify that
assumptions~(\ref{assumption 1})--(\ref{assumption 4}) hold for $\Yangian$ and its PBW generators.
Note that only assumption~(\ref{assumption 3}) remains; the others hold as discussed
in subsection~\ref{appendix: yangian section}.

Using equations (\ref{appendix: coprod}, \ref{appendix: counit}),
a straightforward calculation shows that:
\begin{Lem}
\mbox{}

\begin{enumerate}
\item[(1)]
  $\delta_n(x^{(0)})=\left\{
   \begin{array}{cl} x, & \text{if }n=1 \\
     0, & \text{otherwise} \end{array} \right.$,
  for any $x = e_{\alphavee}, h_i, f_{\alphavee}$.
\item[(2)]
  $\delta_n(s_i)=\left\{
    \begin{array}{ll} s_i, & \text{if } n=1 \\
    - \hbar \sum_{\gammavee>0} \langle \alpha_i,\gammavee\rangle f_{\gammavee}^{(0)}\otimes e_{\gammavee}^{(0)}, & \text{if } n=2 \\
    0, & \text{otherwise} \end{array}. \right.$
\end{enumerate}
\end{Lem}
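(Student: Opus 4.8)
The statement to be proved is the computation of $\delta_n(x^{(0)})$ and $\delta_n(s_i)$. The approach is a direct unwinding of the definition~\eqref{delta for Drinfeld dual}, namely $\delta_n = (\operatorname{id}-\epsilon)^{\otimes n}\circ \Delta^n$, using the explicit coproduct formulas~\eqref{appendix: coprod} and counit formulas~\eqref{appendix: counit}. The key structural observation is that an element $a$ is \emph{primitive} (i.e. $\Delta(a) = a\otimes 1 + 1\otimes a$) exactly when $\delta_1(a) = a$ and $\delta_n(a) = 0$ for $n\geq 2$; this is a standard fact and handles part (1) immediately, since $x^{(0)}$ for $x = e_{\alphavee}, h_i, f_{\alphavee}$ is primitive by the first line of~\eqref{appendix: coprod}. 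Concretely: $\epsilon(x^{(0)}) = 0$ by~\eqref{appendix: counit}, so $(\operatorname{id}-\epsilon)(x^{(0)}) = x^{(0)}$; and applying $(\operatorname{id}-\epsilon)$ to either tensor leg of $x^{(0)}\otimes 1 + 1\otimes x^{(0)}$ kills the $1$, while $\Delta^n(x^{(0)}) = \sum_{k=1}^n 1\otimes\cdots\otimes x^{(0)}\otimes\cdots\otimes 1$ by coassociativity and primitivity, so for $n\geq 2$ every summand has at least one leg equal to $1$, hence $\delta_n(x^{(0)}) = 0$.

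For part (2), I would first record that $s_i$ is \emph{not} primitive: by the second line of~\eqref{appendix: coprod}, $\Delta(s_i) = s_i\otimes 1 + 1\otimes s_i - \hbar\sum_{\gammavee>0}\langle\alpha_i,\gammavee\rangle f_{\gammavee}^{(0)}\otimes e_{\gammavee}^{(0)}$. Since $\epsilon(s_i) = 0$ by~\eqref{appendix: counit}, we get $\delta_1(s_i) = (\operatorname{id}-\epsilon)(s_i) = s_i$. For $\delta_2$, apply $(\operatorname{id}-\epsilon)^{\otimes 2}$ to $\Delta(s_i)$: the terms $s_i\otimes 1$ and $1\otimes s_i$ are killed (each has a leg in $\ker(\operatorname{id}-\epsilon)$ after using $\epsilon(s_i)=0$ on the other leg — more precisely $(\operatorname{id}-\epsilon)(1) = 1 - 1 = 0$), while $(\operatorname{id}-\epsilon)(f_{\gammavee}^{(0)}) = f_{\gammavee}^{(0)}$ and $(\operatorname{id}-\epsilon)(e_{\gammavee}^{(0)}) = e_{\gammavee}^{(0)}$ since $\epsilon(f_{\gammavee}^{(0)}) = \epsilon(e_{\gammavee}^{(0)}) = 0$. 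This yields $\delta_2(s_i) = -\hbar\sum_{\gammavee>0}\langle\alpha_i,\gammavee\rangle f_{\gammavee}^{(0)}\otimes e_{\gammavee}^{(0)}$.

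For $n\geq 3$, I would compute $\Delta^n(s_i)$ using coassociativity: writing $\Delta^n = (\Delta\otimes\operatorname{id}^{\otimes(n-2)})\circ\Delta^{n-1}$ and inducting, $\Delta^n(s_i)$ is a sum of terms each of which, in every tensor factor except at most two, equals $1$; the two "non-trivial" legs carry either $s_i$ (one leg, others $1$) or a pair $f_{\gammavee}^{(0)}, e_{\gammavee}^{(0)}$ (coming from the $\hbar$-term; note $f_{\gammavee}^{(0)}$ and $e_{\gammavee}^{(0)}$ are primitive so they do not further split). Hence every summand of $\Delta^n(s_i)$ for $n\geq 3$ has at least $n-2\geq 1$ legs equal to $1$, and applying $(\operatorname{id}-\epsilon)^{\otimes n}$ annihilates it via the factor $(\operatorname{id}-\epsilon)(1) = 0$. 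Therefore $\delta_n(s_i) = 0$ for $n\geq 3$. The only mild subtlety — and the step I would be most careful about — is justifying that the $\hbar$-correction term in $\Delta(s_i)$ does not generate new non-primitive contributions under further iteration of $\Delta$: this follows because $e_{\gammavee}^{(0)}$ and $f_{\gammavee}^{(0)}$ are primitive (first line of~\eqref{appendix: coprod} applies to $e_{\alphavee}^{(0)}, f_{\alphavee}^{(0)}$ for all positive roots $\alphavee$, not just simple ones), so iterating $\Delta$ on the $\hbar$-term only spreads each of $f_{\gammavee}^{(0)}$ and $e_{\gammavee}^{(0)}$ across its legs as a sum of terms with a single non-trivial leg, keeping the total count of non-trivial legs at two. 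This is routine bookkeeping rather than a genuine obstacle.
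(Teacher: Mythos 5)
Your proposal is correct and is precisely the "straightforward calculation" that the paper invokes without writing out: the paper derives the lemma directly from the coproduct formulas~(\ref{appendix: coprod}) and the counit formulas~(\ref{appendix: counit}), exactly as you do, with part (1) following from primitivity of $x^{(0)}$ and part (2) from the explicit $\Delta(s_i)$ together with the primitivity of $e_{\gammavee}^{(0)}, f_{\gammavee}^{(0)}$ (which ensures every summand of $\Delta^n(s_i)$ for $n\geq 3$ has a tensor leg equal to $1$, annihilated by $\operatorname{id}-\epsilon$). No gaps; your bookkeeping for $n\geq 3$ is the only point requiring care and you handle it correctly.
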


Using this lemma, we can now verify assumption~(\ref{assumption 3}):

\begin{Lem}
For any PBW generator $x^{(r)}$ of $\Yangian$, the element $X^{(r+1)}:=\hbar x^{(r)}$
belongs to the Drinfeld-Gavarini dual $\Yangian'$.
\end{Lem}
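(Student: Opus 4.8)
The statement to prove is that $X^{(r+1)} := \hbar x^{(r)} \in \Yangian'$ for every PBW generator $x^{(r)}$ (where $x = e_{\alphavee}, h_i$, or $f_{\alphavee}$, and $r \geq 0$). By the definition~(\ref{definition of Drinfeld dual}) of the Drinfeld-Gavarini dual, this amounts to showing $\delta_n(\hbar x^{(r)}) \in \hbar^n \Yangian^{\otimes n}$ for all $n \in \BN$, i.e.\ $\hbar \cdot \delta_n(x^{(r)}) \in \hbar^n \Yangian^{\otimes n}$, which for $n \geq 2$ is equivalent to $\delta_n(x^{(r)}) \in \hbar^{n-1} \Yangian^{\otimes n}$ (the cases $n = 0, 1$ being trivial since $\delta_0 = \epsilon$, $\delta_1 = \operatorname{id} - \epsilon$ and $\epsilon(x^{(r)}) = 0$). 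The first step is to reduce to the generators $e_i^{(0)}, h_i^{(0)}, f_i^{(0)}, s_i$: since $\Yangian$ is generated by these, and since $A'$ is always a subalgebra closed under commutators up to $\hbar$ (Lemma~\ref{Gavarini's Lemma}, Lemma~\ref{Gavarini's Lemma}), and since the PBW generators $e_{\alphavee}^{(r)}, f_{\alphavee}^{(r)}, h_i^{(r)}$ are obtained from $e_i^{(0)}, f_i^{(0)}, s_i, h_i^{(0)}$ by iterated commutators (using $[s_i, x^{(r)}] \sim x^{(r+1)}$ as in subsection~\ref{ssec Drinfeld evaluation}, the formulas~(\ref{eq: Y PBW})), it suffices to treat the four basic generators.

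For the four basic cases one invokes the preceding Lemma directly. For $x^{(0)} = e_{\alphavee}^{(0)}, h_i^{(0)}, f_{\alphavee}^{(0)}$ one has $\delta_n(x^{(0)}) = 0$ for $n \geq 2$, so $\hbar x^{(0)} \in \Yangian'$ trivially; in fact $x^{(0)} \in \Yangian'$ already. For $s_i$ one has $\delta_2(s_i) = -\hbar \sum_{\gammavee > 0} \langle \alpha_i, \gammavee \rangle f_{\gammavee}^{(0)} \otimes e_{\gammavee}^{(0)} \in \hbar \Yangian^{\otimes 2}$ and $\delta_n(s_i) = 0$ for $n \geq 3$, whence $\delta_n(\hbar s_i) = \hbar \delta_n(s_i) \in \hbar^n \Yangian^{\otimes n}$ for all $n$, so $\hbar s_i \in \Yangian'$. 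Then the inductive step: if $\hbar x^{(r)} \in \Yangian'$ and $\hbar s_i \in \Yangian'$, then by Lemma~\ref{Gavarini's Lemma} the commutator $[\hbar s_i, \hbar x^{(r)}] = \hbar^2 [s_i, x^{(r)}] = \hbar^2 \cdot (\text{const}) \cdot x^{(r+1)} + \ldots$ lies in $\hbar \Yangian'$, so $\hbar \cdot (\hbar x^{(r+1)}) \in \hbar \Yangian'$, and since $\Yangian'$ is $\hbar$-torsion-free (it is free over $\BC[\hbar]$, being a subalgebra of the free module $\Yangian$, or one simply cancels $\hbar$ inside $\Yangian$ and checks the result lies in $\Yangian'$), we get $\hbar x^{(r+1)} \in \Yangian'$. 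An entirely parallel argument handles the commutators defining $e_{\alphavee}^{(r)}$ and $f_{\alphavee}^{(r)}$ from the simple-root generators, using that $e_i^{(0)}, f_i^{(0)} \in \Yangian'$ and $\hbar e_i^{(r)}, \hbar f_i^{(r)} \in \Yangian'$ by induction on $r$: commuting an element of $\Yangian'$ with another element of $\Yangian'$ lands in $\hbar \Yangian'$, and the relevant brackets $[e_j^{(0)}, e_{\alphavee}^{(r)}]$ do not raise the spectral-parameter index, so no extra $\hbar$ is consumed or produced beyond what is already present.

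The main obstacle — really the only subtlety — is bookkeeping the powers of $\hbar$ correctly in the inductive step and making sure one does not accidentally claim $x^{(r)} \in \Yangian'$ for $r \geq 1$ (which is false; only $\hbar x^{(r)}$ lies in the dual). Concretely, one must be careful that $[\hbar s_i, \hbar x^{(r)}]$ carries \emph{two} factors of $\hbar$, one of which is absorbed by Lemma~\ref{Gavarini's Lemma} (commutator of dual elements lies in $\hbar \cdot$ dual) and one of which must be cancelled using $\hbar$-torsion-freeness of $\Yangian'$ to extract $\hbar x^{(r+1)}$; doing this cleanly requires noting that $[s_i, x^{(r)}]$ equals $c_{ij} x^{(r+1)}$ up to scalar (the commutation relation recalled just before~(\ref{evaluation explicit ag}), valid in $\Yangian$ verbatim), so that $\hbar^2 x^{(r+1)} \in \hbar \Yangian'$ forces $\hbar x^{(r+1)} \in \Yangian'$. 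Once this pattern is set up for the simple-root case, the general positive-root case~(\ref{eq: Y PBW}) follows by the same reasoning applied to the nested brackets, since each inner bracket is taken against a degree-zero generator lying in $\Yangian'$ itself (not merely $\hbar$ times a generator), so no further powers of $\hbar$ accumulate. This lemma, together with the already-verified assumptions~(\ref{assumption 1}), (\ref{assumption 2}), (\ref{assumption 4}), then lets us apply Proposition~\ref{appendix 1: main prop} and Proposition~\ref{appendix 1: main cor} to obtain the explicit PBW and Rees-algebra descriptions of $\Yangian'$ in Theorem~\ref{appendix: main thm 1} and Corollary~\ref{cor: Gavarini = Rees}.
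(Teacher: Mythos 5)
Your overall strategy is exactly the paper's: compute $\delta_n$ on the four basic generators $e_i^{(0)}, h_i^{(0)}, f_i^{(0)}, s_i$ via the preceding lemma, conclude that $\hbar$ times each lies in $\Yangian'$, and then propagate to all $\hbar x^{(r)}$ by iterated commutators, using Lemma~\ref{Gavarini's Lemma} together with $\hbar$-torsion-freeness of $\Yangian$ to implement the operation $a,b\mapsto \tfrac{1}{\hbar}[a,b]$ on $\Yangian'$. Your inductive step raising $r$ via $[\hbar s_i,\hbar x^{(r)}]$ is carried out correctly and is precisely the mechanism the paper invokes.

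However, there is one genuine error. You assert that $x^{(0)}\in\Yangian'$ for $x=e_{\alphavee},h_i,f_{\alphavee}$ ("in fact $x^{(0)}\in\Yangian'$ already"), and you later lean on this to claim that in the nested brackets~(\ref{eq: Y PBW}) "each inner bracket is taken against a degree-zero generator lying in $\Yangian'$ itself \dots so no further powers of $\hbar$ accumulate." This is false: the condition for membership in $\Yangian'$ at $n=1$ reads $\delta_1(a)=(\operatorname{id}-\epsilon)(a)\in\hbar\Yangian$, and $x^{(0)}$ is a PBW generator with $\epsilon(x^{(0)})=0$ that is nonzero modulo $\hbar$, so $x^{(0)}\notin\Yangian'$ — only $\hbar x^{(0)}$ is. The conclusion you want still holds, but the justification must go through the same two-step cancellation you used for $s_i$: bracketing $\hbar e_{\alphavee'}^{(r)}\in\Yangian'$ against $\hbar e_j^{(0)}\in\Yangian'$ gives $\hbar^2[e_{\alphavee'}^{(r)},e_j^{(0)}]\in\hbar\Yangian'$ by Lemma~\ref{Gavarini's Lemma}, whence $\hbar[e_{\alphavee'}^{(r)},e_j^{(0)}]\in\Yangian'$ by torsion-freeness. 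Iterating this at every level of the nested bracket keeps exactly one factor of $\hbar$ in front throughout, which is what the paper means by "repeated application of the operation $a,b\mapsto\tfrac{1}{\hbar}[a,b]$." With that correction your proof is complete and coincides with the paper's.
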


\prf
By the previous lemma, we have
  $\hbar e_{\alphavee}^{(0)}, \hbar h_i^{(0)}, \hbar f_{\alphavee}^{(0)}, \hbar s_i\in \Yangian'$.
All the PBW generators $x^{(r)}$ can be obtained by taking repeated commutators of
these elements, and $X^{(r+1)}$ by repeated application of the operation
$a, b\mapsto \tfrac{1}{\hbar} [a,b]$ to the elements
$\hbar e_{\alphavee}^{(0)}, \hbar h_i^{(0)}, \hbar f_{\alphavee}^{(0)}$, and $\hbar s_i$.
Since $\Yangian'$ is closed under this operation, due to Lemma~\ref{Gavarini's Lemma},
the claim follows.
\epr

Thus Proposition~\ref{appendix 1: main prop} applies providing a complete proof of
the description of $\Yangian'$ given just before~\cite[Theorem 3.5]{kwwy}.
Note that, as mentioned above, the relations given
in~\cite[Theorem 3.5]{kwwy} are incomplete (with the exception of $\fg=\ssl_2$).
We do not address this issue here, as our methods do not provide a complete set of relations.

\begin{Thm}\label{appendix: main thm 1}
The Drinfeld-Gavarini dual $\Yangian'$ is free over $\BC[\hbar]$, with a basis given by
the PBW monomials in the elements $X^{(r+1)}$. In particular, $\Yangian'\subset \Yangian$ is
the $\BC[\hbar]$-subalgebra generated by the elements $X^{(r+1)}$.
\end{Thm}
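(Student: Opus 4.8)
The plan is to deduce Theorem~\ref{appendix: main thm 1} directly from Proposition~\ref{appendix 1: main prop}, once we verify that the four assumptions~(\ref{assumption 1})--(\ref{assumption 4}) hold for $A=\Yangian$ with $\cI$ being the fixed totally ordered set of Yangian PBW generators $\{e_{\alphavee}^{(r)}, h_i^{(r)}, f_{\alphavee}^{(r)}\}$ and $\{x_i\}_{i\in\cI}$ the corresponding elements of $\Yangian$. Most of this bookkeeping is already done in the preceding two subsections: assumption~(\ref{assumption 2}) is the PBW theorem for $\Yangian$ recalled in Appendix~\ref{appendix: yangian section} (and reproved in Appendix~\ref{appendix with Alex Weekes 2}); assumption~(\ref{assumption 1}) holds because, by~(\ref{eq: classical limt of Yangian}), the reduction $\Yangian/\hbar\Yangian\simeq U(\fg[t])$ carries $x_i\mapsto \overline{x}_i$, and the degree-zero PBW generators $\{e_{\alphavee}^{(0)}, h_i^{(0)}, f_{\alphavee}^{(0)}\}$ descend to a basis of $\fg\subset\fg[t]$ (the higher modes $x^{(r)}$ descend to $\fg t^r$, so strictly one should either restrict assumption~(\ref{assumption 1}) to the $r=0$ generators or, more naturally, observe that Proposition~\ref{appendix 1: main prop} only uses~(\ref{assumption 1}) through Lemma~\ref{Etingof-Kazhdan lemma}/the identification $\overline{x}^\alpha\in U(\fg[t])$, which holds with $\fg$ replaced by $\fg[t]$ verbatim); and assumption~(\ref{assumption 4}) is the grading on $\Yangian$ with $\deg\hbar=1$, $\deg x^{(r)}=r$, recorded in Appendix~\ref{appendix: yangian section}, under which all PBW generators are homogeneous.

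The one substantive point, and the step I expect to be the main obstacle, is assumption~(\ref{assumption 3}): that $\hbar x_i\in\Yangian'$ for every PBW generator $x_i$. This is exactly the content of the last displayed Lemma of subsection~\ref{ssec Drinfeld-Gavarini of Yangian}, so I would simply invoke it. The proof there rests on two facts: first, the explicit computation of $\delta_n$ on the degree-zero generators and on $s_i$ (the penultimate Lemma), which shows $\hbar e_{\alphavee}^{(0)}, \hbar h_i^{(0)}, \hbar f_{\alphavee}^{(0)}, \hbar s_i\in\Yangian'$ directly from the definition~(\ref{definition of Drinfeld dual}); and second, Lemma~\ref{Gavarini's Lemma}, which guarantees $\Yangian'$ is closed under the rescaled bracket $(a,b)\mapsto\frac{1}{\hbar}[a,b]$. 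Since every PBW generator $x^{(r)}$ arises by iterated commutators of $e_{\alphavee}^{(0)}, h_i^{(0)}, f_{\alphavee}^{(0)}, s_i$ — using $[s_i,x_j^{(r)}]$-type relations to raise the superscript, as in~(\ref{shift elements}) and~(\ref{eq: Y PBW}) — applying $\frac{1}{\hbar}[-,-]$ to the corresponding $\hbar$-rescaled elements produces precisely $\hbar x^{(r)}$, so $\hbar x^{(r)}\in\Yangian'$.

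With all four assumptions in hand, Proposition~\ref{appendix 1: main prop} immediately yields that $\Yangian'$ is free over $\BC[\hbar]$ with basis the PBW monomials in $\{\hbar x_i\}_{i\in\cI}=\{X^{(r+1)}\}$, and that $\Yangian'\subset\Yangian$ is the $\BC[\hbar]$-subalgebra they generate — which is the statement of Theorem~\ref{appendix: main thm 1}. I would also remark, as the excerpt does, that this completes the description of $\Yangian'$ asserted without full proof just before~\cite[Theorem 3.5]{kwwy}, and that assumption~(\ref{assumption 4}) additionally lets one apply Proposition~\ref{appendix 1: main cor} to obtain the Rees-algebra description $\Yangian'\simeq\operatorname{Rees}^{F'_\bullet}(\Yangian_{\hbar=1})$ stated in Corollary~\ref{cor: Gavarini = Rees}. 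The only genuine care needed in writing this up is the mild mismatch in assumption~(\ref{assumption 1}) flagged above; I would handle it by noting explicitly that the proof of Proposition~\ref{appendix 1: main prop} goes through with $\fg$ replaced by $\fg[t]$ throughout, since it only ever uses that $\{\overline{x}_i\}$ is a basis of the Lie algebra whose enveloping algebra is $A/\hbar A$.
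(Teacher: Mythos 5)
Your proposal is correct and follows essentially the same route as the paper: verify assumptions~(\ref{assumption 1})--(\ref{assumption 4}) for $\Yangian$, with the only substantive point being~(\ref{assumption 3}), which is checked exactly as you describe (the explicit $\delta_n$-computation on $e_{\alphavee}^{(0)},h_i^{(0)},f_{\alphavee}^{(0)},s_i$ plus closure of $\Yangian'$ under $\hbar^{-1}[\cdot,\cdot]$ from Lemma~\ref{Gavarini's Lemma}), and then apply Proposition~\ref{appendix 1: main prop}. The caveat you raise about~(\ref{assumption 1}) is not needed: the abstract setup takes $\mathfrak{a}$ arbitrary, and here $\mathfrak{a}=\fg[t]$ by~(\ref{eq: classical limt of Yangian}), so the full set of PBW generators $x^{(r)}$ lifts the basis $\{\overline{x}\,t^r\}$ of $\fg[t]$ verbatim.
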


Applying Proposition~\ref{appendix 1: main cor}, we also obtain the Rees algebra description
of $\Yangian'$ of~\cite{fkprw}. In the case of the Yangian, the filtration $F'_\bullet Y_{\hbar=1}$
from~(\ref{eq: Kazhdan filtration}) is known as the {\em Kazhdan filtration}.
\begin{Cor}\label{cor: Gavarini = Rees}
There is a canonical $\BC[\hbar]$-algebra isomorphism
\begin{equation*}
  \Yangian'\simeq \operatorname{Rees}^{F'_\bullet}(Y_{\hbar=1})
\end{equation*}
with the Rees algebra of $Y_{\hbar =1}$ with respect to the Kazhdan filtration.
\end{Cor}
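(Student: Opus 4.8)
The plan is to derive Corollary~\ref{cor: Gavarini = Rees} by simply invoking Proposition~\ref{appendix 1: main cor} once we have checked that the Yangian $\Yangian=\Yangian(\fg)$ satisfies all four assumptions~(\ref{assumption 1})--(\ref{assumption 4}). Indeed, the bulk of the work has already been done: assumptions~(\ref{assumption 1}), (\ref{assumption 2}) hold because $\Yangian$ is free over $\BC[\hbar]$ with PBW basis given by the ordered monomials in the PBW generators $e_\alphavee^{(r)}, h_i^{(r)}, f_\alphavee^{(r)}$ lifting a Chevalley-type basis of $\fg$ (this is the content of the classical limit~(\ref{eq: classical limt of Yangian}), together with the PBW theorem for $\Yangian$ proved in Appendix~\ref{appendix with Alex Weekes 2}); assumption~(\ref{assumption 3}) is exactly the second Lemma of Appendix~\ref{ssec Drinfeld-Gavarini of Yangian}, which shows $X^{(r+1)}=\hbar x^{(r)}\in\Yangian'$; and assumption~(\ref{assumption 4}) holds since $\Yangian$ is a graded Hopf algebra with $\deg(\hbar)=1$ and the PBW generators are homogeneous (with $\deg x^{(r)}=r$).

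First I would recall the definition of the two filtrations on $Y_{\hbar=1}:=\Yangian/(\hbar-1)\Yangian$ furnished by assumption~(\ref{assumption 4}), following~(\ref{eq: Kazhdan filtration}): the filtration $F_\bullet Y_{\hbar=1}$ coming from the grading on $\Yangian$ (with $\deg x^{(r)}=r$), and the filtration $F'_\bullet Y_{\hbar=1}$ coming from $\Yangian'$ (with $\deg x^{(r)}$ effectively raised to $r+1$, since the PBW generators of $\Yangian'$ are the $X^{(r+1)}=\hbar x^{(r)}$ of degree $r+1$). The point to make explicit is that $F'_\bullet Y_{\hbar=1}$ is precisely the Kazhdan filtration: on the associated graded $\operatorname{gr}^F Y_{\hbar=1}\simeq U(\fg[t])$ one assigns to the class of $x\cdot t^r$ the degree $r+1$ (rather than $r$), which is the standard definition of the Kazhdan filtration on $\Yangian$. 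This identification is a matter of unwinding definitions and comparing with the explicit description of $F'_k A_{\hbar=1}$ in~(\ref{eq: Kazhdan filtration}), namely $F'_k=\operatorname{span}_\BC\{x^\alpha+(\hbar-1)A:\sum_i(\mathrm{d}_i+1)\alpha_i\le k\}$ with $\mathrm{d}_i=\deg x_i$.

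Then I would apply Proposition~\ref{appendix 1: main cor} verbatim: it gives a canonical isomorphism of graded Hopf algebras $\Yangian'\simeq\operatorname{Rees}^{F'_\bullet}(Y_{\hbar=1})$, compatible with $\Yangian\simeq\operatorname{Rees}^{F_\bullet}(Y_{\hbar=1})$ under the inclusions $\Yangian'\subset\Yangian$ and $\operatorname{Rees}^{F'_\bullet}\subset\operatorname{Rees}^{F_\bullet}$. Combining this with the identification of $F'_\bullet$ as the Kazhdan filtration yields exactly the statement of Corollary~\ref{cor: Gavarini = Rees}, and thereby recovers the Rees algebra description of $\Yangian'$ from~\cite{fkprw}. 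Since the map is a $\BC[\hbar]$-algebra isomorphism and both sides are Hopf algebras, one may also note (though it is not strictly required by the stated Corollary) that it is an isomorphism of Hopf algebras.

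I do not expect any serious obstacle here, as the Corollary is a direct corollary of results already established in the excerpt. The only point requiring genuine care — and the step I would treat as the ``main'' one — is the bookkeeping that identifies $F'_\bullet Y_{\hbar=1}$ with the Kazhdan filtration as classically defined in the Yangian literature; one must be careful that the shift by $1$ in the degree of each PBW generator is applied uniformly (including to the Cartan generators $h_i^{(r)}$), and that this matches the convention under which, for instance, $s_i=h_i^{(1)}-\tfrac{\hbar}{2}(h_i^{(0)})^2$ has Kazhdan degree $2$. Once this is checked against~(\ref{eq: Kazhdan filtration}), the proof is complete.
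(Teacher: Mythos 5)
Your proposal is correct and follows essentially the same route as the paper: verify assumptions (As1)--(As4) for $\Yangian$ (the first, second and fourth from the PBW theorem, the classical limit, and the grading; the third from the lemma showing $\hbar x^{(r)}\in\Yangian'$) and then invoke Proposition~\ref{appendix 1: main cor}, noting that the filtration $F'_\bullet Y_{\hbar=1}$ of~(\ref{eq: Kazhdan filtration}) is precisely the Kazhdan filtration. The paper treats this last identification as a matter of terminology, so your extra bookkeeping on the degree shift is harmless and consistent.
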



\subsection{The RTT version}\label{ssec RTT version of Drinfeld-Gavarini}
\

Recall the RTT Yangian $Y^\rtt_\hbar(\gl_n)$ of subsection~\ref{ssec RTT Yangian}.
We refer to the elements $\{t^{(r)}_{ij}\}_{1\leq i,j\leq n}^{r\geq 1}$ as the
\emph{PBW generators} of $Y^\rtt_\hbar(\gl_n)$. Fix some total ordering on the set
of all PBW generators; this gives rise to the notion of the PBW monomials in
$\{t^{(r)}_{ij}\}_{1\leq i,j\leq n}^{r\geq 1}$.

$Y^\rtt_\hbar(\gl_n)$ is an $\BN$-graded Hopf algebra with $\deg(\hbar)=1,\ \deg(t^{(r)}_{ij})=r-1$.
Its coproduct $\Delta^\rtt$ and counit $\epsilon^\rtt$ are determined explicitly by
\begin{equation}\label{coproduct,counit RTT yangian}
  \Delta^\rtt(T(z))=T(z)\otimes T(z),\
  \epsilon^\rtt(T(z))=I_n.
\end{equation}

Moreover, according to Remark~\ref{limit of rtt yangian}, we have an isomorphism of graded Hopf algebras
\begin{equation}\label{rtt classical limit}
  Y^\rtt_{\hbar}(\gl_n)/\hbar Y^\rtt_{\hbar}(\gl_n)\simeq U(\gl_n[t]).
\end{equation}

\begin{Prop}\label{PBW for RTT yangian}
The PBW monomials in $\{t^{(r)}_{ij}\}_{1\leq i,j\leq n}^{r\geq 1}$ form a basis
of a free $\BC[\hbar]$-module $Y^\rtt_\hbar(\gl_n)$.
\end{Prop}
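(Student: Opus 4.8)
The plan is to deduce the PBW theorem for $Y^\rtt_\hbar(\gl_n)$ by a specialization argument, reducing it to the classical PBW theorem for $U(\gl_n[t])$ via the isomorphism~\eqref{rtt classical limit}. First I would show that the PBW monomials \emph{span} $Y^\rtt_\hbar(\gl_n)$ over $\BC[\hbar]$. This is the purely combinatorial half: using the relations~\eqref{yangian relation} (equivalently, the $RTT$-relation~\eqref{ratRTT}), one checks that for any two PBW generators $t^{(r)}_{ij}, t^{(s)}_{kl}$ the commutator $[t^{(r)}_{ij},t^{(s)}_{kl}]$ is an $\hbar$-multiple of a sum of products of two PBW generators whose total ``$r$-degree'' $r+s$ is not increased; a standard induction (first on the total degree $\sum(r_a-1)$ of a monomial, then on the number of inversions relative to the fixed ordering) then rewrites any monomial in the generators as a $\BC[\hbar]$-linear combination of ordered PBW monomials. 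This part uses no specialization and is essentially the argument behind~\cite[Theorem 1.4.1]{m}.

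Next I would establish \emph{linear independence} of the ordered PBW monomials over $\BC[\hbar]$. Since $Y^\rtt_\hbar(\gl_n)$ is visibly $\BC[\hbar]$-torsion-free (it injects into $Y^\rtt_\hbar(\gl_n)\otimes_{\BC[\hbar]}\BC(\hbar)$ once we know it is free, but before that we argue directly), it suffices to prove linear independence after specializing $\hbar$ to any single nonzero complex number, say $\hbar=1$: a nontrivial $\BC[\hbar]$-linear dependence among ordered PBW monomials could be divided by the highest power of $\hbar$ dividing all coefficients, and then specialized at a generic (hence at $\hbar=1$, since all nonzero specializations are mutually isomorphic via rescaling $t^{(r)}_{ij}\mapsto \hbar^{\,?}t^{(r)}_{ij}$) value to yield a nontrivial relation in $Y^\rtt_{\hbar=1}(\gl_n)$. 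Alternatively and more cleanly, one specializes at $\hbar=0$: by~\eqref{rtt classical limit}, $Y^\rtt_\hbar(\gl_n)/\hbar Y^\rtt_\hbar(\gl_n)\simeq U(\gl_n[t])$ carries the images $\overline{t^{(r)}_{ij}}=E_{ij}t^{r-1}$, which are precisely a basis of $\gl_n[t]$, and the classical PBW theorem for $U(\gl_n[t])$ tells us the images of the ordered PBW monomials form a basis of $U(\gl_n[t])$. Combining the spanning statement with a standard graded-Nakayama / degree-counting argument (the grading $\deg t^{(r)}_{ij}=r-1$, $\deg\hbar=1$ makes all graded pieces finite-dimensional), one upgrades the fact that the PBW monomials span and are independent mod $\hbar$ to freeness over $\BC[\hbar]$ with the PBW monomials as a basis.

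The step I expect to be the main obstacle is making the passage from ``spanning + independence mod $\hbar$'' to ``free with PBW basis'' fully rigorous without circularity, since $\BC[\hbar]$ is not a field. The clean way is: let $M$ be the free $\BC[\hbar]$-module on the set of ordered PBW monomials, and let $\pi\colon M\twoheadrightarrow Y^\rtt_\hbar(\gl_n)$ be the surjection provided by the spanning step. Both $M$ and $Y^\rtt_\hbar(\gl_n)$ are nonnegatively graded with finite-dimensional graded pieces, and $\pi$ is graded. Tensoring with $\BC[\hbar]/\hbar = \BC$ and invoking~\eqref{rtt classical limit} together with the classical PBW theorem shows $\bar\pi\colon M/\hbar M \to U(\gl_n[t])$ is an isomorphism, hence a bijection on each graded piece; comparing dimensions of graded pieces of $M/\hbar M$ and $Y^\rtt_\hbar(\gl_n)/\hbar Y^\rtt_\hbar(\gl_n)$ forces $\pi$ to be an isomorphism in each graded degree, because a graded surjection of free finite-rank $\BC[\hbar]$-modules (in each degree) that is an isomorphism mod $\hbar$ is an isomorphism (its kernel $K$ satisfies $K=\hbar K$ by the snake lemma, and $K$ is finitely generated graded, so $K=0$). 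This yields the theorem. I would also remark that the same reasoning, with the grading bookkeeping spelled out, is exactly what is invoked in the proof of Theorem~\ref{PBW fkprw II} and in~\cite[Theorem 1.4.1]{m}, so the write-up can be kept brief.
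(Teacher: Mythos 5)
Your first route is correct and is essentially the paper's proof: the paper obtains spanning even more cheaply from the graded isomorphism $Y^\rtt_\hbar(\gl_n)/\hbar Y^\rtt_\hbar(\gl_n)\simeq U(\gl_n[t])$ of Remark~\ref{limit of rtt yangian} together with graded Nakayama (rather than by straightening, though that works too), and it proves linear independence exactly as in your option~(a): a relation $\sum_m c_m(\hbar)\,m=0$ specializes at every $\hbar=c\neq 0$, where the PBW monomials are independent by \cite[Theorem 1.4.1]{m} transported through the rescaling isomorphisms $Y^\rtt_{\hbar=c}(\gl_n)\simeq Y^\rtt_{\hbar=1}(\gl_n)$, forcing $c_m\equiv 0$. (Note that for this route you do not need any torsion-freeness, nor the ``divide by the highest power of $\hbar$'' step.)

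The route you call ``cleaner'' --- specializing only at $\hbar=0$ --- has a genuine gap, precisely at the step you flag. For the short exact sequence $0\to K\to M\xrightarrow{\pi} A\to 0$ with $M$ free, the snake lemma applied to multiplication by $\hbar$ yields $0\to {}_\hbar A\to K/\hbar K\to M/\hbar M\to A/\hbar A\to 0$, where ${}_\hbar A$ is the $\hbar$-torsion of $A$; knowing that $\bar\pi$ is an isomorphism therefore gives $K/\hbar K\simeq {}_\hbar A$, \emph{not} $K=\hbar K$. Concluding $K=\hbar K$ is equivalent to $A$ being $\hbar$-torsion-free, which is exactly what is at stake: ``spanning plus a basis mod $\hbar$'' does not imply freeness, as the example $A=\BC[\hbar]\langle x\rangle/(\hbar x^2)$ shows (the monomials $x^k$ span and reduce to a basis of $A/\hbar A\simeq\BC[x]$, yet $\hbar\cdot x^2=0$). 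Your attempted repair via dimension counting also fails because the graded pieces are \emph{not} finite-dimensional: the degree-zero part already contains all words in the $n^2$ generators $t^{(1)}_{ij}$ (and $K$ need not be finitely generated). So the information at $\hbar=0$ alone cannot rule out $\hbar$-torsion, and the passage through a nonzero specialization --- i.e.\ your option~(a), which is the paper's argument --- is the part of the proof that cannot be dispensed with.
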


\begin{proof}
The proof is similar to that of Theorem~\ref{PBW Theorem Yangian} below.
First, combining the isomorphism~(\ref{rtt classical limit}) with the PBW theorem
for $U(\gl_n[t])$, we immediately see that the PBW monomials span $Y^\rtt_\hbar(\gl_n)$
over $\BC[\hbar]$. To prove the linear independence of the PBW monomials over $\BC[\hbar]$,
it suffices to verify that their images are linearly independent over $\BC$ when we specialize
$\hbar$ to any nonzero complex number. The latter holds for $\hbar=1$
(and thus for any $\hbar\neq 0$, since all such specializations are isomorphic),
due to~\cite[Theorem 1.4.1]{m}.

This completes our proof of Proposition~\ref{PBW for RTT yangian}.
\end{proof}

The following result provides a new viewpoint towards $\bY^\rtt_\hbar(\gl_n)$ of
Definition~\ref{RTT integral Yangian}:

\begin{Thm}\label{appendix: main thm 2}
The Drinfeld-Gavarini dual ${Y^\rtt_\hbar(\gl_n)}'$ is free over $\BC[\hbar]$,
with a basis given by the PBW monomials in the elements $\hbar t^{(r)}_{ij}$.
In particular, ${Y^\rtt_\hbar(\gl_n)}'=\bY^\rtt_\hbar(\gl_n)$.
\end{Thm}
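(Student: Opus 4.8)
The plan is to deduce Theorem~\ref{appendix: main thm 2} by applying the general machinery of Proposition~\ref{appendix 1: main prop} (and, if one wishes the Rees-algebra reformulation, Proposition~\ref{appendix 1: main cor}) to the Hopf algebra $A=Y^\rtt_\hbar(\gl_n)$ with the family of PBW generators $\{x_i\}_{i\in\cI}=\{t^{(r)}_{ij}\}_{1\leq i,j\leq n}^{r\geq 1}$. The bulk of the work is to verify that assumptions~(\ref{assumption 1})--(\ref{assumption 4}) hold in this setting; once they do, Proposition~\ref{appendix 1: main prop} immediately gives that ${Y^\rtt_\hbar(\gl_n)}'$ is free over $\BC[\hbar]$ with basis the PBW monomials in $\{\hbar t^{(r)}_{ij}\}$, and that it is the subalgebra generated by those elements --- which is exactly $\bY^\rtt_\hbar(\gl_n)$ of Definition~\ref{RTT integral Yangian}.

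First I would dispose of the easy assumptions. Assumption~(\ref{assumption 4}) is recorded in the text just above: $Y^\rtt_\hbar(\gl_n)$ is an $\BN$-graded Hopf algebra with $\deg(\hbar)=1$, $\deg(t^{(r)}_{ij})=r-1$, so $\hbar$ and all $t^{(r)}_{ij}$ are homogeneous. Assumption~(\ref{assumption 2}) is precisely Proposition~\ref{PBW for RTT yangian}. For assumption~(\ref{assumption 1}): the isomorphism~(\ref{rtt classical limit}), $Y^\rtt_\hbar(\gl_n)/\hbar Y^\rtt_\hbar(\gl_n)\simeq U(\gl_n[t])$, sends $t^{(r)}_{ij}\mapsto E_{ij}t^{r-1}$ (Remark~\ref{limit of rtt yangian}); thus the images $\overline{t^{(r)}_{ij}}$ form a basis of the Lie algebra $\mathfrak a=\gl_n[t]$, so the family lifts a basis for $\mathfrak a$. (Here, as in Appendix~\ref{appendix: yangian section}, $U(\mathfrak a)$ is taken over $\BC$, consistent with the hypotheses of subsection~\ref{ssec Drinfeld functor}.)

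The one assumption requiring genuine checking is~(\ref{assumption 3}): $\hbar t^{(r)}_{ij}\in {Y^\rtt_\hbar(\gl_n)}'$ for all $i,j,r$. I would follow the Yangian argument of subsection~\ref{ssec Drinfeld-Gavarini of Yangian} verbatim. Using the explicit coproduct and counit~(\ref{coproduct,counit RTT yangian}), $\Delta^\rtt(T(z))=T(z)\otimes T(z)$ and $\epsilon^\rtt(T(z))=I_n$, one computes the maps $\delta_n$ of~(\ref{delta for Drinfeld dual}) on the generating matrix: writing $T(z)=I_n+\hbar\sum_{r\geq 1}\bigl(\sum_{i,j}t^{(r)}_{ij}\otimes E_{ij}\bigr)z^{-r}$, one finds $(\mathrm{id}-\epsilon^\rtt)(T(z))=T(z)-I_n\in\hbar\cdot Y^\rtt_\hbar(\gl_n)[[z^{-1}]]$, and more generally $\delta_n(T(z))$, evaluated via $\Delta^{\rtt,n}(T(z))=T(z)^{\otimes n}$ (with the appropriate coproduct bookkeeping), lies in $\hbar^n$ times the $n$-fold tensor power because each of the $n$ tensor legs contributes a factor $T(z)-I_n\in\hbar\,Y^\rtt_\hbar(\gl_n)[[z^{-1}]]$. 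Reading off the coefficient of $z^{-r}$ shows $\hbar t^{(1)}_{ij}=\delta_n$-small in degree $n=1$ directly, and for the higher modes one uses Lemma~\ref{Gavarini's Lemma}: since~(\ref{yangian relation}) expresses $t^{(r)}_{ij}$ (for $r\geq 2$) as $\hbar^{-1}$ times a commutator of lower $t$-modes (e.g. $[t^{(r-1)}_{11},t^{(1)}_{1l}]=t^{(r-1)}_{1l}$ and similar identities from the proof of Theorem~\ref{alternative kernel yangian}), every $\hbar t^{(r)}_{ij}$ is obtained from the elements $\hbar t^{(1)}_{ij}\in {Y^\rtt_\hbar(\gl_n)}'$ by iterated application of the operation $a,b\mapsto \hbar^{-1}[a,b]$, under which ${Y^\rtt_\hbar(\gl_n)}'$ is closed by Lemma~\ref{Gavarini's Lemma}. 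Hence $\hbar t^{(r)}_{ij}\in {Y^\rtt_\hbar(\gl_n)}'$ for all $r\geq 1$.

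With all four assumptions in hand, Proposition~\ref{appendix 1: main prop} yields the theorem, and Proposition~\ref{appendix 1: main cor} gives the optional Rees-algebra form. I expect the main obstacle to be purely bookkeeping: carefully organizing the computation of $\delta_n(T(z))$ so that the $\hbar^n$ divisibility is transparent despite the noncommutative tensor-leg structure of $\Delta^{\rtt,n}$, and confirming that the commutator identities from~(\ref{yangian relation}) indeed generate \emph{all} $t^{(r)}_{ij}$ from the degree-one modes (which is exactly the inductive step already carried out in the proof of Theorem~\ref{alternative kernel yangian}). Finally, to conclude ${Y^\rtt_\hbar(\gl_n)}'=\bY^\rtt_\hbar(\gl_n)$ one simply observes that Definition~\ref{RTT integral Yangian} defines $\bY^\rtt_\hbar(\gl_n)$ as the $\BC[\hbar]$-subalgebra generated by $\{\hbar t^{(r)}_{ij}\}$, which is precisely the ``In particular'' clause of Proposition~\ref{appendix 1: main prop}. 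As noted in the text, this also re-proves Proposition~\ref{yangian integral forms coincide} conceptually, since $\Upsilon$ is a Hopf algebra isomorphism (matching the coproducts of $Y_\hbar(\gl_n)$ and $Y^\rtt_\hbar(\gl_n)$), hence identifies the Drinfeld-Gavarini duals, i.e. $\bY_\hbar(\gl_n)=\Upsilon^{-1}(\bY^\rtt_\hbar(\gl_n))$.
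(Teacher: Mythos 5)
Your overall strategy is exactly the paper's: verify assumptions~(\ref{assumption 1})--(\ref{assumption 4}) for $Y^\rtt_\hbar(\gl_n)$ with PBW generators $\{t^{(r)}_{ij}\}$ and invoke Proposition~\ref{appendix 1: main prop}; the dispatching of~(\ref{assumption 1}),~(\ref{assumption 2}),~(\ref{assumption 4}) via Remark~\ref{limit of rtt yangian}, Proposition~\ref{PBW for RTT yangian}, and the grading is what the paper does, and for~(\ref{assumption 3}) the paper likewise uses only the one-line observation that $(\operatorname{id}-\epsilon^\rtt)(T(z))=T(z)-I_n\in\hbar\,Y^\rtt_\hbar(\gl_n)[[z^{-1}]]$, so that $\delta_n(T(z))\in\hbar^n\,Y^\rtt_\hbar(\gl_n)^{\otimes n}[[z^{-1}]]$.

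One correction to your treatment of~(\ref{assumption 3}): your direct series computation already finishes the job for \emph{all} $r\geq 1$, since the coefficient of $z^{-r}$ in $t_{ij}(z)$ is precisely $\hbar t^{(r)}_{ij}$, so extracting that coefficient from $\delta_n(T(z))\in\hbar^n Y^\rtt_\hbar(\gl_n)^{\otimes n}[[z^{-1}]]$ gives $\delta_n(\hbar t^{(r)}_{ij})\in\hbar^n Y^\rtt_\hbar(\gl_n)^{\otimes n}$ for every $r$. The subsequent fallback via Lemma~\ref{Gavarini's Lemma} is therefore unnecessary, and moreover it is not correct as you state it: the elements $\hbar t^{(1)}_{ij}$ alone do \emph{not} generate all $\hbar t^{(r)}_{ij}$ under iterated $a,b\mapsto\hbar^{-1}[a,b]$ (classically the first modes only span $\gl_n\subset\gl_n[t]$), and the identities you cite from the proof of Theorem~\ref{alternative kernel yangian}, such as $[t^{(r)}_{11},t^{(1)}_{1l}]=t^{(r)}_{1l}$, already presuppose the higher diagonal mode $t^{(r)}_{11}$ as input. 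This is the one place where the RTT case is genuinely easier than the Yangian case of subsection~\ref{ssec Drinfeld-Gavarini of Yangian} (where the coproduct is only known explicitly on low modes and the commutator bootstrap is essential); here the multiplicative coproduct formula~(\ref{coproduct,counit RTT yangian}) handles all modes at once. With that detour deleted, your proof coincides with the paper's.
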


\begin{proof}
This follows from Proposition~\ref{appendix 1: main prop} once we verify
that assumptions~(\ref{assumption 1})--(\ref{assumption 4}) hold for $Y^\rtt_\hbar(\gl_n)$.
Note that only assumption~(\ref{assumption 3}) remains; the others hold as discussed above.
The desired inclusion $\hbar t^{(r)}_{ij}\in {Y^\rtt_\hbar(\gl_n)}'$ follows immediately
from~(\ref{coproduct,counit RTT yangian}), due to
  $(\operatorname{id}-\epsilon^\rtt)(\hbar t^{(r)}_{ij})=\hbar t^{(r)}_{ij}\in \hbar Y^\rtt_\hbar(\gl_n)$.
\end{proof}

Since the $\BC[\hbar]$-algebra isomorphism $\Upsilon\colon Y_\hbar(\gl_n)\iso Y^\rtt_\hbar(\gl_n)$
of Theorem~\ref{Yangian Gauss decomposition} is actually an isomorphism of Hopf algebras,
we conclude that it gives rise to an isomorphism of the corresponding Drinfeld-Gavarini duals
$\Upsilon\colon \bY_\hbar(\gl_n)\iso \bY^\rtt_\hbar(\gl_n)$. This provides an alternative
computation-free proof of Proposition~\ref{yangian integral forms coincide}.

\begin{Rem}
Let us compare the above exposition with that of~\cite{m}, where an opposite order of
reasoning is used. In~\emph{loc.cit.}, the author works with the $\BC$-algebra $Y^\rtt(\gl_n)$
defined as a common specialization $Y^\rtt(\gl_n)=Y^\rtt_{\hbar=1}(\gl_n)=\bY^\rtt_{\hbar=1}(\gl_n)$,
endowed with two different filtrations $F_\bullet Y^\rtt(\gl_n), F'_\bullet Y^\rtt(\gl_n)$ determined by
$\deg^{F_\bullet} (t^{(r)}_{ij})=r, \deg^{F'_\bullet} (t^{(r)}_{ij})=r-1$
(these notations are opposite to those we used in subsection~\ref{sec: Rees algebras}).
First, in~\cite[Corollary 1.4.2]{m}, an algebra isomorphism
  $\operatorname{gr}^{F_\bullet} Y^\rtt(\gl_n)\simeq \BC[\{t^{(r)}_{ij}\}_{1\leq i,j\leq n}^{r\geq 1}]$
is proven, and only then an algebra isomorphism $\operatorname{gr}^{F'_\bullet} Y^\rtt(\gl_n)\simeq U(\gl_n[t])$
is deduced in~\cite[Proposition 1.5.2]{m}.
\end{Rem}


\subsection{The shifted Yangian}\label{ssec appendix shifted yangians}
\

In subsections~\ref{ssec shifted Yangian} and~\ref{ssec shifted Yangian 2}, respectively, the algebras
$\bY_{\mu}$ (for any coweight $\mu$) and $\bY'_{\mu}$ (only for a dominant coweight $\mu$)
are defined. In this section, we show that these two definitions are equivalent when $\mu$ is dominant.
Note that although these definitions were only given in the case of $\fg=\ssl_n$, they can be easily
extended to any semisimple Lie algebra $\fg$ (cf.~\cite{kwwy,fkprw}).
Till the end of this subsection, we assume that $\mu$ is dominant.

We first recall two auxiliary algebras. The first is the $\BC$-algebra $Y_\mu$ defined
in subsection~\ref{ssec shifted Yangian}, and the second is the $\BC[\hbar]$-algebra
$Y_{\mu, \hbar}$ introduced in subsection~\ref{ssec shifted Yangian 2}. Both have PBW bases
in the corresponding generators over their respective ground rings,
by Theorems~\ref{PBW fkprw} and~\ref{PBW fkprw II}, respectively.

Fixing a splitting $\mu=\mu_1+\mu_2$, recall that $Y_\mu$ has a corresponding filtration
$F^\bullet_{\mu_1, \mu_2} Y_\mu$, see~(\ref{yangian filtration}). Similarly $Y_{\mu,\hbar}$
has a corresponding grading, defined by setting $\deg (\hbar) =1$  and
\begin{equation*}
  \deg (e_{\alphavee}^{(r)})=\alphavee(\mu_1)+r,\
  \deg (f_{\alphavee}^{(r)})=\alphavee(\mu_2)+r,\
  \deg (h_i^{(r)})=\alphavee_i(\mu)+r.
\end{equation*}
Thus for $x=e_{\alphavee}, f_{\alphavee}, h_i$, we have $\deg (x^{(r)})=\deg (x)+r$, where
the internal grading $\deg(x)$ is defined via
  $\deg (e_{\alphavee}) = \alphavee(\mu_1),
   \deg (f_{\alphavee}) = \alphavee(\mu_2),
   \deg (h_i) = \alphavee_i(\mu)$.

By comparing their defining relations, it is clear that there is a $\BC$-algebra isomorphism
\begin{equation}\label{eq: iso of shifted Yangians}
  Y_{\mu,\hbar}/(\hbar-1) Y_{\mu,\hbar} \iso  Y_\mu.
\end{equation}
On the PBW generators, this isomorphism involves a shift of labels: $x^{(r)} \mapsto X^{(r+1)}$ for
$x=e_{\alphavee}, f_{\alphavee}, h_i$. It follows that $Y_\mu$ inherits a second filtration
$G_{\mu_1,\mu_2}^\bullet Y_\mu$, coming from the above grading on $Y_{\mu,\hbar}$. This is analogous
to the situation in subsection~\ref{sec: Rees algebras}: if by abuse of notation we denote
the PBW generators of $Y_\mu$ by $x^{(r)}$, then $G^k_{\mu_1, \mu_2} Y_\mu$ is the span of all PBW monomials
\begin{equation}\label{eq: PBW in Ymu}
 x_1^{(r_1)}\cdots x_\ell^{(r_\ell)}
\end{equation}
with $(\deg(x_1)+r_1)+\ldots+(\deg(x_\ell)+r_\ell)\leq k$.
Meanwhile, $F^k_{\mu_1,\mu_2} Y_\mu$ is the span of those monomials~(\ref{eq: PBW in Ymu})
with $(\deg(x_1)+r_1+1)+\ldots+(\deg(x_\ell)+r_\ell+1)\leq k$.

In particular, there is an inclusion $F^k_{\mu_1,\mu_2} Y_\mu\subset G^k_{\mu_1,\mu_2} Y_\mu$,
hence, an embedding of the Rees algebras
\begin{equation}\label{eq: Rees inclusion}
  \Rees^{F^\bullet_{\mu_1,\mu_2}} (Y_\mu) \ \subset \ \Rees^{G^\bullet_{\mu_1,\mu_2}} (Y_\mu).
\end{equation}
Now on the one hand, $\bY_\mu=\Rees^{F^\bullet_{\mu_1,\mu_2}} (Y_\mu)$ by Definition~\ref{defn 1 of shifted y}.
On the other hand, since $Y_{\mu, \hbar}$ is free over $\BC[\hbar]$, we have
$Y_{\mu,\hbar}\simeq \Rees^{G^\bullet_{\mu_1,\mu_2}} (Y_\mu)$.  Explicitly,
on PBW generators this isomorphism is defined by
\begin{equation}\label{eq: Ymuhbar as Rees}
  Y_{\mu, \hbar}\ni x^{(k)} \mapsto \hbar^{\deg(x)+k} x^{(k)}\in \hbar^{\deg(x)+k} G^{\deg(x)+k}_{\mu_1,\mu_2} Y_\mu
  \subset \Rees^{G^\bullet_{\mu_1,\mu_2}} (Y_\mu).
\end{equation}
Altogether, we obtain an injective homomorphism of graded $\BC[\hbar]$-algebras
$\bY_\mu \hookrightarrow Y_{\mu,\hbar}$.

\medskip
We can now prove a generalization of Theorem \ref{Gavarini=Rees} for an arbitrary $\fg$:

\begin{Thm}\label{identification of two definitions}
For any dominant coweight $\mu$, there is a canonical $\BC[\hbar]$-algebra isomorphism $\bY_\mu\simeq \bY'_\mu$.
For any splitting $\mu=\mu_1+\mu_2$, this isomorphism is compatible with the associated gradings on $\bY_\mu$
(from the filtration $F^\bullet_{\mu_1, \mu_2} Y_\mu$) and $\bY'_\mu$ (as a graded subalgebra of $Y_{\mu,\hbar}$).
\end{Thm}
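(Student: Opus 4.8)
The plan is to show that the injective homomorphism $\bY_\mu \hookrightarrow Y_{\mu,\hbar}$ constructed just above (via the chain of identifications $\bY_\mu = \Rees^{F^\bullet_{\mu_1,\mu_2}}(Y_\mu) \subset \Rees^{G^\bullet_{\mu_1,\mu_2}}(Y_\mu) \simeq Y_{\mu,\hbar}$, using~(\ref{eq: Ymuhbar as Rees})) has image precisely $\bY'_\mu$. First I would track, along the composition, where each PBW generator $x^{(r)}$ of $\bY_\mu$ (sitting in filtered degree $\deg(x)+r$ under $F^\bullet$, hence represented by $\hbar^{\deg(x)+r+\text{(shift)}}$ — more carefully, the Rees-algebra element $\hbar^{\deg(x)+r}x^{(r)}$ lifted one more degree because of the $+1$ in $F^\bullet$) goes. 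The key bookkeeping point is that under~(\ref{eq: iso of shifted Yangians}) the label shifts $x^{(r)} \leftrightarrow X^{(r+1)}$, so the PBW generator of $Y_{\mu,\hbar}$ of internal degree $\deg(x)+r+1$ corresponding to $x^{(r)} \in Y_\mu$ is exactly $\hbar \cdot x^{(r)}$ in the notation of subsection~\ref{ssec shifted Yangian 2} (i.e. $\hbar e_\alphavee^{(r)}, \hbar f_\alphavee^{(r)}, \hbar h_i^{(s_i)}$). Thus the image of $\bY_\mu$ under $\bY_\mu \hookrightarrow Y_{\mu,\hbar}$ is the $\BC[\hbar]$-subalgebra generated by $\{\hbar e_\alphavee^{(r)}, \hbar f_\alphavee^{(r)}, \hbar h_i^{(s_i)}\}$ — but that is exactly $\bY'_\mu$ by Definition~\ref{dominant shifted Yangian}.

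The above argument establishes the isomorphism $\bY_\mu \simeq \bY'_\mu$ provided we know the map $\bY_\mu \hookrightarrow Y_{\mu,\hbar}$ is well-defined and injective with the stated image — well-definedness and injectivity follow from the discussion preceding the theorem (freeness of $Y_{\mu,\hbar}$ over $\BC[\hbar]$, Theorem~\ref{PBW fkprw II}, together with the inclusion of filtrations~(\ref{eq: Rees inclusion})), so the only genuine content is the identification of the image with the subalgebra generated by the $\hbar$-rescaled PBW generators. For this, since both $\bY_\mu$ (via its $F^\bullet$-Rees description and Theorem~\ref{PBW fkprw}) and $\bY'_\mu$ (by Proposition~\ref{appendix 1: main prop} applied to $Y_{\mu,\hbar}$, once we check assumptions~(\ref{assumption 1})--(\ref{assumption 3}) hold — which is the Yangian computation already carried out for $\Yangian$ in subsection~\ref{ssec Drinfeld-Gavarini of Yangian}, adapted to the shifted setting) are free $\BC[\hbar]$-modules with explicit PBW bases, it suffices to match those bases. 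I would argue that the ordered PBW monomial $\prod x_i^{(r_i)}$ in $\bY_\mu$, of $F^\bullet$-degree $k = \sum(\deg x_i + r_i + 1)$, maps to $\hbar^{k}\prod x_i^{(r_i)}$ (Rees coordinate), which equals $\prod (\hbar x_i^{(r_i)})$ in $Y_{\mu,\hbar}$ after absorbing the internal-degree powers of $\hbar$ via~(\ref{eq: Ymuhbar as Rees}); and these are exactly the ordered PBW monomials forming a basis of $\bY'_\mu$.

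The compatibility-with-gradings claim then comes essentially for free: the grading on $\bY_\mu$ coming from $F^\bullet_{\mu_1,\mu_2} Y_\mu$ is by construction the internal grading of the Rees algebra $\Rees^{F^\bullet_{\mu_1,\mu_2}}(Y_\mu)$, i.e. the power of $\hbar$; under the embedding into $Y_{\mu,\hbar} \simeq \Rees^{G^\bullet_{\mu_1,\mu_2}}(Y_\mu)$ this is identified with the internal grading on $Y_{\mu,\hbar}$ (which restricts to the grading on $\bY'_\mu$ as a graded subalgebra), because~(\ref{eq: Rees inclusion}) is an inclusion of graded algebras — the $\hbar$-degree is preserved. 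So one just records that the degree bookkeeping in the preceding paragraph is grading-homogeneous.

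The main obstacle I anticipate is purely one of careful index/degree bookkeeping: keeping straight the two distinct filtrations $F^\bullet$ and $G^\bullet$ on $Y_\mu$, the label shift $r \mapsto r+1$ between the generators of $Y_\mu$ and $Y_{\mu,\hbar}$, and the two different "internal" gradings (the loop grading versus the $\mu_1,\mu_2$-shifted grading), so that the chain of isomorphisms~(\ref{eq: Rees inclusion})--(\ref{eq: Ymuhbar as Rees}) composes to send each $F^\bullet$-PBW generator of $\bY_\mu$ to the correct $\hbar$-rescaled generator of $\bY'_\mu$ on the nose. Once that dictionary is pinned down the identification of generating sets, and hence of the two subalgebras, is immediate; there is no hard analytic or representation-theoretic input beyond the PBW theorems already cited. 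I would also remark that the case $\fg = \ssl_n$ recovers Theorem~\ref{Gavarini=Rees}, and that an entirely parallel argument underlies the RTT statement Theorem~\ref{appendix: main thm 2}.
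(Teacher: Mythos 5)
Your proposal follows essentially the same route as the paper: both identify $\bY_\mu=\Rees^{F^\bullet_{\mu_1,\mu_2}}(Y_\mu)$ inside $Y_{\mu,\hbar}\simeq\Rees^{G^\bullet_{\mu_1,\mu_2}}(Y_\mu)$ and then check, via the degree shift between $F^\bullet$ and $G^\bullet$, that a spanning element $\hbar^{k}x_1^{(r_1)}\cdots x_\ell^{(r_\ell)}$ of the former lands on $\hbar^{\,k-\sum_i(\deg(x_i)+r_i+1)}\,(\hbar x_1^{(r_1)})\cdots(\hbar x_\ell^{(r_\ell)})\in\bY'_\mu$, the reverse inclusion being immediate on generators, and the grading compatibility being built into the Rees-algebra bookkeeping. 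One inessential misstep: you invoke Proposition~\ref{appendix 1: main prop} for $Y_{\mu,\hbar}$, but that proposition concerns Drinfeld--Gavarini duals of Hopf algebras and $Y_{\mu,\hbar}$ is not a Hopf algebra for $\mu\neq 0$; this detour is not needed, since the two-sided inclusion argument together with Theorem~\ref{PBW fkprw II} already yields the identification (and, a posteriori, the PBW basis of $\bY'_\mu$).
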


\prf
All that remains is to check that the image of $\bY_\mu \hookrightarrow Y_{\mu,\hbar}$
is precisely $\bY'_\mu$. This follows from the ``shift'' that distinguishes the filtrations
$F^k_{\mu_1,\mu_2} Y_\mu$ and $G^k_{\mu_1, \mu_2} Y_\mu$. Indeed, for a monomial
$x_1^{(r_1)}\cdots x_\ell^{(r_\ell)}\in F^k_{\mu_1,\mu_2} Y_\mu$, the
corresponding element in the Rees algebra is
\begin{equation*}
  \hbar^{k} x_1^{(r_1)}\cdots x_\ell^{(r_\ell)}\in \hbar^k F^k_{\mu_1,\mu_2} Y_\mu
  \subset \Rees^{F^\bullet_{\mu_1,\mu_2}} (Y_\mu).
\end{equation*}
But in the Rees algebra $\Rees^{G^\bullet_{\mu_1,\mu_2}} (Y_\mu) \simeq Y_{\mu,\hbar}$,
by inverting (\ref{eq: Ymuhbar as Rees}) this element gets sent to
\begin{equation*}
  \hbar^{k-(\deg(x_1)+r_1)-\ldots-(\deg(x_\ell)+r_\ell)} x_1^{(r_1)}\cdots x_\ell^{(r_\ell)}\in Y_{\mu,\hbar}.
\end{equation*}
Since $(\deg(x_1)+r_1+1)+\ldots+(\deg(x_\ell)+r_\ell+1)\leq k$, we can rewrite this as
\begin{equation*}
  \hbar^{k-(\deg(x_1)+r_1+1)-\ldots-(\deg(x_\ell)+r_\ell+1)} (\hbar x_1^{(r_1)})\cdots (\hbar x_\ell^{(r_\ell)}),
\end{equation*}
which lies in $\bY'_\mu$. Taking spans of such monomials, we see that
$\bY_\mu=\Rees^{F^\bullet_{\mu_1,\mu_2}} (Y_\mu)\subset \bY'_\mu$. But it is easy to see that
the generators of $\bY'_\mu$ lie in $\Rees^{F^\bullet_{\mu_1,\mu_2}} (Y_\mu$), so actually $\bY_\mu=\bY'_\mu$.
\epr


\subsection{The shifted Yangian, construction III}\label{ssec shifted Yangian 3}
\

Motivated by the discussion of the previous subsection, we provide one more alternative definition of
the shifted Yangians. Fix a coweight $\mu$ of $\fg$ and set $b_i:=\alphavee_i(\mu)$, where
$\{\alphavee_i\}_{i\in I}$ are the simple roots of $\fg$. Let $\CY_{\mu,\hbar}$ be the
associative $\BC[\hbar,\hbar^{-1}]$-algebra generated by
  $\{e_i^{(r)},f_i^{(r)},h_i^{(s_i)}\}_{i\in I}^{r\geq 0, s_i\geq -b_i}$ with the defining relations
similar to those of~(\ref{kwwy yangian}) (but generalized to any $\fg$) with the only exception:
\begin{equation}\label{kwwy yangian nondominant}
  [e_i^{(r)},f_j^{(r')}]=
    \begin{cases}
      h_i^{(r+r')}, & \mbox{if } i=j\ \mathrm{and}\ r+r'\geq -b_i  \\
      \hbar^{-1}, & \mbox{if } i=j\ \mathrm{and}\ r+r'=-b_i-1  \\
      0, & \mbox{otherwise}
    \end{cases}.
\end{equation}

\begin{Rem}
If $\mu$ is dominant, the equality $r+r'=-b_i-1$ never occurs for $r,r'\geq 0$.
Thus, $\CY_{\mu,\hbar}$ is the $\BC[\hbar,\hbar^{-1}]$-extension of scalars
of $Y_{\mu,\hbar}$ of Appendix~\ref{ssec appendix shifted yangians} for dominant $\mu$.
\end{Rem}

Define the elements $\{e_{\alphavee}^{(r)},f_{\alphavee}^{(r)}\}_{\alphavee\in \Delta^+}^{r\geq 0}$
(here $\Delta^+$ denotes the set of positive roots of $\fg$) of $\CY_{\mu,\hbar}$
following~(\ref{PBW basis elements yangian non-h II}) (but generalized from type $A$
to any $\fg$, cf.~\cite[(3.1)]{fkprw}). Choose any total ordering on the set of \emph{PBW generators}
as in~(\ref{pbw bases II}) (but generalized from type $A$ to any $\fg$, cf.~\cite[(3.4)]{fkprw}).
The following is analogous to~\cite[Corollary 3.15]{fkprw} (cf.~Theorem~\ref{PBW fkprw} in type $A$):

\begin{Thm}\label{PBW fkprw III}
For any coweight $\mu$, the PBW monomials form a basis of a free $\BC[\hbar,\hbar^{-1}]$-module $\CY_{\mu,\hbar}$.
\end{Thm}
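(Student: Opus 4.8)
The plan is to mimic the proof of Theorem~\ref{PBW fkprw II}, reducing the statement about the free $\BC[\hbar,\hbar^{-1}]$-module $\CY_{\mu,\hbar}$ to the known PBW property of the non-shifted Cartan-doubled Yangian $Y_\mu$ of Theorem~\ref{PBW fkprw}. First I would verify the spanning claim: arguing as in~\cite[Proposition~3.13]{fkprw}, a straightforward induction on the defining relations of $\CY_{\mu,\hbar}$ (which differ from those of~(\ref{kwwy yangian}) only in the extra term $\hbar^{-1}$ in~(\ref{kwwy yangian nondominant}), and all right-hand sides remain homogeneous once $\deg(\hbar)=1$ is imposed) shows that $\CY_{\mu,\hbar}$ is spanned over $\BC[\hbar,\hbar^{-1}]$ by the ordered PBW monomials in the generators~(\ref{pbw bases II}) (generalized to $\fg$). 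The only subtlety relative to the dominant case is that the bracket $[e_i^{(r)},f_i^{(-b_i-1-r)}]$ for $0\le r\le -b_i-1$ now produces the scalar $\hbar^{-1}$ rather than a Cartan generator, which merely means certain reorderings pick up a factor of $\hbar^{-1}$ — harmless since we work over $\BC[\hbar,\hbar^{-1}]$.

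Next I would establish linear independence. Following the strategy used for Theorem~\ref{PBW fkprw II} (and for Proposition~\ref{PBW for RTT yangian}), it suffices to show that the images of the ordered PBW monomials are linearly independent over $\BC$ after specializing $\hbar$ to any nonzero complex number; and since all such specializations are mutually isomorphic (via rescaling the generators), it is enough to treat $\hbar=1$. So the key point is to identify $\CY_{\mu,\hbar}/(\hbar-1)\CY_{\mu,\hbar}$ with the algebra $Y_\mu$ of subsection~\ref{ssec shifted Yangian}: comparing the defining relations of $\CY_{\mu,1}$ with~(\ref{doubled yangian}) together with the truncation $H_i^{(r)}=0$ for $r<-b_i$ and $H_i^{(-b_i)}=1$ (note $[e_i^{(r)},f_i^{(-b_i-1-r)}]=1$ in $\CY_{\mu,1}$ matches $H_i^{(-b_i-1)}\cdot\text{(nothing)}$ — rather, it reflects exactly the normalization built into passing from $Y_\infty$ to $Y_\mu$, after the uniform shift of all indices by $-1$ and the relabeling of Cartan generators), one obtains a $\BC$-algebra isomorphism $\CY_{\mu,1}\simeq Y_\mu$ sending the PBW generators of $\CY_{\mu,1}$ to those of $Y_\mu$ (with indices shifted by $1$). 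Under this isomorphism the ordered PBW monomials of $\CY_{\mu,\hbar}$ are carried to the ordered PBW monomials~(\ref{pbw bases}) of $Y_\mu$, which form a $\BC$-basis by Theorem~\ref{PBW fkprw}. Hence they are linearly independent, completing the proof.

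The main obstacle I anticipate is pinning down the isomorphism $\CY_{\mu,1}\simeq Y_\mu$ precisely for a \emph{non-dominant} $\mu$: one must check that the extra relation~(\ref{kwwy yangian nondominant}) (the $\hbar^{-1}\mapsto 1$ case) is exactly what is needed to match the relations $H_i^{(r)}=0\ (r<-b_i)$, $H_i^{(-b_i)}=1$ defining $Y_\mu$ as a quotient of $Y_\infty$, and that no further relations are silently imposed or lost. Concretely, in $Y_\infty$ one has $[E_i^{(r)},F_i^{(r')}]=H_i^{(r+r'-1)}$; after the index shift $r\mapsto r+1$ this becomes $[e_i^{(r)},f_i^{(r')}]=h_i^{(r+r')}$, and imposing $h_i^{(-b_i-1)}=1$, $h_i^{(s)}=0$ for $s<-b_i-1$ — wait, the conventions in $Y_\mu$ give $H_i^{(-b_i)}=1$, i.e.\ $h_i^{(-b_i-1)}=1$ after the shift; so the case $r+r'=-b_i-1$ yields the scalar $1$, matching~(\ref{kwwy yangian nondominant}) with $\hbar=1$. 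I would carry out this bookkeeping carefully, also confirming that the Serre-type and $hh$-, $he$-, $hf$-relations transport correctly. Once this identification is secured, everything else is routine, exactly parallel to the proof of Theorem~\ref{PBW fkprw II}.
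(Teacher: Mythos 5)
Your argument is correct, but it takes a different (and somewhat longer) route than the paper. The paper proves the theorem in one stroke via Lemma~\ref{trivial extension}: the assignment $x^{(r)}\mapsto\hbar^{\deg(x)+r}X^{(r+1)}$ (with the internal degrees determined by a splitting $\mu=\mu_1+\mu_2$) is an isomorphism of $\BC[\hbar,\hbar^{-1}]$-algebras $\CY_{\mu,\hbar}\iso Y_\mu[\hbar,\hbar^{-1}]$ carrying each PBW generator to a \emph{unit} multiple of the corresponding PBW generator of $Y_\mu$; freeness and the basis claim then follow immediately from Theorem~\ref{PBW fkprw}, with no separate spanning step. Your proof instead follows the span-plus-specialize template of Theorem~\ref{PBW fkprw II}: a reordering argument for spanning, then linear independence by reducing to $\hbar=1$ via the isomorphisms $\CY_{\mu,1}\simeq\CY_{\mu,c}$ ($c\neq 0$) and the identification $\CY_{\mu,1}\simeq Y_\mu$. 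The two ingredients you need — homogeneity of the defining relations for $\deg(\hbar)=1$ (which you should note requires the $\mu_1,\mu_2$-dependent grading on generators, under which the new relation $[e_i^{(r)},f_i^{(r')}]=\hbar^{-1}$ in degree $-1$ is indeed homogeneous) and the matching of $\CY_{\mu,1}$ with the quotient $Y_\infty\twoheadrightarrow Y_\mu$ after the index shift — are exactly the $\hbar=1$ and graded shadows of the paper's single rescaling isomorphism, so your bookkeeping in the last paragraph is the right thing to check and comes out correctly ($H_i^{(-b_i)}=1$ becomes $h_i^{(-b_i-1)}=1$, which is the $\hbar^{-1}\mapsto 1$ case of~(\ref{kwwy yangian nondominant})). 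What the paper's approach buys is economy: by working over $\BC[\hbar,\hbar^{-1}]$ from the start it inherits both spanning and independence from $Y_\mu$ at once, whereas your route re-proves spanning by hand and needs the vanishing-of-Laurent-polynomials argument to pass from pointwise to generic linear independence.
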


This follows immediately from~\cite[Corollary 3.15]{fkprw} and the following simple result:

\begin{Lem}\label{trivial extension}
Fix a pair of coweights $\mu_1,\mu_2$ such that $\mu_1+\mu_2=\mu$.

\noindent
(a) There is an isomorphism of $\BC[\hbar,\hbar^{-1}]$-algebras
$\CY_{\mu, \hbar} \iso Y_\mu[\hbar, \hbar^{-1}]$
defined by
\begin{equation*}
  h_i^{(r)} \mapsto \hbar^{\alphavee_i(\mu)+r} H_i^{(r+1)},\
  e_i^{(r)} \mapsto \hbar^{\alphavee_i(\mu_1)+r} E_i^{(r+1)},\
  f_i^{(r)} \mapsto \hbar^{\alphavee_i(\mu_2)+r} F_i^{(r+1)}.
\end{equation*}

\noindent
(b) The above isomorphism sends the PBW generators as follows:
\begin{equation*}
  e_{\alpha^\vee}^{(r)} \mapsto \hbar^{\alphavee(\mu_1)+r} E_{\alpha^\vee}^{(r+1)},\
  f_{\alpha^\vee}^{(r)} \mapsto \hbar^{\alphavee(\mu_2) + r} F_{\alpha^\vee}^{(r+1)}.
\end{equation*}
\end{Lem}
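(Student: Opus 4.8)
The plan is to exhibit the stated assignment as an isomorphism by writing down an explicit inverse, with all verifications governed by a single $\BZ$-grading. Put $\deg\hbar=1$ and, for the generators of $\CY_{\mu,\hbar}$, $\deg h_i^{(r)}=\alphavee_i(\mu)+r$, $\deg e_i^{(r)}=\alphavee_i(\mu_1)+r$, $\deg f_i^{(r)}=\alphavee_i(\mu_2)+r$; one checks immediately that every defining relation of $\CY_{\mu,\hbar}$ — that is, the generalization of~(\ref{kwwy yangian}) with the modification~(\ref{kwwy yangian nondominant}) — is homogeneous for this grading. I would then define $\Theta\colon\CY_{\mu,\hbar}\to Y_\mu[\hbar,\hbar^{-1}]$ on generators by the formulas in the statement, and define $\Xi$ in the opposite direction by $E_i^{(r)}\mapsto\hbar^{1-\alphavee_i(\mu_1)-r}e_i^{(r-1)}$, $F_i^{(r)}\mapsto\hbar^{1-\alphavee_i(\mu_2)-r}f_i^{(r-1)}$ for $r\geq1$, $H_i^{(r)}\mapsto\hbar^{1-\alphavee_i(\mu)-r}h_i^{(r-1)}$ for $r>-b_i$, $H_i^{(-b_i)}\mapsto1$, and $H_i^{(r)}\mapsto0$ for $r<-b_i$ (where $b_i=\alphavee_i(\mu)$). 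On generators $\Theta$ and $\Xi$ are visibly mutually inverse, so once both are shown to be well-defined algebra homomorphisms, part~(a) is done.

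For well-definedness of $\Theta$, I would run through the defining relations of $\CY_{\mu,\hbar}$ one at a time: by homogeneity each relation is a sum of monomials of a common degree $d$, substituting the images produces $\hbar^d$ times a relation in $Y_\mu$ — either a defining relation of the Cartan-doubled presentation~(\ref{doubled yangian}) with all superscripts shifted by $+1$, or an immediate consequence of those together with the impositions $H_i^{(s)}=0$ ($s<-b_i$) and $H_i^{(-b_i)}=1$ (this is how, e.g., $[h_i^{(-b_i)},e_j^{(r)}]=c_{ij}e_j^{(r)}$ arises) — and each such relation holds in $Y_\mu$. The only case needing a second look is $[e_i^{(r)},f_j^{(r')}]$ with $i=j$: for $r+r'\geq-b_i$ the image is $\hbar^{\alphavee_i(\mu)+r+r'}[E_i^{(r+1)},F_i^{(r'+1)}]=\hbar^{\alphavee_i(\mu)+r+r'}H_i^{(r+r'+1)}=\Theta(h_i^{(r+r')})$; for $r+r'=-b_i-1$ one has $[E_i^{(r+1)},F_i^{(r'+1)}]=H_i^{(-b_i)}=1$ in $Y_\mu$, so the image equals $\hbar^{\alphavee_i(\mu)+r+r'}=\hbar^{-1}$, precisely as prescribed by~(\ref{kwwy yangian nondominant}); and for $r+r'<-b_i-1$ the relevant $H$ vanishes. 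The verification that $\Xi$ respects the relations of $Y_\mu$ is the same computation read backwards, with $H_i^{(-b_i)}=1$ absorbing the boundary index $r+r'-1=-b_i$. Since $\hbar$ is invertible, the images of the generators under $\Theta$ generate $Y_\mu[\hbar,\hbar^{-1}]$ (e.g.\ $H_i^{(s)}=\hbar^{1-\alphavee_i(\mu)-s}\Theta(h_i^{(s-1)})$ for $s>-b_i$), confirming surjectivity; injectivity then follows from the existence of $\Xi$.

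Part~(b) is a quick consequence. In $\CY_{\mu,\hbar}$ the element $e_{\alphavee}^{(r)}$ is by definition the iterated commutator of $e_{i_\ell}^{(r)}$ with $e_{i_{\ell-1}}^{(0)},\ldots,e_{i_1}^{(0)}$ for a fixed decomposition $\alphavee=\alphavee_{i_1}+\ldots+\alphavee_{i_\ell}$ into simple roots, and in $Y_\mu$ the element $E_{\alphavee}^{(r)}$ is the analogous iterated commutator of $E_{i_\ell}^{(r)}$ with $E_{i_{\ell-1}}^{(1)},\ldots,E_{i_1}^{(1)}$ for the same decomposition (compare~(\ref{PBW basis elements yangian non-h II}) and~(\ref{PBW basis elements yangian non-h})). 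Because $\hbar$ is central, applying $\Theta$ to the first commutator multiplies the second by $\hbar^{(\alphavee_{i_\ell}(\mu_1)+r)+\alphavee_{i_{\ell-1}}(\mu_1)+\ldots+\alphavee_{i_1}(\mu_1)}$, and $\sum_k\alphavee_{i_k}(\mu_1)=\alphavee(\mu_1)$, whence $\Theta(e_{\alphavee}^{(r)})=\hbar^{\alphavee(\mu_1)+r}E_{\alphavee}^{(r+1)}$; the claim for $f_{\alphavee}^{(r)}$ is identical with $\mu_1$ replaced by $\mu_2$.

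The main obstacle I anticipate is purely organizational rather than conceptual: one must fix once and for all the dictionary between the two presentations — the uniform $+1$ shift of superscripts between~(\ref{kwwy yangian}) and~(\ref{doubled yangian}), the fact that $e_i^{(r)},f_i^{(r)}$ of $\CY_{\mu,\hbar}$ range over $r\geq0$ while $E_i^{(r)},F_i^{(r)}$ of $Y_\mu$ range over $r\geq1$, and (for a general semisimple $\fg$) the use of one and the same root-vector decomposition data on both sides so that the iterated-commutator definitions of $e_\alphavee^{(r)}$ and $E_\alphavee^{(r)}$ literally correspond. Once this dictionary is set, every verification collapses to cancelling a common power of $\hbar$, which is why the lemma deserves to be called simple.
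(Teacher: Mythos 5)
Your proposal is correct and takes the same (essentially only) route the paper intends: the paper simply declares part (a) straightforward and part (b) a comparison of~(\ref{PBW basis elements yangian non-h}) with~(\ref{PBW basis elements yangian non-h II}), and your write-up supplies exactly those verifications — homogeneity of the relations for the stated grading, the boundary check at $r+r'=-b_i-1$ against $H_i^{(-b_i)}=1$, the explicit inverse, and the additivity of $\hbar$-exponents under iterated commutators. No gaps.
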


\begin{proof}
Part (a) is straightforward.
Part (b) follows by comparing~(\ref{PBW basis elements yangian non-h})
and~(\ref{PBW basis elements yangian non-h II}).
\end{proof}

Following Definition~\ref{dominant shifted Yangian}, we introduce:

\begin{Def}
Let $\CY_{\mu}$ be the $\BC[\hbar]$-subalgebra of $\CY_{\mu,\hbar}$ generated by
\begin{equation*}
  \{\hbar e_{\alphavee}^{(r)}\}_{\alphavee\in \Delta^+}^{r\geq 0}\cup
  \{\hbar f_{\alphavee}^{(r)}\}_{\alphavee\in \Delta^+}^{r\geq 0}\cup
  \{\hbar h_i^{(s_i)}\}_{i\in I}^{s_i\geq -b_i}.
\end{equation*}
\end{Def}

The following is the main result of this subsection:

\begin{Thm}\label{I=III}
For any coweight $\mu$, there is a canonical $\BC[\hbar]$-algebra isomorphism
$\bY_\mu\simeq \CY_\mu$.
\end{Thm}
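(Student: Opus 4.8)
The plan is to compare both sides of the desired isomorphism $\bY_\mu\simeq\CY_\mu$ with the already-constructed algebra $\CY_{\mu,\hbar}$ and use Lemma~\ref{trivial extension}. Recall from Definition~\ref{defn 1 of shifted y} that $\bY_\mu=\Rees^{F^\bullet_{\mu_1,\mu_2}}Y_\mu$ for any chosen splitting $\mu=\mu_1+\mu_2$ (and that this is canonically independent of the splitting). The strategy is to realize $\bY_\mu$ as a graded $\BC[\hbar]$-subalgebra of $\CY_{\mu,\hbar}$ in a way that, on PBW generators, has exactly the form that the defining generators of $\CY_\mu$ take. Since Theorem~\ref{PBW fkprw III} already furnishes $\CY_{\mu,\hbar}$ with a PBW basis over $\BC[\hbar,\hbar^{-1}]$, this is essentially a grading/Rees-algebra bookkeeping argument, closely modelled on Appendix~\ref{ssec appendix shifted yangians}.

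First I would fix a splitting $\mu=\mu_1+\mu_2$ and invoke Lemma~\ref{trivial extension}(a) to identify $\CY_{\mu,\hbar}\iso Y_\mu[\hbar,\hbar^{-1}]$ via $h_i^{(r)}\mapsto\hbar^{\alphavee_i(\mu)+r}H_i^{(r+1)}$, $e_i^{(r)}\mapsto\hbar^{\alphavee_i(\mu_1)+r}E_i^{(r+1)}$, $f_i^{(r)}\mapsto\hbar^{\alphavee_i(\mu_2)+r}F_i^{(r+1)}$, and Lemma~\ref{trivial extension}(b) to track the images of the root vectors $e^{(r)}_{\alphavee}, f^{(r)}_{\alphavee}$. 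Under this identification the generators $\hbar h_i^{(s_i)}, \hbar e^{(r)}_{\alphavee}, \hbar f^{(r)}_{\alphavee}$ of $\CY_\mu$ become $\hbar^{\alphavee_i(\mu)+s_i+1}H_i^{(s_i+1)}$, $\hbar^{\alphavee(\mu_1)+r+1}E^{(r+1)}_{\alphavee}$, $\hbar^{\alphavee(\mu_2)+r+1}F^{(r+1)}_{\alphavee}$ respectively — but these are precisely elements of the form $\hbar^{\deg}\cdot(\text{PBW generator})$ with $\deg$ equal to the $F^\bullet_{\mu_1,\mu_2}$-degree of that PBW generator, so they lie in $\Rees^{F^\bullet_{\mu_1,\mu_2}}Y_\mu=\bY_\mu$ viewed inside $Y_\mu[\hbar,\hbar^{-1}]$. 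Hence $\CY_\mu\subseteq\bY_\mu$. For the reverse inclusion, I would argue exactly as in the proof of Theorem~\ref{identification of two definitions}: an arbitrary PBW monomial spanning $F^k_{\mu_1,\mu_2}Y_\mu$, when multiplied by $\hbar^k$, can be rewritten (since the total $F^\bullet$-degree of the monomial is $\le k$ and each factor contributes its degree $+1$) as $\hbar^{k-\sum(\deg+r_\ell+1)}\prod_\ell(\hbar\cdot(\text{PBW generator}))$, which lies in the subalgebra generated by $\{\hbar h_i^{(s_i)},\hbar e^{(r)}_{\alphavee},\hbar f^{(r)}_{\alphavee}\}$, i.e.\ in $\CY_\mu$. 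Taking spans gives $\bY_\mu\subseteq\CY_\mu$, completing the identification; and one checks the resulting isomorphism is independent of the splitting because $\bY_\mu$ already is.

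The one genuinely new point compared to Appendix~\ref{ssec appendix shifted yangians} is that $\mu$ is now allowed to be non-dominant, so the auxiliary algebra $Y_{\mu,\hbar}$ over $\BC[\hbar]$ (which had a PBW basis only in the dominant case by Theorem~\ref{PBW fkprw II}) is no longer available; this is exactly why $\CY_{\mu,\hbar}$ is defined over $\BC[\hbar,\hbar^{-1}]$ with the extra relation~(\ref{kwwy yangian nondominant}) allowing $[e_i^{(r)},f_j^{(r')}]=\hbar^{-1}$ when $r+r'=-b_i-1$. So I would be careful to verify that Lemma~\ref{trivial extension}(a) is indeed an \emph{algebra} isomorphism — i.e.\ that the exceptional commutator relation in $\CY_{\mu,\hbar}$ matches the relation $[E_i^{(r)},F_j^{(r')}]=\delta_{ij}H_i^{(r+r'-1)}$ of $Y_\infty$ (equivalently $Y_\mu$) after the substitution, including the degenerate case $H_i^{(-b_i-1)}$ which, under the relation $H_i^{(-b_i)}=1$ and $H_i^{(r)}=0$ for $r<-b_i$ defining $Y_\mu$, forces the value $\hbar^{-1}$ on the $\CY_{\mu,\hbar}$-side after rescaling. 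This is a finite check in the defining relations and is the main obstacle, but a mild one; once it is in place the Rees-algebra bookkeeping of the previous paragraph goes through verbatim. Finally I would record the compatibility with the gradings (from $F^\bullet_{\mu_1,\mu_2}Y_\mu$ on $\bY_\mu$ and the internal grading on $\CY_\mu\subset\CY_{\mu,\hbar}$) as an immediate consequence of the explicit formulas, mirroring the last sentence of Theorem~\ref{identification of two definitions}.
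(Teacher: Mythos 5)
Your proposal is correct and follows essentially the same route as the paper: both identify $\CY_{\mu,\hbar}\iso Y_\mu[\hbar,\hbar^{-1}]$ via Lemma~\ref{trivial extension}, embed $\bY_\mu=\Rees^{F^\bullet_{\mu_1,\mu_2}}(Y_\mu)$ and $\CY_\mu$ into $Y_\mu[\hbar,\hbar^{-1}]$, and observe that the two subalgebras have the same generating set (the paper's Lemma~\ref{auxiliary lemma}). Your explicit check that the exceptional relation~(\ref{kwwy yangian nondominant}) matches $H_i^{(-b_i)}=1$ under the rescaling is exactly the content of Lemma~\ref{trivial extension}(a), which the paper labels ``straightforward.''
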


\begin{Rem}
We note that Theorem~\ref{I=III} does not imply Theorem~\ref{identification of two definitions},
since the algebra $\bY'_\mu$ could a priori have an $\hbar$-torsion.
\end{Rem}

As $\bY_\mu=\Rees^{F_{\mu_1,\mu_2}^\bullet} (Y_\mu)$, by the very definition of the Rees algebra
we have a natural embedding $\bY_\mu \subset Y_\mu [\hbar, \hbar^{-1}]$. Applying
Lemma~\ref{trivial extension} with the same decomposition $\mu=\mu_1+\mu_2$, we also
obtain an embedding $\CY_\mu \subset \CY_{\mu,\hbar} \iso Y_\mu[\hbar, \hbar^{-1}]$.
Therefore, Theorem~\ref{I=III} follows from:

\begin{Lem}\label{auxiliary lemma}
The images of $\bY_\mu$ and $\CY_\mu$ in $Y_\mu[\hbar, \hbar^{-1}]$ are equal.
\end{Lem}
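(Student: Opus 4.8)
The plan is to identify, after the isomorphism $\CY_{\mu,\hbar}\iso Y_\mu[\hbar,\hbar^{-1}]$ of Lemma~\ref{trivial extension}, both $\bY_\mu$ and $\CY_\mu$ with one and the same $\BC[\hbar]$-subalgebra $B$ of $Y_\mu[\hbar,\hbar^{-1}]$: the one generated by the elements $\hbar^{\deg g}g$, where $g$ runs over the PBW generators~(\ref{pbw bases}) of $Y_\mu$ and $\deg$ is the internal degree appearing in~(\ref{yangian filtration}) (so $\deg E_{\alphavee}^{(r)}=\alphavee(\mu_1)+r$, $\deg F_{\alphavee}^{(r)}=\alphavee(\mu_2)+r$, $\deg H_i^{(r)}=\alphavee_i(\mu)+r$). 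We fix once and for all the splitting $\mu=\mu_1+\mu_2$ used to form $\bY_\mu$, and use the same splitting when applying Lemma~\ref{trivial extension}.

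First I would compute the image of $\CY_\mu$. By Lemma~\ref{trivial extension}(b) together with the explicit formulas of Lemma~\ref{trivial extension}(a), the generators $\hbar e_{\alphavee}^{(r)}$, $\hbar f_{\alphavee}^{(r)}$, $\hbar h_i^{(s_i)}$ of $\CY_\mu$ are carried to $\hbar^{\alphavee(\mu_1)+r+1}E_{\alphavee}^{(r+1)}$, $\hbar^{\alphavee(\mu_2)+r+1}F_{\alphavee}^{(r+1)}$, $\hbar^{\alphavee_i(\mu)+s_i+1}H_i^{(s_i+1)}$ respectively; re-indexing $r+1\rightsquigarrow r$ and $s_i+1\rightsquigarrow s_i$, these are precisely the elements $\hbar^{\deg g}g$ for $g$ running over $E_{\alphavee}^{(r)}\ (r\geq 1)$, $F_{\alphavee}^{(r)}\ (r\geq 1)$, $H_i^{(r)}\ (r>-b_i)$, i.e.\ exactly the generators listed in~(\ref{pbw bases}). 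Hence the image of $\CY_\mu$ equals $B$. Next I would check $\bY_\mu=B$. Since $\bY_\mu=\Rees^{F^\bullet_{\mu_1,\mu_2}}(Y_\mu)=\bigoplus_k\hbar^kF^k_{\mu_1,\mu_2}Y_\mu$ (Definition~\ref{defn 1 of shifted y}) and each single PBW generator $g$ is an ordered PBW monomial of total degree $\deg g$, we get $\hbar^{\deg g}g\in\hbar^{\deg g}F^{\deg g}_{\mu_1,\mu_2}Y_\mu\subseteq\bY_\mu$; as $\bY_\mu$ is a $\BC[\hbar]$-algebra this yields $B\subseteq\bY_\mu$. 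For the reverse inclusion I would use that $F^k_{\mu_1,\mu_2}Y_\mu$ is spanned by ordered PBW monomials $m=g_1\cdots g_\ell$ with $\sum_j\deg g_j\leq k$, and rewrite $\hbar^k m=\hbar^{\,k-\sum_j\deg g_j}\prod_j\bigl(\hbar^{\deg g_j}g_j\bigr)$ with the exponent $k-\sum_j\deg g_j\geq 0$; this exhibits every spanning element of $\bY_\mu$ as an element of $B$. Combining the two computations proves the Lemma.

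The bookkeeping in these computations is routine, and the main point requiring attention — also the reason one cannot simply re-run the dominant-shift argument of Theorem~\ref{identification of two definitions}, which proceeds through the genuine $\BC[\hbar]$-algebra $Y_{\mu,\hbar}$ unavailable for non-dominant $\mu$ — is that the degrees $\deg g$ can now be negative, so the elements $\hbar^{\deg g}g$ genuinely belong to $Y_\mu[\hbar,\hbar^{-1}]$ rather than to $Y_\mu[\hbar]$. This costs nothing above: the only positivity invoked is $k\geq\sum_j\deg g_j$ for monomials in $F^k_{\mu_1,\mu_2}Y_\mu$, which is built into the definition of the filtration. I expect the most delicate step to be making the index shift of Lemma~\ref{trivial extension} line the two generating sets up correctly — in particular matching the ranges $r\geq 0$, $s_i\geq -b_i$ for $\CY_\mu$ against $r\geq 1$, $r>-b_i$ in~(\ref{pbw bases}) — but this is a finite mechanical check.
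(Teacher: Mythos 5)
Your proof is correct and follows essentially the same route as the paper's: both identify each of $\bY_\mu$ and $\CY_\mu$ with the $\BC[\hbar]$-subalgebra of $Y_\mu[\hbar,\hbar^{-1}]$ generated by the elements $\hbar^{\deg g}g$, matching generators via the index shift of Lemma~\ref{trivial extension}. You merely spell out the step the paper takes for granted, namely that the Rees algebra of the filtration by total PBW degree is generated by these elements; your verification of that step (including the case of negative degrees) is sound.
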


\begin{proof}
The filtration $F_{\mu_1,\mu_2}^\bullet Y_\mu$ is defined by the degrees of PBW monomials
as in~\cite[(5.1)]{fkprw} (cf.~(\ref{yangian filtration}) in type $A$). In particular,
$\bY_\mu$ is the $\BC[\hbar]$-subalgebra of $Y_\mu[\hbar, \hbar^{-1}]$ generated by the elements
\begin{equation*}
  \hbar^{\alphavee(\mu_1) + r} E_{\alpha^\vee}^{(r)},\
  \hbar^{\alphavee_i(\mu) +r} H_i^{(r)},\
  \hbar^{\alphavee(\mu_2) + r} F_{\alpha^\vee}^{(r)}.
\end{equation*}
Note that these are precisely the images of the generators
$\hbar e_{\alpha^\vee}^{(r-1)}, \hbar h_i^{(r-1)}, \hbar f_{\alpha^\vee}^{(r-1)}$ of $\CY_\mu$
under the isomorphism of Lemma~\ref{trivial extension}. The claim follows.
\end{proof}

\begin{Rem}
We note that Lemma~\ref{auxiliary lemma} provides another proof of the fact that
the Rees algebras $\Rees^{F^\bullet_{\mu_1,\mu_2}} (Y_\mu)$ are canonically isomorphic
for any choice of a splitting $\mu=\mu_1+\mu_2$.
\end{Rem}


\section{A short proof of the PBW theorem for the Yangians}\label{appendix with Alex Weekes 2}

%
%
%

The PBW theorem for the Yangians is well-known and was first proven by Levendorskii in~\cite{le}.
However, we feel that the proof of~\cite{le} contains a gap: in~\cite[p.~40]{le} it is stated that certain
exponents $m^\pm(i,j), m^0(r,j)$ are independent of $j$ without any hint (actually, this seems to be wrong),
and this fact plays a crucial role in the proof.  For this reason, we present here a short proof of the PBW
theorem for the Yangians, which is inspired by Levendorskii's, but which avoids the aforementioned
gap.\footnote{A similar proof appears in~\cite{w}, while a completely different proof of the PBW theorem for
the Yangian defined in its $J$-realization was recently presented in~\cite[Proposition 2.2]{grw}.}


\subsection{Useful Lemma}\label{ssec useful lemma}
\

Let $A=\bigoplus_{k\in \BZ} A_k$ be a graded algebra over $\BC[\hbar]$, with $1\in A_0$ and $\hbar\in A_1$.
Consider its two specializations $A_{\hbar=0}=A/\hbar A$ and $A_{\hbar=1}=A/(\hbar-1) A$.
The former is naturally graded via $A_{\hbar=0}=\bigoplus_{k\in \BZ} A_k/\hbar A_{k-1}$,
while the latter inherits a natural filtration $F_{\bullet}A_{\hbar=1}$ with
$F_kA_{\hbar=1}$ denoting the image of $\bigoplus_{\ell\leq k} A_\ell\subset A$, giving rise
to a graded algebra $\operatorname{gr} A_{\hbar=1}=\operatorname{gr}^{F_\bullet} A_{\hbar=1}$.

An explicit relation between the resulting graded $\BC$-algebras
$A_{\hbar=0}$ and $\operatorname{gr} A_{\hbar=1}$ is presented in the following result:

\begin{Lem}\label{useful lemma}
(a) There is a canonical epimorphism of graded $\BC$-algebras
  $\vartheta\colon A_{\hbar=0}\twoheadrightarrow \operatorname{gr}  A_{\hbar=1}$.

\noindent
(b) The kernel of $\vartheta$ is the image of the $\hbar$--torsion\footnote{Explicitly,
the $\hbar$--torsion of $A$ is given by
$T_\hbar(A)=\left\{a\in A:\hbar^r a=0 \text{ for some } r\geq 0\right\}$.} of $A$ in $A_{\hbar=0}$.
\end{Lem}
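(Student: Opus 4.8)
The plan is to construct the map $\vartheta$ on representatives and then identify its kernel by a diagram chase. First I would define $\vartheta$ degree by degree: given a homogeneous element $\bar a \in A_k/\hbar A_{k-1}$ represented by $a \in A_k$, send it to the class of (the image of) $a$ in $F_k A_{\hbar=1}/F_{k-1}A_{\hbar=1}$. One must check this is well-defined: if $a \in \hbar A_{k-1}$, say $a = \hbar b$ with $b \in A_{k-1}$, then the image of $a$ in $A_{\hbar=1}$ equals the image of $b$, which lies in $F_{k-1}A_{\hbar=1}$, hence vanishes in the associated graded. So $\vartheta$ is well-defined on each graded piece, and it is clearly additive and multiplicative because both the specialization $A \to A_{\hbar=1}$ and the passage to associated graded are algebra maps; it sends $1 \mapsto 1$. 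Surjectivity is immediate: any element of $\operatorname{gr}_k A_{\hbar=1}$ is the class of the image of some $a \in A$, and writing $a = \sum_\ell a_\ell$ in homogeneous components, only the components with $\ell \le k$ survive in $F_k A_{\hbar=1}$, and modulo $F_{k-1}$ only $a_k$ contributes, so the class is $\vartheta(\bar a_k)$. This proves part (a).

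For part (b), the claim is that $\ker\vartheta$ equals the image in $A_{\hbar=0}$ of the $\hbar$-torsion submodule $T_\hbar(A) \subseteq A$. I would first show $\supseteq$: if $a \in A_k$ satisfies $\hbar^r a = 0$ in $A$, then the image of $a$ in $A_{\hbar=1}$ also satisfies $1 \cdot \bar a = \hbar^r \bar a \cdot$ wait—more carefully, in $A_{\hbar=1}$ we have $\hbar \equiv 1$, so $\bar a = \hbar^r \bar a = \overline{\hbar^r a} = 0$; thus $a$ maps to $0$ in $A_{\hbar=1}$, a fortiori its class in $\operatorname{gr} A_{\hbar=1}$ is zero, so $\vartheta(\bar a) = 0$. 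For the reverse inclusion $\subseteq$, suppose $a \in A_k$ and $\vartheta(\bar a) = 0$, meaning the image of $a$ in $F_k A_{\hbar=1}$ actually lies in $F_{k-1}A_{\hbar=1}$. Then there is $b \in \bigoplus_{\ell \le k-1} A_\ell$ with $a - b \in (\hbar - 1)A$, say $a - b = (\hbar-1)c$ for some $c \in A$; decomposing $c$ into homogeneous components and comparing degrees (using that $(\hbar-1)$ shifts the top degree of a homogeneous piece up by one) lets me express $a$ in the form $\hbar^m a = (\text{lower-degree terms})$ after iterating, and the key point is that the degree-$k$ part forces a relation exhibiting $a$, up to elements already known to be torsion or zero, as $\hbar$-divisible in a way that terminates. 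Concretely I would argue by descending manipulation: from $a - b = (\hbar-1)c$ with $\deg b \le k-1$, taking the degree-$(k+1)$ component of both sides gives $0 = \hbar c_k - \dots$, and working downward one extracts that a suitable $\hbar$-power annihilates $a$ modulo $\hbar A_{k-1}$, i.e. $\bar a$ is the image of a torsion element.

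The step I expect to be the main obstacle is precisely this reverse inclusion in part (b): turning the relation $a - b \in (\hbar-1)A$ with a degree drop into a statement of $\hbar$-torsion requires carefully tracking homogeneous components of $c$ and iterating a substitution, and one has to be sure the process terminates (it does, because the components of $c$ have bounded degrees, so after finitely many steps the remaining term is annihilated by a fixed power of $\hbar$). A clean way to organize this is to observe that $A_{\hbar=1} = A[\hbar^{-1}]/(\hbar-1)$-type reasoning is a red herring and instead work directly: filter $A$ by degree, note $\operatorname{gr}^{F} A_{\hbar=1}$ is computed from the short exact sequences $0 \to F_{k-1} \to F_k \to F_k/F_{k-1} \to 0$, and compare with $A_k/\hbar A_{k-1} = A_k/(\hbar A \cap A_k)$; the snake-type bookkeeping then yields exactly that the discrepancy between $\vartheta$ being an isomorphism is measured by $(\hbar A \cap A_k)$ versus $\ker(A_k \to A_{\hbar=1})$, and the latter over the former is the torsion. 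I would present this as the crux and spell out the termination argument, since everything else (well-definedness, functoriality, surjectivity, the easy inclusion) is routine.
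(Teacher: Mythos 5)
Your proof is correct; the paper simply declares this lemma ``straightforward'' and omits the argument, and what you write is exactly the intended one. The only place worth tightening is the crux you flagged in (b): from $a-b=(\hbar-1)c$ with $a\in A_k$ and $b\in\bigoplus_{\ell\le k-1}A_\ell$, comparing homogeneous components gives $c_j=\hbar c_{j-1}$ for every $j>k$ (degrees $>k$) and $a=\hbar c_{k-1}-c_k$ (degree $k$); iterating the first relation \emph{upward} and using that $c$ has finitely many nonzero components yields $\hbar^{M}c_k=0$ for some $M$, so $a\equiv -c_k \pmod{\hbar A_{k-1}}$ with $c_k\in T_\hbar(A)$, which is precisely the claim --- no further descending substitution is needed.
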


\begin{proof}
The proof is straightforward.
\end{proof}

\subsection{Setup}\label{ssec Setup for PBW yangian}
\

We follow subsection~\ref{appendix: yangian section} for the conventions regarding the Yangian $\Yangian$.
However, throughout this section we will work with its specialization $Y_{\hbar=1}$, which we denote simply by $Y$.
Below, we prove the PBW theorem for $Y$ over $\BC$.  We then give a simple argument extending the PBW theorem to
the one for $\Yangian$ over $\BC[\hbar]$. By abuse of notation, we denote the images of the Yangian's PBW generators
in $Y$ by $e_{\alphavee}^{(r)}, h_i^{(r)}, f_{\alphavee}^{(r)}$.

Let us recall a few basic facts about $Y$. First of all, there is a natural linear map $\fg \rightarrow Y$,
defined on the Chevalley generators by $e_i \mapsto e_i^{(0)}, h_i \mapsto h_i^{(0)}, f_i\mapsto f_i^{(0)}$,
cf.~Lemma~\ref{embed+ev rtt yangian}(a). This map is injective. Indeed,
according to~\cite[Theorem 8]{d1}\footnote{The proof of this result is presented in~\cite[Section 6]{cp3}.},
the faithful action of $\fg$ on $\fg\oplus \BC$ (the direct sum of the adjoint representation and the trivial
one-dimensional) can be extended to an action of $Y$, hence, any element in the kernel of the above map
$\fg \rightarrow Y$ is zero.

Second, the grading on $\Yangian$ of subsection~\ref{appendix: yangian section} gives rise to a filtration
$F_\bullet Y$ as in subsection~\ref{ssec useful lemma}. In particular, every PBW generator $x^{(r)}$ belongs to $F_rY$.
We note that the coproduct
$\Delta\colon Y \rightarrow Y\otimes Y$ satisfies
\begin{equation}
  \text{Total filtered degree}\Big(\Delta(x^{(r)})-x^{(r)}\otimes 1-1\otimes x^{(r)}\Big)<r
\end{equation}
for any PBW generator $x^{(r)}$, which follows from~(\ref{appendix: coprod}).
Note that the aforementioned embedding $\fg\hookrightarrow Y$ yields a surjection
$U(\fg)\twoheadrightarrow F_0 Y$. Moreover, combining the isomorphism~(\ref{eq: classical limt of Yangian})
with Lemma~\ref{useful lemma}, we obtain a graded algebra epimorphism
  $\vartheta\colon U(\fg[t])\twoheadrightarrow \bigoplus_{k\geq 0}F_k Y / F_{k-1} Y$,
in particular, we get a surjective linear map from the degree $k$ part of $U(\fg[t])$ to $F_k Y / F_{k-1} Y$.

Finally, we recall that there is a {\em translation homomorphism} $\tau_a\colon Y\rightarrow Y[a]$
(here $a$ is a formal parameter) defined on the PBW generators by
\begin{equation}
  \tau_a(x^{(r)})=\sum_{s=0}^r {r \choose s}a^{r-s}x^{(s)}
\end{equation}
for any PBW generator $x=e_{\alphavee}, h_i, f_{\alphavee}$ (note that this formula is valid for
$e_{\alphavee}, f_{\alphavee}$ with $\alphavee$ a non-simple root because of our choices (\ref{eq: Y PBW})).
In particular, it follows that the filtered degree of any PBW monomial $y\in Y$ is precisely the degree
in $a$ of $\tau_a(y)$.

Define a homomorphism $\Delta_n\colon Y\rightarrow Y[a_1]\otimes\cdots\otimes Y[a_n]$ as the composition
\begin{equation}
  Y \stackrel{\Delta^n}{\relbar\joinrel\longrightarrow} Y^{\otimes n}
  \stackrel{\tau_{a_1}\otimes \cdots \otimes \tau_{a_n}}{\relbar\joinrel\relbar\joinrel\relbar\joinrel\longrightarrow}
  Y[a_1]\otimes \cdots \otimes Y[a_n].
\end{equation}
Here $\Delta^n$ is the $n$-th iterated coproduct as in subsection~\ref{ssec Drinfeld functor}, and
$\tau_{a_i}\colon Y\rightarrow Y[a_i]$ is the translation homomorphism.
In particular, it follows from the above discussion that for any PBW generator $x^{(r)}$, we have
\begin{equation}\label{eq: monomial total degree}
  \Delta_n(x^{(r)})=a_1^r x\otimes 1\otimes\cdots\otimes 1 +1\otimes a_2^r x\otimes 1\otimes\cdots\otimes 1
  +\ldots +1\otimes\cdots\otimes 1\otimes a_n^r x
\end{equation}
modulo terms of total degree $<r$ in $a_1,\ldots,a_n$.


\subsection{The PBW Theorem for the Yangians}\label{ssec PBW yangian}
\

In this subsection, we prove the PBW theorems for $Y$ and $\Yangian$.

\begin{Thm}\label{PBW for h=1 yangian}
The PBW monomials in the generators $e_{\alphavee}^{(r)}, h_i^{(r)}, f_{\alphavee}^{(r)}$
form a $\BC$-basis of $Y$.
\end{Thm}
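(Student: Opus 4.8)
The plan is to establish the PBW theorem for $Y = Y_{\hbar=1}$ by the standard two-part argument: first show the PBW monomials span $Y$, then show they are linearly independent. The spanning statement is essentially immediate: combining the classical limit isomorphism~\eqref{eq: classical limt of Yangian} with Lemma~\ref{useful lemma}, we obtain a surjection $\vartheta\colon U(\fg[t])\twoheadrightarrow \operatorname{gr}^{F_\bullet} Y$ of graded algebras, and the PBW theorem for $U(\fg[t])$ together with the observation that $\vartheta$ sends the classical PBW monomial $x_1 t^{r_1}\cdots x_\ell t^{r_\ell}$ onto the symbol of $x_1^{(r_1)}\cdots x_\ell^{(r_\ell)}$ (by the filtered-degree estimate on the coproduct and formulas~\eqref{eq: Y PBW}) shows the PBW monomials in $\{e_\alphavee^{(r)}, h_i^{(r)}, f_\alphavee^{(r)}\}$ span each filtered piece $F_k Y$, hence all of $Y$.

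The main work is linear independence, and here I would follow Levendorskii's strategy via the iterated coproduct and translation homomorphisms, but phrased so as to avoid the gap in~\cite{le}. Suppose a nontrivial $\BC$-linear relation $\sum_\alpha c_\alpha x^\alpha = 0$ holds among ordered PBW monomials. Pick $n$ larger than the total degree $|\alpha|$ of any monomial appearing, and apply $\Delta_n\colon Y \to Y[a_1]\otimes\cdots\otimes Y[a_n]$. Using~\eqref{eq: monomial total degree}, the top-degree-in-$(a_1,\ldots,a_n)$ part of $\Delta_n(x^\alpha)$ is a sum over all ways of distributing the $|\alpha|$ PBW \emph{generators} constituting $x^\alpha$ among the $n$ tensor factors, each factor receiving an ordered product of generators of $\fg$ (the symbols $x$, not $x^{(r)}$) weighted by the appropriate monomial in the $a_i$'s, modulo lower-degree terms. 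Since $n > |\alpha|$ for all $\alpha$, one can choose a distribution that places at most one generator in each factor; specializing the $a_i$ to generic distinct scalars and projecting onto the component of $Y^{\otimes n}$ where each factor lies in $F_1 Y$ (or rather extracting the relevant coefficient of a monomial in the $a_i$), the relation descends to a linear relation in $U(\fg)^{\otimes n}$ — more precisely in (a completion of) $\fg^{\otimes |\alpha|}$ embedded diagonally — which forces all $c_\alpha = 0$ by the PBW theorem for $U(\fg)$ and the injectivity of $\fg \hookrightarrow Y$ recalled in subsection~\ref{ssec Setup for PBW yangian}. The point that replaces the flawed step in~\cite{le} is that we never need to track the precise exponents $m^\pm(i,j)$; we only need $n$ large enough to separate all generators into distinct tensor slots and then invoke genericity of the $a_i$ together with classical PBW for $U(\fg)$.

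The hard part will be bookkeeping the combinatorics of $\Delta_n$ on a general PBW \emph{monomial} (as opposed to a single generator): one must check that the leading term in the $a_i$-grading of $\Delta_n(x^\alpha)$ is exactly the ``fully spread out'' symbol and that distinct $\alpha$ yield linearly independent leading terms after the generic specialization. This is where one must argue carefully that the ordering conventions on PBW generators, the choices~\eqref{eq: Y PBW} of root vectors, and the compatibility of $\tau_a$ with these choices all cooperate, so that the symbol map $\fg[t]$-degree-$k$-part $\to F_kY/F_{k-1}Y$ composed with the leading-term extraction from $\Delta_n$ is injective on PBW monomials of degree $\le k$. Once this is in place, linear independence follows formally, completing the proof of Theorem~\ref{PBW for h=1 yangian}. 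The extension to $\Yangian$ over $\BC[\hbar]$ is then the routine argument used already in the proof of Proposition~\ref{PBW for RTT yangian}: spanning over $\BC[\hbar]$ follows from~\eqref{eq: classical limt of Yangian} and PBW for $U(\fg[t])$, while linear independence over $\BC[\hbar]$ reduces to linear independence after specializing $\hbar$ to any nonzero scalar, which holds since all such specializations are isomorphic to $Y$ and we have just proved the result for $Y$.
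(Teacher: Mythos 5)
Your overall strategy coincides with the paper's: spanning via the surjection $U(\fg[t])\twoheadrightarrow\operatorname{gr}^{F_\bullet}Y$, and linear independence via $\Delta_n=(\tau_{a_1}\otimes\cdots\otimes\tau_{a_n})\circ\Delta^n$ together with the leading term in the $a$-grading. The spanning half and the final extension to $\Yangian$ over $\BC[\hbar]$ are correct as written. The problem is that the linear-independence half is not actually proved: the step you yourself flag as ``the hard part'' --- that distinct PBW monomials of maximal filtered degree have linearly independent leading terms under $\Delta_n$ --- is exactly the content of the theorem at this stage, and exactly where Levendorskii's argument broke down. Deferring it to ``careful bookkeeping'' leaves the proof with a genuine gap, because everything before and after it is routine.

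Moreover, the device you sketch to fill it (take $n>|\alpha|$, spread the generators into distinct tensor slots, specialize the $a_i$ generically, project onto the ``multilinear'' component) does not go through as stated. First, the coefficient of a fixed monomial in $a_1,\dots,a_n$ does not isolate the injective distributions: if some superscript $r_j$ vanishes (e.g.\ for $x^{(0)}$-type generators), a non-injective distribution that dumps $y_j^{(0)}$ into an already occupied slot produces the \emph{same} $a$-monomial as the spread-out one, so the terms you want to discard reappear in the same coefficient. Second, separating ``each slot contains a single generator'' from ``some slot contains a product of two generators'' inside $Y^{\otimes n}$ requires linear-independence statements about products in $F_0Y$ that are not yet available --- at this point one only knows $\fg\hookrightarrow Y$, not $U(\fg)\hookrightarrow Y$. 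The paper circumvents both issues with one combinatorial trick: list the PBW generators occurring in the top-degree monomials, with multiplicity, as $x_1^{(\sd_1)}\leq\cdots\leq x_n^{(\sd_n)}$ (with a convention placing selected copies of repeated generators to the left), encode each top-degree monomial by a $0$--$1$ vector $(\eps_1,\dots,\eps_n)$, and take $n$ equal to the length of this list. Then each leading term contains the ``diagonal'' summand $(a_1^{\sd_1}x_1^{(0)})^{\eps_1}\otimes\cdots\otimes(a_n^{\sd_n}x_n^{(0)})^{\eps_n}$ with coefficient $1$; this summand has at most one generator per slot, so its nonvanishing and the linear independence of the diagonal summands need only $\fg\hookrightarrow Y$, and it recovers $\eps$ (hence the monomial) uniquely. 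You need either to reproduce this device or to supply an equally concrete substitute before the independence claim can be considered proved.
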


\prf
First, we claim that the PBW monomials span $Y$. The proof is by induction in the filtered degree.
For degree $0$, we recall that there is an algebra epimorphism $U(\fg)\twoheadrightarrow F_0 Y$,
so the usual PBW theorem for $U(\fg)$ applies; in particular, the PBW monomials in
$e_{\alphavee}^{(0)}, h_i^{(0)}, f_{\alphavee}^{(0)}$ span $F_0 Y$. For any $k>0$, recall
that the degree $k$ part of $U(\fg[t])$ surjects onto $F_k Y / F_{k-1} Y$. Combining this with the
PBW theorem for $U(\fg[t])$, we see that $F_k Y$ is spanned by the PBW monomials modulo terms
of the lower filtered degree. By induction, the claim follows.

Next, suppose that we could find a relation $R$ between some PBW monomials.
Consider the set of the PBW monomials of the maximal filtered degree $\sd$ that appear
non-trivially in this relation. Since this is a finite set, we may find a list of PBW generators
$x_1^{(\sd_1)}\leq\ldots\leq x_n^{(\sd_n)}$ (possibly with multiplicities) such that
each of these maximal degree monomials has the form
\begin{equation}
  (x_1^{(\sd_1)})^{\eps_1}\cdots (x_n^{(\sd_n)})^{\eps_n},
\end{equation}
with all $\eps_i\in \{0,1\}$ and $\sum_i \eps_i \sd_i=\sd$.
When multiplicities do occur, we take the convention
that the $\eps_i=1$ appear to the left of $\eps_i=0$.
With this convention, each tuple $(\eps_1,\ldots,\eps_n)$ corresponds uniquely to a PBW monomial.

By (\ref{eq: monomial total degree}), we find that $\Delta_n(R)$ is a sum of expressions of the form
\begin{equation}\label{eq: image of monomial under modified coproduct}
  \left(\sum_{i=1}^n 1^{\otimes (i-1)}\otimes a_i^{\sd_1} x^{(0)}_1\otimes 1^{\otimes(n-i)}\right)^{\eps_1}\cdots
  \left(\sum_{i=1}^n 1^{\otimes (i-1)}\otimes a_i^{\sd_n} x^{(0)}_n\otimes 1^{\otimes(n-i)}\right)^{\eps_n},
\end{equation}
modulo terms of total degree $<\sd$ in $a_1,\ldots,a_n$. In particular, in the
expression~(\ref{eq: image of monomial under modified coproduct}) there is a summand
\begin{equation}\label{eq: image of monomial under modified coproduct 2}
  (a_1^{\sd_1} x^{(0)}_1)^{\eps_1}\otimes (a_2^{\sd_2} x^{(0)}_2)^{\eps_2}\otimes\cdots\otimes (a_n^{\sd_n} x^{(0)}_n)^{\eps_n}
\end{equation}
which appears with coefficient $1$. Moreover, there is a unique PBW monomial for
which~(\ref{eq: image of monomial under modified coproduct 2}) appears as a summand.

Since $x^{(0)}_r$ are in the image of the embedding $\fg \hookrightarrow Y$, the
elements~(\ref{eq: image of monomial under modified coproduct 2}) are linearly independent in
$Y[a_1]\otimes\cdots\otimes Y[a_n]$.
Thus the expressions~(\ref{eq: image of monomial under modified coproduct}) are also linearly independent.
This implies that the top total degree term in $\Delta_n(R)$ must be zero, a contradiction.

Hence no linear relations exist, proving the PBW theorem for $Y$.
\epr

This PBW theorem can be easily generalized to $\Yangian$ over $\BC[\hbar]$:

\begin{Thm}\label{PBW Theorem Yangian}
$\Yangian$ is free over $\BC[\hbar]$, with a basis of the PBW monomials in the generators
$e_{\alphavee}^{(r)}, h_i^{(r)}, f_{\alphavee}^{(r)}$.
\end{Thm}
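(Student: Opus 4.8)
\textbf{Proof proposal for Theorem~\ref{PBW Theorem Yangian}.}
The plan is to deduce the PBW theorem for $\Yangian$ over $\BC[\hbar]$ from the PBW theorem for $Y=Y_{\hbar=1}$ (Theorem~\ref{PBW for h=1 yangian}), using the graded structure on $\Yangian$ together with Lemma~\ref{useful lemma}. Recall that $\Yangian$ is $\BN$-graded with $\deg(\hbar)=1$ and $\deg(x^{(r)})=r$ for each PBW generator $x^{(r)}$, so we are precisely in the setting of subsection~\ref{ssec useful lemma} with $A=\Yangian$. The two specializations are $\Yangian/\hbar\Yangian\simeq U(\fg[t])$ (the isomorphism~(\ref{eq: classical limt of Yangian})) and $\Yangian/(\hbar-1)\Yangian=Y$.

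First I would establish the spanning statement: the PBW monomials in the $e_\alphavee^{(r)},h_i^{(r)},f_\alphavee^{(r)}$ span $\Yangian$ over $\BC[\hbar]$. This follows by a standard filtered/graded argument: combining the isomorphism~(\ref{eq: classical limt of Yangian}) with the usual PBW theorem for $U(\fg[t])$ shows that the PBW monomials span $\Yangian$ modulo $\hbar\Yangian$; since $\Yangian$ is a graded algebra with $\hbar$ in positive degree and each homogeneous component is finite-dimensional over $\BC$, a degreewise induction (lifting relations from $\Yangian/\hbar\Yangian$ and absorbing the $\hbar$-multiple correction terms, which lie in strictly lower internal degree once one accounts for $\deg(\hbar)=1$) yields that the PBW monomials span $\Yangian$ over $\BC[\hbar]$.

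Next I would prove linear independence of the PBW monomials over $\BC[\hbar]$. Suppose $\sum_k c_k(\hbar)\,m_k=0$ in $\Yangian$ with $c_k(\hbar)\in\BC[\hbar]$ not all zero and $m_k$ distinct PBW monomials. After dividing through by the largest power of $\hbar$ dividing all the $c_k$, we may assume some $c_k(0)\neq 0$. Since $\Yangian$ is free of $\hbar$-torsion — which I would note follows from Lemma~\ref{useful lemma}(b) together with the fact that $\vartheta\colon U(\fg[t])\to\operatorname{gr}^{F_\bullet}Y$ is an \emph{isomorphism}, the latter being exactly the content of the surjectivity argument in the proof of Theorem~\ref{PBW for h=1 yangian} combined with the injectivity that the PBW theorem for $Y$ provides — the element $\sum_k c_k(\hbar)m_k$ cannot be killed by multiplication by any power of $\hbar$. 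Alternatively, and more directly, I would specialize $\hbar\mapsto 1$: the relation descends to $\sum_k c_k(1)\,m_k=0$ in $Y$, but Theorem~\ref{PBW for h=1 yangian} says the images of the $m_k$ are a $\BC$-basis of $Y$, so $c_k(1)=0$ for all $k$; repeating with $\hbar\mapsto\xi$ for infinitely many scalars $\xi$ (all specializations being isomorphic, or arguing directly that each $Y_{\hbar=\xi}$ has the PBW property) forces every polynomial $c_k(\hbar)$ to vanish, a contradiction. This shows the PBW monomials are linearly independent over $\BC[\hbar]$, hence form a basis of the free $\BC[\hbar]$-module $\Yangian$.

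The main obstacle, as in the $h=1$ case, is the freeness/torsion-freeness input rather than spanning: one must know that no new relations among PBW monomials appear over $\BC[\hbar]$ that are invisible both at $\hbar=0$ and at any single nonzero specialization. The cleanest route is the specialization-at-many-values argument in the previous paragraph, which reduces everything to Theorem~\ref{PBW for h=1 yangian} and avoids any delicate grading bookkeeping; the only point requiring care is checking that the spanning set reduces correctly at each specialization, which is immediate since the defining relations of $\Yangian$ specialize to those of $Y_{\hbar=\xi}$ and the translation-homomorphism argument of subsection~\ref{ssec PBW yangian} applies verbatim for every $\xi\neq 0$.
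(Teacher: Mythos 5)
Your proposal is correct and follows essentially the same route as the paper: spanning is deduced from the classical limit $\Yangian/\hbar\Yangian\simeq U(\fg[t])$ by induction on the grading, and linear independence follows by specializing $\hbar$ to (infinitely many, in fact all) complex values, where Theorem~\ref{PBW for h=1 yangian} handles $\hbar\neq0$ and the classical PBW theorem handles $\hbar=0$. One small inaccuracy: the homogeneous components of $\Yangian$ are \emph{not} finite-dimensional over $\BC$ (the degree-zero part already contains all of $U(\fg)$), but this claim is not actually needed for the degreewise induction, so the argument is unaffected.
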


\prf
Similarly to the proof of Theorem~\ref{PBW for h=1 yangian}, we see that the PBW monomials span
$\Yangian$ over $\BC[\hbar]$. Moreover, if we specialize $\hbar$ to any complex number,
the images of the PBW monomials form a basis. Indeed, the previous theorem proves this for
$\hbar=1$ (and thus for any $\hbar\neq 0$, since all such specializations are isomorphic),
while the case $\hbar=0$ follows from~(\ref{eq: classical limt of Yangian}) and the PBW theorem for $U(\fg[t])$.

Suppose that there is some linear relation among the PBW monomials.
Its coefficients are elements of $\BC[\hbar]$. But they must vanish
wherever $\hbar$ is specialized in $\BC$, since the PBW monomials become a basis.
Therefore, all the coefficients are zero. So there are no relations, and the theorem is proved.
\epr


\end{document}